\def\ssign{\textsection\nobreak\hspace{1pt plus 0.3pt}}
\let\origsection=\section 
\def\mysection{\@mystartsection{section}{1}\z@{.7\linespacing\@plus\linespacing}{.5\linespacing}{\normalfont\scshape\centering\ssign}}
\def\section{\@ifstar{\origsection*}{\mysection}}
\def\appendix{\par\c@section\z@ \c@subsection\z@
	\let\sectionname\appendixname
	\let\section=\origsection
	\def\thesection{\@Alph\c@section}}
\def\@mystartsection#1#2#3#4#5#6{\if@noskipsec \leavevmode \fi
	\par \@tempskipa #4\relax
	\@afterindenttrue
	\ifdim \@tempskipa <\z@ \@tempskipa -\@tempskipa \@afterindentfalse\fi
	\if@nobreak \everypar{}\else
	\addpenalty\@secpenalty\addvspace\@tempskipa\fi
	\@dblarg{\@mysect{#1}{#2}{#3}{#4}{#5}{#6}}}
\def\@mysect#1#2#3#4#5#6[#7]#8{\edef\@toclevel{\ifnum#2=\@m 0\else\number#2\fi}\ifnum #2>\c@secnumdepth \let\@secnumber\@empty
	\else \@xp\let\@xp\@secnumber\csname the#1\endcsname\fi
	\@tempskipa #5\relax
	\ifnum #2>\c@secnumdepth
	\let\@svsec\@empty
	\else
	\refstepcounter{#1}\edef\@secnumpunct{\ifdim\@tempskipa>\z@ \@ifnotempty{#8}{\@nx\enspace}\else
		\@ifempty{#8}{.}{\@nx\enspace}\fi
	}\@ifempty{#8}{\ifnum #2=\tw@ \def\@secnumfont{\bfseries}\fi}{}\protected@edef\@svsec{\ifnum#2<\@m
		\@ifundefined{#1name}{}{\ignorespaces\csname #1name\endcsname\space
		}\fi
		\@seccntformat{#1}}\fi
	\ifdim \@tempskipa>\z@ \begingroup #6\relax
	\@hangfrom{\hskip #3\relax\@svsec}{\interlinepenalty\@M #8\par}\endgroup
	\ifnum#2>\@m \else \@tocwrite{#1}{#8}\fi
	\else
	\def\@svsechd{#6\hskip #3\@svsec
		\@ifnotempty{#8}{\ignorespaces#8\unskip
			\@addpunct.}\ifnum#2>\@m \else \@tocwrite{#1}{#8}\fi
	}\fi
	\global\@nobreaktrue
	\@xsect{#5}}
\pgfplotsset{compat=1.15}
\def\rmlabel{\upshape({\itshape \roman*\,})}
\let\setminus=\smallsetminus
\let\emptyset=\varnothing
\let\vn=\varnothing
\def\moverlay{\mathpalette\mov@rlay}
\def\mov@rlay#1#2{\leavevmode\vtop{   \baselineskip\z@skip \lineskiplimit-\maxdimen
		\ialign{\hfil$\m@th#1##$\hfil\cr#2\crcr}}}
\newcommand{\charfusion}[3][\mathord]{
	#1{\ifx#1\mathop\vphantom{#2}\fi
		\mathpalette\mov@rlay{#2\cr#3}
	}
	\ifx#1\mathop\expandafter\displaylimits\fi}
\newcommand*\linenomathpatch[1]{%
	\expandafter\pretocmd\csname #1\endcsname {\linenomath}{}{}%
	\expandafter\pretocmd\csname #1*\endcsname{\linenomath}{}{}%
	\expandafter\apptocmd\csname end#1\endcsname {\endlinenomath}{}{}%
	\expandafter\apptocmd\csname end#1*\endcsname{\endlinenomath}{}{}%
}
\newcommand*\linenomathpatchAMS[1]{%
	\expandafter\pretocmd\csname #1\endcsname {\linenomathAMS}{}{}%
	\expandafter\pretocmd\csname #1*\endcsname{\linenomathAMS}{}{}%
	\expandafter\apptocmd\csname end#1\endcsname {\endlinenomath}{}{}%
	\expandafter\apptocmd\csname end#1*\endcsname{\endlinenomath}{}{}%
}
\let\linenomathAMS\linenomathWithnumbers
\patchcmd\linenomathAMS{\advance\postdisplaypenalty\linenopenalty}{}{}{}
\let\linenomathAMS\linenomathNonumbers
\theoremstyle{plain}
\newtheorem{theorem}{Theorem}[section]
\crefname{theorem}{Theorem}{Theorems}
\newtheorem{proposition}[theorem]{Proposition}
\crefname{proposition}{Proposition}{Propositions}
\newtheorem{corollary}[theorem]{Corollary}
\crefname{corollary}{Corollary}{Corollaries}
\newtheorem{lemma}[theorem]{Lemma}
\crefname{lemma}{Lemma}{Lemmata}
\newtheorem{conjecture}[theorem]{Conjecture}
\crefname{conjecture}{Conjecture}{Conjectures}
\crefname{problem}{Problem}{Problem}
\newtheorem{claim}[theorem]{Claim}
\crefname{claim}{Claim}{Claims}
\newtheorem{observation}[theorem]{Observation}
\crefname{observation}{Observation}{Observations}
\crefname{setup}{Setup}{Setups}
\newtheorem{fact}[theorem]{Fact}
\crefname{fact}{Fact}{Facts}
\crefname{algorithm}{Algorithm}{Algorithms}
\crefname{remark}{Remark}{Remarks}
\crefname{example}{Example}{Examples}
\theoremstyle{definition}
\newtheorem{definition}[theorem]{Definition}
\crefname{definition}{Definition}{Definitions}
\newtheorem{construction}[theorem]{Construction}
\crefname{construction}{Construction}{Constructions}
\crefname{question}{Question}{Questions}
\numberwithin{equation}{section}
\crefname{appendix}{Appendix}{Appendix}
\crefname{figure}{Figure}{Figures}
\newcommand{\rf}[1]{\cref{#1} (\nameref*{#1})}
\theoremstyle{definition}
\newtheorem*{connectivity}{Connectivity}
\newtheorem*{space-prop}{Space}
\newtheorem*{aperiodicity}{Aperiodicity}
\newenvironment{proofclaim}[1][Proof of the claim]{\begin{proof}[#1]\renewcommand*{\qedsymbol}{\(\blacksquare\)}}{\end{proof}}
\def\COMMENT#1{}
\let\polishlcross=\l
\def\l{\ifmmode\ell\else\polishlcross\fi}
\newcommand{\vecb}{\mathbf} 
\newcommand{\es}{\emptyset}
\newcommand{\eps}{\varepsilon}
\renewcommand{\rho}{\varrho}
\newcommand{\sm}{\setminus}
\renewcommand{\subset}{\subseteq}
\newcommand{\NATS}{\mathbb{N}}
\newcommand{\INTS}{\mathbb{Z}}
\newcommand{\REALS}{\mathbb{R}}
\newcommand{\Exp}{\mathbb{E}}
\newcommand{\ori}[1]{\smash{\overrightarrow{#1}}}
\let\vn\relax
\newcommand{\vn}{\mathbf{1}}
\newcommand{\bvec}[1]{\mathbf{#1}}
\let\th\relax
\DeclareMathOperator{\th}{\delta}
\newcommand{\cA}{\mathcal{A}}
\newcommand{\cC}{\mathcal{C}}
\newcommand{\cF}{\mathcal{F}}
\newcommand{\cG}{\mathcal{G}}
\newcommand{\cL}{\mathcal{L}}
\newcommand{\cM}{\mathcal{M}}
\newcommand{\cP}{\mathcal{P}}
\newcommand{\cQ}{\mathcal{Q}}
\newcommand{\cR}{\mathcal{R}}
\newcommand{\cU}{\mathcal{U}}
\newcommand{\cV}{\mathcal{V}}
\newcommand{\cW}{\mathcal{W}}
\newcommand{\cX}{\mathcal{X}}
\DeclareMathOperator{\cover}{{cov}}
\DeclareMathOperator{\res}{{res}}
\DeclareMathOperator{\poly}{poly}
\DeclareMathOperator{\polylog}{polylog}
\DeclareMathOperator{\con}{\mathsf{Con}}
\DeclareMathOperator{\spa}{\mathsf{Spa}}
\DeclareMathOperator{\ape}{\mathsf{Ape}}
\DeclareMathOperator{\ham}{\mathsf{HC}}
\DeclareMathOperator{\hf}{\mathsf{HF}}
\DeclareMathOperator{\natfw}{\mathsf{Nat}}
\DeclareMathOperator{\udiv}{\mathsf{Div}}
\DeclareMathOperator{\ucon}{\overline{\mathsf{Con}}}
\DeclareMathOperator{\uspa}{\overline{\mathsf{Spa}}}
\DeclareMathOperator{\uape}{\overline{\mathsf{Ape}}}
\DeclareMathOperator{\dcon}{{\con}}
\DeclareMathOperator{\dspa}{{\spa}}
\DeclareMathOperator{\dape}{{\ape}}
\DeclareMathOperator{\hamcon}{\mathsf{HamCon}}
\DeclareMathOperator{\mat}{\mathsf{Mat}}
\DeclareMathOperator{\Del}{\mathsf{Del}}
\DeclareMathOperator{\adh}{\mathsf{adh}}
\DeclareMathOperator{\tc}{\mathsf{tc}}
\let\P\relax
\DeclareMathOperator{\P}{\mathsf{P}}
\newcommand{\PG}[3]{{P^{(#3)}}(#1,#2)}
\DeclareMathOperator{\Deg}{\mathsf{MinDeg}}
\newcommand{\DegF}[2]{\Deg(#1,#2)}
\newcommand{\DegSeq}{{\mathsf{DegSeq}}}
\newcommand{\rdeg}{\overline{\deg}}
\newcommand{\tightly}{}
\newcommand{\tight}{}
\title{A hypergraph bandwidth theorem}
\author[R.~Lang]{Richard Lang}
\address[R.~Lang]{
	Departament de Matemàtiques,
	Universitat Politècnica de Catalunya,
	Barcelona, Spain and
	Centre de Recerca Matemàtica, Barcelona, Spain
}
\email{richard.lang@upc.edu}
\author[N.~Sanhueza-Matamala]{Nicolás Sanhueza-Matamala}
\address[N.~Sanhueza-Matamala]{ 
	Departamento de Ingeniería Matemática and CI$^2$MA,
	Facultad de Ciencias Físicas y Matemáticas,
	Universidad de Concepción,
	Concepción, Chile
}
\email{nsanhuezam@udec.cl}
\begin{document}

\begin{abstract}
	A cornerstone of extremal graph theory due to Erdős and Stone states that the edge density which guarantees a fixed graph $F$ as subgraph also asymptotically guarantees a blow-up of $F$ as subgraph.
	It is natural to ask whether this phenomenon generalises to vertex-spanning structures such as Hamilton cycles.
	This was confirmed by Böttcher, Schacht and Taraz for graphs in the form of the Bandwidth Theorem.
	Our main result extends the phenomenon to hypergraphs.

	A graph on $n$ vertices that robustly contains a Hamilton cycle must satisfy certain conditions on space, connectivity and aperiodicity.
	Conversely, we show that if these properties are robustly satisfied, then all blow-ups of cycles on $n$ vertices with clusters of size at most $\poly (\log \log n)$ are guaranteed as subgraphs.
	This generalises to powers of cycles and to the hypergraph setting.

	As an application, we recover a series of classic results and recent breakthroughs on Hamiltonicity under degree conditions, which are then immediately upgraded to blown up versions.
	The proofs are based on a new setup for embedding large substructures into dense hypergraphs, which is of independent interest and does not rely on the Regularity Lemma or the Absorption Method.
\end{abstract}

\subjclass[2020]{05C35 (primary), 05C45, 05C65, 05C70 (secondary)}
\keywords{Hamilton cycles, hypergraphs, minimum degree}

\maketitle


\vspace{-0.5cm}

\section{Introduction}\label{sec:introduction}

An old problem of Turán is to determine the optimal edge density $\tau(F)$ that guarantees a copy of a $k$-uniform hypergraph $F$ in a host hypergraph $G$.
In the graph setting, Erdős and Stone~\cite{ES46} proved that under a mildly stronger assumption, one can even guarantee a \emph{blow-up} of $F$.
Formally, such a blow-up is formed by replacing the vertices of $F$ with vertex-disjoint clusters and each edge with a complete $k$-partite $k$-graph whose parts are the corresponding clusters.
Erdős~\cite{Erd64} extended this phenomenon to hypergraphs, showing that the presence of many copies of $F$ already forces a blow-up of $F$ with clusters of size $\polylog n$.
In combination with supersaturation~\cite{ES83}, this extends the Erdős--Stone Theorem to hypergraphs, which is remarkable insofar as the value of $\tau(F)$ is generally unknown.
Random constructions show that the cluster sizes in these blow-ups cannot exceed $\polylog n$.
Nevertheless, an ongoing branch of research is dedicated to optimising the bounds in this area~\cite{BE73,CS81,Nik09}.

So far, the order of the substructures has been tiny in comparison to the host structure.
It is then natural to ask whether the Erdős--Stone theorem generalises to much larger, even vertex-spanning substructures.
To extend Turán-type problems in this direction, one typically replaces bounds on density with some condition on the degrees.
For instance, the classic theorem of Dirac~\cite{Dir52} provides optimal minimum degree conditions for the existence of a Hamilton cycle in an (ordinary) graph.
This was later generalised to powers of cycles by Komlós, Sárközy and Szemerédi~\cite{KSS98}, which can be viewed as a combination of Dirac's and Turán's theorems.
Given this, Bollobás and Komlós~\cite{KS96} asked whether one can find a vertex-spanning blow-up of a power of a cycle under marginally stronger degree conditions.
This vertex-spanning extension of the Erdős--Stone theorem was confirmed by Böttcher, Schacht and Taraz~\cite{BST09} by proving the Bandwidth Theorem (\cref{thm:bandwidth-graphs-linear}).

Our main result presents a solution to this question for hypergraphs.
To state Dirac-type results for hypergraphs, we introduce some terminology.
For $1 \leq d <k$, the \emph{minimum $d$-degree $\delta_d(G)$} of a  $k$-uniform hypergraph $G$ is the maximum $m$ such that every set of $d$ vertices is in at least $m$ edges.
The \emph{$(t-k+1)$st power of a Hamilton cycle} $C \subset G$ has a cyclical vertex ordering such that every set of $t$ cyclically consecutive vertices forms a clique $K_t^{(k)}$.
Given this, the \emph{threshold} $\delta_d^{\ham}(k,t)$ is the infimum $\delta \in [0,1]$ such that for every $\eps > 0$, there is $n_0$ such that every $k$-graph $G$ on $n \geq n_0$ vertices with $\delta_d(G)\geq (\delta + \eps ) \binom{n-d}{k-d}$ contains the $(t-k+1)$st power of a Hamilton cycle.
For instance, Dirac's theorem implies that $\delta_1(2, 2) = 1/2$ and the above-mentioned result of Komlós, Sárközy and Szemerédi implies that $\delta_1(2,t) = 1-1/t$.
In general, we can think of $\delta_d^{\ham}(k,t)$ as the analogue of $\tau(K_t^{(k)})$ for Hamilton cycles.
As with the latter, little is known about its value except for a few cases (surveyed in \cref{sec:background+applications}).

In the spirit of the Erdős--Stone problem, our goal is to state a hypergraph bandwidth theorem in terms of $\delta_d^{\ham}(k,t)$.
However, in addition to the existence of Hamilton cycles, one also needs a certain consistency guarantee of their location.
In \cref{sec:frameworks}, we define a threshold $\delta_d^{\hf}(k,t)$ that captures this condition.
Moreover, we shall see that $\delta_d^{\ham}(k,t) = \delta_d^{\hf}(k,t)$ in all cases where $\delta_d^{\ham}(k,t)$ is known (\cref{thm:thresholds}).
Given this, we can formulate a simplified version of our main result.
Unlike the original \emph{Bandwidth Theorem} (\cref{thm:bandwidth-graphs-linear}), our statement is formulated in terms of blow-ups to avoid the `bandwidth' parameter, which is not defined for hypergraphs (see the discussion after \cref{thm:bandwidth-graphs-linear}).

\begin{theorem}[Hypergraph Bandwidth Theorem]\label{thm:framework-bandwidth-simple}
	For every $1 \leq d < k \leq t$ and $\eps >0$, there exist $c$ and~$n_0$ with the following properties.
	Let $G$ be a $k$-uniform hypergraph on $n\geq n_0$ vertices with $$\delta_d(G) \geq \big(\delta_d^{\hf}(k,t) + \eps \big) \tbinom{n-k}{k-d}.$$
	Let $H$ be an $n$-vertex blow-up of the $(t-k+1)$st power of a $k$-uniform cycle with clusters of (not necessarily uniform) size at most $(\log \log n)^c$.
	Then $H \subset G$.
\end{theorem}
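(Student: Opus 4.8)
The plan is to derive \cref{thm:framework-bandwidth-simple} from a purely structural embedding theorem and to feed that theorem the structure handed over by the degree hypothesis.

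\textbf{Step 1 (degrees $\Rightarrow$ framework).} By the way the threshold $\delta_d^{\hf}(k,t)$ is set up — as the infimum of those $\delta$ for which a minimum $d$-degree of $(\delta+\eps)\binom{n-k}{k-d}$ robustly produces the space, connectivity and aperiodicity data known to obstruct spanning cycle-like structures — the hypothesis $\delta_d(G)\ge(\delta_d^{\hf}(k,t)+\eps)\binom{n-k}{k-d}$ immediately equips $G$, for $n$ large, with a \emph{Hamilton framework}: a bounded collection of dense pieces (induced subhypergraphs retaining positive internal minimum $d$-degree) together with connectivity, space and aperiodicity certificates relating them, each piece carrying a flexible reservoir, and the whole package stable under deleting any small proportion of vertices. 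Extracting this is a matter of unpacking the definition through a standard supersaturation/compactness step; essentially all the content of the theorem lies in the next step.

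\textbf{Step 2 (framework $\Rightarrow$ blow-up embeds).} The key observation is that $H$ behaves like a bounded-bandwidth hypergraph: listing the vertices of $H$ cluster by cluster along the cycle, every edge spans at most $t\cdot(\log\log n)^c$ consecutive positions, so both the bandwidth and the maximum degree of $H$ are $\poly(\log\log n)$. I would cut the cyclic order of $H$ into $O(1)$ arcs, one per dense piece, the length of each arc matched to the size of its piece, and embed first the $O(1)$ clusters straddling each of the $O(1)$ ``seams'' between consecutive arcs, routed through the connectivity certificate of the framework. Inside a fixed piece the remaining task reduces to embedding a power of a path with short clusters into a dense $k$-graph, which I would do greedily and without regularity, maintaining the invariant that the common ``continuation'' set of any window of $t-1$ consecutive already-embedded clusters has size linear in that of the piece, and finding the next cluster at each step inside that set via hypergraph supersaturation (a $k$-graph on $M$ vertices with positive density contains a complete $k$-partite subhypergraph with all parts of polylogarithmic size in $M$). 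Since there is no absorbing structure, one steers the process so that the vertices left unused in each piece are exactly its reservoir — this is what the reservoir was set aside for — and then closes up the power of the cycle through precisely those vertices.

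\textbf{The main obstacle} is to make this greedy embedding both \emph{spanning} and \emph{globally consistent}. Global consistency — closing into a single cycle with the clusters of $H$ in the prescribed cyclic order and with the arithmetic of the cluster boundaries matching the seams — is exactly where the aperiodicity certificate is used: it lets the seams slide and short adjustment segments be inserted so that any residue obstruction is neutralised, just as divisibility barriers are in Dirac-type theorems. Being spanning is the quantitatively delicate point: the arc lengths, the seam neighbourhoods and the reservoir sizes must be balanced so that the counts total $n$ across all pieces at once, while every intermediate continuation set stays large enough to host the next cluster. Propagating these inequalities through the embedding of all $n$ vertices — and in particular verifying that a cluster budget of $(\log\log n)^c$ survives the loss compounded over the process, which is the price of not using the Regularity Lemma — is the technical heart of the argument. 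The case of general $t$ is handled by working throughout with windows of $t$ consecutive clusters, and non-uniform cluster sizes only shift the arc boundaries by $O((\log\log n)^c)$, a rounding error the reservoirs absorb.
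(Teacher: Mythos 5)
Your proposal does not follow the paper's route and, as written, has two genuine gaps.

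\emph{What the paper actually does for this statement.} The proof of \cref{thm:framework-bandwidth-simple} in the paper is a short reduction: by the Inheritance Lemma (\cref{lem:inheritance-minimum-degree}) the hypothesis $\delta_d(G)\ge(\delta+\eps)\binom{n-d}{k-d}$ implies that the property $s$-graph $\PG{G}{\DegF{d}{\delta+\eps/2}}{s}$ has huge minimum degree, i.e.\ $K_t(G)$ $s$-robustly satisfies the family $\P$ of $s$-vertex $t$-graphs $K_t(R)$ with $R\in\DegF{d}{\delta+\eps/2}$; by \cref{def:ham-fw-threshold} that $\P$ admits a Hamilton framework; and then the General Hypergraph Bandwidth Theorem (\cref{thm:framework-bandwidth}) is invoked. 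All the real work lives in that general theorem, which is what you are implicitly trying to resketch.

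\emph{Gap 1: misreading of the threshold.} In \cref{def:hamilton-framework,def:ham-fw-threshold}, a Hamilton framework is a choice of a tight component $F(G')$ inside each member $G'$ of a \emph{family of $s$-vertex $t$-graphs}, with connectivity, space, aperiodicity, and a consistency condition between members that differ in one vertex. It is not a macroscopic decomposition of the host graph. The object you describe in Step 1 — ``a bounded collection of dense pieces, each carrying a flexible reservoir, with connectivity/space/aperiodicity certificates relating them'' — resembles the paper's \emph{blow-up cover} (\rf{pro:blow-up-cover}), not its Hamilton framework. But the blow-up cover has $\ell\approx n/\poly(\log n)$ pieces of polylogarithmic size (see the estimate $\ell\le 2n/(s_1 m_1)$ in the proof of \cref{pro:blow-up-cover}), not $O(1)$ pieces of linear size, and producing it requires the robustness hypothesis, the Booster Lemma, and two nested applications of Erd\H{o}s' theorem on polylogarithmic blow-ups. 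That structure does not fall ``immediately'' out of the definition of $\delta_d^{\hf}$.

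\emph{Gap 2: the greedy continuation set collapses.} Your Step 2 proposes to embed an arc of $H$ into a linear-sized dense piece one cluster at a time, ``maintaining the invariant that the common continuation set of any window of $t-1$ consecutive already-embedded clusters has size linear in that of the piece.'' No mechanism is offered for why this invariant should hold, and in general it fails: once you fix $\Theta((\log\log n)^c)$ vertices for a window in an arbitrary dense (but not quasirandom) $k$-graph, the set of vertices completing a $t$-clique with \emph{all} of them can shrink by a factor of roughly $d^{\poly(\log\log n)}$ per step, so after $\Theta(n/\poly(\log\log n))$ steps the set is empty. This is exactly the problem the Blow-up Lemma solves in the original Bandwidth Theorem, and exactly what the present paper circumvents by never running a long greedy process inside a linear-sized piece: each piece of the blow-up cover is a \emph{complete} blow-up $R(\cV)$ of a constant-order reduced graph, inside which allocation is an exact, non-greedy combinatorial problem (\cref{pro:allocation-Hamilton-cycle,pro:allocation-bandwidth-frontend}, via a Markov-chain allocation and a lattice/transferral argument). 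Without one of these devices, your invariant is an assumption, not a consequence. A further tell: a single application of Erd\H{o}s' theorem inside a linear-sized piece would give cluster sizes $\poly(\log n)$, not $\poly(\log\log n)$; the $\log\log$ in the statement reflects the two-level nesting in the actual architecture, which your $O(1)$-piece plan cannot reproduce.
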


In the next two sections, we give a more detailed introduction to the overall topic and our contributions.
We conclude this overview with a summary of our main outcomes:

\begin{itemize}
	\item The threshold $\delta_d^{\hf}(k,t)$ is conceptually simpler than $\delta_d^{\ham}(k,t)$ and thus easier to determine.
	      Plain powers of Hamilton cycles can be recovered by taking clusters of size one in \cref{thm:framework-bandwidth-simple}.
	      So in particular $\delta_d^{\ham}(k,t) \leq \delta_d^{\hf}(k,t)$.
	\item \cref{thm:framework-bandwidth-simple} is a consequence of a more general result (\cref{thm:framework-bandwidth}) where the requirement on the minimum degree is replaced by a more general condition based on subgraph counts.
	      This corresponds to Erdős' theorem~\cite{Erd64}, where a blow-up of $F$ is forced via the presence of many copies of $F$ instead of the Turán density~$\tau(F)$.
	\item We also prove a Hamilton-connectedness result (\cref{thm:framework-connectedness-robust}), which allows us to find Hamilton paths with some prescribed end-tuples. Together with our former work with Joos~\cite{JLS23}, this leads to counting and random-robust Dirac-type theorems.
	\item Our proofs do not rely on the Regularity Lemma, the Blow-up Lemma or the Absorption Method, which are the prevalent techniques in the field.
	      Instead, we develop a new setup (\cref{pro:blow-up-cover}) which uses `blow-up covers' to tackle these types of problems.
\end{itemize}

For a gentle introduction to the proof ideas, we also refer to our related work~\cite{LS24b}, where a similar approach is used to find blow-ups with optimal cluster sizes in the more basic setting of Hamilton cycles in graphs with large minimum degree.

\subsection*{Organisation of the paper}

The remainder of the paper is organised as follows.
In the next section, we provide more details on the history of Dirac-type problems and present outcomes derived from our work.
In \cref{sec:frameworks}, we introduce \cref{thm:framework-bandwidth}, which generalises \cref{thm:framework-bandwidth-simple} and presents the main result of this paper.
The exposition until \cref{sec:frameworks} can be regarded as an extended introduction.

The proofs of the applications can be found in \cref{sec:applications-proofs}.
In \cref{sec:directed-setup}, we reformulate our main result in the directed setting (\cref{thm:framework-bandwidth-directed}) and derive \cref{thm:framework-bandwidth-simple} using a transition between directed and undirected graphs (\cref{pro:framework-undirected-to-directed}).
The proof of \cref{thm:framework-bandwidth-directed} can be found in \cref{sec:proof-main-result} subject to results on covering with blow-ups (\cref{pro:blow-up-cover}) and allocating subgraphs (\cref{pro:allocation-Hamilton-path,pro:allocation-bandwidth-frontend}).
We continue by giving a proof of \rf{pro:blow-up-cover} in \cref{sec:blow-up-covers}.
This is followed by an interlude in \cref{sec:robust-ham-connect}, where we prove the aforementioned result on Hamilton connectedness (\cref{thm:framework-connectedness-robust}).
Returning to the main line of the argument, the allocation details are spelled out in \cref{pro:allocation-Hamilton-path,pro:allocation-bandwidth-frontend} in \cref{sec:allocation-cycle} and \cref{sec:allocation-path-blow-up}, respectively.
We finish the proof of \cref{thm:framework-bandwidth} by showing \cref{pro:framework-undirected-to-directed} in \cref{sec:boosting+orientations}.
The paper is concluded in \cref{sec:conclusion} with a discussion of open problems and related questions.

\section{Background and applications}\label{sec:background+applications}

The study of Hamilton cycles and related structures under minimum degree conditions presents a rich branch of research.
In the following, we provide a short historical overview, followed by several applications of \cref{thm:framework-bandwidth-simple}.

\subsection*{Graphs}

To put \cref{thm:framework-bandwidth-simple} into perspective, we begin in the graph setting.
The study of Hamiltonicity under degree conditions dates back to Dirac~\cite{Dir52}, who proved that a graph $G$ on $n \geq 3$ vertices with minimum degree $\delta_1(G) \geq n/2$ contains a Hamilton cycle.
Over the past decades, this was generalised by Hajnal and Szemerédi~\cite{HS70} to clique factors and by Komlós, Sárközy and Szemerédi~\cite{KSS98} to powers of Hamilton cycles, who showed, in the above notation, that $\delta_1^{\ham}(2,t) = 1-1/t$.
Proving a conjecture of Bollobás and Komlós~\cite{KS96}, this was extended by Böttcher, Schacht and Taraz~\cite{BST09} to sparse subgraphs of blow-ups of cycles:

\begin{theorem}[Bandwidth Theorem]\label{thm:bandwidth-graphs-linear}
	For every $2 \leq k \leq t$ and $\eps >0$, there exist $c$ and $n_0$ with the following properties.
	Let $G$ be a graph on $n\geq n_0$ vertices with $$\delta_1(G) \geq \big(\delta_1^{\hf}(2,t) + \eps \big) n.$$
	Let $H$ be a subgraph of an $n$-vertex blow-up of the $(t-1)$st power of a cycle with clusters of size at most $cn$ and maximum degree $\Delta(H) \leq \Delta$.
	Then $H \subset G$.
\end{theorem}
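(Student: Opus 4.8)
The plan is to run the regularity method, along the lines of Böttcher, Schacht and Taraz. Fix auxiliary constants $\eps' \ll d \ll \beta \ll \eps$ and set $c = \beta/t$. First apply Szemerédi's Regularity Lemma to $G$ to obtain an equitable $\eps'$-regular partition $V_0 \cup V_1 \cup \dots \cup V_m$ with reduced graph $R$ on $[m]$, where $ij \in E(R)$ precisely when $(V_i,V_j)$ is $\eps'$-regular of density at least $d$; a standard averaging argument transfers the degree hypothesis, giving $\delta(R) \ge (\delta_1^{\hf}(2,t) + \eps/2)\,m$.

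Next, analyse the target. As a subgraph of the blow-up of the $(t-1)$st power of a cycle, $H$ carries clusters $W_1,\dots,W_p$ in cyclic order, each of size at most $cn$, with an edge joining $W_i$ to $W_j$ only when $i,j$ lie within cyclic distance $t-1$; ordering $V(H)$ cluster by cluster then witnesses bandwidth at most $\beta n$, and $H$ is $t$-chromatic. The core combinatorial task is to produce a \emph{balanced homomorphism} $\psi$ from this cyclic cluster-graph into $R$: an assignment of the $W_i$ to vertices of $R$ that sends clusters within cyclic distance $t-1$ to distinct, pairwise-adjacent vertices of $R$, and with $\sum_{i:\psi(W_i)=\ell}|W_i|$ within $o(n)$ of $n/m$ for each $\ell$. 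This is precisely the step where $\delta(R)$ exceeding the Hamilton-framework threshold is used: it supplies the space, connectivity and aperiodicity of $R$ needed both to host the $(t-1)$st power of a Hamilton cycle of $R$ (already available from $\delta(R) > (1-1/t)m$ by the theorem of Komlós, Sárközy and Szemerédi~\cite{KSS98}) and, crucially, to wind $H$'s cluster cycle around it in a balanced fashion, accommodating the divisibility and parity constraints relating the two cycle lengths.

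With $\psi$ fixed, super-regularize and balance: move $O(\eps' n)$ vertices into $V_0$ so that every pair of reduced clusters used by $\psi$ becomes super-regular, then shift $o(n)$ vertices between clusters and reabsorb $V_0$ so that $|V_\ell| = \sum_{i:\psi(W_i)=\ell}|W_i|$ holds exactly while super-regularity persists -- feasible because $\Delta(H)\le\Delta$ keeps $H$ sparse near every cluster boundary. Finally, apply the Blow-up Lemma of Komlós, Sárközy and Szemerédi cluster by cluster to embed $H$ into $G$ consistently with $\psi$, using $\Delta(H)\le\Delta$ to keep candidate sets large and image restrictions for those $H$-clusters whose vertices are split across two consecutive reduced clusters. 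Every edge of $H$ then lies inside a super-regular pair and none is asked to live inside a single cluster, so $H\subseteq G$.

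I expect the main obstacle to be the construction of $\psi$ together with the exact size-balancing: one must reconcile three a priori unrelated partitions -- the equitable regularity partition of $G$, the cycle-of-cliques structure inside $R$, and the cyclic cluster decomposition of $H$ -- so that cluster sizes agree on the nose, every edge of $H$ stays inside a super-regular pair, and each length-$t$ window keeps spanning a clique, all while perturbing only $o(n)$ vertices. The genuinely delicate sub-case is small $t$, in particular $t = 2$, where a bipartite component of $H$ must be matched against the two colour classes along an even stretch of the backbone; this parity phenomenon is what the definition of $\delta_1^{\hf}(2,t)$ is designed to encode, even though for graphs its value is still $1-1/t$. Alternatively, one can bypass regularity altogether via the blow-up-cover machinery developed later in the paper.
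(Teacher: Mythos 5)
The paper does not prove this theorem; it records it as background, attributed to B\"ottcher, Schacht and Taraz~\cite{BST09}, and remarks that the argument is ``based on a sophisticated combination of Szemer\'edi's Regularity Lemma and the Blow-up Lemma.'' Your sketch --- a regularity partition with degree inheritance to the reduced graph $R$, a balanced allocation of $H$'s cyclic cluster structure onto the backbone $(t-1)$st power of a Hamilton cycle in $R$, super-regularisation and exact size-balancing, then the Blow-up Lemma with image restrictions, using $\Delta(H)\le\Delta$ to keep candidate sets nondegenerate --- is a faithful account of that original argument, consistent with the paper's attribution. Your remark that $\delta_1^{\hf}(2,t)=1-1/t$ for graphs is correct and is indeed what reconciles the paper's phrasing with the $(1-1/t)$ threshold of Koml\'os--S\'ark\"ozy--Szemer\'edi used for the backbone, though this equality is not proved in the present paper.

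The one genuine error is your closing claim that ``one can bypass regularity altogether via the blow-up-cover machinery developed later in the paper.'' That alternative route does not give the theorem as stated. The paper's own bandwidth result (\cref{thm:framework-bandwidth}) only accommodates blow-ups with clusters of size $\poly(\log\log n)$, whereas the present theorem allows clusters of linear size $cn$ provided $\Delta(H)\le\Delta$. The paper explicitly flags the two statements as incomparable: blow-up covers trade bounded maximum degree for tiny clusters, while BST09 trades tiny clusters for bounded maximum degree. So the blow-up-cover approach would recover only the bounded-cluster-size corner of this theorem, not the full statement with $cn$-sized clusters.
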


The conjecture of Bollobás and Komlós~\cite{KS96} and its solution were originally stated in terms of the bandwidth parameter, hence the name of the theorem.
It is not hard to see that \cref{thm:bandwidth-graphs-linear} implies this formulation \cite[Lemma 10.1]{LS23}.
(The basic reason is that a graph $H$ of chromatic number $\chi(H) \leq t$ and bandwidth at most $b$ can be greedily embedded into the $(t-1)$st power of a path with clusters of size at most $tb$.)

\cref{thm:framework-bandwidth-simple,thm:bandwidth-graphs-linear} coincide for blow-ups of bounded cluster size.
However, in general they are not quite comparable.
While our result comes with a more restrictive cluster size, the maximum degree is not bounded.
This allows embeddings of graphs with $n \polylog \log n$ edges, which was not possible before.

The proof of \cref{thm:bandwidth-graphs-linear} is based on a sophisticated combination of Szemerédi's Regularity Lemma~\cite{Sze76} and the Blow-up Lemma~\cite{KSS97}.
These techniques extend beyond the setting of minimum degree conditions, and with some additional ideas one can generalise \cref{thm:bandwidth-graphs-linear} to many other host graph families that have been studied in terms of Hamiltonicity~\cite{LS23}.

\subsection*{Hypergraphs}

{A \emph{$k$-uniform cycle} $C \subset G$ comes with a cyclical ordering of its vertices, such that the edges of $C$ are formed by the sets of $k$ consecutive vertices.}
(In the literature, these structures are also known as \emph{tight cycles}.)
The study of Dirac-type theorems for Hamilton cycles in hypergraphs was initiated by Katona and Kierstead~\cite{KK99} and further popularised in the seminal work of Rödl, Ruciński and Szemerédi~\cite{RRS08a}.
Over the past two decades, significant efforts have been directed towards extending their work.
It is generally believed that the following constructions due to Han and Zhao~\cite{HZ16} capture the extremal behaviour of the problem.

\begin{construction}\label{const:tight-general}
	For $1 \leq d \leq k-1$, let $\ell=k-d$.
	Choose $0 \leq j \leq k$  such that $(j-1)/k < \lceil \ell/2 \rceil /(\ell+1) < (j+1)/k$.
	Let $H$ be a $k$-uniform hypergraph on $n$ vertices and a subset of vertices  $X$ with $|X|= \lceil \ell/2 \rceil n /(\ell+1)$, such that $H$ contains precisely the edges $S$ for which $|S \cap X| \neq j$.
\end{construction}

For convenience, we abbreviate $\delta_d^{\ham}(k)=\delta_d^{\ham}(k,k)$.
It is not hard to see that the hypergraphs of \cref{const:tight-general} do not contain Hamilton cycles, and thus give lower bounds for the threshold $\delta_d^{\ham}(k)$~\cite{HZ16}.
Interestingly, the lower bounds obtained from these configurations are uniform in terms of $\ell = k-d$.
For instance, we have $\delta_d^{\ham}(k) \geq 1/2$, $5/9$, $5/8$ and $408/625$ whenever $k-d=1$, $2$, $3$ and $4$, respectively.
This behaviour is quite different for related structures, such as perfect matchings~\cite{Zha16}.
The central conjecture in the area is that the minimum degree conditions derived from these configurations are optimal.

\begin{conjecture}\label{con:Han-Zhao-are-best-possible}
	The minimum $d$-degree threshold for $k$-uniform Hamilton cycles coincides with the lower bounds given by Construction~\ref{const:tight-general}.
\end{conjecture}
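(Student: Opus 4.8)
This is the central open conjecture of the area, so I can only outline a plan of attack within the present framework rather than a complete argument. By \cref{thm:framework-bandwidth-simple}, taking clusters of size one, we already have $\delta_d^{\ham}(k) \le \delta_d^{\hf}(k)$, while \cref{const:tight-general} supplies the matching lower bound for $\delta_d^{\ham}(k)$; write $\eta_d(k)$ for the value coming from that construction. Thus it suffices to prove the single inequality $\delta_d^{\hf}(k) \le \eta_d(k)$. The plan is to unfold the definition of $\delta_d^{\hf}$ from \cref{sec:frameworks}: a $k$-graph that robustly contains a Hamilton cycle must robustly satisfy a \emph{space} (divisibility/fractional-tiling) condition, a \emph{connectivity} condition, and an \emph{aperiodicity} condition, and $\delta_d^{\hf}(k)$ is governed by the largest density at which one of these three can fail. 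So the task splits into bounding three separate robust thresholds and checking that their maximum is exactly $\eta_d(k)$ --- the point, as recorded in \cref{thm:thresholds}, being that in every case where $\delta_d^{\ham}(k)$ is currently known the two thresholds already coincide, so this reduction loses nothing.

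First I would treat the \emph{space} threshold. Above it every admissible $k$-graph should carry a robust fractional tight-Hamilton-cycle structure; by linear-programming duality this collapses to a finite extremal problem, and I would exhibit an explicit dual weighting showing its optimum is attained by the configurations of \cref{const:tight-general} with $\ell = k-d$ even (the ``balanced-intersection'' obstructions). Second, the \emph{connectivity} threshold: above it the relevant tight connectivity of the auxiliary structure of $G$ must be bounded below, and the $\ell$ odd members of \cref{const:tight-general} are precisely the configurations that sever this connectivity. Here I would run a stability dichotomy --- either $G$ is $\eps$-close to a configuration of \cref{const:tight-general} (handled directly, using that such $G$ has slightly smaller $d$-degree than assumed), or it is $\eps$-far, in which case a short expansion argument yields the required connectivity. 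Third, the \emph{aperiodicity} threshold: I expect it to be dominated by the first two, proved by the standard short argument that a robust period forces an iterated blow-up structure incompatible with the degree condition, exactly as in the graph case and the known small-uniformity cases.

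The hard part --- and the reason the conjecture is open --- is the second step for general pairs $(k,d)$: pinning the connectivity threshold requires a complete description of the near-extremal $k$-graphs and a proof that no obstruction other than those of \cref{const:tight-general} survives, which is precisely the missing ingredient. Even the first step is not automatic: checking that the space linear program is optimised exactly at the value $\eta_d(k)$, rather than at some larger density, is itself a nontrivial Tur\'an-type computation. A realistic intermediate goal would therefore be to carry out this plan for the next few values $k-d \in \{2,3,4\}$, where the target thresholds $5/9$, $5/8$ and $408/625$ are already known to be tight from below, or to prove the conjecture conditionally on a clean extremal-stability hypothesis for tight Hamilton cycles.
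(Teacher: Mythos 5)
This statement is a \emph{conjecture}, and the paper does not prove it: it is introduced precisely as the central open problem of the area, with only the cases $k-d \in \{1,2,3\}$ confirmed (\cref{thm:hamilton-cycle-k-d-small}, recast in the framework language as \cref{thm:thresholds}). You correctly recognise this, so there is no complete proof to compare against; what you have written is an outline, and the question is whether the outline is faithful to the paper's machinery.

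Your reduction is essentially the paper's \cref{con:thresholds} combined with the Han--Zhao lower bound: if $\delta_d^{\hf}(k) \le \eta_d(k)$, then since $\eta_d(k) \le \delta_d^{\ham}(k) \le \delta_d^{\hf}(k)$ all three are equal. That much is sound. But your decomposition of $\delta_d^{\hf}(k)$ into three separate thresholds (space, connectivity, aperiodicity) omits the fourth ingredient of a Hamilton framework, the \emph{consistency} condition \ref{itm:hf-intersecting} of \cref{def:hamilton-framework}. The paper stresses (in a footnote, with an explicit counterexample in their earlier work) that connectivity, space and aperiodicity alone are \emph{not} enough even robustly; consistency is what rules out host graphs where the ``good'' subgraph $F(G)$ jumps discontinuously between nearby $G$'s. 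Any serious plan to bound $\delta_d^{\hf}(k)$ must therefore establish consistency as a fourth step, and in the unknown cases $k-d \ge 4$ there is no reason to expect it to come for free. Second, your assignment of the Han--Zhao configurations to obstruction types by parity of $\ell$ (even $\ell$ for space, odd $\ell$ for connectivity) is not something the paper asserts, and I don't believe it is correct as stated: the constructions of \cref{const:tight-general} are designed around intersection-size parity constraints with a fixed set $X$, and the paper's own treatment of the known cases (via natural vicinities, \cref{sec:natural-frameworks}) does not sort them by parity in this way. That speculation would need justification or should be dropped.

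Finally, note that the paper's route to the known cases does not split into three independent extremal computations; it instead builds a single \emph{natural framework} from densest components of link graphs (\cref{sec:natural-frameworks}), and then verifies all four Hamilton-framework conditions simultaneously for that specific choice, using structural lemmas (\cref{lem:connectivity,lem:space,prop:arc,prop:switcher}) inherited from earlier work. Your plan of directly bounding three separate robust thresholds is a different and arguably cleaner decomposition, but it is not what the paper does, and the paper's own conjectured route forward is \cref{con:natural-framework}: that natural frameworks always suffice. Aligning your plan with that formulation --- proving $\delta_d^{\natfw}(k) \le \eta_d(k)$ rather than trying to decouple the three properties --- would match the paper's actual programme more closely.
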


The conjecture was resolved for \emph{codegrees} (when $d = k-1$) by Rödl, Ruciński, and
Szemerédi~\cite{RRS08a}, by introducing the \emph{Absorption
	Method} in its modern form. Since then, the focus has shifted
to degree types below $k-1$. After advances for nearly spanning cycles by Cooley and Mycroft~\cite{CM17},
it was shown by Reiher, Rödl, Ruciński, Schacht and
Szemerédi~\cite{RRR19} that $\delta_1^{\ham}(3)=5/9$, which resolves the case
of $d=k-2$ when $k=3$. Subsequently, this was generalised to
$k=4$~\cite{PRR+20} and finally, Polcyn, Reiher, R\"odl, and
Schülke~\cite{PRRS21} and, independently, Lang and
Sanhueza-Matamala~\cite{LS22} established $\delta_{k-2}^{\ham}(k)=5/9$ for all
$k \geq 3$.
Recently, Lang, Schacht and Volec~\cite{LSV24} proved that $\delta_{k-3}^{\ham}(k)=5/8$ for all $k \geq 4$.
For later reference, we summarise these efforts as follows:

\begin{theorem}\label{thm:hamilton-cycle-k-d-small}
	We have $	\delta_{k-1}^{\ham}(k) = 1/2$,  $\delta_{k-2}^{\ham}(k) = 5/9$ and $\delta_{k-3}^{\ham}(k) = 5/8.$
\end{theorem}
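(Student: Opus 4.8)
The plan is to assemble the three claimed threshold values from the literature cited in the paragraph immediately preceding the statement, verifying in each case that the known result matches the lower bound coming from \cref{const:tight-general}. First I would treat the lower bounds uniformly: for each of $d = k-1$, $d = k-2$, $d = k-3$ one checks that the hypergraph produced by \cref{const:tight-general} (with $\ell = k-d \in \{1,2,3\}$ and the corresponding admissible $j$) has minimum $d$-degree asymptotically equal to $\tfrac12$, $\tfrac59$, $\tfrac58$ of $\binom{n-d}{k-d}$ respectively, while containing no Hamilton cycle — a short counting argument using the fact that a Hamilton cycle would have to pass through $X$ with a forbidden intersection pattern along some window of $k$ consecutive vertices. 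This shows $\delta_d^{\ham}(k) \geq 1/2,\,5/9,\,5/8$ in the three cases. Since these lower bounds are already recorded in the text as consequences of Han–Zhao's construction, this step is essentially a citation plus a sanity check.

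For the matching upper bounds, I would invoke the corresponding breakthroughs directly. The codegree case $\delta_{k-1}^{\ham}(k) = 1/2$ is the theorem of Rödl, Ruciński and Szemerédi~\cite{RRS08a}. The case $d = k-2$, namely $\delta_{k-2}^{\ham}(k) = 5/9$, is established by Polcyn, Reiher, Rödl and Schülke~\cite{PRRS21} and independently by Lang and Sanhueza-Matamala~\cite{LS22}, building on the $k=3$ result of Reiher, Rödl, Ruciński, Schacht and Szemerédi~\cite{RRR19} and the $k=4$ result of~\cite{PRR+20}. The case $d = k-3$, namely $\delta_{k-3}^{\ham}(k) = 5/8$, is the recent theorem of Lang, Schacht and Volec~\cite{LSV24}. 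Combining each upper bound with the matching lower bound from \cref{const:tight-general} yields the three equalities, and the statement follows.

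Strictly speaking there is essentially no new mathematical content here: the proposition is a summary of prior work, packaged for later reference in the applications section. The only place where any genuine (if routine) work is needed is confirming that the extremal constructions give exactly the stated numerical values rather than merely lower bounds of the right shape; this is the degree computation for \cref{const:tight-general} with the three relevant choices of $j$, and it is the part I would write out in full. The rest is bookkeeping of references.

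I expect the main (minor) obstacle to be purely expository: making sure the normalisation of the minimum $d$-degree — here stated against $\binom{n-d}{k-d}$, though the theorem statement \cref{thm:framework-bandwidth-simple} writes $\binom{n-k}{k-d}$ — is handled consistently, since the difference is absorbed into the error term $\eps$ but should be acknowledged so that the quoted results from~\cite{RRS08a,PRRS21,LS22,LSV24} transfer cleanly. No deep argument is required beyond citing those papers and \cref{const:tight-general}.
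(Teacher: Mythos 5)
Your proposal is correct and matches the paper's treatment: the theorem is introduced with ``for later reference, we summarise these efforts as follows'', so the paper presents it purely as a survey statement citing exactly the works you list for the upper bounds, with the lower bounds coming from \cref{const:tight-general} as already recorded in the preceding discussion. It may be worth knowing that the paper also re-derives the upper bounds self-containedly later, via \cref{lem:natural-framework-threshold-1,lem:natural-framework-threshold-2,lem:natural-framework-threshold-3} together with \cref{cor:natural-frameworks} and the bandwidth theorem, but at the point where \cref{thm:hamilton-cycle-k-d-small} is stated the citation-based reading you give is the intended one.
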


For general $k$ and $d$, the following cruder bounds are known~\cite{HZ16, LS22}.

\begin{theorem}\label{thm:hamilton-cycle-k-d-large}
	For $1\leq d < k$, we have
	$$1 - \sqrt{2/(k-d)\pi} \leq \delta_{d}^{\ham}(k) \leq 1 - 1/2(k-d).$$
\end{theorem}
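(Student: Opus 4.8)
The two inequalities are independent, so I treat them separately. I sketch the lower bound in full (it is a short computation with \cref{const:tight-general}), and for the upper bound, due to~\cite{LS22}, I only indicate how it fits into the present framework.

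\textbf{Lower bound.} Fix $1\le d<k$, put $\ell=k-d$, and let $j$, $X$ and $H$ be as in \cref{const:tight-general}; thus $|X|=\lceil\ell/2\rceil\,n/(\ell+1)$ and $H$ consists of all $k$-sets $S$ with $|S\cap X|\ne j$. The defining inequalities for $j$ together with $\tfrac1k\le\lceil\ell/2\rceil/(\ell+1)\le\tfrac12$ force $1\le j\le k-1$. First, $H$ contains no tight Hamilton cycle: if $C\subseteq H$ were one, with cyclic order $v_1,\dots,v_n$ and edges $e_i=\{v_i,\dots,v_{i+k-1}\}$ (indices mod $n$), then $e_{i+1}$ is obtained from $e_i$ by deleting $v_i$ and adding $v_{i+k}$, so the numbers $|e_i\cap X|$ form a closed walk on $\{0,\dots,k\}$ with steps in $\{-1,0,1\}$ that avoids $j$; since $1\le j\le k-1$, this walk stays inside $\{0,\dots,j-1\}$ or inside $\{j+1,\dots,k\}$. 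As each vertex lies in exactly $k$ of the edges $e_i$, we have $k|X|=\sum_i|e_i\cap X|$, which is $\le(j-1)n$ in the first case and $\ge(j+1)n$ in the second; either outcome contradicts $(j-1)/k<|X|/n<(j+1)/k$. Hence $\delta_d(H)$ is a lower bound for $\delta_d^{\ham}(k)$, and it remains to estimate it.

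\textbf{Degree estimate.} For a $d$-set $T$ with $a:=|T\cap X|$, a set $T\cup R$ with $|R|=\ell$ and $R\cap T=\varnothing$ is \emph{not} an edge of $H$ exactly when $|R\cap X|=j-a$, so the number of edges of $H$ through $T$ equals $\binom{n-d}{\ell}-\binom{|X|-a}{j-a}\binom{n-|X|-d+a}{\ell-j+a}$. As $n\to\infty$ with $|X|/n\to x:=\lceil\ell/2\rceil/(\ell+1)$, the subtracted term divided by $\binom{n-d}{\ell}$ tends to the binomial point probability $p_a:=\binom{\ell}{j-a}x^{j-a}(1-x)^{\ell-j+a}$ (a hypergeometric-to-binomial limit), whence $\delta_d(H)/\binom{n-d}{\ell}\to 1-\max_{0\le a\le d}p_a$. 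Since $\max_a p_a$ is at most the modal probability of $\mathrm{Bin}(\ell,x)$, it suffices to bound that by $\sqrt{2/(\pi\ell)}$. For $\ell$ odd one has $x=\tfrac12$, and the mode probability equals $\binom{\ell}{(\ell-1)/2}2^{-\ell}=\tfrac{\ell}{\ell+1}\binom{\ell-1}{(\ell-1)/2}2^{-(\ell-1)}\le\tfrac{\ell}{\ell+1}\sqrt{2/(\pi(\ell-1))}\le\sqrt{2/(\pi\ell)}$ for $\ell\ge3$ (the last step reducing to $\ell^2-\ell-1\ge0$), while trivially $\tfrac12\le\sqrt{2/\pi}$ when $\ell=1$; for $\ell$ even one has $(\ell+1)x=\ell/2$ and $x(1-x)=\tfrac14\bigl(1-(\ell+1)^{-2}\bigr)$, so the mode probability $\binom{\ell}{\ell/2}\bigl(x(1-x)\bigr)^{\ell/2}$ is at most $\binom{\ell}{\ell/2}2^{-\ell}\le\sqrt{2/(\pi\ell)}$; both cases use the standard bound $\binom{2m}{m}\le 4^m/\sqrt{\pi m}$. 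Therefore $\delta_d(H)\ge\bigl(1-\sqrt{2/(\pi\ell)}-o(1)\bigr)\binom{n-d}{\ell}$, which with the absence of a tight Hamilton cycle gives $\delta_d^{\ham}(k)\ge 1-\sqrt{2/((k-d)\pi)}$.

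\textbf{Upper bound.} The inequality $\delta_d^{\ham}(k)\le 1-\tfrac1{2(k-d)}$ is proved in~\cite{LS22} by a Dirac-type argument, and I do not reprove it here. In the language of the present paper it amounts to $\delta_d^{\hf}(k,k)\le 1-\tfrac1{2(k-d)}$, which via \cref{thm:framework-bandwidth-simple} (with clusters of size one) would simultaneously reprove it and upgrade it to blow-ups; concretely, one has to verify that every $n$-vertex $k$-graph $G$ with $\delta_d(G)\ge\bigl(1-\tfrac1{2(k-d)}+\eps\bigr)\binom{n-d}{k-d}$ robustly satisfies the space, connectivity and aperiodicity conditions underlying $\delta_d^{\hf}$ — the constant $\tfrac1{2(k-d)}$ being the density demanded by the space condition (a near-spanning system of vertex-disjoint tight paths), with connectivity and aperiodicity comparatively soft. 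I expect this conversion of a bare minimum $d$-degree hypothesis into the robust-framework hypotheses, rather than anything in the lower-bound side, to be the only genuine difficulty in the statement: the lower bound is a bounded-complexity Stirling estimate once \cref{const:tight-general} is in hand.
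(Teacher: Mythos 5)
The paper does not give a proof of this theorem at all: it is stated as a known result and attributed directly to Han--Zhao~\cite{HZ16} (lower bound, via \cref{const:tight-general}) and Lang--Sanhueza-Matamala~\cite{LS22} (upper bound). So there is no internal proof in the paper for your argument to be compared against.

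That said, your reconstruction of the lower bound is correct and self-contained. The two ingredients — that the graph of \cref{const:tight-general} has no tight Hamilton cycle because the sequence $|e_i\cap X|$ is a $\{-1,0,1\}$-step closed walk forced to avoid $j$, and the double-count $k|X|=\sum_i|e_i\cap X|$ — are exactly the [HZ16] argument. Your degree estimate is also sound: the fraction of non-edges through a fixed $d$-set with $a$ vertices in $X$ converges to the binomial point probability $\binom{\ell}{j-a}x^{j-a}(1-x)^{\ell-j+a}$, the modal probability of $\mathrm{Bin}(\ell,x)$ dominates it, and the case split on the parity of $\ell$ together with $\binom{2m}{m}\le 4^m/\sqrt{\pi m}$ delivers the bound $\sqrt{2/(\pi\ell)}$. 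One small remark: your inequality $\ell^2-\ell-1\ge 0$ already holds for $\ell\ge 2$, so the odd case splits more naturally into $\ell=1$ versus $\ell\ge 3$ only because those are the odd values; the estimate is not tight at the threshold. For the upper bound you defer to~\cite{LS22}, which is the same stance the paper takes, so there is nothing to fault there; your speculative remarks about deriving it through $\delta_d^{\hf}(k)$ and \cref{thm:framework-bandwidth-simple} describe what the paper actually does for the blow-up version \cref{thm:hamilton-map-k-d-large}, but play no role in the proof of \cref{thm:hamilton-cycle-k-d-large} itself. Net: your write-up is more explicit than the paper, which simply cites the sources, and the lower-bound computation you supply is a faithful and correct rendering of the [HZ16] argument.
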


Powers of Hamilton cycles for hypergraphs were first studied by Bedenknecht and Reiher~\cite{BR20} for $3$-uniform graphs.
More recently, Pavez-Signé, Sanhueza-Matamala and Stein~\cite{PSS23} generalised their work to higher uniformities.
Thus far, research has focused on the case of {codegrees}.
For $k \leq t$, set $f_k(t) = 1 - 1/\left(\tbinom{t-1}{k-1}+\tbinom{t-2}{k-2}\right)$.

\begin{theorem}[{\cite{BR20,PSS23}}]\label{thm:power-hamilton-cycle-codegree}
	We have $\delta_{k-1}^{\ham}(k,t) \leq f_k(t)$ for every $2 \leq k \leq t$.
\end{theorem}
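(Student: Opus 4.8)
The plan is to obtain the bound from the Hypergraph Bandwidth Theorem. Since a plain $(t-k+1)$st power of a tight cycle is exactly the blow-up of that cycle with all clusters of size~$1$, and since $\delta_{k-1}^{\ham}(k,t)\le\delta_{k-1}^{\hf}(k,t)$ by the discussion in the introduction, \cref{thm:framework-bandwidth-simple} reduces the problem to bounding the framework threshold, i.e.\ to showing
\[
	\delta_{k-1}^{\hf}(k,t)\ \le\ f_k(t)\ =\ 1-\frac{1}{\binom{t-1}{k-1}+\binom{t-2}{k-2}}.
\]
As a consistency check this specialises correctly: for $t=k$ it reads $\delta_{k-1}^{\hf}(k,k)\le 1/2$, matching the codegree threshold of Rödl, Ruciński and Szemerédi and \cref{thm:hamilton-cycle-k-d-small}; for $k=2$ it reads $\delta_1^{\hf}(2,t)\le 1-1/t$, the Komlós--Sárközy--Szemerédi bound.

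To prove the displayed inequality one verifies, for every $k$-graph $G$ on $n$ vertices with $\delta_{k-1}(G)\ge(f_k(t)+\eps)n$, the three robust conditions --- space, connectivity, aperiodicity --- that define $\delta_{k-1}^{\hf}(k,t)$ in \cref{sec:frameworks}, relative to the $(t-k+1)$st power of a tight cycle. Connectivity and aperiodicity are soft and already follow from a codegree of order~$\eps n$. For connectivity: any two ordered $(t-1)$-tuples that span cliques can be joined by a short $(t-k+1)$st power of a tight path, built one vertex at a time (say, growing out from one end and into the other), where the next vertex is chosen outside the union of the $\binom{t-1}{k-1}$ relevant non-neighbourhoods; this union has size at most $\binom{t-1}{k-1}(1-f_k(t)-\eps)n=\frac{t-1}{t+k-2}\,n-\binom{t-1}{k-1}\eps n$, which leaves linear room even after forbidding any fixed $o(n)$-set, so the construction never gets stuck over $O(1)$ steps, and running it inside a random reservoir makes the relevant connection complex robustly connected. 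Aperiodicity then follows from the same freedom (one can change the length of a connecting path by any fixed constant) via a routine divisibility argument.

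The heart of the matter is the space condition, which is exactly what forces the value $f_k(t)$ and what a naive greedy argument fails to deliver: the one-vertex-at-a-time extension of the previous paragraph gets stuck once a $\frac{t-1}{t+k-2}$-fraction of the vertices have been used, far short of spanning. To reach the sharp value one instead has to produce a robust \emph{fractional $t$-path-cover structure} --- essentially a near-perfect fractional tiling of $V(G)$ by copies of the relevant power of a tight path together with compatible linking data --- that is stable under deletion of an $o(n)$-set, so that it survives inside the reservoir; obtaining it reduces, through a min-degree-forces-large-fractional-matching argument, to a finite extremal/linear-programming computation whose optimum is precisely $1-1/(\binom{t-1}{k-1}+\binom{t-2}{k-2})$. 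Here the extra summand $\binom{t-2}{k-2}$ can be recognised as the number of edges through a near-end vertex of such a power of a tight path that come from the second of the two windows containing it (the first window contributing $\binom{t-1}{k-1}$ edges). Setting up this structure and proving its robustness is the main obstacle of the argument; the matching lower-bound constructions, which are not needed for the upper bound asserted here, are those of \cite{BR20,PSS23}.

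Finally, feeding $\delta_{k-1}^{\hf}(k,t)\le f_k(t)$ back into \cref{thm:framework-bandwidth-simple} with clusters of size~$1$ yields the $(t-k+1)$st power of a tight Hamilton cycle in every $G$ with $\delta_{k-1}(G)\ge(f_k(t)+\eps)(n-k+1)$ for $n$ large, which is the asserted bound $\delta_{k-1}^{\ham}(k,t)\le f_k(t)$. An equivalent self-contained route, and the one taken originally in \cite{BR20,PSS23}, is the absorption method: build an absorbing power of a tight path that can swallow any $o(n)$ leftover vertices one by one, reserve a small connecting set, then cover all but $o(n)$ vertices by a bounded collection of long powers of tight paths --- the step that again consumes the full strength of the hypothesis --- and splice everything into a single cycle before absorbing the remainder.
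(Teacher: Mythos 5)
You correctly reduce the statement to bounding $\delta_{k-1}^{\hf}(k,t)\le f_k(t)$ and hence to verifying the Hamilton-framework conditions for $K_t(G)$ (with $F(K)=K$), which is exactly how the paper obtains the stronger \cref{thm:power-hamilton-map-codegree}. But your diagnosis of which condition is hard is inverted, and this is not cosmetic: it misattributes the source of the constant $f_k(t)$. You call the space condition ``the heart of the matter'' and claim it ``forces the value $f_k(t)$'' via an extremal/LP computation over a ``fractional $t$-path-cover structure''. In fact the space condition is the \emph{soft} one: a perfect fractional matching in $K_t(G)$ is given directly by the Lo--Markström theorem (\cref{thm:lo-markstrom}) already at the strictly weaker threshold $1-1/\binom{t-1}{k-1}<f_k(t)$. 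No optimisation over a path-cover LP produces $f_k(t)$ here, because the matching problem does not need it.

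Conversely, $f_k(t)$ is exactly what the connectivity and aperiodicity verifications consume, and your greedy sketch does not supply them. Extending a $(t-1)$-clique by one fresh vertex excludes only $\binom{t-1}{k-1}(1-\delta)n$ vertices, so greedy growth already works at the weaker Lo--Markström threshold; this shows a clique can be \emph{continued}, not that any two $(t-1)$-cliques lie on a common closed tight walk in $K_t(G)$, and ``growing out from one end and into the other'' is not an argument for landing on a prescribed target. What is actually needed is the sharper step of \cref{lemma:crucialbound}: two $(t-1)$-cliques $D_1,D_2$ with $|D_1\cap D_2|=t-2$ span, by inclusion--exclusion and Pascal's identity, $2\binom{t-1}{k-1}-\binom{t-2}{k-1}=\binom{t-1}{k-1}+\binom{t-2}{k-2}$ distinct $(k-1)$-sets, and $\delta_{k-1}(G)>f_k(t)n$ is precisely what guarantees a vertex extending both simultaneously; this feeds the induction on $t$ in \cref{lem:powers-connectivity} and the explicit walk of order $-1\bmod t$ in \cref{lem:powers-odd-cycle}. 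Your reading of $\binom{t-2}{k-2}$ as a ``second-window'' edge count at a near-end path vertex is not the count that pins down the threshold. Finally, you omit the consistency condition (F4) of \cref{def:hamilton-framework}; it is immediate here since $F(K)=K$, but it must be checked.
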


We do not have a conjecture on the general behaviour of the threshold $\delta_{d}^{\ham}(k,t)$.
However, it is plausible that $\delta_{2}^{\ham}(3,4) = 3/4$ in this special case; see  \cref{sec:conclusion} for more details.

\medskip
As discussed before, bandwidth theorems are relatively well-understood in the graph setting.
For hypergraphs, however, no equivalent (or partial) results have been obtained, to the best of our knowledge.

\subsection*{Applications}

Our (simplified) main result, \cref{thm:framework-bandwidth-simple}, has a series of consequences for Dirac-type problems in hypergraphs.
We believe that vertex-spanning blow-ups of powers of cycles appear under the same (asymptotic) minimum degree conditions as powers of Hamilton cycles.
Given \cref{thm:framework-bandwidth-simple}, this can be formalised as follows.

\begin{conjecture}\label{con:thresholds}
	For all $1 \leq d < k \leq t$, we have $\delta_d^{\hf}(k,t) = \delta_d^{\ham}(k,t)$.
\end{conjecture}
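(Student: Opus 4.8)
Since \cref{con:thresholds} is stated as a conjecture, what follows is a plan of attack rather than a finished argument. One of the two inequalities is already available: by the discussion following \cref{thm:framework-bandwidth-simple}, taking all clusters of size one in that theorem recovers the plain $(t-k+1)$st power of a Hamilton cycle, so $\delta_d^{\ham}(k,t) \le \delta_d^{\hf}(k,t)$. Thus \cref{con:thresholds} is equivalent to the reverse bound $\delta_d^{\hf}(k,t) \le \delta_d^{\ham}(k,t)$: whenever $\delta_d(G)$ exceeds $\big(\delta_d^{\ham}(k,t)+\eps\big)\binom{n-d}{k-d}$, one should be able to certify not just that a power of a Hamilton cycle exists, but that the space, connectivity and aperiodicity conditions underlying $\delta_d^{\hf}$ are \emph{robustly} satisfied. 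The natural route is therefore a stability argument — essentially the mechanism behind \cref{thm:thresholds}, pushed into more cases.

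Concretely, the plan is to localise the failure of robustness. By the definition of $\delta_d^{\hf}$ given in \cref{sec:frameworks}, its value is governed by the worst configuration in which one of the three robust conditions breaks down. I would (i) prove a stability counterpart of each obstruction — space, connectivity, aperiodicity — showing that a $k$-graph $G$ with $\delta_d(G)$ close to $\delta_d^{\hf}(k,t)$ for which robustness fails must be close in edit distance to a configuration of the type in \cref{const:tight-general}, extracting a partition of $V(G)$ that imitates the special set $X$ there; (ii) argue that such a partition also blocks a $(t-k+1)$st power of a Hamilton cycle, so $\delta_d(G)$ cannot beat the lower bound of \cref{const:tight-general}; and (iii) conversely verify that just above that bound all three conditions hold robustly. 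Combining (i)--(iii) yields $\delta_d^{\hf}(k,t)\le\delta_d^{\ham}(k,t)$ in every instance where \cref{con:Han-Zhao-are-best-possible} is known; together with \cref{thm:thresholds} this is already the case for the small-codegree regimes of \cref{thm:hamilton-cycle-k-d-small} and the codegree powers of \cref{thm:power-hamilton-cycle-codegree}, and the same scheme would extend it to any further case in which the extremal configuration is pinned down.

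The main obstacle is twofold. First, \cref{con:Han-Zhao-are-best-possible} is itself open, so the full strength of \cref{con:thresholds} cannot be obtained more cheaply than determining $\delta_d^{\ham}(k,t)$ — the honest target is the \emph{conditional} statement, or individual cases such as $(k,t)=(3,4)$ where one expects $\delta_2^{\ham}(3,4)=3/4$. Second, even granting the extremal configurations, step (i) is the genuinely hard part: it amounts to excluding host graphs that do contain a power of a Hamilton cycle but in which its placement is \emph{fragile} — destroyed by deleting a few vertices or edges — since that gap between $\delta_d^{\ham}$ and $\delta_d^{\hf}$ is exactly what the conjecture claims is empty. Ruling out such fragile-but-Hamiltonian configurations near the threshold seems to require new stability theorems for tight Hamilton cycles, separately calibrated to the space, connectivity and aperiodicity parameters, which at present are understood only in the handful of cases covered by \cref{thm:hamilton-cycle-k-d-small,thm:power-hamilton-cycle-codegree}. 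A reasonable first milestone would be to settle \cref{con:thresholds} for $(k,t)=(3,4)$ by pairing a robust analogue of \cref{thm:power-hamilton-cycle-codegree} with the matching construction.
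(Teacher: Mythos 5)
This statement is posed as a conjecture in the paper, so there is no proof of record to compare against; you correctly treat it as open and present a plan rather than a proof. Your identification of the easy inequality $\delta_d^{\ham}(k,t)\le\delta_d^{\hf}(k,t)$ — singleton clusters in \cref{thm:framework-bandwidth-simple} recover plain powers of Hamilton cycles — matches the paper's own remark following that theorem, and the observation that the full conjecture is at least as hard as \cref{con:Han-Zhao-are-best-possible} is also fair.

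Two caveats on the planned hard direction. First, you frame $\delta_d^{\hf}$ in terms of the connectivity, space and aperiodicity conditions being \emph{robustly} satisfied, but that is not what \cref{def:ham-fw-threshold} asks: robustness is supplied for free by the Inheritance Lemma (\cref{lem:inheritance-minimum-degree}) whenever the hypothesis is a minimum degree condition, and $\delta_d^{\hf}$ is actually the threshold at which the family of clique-graphs admits a (non-robust) Hamilton framework in the sense of \cref{def:hamilton-framework}. Second, and more importantly, your sketch addresses (F1)--(F3) but never confronts the consistency condition (F4), which is the new ingredient that separates a Hamilton framework from the conjunction of connectivity, space, and aperiodicity; the paper's footnote to \cref{def:hamilton-framework} points out that (F4) is genuinely needed, and it is also the hardest condition to force near the threshold. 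By contrast, where the paper \emph{does} confirm instances of the conjecture (\cref{thm:thresholds}, proved in \cref{sec:applications-proofs}), it proceeds not by stability but by an explicit construction: natural $d$-vicinities built from densest tight components of link graphs (\cref{sec:natural-frameworks}), combined with structural lemmas (\cref{lem:connectivity,lem:space,prop:arc,prop:switcher}) to verify (F1)--(F3), and a separate intersection argument for (F4). A stability route along the lines you describe might also work, and would have the advantage of potentially handling new cases once the extremal configurations are pinned down, but you would need to build the stability statements around a common tight component and its behaviour under vertex deletion (to handle (F4)) rather than around the three single-graph properties in isolation.
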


We remark that one can combine a stronger version of our main result (\cref{thm:framework-connectedness-robust}) together with our work with Joos~\cite{JLS23} to show that \cref{con:thresholds} implies a conjecture of Kelly, Müyesser and Pokrovskiy~\cite[Conjecture 8.1]{KMP23}, which states that $\delta_d^{\ham}(k,t)$ is the optimal threshold in the random robust setting.
In support of \cref{con:thresholds}, we confirm all cases where $\delta_d^{\ham}(k,t)$ is known (\cref{thm:hamilton-cycle-k-d-small}).

\begin{theorem}\label{thm:thresholds}
	We have $\delta_{k-1}^{\hf}(k) = 1/2$, $\delta_{k-2}^{\hf}(k) = 5/9$ and $\delta_{k-3}^{\hf}(k) = 5/8$.
\end{theorem}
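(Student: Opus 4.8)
We first dispose of the lower bounds, which require no new ideas. The observation following \cref{thm:framework-bandwidth-simple} gives $\delta_d^{\ham}(k) \le \delta_d^{\hf}(k)$ for all $d<k$, so \cref{thm:hamilton-cycle-k-d-small} immediately yields $\delta_{k-1}^{\hf}(k) \ge 1/2$, $\delta_{k-2}^{\hf}(k) \ge 5/9$ and $\delta_{k-3}^{\hf}(k) \ge 5/8$. (One can also see this directly: the extremal hypergraphs of \cref{const:tight-general} violate the aperiodicity part of the framework by a linear margin, hence violate it robustly.) It therefore remains to establish the matching upper bounds $\delta_{k-1}^{\hf}(k) \le 1/2$, $\delta_{k-2}^{\hf}(k) \le 5/9$ and $\delta_{k-3}^{\hf}(k) \le 5/8$.

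By the definition of the threshold $\delta_d^{\hf}(k,t)$ introduced in \cref{sec:frameworks}, proving $\delta_d^{\hf}(k) \le c$ amounts to showing: for every $\eps>0$ there is $n_0$ so that every $k$-graph $G$ on $n \ge n_0$ vertices with $\delta_d(G) \ge (c+\eps)\binom{n-d}{k-d}$ robustly satisfies the three framework conditions for $t=k$ --- informally, that $G$ admits a cover of almost all of its vertices by a bounded number of tight paths (\emph{space}), that $G$ is tightly connected in the appropriate reachability sense (\emph{connectivity}), and that $G$ carries no parity/lattice obstruction preventing these pieces from being merged into a tight Hamilton cycle (\emph{aperiodicity}) --- where ``robustly'' asks that each property persists after the deletion of any $o(n)$ vertices. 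The plan is to verify these three properties in each of the regimes $d \in \{k-1,k-2,k-3\}$ by reopening the proofs of the corresponding cases of \cref{thm:hamilton-cycle-k-d-small} and extracting robust structural statements, rather than citing only their conclusion.

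For $d=k-1$ the three conditions are, in effect, the ingredients of the Absorption Method proof of Rödl, Ruciński and Szemerédi~\cite{RRS08a}: an almost-spanning tight-path-cover lemma supplies space, the connecting lemma supplies connectivity, and once the codegree exceeds $n/2$ a short transferral argument excludes a lattice obstruction and gives aperiodicity; persistence under deletion of $o(n)$ vertices is automatic since the codegree bound is inherited up to an $o(n)$ loss. For $d=k-2$ and $d=k-3$ one follows the same route, now invoking the proofs of $\delta_{k-2}^{\ham}(k)=5/9$ by Polcyn, Reiher, Rödl and Schülke~\cite{PRRS21} and Lang and Sanhueza-Matamala~\cite{LS22} (building on \cite{RRR19,PRR+20}), and of $\delta_{k-3}^{\ham}(k)=5/8$ by Lang, Schacht and Volec~\cite{LSV24}. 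These arguments run on a stability dichotomy: either $G$ already exhibits strong robust expansion and tight connectivity --- which hands over space, connectivity and aperiodicity --- or $G$ is $o(1)$-close in structure to \cref{const:tight-general}, in which case the strict surplus $\eps\binom{n-d}{k-d}$ in the minimum degree is used to rule out the lattice obstruction, so the framework again holds directly.

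The main obstacle is precisely this near-extremal regime. Quoting \cref{thm:hamilton-cycle-k-d-small} as a black box does not suffice: one needs a $k$-graph just above the threshold to satisfy the framework's exact aperiodicity condition, and to do so \emph{robustly} under vertex deletions, whereas the published stability analyses only produce a single Hamilton cycle. Harvesting robust structural conclusions from the (technically heavy, partly flag-algebra-based) arguments of~\cite{PRRS21,LS22,LSV24}, and reconciling the objects they manufacture with the paper's definitions of space, connectivity and aperiodicity, is where the real effort lies; a secondary bookkeeping point is to check that the $\eps$ surplus defeats the obstruction uniformly in $k$, using that \cref{const:tight-general} has minimum degree exactly at the threshold.
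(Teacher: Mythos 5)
Your lower-bound argument is correct and matches what the paper implicitly uses: since taking singleton clusters in \cref{thm:framework-bandwidth-simple} recovers tight Hamilton cycles, $\delta_d^{\ham}(k) \le \delta_d^{\hf}(k)$, and \cref{thm:hamilton-cycle-k-d-small} supplies the matching values.

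For the upper bounds, however, your plan diverges from the paper's and stops short of a proof. First, a definitional point: $\delta_d^{\hf}(k)$ (\cref{def:ham-fw-threshold}) is not defined in terms of ``robustly satisfying'' anything after deleting $o(n)$ vertices --- that robustness is automatic from inheritance (\cref{lem:inheritance-minimum-degree}) when $\delta_d^{\hf}$ is later fed into \cref{thm:framework-bandwidth}. What you actually need is that the \emph{family} of such $G$ (for $t=k$) admits a Hamilton framework in the sense of \cref{def:hamilton-framework}, which comprises four conditions, not three: connectivity, space, aperiodicity, \emph{and} the consistency condition (F4) relating $F(G)$ and $F(G')$ for $G,G'$ obtained by deleting a vertex from a common $(n{+}1)$-vertex host. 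Your sketch omits (F4) entirely, and it is precisely the condition that cannot be extracted from the Hamiltonicity theorems you propose to reopen, since those papers never build a canonical choice of subgraph $F(G)$ across neighbouring hosts.

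Second, and more substantively, the paper explicitly states that \cref{thm:thresholds} ``does not rely on the technical setup of our past work'' and instead gives ``new much cleaner proofs.'' Concretely, \cref{sec:natural-frameworks} introduces \emph{natural frameworks} built from natural $d$-vicinities --- for each $d$-set $S$, pick a densest tight component $C(S)$ of the link $L(S)$, and let $F(G)$ be the graph these generate. The threshold $\delta_d^{\natfw}(k)$ for this construction is then shown to equal $1/2$, $5/9$, $5/8$ in \cref{lem:natural-framework-threshold-1,lem:natural-framework-threshold-2,lem:natural-framework-threshold-3}, using lightweight structural lemmas: \cref{lem:cooley-mycroft} on components of dense $2$-graphs (for $k-d=2$), \cref{thm:5/8} and a Kruskal--Katona density argument (\cref{obs:density}) (for $k-d=3$), plus general switcher/arc/space machinery (\cref{lem:connectivity,lem:space,prop:arc,prop:switcher}). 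Since $\delta_d^{\hf}(k) \le \delta_d^{\natfw}(k)$ (\cref{cor:natural-frameworks}), this closes the upper bound. Crucially, the canonical choice of $F(G)$ via densest link components is exactly what makes (F4) tractable: two neighbouring hosts have overlapping densest components, which can be checked by a counting argument --- a step your approach has no analogue of. Your plan to re-derive robust structural conclusions from the stability/flag-algebra proofs of~\cite{PRRS21,LS22,LSV24} would indeed be heavy, and your proposal acknowledges this difficulty without resolving it, so as written there is a genuine gap. The missing idea is the natural-framework construction: it bypasses the absorption machinery entirely by extracting only the density-of-link-components information, which is where the thresholds $1/2$, $5/9$, $5/8$ actually live.
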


It is worth noting that \cref{thm:thresholds} does not rely on the technical setup of our past work~\cite{LS22}.
Instead, we integrate the structural ideas from the proofs of \cref{thm:hamilton-cycle-k-d-small} into our framework to give much cleaner proofs.
This is illustrated for the exemplary case $\delta_{1}^{\hf}(3) = 5/9$, for which we provide the entire details (see \cref{sec:vertex-deg-3-graph}).

Finally, we also extend  \cref{thm:hamilton-cycle-k-d-large,thm:power-hamilton-cycle-codegree}
to blow-ups as follows.

\begin{theorem}\label{thm:hamilton-map-k-d-large}
	For $1\leq d < k$, we have
	$$1 - \sqrt{2/(k-d)\pi} \leq \delta_{d}^{\hf}(k) \leq 1 - 1/2(k-d).$$
\end{theorem}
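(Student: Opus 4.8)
The plan is to establish the two inequalities separately and by quite different means. For the lower bound $\delta_d^{\hf}(k) \geq 1 - \sqrt{2/(k-d)\pi}$, I would simply observe that $\delta_d^{\ham}(k) \leq \delta_d^{\hf}(k)$ (recorded in the first bullet of the introduction, since plain powers of Hamilton cycles are recovered by taking clusters of size one in \cref{thm:framework-bandwidth-simple}), so the lower bound transfers verbatim from the lower bound of \cref{thm:hamilton-cycle-k-d-large}. That is, the extremal constructions (variants of \cref{const:tight-general}) which lack a $k$-uniform Hamilton cycle a fortiori lack any spanning blow-up of a power of a cycle, and one does not even need to re-examine them: the inequality $\delta_d^{\hf}(k) \ge \delta_d^{\ham}(k)$ does the work.

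For the upper bound $\delta_d^{\hf}(k) \leq 1 - 1/2(k-d)$, the strategy is to unfold the definition of $\delta_d^{\hf}(k,t)$ given in \cref{sec:frameworks} (the ``Hamilton framework'' threshold encoding the existence of Hamilton cycles \emph{together with} the consistency/location guarantee) and verify that any $k$-graph $G$ on $n$ vertices with $\delta_{k-1}$-type slack above $1 - 1/2(k-d)$ in the appropriate $d$-degree sense satisfies the framework axioms — space, connectivity, and aperiodicity — robustly. Concretely I would revisit the argument behind the upper bound in \cref{thm:hamilton-cycle-k-d-large} from \cite{HZ16, LS22}: the degree condition $\delta_d(G) \ge (1 - 1/2(k-d) + \eps)\binom{n-d}{k-d}$ is already strong enough there to force a (robustly located) tight Hamilton cycle via a short extremal argument, and the same quantitative input should yield that $G$ lies in the Hamilton framework with room to spare. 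Then one appeals to \cref{thm:framework-bandwidth-simple} (or its refinement) to upgrade from ``contains a power of a Hamilton cycle'' to ``contains every spanning blow-up of a power of a cycle with clusters of size $(\log\log n)^c$'', which is exactly the content of $\delta_d^{\hf}(k) \le 1 - 1/2(k-d)$.

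The main obstacle I anticipate is the connectivity/aperiodicity half of the framework verification: bounding $\delta_d^{\ham}$ from above only needs one Hamilton cycle somewhere, whereas $\delta_d^{\hf}$ demands a robust supply of Hamilton structures that is insensitive to deleting or rerouting bounded pieces, and one must check that the comparatively crude degree condition $1 - 1/2(k-d) + \eps$ still buys enough expansion to rule out the ``almost-disconnected'' and ``almost-periodic'' obstructions at the $d$-degree level (for $d < k-1$ the link hypergraphs are themselves $(k-d)$-uniform, so the relevant connectivity is a higher-uniform statement). I would handle this by quoting the robust-Hamiltonicity machinery — in particular \cref{thm:framework-connectedness-robust} and the allocation propositions \cref{pro:allocation-Hamilton-cycle,pro:allocation-bandwidth-frontend} — and showing that the degree surplus $\eps\binom{n-d}{k-d}$ propagates through the link structure to give the required robust expansion; the computation that $1 - 1/2(k-d)$ is the right exponent here is essentially the averaging argument already present in \cite{HZ16, LS22}, now applied inside the framework rather than directly to a Hamilton cycle, so no genuinely new estimate should be needed.
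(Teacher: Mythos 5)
Your lower bound is correct: $\delta_d^{\hf}(k) \ge \delta_d^{\ham}(k)$ (take clusters of size one in \cref{thm:framework-bandwidth-simple}) together with the lower bound of \cref{thm:hamilton-cycle-k-d-large} gives $\delta_d^{\hf}(k) \ge 1 - \sqrt{2/(k-d)\pi}$, and no new construction is needed.

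Your upper bound has the right starting point but goes off the rails at the end. Recall what $\delta_d^{\hf}(k)$ is (\cref{def:ham-fw-threshold}): it is the infimum $\delta$ such that the family of graphs with $\delta_d(G) \ge (\delta + \eps)\binom{n-d}{k-d}$ \emph{admits a Hamilton framework}. So the moment you have verified conditions \ref{itm:hf-connected}--\ref{itm:hf-intersecting} of \cref{def:hamilton-framework} under the degree assumption $\delta_d(G) \ge (1 - 1/2(k-d) + \eps)\binom{n-d}{k-d}$, you are done — the bound $\delta_d^{\hf}(k) \le 1 - 1/2(k-d)$ follows \emph{by definition}. Your final step of ``then one appeals to \cref{thm:framework-bandwidth-simple} to upgrade to spanning blow-ups, which is exactly the content of $\delta_d^{\hf}(k) \le 1 - 1/2(k-d)$'' mistakes the conclusion of \cref{thm:framework-bandwidth-simple} for the definition of $\delta_d^{\hf}$; appealing to \cref{thm:framework-bandwidth-simple} here would also be circular, since its hypothesis is already phrased in terms of $\delta_d^{\hf}$. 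For the same reason, \cref{thm:framework-connectedness-robust}, \cref{pro:allocation-Hamilton-cycle} and \cref{pro:allocation-bandwidth-frontend} are downstream tools that \emph{consume} a framework, not tools for \emph{verifying} one, so invoking them to establish ``robust expansion'' is misdirected. The intended route in the paper is via natural frameworks: by \cref{cor:natural-frameworks} one has $\delta_d^{\hf}(k) \le \delta_d^{\natfw}(k)$, so it suffices to verify the four framework axioms for the graph generated by the densest tight component in each $(k-d)$-uniform link — this is where \cref{lem:connectivity}, \cref{lem:space}, \cref{prop:arc}, \cref{prop:switcher} and the $1-1/2(k-d)$ bound of \cite{LS22} enter. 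You are right that the quantitative input is the same as in \cref{thm:hamilton-cycle-k-d-large}, but that input must be fed into the framework-verification lemmas, not into the embedding machinery.
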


\begin{theorem}\label{thm:power-hamilton-map-codegree}
	We have $\delta_{k-1}^{\hf}(k,t) \leq f_k(t)$ for every $2 \leq k \leq t$.
\end{theorem}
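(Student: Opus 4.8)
The plan is to unwind the definition of $\delta_{k-1}^{\hf}(k,t)$. Recalling from \cref{sec:frameworks} that this threshold is set up so that exceeding it forces a $k$-uniform host $G$ to robustly carry the three framework conditions for the $(t-k+1)$st power of a $k$-uniform cycle --- robust space, robust connectivity and robust aperiodicity --- it suffices to check that every $k$-graph $G$ on $n$ vertices with $\delta_{k-1}(G) \geq (f_k(t)+\eps)(n-k+1)$ has these three robust properties. The structural inputs we need are already contained in the proofs of \cref{thm:power-hamilton-cycle-codegree} due to Bedenknecht and Reiher~\cite{BR20} and to Pavez-Signé, Sanhueza-Matamala and Stein~\cite{PSS23}; as with \cref{thm:thresholds}, the work is to isolate these inputs and fit them into our framework, now in robust form. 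Robustness costs almost nothing, since a minimum codegree condition is hereditary: deleting any $W$ with $|W| \leq \eps n/2$ leaves $\delta_{k-1}(G-W) \geq (f_k(t)+\eps/2)(|V(G) \setminus W| - k+1)$, so every robust claim reduces to its plain version with $\eps$ halved, applied inside $G-W$.

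\emph{Connectivity.} The heart of this is a connecting lemma: any two consistently ordered copies of $K_{t-1}^{(k)}$ in $G$ can be joined into a $(t-k+1)$st power of a tight path of bounded length whose two end-$(t-1)$-tuples are the given cliques. This is exactly where the value $f_k(t)=1-1/\big(\binom{t-1}{k-1}+\binom{t-2}{k-2}\big)$ enters: when a tight-path power is extended across a short bridge, the bottleneck vertex must lie in the common link-neighbourhood of up to $\binom{t-1}{k-1}+\binom{t-2}{k-2}$ of the relevant $(k-1)$-sets, and $\delta_{k-1}(G) > \big(1-1/(\binom{t-1}{k-1}+\binom{t-2}{k-2})\big)n$ leaves a linear number of admissible choices at each step (compare the connecting lemma in~\cite{PSS23}). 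As there is linear slack throughout, the construction survives after deleting any $o(n)$ vertices, yielding the required robust connectivity.

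\emph{Space and aperiodicity.} For space we need the fractional structure behind a power of a cycle, namely a perfect fractional $K_t^{(k)}$-tiling of $G$ --- each vertex covered with total weight one by copies of $K_t^{(k)}$ --- and we need it robustly. The codegree hypothesis puts every vertex into $\Omega(n^{t-1})$ copies of $K_t^{(k)}$, and the linear-programming and supersaturation arguments that feed the absorbing structures of~\cite{BR20,PSS23} upgrade this to a perfect fractional tiling; robustness again follows from heredity. Aperiodicity is immediate: for $t=k$ the statement is precisely $\delta_{k-1}^{\hf}(k)=1/2$ from \cref{thm:thresholds}, while for $t>k$ one has $f_k(t) > f_k(k) = 1/2$, strictly above the unique divisibility obstruction, which is \cref{const:tight-general} with $d=k-1$; hence the pertinent transferral lattice is full, robustly so by heredity.

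\emph{Main obstacle.} The delicate part is faithfully translating the codegree bound $f_k(t)$, which~\cite{BR20,PSS23} use via the Absorption Method, into the precise space requirement of our framework: one has to convert their ``a valid absorber exists'' output into the ``$G$ robustly admits a perfect fractional $K_t^{(k)}$-tiling'' input needed here, and, similarly, to make sure the connecting lemma delivers tight-path powers with end-tuples in exactly the format and orientation conventions fixed in \cref{sec:frameworks}. Once this bookkeeping is in place, no extremal ingredient beyond~\cite{BR20,PSS23} is required.
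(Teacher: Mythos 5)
Your reading of the target definition is off. By \cref{def:ham-fw-threshold}, establishing $\delta_{k-1}^{\hf}(k,t)\leq f_k(t)$ requires showing that the family of $t$-graphs $K_t(G)$, where $\delta_{k-1}(G)\geq(f_k(t)+\eps)n$, \emph{admits a Hamilton framework} in the sense of \cref{def:hamilton-framework}. This is an unrobust statement about a family of clique-graphs, with \emph{four} conditions to check (F1)--(F4). Robustness enters only later, via the Inheritance Lemma feeding into \cref{thm:framework-bandwidth}, and is not part of what this theorem asserts; accordingly your ``robustness costs almost nothing by heredity'' layer, while true, is aimed at the wrong target. More seriously, you omit the consistency condition (F4) entirely, and this is not a cosmetic point: it is precisely the condition that distinguishes a Hamilton framework from a mere triple of necessary properties, and the paper must (and does) verify it. In the paper's proof it is verified cheaply by taking $F(K)=K$ for every $K$ in the family, so that $F(K)-x-y$ and $F(K')-x-y$ trivially share an edge when $K$ and $K'$ come from deleting different vertices of a common $(n{+}1)$-vertex graph.

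Within the three conditions you do discuss, the connectivity sketch is in the right spirit (the paper's \cref{lemma:crucialbound} is exactly the ``count the relevant $(k{-}1)$-sets and find a common extension vertex'' step you describe, fed into \cref{lem:powers-connectivity} by induction on $t$). But your space and aperiodicity paragraphs are not proofs. For space, the paper does not route through absorbers from~\cite{BR20,PSS23}: it invokes Lo--Markström's \cref{thm:lo-markstrom} directly, and note $f_k(t)\geq 1-\binom{t-1}{k-1}^{-1}$, so the hypothesis of that theorem is met. For aperiodicity, your claims that for $t=k$ the statement ``is precisely'' part of \cref{thm:thresholds}, and that for $t>k$ ``the pertinent transferral lattice is full, robustly so by heredity,'' are assertions rather than arguments; the former risks circularity unless you are careful about dependencies, and the latter does not produce the required closed walk of suitable order. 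The paper instead constructs the walk explicitly in \cref{lem:powers-odd-cycle} via repeated use of \cref{lemma:crucialbound}, producing a closed tight walk of order $-1\bmod t$, which suffices. Your closing ``Main obstacle'' paragraph essentially acknowledges these gaps without closing them, so what you have is a plan that points at the right external results, not a proof.
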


In summary, we recover all known results on Hamiltonicity and extend them to the bandwidth setting (blow-ups of cycles).

\section{A framework for Hamiltonicity}\label{sec:frameworks}

So far, the requirements on the host (hyper)graphs have been expressed in terms of minimum degree conditions.
In the graph setting, many other assumptions have been considered, ranging from degree sequences to quasirandomness.
The corresponding results are often proved with similar embedding techniques, but differ in their structural analysis.
One might therefore wonder whether there is a common structural base that ‘sits between’ all of these assumptions and (variations of) Hamiltonicity. For (ordinary) Hamilton cycles, this was done by Kühn, Osthus and Treglown~\cite{KOT2010} using the notion of \emph{robust expanders}.
Ebsen, Maesaka, Reiher, Schacht and Schülke~\cite{EMR+20} raised the question of generalising the concept of robust expanders to handle powers of cycles and other cyclical structures in the graph setting.
We solved this problem by introducing the notion of Hamilton frameworks and proved a generalisation of the Bandwidth Theorem~\cite{LS23}.
In the following, we present a hypergraph version of these results.
For an alternative approach in the graph setting, see also the work of Maesaka~\cite{Mae23}.

\subsection{Necessary conditions}\label{sec:necessary-conditions}

We begin with a discussion of three structural features that constitute Hamiltonicity: \emph{connectivity}, \emph{space} and \emph{aperiodicity}.
Let $G$ be a $k$-uniform hypergraph, or \emph{$k$-graph} for short.

\begin{connectivity}
	We denote by $L(G)$ the \emph{line graph} of $G$, which is the $2$-graph on vertex set $E(G)$ with an edge $ef$ whenever $|e \cap f|=k-1$.
	A subgraph of $G$ is \emph{\tightly connected} if it has no isolated vertices and its edges induce a connected subgraph in $L(G)$.
	Moreover, we refer to edge-maximal \tightly connected subgraphs as \emph{\tight components}.
\end{connectivity}

\begin{space-prop}
A \emph{fractional matching} is a function $\omega\colon E(G) \to [0,1]$ such that $\sum_{e\colon v \in e} \omega (e) \leq 1$ for every vertex $v \in V(G)$.
The \emph{size} of a fractional matching $\omega$ is $\sum_{e \in E(G)} \omega (e)$.
We say that $\omega$ is \emph{perfect} if its size is $n/k$.
\end{space-prop}

\begin{aperiodicity}
	A \emph{homomorphism} from a $k$-graph $C$ to $G$ is a function $\phi \colon V(C) \to V(G)$ that maps edges to edges.
	We say that $W \subset G$ is a \emph{closed \tight walk} if $W$ is the image of a homomorphism of a $k$-uniform cycle $C$.
	The \emph{order} of $W$ is the order of $C$.
	We call $G$ \emph{aperiodic} if it contains a closed walk whose order is congruent to $1$ modulo $k$.
\end{aperiodicity}

It is easy to see that any $k$-uniform cycle is connected, contains a perfect fractional matching and (if its order is coprime to $k$) is aperiodic.
In other words, these three properties are satisfied by any family $\cG$ of $k$-graphs whose members are Hamiltonian after deleting up to $k-1$ vertices.
In addition to this, constructions show that such a family $\cG$ must also satisfy a fourth property, which ensures that these features are \emph{consistent} between reasonably similar members of $\cG$.\footnote{In \cite[Appendix B.1]{LS23}, we construct a non-Hamiltonian $3$-graph $G$ such that for every vertex $x \in V(G)$, there are $G'_x \subseteq G - x$ satisfying \ref{itm:hf-connected}--\ref{itm:hf-odd} (even in a robust way). The main obstacle to Hamiltonicity is that the union $G'_x \cup G'_y$ is not connected for certain choices of $x, y$; and this is precisely what is avoided by \ref{itm:hf-intersecting}.}
This motivates the following definition.

\begin{definition}[Hamilton framework]\label{def:hamilton-framework}
	A family $\cG$ of $s$-vertex $k$-graphs has a \emph{Hamilton framework} $F$ if for every $G \in \cG$ there is an {$s$-vertex} subgraph $F(G) \subset G$ such that
	\begin{enumerate}[(F1)]
		\item \label{itm:hf-connected} $F(G)$ is a \tight component, \hfill(connectivity)
		\item \label{itm:hf-matching} $F(G)$ has a perfect fractional matching, \hfill(space)
		\item \label{itm:hf-odd} $F(G)$ contains a closed walk of order $1 \bmod k$, and \hfill(aperiodicity)
		\item \label{itm:hf-intersecting} {$F(G) \cup F(G')$ is \tightly connected whenever $G,G' \in \cG$ are obtained by deleting a distinct vertex from the same $(s+1)$-vertex $k$-graph. \hfill(consistency)}
	\end{enumerate}
\end{definition}

Thus, a Hamilton framework is a family of graphs which satisfy the four aforementioned necessary properties for Hamiltonicity.
It is not true that every member of a $k$-graph family which has a Hamilton framework is Hamiltonian, but, as we will see, a natural strengthening of these properties does indeed imply Hamiltonicity.

Before we continue exploring this idea, let us connect Hamilton frameworks to degree conditions to complete the statements of \cref{thm:framework-bandwidth-simple,con:thresholds}.
For a $k$-graph $G$ and $t\geq k$, the \emph{clique-graph} $K_t(G)$ has vertex set $V(G)$ and a $t$-uniform edge $X$ whenever $G[X]$ induces a clique.
Note that a $(t-k+1)$st power of a Hamilton cycle in $G$ corresponds to a Hamilton cycle in $K_t(G)$.

\begin{definition}\label{def:ham-fw-threshold}
	Let $\delta_d^{\hf}(k,t)$ be the infimum $\delta \in [0,1]$ such that for every $\eps$, there is $n_0$ such that the family of $t$-graphs $K_t(G)$ on $n \geq n_0$ vertices, where $G$ is a $k$-graph with $\delta_d(G) \geq (\delta + \eps ) \binom{n-d}{k-d}$, admits a Hamilton framework.
\end{definition}

\subsection{Robustness}\label{sec:robustness}

To strengthen the notion of Hamilton frameworks, we require that a given $k$-graph  satisfies the required properties not only globally, but also in a robust local sense.
This is formalised using the notion of `property graphs', which was introduced in the context of perfect tilings~\cite{Lan23}.

\begin{definition}[Property graph]\label{def:property-graph}
	For a $k$-graph $G$ and a family of $k$-graphs $\P$, the \emph{$s$-uniform property graph}, denoted by $\PG{G}{\P}{s}$, is the $s$-graph on vertex set $V(G)$ with an edge $S \subset V(G)$ whenever the induced subgraph $G[S]$ \emph{satisfies}~$\P$, that is $G[S] \in \P$.
\end{definition}

Thus, an $s$-uniform property graph of $G$ tracks the local places that inherit the property $\P$.
We remark that for all practical purposes, we can assume that $s$ is much larger than $k$, and the order of $G$ is much larger than $s$.
Given this, we express the robustness of property $\P$ in a $k$-graph $G$ by saying that the property $s$-graph for $\P$ has sufficiently large minimum degree.

\begin{definition}[Robustness]\label{def:robustness}
	For a family of $k$-graphs $\P$, $r=2k$ and $\delta=1-1/s^2$, a $k$-graph $G$ \emph{$s$-robustly satisfies} $\P$ if the minimum $r$-degree of the property $s$-graph $\PG{G}{  \P   }{s}$ is at least $\delta \tbinom{n-r}{s-r}$.
\end{definition}

We remark that the `hard-coded' values of the parameters $r$ and $\delta$ in the previous definition can be further optimised in the context of our main results.
However, in practice this does not matter since $r=o(s)$ and $\delta = 1-\exp^{-\Omega(s)}$ in the intended applications of our main result.
For a more detailed discussion, see \cref{sec:booster-lemma}.

\subsection{Sufficient conditions}

We are now ready to formulate our main result, which states that every graph whose clique-graph robustly admits a Hamilton framework contains all suitable blow-ups of powers of a Hamilton cycle.

\begin{theorem}[General Hypergraph Bandwidth Theorem]\label{thm:framework-bandwidth}
	For  all $k$, $t$ and $s$, there are $c$ and $n_0$ such that the following holds.
	Let $\P$ be a family of $s$-vertex $t$-graphs that admits a Hamilton framework.
	Let $G$ be a $k$-graph on $n \geq n_0$ vertices such that $K_t(G)$ $s$-robustly satisfies $\P$.
	Let $H$ be an $n$-vertex blow-up of the $(t-k+1)$st power of a $k$-uniform cycle with clusters of size at most $(\log \log n)^c$.
	Then $H \subset G$.
\end{theorem}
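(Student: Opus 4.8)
The plan is to reduce the problem to the directed setting and then run the covering-and-allocation machinery announced in the excerpt. First I would invoke \cref{pro:framework-undirected-to-directed}, the undirected-to-directed transition, so that it suffices to establish the directed analogue \cref{thm:framework-bandwidth-directed}. This replaces $G$ (or rather $K_t(G)$) by a suitable auxiliary structure in which the Hamilton framework properties (F1)--(F4) are witnessed by a robustly connected, robustly space-balanced, robustly aperiodic, robustly consistent `frame'; the point of passing to the directed world is that closed walks of order $1 \bmod k$ and perfect fractional matchings translate cleanly into an Eulerian-type condition on an auxiliary digraph whose vertices are $(k-1)$-tuples and whose arcs are edges of the frame, which is the natural arena for building a spanning blown-up power of a cycle.

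The core of the argument is a two-layer scheme. At the top layer, I would apply the allocation results \cref{pro:allocation-Hamilton-cycle,pro:allocation-bandwidth-frontend}: the framework properties guarantee an integral circulation in the auxiliary digraph that, after rounding, produces a closed walk of length exactly $n$ (using aperiodicity to fix the residue mod $k$ and the perfect fractional matching to guarantee the total length is $n$ rather than something smaller), and moreover this walk can be chosen to be `quasirandom' in the sense that every short window is traversed roughly equally often. Crucially, this walk is decomposed into $O(n/\ell)$ short segments, each of length $\ell = \polylog\log n$ or so, such that consecutive segments overlap in a $(t-1)$-tuple. The blow-up $H$ of the $(t-k+1)$st power of a $k$-uniform cycle, with clusters of size at most $(\log\log n)^c$, is then chopped along the same cyclic structure into blocks each of which fits inside one segment together with a bounded amount of overlap; here the doubly-logarithmic cluster bound is exactly what makes each block small enough for the bottom layer.

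At the bottom layer, I would invoke \cref{pro:blow-up-cover}: the robustness hypothesis (the property $s$-graph has $r$-degree at least $(1-1/s^2)\binom{n-r}{s-r}$) says that `almost every' $s$-set of $G$ induces a copy of some member of $\P$, and in particular of a clique-rich configuration, so one can greedily cover the segments by vertex-disjoint blow-up gadgets, each embedding one block of $H$ into one segment of the host walk, while matching the prescribed $(t-1)$-tuple interfaces between consecutive gadgets. The segments are processed in cyclic order; at each step the set of already-used vertices is a $o(1)$-fraction of $V(G)$, so the robust minimum-degree condition still applies to the remaining vertices, and the bounded overlap between consecutive segments lets us hand off the boundary tuple. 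Finally one checks that the total vertex count is exactly $n$ — this is where one uses that the fractional matching is perfect, so the length-$n$ host walk uses each vertex with the right multiplicity — and closes the cycle up, matching the last interface to the first.

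The main obstacle will be the interface between the two layers: ensuring that the allocation (top layer) produces segments whose prescribed end-tuples are simultaneously (i) compatible with the cyclic block structure of $H$, (ii) realisable by the covering gadgets of \cref{pro:blow-up-cover} given the robustness of $G$, and (iii) globally consistent so that the gadgets can be glued into one spanning structure without accumulating errors around the cycle. This is precisely the role of property (F4) (consistency) — without it, the frames witnessing connectivity on overlapping vertex sets could fail to agree, and the handoff between consecutive segments would break — and of the Hamilton-connectedness result \cref{thm:framework-connectedness-robust}, which supplies the short connecting paths with prescribed ends that stitch one gadget to the next. Managing the accumulated slack (lengths, residues mod $k$, cluster-size imbalances) around a cycle of $O(n/\polylog\log n)$ segments, and absorbing it into the flexibility of the last few gadgets, is the delicate quantitative heart of the proof.
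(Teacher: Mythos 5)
Your high-level ingredients are the right ones — reduce to the directed setting via \cref{pro:framework-undirected-to-directed}, use the consistency property (F4) to synchronise handoffs around the cycle, lean on \cref{pro:blow-up-cover} for covering and \cref{pro:allocation-bandwidth-frontend} for placing pieces of $H$ — but the architecture is inverted, and the inversion creates a real gap. In the paper, \cref{pro:blow-up-cover} is applied to the host $[t]$-digraph $K$ \emph{before} touching $H$: it produces an exact spanning cover of $V(G)$ by blow-ups $R^v(\cV^v)$ (clusters of size $\sim(\log n)^{c'}$) and $R^e(\cW^e)$ (clusters of size $\sim(\log\log n)^{c'}$) interlocked in the shape of an $\ell$-vertex cycle, and only then is $H$ chopped into pieces $H^v,H^e$ of matching sizes and fed to \cref{pro:allocation-bandwidth-frontend}. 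You instead describe first building a length-$n$ walk template in a small auxiliary digraph, cutting it into $\polylog\log n$-sized segments, and then \emph{greedily} realising each segment by a fresh blow-up gadget. That greedy realisation cannot be made spanning: your own invariant (``at each step the set of already-used vertices is a $o(1)$-fraction of $V(G)$'') fails precisely in the regime you need, namely when almost all of $V(G)$ has been consumed, and there is no mechanism in your outline for the final gadgets to mop up the remainder. Your closing remark about ``absorbing [the slack] into the flexibility of the last few gadgets'' is exactly the absorption step the paper is designed to avoid — the exact cover of \cref{pro:blow-up-cover}, whose proof uses a fractional-matching/tiling argument in the property graph rather than greed, is what replaces it.

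Two further mismatches worth flagging. First, \cref{thm:framework-connectedness-robust} is not used in the paper's proof of \cref{thm:framework-bandwidth}; the stitching between consecutive blow-ups is achieved entirely by the cyclic shape of the cover together with the ``moreover'' clause of \cref{pro:allocation-bandwidth-frontend}, which lets you prescribe the $t$-edges into which the first and last clusters of each path-piece $H^x$ are sent, so consecutive embeddings meet edge-to-edge. Second, you omit the Booster step (\cref{corollary:booster2}): the hypothesis only gives $s$-robustness of $\dcon\cap\dspa\cap\dape$, whereas \cref{pro:blow-up-cover} needs $(1,s_1)$- and $(2s_1-2,s_2)$-robustness of the hardened property $\Del_{t+1}(\dcon\cap\dspa\cap\dape)$, and \cref{pro:allocation-bandwidth-frontend} needs the reduced graphs to survive deletion of a few vertices. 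Without the booster, the covering and allocation lemmata simply do not apply to what the hypothesis gives you.
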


The intended setting to apply \cref{thm:framework-bandwidth} are host graphs from hereditary graph families.
By this, we mean a family of graphs that is approximately closed under taking typical induced subgraphs of constant order.
This is formalised using an \emph{Inheritance Lemma} such as \cref{lem:inheritance-minimum-degree}.
Given inheritance, the robustness of the assumptions of  \cref{thm:framework-bandwidth} becomes trivial, and the remaining task consists in verifying the Hamilton framework properties.
Since degree conditions are hereditary, we shall thus see that \cref{thm:framework-bandwidth} implies \cref{thm:framework-bandwidth-simple}.
A more detailed explanation of the argument in various applications can be found in \cref{sec:applications-proofs}.

\subsection{Methodology}

There are two prevalent approaches for the embedding of spanning structures in graphs and hypergraphs.
The first approach combines Szemerédi's Regularity Lemma with the Blow-up Lemma of Komlós, Sárközy and Szemerédi~\cite{KSS98}.
This framework was key in the proof of the Bandwidth Theorem (\cref{thm:bandwidth-graphs-linear}).
The second approach is based on the Absorption Method that is often also used in conjunction with a (hypergraph) Regularity Lemma.
This setup was introduced by Rödl, Ruciński, and Szemerédi~\cite{RRS08a} to find \tight Hamilton cycles under codegree conditions (the case $k-d =1$ of \cref{thm:hamilton-cycle-k-d-small}) and has formed the basis of all contributions in this direction thereafter.

Both methods come with advantages and drawbacks.
The combination of the Regularity and Blow-up Lemma offers a systematic way to tackle embedding problems for dense graphs (more on this below).
However, this framework also comes with many technical challenges, which usually lead to long and convoluted proofs~\cite{LS23}.
This phenomenon is most dramatic for (proper) hypergraphs, where even stating the definitions requires effort and attention to detail.
In this setting, we are moreover still lacking some of the basic instruments such as a Blow-up Lemma with image restrictions.
The Absorption Method, on the other hand, avoids many of these technicalities and has therefore become the approach of choice in the hypergraph setting.
It is particularly well-suited for the embedding of spanning structures that exhibit strong symmetries.
Absorption for less symmetrical structures becomes more difficult and it is not well-understood whether it works at all.
For instance, there is so far no proof of the Bandwidth Theorem using the Absorption Method.
Finally, we note that neither of these methods works systematically well with graphs of unbounded maximum degree.

To overcome these limitations, we propose a new framework for embedding large structures into dense hypergraphs.
Our approach is best explained in the context of a Blow-up Lemma, which can be summarised as follows.
Broadly speaking, the Regularity Lemma allows us to approximate a given host-graph $G$ with a quasirandom blow-up of a \emph{reduced graph} $R$ of constant order.
So the vertices of $R$ are replaced by (disjoint, linear-sized) vertex clusters and the edges of $R$ are replaced by quasirandom partite graphs.
Importantly, the Regularity Lemma guarantees that $R$ approximately inherits structural assumptions on $G$ such as degree conditions.
The Blow-up Lemma then tells us that one can effectively assume that these partite graphs are complete.
Ignoring a number of technicalities, this reduces the problem of embedding a given guest graph $H$ into $G$ to the problem of embedding $H$ into a complete blow-up of $R$.
This last step is known as the \emph{allocation} of $H$, and there is a well-tested set of techniques available to facilitate this step. Therefore, the reduction to the allocation problem often presents a considerable step towards the successful embedding of $H$ into $G$.

In contrast to this, our method proceeds as follows.
Instead of approximating the entire structure of~$G$ with a quasirandom blow-up, we only cover the vertex set of~$G$ with a family of complete blow-ups $R_1(\cV_1),\dots, R_\ell(\cV_\ell)$ that are interlocked in a cycle-like fashion.
As before, the \emph{reduced graphs} $R_1,\dots,R_\ell$ inherit structural properties of $G$ such as degree conditions.
Now suppose we are given a blow-up of a cycle~$H$.
We subdivide $H$ into blow-ups of paths $H_1,\dots,H_\ell$, where each $H_i$ has approximately $|\bigcup \cV_i|$ vertices.
This effectively reduces the problem of embedding $H$ into $G$ to embedding each $H_i$ into the blow-up $R_i(\cV_i)$, which is analogous to the above-described allocation problem.

The connectivity, space and divisibility properties of \cref{def:hamilton-framework} allow us to carry out the allocation into each $R_i(\cV_i)$ by combining arguments from our past work~\cite{Lan23,LS23} with several new ideas.
The consistency property of \cref{def:hamilton-framework} then guarantees that the allocations for $R_i(\cV_i)$ and $R_{i+1}(\cV_{i+1})$ are properly synchronised.

\subsection{Notation}

We express some of the constant hierarchies in standard $\gg$-notation.
To be precise, we write $y \gg x$ to mean
that for any $y \in (0, 1]$ there exists an $x_0 \in (0,1)$
such that for all $x_0 \geq x$ the subsequent statements
hold.  Hierarchies with more constants are defined in a
similar way and are to be read from left to right following the order that the constants are chosen.
Moreover, we tend to ignore rounding errors if the context allows~it.

\section{Proofs of the applications}\label{sec:applications-proofs}

In this section, we prove the results announced in~\cref{sec:background+applications}.
Thanks to \cref{thm:framework-bandwidth}, we only need to find a robust Hamilton framework.
Moreover, the robustness comes effectively for free thanks to the inheritance principle.
The remaining challenge thus consists in identifying a Hamilton framework under minimum degree conditions.
This is done by exploiting the structural insights of former work.

We begin by deriving \cref{thm:framework-bandwidth-simple} from \cref{thm:framework-bandwidth}.
The remainder of the section is then dedicated to the proofs of \cref{thm:thresholds,thm:hamilton-map-k-d-large,thm:power-hamilton-map-codegree}.
For the sake of illustration, we also give the full details for vertex-degree conditions in $3$-graphs:

\begin{theorem}\label{pro:5/9-framework}
	For every $\eps > 0$, there is $n_0$ such that for each $n \geq n_0$, the family of $n$-vertex $3$-graphs $H$ satisfying $\delta_1(H) \geq \left( 5/9 + \varepsilon \right) \binom{n-1}{2}$ admit a Hamilton framework.
\end{theorem}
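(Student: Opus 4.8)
The plan is to assign to each $3$-graph $G$ in the family the spanning subgraph $F(G)\subseteq G$ equal to the \emph{governing tight component} of $G$ — the tight component that, as it will turn out, uniquely contains every vertex. Once this component has been located, the four axioms of \cref{def:hamilton-framework} split into tasks of very unequal difficulty: \ref{itm:hf-connected} is then immediate, \ref{itm:hf-matching} follows by LP duality, \ref{itm:hf-odd} by a short closed‑walk argument, and \ref{itm:hf-intersecting} by a stability argument under single‑vertex deletions. Together with \cref{thm:framework-bandwidth} this recovers the bound $\delta_1^{\hf}(3)\leq 5/9$ (the matching lower bound being \cref{const:tight-general}), and the whole argument is a streamlined repackaging of the structural work behind $\delta_1^{\ham}(3)=5/9$ from \cref{thm:hamilton-cycle-k-d-small}.

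\emph{Locating the governing component.} Everything hinges on the identity $5/9=1-(2/3)^2$. On the one hand, $2\cdot\tfrac{5}{9}-1=\tfrac{1}{9}>0$, so any two vertices $u,v$ have $\Omega(n^2)$ common neighbour‑pairs; for any such pair $\{a,b\}$ the edges $\{u,a,b\}$ and $\{v,a,b\}$ overlap in two vertices and so lie in a common tight component. On the other hand, if the link graph of some vertex were disconnected, then, since its two sides of sizes $\alpha n$ and $(1-\alpha)n$ would have to satisfy $\alpha^2+(1-\alpha)^2>5/9$, one side would exceed $(2/3)n$. Inserting these two observations into a propagation of the resulting near‑extremal structure — the point at which we import the $5/9$‑specific analysis behind \cref{thm:hamilton-cycle-k-d-small} — gives a dichotomy: either every link is connected, in which case $G$ is tightly connected; or $G$ is close to a configuration as in \cref{const:tight-general} for a set $X$ whose size is pushed by the degree hypothesis away from $n/3$, in which case $G$ has exactly two tight components, the denser of which spans all of $V(G)$ while the other spans at most about $n/3$ vertices. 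In both cases $G$ has a unique tight component $F^{\ast}(G)$ spanning $V(G)$, and we set $F(G):=F^{\ast}(G)$, which gives \ref{itm:hf-connected}.

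\emph{Space and aperiodicity.} For \ref{itm:hf-matching} we use LP duality: $F^{\ast}(G)$ has no perfect fractional matching only if it admits a fractional vertex cover of total weight strictly less than $n/3$. When $G$ is tightly connected, $F^{\ast}(G)=G$, and $\delta_1(G)\geq(5/9+\eps)\binom{n-1}{2}$ comfortably exceeds the minimum‑vertex‑degree threshold for a perfect (fractional) matching in a $3$‑graph, so no cheap cover exists. In the two‑component case one writes the matching down: spread weight over the edges meeting $X$ in exactly two vertices so as to cover $V(G)\setminus X$ exactly, then add weight on the edges inside $X$ to finish covering $X$; because the degree hypothesis places $|X|$ in the right range, a one‑line check gives total weight $n/3$. (The slack $\eps$ is essential here: at $|X|=n/3$, as in \cref{const:tight-general}, the denser component has no perfect fractional matching whatsoever, which is exactly the lower‑bound obstruction.) For \ref{itm:hf-odd} it suffices to exhibit in $F^{\ast}(G)$ a closed tight walk whose order is not divisible by $3$, since traversing it twice yields one of order $\equiv 1\bmod 3$; equivalently, we must certify that $F^{\ast}(G)$ is not \emph{tightly $3$‑partite}. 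But a tightly $3$‑partite $3$‑graph on $n$ vertices has minimum degree at most $(2/9+o(1))\binom{n}{2}$ — its largest part has at least $n/3$ vertices, so the remaining two parts multiply to at most $n^2/9$ — whereas $F^{\ast}(G)$ spans $V(G)$ and, in either branch of the dichotomy, retains a positive proportion of each vertex's degree in $G$. Hence \ref{itm:hf-odd}.

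\emph{Consistency, and where the work lies.} For \ref{itm:hf-intersecting}, let $\hat G$ be an $(n+1)$‑vertex $3$‑graph with $G=\hat G-x$ and $G'=\hat G-y$ both in the family, $x\neq y$. Then $\hat G-\{x,y\}$ is an $(n-1)$‑vertex $3$‑graph with $\delta_1\geq(5/9+\eps/2)\binom{n-2}{2}$, so the analysis above produces its governing component $F^{\ast\ast}$, spanning all $n-1$ of its vertices. As $F^{\ast\ast}$ is tightly connected inside $\hat G-\{x,y\}\subseteq G$ and spans all but one vertex of $G$, it must lie inside the unique comparably large tight component of $G$, namely $F(G)$; likewise $F^{\ast\ast}\subseteq F(G')$. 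Hence $F(G)\cup F(G')\supseteq F^{\ast\ast}$ is tightly connected, and since $F(G)$ covers $y$, $F(G')$ covers $x$, and each is joined to $F^{\ast\ast}$, the union is tightly connected, which is \ref{itm:hf-intersecting}. All of the real effort is concentrated in the first step — producing the governing component and describing its structure precisely enough to carry out the other three — and this is the main obstacle; it is exactly there that the $5/9$‑specific structural analysis underlying \cref{thm:hamilton-cycle-k-d-small} cannot be avoided, while everything downstream of it is short.
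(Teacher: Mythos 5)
Your proposal takes a genuinely different route from the paper, but it leaves the decisive step unproved, and this is not a gap that can be waved away. The paper never tries to identify a ``governing tight component'' of $G$ at all. Instead it descends to the link level: for each vertex $v$ it picks the densest connected component $C_v$ of the $2$-graph $L(v)$, sets $F_v = \{e \cup \{v\}\colon e \in C_v\}$, and declares $F(G) = \bigcup_v F_v$. That all of the $F_v$ sit in a common tight component is then a consequence of \cref{lem:cooley-mycroft}\ref{itm:cooleymycroft-edge-in-common}, a statement about ordinary $2$-graphs: two dense $2$-graphs on the same vertex set have their largest components sharing an edge. This circumvents any need to understand the tight-component structure of $G$ itself, and it is exactly why the rest of the proof becomes short. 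Your plan replaces this with a stability-type dichotomy for $3$-graphs (``$G$ is tightly connected, or it has exactly two tight components, the denser of which spans $V(G)$ and the other spans $\lesssim n/3$ vertices''). You label this step as ``the main obstacle'' and ``exactly there that the $5/9$-specific structural analysis cannot be avoided''; but that is precisely the content of the theorem, so asserting that it follows from ``a propagation of the resulting near-extremal structure'' that you ``import'' is not a proof. Notably, the paper states in \cref{sec:background+applications} that its goal was to avoid having to reproduce that technical stability analysis, which suggests the import is not as painless as your outline implies.

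There is also a concrete worry about the dichotomy as you state it. In \cref{const:tight-general} for $(k,d)=(3,1)$ one has $|X| = n/3$, and the two tight components are the edges inside $Y=V\setminus X$ (density $\approx 8/27$, \emph{not} vertex-spanning) and the edges meeting $X$ in $\ge 2$ vertices (density $\approx 7/27$, vertex-spanning). So at the extremal point the denser tight component is the one that does \emph{not} span. Your observation that $\delta_1 \ge (5/9+\eps)\binom{n-1}{2}$ forces $|X|$ away from $n/3$ only rescues the claim if one additionally shows (i) that the degree hypothesis specifically pushes $|X|$ above roughly $2n/3$ rather than below $n/3$, (ii) that every graph in the family is actually close to such a configuration if not tightly connected, and (iii) that the closeness is enough to preserve the description of the two components. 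None of this is in your write-up. The downstream verifications of \ref{itm:hf-matching}, \ref{itm:hf-odd} and \ref{itm:hf-intersecting} are plausible once the dichotomy is granted, and the observations you make (the common-neighbour-pair count and the size of a disconnected link's side) are the right starting ingredients; but without the dichotomy the definition of $F(G)$ is not even well-posed, so the argument does not assemble into a proof.
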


Together with \cref{thm:framework-bandwidth-simple}, this recovers and extends a breakthrough result of Reiher, Rödl, Ruciński, Schacht and Szemerédi~\cite{RRR19}.

\subsection{Inheritance}

Our framework applies to host graphs that satisfy graph properties, which are hereditary in the sense of being approximately closed under taking typical induced subgraphs of constant order.
This is formalised by the following Inheritance Lemma.
To state it, we need the following definition.

\begin{definition}\label{def:degree-family}
	For $1 \leq d \leq k$ and $\delta \geq 0$, the family $\DegF{d}{\delta}$ contains, for all $k \leq n$, every $n$-vertex $k$-graph~$G$ with minimum $d$-degree $\delta_d(G) \geq \delta \binom{n-d}{k-d}$.
\end{definition}

\begin{lemma}[Inheritance Lemma]\label{lem:inheritance-minimum-degree}
	For $1/k,\,1/r,\,\eps \gg 1/s \gg 1/n$ and $\delta \geq 0$, let $G$ be an $n$-vertex $k$-graph with $\delta_d(G) \geq (\delta + \eps) \tbinom{n-d}{k-d}$.
	Then the property $s$-graph $P =\PG{G}{\DegF{d}{\delta+\eps/2}}{s}$ satisfies $\delta_{r}(P) \geq  (1-e^{-\sqrt{s}}    )  \tbinom{n-r}{s-r}$.\qed
\end{lemma}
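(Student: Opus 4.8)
The plan is to read the conclusion probabilistically and prove it by a union bound together with a concentration inequality for random subsets. Fix an $r$-set $R\subseteq V(G)$; the bound $\delta_r(P)\geq(1-e^{-\sqrt s})\binom{n-r}{s-r}$ is equivalent to the statement that, for a uniformly random $(s-r)$-subset $T$ of $V(G)\setminus R$ and $S:=R\cup T$, one has $\Prob\big[G[S]\in\DegF{d}{\delta+\eps/2}\big]\geq 1-e^{-\sqrt s}$. By \cref{def:degree-family}, the complementary event is that some $d$-set $D\subseteq S$ has $\deg_{G[S]}(D)<(\delta+\eps/2)\binom{s-d}{k-d}$, where $\deg_{G[S]}(D)$ denotes the number of edges of $G[S]$ through $D$. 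A union bound over all $d$-subsets $D$ of $V(G)$ gives
\[
\Prob\big[G[S]\notin\DegF{d}{\delta+\eps/2}\big]\ \leq\ \sum_{|D|=d}\Prob[D\subseteq S]\cdot\Prob\Big[\deg_{G[S]}(D)<(\delta+\tfrac\eps2)\tbinom{s-d}{k-d}\ \Big|\ D\subseteq S\Big],
\]
and since $\sum_{|D|=d}\Prob[D\subseteq S]=\Exp\big[\#\{D\subseteq S:|D|=d\}\big]=\binom{s}{d}$, it will suffice to bound the conditional probability above by $e^{-\sqrt s}/\binom{s}{d}$ uniformly over $D$.

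So I would fix such a $D$ and condition on $D\subseteq S$. Setting $D'=D\setminus R$, the set $T':=T\setminus D'$ is a uniformly random $(s-r-|D'|)$-subset of $U:=V(G)\setminus(R\cup D)$, and by monotonicity $\deg_{G[S]}(D)\geq X$, where $X:=\#\{W\subseteq T':|W|=k-d,\ D\cup W\in E(G)\}$. To estimate $\Exp[X]$, note that $V(G)\setminus D=U\sqcup(R\setminus D)$, so the link size $\deg^U_G(D):=\#\{W\subseteq U:D\cup W\in E(G)\}$ satisfies $\deg^U_G(D)\geq\deg_G(D)-r\binom{n}{k-d-1}\geq(\delta+\eps)\binom{n-d}{k-d}-O\big(n^{k-d-1}\big)$, which is at least $(\delta+\tfrac45\eps)\binom{n-d}{k-d}$ since $1/n\ll 1/s\ll\eps$. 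As $T'$ is a uniform subset of $U$, the standard identity $\Exp[X]=\deg^U_G(D)\cdot\binom{|T'|}{k-d}\big/\binom{|U|}{k-d}$ holds, and since $|T'|\geq s-3k$ while $|U|\leq n-d$ (so that $\binom{n-d}{k-d}/\binom{|U|}{k-d}\geq 1$), this yields $\Exp[X]\geq(\delta+\tfrac35\eps)\binom{s-d}{k-d}$ once $s$ is large; in particular $\Exp[X]$ exceeds the threshold $(\delta+\eps/2)\binom{s-d}{k-d}$ by $\Omega(\eps s^{k-d})$.

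It then remains to show $X$ is tightly concentrated below its mean. Since $X$ is a function of the random set $T'$ and swapping one element of $T'$ changes $X$ by at most $2\binom{s}{k-d-1}=:\Delta$, the bounded-differences inequality for uniformly random subsets — obtained via the Doob martingale exposing a random permutation of $U$ one element at a time, which only moves during its first $|T'|$ steps, combined with Azuma--Hoeffding — gives $\Prob[X<\Exp[X]-\lambda]\leq\exp\!\big(-\lambda^2/(2|T'|\Delta^2)\big)$ for every $\lambda>0$. Choosing $\lambda=\Theta(\eps s^{k-d})$ equal to the slack found above makes the exponent $\Omega(\eps^2 s)$, so $\Prob[\deg_{G[S]}(D)<(\delta+\tfrac\eps2)\binom{s-d}{k-d}]\leq e^{-\Omega(\eps^2 s)}$; since $\eps$ is a constant with $\eps\gg 1/s$, this is at most $e^{-\sqrt s}/\binom{s}{d}$ for $s$ large. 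Feeding this into the union bound of the first paragraph proves the lemma.

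I expect the main obstacle to be the calibration of the deviation $\lambda$: it must be of order $s^{k-d}$ so that $\binom{s}{d}\cdot e^{-\lambda^2/(2|T'|\Delta^2)}$ still beats the target $e^{-\sqrt s}$, yet it must remain inside the $\Theta(\eps s^{k-d})$ gap between $\Exp[X]$ and $(\delta+\eps/2)\binom{s-d}{k-d}$, which forces the expectation computation to be carried out with the correct lower-order error terms. The only other subtlety is that $T'$ is sampled \emph{without} replacement, so one must invoke a concentration bound valid in that setting — the permutation martingale above, or equivalently negative association of the indicators of the events $\{v\in T'\}$ — while everything else is routine binomial-coefficient estimation, using $1/k,1/r,\eps\gg 1/s\gg 1/n$ to discard negligible terms.
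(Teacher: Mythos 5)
Your proof is correct and follows essentially the same route as the paper's: fix an $r$-set, union bound over $d$-sets $D$, and for each $D$ apply a bounded-differences concentration bound for random subsets (permutation Doob martingale plus Azuma--Hoeffding, the paper's Lemma~\ref{lem:concentration}) to the degree of $D$ in the random induced subgraph. The paper proves the slightly more general Lemma~\ref{lem:inheritance-degree} first (pointwise inheritance $\rdeg_{G[S]}(D) \geq \rdeg_G(D) - \eps$ for all $D \subseteq S$), from which the stated lemma follows; your calibration of $\lambda$, the expectation estimate, and the handling of sampling without replacement all match the intended argument.
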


The proof of \cref{lem:inheritance-minimum-degree} follows from standard probabilistic concentration bounds~\cite[Lemma 4.9]{Lan23}.
For the sake of completeness, the details can be found in \cref{sec:inheritance}.

Now we can easily derive \cref{thm:framework-bandwidth-simple}.

\begin{proof}[Proof of \cref{thm:framework-bandwidth-simple}]
	Set $\delta = \delta_d^{\hf}(k,t)$.
	Given $\eps > 0$, set $r=2r$, and choose $s,c$ with $1/r,\,\eps \gg 1/s \gg c \gg 1/n$.
	Let $G$ be a $k$-graph on $n$ vertices with $\delta_d(G) \geq (\delta + \eps) \binom{n}{2}$.
	By \cref{lem:inheritance-minimum-degree} the property $s$-graph $P =\PG{G}{\DegF{d}{\delta+\eps/2}}{s}$ satisfies $\delta_{r}(P) \geq  (1-1/s^2)  \tbinom{n-r}{s-r}$.
	Let $\P$ be the family of $s$-vertex $t$-graphs $K_t(R)$ with $R \in \DegF{d}{\delta+\eps/2}$.
	By definition of $\delta$, $\P$ admits a Hamilton framework.
	Thus we can finish by applying \cref{thm:framework-bandwidth}.
\end{proof}

\subsection{Vertex-degree in $3$-uniform graphs} \label{sec:vertex-deg-3-graph}

In this section, we show \cref{pro:5/9-framework}.
The purpose of the following exposition is to illustrate the ideas for finding Hamilton frameworks.
A more comprehensive treatment of the subject follows in \cref{sec:natural-frameworks}, where \cref{pro:5/9-framework} is obtained again as a corollary of \cref{thm:hamilton-cycle-k-d-small}.

We begin by gathering some tools to work with connectivity, matchings, and cycles, starting with the following characterisation of \tight connectivity in $k$-graphs.
(The easy proof is left to the reader.)

\begin{observation}\label{obs:tight-connectivity}
	A $k$-graph is \tightly connected if and only if every two edges are on a common closed $k$-uniform \tight walk.\qed
\end{observation}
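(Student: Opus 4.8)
The plan is to prove the two implications separately; the `only if' direction is immediate, and essentially all of the (modest) work lies in the `if' direction.

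\emph{`Only if'.} Suppose every two edges of $G$ lie on a common closed \tight walk (I keep the standing convention that $G$ has no isolated vertices). Given edges $e,f$, fix a closed \tight walk $W$ containing both, say realised by a homomorphism from a $k$-uniform cycle $C$, and list the windows $A_1,A_2,\dots,A_m$ (images of the edges of $C$) in cyclic order from one equal to $e$ to one equal to $f$. Consecutive windows overlap in $k-1$ vertices, so each pair $A_i,A_{i+1}$ is either equal or adjacent in $L(G)$; deleting repetitions from the list thus produces a walk from $e$ to $f$ in $L(G)$. Hence $L(G)$ is connected and $G$ is \tightly connected.

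\emph{`If'.} Assume $G$ is \tightly connected. I would first record two facts. \emph{(a)} Every edge lies on a closed \tight walk: for $e=\{u_1,\dots,u_k\}$, the cyclic vertex sequence $u_1 u_2\cdots u_k u_1 u_2\cdots u_k$ is the image of a homomorphism from the $k$-uniform cycle on $2k$ vertices (each window of $k$ consecutive entries is a cyclic rotation of $e$), hence a closed \tight walk containing $e$. \emph{(b)}~(Extension.) If $W$ is a closed \tight walk and $e\in E(G)$ satisfies $|e\cap f|=k-1$ for some $f\in E(W)$, then $G$ has a closed \tight walk $W'$ with $E(W')=E(W)\cup\{e\}$. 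To prove this, put $S=e\cap f$, let $\{z\}=f\setminus e$ and $\{z'\}=e\setminus f$, and fix a window of $W$ with vertex set $f$. Splice in a short detour there: rotate the window around $f$ (repeatedly appending its current first vertex, which leaves the window equal to $f$) until $z$ is in front; append $z'$, which slides the window onto $e$; rotate around $e$ until $z'$ is in front; append $z$, sliding back onto $f$; and rotate around $f$ until the original window is restored, whereupon $W$ continues unchanged. Every window created along the detour equals $e$ or $f$, so $W'$ is a closed \tight walk with $E(W')=E(W)\cup\{e\}$.

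Now let $e,f$ be arbitrary edges. Since $G$ is \tightly connected, $L(G)$ contains a path $e=e_1,e_2,\dots,e_r=f$, so $|e_i\cap e_{i+1}|=k-1$ for each $i$. Take a closed \tight walk $W_1\ni e_1$ from~(a); for $i=1,\dots,r-1$ apply~(b) to $W_i$ and $e_{i+1}$, using $e_i\in E(W_i)$ (which holds inductively), to obtain $W_{i+1}$ with $E(W_{i+1})=E(W_i)\cup\{e_{i+1}\}$. Then $e,f\in\{e_1,\dots,e_r\}\subseteq E(W_r)$, so $e$ and $f$ lie on the common closed \tight walk $W_r$.

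I expect the only delicate point to be the Extension step~(b): the subtlety is that a window cannot be permuted freely — it may only be advanced one vertex at a time — which is exactly why the detour must be built from `rotate-by-one' moves around $f$ and $e$, linked by the two single-vertex swaps $z\leftrightarrow z'$. Everything else is routine bookkeeping with cyclic sequences.
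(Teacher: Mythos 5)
Your proof is correct and is exactly the argument the paper regards as immediate (the observation is stated with a \qed and no proof is given). The rotate-and-swap splicing in your step~(b) is the standard way to absorb an adjacent edge into a closed tight walk, and it is the same idea that underlies \cref{proposition:keyorientation}, which the paper cites from earlier work; your caveat about isolated vertices is the right one to flag, since without it the ``only if'' direction would need the walk to be vertex-spanning.
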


We also need the following lemma due to Alon, Frankl, Huang, Rödl, Ruciński and Sudakov~\cite{AFH+12}, whose proof we include for the sake of completeness.
For a $d$-set $S \subset V(G)$, we denote by $L(S)$ the \emph{link graph} of $S$ in~$G$, which is the $(k-d)$-graph on vertex set $V(G) \sm S$ with an edge $X$ whenever $X \cup S \in G$.
If $S = \{v\}$ is a singleton, we write $L(v)$ instead of $L(S)$.

\begin{lemma}\label{lem:matching-link-graph}
	Let $G$ be a $3$-graph on $n$ vertices such that $L(v)$ contains a fractional matching of size $n/3$ for every $v \in V(G)$.
	Then $G$ has a perfect fractional matching.
\end{lemma}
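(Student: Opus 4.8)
The plan is to use LP duality: a $3$-graph $G$ on $n$ vertices has a perfect fractional matching (of size $n/3$) if and only if its fractional matching number equals $n/3$, which by LP duality equals the fractional vertex cover number. So I would pass to the dual and show that any fractional vertex cover $\tau\colon V(G) \to [0,1]$ — meaning $\tau(u)+\tau(v)+\tau(w) \geq 1$ for every edge $uvw \in G$ — has total weight $\sum_{v} \tau(v) \geq n/3$. Suppose for contradiction that $\sum_{v} \tau(v) < n/3$. Then by averaging there is a vertex $v^\star$ with $\tau(v^\star) < 1/3$; set $\alpha := \tau(v^\star) < 1/3$.

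The key step is to exploit the hypothesis at this vertex $v^\star$: the link graph $L(v^\star)$ (a $2$-graph on $V(G)\setminus\{v^\star\}$) carries a fractional matching $\omega$ of size $n/3$. For every edge $xy \in L(v^\star)$ we have $v^\star xy \in G$, so the cover inequality gives $\tau(x)+\tau(y) \geq 1 - \alpha$. Now I would estimate $\sum_{v \neq v^\star}\tau(v)$ from below by integrating these edge inequalities against $\omega$. Concretely, consider $\sum_{xy \in L(v^\star)} \omega(xy)\,(\tau(x)+\tau(y)) \geq (1-\alpha)\sum_{xy}\omega(xy) = (1-\alpha)\cdot \tfrac{n}{3}$. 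On the other hand, regrouping the left-hand side by vertices, it equals $\sum_{v \neq v^\star} \tau(v) \cdot \big(\sum_{xy \ni v}\omega(xy)\big) \leq \sum_{v \neq v^\star}\tau(v)$, using the fractional matching constraint $\sum_{xy \ni v}\omega(xy)\leq 1$ at each $v$ (here it is important that $\tau(v)\geq 0$, so the weights only help). Hence $\sum_{v\neq v^\star}\tau(v) \geq (1-\alpha)\tfrac{n}{3}$, and therefore $\sum_{v}\tau(v) \geq \alpha + (1-\alpha)\tfrac n3 = \tfrac n3 + \alpha\big(1 - \tfrac n3\big)$. For $n \geq 3$ this is at least $n/3$ (since $\alpha \geq 0$ and $1-n/3 \leq 0$ only when $n\ge 3$... ) — wait, this inequality points the wrong way when $n>3$; so instead I would simply note $\sum_v \tau(v) \ge (1-\alpha)\tfrac n3$ and, combined with $\alpha < 1/3$, this is not immediately a contradiction either, so the averaging step needs to be applied more carefully.

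The cleaner route, which I expect to be the actual argument: rather than picking one light vertex, integrate globally. By LP duality pick an \emph{optimal} fractional cover $\tau$ with $\sum_v \tau(v) = \nu^*(G)$ (the fractional matching number); I want $\nu^*(G)\ge n/3$. For each vertex $v$, apply the hypothesis to get a fractional matching $\omega_v$ in $L(v)$ of size $n/3$; then $\sum_{xy\in L(v)}\omega_v(xy)(\tau(x)+\tau(y)) \ge (1-\tau(v))\cdot\frac n3$ by the cover inequality for edges through $v$, while the left side is $\le \sum_{u}\tau(u)(\sum_{xy\ni u}\omega_v(xy)) \le \sum_u \tau(u) = \nu^*$. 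So $\nu^* \ge (1-\tau(v))\frac n3$ for \emph{every} $v$, i.e. $\tau(v) \ge 1 - 3\nu^*/n$ for all $v$. Summing over all $n$ vertices: $\nu^* = \sum_v \tau(v) \ge n(1 - 3\nu^*/n) = n - 3\nu^*$, hence $4\nu^* \ge n$... which gives $\nu^* \ge n/4$, not $n/3$ — so this bound alone is too weak, and I would need to instead bound $\nu^*$ via complementary slackness or directly construct the perfect fractional matching by an averaging/LP argument over the $\omega_v$'s.

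The honest main obstacle, then, is combining the $n$ link-matchings $\omega_v$ into a single perfect fractional matching of $G$: the natural candidate is $\omega(xyz) := \tfrac1{3}\big(\omega_x(yz) + \omega_y(xz) + \omega_z(xy)\big)$ (suitably normalised), and one must check that the vertex constraints $\sum_{e \ni v}\omega(e)\le 1$ hold while the total size stays $\ge n/3$; verifying the vertex constraints — where the contributions $\omega_x(yz)$ with $v\in\{y,z\}$ and $\omega_v(xy)$ mix — is the delicate calculation. I would set this up, track the three types of terms through a vertex $v$, and use $\sum_{yz\ni v}\omega_v(yz)\le 1$ together with $\sum_{e\ni v}\omega_x(\cdot)$-type sums, possibly rescaling by a constant factor and then invoking LP duality to upgrade a near-perfect fractional matching to a perfect one. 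This combinatorial bookkeeping over the three roles a vertex can play in a triple is where the real work lies; the LP-duality framing above is the scaffolding that makes it go through.
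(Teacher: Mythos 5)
Your LP-duality framing matches the paper's exactly: both reduce the lemma to showing that every fractional vertex cover $\tau$ of $G$ has total weight at least $n/3$, given (again by duality, applied in each link) that every fractional cover of each $L(v)$ has weight at least $n/3$. Your first attempt is the closest to the actual argument but stops one step short, and the gap is genuine. The missing idea is an \emph{affine renormalisation} of the cover. Take $w_0$ to be a vertex of \emph{minimum} cover weight $c_0 = \tau(w_0) < 1/3$ and define $\tau'(v) = \bigl(\tau(v)-c_0\bigr)/(1-3c_0)$. Then $\tau'$ is nonnegative (it matters that $c_0$ is the minimum, not merely some value below $1/3$, lest $\tau'$ go negative and fail to be a valid cover); a short check shows its total weight is still strictly below $n/3$; and, since $\tau'(w_0)=0$, the restriction of $\tau'$ to $V(G)\setminus\{w_0\}$ is a genuine fractional cover of $L(w_0)$: for each edge $xy\in L(w_0)$, $\tau'(x)+\tau'(y) = \bigl(\sum_{u\in\{x,y,w_0\}}\tau(u) - 3c_0\bigr)/(1-3c_0) \geq 1$. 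This contradicts the hypothesis on $L(w_0)$ and finishes the proof. You can see in your own arithmetic exactly why the renormalisation matters: the raw cover inequality $\tau(x)+\tau(y)\geq 1-\alpha$ loses a multiplicative factor and gives $\sum_{v\neq v^\star}\tau(v) \geq (1-\alpha)n/3$, whereas the renormalised inequality $\tau'(x)+\tau'(y)\geq 1$ gives $\sum_{v\neq w_0}\tau'(v)\geq n/3$, which unwinds to $\sum_{v\neq w_0}\tau(v) \geq n/3 - c_0$ and hence $\sum_v\tau(v)\geq n/3$ exactly.

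Without this renormalisation the argument is genuinely stuck at $\nu^* \geq n/4$, which is where all your alternative routes land: your global averaging gives $n/4$ explicitly; your first approach, pushed to its limit by combining $\sum_v\tau(v)\geq n c_0$ with the integrated bound, also gives only $\nu^* \geq n^2/(4n-3)\to n/4$; and your symmetric primal construction $\omega(xyz)=\tfrac13\bigl(\omega_x(yz)+\omega_y(xz)+\omega_z(xy)\bigr)$ likewise yields $n/4$ after the mandatory rescaling, since its total weight is $n^2/9$ while the vertex constraint at a single $v$ can reach roughly $n/9 + n/3 = 4n/9$ (the first term from $\omega_v$, the rest from $\sum_y\omega_x(vy)\leq 1$ over all $x\neq v$). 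The factor you are missing is exactly the $1/(1-3c_0)$.
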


A \emph{fractional cover} in a $k$-graph $G$ is a function $\bvec{c}\colon E(G) \to [0,1]$  such that ${\sum_{v \in e} \bvec c(v) \geq 1}$ for every $v \in V(G)$.
The \emph{size} of $\vecb c$ is $\sum_{v \in V} \vecb c (v)$.
By linear programming duality, the maximum size of a fractional matching in $G$ is equal to the minimum size of a fractional cover in $G$.
Hence, \cref{lem:matching-link-graph} is equivalent to the following result.

\begin{lemma}\label{lem:cover-link-graph}
	Let $G$ be a $3$-graph on $n$ vertices such that, for every $v \in V(G)$, every fractional cover $L(v)$ has size at least $n/3$.
	Then every fractional cover in $G$ has size at least $n/3$.
\end{lemma}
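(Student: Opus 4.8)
The plan is to argue by contradiction: suppose $\bvec{c}\colon V(G) \to [0,1]$ is a fractional cover of $G$ of size strictly less than $n/3$. The idea is to extract from $\bvec c$ a fractional cover of $L(v)$ of size less than $n/3$ for a suitably chosen vertex $v$, contradicting the hypothesis. First I would observe that, since every edge of $L(v)$ is a pair $\{x,y\}$ with $\{v,x,y\} \in E(G)$, the restriction $\bvec c \restriction_{V(G) \setminus \{v\}}$ satisfies the covering inequality for every edge of $L(v)$ except that it may be ``short'' by the amount $\bvec c(v)$ on each such edge. So the natural fix is to add $\bvec c(v)$ uniformly (or just enough) to the weights of the vertices in $V(G) \setminus \{v\}$; but this costs $(n-1)\bvec c(v)$, which is too expensive in general.

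The better approach is an averaging argument over the choice of $v$. Write $\sigma = \sum_{u \in V(G)} \bvec c(u) < n/3$. For each vertex $v$, define $\bvec c_v\colon V(G)\setminus\{v\} \to [0,1]$ by $\bvec c_v(u) = \min\{1, \bvec c(u) + \bvec c(v)\}$, or more economically $\bvec c_v(u) = \bvec c(u) + \bvec c(v)$ capped at $1$; this is a fractional cover of $L(v)$, since for $\{x,y\}\in E(L(v))$ we have $\bvec c_v(x) + \bvec c_v(y) \ge \bvec c(x) + \bvec c(y) + \bvec c(v) \ge 1$ because $\{v,x,y\}\in E(G)$. Its size is at most $\sigma - \bvec c(v) + (n-1)\bvec c(v) = \sigma + (n-2)\bvec c(v)$. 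This is only useful when $\bvec c(v)$ is very small, so I would choose $v$ to be a vertex minimising $\bvec c(v)$; then $\bvec c(v) \le \sigma/n < 1/3$, giving size at most $\sigma + (n-2)\sigma/n = \sigma(2 - 2/n) < (n/3)(2-2/n)$, which is not obviously below $n/3$. So a cruder bound on $\bvec c(v)$ does not suffice, and the averaging has to be done more carefully — for instance by a weighted choice of $v$, or by using the LP-duality reformulation (Lemma~\ref{lem:matching-link-graph}) and arguing on the matching side, where one builds a perfect fractional matching of $G$ by combining, for each $v$, a large fractional matching of $L(v)$ scaled appropriately and summing over all $v$ with the right normalisation so that the vertex constraints are met.

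Concretely, on the matching side: for each $v$, let $\omega_v$ be a fractional matching of $L(v)$ of size $n/3$; lift it to a weight on triples of $G$ by setting $\widehat\omega_v(\{v,x,y\}) = \omega_v(\{x,y\})$, and then average $\omega = \tfrac{1}{n}\sum_{v} \widehat\omega_v$ (with a correction since each triple $\{a,b,c\}$ receives contributions from $v\in\{a,b,c\}$). Checking the vertex constraint $\sum_{e\ni u}\omega(e)\le 1$ is the crux: the contribution at $u$ splits into the term where $u$ plays the role of the ``apex'' $v$ (contributing $\tfrac1n \cdot \tfrac n3 = \tfrac13$) and the terms where $u$ lies in the link of some other $v$ (each link fractional matching puts total weight $\le 1$ on edges through $u$, and we average over the $n$ choices of apex, but only those apices $v$ with $\{v,u,\cdot\}$ an edge contribute), and one must verify these sum to at most $1$. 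The size of $\omega$ is then $\tfrac1n\sum_v \tfrac n3 = \tfrac n3$, forcing a perfect fractional matching. \textbf{The main obstacle} I anticipate is precisely this bookkeeping of the vertex constraint after averaging — making sure the overlap between the ``apex'' role and the ``link'' role of a vertex does not push the load above $1$; this likely requires either a slightly cleverer averaging weight or an appeal to the fact that $L(v)$ fractional matchings can be taken of size \emph{exactly} $n/3$ with all vertex-loads equal, which tightens the count. Once the constraint is verified, the conclusion is immediate.
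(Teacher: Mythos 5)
Your proposal does not close the argument: both routes you sketch are left incomplete, and you correctly identify the sticking points yourself. In the first route, adding $\bvec c(v)$ to every other coordinate overshoots — even with $v$ chosen of minimum weight, you only get a bound of roughly $2\sigma$, not $< n/3$. In the second (matching-side) route, the vertex-constraint bookkeeping after averaging is exactly where the difficulty lies, and you acknowledge you haven't resolved it. So as written there is a genuine gap.

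The paper stays on the cover side but uses a sharper device than simple subtraction: an \emph{affine renormalisation}. Let $w_0$ minimise $\bvec c$ and $c_0 = \bvec c(w_0) < 1/3$ (else $\bvec c$ already has size $\geq n/3$). Define
\[
\bvec c'(v) \;=\; \frac{\bvec c(v) - c_0}{1 - 3c_0}\,.
\]
This has three simultaneous effects, each of which your first attempt was groping towards but could not achieve with a mere additive shift. First, subtracting $c_0$ makes $\bvec c'(w_0) = 0$, so that for any $e = \{x,y\} \in L(w_0)$ the original constraint $\bvec c(w_0)+\bvec c(x)+\bvec c(y) \geq 1$ becomes $\bvec c'(x) + \bvec c'(y) \geq \frac{1 - 3c_0}{1-3c_0} = 1$: thus $\bvec c'\restriction_{V\setminus\{w_0\}}$ is a fractional cover of $L(w_0)$. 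Second, the choice of $w_0$ as the minimiser keeps $\bvec c'$ nonnegative. Third — and this is the step your additive approach had no analogue of — dividing by $1 - 3c_0$ exactly compensates for the total lost by subtracting $c_0$ everywhere: one checks $(1-3c_0)\sum_v \bvec c'(v) = \sigma - n c_0 < \tfrac n3 - nc_0 = \tfrac n3 (1-3c_0)$, hence $\sum_v \bvec c'(v) < n/3$. Since $\bvec c'(w_0) = 0$, this is a cover of $L(w_0)$ of size $< n/3$, contradicting the hypothesis. The trick is that the map $\bvec c \mapsto \bvec c'$ preserves the threshold $n/3$ because the fixed point of the affine transformation $x \mapsto (x - c_0)/(1-3c_0)$ is precisely $1/3$; an additive-only shift cannot have this property.
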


\begin{proof}
	For the sake of contradiction, suppose that there is a fractional cover $\bvec{c}: V \to  [0,1]$ of size less than $n/3$.
	Let $w_0$ be a vertex of minimum value under $\bvec{c}$, and let $c_0 = \bvec{c}(w_0)$.
	Note that $c_0 <1/3$.
	(Otherwise, $\bvec c$ would have size $n/3$.)
	We define
	\begin{equation*}
		\bvec{c}'(v) = \frac{\bvec{c}(v)-c_0}{1-3 c_0}.
	\end{equation*}
	Observe that $\sum_{v \in V} \bvec{c}'(v)$ is still less than $n/3$, since
	\begin{equation*}
		(1-3 c_0) \sum_{v \in {V}}  \vecb c'(v) =\sum_{v \in {V}} (\bvec{c}(v) - c_0) < \frac{n}{3} - c_0 n = (1- 3 c_0) \frac{n}{3}.
	\end{equation*}

	Since $\bvec{c}'(w_0)=0$ and $\bvec{c}$ is a fractional cover, we have, for any $e \in L(w_0)$,
	\begin{equation*}
		\sum_{v \in e} \bvec c'(v) =   \sum_{v \in e \cup \{w_0\}} \bvec c'(v) =\frac{\sum_{v \in e \cup \{w_0\}} \bvec c(v) -c_0}{1-3 c_0} \geq 1.
	\end{equation*}
	Hence, $\bvec c'$ is a fractional cover of $L(w_0)$.
	Crucially, we also have that $\bvec c'(w_0)=0$.
	So $L(w_0)$ has a cover of size less than $n/3$,
	a contradiction.
\end{proof}

The proof of \cref{pro:5/9-framework} relies on the following lemma, which was originally shown by Cooley and Mycroft~\cite[Lemma 3.2]{CM17}.
We remark that the condition \ref{itm:cooleymycroft-triangle} is not present in the original statement but follows by inspecting the proof and applying Mantel's theorem (see also~\cite[Lemma 3.7]{LS22}).
For completeness, we give the details in \cref{sec:cooley-mycroft-lemma}.

\begin{lemma}  \label{lem:cooley-mycroft}
	For every $\eps > 0$, there is $n_0$ such that the following holds.
	Suppose that $L$ is a $2$-graph on $n$ vertices with $e(L) \geq (5/9 + \eps) \binom{n}{2}$.
	Let $C \subset L$ be a component with a maximum number of edges.
	Then
	\begin{enumerate}[label=\textnormal{(\roman*)}]
		\item \label{itm:cooleymycroft-edge-density} {$e(C) > 4/9\binom{n}{2}$,}
		\item \label{itm:cooleymycroft-fractional-matching} $C$ has a matching of size $n/3$ and
		\item \label{itm:cooleymycroft-triangle} $C$ has a triangle.
	\end{enumerate}
	Moreover, for another $2$-graph $L'$ with $e(L') \geq 5/9 \binom{n}{2}$, $V(L') = V(L)$ and largest component $C' \subset L'$, it follows that
	\begin{enumerate}[label=\textnormal{(\roman*)}]\addtocounter{enumi}{4}
		\item \label{itm:cooleymycroft-edge-in-common} $C$ and $C'$ have at least one edge in common.
	\end{enumerate}
\end{lemma}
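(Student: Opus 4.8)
The proof stays within elementary extremal graph theory, and we essentially follow Cooley and Mycroft~\cite{CM17}; the only addition is~\ref{itm:cooleymycroft-triangle}, which we read off from the same analysis via Mantel's theorem. Throughout, set $n_1 := |V(C)|$ and note that, since $C$ is a component of $L$,
\begin{equation*}
	e(L) = e(C) + e(L - V(C)) \le \binom{n_1}{2} + \binom{n - n_1}{2}.
\end{equation*}
For~\ref{itm:cooleymycroft-edge-density} the plan is first to localise $n_1$. The map $x \mapsto \binom{x}{2} + \binom{n-x}{2}$ is convex, symmetric about $n/2$, and equals $(5/9 + o(1))\binom{n}{2}$ at $x = 2n/3$; so the displayed bound together with $e(L) \ge (5/9 + \eps)\binom{n}{2}$ forces $n_1 \le (1/3 - \Omega(\eps))n$ or $n_1 \ge (2/3 + \Omega(\eps))n$. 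The first alternative is impossible: since $C$ is edge-maximal, every component $C_i$ satisfies $e(C_i) \le e(C) \le \binom{n_1}{2} \le \binom{n/3}{2} \le (1/9)\binom{n}{2}$ as well as $e(C_i) \le \binom{|V(C_i)|}{2}$, and a short calculation gives $\min\{\binom{|V(C_i)|}{2}, (1/9)\binom{n}{2}\} \le |V(C_i)|\, n / 6$ for each $i$; summing over all components then yields $e(L) \le n^2/6 < (5/9)\binom{n}{2}$, a contradiction. Hence $n_1 \ge (2/3 + \Omega(\eps))n$, and then $e(C) = e(L) - e(L - V(C)) \ge (5/9 + \eps)\binom{n}{2} - \binom{n - n_1}{2} \ge (4/9 + \Omega(\eps))\binom{n}{2}$, which is~\ref{itm:cooleymycroft-edge-density}.

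For~\ref{itm:cooleymycroft-fractional-matching}, suppose $C$ has no matching of size $n/3$. By the Erdős--Gallai theorem on matchings, $e(C) \le \max\{\binom{2n/3 - 1}{2},\, \binom{n/3-1}{2} + (n/3-1)(n_1 - n/3 + 1)\}$; the first term is smaller than $(4/9)\binom{n}{2} < e(C)$, so the second term dominates. Feeding this into $e(L) \le e(C) + \binom{n - n_1}{2}$ produces an upper bound for $e(L)$ that is convex in $n_1$, hence attains its maximum over the range $n_1 \in [2n/3, n]$ permitted by~\ref{itm:cooleymycroft-edge-density} at one of the two endpoints; evaluating there gives in both cases a value strictly below $(5/9 + \eps)\binom{n}{2}$, contradicting the hypothesis. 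For~\ref{itm:cooleymycroft-triangle}, suppose $C$ is triangle-free, so $e(C) \le n_1^2/4$ by Mantel's theorem. If $n_1 \le (5/6)n$ then $e(C) \le 25 n^2/144 < (4/9)\binom{n}{2}$, contradicting~\ref{itm:cooleymycroft-edge-density}; if $n_1 > (5/6)n$ then $L - V(C)$ spans fewer than $n/6$ vertices, so $e(C) = e(L) - e(L - V(C)) > (5/9)\binom{n}{2} - \binom{n/6}{2} > n^2/4 \ge n_1^2/4$, again a contradiction. So $C$ contains a triangle.

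For~\ref{itm:cooleymycroft-edge-in-common}, running the argument of~\ref{itm:cooleymycroft-edge-density} for $L'$ — where only $e(L') \ge (5/9)\binom{n}{2}$ is available, so $\Omega(\eps)$ is replaced by $o(1)$ — gives $n_1' := |V(C')| \ge (2/3 - o(1))n$, whence $W := V(C) \cap V(C')$ satisfies $|W| \ge (1/3 - o(1))n > 0$. Every edge of $L$ contained in $W \subseteq V(C)$ is an edge of $C$, and likewise every edge of $L'$ contained in $W$ is an edge of $C'$, so it suffices to exhibit a pair inside $W$ that is an edge of both $L$ and $L'$. Now the $n_1(n - n_1)$ pairs between $V(C)$ and $V(L) \setminus V(C)$ are all non-edges of $L$ and lie outside $W$, so the number of non-edges of $L$ inside $W$ is at most $\big(\binom{n}{2} - e(L)\big) - n_1(n-n_1) \le (4/9 - \eps)\binom{n}{2} - n_1(n-n_1)$, and symmetrically for $L'$ (without the $\eps$). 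Writing $n_1 = (1-a)n$ and $n_1' = (1-b)n$ with $a, b \le 1/3 + o(1)$, the sum of these two counts is strictly less than $\binom{|W|}{2} \ge \binom{(1-a-b)n}{2}$ precisely when $(a-b)^2 < 1/9 + \eps$, which holds because $|a-b| \le 1/3 + o(1)$. Hence some pair inside $W$ is an edge of both $L$ and $L'$, and therefore of both $C$ and $C'$.

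I expect~\ref{itm:cooleymycroft-edge-in-common} to be the part requiring the most care, chiefly in bookkeeping the asymmetric roles of $\eps$ for $L$ and $L'$ and in bounding the non-edges of $L$ and $L'$ inside $W$; items~\ref{itm:cooleymycroft-edge-density}--\ref{itm:cooleymycroft-triangle} are routine once $n_1$ has been localised to $[(2/3 + \Omega(\eps))n, n]$. Rounding issues are harmless throughout and are suppressed as elsewhere in the paper.
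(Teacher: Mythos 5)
Your proof is correct and follows essentially the same route as the paper's (which itself follows Cooley and Mycroft): localise $|V(C)|$ to roughly $[2n/3, n]$ via a counting argument, then apply Erdős--Gallai for the matching, Mantel for the triangle, and an intersection-counting argument reducing to $(a-b)^2 < 1/9 + \eps$ for the common edge. The small variations — the averaging inequality $\min\{\binom{v_i}{2},(1/9)\binom{n}{2}\}\le v_i n/6$ to rule out small components, the contrapositive-plus-convexity phrasing of the Erdős--Gallai step, and counting non-edges rather than edges in $W$ — are all sound and equivalent to the paper's choices.
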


\begin{proof}[Proof of \cref{pro:5/9-framework}]
	For $\eps \gg 1/n$, let $\P$ be the family of $n$-vertex $3$-graphs in $\DegF{1}{5/9+\eps}$.
	Consider $G \in \cP$.
	We select a $2$-uniform component $C_v \subset L(v)$ with a maximum number of edges, for each vertex $v \in V(G)$.
	So in particular, the graphs $C_v$ satisfy the outcome of \cref{lem:cooley-mycroft}.
	Let $F_v = \{e \cup \{v\}\colon e \in C_v\} \subseteq G$ for all $v \in V(G)$,
	and let $C \subset G$ be the spanning subgraph whose edges are formed by the union of $F_v$ over all $v \in V(G)$.
	Finally, set $F(G) = C$.
	We claim that the family $F = \{ F(G) \colon G \in \P \}$ is a Hamilton framework for $\P$.
	To this end, we verify conditions \ref{itm:hf-connected}--\ref{itm:hf-intersecting}.

	We first show that $C$ is \tightly connected.
	Note that for a fixed $v \in V(G)$, the edges $F_v$ are in a \tight component of $G$, since $C_v$ is connected by choice.
	Moreover, for another $u \in V(G)$, we can find a common edge $e \in C_v \cap C_v$ by \cref{lem:cooley-mycroft}(v).
	It follows that $F_u$ and $F_v$ are in the same \tight component.
	Hence $C$ is \tightly connected, and  part~\ref{itm:hf-connected} holds.

	For the matching property, we can simply combine \cref{lem:matching-link-graph} with \cref{lem:cooley-mycroft}\ref{itm:cooleymycroft-fractional-matching} to find a perfect fractional matching in $C$.
	In other words, part~\ref{itm:hf-matching} holds.

	Now we show that $C$ contains a closed walk of order $1 \bmod 3$.
	For each $v \in V(G)$, we have that $|F_v| = |C_v| > \frac{4}{9} \binom{n}{2}$, by \ref{itm:cooleymycroft-edge-density}.
	Since $\sum_{v \in V(G)}|F_v| > (4n/9)\binom{n}{2} > \binom{n}{3} \geq |C|$, there are distinct $x, y \in V(G)$ such that $F_x \cap F_y$ have non-empty intersection.
	Hence, there exists $z \in V$ distinct from $x$ and $y$ such that
	$xyz$ is an edge in $C$, and such that $yz \in C_x$ and $xz \in C_y$.
	Since $C_x$ contains an odd cycle by \ref{itm:cooleymycroft-triangle}, there exists $k \geq 1$ and a walk $W_x = w_0 w_1 \dotsb w_{2k} w_{2k+1}$ in $C_x$ such that $w_0w_1 = yz$ and $w_{2k} w_{2k+1} = zy$.
	Similarly, there exists $r \geq 1$ and a walk $W_y = u_0 u_1 \dotsb u_{2r} u_{2r+1}$ in $C_y$ which begins with $xz$ and ends with $zx$.
	Note then that
	\[ P_x = yz x  w_2 w_3 x w_4 w_5 x \dotsb x zy \]
	and
	\[ P_y = xz y u_2 u_3 y u_4 u_5 y \dotsb y zx \]
	are both \tight walks in $C$, each one containing a number of vertices which is equal to $2$ modulo $3$.
	Also the cyclic concatenation $P_x P_y$ is a closed \tight walk whose number of vertices is $1 \bmod 3$, as required.
	Hence,  part~\ref{itm:hf-odd} holds.

	We finish by verifying the consistency property.
	Suppose that there is an $(n+1)$-vertex $3$-graph $J$ with $x,y \in V(J)$ such that $G=J-x$ and $G'=J-y$, where $G'$ is another $n$-vertex $k$-graph with $\delta_1(G') \geq (5/9 + \eps) \binom{n-1}{2}$.
	For $v \in V(J-x-y)$, consider components $C_v\subset L_G(S)$ and $C'_v \subset L_{G'}(S)$ each with a maximum number of edges.
	Then $C_v-x-y$ and $C'_v-x-y$ have each more than $(1/4)\binom{n-1}{2}$ edges, and thus intersect in a vertex.
	It follows that $F(G) \cup F(G')$ is \tightly connected, as desired.
\end{proof}

\subsection{Power of Hamilton cycles}\label{sec:applications-proofs-powers}

Now we come to powers of hypergraph Hamilton cycles and prove \cref{thm:power-hamilton-map-codegree}.

We require the following three facts, that can be derived from former work~\cite[Lemmata 3.6 and 4.3]{PSS23}.
Recall that $f_k(t) = 1 - 1/\left(\tbinom{t-1}{k-1}+\tbinom{t-2}{k-2}\right)$.
	{We also abbreviate minimum codegrees to $\delta(G) = \delta_{k-1}(G)$.}

\begin{lemma} \label{lemma:crucialbound}
	For $2 \leq k \leq t \leq n$, let $G$ be a $k$-graph on $n$ vertices with $\delta(G) > f_k(t)n$.
	Then for any $D_1, D_2 \in K_{t-1}(G)$ with $|D_1 \cap D_2| = t-2$,
	there is a vertex $v$ such that $D_1 \cup \{v\}, D_2 \cup \{v\}\in K_{t}(G)$.
\end{lemma}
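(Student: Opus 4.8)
The plan is to deduce this by a short counting argument that hinges on the precise value of \(f_k(t)\). Since \(|D_1 \cap D_2| = t-2\), write \(D_1 = A \cup \{a\}\) and \(D_2 = A \cup \{b\}\) with \(|A| = t-2\) and \(a \ne b\), \(a,b \notin A\). The first step is to translate the conclusion into a local condition on a single new vertex. For \(v \in V(G) \setminus (D_1 \cup D_2)\), every \(k\)-subset of \(D_1 \cup \{v\}\) that avoids \(v\) already lies in the clique \(D_1\), so \(D_1 \cup \{v\} \in K_t(G)\) if and only if \(S \cup \{v\} \in E(G)\) for every \((k-1)\)-set \(S \subseteq D_1\); and likewise for \(D_2\). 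Hence a vertex \(v \notin D_1 \cup D_2\) with \(D_1 \cup \{v\}, D_2 \cup\{v\} \in K_t(G)\) exists if and only if some \(v\) satisfies \(S \cup \{v\} \in E(G)\) for all \(S\) in the family
\[
\mathcal{S} := \{\, S \subseteq D_1 \cup D_2 : |S| = k-1,\ S \subseteq D_1 \text{ or } S \subseteq D_2 \,\},
\]
equivalently the \((k-1)\)-subsets of the \(t\)-set \(D_1 \cup D_2\) that do not contain both \(a\) and \(b\).

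Next I would compute \(|\mathcal{S}|\). There are \(\binom{t}{k-1}\) such subsets in total and \(\binom{t-2}{k-3}\) of them contain both \(a\) and \(b\), so \(|\mathcal{S}| = \binom{t}{k-1} - \binom{t-2}{k-3}\); two applications of Pascal's identity turn this into \(|\mathcal{S}| = \binom{t-1}{k-1} + \binom{t-2}{k-2} =: N\), which is exactly \(1/(1-f_k(t))\). Now apply a union bound over candidates \(v \in V(G) \setminus (D_1 \cup D_2)\). For a fixed \(S \in \mathcal{S}\), the codegree hypothesis says that the number of \(v\) with \(S \cup \{v\} \in E(G)\) exceeds \(f_k(t) n\), so the number of \(v\) with \(S \cup \{v\} \notin E(G)\) is smaller than \((n-k+1) - f_k(t) n = n/N - (k-1)\). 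Summing over the \(N\) sets of \(\mathcal{S}\), the number of ``bad'' vertices is strictly less than \(n - N(k-1)\), and since \(N(k-1) \ge t\) for all \(2 \le k \le t\) (an elementary inequality), this last quantity is at most \(n - t = |V(G) \setminus (D_1 \cup D_2)|\). Therefore some \(v \notin D_1 \cup D_2\) survives all constraints, and this \(v\) is as required.

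The calculation is routine, so there is no genuine obstacle; the one place that needs care is the bookkeeping in the first two steps. A naive union bound over the \(\binom{t-1}{k-1}\) many \((k-1)\)-subsets of \(D_1\) together with the \(\binom{t-1}{k-1}\) of \(D_2\) would give \(2\binom{t-1}{k-1}\) constraints, which is too many; the point is that the \(\binom{t-2}{k-1}\) subsets lying inside the shared part \(A\) are common to \(D_1\) and \(D_2\) and must be counted only once, bringing the total down to exactly \(N\) --- and it is precisely this \(N\) that appears as the denominator in the definition of \(f_k(t)\), so the union bound closes with essentially no room to spare (apart from the slack from the \(-(k-1)\) term, which is what also lets us avoid the \(t\) vertices of \(D_1 \cup D_2\) themselves).
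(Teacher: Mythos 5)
Your proof is correct and follows essentially the same route as the paper: both identify the family of $(k-1)$-sets lying in $D_1$ or $D_2$, show it has exactly $\binom{t-1}{k-1}+\binom{t-2}{k-2}$ members (you via Pascal applied to $\binom{t}{k-1}-\binom{t-2}{k-3}$, the paper via inclusion--exclusion $2\binom{t-1}{k-1}-\binom{t-2}{k-1}$), and close with a union bound against the codegree condition. Your extra bookkeeping with the $-(k-1)$ slack and the requirement $v \notin D_1 \cup D_2$ is implicit in the paper's terser count, since any $v$ surviving all constraints must lie outside every $S$ and hence outside $D_1 \cup D_2$.
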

\begin{proof}
	We count the number of $(k-1)$-sets completely contained in either $C_1$ or $C_2$.
	Each of $C_1, C_2$ has~$\binom{t-1}{k-1}$ such sets of which~$\binom{t-2}{k-1}$ are shared,
	so by inclusion-exclusion and by Pascal's identity, the desired number is $2\binom{t-1}{k-1} - \binom{t-2}{k-1} = \binom{t-1}{k-1}+\binom{t-2}{k-2}$.
	So, the number of possible choices for $v$ is at least
	\begin{equation*}
		n - \left(\tbinom{t-1}{k-1}+\tbinom{t-2}{k-2}\right)\left (n- \delta(G)\right)
		> 0. \qedhere
	\end{equation*}
\end{proof}

For a $k$-graph $G$, we denote by $\partial G$ the $(k-1)$-sets of $V(G)$ that are contained in an edge of $G$.

\begin{lemma}\label{lem:powers-connectivity}
	For $2 \leq k \leq t \leq n$, let $G$ be a $k$-graph on $n$ vertices with $\delta(G) > f_k(t)n$.
	Then $K_t(G)$ is \tightly connected.
\end{lemma}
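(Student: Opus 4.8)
The plan is to show that any two edges of $K_t(G)$ lie on a common closed $t$-uniform tight walk, which by \cref{obs:tight-connectivity} is equivalent to tight connectivity. So fix two cliques $A, B \in K_t(G)$. The key engine is \cref{lemma:crucialbound}: it lets us take a $(t-1)$-subset $D \subset A$, replace one vertex of $D$ by a suitable new vertex $v$ to get another $(t-1)$-clique $D'$, and know that both $D$ and $D'$ extend by a common vertex into $t$-cliques. In tight-walk language, this says that from a $t$-clique we can perform a single "tight step" to an overlapping $t$-clique while keeping the walk inside $K_t(G)$; crucially the vertex we bring in is essentially free to choose among a linear-sized set.

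First I would establish a "local mobility" claim: given any $t$-clique $S$ and any vertex $w \notin S$ with $w$ completing some $(t-1)$-subset of $S$ into a $t$-clique (which the codegree bound $\delta(G) > f_k(t)n$ guarantees for a positive-density set of $w$), there is a short tight walk in $K_t(G)$ from $S$ to a $t$-clique containing $w$. Iterating this, from $S$ we can reach a $t$-clique on any prescribed $(t-1)$-set that is "reachable" in the sense of being built up one vertex at a time through $(t-1)$-cliques. The codegree condition $\delta(G) > f_k(t)n \ge f_k(t)n$, combined with the counting already done in the proof of \cref{lemma:crucialbound} (each swap forbids at most $\left(\binom{t-1}{k-1}+\binom{t-2}{k-2}\right)(n-\delta(G))$ vertices, which is less than $n$), shows every such swap is possible, so in fact the "$(t-1)$-clique graph with swaps" is connected — indeed it contains all of $\partial^{t-k+1}$-type structure sitting above edges of $G$.

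Second, I would connect $A$ to $B$: since $G$ has minimum codegree $> f_k(t)n > n/2$ it is connected as a $k$-graph, hence $\partial G$ (and the auxiliary "$(t-1)$-cliques with single-vertex swaps" graph) is connected, so we can walk from a $(t-1)$-subset of $A$ to a $(t-1)$-subset of $B$ through a sequence of legal swaps, lifting each swap to a tight step in $K_t(G)$ by the local mobility claim. Concatenating these tight walks and closing up (a closed tight walk is obtained by traversing the walk from $A$ to $B$ and back) yields a closed $t$-uniform tight walk through both $A$ and $B$. By \cref{obs:tight-connectivity}, $K_t(G)$ is tightly connected.

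The main obstacle is the bookkeeping in the local mobility claim: one must check that the intermediate $(t-1)$-cliques produced by successive applications of \cref{lemma:crucialbound} really are cliques in $K_{t-1}(G)$ and that the connecting vertices can be chosen distinct enough to form a genuine tight walk (consecutive edges sharing exactly $t-1$ vertices) rather than a degenerate one. This is where the quantitative slack in the codegree condition — the strict inequality $\delta(G) > f_k(t)n$ giving a linear surplus of valid choices at each step — is used, and it is essentially the content of \cref{lemma:crucialbound} applied repeatedly; I expect this to be routine but notation-heavy.
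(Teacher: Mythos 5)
Your proposal departs from the paper's proof in a way that leaves a genuine gap. The paper proves the lemma by induction on $t$: the base case $t=k$ shows $G = K_k(G)$ is tightly connected from $\delta(G) > n/2$, and the inductive step uses $f_k(t) \geq f_k(t-1)$ together with \cref{lemma:crucialbound} to lift a closed tight walk in $K_{t-1}(G)$ (provided by the inductive hypothesis) to one in $K_t(G)$. Your plan tries to avoid the induction, but in doing so you quietly assume exactly what the induction supplies.

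Concretely, your argument needs to walk from a $(t-1)$-subset of $A$ to a $(t-1)$-subset of $B$ via single-vertex swaps through $(t-1)$-cliques — that is, you need the $(t-1)$-clique graph $K_{t-1}(G)$ to be tightly connected. You justify this by saying that $G$ is tightly connected as a $k$-graph (true, from $\delta(G) > n/2$), and then write "hence $\partial G$ (and the auxiliary $(t-1)$-cliques with single-vertex swaps graph) is connected." That "hence" is unjustified: for $t > k$, tight connectivity of the $k$-graph $G$ does not by itself give tight connectivity of the $(t-1)$-clique hypergraph $K_{t-1}(G)$, which is a $(t-1)$-uniform object whose edges are required to share $t-2$ vertices along the walk. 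This is a strictly stronger statement and is precisely what the codegree bound $\delta(G) > f_k(t)n \geq f_k(t-1)n$ and the induction on $t$ are there to establish. Your "local mobility" claim is also trivially true as stated (a $t$-clique $S$ and a $t$-clique $D\cup\{w\}$ with $D\subset S$ of size $t-1$ already share $t-1$ vertices, so they are adjacent in the line graph with no work), so it does not fill the gap either; the actual content of \cref{lemma:crucialbound} is the lift from $K_{t-1}$-steps to $K_t$-steps, which only becomes useful once you already have a walk in $K_{t-1}(G)$ to lift. To repair the argument you would essentially reintroduce the induction: prove $K_{t-1}(G)$ tightly connected first, then apply \cref{lemma:crucialbound} along a closed tight walk there.
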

\begin{proof}
	We go by induction on $t$.
	For the base case $t = k$, we have $f_k(t) = 1/2$.
	Suppose that, for the sake of contradiction, there are distinct \tight components $E,F \subset G$.
	Consider edges $e \in E$, $f \in F$ such that $d = |e \cap f|$ is maximal.
	Since $\delta(G) > n/2$, there is a vertex such that $\{v\} \cup e$ and $\{v\} \cup f$ are both in $G$.
	Note that these two edges contain $(k-1)$-sets $e',f' \in \partial G$ with $|e' \cap f'| > d$.
	By maximality, $e'$ and $f'$ are in the same \tight component.
	But this contradicts the fact that $E$ and $F$ are distinct.

	Now suppose that $t>k$.
	Consider two $(t-1)$-edges $e,f \in \partial(K_t(G)) \subset K_{t-1}(G)$.
	Note that $f_k(t) \geq f_k(t-1)$.
	By the induction assumption, there is a $(t-1)$-uniform closed \tight walk $W \subset K_{t-1}(G)$ that contains both $e$ and~$f$.
	We then apply \cref{lemma:crucialbound} along the consecutive edges of $W$ to find that $K_t(G)$ is \tightly connected.
\end{proof}

A \emph{neighbour} of a set $X$ of vertices in a hypergraph $G$ is any set $Y$ such that $X \cup Y$ is an edge of $G$.

\begin{lemma}\label{lem:powers-odd-cycle}
	For $2 \leq k \leq t \leq n$, let $G$ be a $k$-graph on $n$ vertices with $\delta(G) > f_k(t)n$.
	Then $K_t(G)$ has a closed \tight walk of order $-1 \bmod t$.
\end{lemma}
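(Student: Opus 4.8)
We want a closed tight walk in $K_t(G)$ whose number of vertices is $\equiv -1 \bmod t$, i.e. congruent to $t-1 \bmod t$. We have codegree $\delta(G) > f_k(t)n$, and we already know (Lemma~\ref{lem:powers-connectivity}) that $K_t(G)$ is tightly connected, plus the "linking" statement of Lemma~\ref{lemma:crucialbound}: any two $(t-1)$-cliques sharing $t-2$ vertices can be extended by a common vertex to $t$-cliques.

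\textbf{Setup.} By \cref{lem:powers-connectivity}, $K_t(G)$ is tightly connected; since $n>t$ it has no isolated vertices and hence at least two edges, so there are edges $E,E'$ of $K_t(G)$ with $|E\cap E'|=t-1$, say $E=S\cup\{a\}$ and $E'=S\cup\{b\}$ with $|S|=t-1$ and $a\ne b$. The plan is to isolate a clean reduction and then indicate how to realise it.

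\textbf{Reduction.} It suffices to exhibit \emph{some} closed tight walk $Z$ in $K_t(G)$ whose order $\ell$ is coprime to $t$. Indeed, traversing the cyclic vertex sequence of $Z$ exactly $c$ times in a row is again a closed tight walk: it is the image of the natural homomorphism $C_{c\ell}^{(t)}\to C_\ell^{(t)}\to K_t(G)$, so it has order $c\ell$. Choosing $c$ with $c\ell\equiv-1\pmod t$ — possible since $\gcd(\ell,t)=1$ — then finishes the proof. So the whole problem is to manufacture a closed tight walk of order coprime to $t$.

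\textbf{Building the gadget.} A routine consequence of \cref{lemma:crucialbound} is that every tight walk in $K_t(G)$ with at least $t$ vertices can be prolonged by one vertex: if $\{d_1,\dots,d_t\}$ is its last window, with $d_1$ the vertex about to leave it, apply \cref{lemma:crucialbound} to the $(t-1)$-cliques $\{d_2,\dots,d_t\}$ and $\{d_1,\dots,d_t\}\setminus\{d_i\}$ for some $i\ge2$ (they are $(t-1)$-subsets of one edge, so they share $t-2$ vertices) to obtain an admissible next vertex. The cleanest gadget is a copy of $K_{t+1}^{(k)}$ inside $G$: on $t+1$ vertices all of whose $t$-subsets lie in $K_t(G)$, every cyclic ordering is a tight walk, so one gets a closed tight walk of order $t+1$, which is coprime to $t$. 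When $k=2$ this $K_{t+1}$ is immediate: for a $t$-clique $E_0$ the union of the $t$ non-neighbourhoods of its vertices, each of size less than $(1-f_2(t))n=n/t$, covers fewer than $n$ vertices, so some vertex is adjacent to all of $E_0$ and completes a $K_{t+1}$. The same works verbatim for cliques $E_0$ of any fixed size in the $k=2$ setting, which is all one needs here.

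\textbf{The main obstacle.} For $k\ge3$ the hypothesis $\delta(G)>f_k(t)n$ need \emph{not} force $K_{t+1}^{(k)}$: by Pascal's identity the number of $(k-1)$-sets one would have to complete simultaneously is $\binom{t}{k-1}=\binom{t-1}{k-1}+\binom{t-1}{k-2}\ge\binom{t-1}{k-1}+\binom{t-2}{k-2}=(1-f_k(t))^{-1}$, so the naive union bound over non-neighbourhoods is exactly tight and collapses. Hence the gadget must be produced by other means, and the crux of the proof is to show that $K_t(G)$ admits \emph{any} closed tight walk of order coprime to $t$. Note this is a genuine periodicity statement: every edge of $K_t(G)$ already yields a closed tight walk of order exactly $t$, so the gcd of all closed-tight-walk orders divides $t$, and the point is that it equals $1$. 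I expect this to be handled by induction on $t$, mirroring the proof of \cref{lem:powers-connectivity}: from a closed tight walk in $K_{t-1}(G)$ whose order is coprime to $t$ (obtained from tight-connectivity together with the inductive information) one thickens along consecutive edges using \cref{lemma:crucialbound}, keeping track of how the order transforms, to produce the required closed tight walk in $K_t(G)$; the base case $t=k$ is the codegree-Dirac regime $\delta_{k-1}(G)>n/2$, where it reduces to locating a short tight cycle of length coprime to $k$. Controlling the order through the thickening step — and through the base case — is where the real work lies.
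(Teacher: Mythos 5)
There is a genuine gap: you correctly set up the problem but never actually construct the walk.

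Your reduction is sound — concatenating a closed tight walk of order $\ell$ with itself $c$ times gives one of order $c\ell$, so if $\gcd(\ell,t)=1$ you can hit any residue mod $t$ — and you correctly observe that a $K_{t+1}^{(k)}$ gadget would supply such a walk (order $t+1$) but that $\delta(G)>f_k(t)n$ is exactly too weak to force one when $k\geq 3$. But at that point the argument stops: your last paragraph is a plan ("I expect this to be handled by induction on $t$, mirroring the proof of \cref{lem:powers-connectivity}\ldots controlling the order through the thickening step is where the real work lies"), not a proof. The crucial construction is precisely the part you leave out.

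The paper does not go via induction on $t$, and it does not pass through the ``coprime'' reduction at all. It makes a single direct application of \cref{lemma:crucialbound}: fix one $t$-clique $A=\{a_1,\dots,a_t\}$ of $K_t(G)$; for each $2\le i\le t$, since $A\setminus\{a_1\}$ and $A\setminus\{a_i\}$ are $(t-1)$-cliques sharing $t-2$ vertices, there is a vertex $b_i$ extending both to $t$-cliques. This is strictly weaker than a $K_{t+1}$ (the $b_i$ need not be the same vertex, and need not be adjacent to one another), and it is what your union-bound observation permits. One then writes down an explicit closed tight walk $W=\cA_0\cA_1\cdots\cA_{t-1}$, where $\cA_0=(a_1,\dots,a_t)$, $\cA_i$ for $1\le i\le t-2$ is a $t$-tuple interleaving the $a_j$'s with the single vertex $b_{i+1}$ placed so that its $t-1$ predecessors are $A\setminus\{a_1\}$ and its $t-1$ successors are $A\setminus\{a_i\}$, and $\cA_{t-1}$ is a final $(t-1)$-tuple. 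Tightness of each window is exactly the two-sided clique condition that defines $b_{i+1}$, and the total order is $(t-1)t+(t-1)\equiv -1\pmod t$. So the missing idea is: do not seek a $K_{t+1}$ or a walk of order $t+1$; instead thread a walk through the pairwise ``link vertices'' $b_i$ of the faces of a single $t$-clique, which is precisely what \cref{lemma:crucialbound} manufactures.
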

\begin{proof}
	Set $K = K_t(G)$.
	By \cref{lemma:crucialbound}, for any $D_1, D_2 \in K_{t-1}(G)$ with $|D_1 \cap D_2| = t-2$, there is a vertex $v$ such that $D_1 \cup \{v\}, D_2 \cup \{v\}\in K$.
	So for $t=2$, we obtain a triangle and are done.
	This allows us to assume that $t \geq 3$.
	Let $A=\{a_1,\dots,a_t\}$ be an edge in $K$.
	By the above, $A \sm \{a_1\}$ and $A \sm \{a_i\}$ share a neighbour $b_i$ for each $2 \leq i \leq   t$.
	We define $\cA_0=(a_1,\dots,a_t)$, $\cA_1=(b_2,a_1)(a_3,a_4,\dots,a_t)$ and, for $2 \leq i \leq t-2$,
	\begin{equation*}
		\cA_i=	(a_2,\dots,a_{i})(b_{i+1},a_1)(a_{i+2},\dots,a_{t}).
	\end{equation*}
	Finally, let $\cA_{t-1}=(a_2,\dots,a_{t-1},b_{t-1})$.
	Note that, $\cA_0,\dots,\cA_{t-2}$ are each $t$-tuples, while $\cA_{t-1}$ is an $(t-1)$-tuple.
	We claim that the concatenation $W =\cA_0\cA_1\dotsm\cA_{t-1}$ is a \tight walk.
	This follows because the $t-1$ vertices to the left of each $b_{i+1}$ consist of $A\sm \{a_1\}$ and the $t-1$ vertices to the right of each $b_{i+1}$ consist of $A\sm \{a_{i}\}$.
	We conclude by noting that $W$ has order $(t-1)t + (t-1) \equiv -1 \bmod t$, as desired.
\end{proof}

Finally, we require a result of  Lo and Markström~\cite[Theorem 1.6]{LM13b}.

\begin{theorem}\label{thm:lo-markstrom}
	For $k, t$ and $\eps >0$, there is $n_0$ such that the following holds.
	Let $G$ be a $k$-graph on $n \geq n_0$ vertices with $\delta_{k-1}(G) \geq (1 - \binom{t-1}{k-1}^{-1} + \eps)n$.
	Then $K_t(G)$ contains a perfect fractional matching.
\end{theorem}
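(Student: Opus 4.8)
The plan is to pass to the dual. The constant weighting $v \mapsto 1/t$ is a fractional vertex cover of $\cK := K_t(G)$ of size $n/t$, and every fractional matching of $\cK$ has size at most $n/t$, so by linear-programming duality it suffices to show that $\cK$ has no fractional vertex cover of size strictly below $n/t$. Suppose, for contradiction, that $\bvec c \colon V(G) \to \REALS_{\geq 0}$ satisfies $\bvec c(X) := \sum_{v \in X} \bvec c(v) \geq 1$ for every $t$-clique $X$ of $G$, yet $\sum_{v \in V(G)} \bvec c(v) < n/t$. Then the goal is to exhibit a $t$-clique $X$ with $\bvec c(X) < 1$.

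The first ingredient is that the codegree hypothesis makes cliques, and extensions of partial cliques, plentiful. If $v_1, \dots, v_i$ are chosen so that every initial segment spans a complete $k$-graph, then a vertex $w$ extends them to a $K^{(k)}_{i+1}$ exactly when $S \cup \{w\} \in G$ for each of the $\binom{i}{k-1}$ sets $S \subseteq \{v_1,\dots,v_i\}$ of size $k-1$; since each such $S$ has at most $\bigl(\binom{t-1}{k-1}^{-1} - \eps\bigr)n$ non-neighbours, the number of valid $w$ is at least $n - i - \binom{i}{k-1}\bigl(\binom{t-1}{k-1}^{-1} - \eps\bigr)n$, which for $i < t$ and $n$ large is at least $\beta n$ for a constant $\beta = \beta(k,t,\eps) > 0$ (the worst case being $i = t-1$, where it equals $\eps\binom{t-1}{k-1}n - (t-1)$). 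Consequently $G$ has $\Omega(n^t)$ copies of $K^{(k)}_t$, every vertex lies in $\Omega(n^{t-1})$ of them, and, running the same greedy inside a vertex set $W$, every induced subgraph $G[W]$ with $\lvert W\rvert > \bigl(1 - \eps\binom{t-1}{k-1}\bigr)n + t$ already contains a copy of $K^{(k)}_t$.

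To locate a \emph{light} clique one exploits that a cover of size below $n/t$ is itself constrained: the $t$ lightest vertices of $\bvec c$ have total weight below $1$ and so cannot form a clique, and iterating this one deduces that the set on which $\bvec c$ lies below its average value $1/t$ induces a subgraph of $G$ whose $K^{(k)}_t$-free part carries essentially all of the corresponding mass. Comparing with the abundance estimate of the previous paragraph --- which caps the order of a $K^{(k)}_t$-free induced subgraph at $(1-\Omega(\eps))n$ --- forces this `light' set, once cleaned of boundedly many atypical vertices, to still contain a $K^{(k)}_t$, and any such clique has $\bvec c$-weight below $1$: the required contradiction. One clean way to run the bookkeeping is induction on $t$: take $v$ of minimum $\bvec c$-value, pass to its link, and check as in \cref{lem:matching-link-graph,lem:cover-link-graph} that the link inherits a codegree condition strong enough for the case $t-1$, the additional constraint coming from $G$ being absorbed into the $\eps$-slack.

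The step I expect to be the main obstacle is this last quantitative comparison: replacing the crude constant $\beta$ of the naive greedy with the sharp value $1 - \binom{t-1}{k-1}^{-1}$, equivalently ruling out for large $n$ the near-extremal configuration in which some set of fewer than $n/t$ vertices meets every $t$-clique of $G$. Establishing the non-existence of that configuration is exactly what pins down the threshold and is the substantive content of the Lo--Markström argument; the remaining steps --- turning the clique we find into the contradiction and controlling the lower-order error terms --- are routine.
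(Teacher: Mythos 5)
The paper does not prove \cref{thm:lo-markstrom}; it quotes it from Lo and Markstr\"om~\cite{LM13b} and uses it as a black box, so there is no in-paper proof to compare against. As to your sketch: the LP-duality reduction, the codegree-greedy extension counts, and the observation that the $t$ lightest vertices cannot span a clique are all fine, but there is a genuine gap --- not merely a deferred calculation --- in the step that should produce a $t$-clique of $\bvec c$-weight below $1$. The set $\{v : \bvec c(v) < 1/t\}$ need not be large: a cover of total weight just below $n/t$ can assign weight exactly $1/t$ to roughly $n/2$ vertices and slightly less to the others, so the ``light'' set may have size about $n/2$, far short of the $(1-\Omega(\eps))n$ your greedy bound would need to force a $K_t^{(k)}$ inside it, and the sentence ``iterating this one deduces \ldots carries essentially all of the corresponding mass'' describes what a proof would have to show rather than how. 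The proposed induction on $t$ via the link of a minimum-weight vertex (modelled on \cref{lem:cover-link-graph}) is a sensible route, but note that the relevant object is not $K_{t-1}$ of any $k$-graph: the family of $(t-1)$-sets $T$ for which $\{v\}\cup T$ is a $t$-clique of $G$ is constrained not only by $T$ spanning a clique but also by the $(k-1)$-subsets of $T$ extending to edges through $v$, so one must establish afresh that this hybrid object satisfies a covering bound at the $(t-1)$-threshold; the rescaled cover $\bvec c'$ handles the LP bookkeeping as in \cref{lem:cover-link-graph}, but the structural inheritance is the real work. You acknowledge in your final paragraph that the sharp step is missing; that assessment is accurate, and it means the proposal as written is an outline rather than a proof of the cited theorem.
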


\begin{proof}[Proof of \cref{thm:power-hamilton-map-codegree}]
	Given $t \geq k \geq 2$ and $\eps > 0$, let $n$ be sufficiently large.
	Let $\P$ be the family consisting of $K_t(G)$ for all $n$-vertex $k$-graphs $G$ where $\delta_{k-1}(G) \geq \left( f_k(t) + \eps \right) n$.
	Then $\P$ has a Hamilton framework.

	Given $k,t, \eps$, choose $n_0$ large enough.
	For each $K \in \P$, we set $F(K) = K$.
	We show that this is a Hamilton framework.
	Indeed, the conditions \ref{itm:hf-connected}--\ref{itm:hf-odd}  of \cref{def:hamilton-framework} follow immediately from \cref{lem:powers-connectivity,lem:powers-odd-cycle,thm:lo-markstrom}.
	For the intersecting property, suppose that there is an $(n+1)$-vertex $t$-graph $J$ with $x,y \in V(J)$ such that $K=J-x$ and $K'=J-y$, where $G' \in \P$ is another $n$-vertex $t$-graph.
	Recall that $F(K)=K$ and $F(K')=K'$.
	Since $F(K)-x-y$ and $F(K')-x-y$ have a (common) edge, it follows that $F(G) \cup F(G')$ is \tightly connected, as desired.
\end{proof}

\subsection{Natural frameworks}\label{sec:natural-frameworks}

One of the principal difficulties in finding \tight Hamilton cycles under $d$-minimum degree conditions is that beyond codegrees, the host graphs are no longer guaranteed to be \tightly connected.
In our recent work~\cite{LS22}, we proposed to tackle \cref{con:Han-Zhao-are-best-possible} via the study of $(k-d)$-uniform link graphs.
To find a $k$-uniform \tight component, the idea is to select a $(k-d)$-uniform \tight component of maximum density in each link graph.

To formalise this, consider a $k$-graph $G$.
Recall that for a $d$-set $S \subset V(G)$, we denote by $L(S)$ the {link graph} of $S$ in $G$, which is the $(k-d)$-graph on vertex set $V(G) \sm S$ with an edge $X$ whenever $X \cup S \in G$.
A \emph{$d$-vicinity} $C$ assigns to each $d$-set $S \subset V(G)$ a subgraph $C(S) \subset L(S)$.
The subgraph $C(G) \subset G$ \emph{generated} by ${C}$ is the $k$-graph on $V(G)$ whose edges consist of $S \cup X$ for all $X \in C(S)$ and $d$-sets $S \subset V(G)$.
A \emph{natural} $d$-vicinity $C$ of $G$ is formed by taking for each $d$-set $S \subset V(G)$ a \tight component $C(S) \subset L(S)$ with a maximum number of edges.
For a $k$-graph family $\P$, we say that $F$ is a \emph{natural framework} if for each $G \in \P$, the subgraph $F(G) \subset G$ is generated by a natural vicinity of $G$.

We define $\delta_d^{\natfw}(k)$ as the
minimum $d$-degree threshold for natural frameworks.
More precisely, $\delta_d^{\natfw}(k)$ is the infimum $\delta \in [0,1]$ such that for every $\eps > 0$ and $n$ sufficiently large, every natural framework of the $n$-vertex $k$-graphs in $\DegF{d}{\delta}$ is a Hamilton framework.

We obtain the following upper bound from \cref{thm:framework-bandwidth-simple}
\begin{corollary}\label{cor:natural-frameworks}
	We have $\delta_d^{\hf}(k) \leq \delta_d^{\natfw}(k)$ for all $1 \leq d < k$.
\end{corollary}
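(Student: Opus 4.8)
The inequality is essentially a matter of unwinding the two definitions, so my plan is bookkeeping rather than combinatorics. Two facts make it work: first, for a $k$-graph $G$ the clique graph $K_k(G)$ is just $G$ itself (a $k$-set $X$ induces a copy of $K_k^{(k)}$ exactly when $X\in E(G)$); and second, a natural framework is a particular instance of the objects in \cref{def:hamilton-framework}, and one always exists, since a natural $d$-vicinity can always be chosen once $n$ is large. So I would first use the identity $K_k(G)=G$ to restate $\delta_d^{\hf}(k)=\delta_d^{\hf}(k,k)$ as follows: it is the infimum of those $\delta\in[0,1]$ such that for every $\eps>0$ there is $n_0$ with the property that, for all $n\geq n_0$, the family of $n$-vertex $k$-graphs in $\DegF{d}{\delta+\eps}$ admits a Hamilton framework.

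Next I would record a monotonicity observation. Write $\cS$ for the set of $\delta\in[0,1]$ such that, for all sufficiently large $n$, every natural framework of the family of $n$-vertex $k$-graphs in $\DegF{d}{\delta}$ is a Hamilton framework; thus $\delta_d^{\natfw}(k)=\inf\cS$. I claim $\cS$ is upward-closed, so that $\cS\supseteq(\delta_d^{\natfw}(k),1]$. To see this, suppose $\delta_1\le\delta_2$ with $\delta_1\in\cS$, and let $F$ be any natural framework of $\DegF{d}{\delta_2}$. Since $\DegF{d}{\delta_2}\subseteq\DegF{d}{\delta_1}$ (the degree requirement for $\delta_2$ being the stronger one), we may extend $F$ to a natural framework $\widetilde F$ of $\DegF{d}{\delta_1}$ by assigning an arbitrary natural $d$-vicinity to each remaining graph. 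By hypothesis $\widetilde F$ is a Hamilton framework; but conditions \ref{itm:hf-connected}--\ref{itm:hf-odd} are properties of the individual subgraphs $\widetilde F(G)$, and \ref{itm:hf-intersecting} only involves pairs $G,G'$ of members of the family, so the restriction of $\widetilde F$ to $\DegF{d}{\delta_2}$, which is exactly $F$, is still a Hamilton framework. Hence $\delta_2\in\cS$.

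With these preparations the conclusion is immediate. Fix a small $\eps>0$ and put $\delta:=\delta_d^{\natfw}(k)$, so $\delta+\eps\in\cS$: for $n$ large, every natural framework of the $n$-vertex $k$-graphs in $\DegF{d}{\delta+\eps}$ is a Hamilton framework. Because $\eps>0$, for $n$ large the degree condition forces each $d$-set $S$ into at least one edge, so every link graph $L(S)$ contains a \tightly connected subgraph with a maximum number of edges; choosing one such for every $S$ and every $G$ produces a natural framework, which is therefore a Hamilton framework. Thus, for all large $n$, the family $\DegF{d}{\delta+\eps}$ admits a Hamilton framework, and by the reformulation in the first step this gives $\delta_d^{\hf}(k)\le\delta+\eps$; letting $\eps\to0$ yields $\delta_d^{\hf}(k)\le\delta_d^{\natfw}(k)$.

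I do not expect a genuine obstacle here; the argument is bookkeeping. The only points needing care are matching the slightly different quantifier patterns in \cref{def:ham-fw-threshold} and the definition of $\delta_d^{\natfw}(k)$ (the $\delta$-versus-$\delta+\eps$ shift, and the ``for $n$ large'' clauses), and the monotonicity of membership in $\cS$, which rests on the fact that restricting a Hamilton framework to a subfamily again yields a Hamilton framework. The role of \cref{thm:framework-bandwidth-simple} is only to give the bound its force, in that it upgrades any bound on $\delta_d^{\natfw}(k)$ to a bandwidth-type statement; it is not needed for the inequality itself.
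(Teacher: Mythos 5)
Your proof is correct: the inequality is indeed a purely definitional unwinding, and you handle the two genuine subtleties (the $K_k(G)=G$ identification and the existence of natural $d$-vicinities once the degree condition is positive) properly, with the $\cS$-monotonicity step correctly resolving the $\delta$-versus-$\delta+\eps$ mismatch between \cref{def:ham-fw-threshold} and the definition of $\delta_d^{\natfw}(k)$. The paper itself gives no explicit argument for this corollary (and its attribution to \cref{thm:framework-bandwidth-simple} appears to be a stray reference, as you also observe), so your write-out is in substance the intended one.
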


We believe that this is tight.

\begin{restatable}{conjecture}{connaturalframework}\label{con:natural-framework}
	We have $\delta_d^{\hf}(k) = \delta_d^{\natfw}(k)$ for all $1 \leq d < k$.
\end{restatable}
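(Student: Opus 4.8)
The plan is to derive this directly by comparing the two thresholds through their definitions; the embedding content is already packaged inside $\delta_d^{\hf}$ (whose combinatorial significance comes from \cref{thm:framework-bandwidth-simple}), so no further work on blow-ups is needed. The one observation to make at the outset is that for $t=k$ the clique-graph is trivial, $K_k(G)=G$, so $\delta_d^{\hf}(k)=\delta_d^{\hf}(k,k)$ is exactly the infimum of those $\delta\in[0,1]$ for which, for every $\eps>0$ and all sufficiently large $n$, the family $\cG_{\delta,\eps}$ of $n$-vertex $k$-graphs $G$ with $\delta_d(G)\ge(\delta+\eps)\tbinom{n-d}{k-d}$ admits a Hamilton framework.

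Now fix $\delta>\delta_d^{\natfw}(k)$ and $\eps>0$; I would show $\delta\ge\delta_d^{\hf}(k)$. Choose $n$ large enough that (a) by definition of $\delta_d^{\natfw}(k)$ every natural framework of the $n$-vertex $k$-graphs in $\DegF{d}{\delta}$ is a Hamilton framework, and (b) $\delta\tbinom{n-d}{k-d}\ge 1$, so that every $d$-set of every $G\in\DegF{d}{\delta}$ lies in an edge and hence every link graph $L(S)$ has a nonempty \tight component. Property (b) lets us pick a \tight component of maximum size in each $L(S)$, that is, a natural $d$-vicinity of each $G\in\DegF{d}{\delta}$; the resulting subgraphs $F(G)$ form a natural framework $F$ of $\DegF{d}{\delta}$, which by (a) is a Hamilton framework. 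Since $(\delta+\eps)\tbinom{n-d}{k-d}\ge\delta\tbinom{n-d}{k-d}$ we have $\cG_{\delta,\eps}\subseteq\DegF{d}{\delta}$, and restricting $F$ to $\cG_{\delta,\eps}$ still yields a Hamilton framework: conditions \ref{itm:hf-connected}--\ref{itm:hf-odd} are statements about individual graphs and are inherited verbatim, whereas \ref{itm:hf-intersecting} is inherited because any pair $G,G'\in\cG_{\delta,\eps}$ obtained by deleting distinct vertices from a common $(n+1)$-vertex $k$-graph is also such a pair inside $\DegF{d}{\delta}$. Hence $\cG_{\delta,\eps}$ admits a Hamilton framework; as $\eps>0$ was arbitrary, $\delta\ge\delta_d^{\hf}(k)$, and taking the infimum over all $\delta>\delta_d^{\natfw}(k)$ gives $\delta_d^{\hf}(k)\le\delta_d^{\natfw}(k)$.

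In this direction there is essentially no obstacle — the argument is bookkeeping, the only two points deserving a comment being that natural vicinities exist in the degree regime under consideration and that the consistency condition \ref{itm:hf-intersecting} is preserved on passing to a subfamily. The genuine difficulty lies in the reverse inequality, \cref{con:natural-framework}, which asks that the natural choice of framework be \emph{optimal}: whenever a natural framework fails to be a Hamilton framework one would have to rule out \emph{every} Hamilton framework, presumably via extremal obstructions in the spirit of \cref{const:tight-general}. That remains open in general and would, in particular, settle \cref{con:Han-Zhao-are-best-possible}.
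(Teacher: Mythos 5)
The label \verb|\cref{con:natural-framework}| is a \emph{conjecture}, not a theorem: the paper supplies no proof of equality, only the one-sided bound $\delta_d^{\hf}(k) \le \delta_d^{\natfw}(k)$ as \cref{cor:natural-frameworks} (asserted with a pointer to \cref{thm:framework-bandwidth-simple} but no written-out argument) and then explicitly declares the reverse inequality an open problem. Your write-up gets this exactly right: you cleanly prove $\delta_d^{\hf}(k) \le \delta_d^{\natfw}(k)$ by unwinding the two definitions, and you correctly flag that the converse $\delta_d^{\natfw}(k) \le \delta_d^{\hf}(k)$ is the genuinely hard part and remains open.

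Your argument for the provable direction is sound and, if anything, more careful than what the paper leaves implicit. In particular you isolate the two points that actually need a word: that natural vicinities exist in the relevant degree regime (every $d$-set lies in at least one edge once $\delta\binom{n-d}{k-d}\ge 1$), and that passing to a subfamily $\cG_{\delta,\eps}\subset\DegF{d}{\delta}$ preserves the Hamilton framework conditions, the only nontrivial one being \ref{itm:hf-intersecting}, which is inherited because any pair $G,G'$ arising from a common $(s+1)$-vertex graph inside the subfamily already arises that way inside $\DegF{d}{\delta}$. This matches the mechanism the authors presumably had in mind for \cref{cor:natural-frameworks}, while being somewhat more self-contained (it is a purely definitional argument and does not actually lean on the embedding content of \cref{thm:framework-bandwidth-simple}; that theorem is only what makes the threshold $\delta_d^{\hf}$ interesting). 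One small caveat on your closing remark: proving $\delta_d^{\natfw}(k)\le\delta_d^{\hf}(k)$ would establish $\delta_d^{\hf}(k)=\delta_d^{\natfw}(k)$, but on its own it does not \emph{settle} \cref{con:Han-Zhao-are-best-possible}; one would still need to compute $\delta_d^{\natfw}(k)$ and match it to the values from \cref{const:tight-general}.
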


Owing to the results of Rödl, Ruciński and Szemerédi~\cite{RRS08a}, Cooley and Mycroft~\cite{CM17}, Polcyn, Reiher, Rödl and Schülke~\cite{PRRS21}, Lang, Schacht and Volec~\cite{LSV24}, as well as our own work~\cite{LS22}, the conjecture has thus far been confirmed in the cases $k-d \in \{1, 2, 3\}$.

\begin{lemma}\label{lem:natural-framework-threshold-1}
	We have $\delta_d^{\natfw}(k) = 1/2$ for $k-d = 1$.
\end{lemma}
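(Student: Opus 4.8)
The plan is to show that when $k-d=1$ (i.e.\ $d=k-1$, the codegree setting), a natural framework is always a Hamilton framework once $\delta_{k-1}(G)\geq (1/2+\eps)\binom{n-k+1}{1}=(1/2+\eps)(n-k+1)$, and conversely that $1/2$ cannot be lowered because of \cref{const:tight-general} with $\ell=k-d=1$. The lower bound $\delta_d^{\natfw}(k)\geq 1/2$ is immediate: the Han--Zhao construction with $\ell=1$ produces a $k$-graph with codegree close to $n/2$ containing no Hamilton cycle, hence admitting no Hamilton framework at all, so in particular no natural framework of it can be a Hamilton framework.

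For the upper bound $\delta_{k-1}^{\natfw}(k)\leq 1/2$, fix $\eps>0$ and a $k$-graph $G$ on $n$ vertices with $\delta_{k-1}(G)\geq(1/2+\eps)(n-k+1)$, and let $F$ be a natural framework: for each $(k-1)$-set $S$, since $\ell=k-d=1$ the link $L(S)$ is a $1$-graph (a set of vertices) on $V(G)\setminus S$ with $|L(S)|=\deg_G(S)\geq (1/2+\eps)n$, and a "tight component with a maximum number of edges" in a $1$-graph is just a single vertex $C(S)=\{v_S\}$ with $v_S\in L(S)$; thus $F(G)$ is the spanning subgraph whose edges are $S\cup\{v_S\}$. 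The key point is that $F(G)$ has minimum codegree at least... no — we should argue differently: $F(G)$ as constructed is too sparse to directly carry a framework, so instead the right move is to observe that for $k-d=1$ the \emph{relevant structural facts already follow from $\delta_{k-1}(G)>n/2$ applied to $G$ itself}, and a natural framework built from maximum components coincides (after re-reading: a tight component in a $1$-graph is a single vertex, so $C(G)=F(G)$ has exactly one edge through each $(k-1)$-set). I would therefore instead verify the four framework conditions for the natural choice directly, using the codegree bound: connectivity via \cref{obs:tight-connectivity} and the standard "shift along a shared $(k-1)$-set" argument (mimicking the base case $t=k$ in the proof of \cref{lem:powers-connectivity}); the perfect fractional matching via \cref{lem:matching-link-graph}-type reasoning or directly since $\delta_{k-1}(G)>n/2$ forces a fractional perfect matching; aperiodicity by producing a closed tight walk of order $\equiv -1\bmod k$ exactly as in \cref{lem:powers-odd-cycle} specialised to $t=k$; and consistency by the usual deletion argument, noting that after removing two vertices the relevant links still have more than one element so the maximum-component choices overlap.

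Concretely, the cleanest route is: first check that a natural $d$-vicinity with $k-d=1$ assigns to each $(k-1)$-set $S$ a single vertex, so the generated subgraph $F(G)$ satisfies $\partial(F(G))\supseteq\{S: \deg_G(S)\geq 1\}$ restricted appropriately, and in fact every $(k-1)$-set $S$ with $\deg_G(S)\geq 1$ lies in some edge of $F(G)$; then show $F(G)$ is tightly connected because any two $(k-1)$-sets can be joined by a tight walk obtained by repeatedly applying the codegree bound (when $S,S'$ share $k-2$ vertices, $\delta_{k-1}(G)>n/2$ gives a common extension, and one checks the resulting edges lie in $F(G)$ or at least in the same tight component of $G$ — and since $F(G)$ contains a full "transversal" of extensions this propagates); then invoke \cref{obs:tight-connectivity} to conclude (F\ref{itm:hf-connected}); then run the \cref{lem:powers-odd-cycle} construction with $t=k$ inside $F(G)$ for (F\ref{itm:hf-odd}); then get the perfect fractional matching for (F\ref{itm:hf-matching}) from the codegree threshold $1/2 > 1-\binom{k-1}{k-1}^{-1}=0$ via \cref{thm:lo-markstrom} (trivially satisfied) or a direct argument; and finally (F\ref{itm:hf-intersecting}) from the two-vertex-deletion robustness of all of the above. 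The main obstacle I expect is the first step — verifying carefully that the \emph{maximum-component} (i.e.\ single-vertex) choices in the links are mutually compatible enough to make the sparse graph $F(G)$ tightly connected and aperiodic, rather than just $G$ itself; this requires showing that the shift argument can be carried out staying within the chosen transversal, which is where the hypothesis $\delta_{k-1}(G)\geq(1/2+\eps)n$ (with the $\eps$, giving room for the deletions in the consistency condition) is actually used.
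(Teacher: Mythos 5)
Your plan rests on a false identification of the natural vicinity in the codegree case. You assert that a tight component with a maximum number of edges in a $1$-graph is a single vertex, so that $C(S)=\{v_S\}$ and $F(G)$ becomes a sparse transversal-like subgraph with exactly one edge through each $(k-1)$-set. This is wrong. By the paper's definition, the line graph of a $k$-graph joins two edges precisely when they intersect in $k-1$ vertices; for a $1$-graph this means $|e\cap f|=0$, so \emph{any} two distinct singleton edges are adjacent, the line graph is complete, and the entire $1$-graph is a single tight component. Consequently a natural vicinity must take $C(S)=L(S)$ in its entirety, and the generated subgraph is simply $F(G)=G$. The ``main obstacle'' you anticipated — verifying that the single-vertex choices are mutually compatible enough to make a sparse $F(G)$ tightly connected and aperiodic — does not exist, because there is no sparse $F(G)$: the natural framework is trivial in this case.

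Once $F(G)=G$ is recognised, the rest of your sketch essentially collapses to the paper's argument: tight connectivity of $G$ from the codegree bound (the $t=k$ base case of \cref{lem:powers-connectivity}), aperiodicity via the construction of \cref{lem:powers-odd-cycle} specialised to $t=k$, a perfect fractional matching from \cref{lem:space} since each link $L(S)$ is a $1$-graph on more than $n/2$ vertices, and consistency from the observation that $F(G)-x-y$ and $F(G')-x-y$ both equal $J-x-y$, which is nonempty and hence shares an edge with itself. (The paper routes the first three through \cref{lem:connectivity,lem:space}.) Your lower-bound argument via the $\ell=1$ Han--Zhao construction is correct. But you must first correct the description of the natural vicinity, as several of your downstream steps are reasoning about an object that does not match the paper's definitions.
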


\begin{lemma}\label{lem:natural-framework-threshold-2}
	We have $\delta_d^{\natfw}(k) = 5/9$ for $k-d = 2$.
\end{lemma}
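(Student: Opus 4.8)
The statement packages two bounds. For the lower bound $\delta_{k-2}^{\natfw}(k)\ge 5/9$ I would just chain $5/9\le\delta_{k-2}^{\ham}(k)\le\delta_{k-2}^{\hf}(k)\le\delta_{k-2}^{\natfw}(k)$: the first inequality is the Han--Zhao bound from \cref{const:tight-general} with $\ell=2$ (or \cref{thm:hamilton-cycle-k-d-small} when $k=3$), the second follows from \cref{thm:framework-bandwidth-simple} with clusters of size one, and the third is \cref{cor:natural-frameworks}; alternatively one checks directly that for the $k$-graph $G$ of \cref{const:tight-general} every link $2$-graph $L(S)$ of a $(k-2)$-set $S$ has a unique densest component and that the resulting natural vicinity generates a subgraph $C(G)$ which fails to be \tightly connected, a short computation for $k=3$. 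The content of the lemma is the matching upper bound. Here I would follow the proof of \cref{pro:5/9-framework} with $(k-2)$-element sets $S$ playing the role of the single vertices there. Fix $\eps>0$, let $G$ be a $k$-graph with $\delta_{k-2}(G)\ge(5/9+\eps)\tbinom{n-k+2}{2}$, let $C$ be an arbitrary natural $(k-2)$-vicinity (for each $(k-2)$-set $S$ it picks a densest component $C(S)$ of $L(S)$, a $2$-graph on $n-k+2$ vertices with at least $(5/9+\eps)\tbinom{n-k+2}{2}$ edges), and set $F(G)=C(G)$, the $k$-graph with edges $S\cup e$ for $e\in C(S)$. Applied to each $L(S)$, \cref{lem:cooley-mycroft} supplies all the local data: $C(S)$ has more than $\tfrac49\tbinom{n-k+2}{2}$ edges and so spans more than $\tfrac23(n-k+2)$ vertices (\ref{itm:cooleymycroft-edge-density}), contains a matching of size $\approx n/3$ (\ref{itm:cooleymycroft-fractional-matching}), contains a triangle (\ref{itm:cooleymycroft-triangle}), and any two such densest components, restricted to a common vertex set, share an edge (\ref{itm:cooleymycroft-edge-in-common}). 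It remains to verify \ref{itm:hf-connected}--\ref{itm:hf-intersecting}.

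For \ref{itm:hf-connected}: each block $\{S\cup e:e\in C(S)\}$ is \tightly connected, since a path in the connected graph $C(S)$ lifts to $k$-edges meeting in $k-1$ vertices (\cref{obs:tight-connectivity}); and to join the blocks of two $(k-2)$-sets it suffices, by transitivity, to treat $S'=(S\setminus\{x\})\cup\{y\}$. Restricting $L(S)$ and $L(S')$ to the common ground set $V\setminus(S\cup\{y\})$ deletes a single vertex from each, so both restrictions keep more than $\tfrac59$ of the edges, and \ref{itm:cooleymycroft-edge-in-common} yields a common edge $\{p,q\}$ of their densest components; a counting argument (two vertex-disjoint subgraphs, each on more than $\tfrac23(n-k+1)$ vertices, cannot fit inside $n-k+2$ vertices) forces these to lie inside $C(S)$ and $C(S')$, so $S\cup\{p,q\}$ and $S'\cup\{p,q\}$ are edges of $C(G)$ meeting in $k-1$ vertices and the two blocks lie in one \tight component. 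Property \ref{itm:hf-intersecting} is the identical argument performed after one further vertex deletion from the common $(n+1)$-vertex host, which again produces an edge lying in both $F(G)$ and $F(G')$.

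For \ref{itm:hf-matching} I would establish the $(k-2)$-analogue of \cref{lem:matching-link-graph}, equivalently generalise \cref{lem:cover-link-graph}: given a fractional cover $\bvec c$ of $C(G)$ of size below $n/k$, peel off a minimum-weight vertex and rescale, $k-2$ times; each step lowers the uniformity by one while preserving ``a fractional cover of size below $n/k$'', and one lands on a cover of a $2$-graph containing $C(\{w_1,\dots,w_{k-2}\})$, whose matching of size $\approx n/3$ from \ref{itm:cooleymycroft-fractional-matching} exceeds $n/k$ (comfortably for $k\ge4$, and via the $\eps$-slack when $k=3$), a contradiction. For \ref{itm:hf-odd}: a triangle in any $C(S)$ already yields a closed \tight walk of order $3k\equiv0\bmod k$, so one needs a second walk of order $\not\equiv0\bmod k$. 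To get it, locate a \emph{doubly covered} edge $e_0=S\cup\{a,b\}=S'\cup\{a',b'\}$ with $\{a,b\}\in C(S)$, $\{a',b'\}\in C(S')$ and $S\ne S'$; such $e_0$ exists by pigeonhole, since $\sum_S|C(S)|>\tfrac49\tbinom{k}{2}\tbinom{n}{k}>\tbinom{n}{k}\ge|C(G)|$ for $k\ge3$. Then glue an $S$-based gadget to an $S'$-based gadget along $e_0$, with lengths tuned so the total order is $\equiv1\bmod k$, exactly as the walks $P_x,P_y$ are concatenated in the $k=3$ case of \cref{pro:5/9-framework}.

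The main obstacle is precisely this last step, \ref{itm:hf-odd}, for $k\ge4$. When $k-2=1$ the doubly covered edge has only three vertices and the two gadgets glue \emph{directly} along it, so every window at the seam is $e_0$; for larger $k$ a window straddling the seam mixes vertices of $S$, of $S'$ and of the two complementary pairs and need not be an edge of $C(G)$, so the transition between the $S$-world and the $S'$-world has to be engineered more carefully (for instance by listing $e_0$ in compatible cyclic orders at the seams and routing the $C(S)$- and $C(S')$-walks in and out through $\{a,b\}$ and $\{a',b'\}$), and the modular arithmetic, transparent for $k=3$, has to be redone so that the final order is $\equiv1\bmod k$ for every $k$. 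A secondary, purely bookkeeping difficulty --- already visible in \ref{itm:hf-connected} and \ref{itm:hf-intersecting} --- is that the link $2$-graphs of different $(k-2)$-sets live on different, heavily overlapping, vertex sets, so each application of \cref{lem:cooley-mycroft} must be set up after the relevant deletions, with the $\eps$-slack absorbing all rounding. I would expect that in \cref{sec:natural-frameworks} these difficulties are isolated once and for all in a general criterion turning a natural framework into a Hamilton framework from such local inputs, so that the proof of the present lemma reduces to feeding \cref{lem:cooley-mycroft} into that criterion.
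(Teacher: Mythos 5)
The gap you flag in \ref{itm:hf-odd} for $k\ge 4$ is real and cannot be patched by the ad hoc gluing of $C(S)$- and $C(S')$-gadgets: the seam problem you describe (windows straddling the seam mix vertices of $S$, of $S'$ and of the complementary pairs, and need not lie in $C(G)$) is precisely the obstruction, and it only worsens as $k$ grows. Your closing guess is, however, exactly right: \cref{sec:natural-frameworks} does isolate these difficulties in a general criterion, and the paper's proof of \cref{lem:natural-framework-threshold-2} is essentially just the exercise of feeding \cref{lem:cooley-mycroft} into it. Both \ref{itm:hf-connected} and \ref{itm:hf-odd} are delegated to \cref{lem:connectivity} (imported from~\cite{LS22}), which takes as input a $d$-vicinity whose components $C(S)$ are connected and pairwise intersecting and which possesses a \emph{switcher} and an \emph{arc}, and outputs a generated $k$-graph that is tightly connected and contains a closed tight walk of order $1\bmod k$. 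The switcher is the triangle from \ref{itm:cooleymycroft-triangle}, and the arc comes from \cref{prop:arc} together with the density bound \ref{itm:cooleymycroft-edge-density} via $4/9+(4/9)^{1/2}>1$; this is the carefully engineered seam transition you anticipated would be needed. For \ref{itm:hf-matching} the paper cites \cref{lem:space} rather than re-deriving a $(k-2)$-analogue of \cref{lem:cover-link-graph}, though your peeling-and-rescaling sketch is essentially how that general result is proved. Your arguments for \ref{itm:hf-intersecting} and the lower-bound chain through \cref{cor:natural-frameworks} are sound. So the proposal correctly locates every ingredient, but as written it is incomplete: the aperiodicity step for $k\ge 4$ genuinely needs the switcher/arc machinery that you correctly suspected the paper supplies as a black box.
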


\begin{lemma}\label{lem:natural-framework-threshold-3}
	We have $\delta_d^{\natfw}(k) = 5/8$ for $k-d = 3$.
\end{lemma}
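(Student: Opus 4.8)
The plan is to show both bounds. The lower bound $\delta_d^{\natfw}(k) \geq 5/8$ follows from \cref{const:tight-general} with $\ell = k-d = 3$: the extremal configuration there has no Hamilton cycle, so its natural framework cannot be a Hamilton framework, and one checks the minimum $d$-degree of that construction is $(5/8 - o(1))\binom{n-d}{k-d}$ (this is the computation behind $\delta_d^{\ham}(k) \geq 5/8$ cited after \cref{const:tight-general}). Since $\delta_d^{\ham}(k) \leq \delta_d^{\hf}(k) \leq \delta_d^{\natfw}(k)$ by \cref{cor:natural-frameworks} and the discussion in \cref{sec:frameworks}, and since \cref{thm:hamilton-cycle-k-d-small} gives $\delta_{k-3}^{\ham}(k) = 5/8$, the lower bound is immediate and the real content is the upper bound.

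For the upper bound, fix $\eps > 0$, let $n$ be large, and let $G$ be an $n$-vertex $k$-graph with $\delta_d(G) \geq (5/8 + \eps)\binom{n-d}{k-d}$, where $d = k-3$; let $F$ be a natural framework, so for each $d$-set $S$ we have chosen a tight component $C(S) \subset L(S)$ of the $3$-uniform link graph $L(S)$ with the maximum number of edges, and $F(G)$ is the $k$-graph generated by these choices. First I would record that each $L(S)$ is a $3$-graph on $n-d$ vertices with $\delta_1(L(S)) \geq (5/8+\eps)\binom{n-d-1}{2}$ — more precisely, minimum $1$-degree in the sense appropriate to $3$-graphs — so the structural results underpinning \cref{thm:hamilton-cycle-k-d-small} for the case $k-d=3$ (i.e.\ the analysis of Lang--Schacht--Volec~\cite{LSV24}, packaged as in our past work~\cite{LS22}) apply to $C(S)$: it has a perfect fractional matching (giving \ref{itm:hf-matching} after lifting via the link structure, exactly as \cref{lem:matching-link-graph} does for $3$-graphs when $k=3$), it contains a closed tight walk of order $\equiv 1 \bmod 3$ which lifts to a closed tight walk in $F(G)$ of order $\equiv 1 \bmod k$ after interleaving the $d$ fixed vertices of $S$ appropriately (giving \ref{itm:hf-odd}), and it is dense enough — at least $(4/8+\Omega(\eps))\binom{n-d}{3}$ edges or whatever the exact density threshold from the extremal analysis is — that any two of the chosen components $C(S)$, $C(S')$ for $d$-sets $S, S'$ overlapping suitably must share structure forcing $F(G)$ to be tightly connected (giving \ref{itm:hf-connected}, by an argument in the spirit of \cref{obs:tight-connectivity} and the connectivity steps of \cref{pro:5/9-framework}). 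The consistency property \ref{itm:hf-intersecting} is handled as in \cref{pro:5/9-framework}: deleting a vertex changes each $L(S)$ only slightly, the density of the maximum component is robust to such deletions, and so $F(G) - x - y$ and $F(G') - x - y$ still share the relevant overlapping edges and remain jointly tightly connected.

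The main obstacle is the connectivity and consistency analysis for $k-d = 3$, because unlike the $k-d=1$ case the link graphs need not be tightly connected, and unlike the clean $k=3$ situation of \cref{pro:5/9-framework} one must verify that the $3$-uniform tight-component structure at density $5/8$ is rigid enough that the maximum-density components across different $d$-sets, and across vertex deletions, are forced to overlap — this is exactly where the detailed extremal characterisation from~\cite{LSV24,LS22} is needed, and I expect that reproducing it (or citing it in the precise form required) is the technical heart of the proof. The matching and aperiodicity conditions, by contrast, are comparatively routine once the link-graph results are in hand, following the template already laid out for the $k=3$ case. In fact, the cleanest route is probably to state and prove a single Inheritance-style lemma asserting that for $k-d \in \{1,2,3\}$ a natural framework of $\DegF{d}{\delta_d^{\ham}(k)}$ satisfies \ref{itm:hf-connected}--\ref{itm:hf-intersecting}, and then read off \cref{lem:natural-framework-threshold-1,lem:natural-framework-threshold-2,lem:natural-framework-threshold-3} simultaneously; the $k-d=3$ instance is the one whose verification draws on the most recent and most involved extremal input.
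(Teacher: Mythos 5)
Your high-level plan is essentially the paper's: lower bound from \cref{const:tight-general} via $\delta_d^{\ham}(k)\leq\delta_d^{\hf}(k)\leq\delta_d^{\natfw}(k)$; upper bound by verifying that the natural vicinity — a maximum-density tight component $C(S)$ in each $3$-uniform link graph $L(S)$, which inherits $\delta_1(L(S))\geq(5/8+\eps)\binom{n-d-1}{2}$ — yields all four framework properties, drawing on the extremal analysis of~\cite{LSV24}.

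The gap is exactly where you say you expect one, but it is more specific than ``reproducing the extremal characterisation.'' Your sketch of aperiodicity — lift a closed walk of order $1\bmod 3$ from one link $L(S)$ to a closed walk of order $1\bmod k$ in $F(G)$ by interleaving the $d$ fixed vertices of $S$ — does not work as stated. Keeping $S$ fixed forces the image to stay in the $k$-edges of $F(G)$ that contain $S$, and a tight walk in $F(G)$ must rotate through different $d$-sets as it advances; compatibility of the chosen components $C(S)$ and $C(S')$ as $S$ shifts to $S'$ is precisely what is nontrivial. The paper resolves this with the arc and switcher machinery from~\cite{LS22} (\cref{prop:arc}, \cref{prop:switcher}), packaged into \cref{lem:connectivity}, which handles \ref{itm:hf-connected} and \ref{itm:hf-odd} \emph{simultaneously}: one shows each $C(S)$ has a switcher (via \cref{obs:density} and \cref{prop:switcher}, using the density bound $e(C(S))>\tfrac12\binom{n}{3}$ from \cref{thm:5/8}), and that the vicinity admits an arc (via \cref{prop:arc}, using $\tfrac12+(\tfrac12)^{2/3}<1.2$). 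Your plan never invokes these concepts, and without them the connectivity/aperiodicity step stalls. Likewise, the consistency check \ref{itm:hf-intersecting} in the paper is a clean Kruskal--Katona shadow-intersection argument from the density bound, not a perturbation argument about ``density being robust to vertex deletions'' as you sketch. The space condition \ref{itm:hf-matching} is indeed the routine part you describe, via \cref{lem:space} (the general-$k$ form of \cref{lem:matching-link-graph}). Finally, the paper does prove \cref{lem:natural-framework-threshold-1,lem:natural-framework-threshold-2,lem:natural-framework-threshold-3} with a shared toolkit rather than a single unified lemma as you propose — the $k-d=1$ case needs no arc/switcher at all since links are trivially connected, so the cases are not as uniform as your ``read them off simultaneously'' suggestion implies.
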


Note that \cref{lem:natural-framework-threshold-1,lem:natural-framework-threshold-2,lem:natural-framework-threshold-3} confirm \cref{thm:hamilton-cycle-k-d-small}.
To conclude the proofs of the applications, we derive \cref{lem:natural-framework-threshold-1,lem:natural-framework-threshold-2,lem:natural-framework-threshold-3} in the next three subsections from the structural insights of former work.

\subsection{Singleton vicinities}
We begin by setting up a bit of terminology and some related tools.
Recall that a neighbour of a set $X$ of vertices in a hypergraph $G$ is any set $Y$ such that $X \cup Y$ forms an edge of $G$.
Note that in the following discussion we sometimes allow for $1$-uniform graphs.

\begin{definition}[Switcher]
	For $k\geq 1$, a \emph{switcher} in a $k$-graph $G$ is an edge $A$ of $G$ with a distinguished \emph{central vertex} $a \in A$ such that, for every $b \in A$, the $(\ell-1)$-sets $A \sm \{a\}$ and $A \sm \{b\}$ share a neighbour in $G$.
\end{definition}

\begin{definition}[Arc]
	For $k\geq 2$, let $G$ be a $k$-graph with a $d$-vicinity $C$. A tuple of vertices $(v_1,\dots,v_{k+1})$ is an \emph{arc} for $C$, if  $\{v_{d+1},\dots,v_k\} \in C(\{v_1,\dots,v_d\} )$ and $\{v_{d+2},\dots,v_{k+1}\} \in C(\{v_2,\dots,v_{d+1}\} )$.
\end{definition}

\begin{lemma}[Connectivity and divisibility, {\cite[Lemma 2.1]{LS22}}]\label{lem:connectivity}
	For $k\geq 2$, let $G$ be a $k$-graph with a $d$-vicinity~$C$.
	Suppose that, for all $d$-sets $S,S' \subset V(G)$:
	\begin{enumerate}[label=\textnormal{(V\arabic*)}]
		\item  $C(S)$ is \tightly connected,
		\item  $C(S)$ and $C(S')$ intersect and
		\item  $L$ has a switcher and the vicinity $C$ has an arc.
	\end{enumerate}
	Then the graph $H \subset G$ generated by $\cC$ satisfies:
	\begin{enumerate}[label=\textnormal{(F\arabic*)}] \addtocounter{enumi}{1}
		\item   $H$ is \tightly connected and
		\item   $H$ contains a \tight closed walk of order $1 \bmod k$.
	\end{enumerate}
\end{lemma}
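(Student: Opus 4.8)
The plan is to verify the two conclusions (F2) and (F3) separately, both by translating walk-level arguments from the hypergraph $H$ down to the $(k-d)$-uniform vicinity graphs $C(S)$ where the hypotheses (V1)--(V3) live. For connectivity, recall by \cref{obs:tight-connectivity} that it suffices to show that any two edges of $H$ lie on a common closed \tight walk. An edge of $H$ has the form $S \cup X$ with $X \in C(S)$ for some $d$-set $S$; I would first fix one $d$-set $S$ and argue that all edges of $H$ arising from $S$ lie in one tight component of $H$. This should follow by taking a \tight walk in $C(S)$ witnessing (V1) and ``lifting'' it to a \tight walk in $H$: a \tight walk in $C(S)$ is a sequence of $(k-d)$-sets consecutive ones overlapping in $k-d-1$ vertices, and prepending the fixed $S$ (plus using the arc from (V3) to slide between consecutive $d$-sets as needed) turns this into a \tight walk in $H$. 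Then, to connect edges from two different $d$-sets $S$ and $S'$, I would use (V2): since $C(S)$ and $C(S')$ share an edge (or at least a vertex — here I'd want to be careful about exactly what ``intersect'' means in the cited statement), the corresponding edges of $H$ are tightly linked, chaining through the arc to handle the shift from $S$ to $S'$. Assembling these pieces gives that all of $H$ lies in a single tight component, i.e.\ (F2).

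For the aperiodicity conclusion (F3), the strategy is to build a closed \tight walk of order $\equiv 1 \bmod k$ out of two ingredients: the switcher guaranteed by (V3), which provides a local ``parity-changing'' gadget, and the arc, which lets us move between overlapping $d$-windows. Concretely, a switcher is an edge $A$ with central vertex $a$ such that $A \setminus \{a\}$ and $A \setminus \{b\}$ share a neighbour for every $b \in A$; this is exactly the kind of structure used in the proof of \cref{pro:5/9-framework} and in \cref{lem:powers-odd-cycle} to manufacture a closed walk whose length is off by one from a multiple of $k$. I would mimic that construction: walk around a closed cycle built from the switcher's shared neighbours, inserting the central vertex repeatedly so that each ``detour'' contributes a controlled number of vertices modulo $k$, and then patch the whole thing into a genuine \tight walk of $H$ using the arc to guarantee that consecutive windows are realised as edges of $H$ (not merely of some $L(S)$). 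Counting vertices of the resulting closed walk modulo $k$ should yield $1 \bmod k$, as in the analogous computations $(t-1)t + (t-1) \equiv -1 \bmod t$ of \cref{lem:powers-odd-cycle}.

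The main obstacle I anticipate is bookkeeping the interface between the $d$-vicinity graphs $C(S)$ (which are $(k-d)$-uniform) and the $k$-graph $H$ they generate: a \tight walk in $C(S)$ does not automatically lift to a \tight walk in $H$, because moving along $H$ also shifts the anchoring $d$-set, and one must continually re-certify membership in the right $C(S')$. This is precisely the role of the arc hypothesis in (V3), and getting the indexing right — showing that one can interleave ``steps within a fixed link'' and ``steps that slide the link via an arc'' without breaking the tightness or miscounting the length modulo $k$ — is where the real care is needed. I would handle this by setting up a clean notion of a \tight walk in $H$ ``labelled'' by its trailing $d$-sets, proving a small lifting lemma that converts a \tight walk in $C(S)$ plus arcs into a labelled \tight walk in $H$ of the same length up to an additive constant, and then feeding both the connectivity witnesses from (V1)--(V2) and the switcher-based odd walk through that lifting lemma. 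The parity/length computation at the end should then be routine.
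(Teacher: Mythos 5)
The paper itself does not prove \cref{lem:connectivity}; it cites it verbatim from earlier work of the authors (\cite[Lemma 2.1]{LS22}), so your proposal can only be compared against the argument that the hypotheses and the downstream applications suggest. By that standard your decomposition into (F2) and (F3), the roles you assign to the switcher and the arc for aperiodicity, your identification of the interface between the $(k-d)$-uniform vicinities and the $k$-graph $H$ as the main technical obstacle, and your use of \cref{pro:5/9-framework} and \cref{lem:powers-odd-cycle} as templates for the mod-$k$ length computation are all on target.

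There is, however, a concrete gap in your treatment of (F2). Prepending the fixed $d$-set $S$ to a \tight walk $w_1,\dots,w_m$ in $C(S)$ does \emph{not} yield a \tight walk in $H$: the first $k$-window $S\cup\{w_1,\dots,w_{k-d}\}$ is an edge of $H$, but the second window $\{s_2,\dots,s_d,w_1,\dots,w_{k-d+1}\}$ need not be, and nothing in the hypotheses certifies it. The arc cannot repair this, because (V3) supplies only \emph{one} specific $(k+1)$-tuple, not a device for re-anchoring across arbitrary $d$-sets; building a vertex-sequence walk for (F2) by ``sliding as needed'' is therefore not available. The fix is to argue (F2) directly at the level of the line graph $L(H)$, which is how \tight connectedness is defined in \cref{sec:necessary-conditions}: if $X,X'\in C(S)$ share $k-d-1$ vertices, then $S\cup X$ and $S\cup X'$ share $k-1$ vertices, so (V1) already places all $S$-anchored edges of $H$ in one component of $L(H)$; and for $d$-sets $S,S'$ with $|S\cap S'|=d-1$, condition (V2) --- which must be read as ``share an edge'', as you correctly suspect it should (cf.\ the use of \cref{lem:cooley-mycroft}(v) in the proof of \cref{pro:5/9-framework}) --- yields a common $X$ with $|(S\cup X)\cap(S'\cup X)|=(k-d)+(d-1)=k-1$, merging the corresponding components. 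Chaining over consecutive $d$-sets finishes (F2) with no arc and no explicit vertex sequence. The arc and the switcher are then reserved exclusively for (F3), where constructing one closed \tight walk of order $1\bmod k$ by interleaving vicinity walks with the arc's anchor shift is the right plan, exactly as in the construction of $P_x P_y$ in \cref{pro:5/9-framework}.
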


\begin{lemma}[Space, {\cite[Proposition 1.1]{AFH+12}}]\label{lem:space}
	For $k\geq 1$, let $G$ be a $k$-graph on $n$ vertices such that every link $(k-d)$-graph has a fractional matching of size at least $n/k$.
	Then the graph $G$ has a perfect fractional matching.
\end{lemma}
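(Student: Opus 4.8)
The plan is to pass to fractional covers via the linear-programming duality noted before \cref{lem:cover-link-graph}, and then to iterate the rescaling trick from its proof, deleting one vertex at a time. First observe that for any $k$-graph $G$ on $n$ vertices, summing the constraints $\sum_{e\ni v}\omega(e)\le 1$ over all $n$ vertices shows that every fractional matching of $G$ has size at most $n/k$; hence $G$ has a perfect fractional matching as soon as it has one of size at least $n/k$, and by duality this is equivalent to every fractional cover of $G$ having size at least $n/k$. Dualising the hypothesis in the same way, it says that for every $d$-set $S$ every fractional cover of the link $(k-d)$-graph $L(S)$ has size at least $n/k$. So I would argue by contradiction: assume there is a fractional cover $\mathbf{c}\colon V(G)\to[0,1]$ of total weight less than $n/k$, and from it produce a $d$-set $S$ whose link carries a fractional cover of total weight less than $n/k$.

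The engine is a one-step move, which is exactly the computation in the proof of \cref{lem:cover-link-graph} with the uniformity $3$ replaced by the current one. Suppose $G'$ is an $m$-graph on $n'$ vertices carrying a fractional cover $\mathbf{c}'$ of total weight less than the \emph{fixed} target $n/k$ (not $n'/m$), and suppose $n\ge k$. Pick a vertex $w$ of minimum weight $c_0=\mathbf{c}'(w)$; since $n'c_0$ is at most the total weight of $\mathbf{c}'$, a short manipulation using $n\ge k$ yields $c_0<1/m$, so $1-mc_0>0$. Setting $\mathbf{c}''(v)=(\mathbf{c}'(v)-c_0)/(1-mc_0)$ for $v\in V(G')\setminus\{w\}$, one checks as in \cref{lem:cover-link-graph} that $\mathbf{c}''\ge 0$, that $\mathbf{c}''$ is a fractional cover of the $(m-1)$-graph $L_{G'}(w)$, and that its total weight is again less than $n/k$. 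Applying this move to $G$ and $\mathbf{c}$, then to the link of the deleted vertex, and so on, after $d$ iterations we have deleted vertices $v_1,\dots,v_d$; because links telescope, the $(m-1)$-graph reached at the final step is $L_G(\{v_1,\dots,v_d\})$, a $(k-d)$-graph on $n-d$ vertices carrying a fractional cover of total weight less than $n/k$. Since $\{v_1,\dots,v_d\}$ is a $d$-set, this contradicts the hypothesis, and we are done.

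The only genuinely delicate point is the bookkeeping of the threshold through the iteration: the graph sheds a vertex and a level of uniformity at each step while the target $n/k$ stays fixed, so one must check both that the minimum weight stays below $1/m$ and that the rescaling does not push the total weight up to $n/k$. At the $i$-th step both boil down, after the same elementary manipulation as in \cref{lem:cover-link-graph}, to inequalities of the form $(n-i+1)k\ge n(k-i+1)$, all of which reduce to $n\ge k$; and $n\ge k$ may be assumed, since otherwise $G$ has no edges and the hypothesis already fails. I expect this verification — together with checking that everything still reads correctly when $k-d=1$, so that the final link is a $1$-graph — to be the main, though entirely routine, obstacle.
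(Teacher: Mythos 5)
Your proof is correct and follows exactly the rescaling strategy that the paper itself demonstrates for the $3$-uniform, $d=1$ special case in \cref{lem:cover-link-graph}; for \cref{lem:space} the paper gives no proof (it is cited to \cite{AFH+12}), and iterating the one-step rescaling $d$ times, against the fixed target $n/k$ rather than the shifting ratio $n'/m$, is the intended generalisation. Your reduction of the bookkeeping at each step to $n\ge k$ is accurate: both $c_0 < 1/m$ and the preservation of the bound $< n/k$ after rescaling come down to $n(k-i)\le k(n-i)$, which holds precisely when $n\ge k$, and this is forced by the hypothesis being non-vacuous.
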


Given this, \cref{lem:natural-framework-threshold-1} follows immediately.

\begin{proof}[Proof of \cref{lem:natural-framework-threshold-1}]
	Let $n \geq 10k$.
	Denote by $\P$ the $n$-vertex $k$-graphs of $\DegF{k-1}{1/2}$.
	Let $F$ be a natural framework of $\P$.
	Since $1$-uniform graphs are trivially \tightly connected, the choice of $F$ is trivial, and we have $F(G) = G$ for all $G \in \P$.

	It is then straightforward to check that \cref{lem:connectivity,lem:space} yield properties \ref{itm:hf-connected}--\ref{itm:hf-odd} of \cref{def:hamilton-framework}.
	(In fact, this can also be done directly without much effort.)
	For the intersecting property, suppose that there is an $(n+1)$-vertex $t$-graph $J$ with $x,y \in V(J)$ such that $G=J-x$ and $G'=J-y$, where $G,G' \in \P$.
	Recall that $F(G)=G$ and $F(G')=G'$.
	Note that $F(G)-x-y$ and $F(G')-x-y$ have a (common) edge, since $n\geq 10k$.
	It follows that $F(G) \cup F(G')$ is \tightly connected, as desired.
\end{proof}

\subsection{Pair vicinities}

Next we prove \cref{lem:natural-framework-threshold-2}.
The proof relies on \cref{lem:cooley-mycroft}.
We also need the following lemma for finding arcs.

\begin{lemma}[Arc, {\cite[Proposition 3.6]{LS22}}]\label{prop:arc}
	Let $1\leq d \leq k-1$, $t \in \NATS$ and $\delta,\eps >0$ with $\delta \gg \eps \gg 1/t$ and $\delta+\delta^{1-1/(k-d)} > 1+\eps$.
	Let $R$ be a $k$-graph on $t$ vertices with a subgraph $H$ that is generated by a $d$-vicinity $\cC$.
	Suppose that each $C_S \in \cC$ has edge density at least $\delta$.
	Then $\cC$ admits an arc.
\end{lemma}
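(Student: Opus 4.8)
The plan is to reduce the existence of an arc to an intersection argument between two dense families of $(\ell-1)$-sets, where $\ell=k-d$. Observe first that to exhibit an arc $(v_1,\dots,v_{k+1})$ it suffices to choose a $(d-1)$-set $U=\{v_2,\dots,v_d\}$, a pivot vertex $w=v_{d+1}$, an $(\ell-1)$-set $Y=\{v_{d+2},\dots,v_k\}$, and two further vertices $a=v_1$ and $b=v_{k+1}$, with $U$, $Y$, $\{w\}$, $\{a\}$, $\{b\}$ pairwise disjoint, such that
\[
	Y\cup\{w\}\in C(U\cup\{a\})\qquad\text{and}\qquad Y\cup\{b\}\in C(U\cup\{w\}).
\]
Indeed, ordering $U$ and $Y$ arbitrarily and reading these conditions back recovers the two membership requirements in the definition of an arc, and the only distinctness implicit in that definition which is not automatic from the membership conditions is $v_1\neq v_{k+1}$. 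I would fix $U$ to be any $(d-1)$-subset of $V(R)$ and then hunt for $w$, $Y$, $a$ and $b$.

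For the ``left'' condition I would first build a single auxiliary $\ell$-graph. Let $D$ be the $\ell$-graph on $V(R)\setminus U$ whose edges are the $\ell$-sets $e$ with $e\in C(U\cup\{a\})$ for at least two vertices $a\in V(R)\setminus U$. Double counting the pairs $(a,e)$ with $e\in C(U\cup\{a\})$ — there are at least $(t-d+1)\,\delta\binom{t-d}{\ell}$ of them, since each $C(U\cup\{a\})$ lives on $t-d$ vertices and has edge density at least $\delta$, while a fixed $\ell$-set lies in at most $t$ of the graphs $C(U\cup\{a\})$ — shows $e(D)\geq(\delta-o_t(1))\binom{t-d}{\ell}$. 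Averaging the link sizes $\lvert D_w\rvert$ over $w\in V(R)\setminus U$, where each $D_w$ is the $(\ell-1)$-graph $\{Y : Y\cup\{w\}\in D\}$ on the $(t-d)$-set $V(R)\setminus(U\cup\{w\})$, then produces a pivot $w\notin U$ with $e(D_w)\geq(\delta-o_t(1))\binom{t-d}{\ell-1}$; and by construction every $Y\in D_w$ admits at least two vertices $a$ with $Y\cup\{w\}\in C(U\cup\{a\})$.

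For the ``right'' condition I would invoke the Kruskal--Katona theorem (in Lovász's form) for the $\ell$-graph $C(U\cup\{w\})$, which has edge density at least $\delta$ on the same $(t-d)$-set: its shadow $\partial C(U\cup\{w\})$, namely the set of $(\ell-1)$-sets $Y$ admitting a vertex $b$ with $Y\cup\{b\}\in C(U\cup\{w\})$, has at least $(\delta^{1-1/\ell}-o_t(1))\binom{t-d}{\ell-1}$ members. Since $\delta+\delta^{1-1/\ell}>1+\eps$ and $\eps\gg 1/t$, the two families $D_w$ and $\partial C(U\cup\{w\})$ of $(\ell-1)$-subsets of the $(t-d)$-set $V(R)\setminus(U\cup\{w\})$ have sizes summing to more than $\binom{t-d}{\ell-1}$, so they intersect; choose $Y$ in the intersection. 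Then pick $b\notin U\cup\{w\}\cup Y$ with $Y\cup\{b\}\in C(U\cup\{w\})$, and, using that $Y\in D_w$ provides at least two admissible vertices $a$ (each automatically outside $U\cup\{w\}\cup Y$), pick such an $a$ with $a\neq b$. Ordering $U$ and $Y$ and setting $v_1=a$, $v_{d+1}=w$, $v_{k+1}=b$ yields an arc for $\cC$.

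\textbf{Main obstacle.} The step I would expect to carry the key idea is the intersection argument in the last paragraph: one must recognise that $\delta^{1-1/\ell}$ is precisely the Kruskal--Katona bound for the shadow density of a density-$\delta$ family of $\ell$-sets, so that the hypothesis $\delta+\delta^{1-1/\ell}>1+\eps$ is exactly what forces the link $D_w$ and the shadow $\partial C(U\cup\{w\})$ to overlap. Everything else is routine double counting, with the $o_t(1)$ error terms (from the Kruskal--Katona estimate for finite ground set and from the multiplicity bookkeeping) absorbed into $\eps$ via $\delta\gg\eps\gg 1/t$ and $t$ large; the passage to multiplicity-$\geq 2$ edges in the definition of $D$ is a minor device whose sole purpose is to guarantee $a\neq b$, i.e.\ $v_1\neq v_{k+1}$.
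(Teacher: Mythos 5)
The paper does not contain a proof of this lemma — it is cited verbatim from external work (\cite[Proposition 3.6]{LS22}) — so there is no in-paper argument to compare against. Taken on its own terms, your proof is correct. The reduction of arc-existence to the two disjoint membership conditions via $(U,w,Y,a,b)$ with $v_1=a$, $v_{d+1}=w$, $v_{k+1}=b$ is exactly right, and, as you observe, the only distinctness not already forced by the two membership conditions (each of which requires the relevant $d$-set and $\ell$-set to be disjoint) is $a\neq b$. The double count showing $e(D)\geq(\delta-o_t(1))\binom{t-d}{\ell}$, the averaging of the $\ell$-link sizes producing a pivot $w$ with $e(D_w)\geq(\delta-o_t(1))\binom{t-d}{\ell-1}$, the Lov\'asz form of Kruskal--Katona giving $|\partial C(U\cup\{w\})|\geq(\delta^{1-1/\ell}-o_t(1))\binom{t-d}{\ell-1}$, and the final intersection of two families of $(\ell-1)$-sets over the same $(t-d)$-element ground set are all sound, with every $o_t(1)$ term legitimately absorbed by $\eps$ under the hierarchy $\delta\gg\eps\gg 1/t$. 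The multiplicity-$\geq 2$ requirement in the definition of $D$ is a clean device to secure $a\neq b$: since the two witnesses for $Y\cup\{w\}$ lie outside $U\cup\{w\}\cup Y$ and are distinct, at least one differs from $b$. Given that the hypothesis $\delta+\delta^{1-1/\ell}>1+\eps$ is exactly the Kruskal--Katona intersection condition, this is almost certainly the same argument the cited source uses.
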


\begin{proof}[Proof of \cref{lem:natural-framework-threshold-2}]
	Given $\eps > 0$, let $n$ be sufficiently large.
	Denote by $\P$ the $n$-vertex $k$-graphs of $\DegF{k-2}{5/9+\eps}$.
	Let $F$ be a natural framework of $\P$.
	Consider $G \in \P$, and let $C$ be the natural vicinity of $F(G)$.

	Let us first verify that $F(G)$ has properties \ref{itm:hf-connected}, \ref{itm:hf-matching} and \ref{itm:hf-odd} of \cref{def:hamilton-framework}.
	Note that for each $(k-2)$-set $S\subset V(G)$, the $2$-graph $C(S) \subset L_G(S)$ satisfies conditions~\ref{itm:cooleymycroft-edge-density}--\ref{itm:cooleymycroft-triangle} of \cref{lem:cooley-mycroft}.
	We remark that a triangle corresponds to a $2$-uniform switcher.
	Moreover, since  $4/9+(4/9)^{1-1/2}=1+1/9$, it follows by Proposition~\ref{prop:arc} that $C$ has an arc.
	So by \cref{lem:connectivity}, $F(G)$ is \tightly connected and contains a closed walk of order $1 \bmod k$.
	Finally, $F(G)$ has a perfect fractional matching by \cref{lem:space}.

	We finish by verifying the intersecting property.
	Suppose that there is an $(n+1)$-vertex $3$-graph $J$ with $x,y \in V(J)$ such that $G=J-x$ and $G'=J-y$, where $G'$ is another $n$-vertex $k$-graph in $\P$.
	For a $(k-2)$-set $S \subset V(J-x-y)$ consider components $C(S)\subset L_G(S)$ and $C'(S) \subset L_{G'}(S)$, where $C'$ is the natural vicinity of $F(G')$.
	Then $C(S)-x-y$ and $C'(S)-x-y$ have each more than $(1/4)\binom{n-1}{2}$ edges, and thus intersect in a vertex.
	It follows that $F(G) \cup F(G')$ is \tightly connected, as desired.
\end{proof}

\subsection{Triple vicinities}

Finally, we show \cref{lem:natural-framework-threshold-2}.
Let us begin with a few technical observations.

\begin{lemma}[{\cite[Proposition 3.2]{LS22}}] \label{prop:large-component}
	For an $\ell$-graph $L$, let $C$ be the \tight component that maximises $e_{\ell}(C) / e_{\ell-1}(C)$.
	Then $$\frac{e_\ell(C)}{e_{\ell-1}(C)}  \ge \frac{e_\ell(L)}{e_{\ell-1}(L)}.$$
	In particular, if $\delta, \nu, \nu'$ denote the edge densities of $L, C$ and $\partial(C)$ respectively, then $\nu /\nu'\ge \delta $.
\end{lemma}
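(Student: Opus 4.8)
\emph{Proof plan.} The strategy is to observe that the tight components of $L$ partition not merely its edge set but also its shadow, and then apply an averaging (mediant) inequality. Let $C_1,\dots,C_m$ be the tight components of $L$ that contain at least one edge. Since distinct tight components are edge-disjoint (their union would otherwise be a tightly connected subgraph properly extending one of them, contradicting edge-maximality) and every edge of $L$ lies in some tight component, we have $e_\ell(L)=\sum_{i=1}^m e_\ell(C_i)$.

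First I would establish the analogous identity for shadows, namely $\partial L=\bigdcup_{i=1}^m\partial C_i$, whence $e_{\ell-1}(L)=\sum_{i=1}^m e_{\ell-1}(C_i)$. Indeed, $\bigcup_i\partial C_i\subseteq\partial L$ is immediate, and every $S\in\partial L$ lies in some edge, hence in some $\partial C_i$; and the union is disjoint, for if an $(\ell-1)$-set $S$ satisfied $S\subseteq e$ and $S\subseteq f$ with $e\in C_i$, $f\in C_j$ and $i\neq j$, then $e\neq f$ forces $|e\cap f|=\ell-1$, so $ef$ is an edge of the line graph $L(L)$ and $e,f$ would lie in a common tight component, a contradiction. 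With both decompositions in place, the elementary inequality $\max_i a_i/b_i\ge(\sum_i a_i)/(\sum_i b_i)$ for nonnegative $a_i$ and positive $b_i$, applied to $a_i=e_\ell(C_i)$ and $b_i=e_{\ell-1}(C_i)>0$, gives $e_\ell(C)/e_{\ell-1}(C)\ge e_\ell(L)/e_{\ell-1}(L)$ for $C$ as in the statement.

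For the density statement, write $n=|V(L)|$, so that $\delta=e_\ell(L)/\binom{n}{\ell}$, $\nu=e_\ell(C)/\binom{n}{\ell}$ and $\nu'=e_{\ell-1}(C)/\binom{n}{\ell-1}$. Then $\nu/\nu'=\frac{e_\ell(C)}{e_{\ell-1}(C)}\cdot\frac{\binom{n}{\ell-1}}{\binom{n}{\ell}}\ge\frac{e_\ell(L)}{e_{\ell-1}(L)}\cdot\frac{\binom{n}{\ell-1}}{\binom{n}{\ell}}\ge\frac{e_\ell(L)}{\binom{n}{\ell}}=\delta$, where the final step uses $e_{\ell-1}(L)=|\partial L|\le\binom{n}{\ell-1}$. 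The whole argument is elementary; the one point carrying the actual content — and the only place where care is needed — is the disjointness of the shadows of distinct tight components, which is precisely where the line-graph definition of tight connectivity enters. Degenerate cases (a component, or $L$ itself, with empty shadow) are handled by restricting attention to components containing an edge, as done above.
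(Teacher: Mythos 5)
Your proof is correct and follows essentially the same route as the paper's: decompose both $e_\ell(L)$ and $e_{\ell-1}(L)$ over the tight components, apply the mediant inequality, and bound $e_{\ell-1}(L) \leq \binom{n}{\ell-1}$ for the density statement. The only difference is that you spell out the disjointness of the shadows of distinct components (which the paper compresses into the phrase ``by the definition of tight components''), and your argument for it via the line graph is the right one.
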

\begin{proof}
	Let $C_1,\dots,C_s$ be the \tight components of $L$.
	Fix $1 \leq q \leq s$ which maximises $e_{\ell}(C_q) / e_{\ell-1}(C_q)$.
	By the definition of \tight components, we have $\sum_{i=1}^s e_\ell(C_i) = e_\ell(L)$ and $\sum_{i=1}^s e_{\ell-1}(C_i) = e_{\ell-1}(L)$.
	It follows that
	\begin{equation*}
		e_\ell(L) = \sum_{i=1}^s e_\ell(C_i) \leq \frac{e_\ell(C_q)}{e_{\ell-1}(C_q)} \sum_{i=1}^s e_{\ell-1}(C_i) = \frac{e_\ell(C_q)}{e_{\ell-1}(C_q)}  e_{\ell-1}(L).
	\end{equation*}
	We obtain the second part by bounding the edge density of $\partial_{\ell-1}(L)$ {by~$1$}.
\end{proof}

\begin{fact}\label{obs:density}
	For $n$ sufficiently large, let $L$ be a $3$-graph $n$ vertices with $e(G) = \lceil 5/8 \binom{n}{3} \rceil$.
	Let $C \subset G$ be a \tight component with $e(C) \geq 1/2 \binom{n}{3}$.
	Denote the edge densities of $L$, $C$ and $\partial (C)$ by $\delta$, $\nu$ and $\nu'$ respectively.
	Then $\nu/ \nu' \geq \delta$.
\end{fact}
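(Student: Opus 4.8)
The plan is to deduce \cref{obs:density} directly from \cref{prop:large-component}: I will show that, under the stated hypotheses, the prescribed component $C$ is the (unique) tight component of $L$ that maximises the ratio $e_3(\cdot)/e_2(\cdot)$, and then the bound $\nu/\nu'\ge\delta$ is precisely the `in particular' conclusion of \cref{prop:large-component}.

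The first observation is that $C$ is the unique tight component of $L$ with at least $\tfrac12\binom n3$ edges. Indeed, distinct tight components are edge-disjoint (being the edge classes of the components of the line graph), so two such components would together carry at least $\binom n3 > \big\lceil\tfrac58\binom n3\big\rceil = e(L)$ edges, which is impossible for $n$ large. Consequently every tight component $C'\neq C$ satisfies
\[
  e_3(C') \;\le\; e(L) - e_3(C) \;\le\; \Big\lceil \tfrac58\tbinom n3 \Big\rceil - \tfrac12\tbinom n3 \;\le\; \tfrac18\tbinom n3 + 1 .
\]

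Next I would bound the ratio of each such $C'$ from above using the Kruskal--Katona theorem. Writing $e_3(C') = \binom x3$ for the real $x\ge 3$ with this property, the Lov\'asz form of the Kruskal--Katona theorem gives $e_2(C') = |\partial C'| \ge \binom x2$, hence $e_3(C')/e_2(C') \le (x-2)/3$. Since
\[
  \tbinom{(n+2)/2}{3} - \tfrac18\tbinom n3 \;=\; \tfrac1{16}\,n(n-2) \;>\; 1
\]
for $n$ large, the monotonicity of $t\mapsto\binom t3$ together with the displayed edge bound forces $x < (n+2)/2$, and therefore $e_3(C')/e_2(C') < (n-2)/6$. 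On the other hand $e_2(C)\le\binom n2$ and $e_3(C)\ge\tfrac12\binom n3$, so
\[
  \frac{e_3(C)}{e_2(C)} \;\ge\; \frac{\tfrac12\binom n3}{\binom n2} \;=\; \frac{n-2}{6} \;>\; \frac{e_3(C')}{e_2(C')} .
\]
Hence $C$ strictly exceeds every other tight component of $L$ in the ratio $e_3(\cdot)/e_2(\cdot)$, so it is the maximiser, and \cref{prop:large-component} yields $\nu/\nu'\ge\delta$, as required.

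The only non-elementary ingredient is the Kruskal--Katona theorem; everything else is bookkeeping with binomial coefficients. The single point that needs a little care is that the quantities $e_3(C)/e_2(C)$ and $(n-2)/6$ coincide in the worst case, so the inequality $x<(n+2)/2$ must be kept strict — which is safe, since the slack $\tfrac1{16}n(n-2)$ above grows and comfortably absorbs the additive constant coming from the ceiling.
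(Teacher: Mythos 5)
Your proof is correct, and it rests on the same two ingredients as the paper's own argument: \cref{prop:large-component} and the Kruskal--Katona theorem. The presentational difference is minor. The paper invokes \cref{prop:large-component} directly for the maximiser $C_1$ and then derives a contradiction (via Kruskal--Katona in density form, $\nu_1' \ge \nu_1^{2/3} - o(1)$) if $C_1 \neq C$; you instead compare the raw ratios $e_3(\cdot)/e_2(\cdot)$ of $C$ and of every other tight component directly (via the Lov\'asz form, $e_3(C') = \binom{x}{3} \Rightarrow e_2(C') \ge \binom{x}{2}$) to see that $C$ must itself be the maximiser, and only then apply \cref{prop:large-component}. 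Both establish exactly the same intermediate fact, that $C$ is the tight component maximising $e_3/e_2$, so I would not call this a genuinely different route.
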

\begin{proof}
	Let $C_1 \subset G$ be the \tight component that maximises $e_{\ell}(C_1) / e_{\ell-1}(C_1)$.
	Denote by $\nu_1, \nu'_1$ the edge densities of $C_1$ and $\partial(C_1)$ respectively.
	By \cref{prop:large-component}, we have $\nu_1 / \nu_1'\ge \delta $.
	If $C = C_1$, we are done.
	So suppose otherwise.
	But then $\nu_1 \leq 1/8$ due to the assumption that  $e(G) = \lceil 5/8 \binom{n}{3} \rceil$.
	By the Kruskal--Katona theorem, we have $\nu'_1 \geq \nu_1^{2/3} - o(1)$.
	So $$ \frac{\nu_1}{\nu'_1} \leq \nu_1^{1/3} + o(1) \leq \frac 1 2 + o(1) < \frac{5}{8} \leq \delta,$$ which is absurd.
\end{proof}

The proof of the following result uses a nice convexity argument of Reiher, Rödl, Ruciński, Schacht and Szemerédi~\cite[Lemma 6.2]{RRR19}.

\begin{lemma}[Switcher, {\cite[Proposition 3.5]{LS22}}]\label{prop:switcher}
	For $1/t \ll \mu$, let $L$ be an $\ell$-graph on $t$ vertices with edge density at least $\delta +\mu$, where $\delta = \ell/(\ell + 2 \sqrt{\ell-1})$.
	Consider a vertex spanning subgraph $C\subset L$ and denote the edge densities of $C$ and $\partial (C)$ by $\nu$ and $\nu'$ respectively.
	Suppose that $\nu/ \nu' \geq \delta +\mu $.
	Then $C$ has a switcher.
\end{lemma}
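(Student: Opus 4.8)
The plan is to argue by contradiction, following the reciprocal‑codegree proof of Mantel's theorem (the case $\ell=2$), which is the convexity input referred to above. Assume $C$ has no switcher; I will deduce $\nu/\nu'<\delta+\mu$, contrary to hypothesis. For $S\in\partial C$ write $N(S)=\{v\in V(C):S\cup\{v\}\in C\}$ and $d(S)=|N(S)|$, the number of edges of $C$ through $S$, so that $\sum_{S\in\partial C}d(S)=\ell\, e_\ell(C)$; note that two $(\ell-1)$-sets share a neighbour in $C$ precisely when their $N$-sets meet.

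First I would record what the absence of a switcher gives on a single edge. Fix $A\in C$, let $S_0=A\setminus\{a\}$ be a sub-$(\ell-1)$-set attaining $M(A):=\max_{x\in A}d(A\setminus\{x\})$, and put $m(A):=\min_{x\in A}d(A\setminus\{x\})$. If $M(A)+m(A)>t$, then for every $b\in A\setminus\{a\}$ one has $d(A\setminus\{a\})+d(A\setminus\{b\})\ge M(A)+m(A)>t=|V(C)|$, hence $N(A\setminus\{a\})\cap N(A\setminus\{b\})\ne\emptyset$; so $A\setminus\{a\}$ shares a neighbour with every $A\setminus\{b\}$, i.e. $(A,a)$ is a switcher, a contradiction. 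Thus $M(A)+m(A)\le t$ for every edge $A\in C$ (so in particular $m(A)\le t/2$).

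The convexity step is then as follows. Counting each $S\in\partial C$ once for every edge of $C$ containing it gives the identity
\[
|\partial C|\ =\ \sum_{S\in\partial C}1\ =\ \sum_{A\in C}\ \sum_{x\in A}\frac{1}{d(A\setminus\{x\})}.
\]
Within a fixed edge $A$, one of the $\ell$ summands equals $1/m(A)$, while each of the remaining $\ell-1$ is at least $1/M(A)\ge 1/(t-m(A))$ by the previous paragraph; so, with $\beta:=m(A)/t\in(0,1)$,
\[
\sum_{x\in A}\frac{1}{d(A\setminus\{x\})}\ \ge\ \frac{1}{m(A)}+\frac{\ell-1}{t-m(A)}\ =\ \frac1t\Big(\frac1\beta+\frac{\ell-1}{1-\beta}\Big)\ \ge\ \frac{(1+\sqrt{\ell-1})^2}{t},
\]
the last inequality being the minimisation of $\beta\mapsto\tfrac1\beta+\tfrac{\ell-1}{1-\beta}$ on $(0,1)$, whose minimiser is $\beta=1/(1+\sqrt{\ell-1})$ and whose minimum is $(1+\sqrt{\ell-1})^2=\ell+2\sqrt{\ell-1}=\ell/\delta$. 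Summing over the $e_\ell(C)$ edges gives $|\partial C|\ge \ell\, e_\ell(C)/(\delta t)$, whence
\[
\frac{\nu}{\nu'}\ =\ \frac{e_\ell(C)}{|\partial C|}\cdot\frac{\binom{t}{\ell-1}}{\binom{t}{\ell}}\ \le\ \frac{\delta t}{\ell}\cdot\frac{\ell}{t-\ell+1}\ =\ \delta\cdot\frac{t}{t-\ell+1}\ =\ \delta+O(\ell/t)\ <\ \delta+\mu
\]
by $1/t\ll\mu$, which is the desired contradiction.

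I do not expect a real obstacle here — once the two ideas are in place the proof is short. The points that need an idea are: (i) the observation above that failure of the switcher condition on a single edge is already forced by the two \emph{extreme} codegrees along it; and (ii) the choice to double-count $\partial C$ via the reciprocals $1/d(A\setminus\{x\})$, which makes each edge's contribution governed by the convex function $\tfrac1\beta+\tfrac{\ell-1}{1-\beta}$, whose minimum produces exactly the constant $\ell+2\sqrt{\ell-1}$ appearing in $\delta$; the rest is bookkeeping, and in fact only $\nu/\nu'\ge\delta+\mu$ and $1/t\ll\mu$ are used. For orientation, the bound is sharp: taking $C$ to be all $\ell$-sets $e$ with $|e\cap X|=1$ for a set $X$ with $|X|=t/(1+\sqrt{\ell-1})$, every edge satisfies $M(A)+m(A)=t$ and $\nu/\nu'\to\delta$.
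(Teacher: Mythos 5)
Your proof is correct and self-contained. Let me note that the paper itself does not prove this lemma; it cites~\cite[Proposition~3.5]{LS22} and only remarks that the proof uses ``a nice convexity argument of Reiher, R\"odl, Ruci\'nski, Schacht and Szemer\'edi,'' so there is no in-paper proof to compare against. That said, your argument is exactly what that remark gestures at: you encode the absence of a switcher through the extremal codegrees $M(A)+m(A)\le t$ along each edge (the pigeonhole step is sound since $N(A\sm\{a\}),N(A\sm\{b\})\subset V(C)$, and the $b=a$ case is trivial as $a$ is a common neighbour), then double-count $|\partial C|$ via the reciprocals $1/d(A\sm\{x\})$, with the convex function $\beta\mapsto 1/\beta+(\ell-1)/(1-\beta)$ delivering the constant $\ell+2\sqrt{\ell-1}=\ell/\delta$. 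The constants check out: $\binom{t}{\ell-1}/\binom{t}{\ell}=\ell/(t-\ell+1)$, so the final bound $\nu/\nu'\le\delta\cdot t/(t-\ell+1)<\delta+\mu$ uses only $1/t\ll\mu$, as you observe. Two small robustness points you implicitly handle but might state: $m(A)\ge1$ because $a\in N(A\sm\{a\})$ (so there is no division by zero), and the inequality $\sum_x 1/d(A\sm\{x\})\ge 1/m(A)+(\ell-1)/M(A)$ holds even with ties at the minimum since $1/m(A)\ge 1/M(A)$. You are also right that the edge-density hypothesis on $L$ is never used; only $\nu/\nu'\ge\delta+\mu$ matters, which is worth flagging since it shows the lemma is stated with a redundant hypothesis (presumably retained because that is how it is produced in applications).
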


Finally, we require the following result due to Lang, Schacht and Volec~\cite{LSV24}.

\begin{theorem}{\cite[Lemmata 2.3 and 2.4]{LSV24}}\label{thm:5/8}
	For every $\eps$, there is $n_0$ such that the following holds.
	Suppose that $G$ is a $3$-graph on $n \geq n_0$ vertices with $e(G) \geq (5/8 + \eps) \binom{n}{3}$.
	Then there is a subgraph $C\subset G$ which is
	\begin{enumerate}[\upshape (1)]
		\item \label{itm:problem-erdos-gallai-connected} \tightly connected,
		\item \label{itm:problem-erdos-gallai-matching}  has a matching of size $n/4$ and
		\item \label{itm:problem-erdos-gallai-dense}  has more than $1/2 \binom{n}{\ell}$ edges.
	\end{enumerate}
\end{theorem}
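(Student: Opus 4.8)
The plan is to take $C$ to be a single \tight component of $G$, so that the connectivity condition holds automatically, and then to establish the density and matching conditions separately: first I would locate a \tight component with more than $\tfrac12\binom n3$ edges, and then show that any such component has a fractional matching of size~$n/4$.

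For the density condition, the starting point is the elementary fact that distinct \tight components of a $3$-graph have edge-disjoint $2$-shadows: if a pair $S$ lay in both $\partial C_i$ and $\partial C_j$, then an edge of $C_i$ and an edge of $C_j$ through $S$ would be equal or meet in two vertices, hence be adjacent in the line graph, forcing $C_i=C_j$. Writing $C_1,\dots,C_s$ for the \tight components of $G$ with $e(C_1)$ largest, this gives $\sum_i e(\partial C_i)\le\binom n2$, while each edge of a component spans a distinct triangle of its shadow, so $e(C_i)$ is at most the number of triangles of $\partial C_i$. If $e(C_1)>\tfrac12\binom n3$ we are done with this part, so assume $e(C_1)\le\tfrac12\binom n3$ and aim to derive $e(G)=\sum_i e(C_i)\le(\tfrac58+o(1))\binom n3$, contradicting the hypothesis. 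The Kruskal--Katona theorem — as already used in \cref{prop:large-component} and \cref{obs:density} — bounds $e(C_i)$ in terms of $e(\partial C_i)$, but, as the cruder estimates there suggest, this alone is too weak to reach $\tfrac58$. The additional input is that $\partial C_1,\dots,\partial C_s$ are edge-disjoint graphs \emph{on the same $n$ vertices}: a component whose shadow avoids the dense part of $\partial C_1$ can only form triangles on the comparatively few vertices left sparse by $\partial C_1$, and so contributes few edges. Optimising this trade-off between $e(\partial C_1)$ and the residual triangle count, subject to $e(C_1)\le\tfrac12\binom n3$ and $\sum_i e(\partial C_i)\le\binom n2$, should give exactly $(\tfrac58+o(1))\binom n3$, with equality only near \cref{const:tight-general}. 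Turning this extremal estimate into a proof — most plausibly via a stability statement forcing $G$ to resemble \cref{const:tight-general} once $e(C_1)\le\tfrac12\binom n3$, i.e.\ the $3$-uniform counterpart of \cref{lem:cooley-mycroft} — is the step I expect to be the main obstacle.

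For the matching condition, let $C$ be the \tight component produced above, with $e(C)>\tfrac12\binom n3$; note that $C$ spans $m\ge(1-o(1))\,2^{-1/3}n$ vertices, since $e(C)\le\binom m3$. By linear-programming duality (compare \cref{lem:cover-link-graph}) it suffices to show that every fractional vertex cover $\mathbf c\colon V(C)\to[0,1]$ has size at least $n/4$. Supposing $\|\mathbf c\|_1<n/4$, the weight of $\mathbf c$ concentrates on few vertices; combining this with $e(C)\le\sum_v\mathbf c(v)\,d_C(v)$ and $d_C(v)\le\binom{n-1}2$ one sees that the degrees of the heavy vertices in $C$ cannot all be close to $\binom{n-1}2$, so $e(C)$ cannot reach $\tfrac12\binom n3$ unless $C$ is very close to the relevant component of \cref{const:tight-general} — where the minimum cover is already $(1-o(1))n/4$, so $e(C)>\tfrac12\binom n3$ yields the contradiction in the near-extremal case too. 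The \tight connectivity of $C$ is used only to ensure it is one spread-out component; granting these two estimates, the matching and density conditions follow and the proof is complete.
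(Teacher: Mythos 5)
Note first that \cref{thm:5/8} is not proved in this paper — it is imported verbatim from \cite[Lemmata 2.3 and 2.4]{LSV24} and used as a black box; your attempt is therefore a reconstruction of the external proof rather than a check of an argument that appears here. Your initial observation is sound and is indeed the standard starting point: distinct tight components of a $3$-graph have edge-disjoint $2$-shadows, so $\sum_i e(\partial C_i)\le\binom n2$, and Kruskal--Katona bounds each $e(C_i)$ in terms of $e(\partial C_i)$ — this is exactly the mechanism behind \cref{prop:large-component} and \cref{obs:density}. You also correctly diagnose that this alone falls well short of $5/8$.

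The genuine gap is that the two places where you write "should give exactly $(\tfrac58+o(1))\binom n3$" and "is the step I expect to be the main obstacle" are not a sketch of a proof but the entire content of the result. In particular, the intuition that a component whose shadow misses the dense part of $\partial C_1$ "can only form triangles on the comparatively few vertices left sparse by $\partial C_1$" is not correct as stated: two edge-disjoint graphs on the same $n$ vertices can both be globally dense and both spread over the whole vertex set, so there is no vertex-localisation to exploit, and the optimisation you describe is a real extremal/stability analysis (which is what \cite{LSV24} carries out), not a one-line trade-off. The matching leg has the same character: after passing to LP duality, the chain "the weight of $\mathbf{c}$ concentrates on few vertices", "the degrees of the heavy vertices cannot all be close to $\binom{n-1}2$", "unless $C$ is very close to \cref{const:tight-general}" consists of unargued assertions — nothing prevents $\mathbf{c}$ from being spread thinly over many vertices, and the inequality $\sum_v\mathbf{c}(v)\,d_C(v)\ge e(C)$ does not by itself force any concentration. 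Both halves of your argument are therefore gaps rather than rough edges; for the purposes of this paper, \cref{thm:5/8} should simply be treated as a cited result.
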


\begin{proof}[Proof of \cref{lem:natural-framework-threshold-3}]
	Given $\eps > 0$, let $n$ be sufficiently large.
	Denote by $\P$ the $n$-vertex $k$-graphs of $\DegF{k-3}{5/8+\eps}$.
	Let $F$ be a natural framework of $\P$.
	Consider $G \in \P$, and let $C$ be the natural vicinity of $F(G)$.

	Let us first verify that $F(G)$ has properties \ref{itm:hf-connected}, \ref{itm:hf-matching} and \ref{itm:hf-odd} of \cref{def:hamilton-framework}.
	Note that for each $(k-3)$-set $S\subset V(G)$, the $3$-graph $C(S) \subset L_G(S)$ satisfies conditions~\ref{itm:problem-erdos-gallai-connected}--\ref{itm:problem-erdos-gallai-dense} of \cref{thm:5/8}.
	Note that $3/(3+2\sqrt{2}) < 0.52 < 5/8$.
	By \cref{obs:density,prop:switcher}, there is a switcher in $C(S)$.
	(To attain the edge density condition of \cref{obs:density}, we may delete some edges outside of $C(S)$.)
	Moreover, since  $1/2+(1/2)^{1-1/3} < 1.2$, it follows by Proposition~\ref{prop:arc} that $C$ has an arc.
	So by \cref{lem:connectivity}, $F(G)$ is \tightly connected and contains a closed walk of order $1 \bmod k$.
	Finally, $F(G)$ has a perfect fractional matching by \cref{lem:space}.

	We finish by verifying the intersecting property.
	Suppose that there is an $(n+1)$-vertex $4$-graph $J$ with $x,y \in V(J)$ such that $G=J-x$ and $G'=J-y$, where $G'$ is another $n$-vertex $k$-graph in $\P$.
	For a $(k-3)$-set $S \subset V(J-x-y)$ consider components $C(S)\subset L_G(S)$ and $C'(S) \subset L_{G'}(S)$, where $C'$ is the natural vicinity of $F(G')$.
	Then $C(S)-x-y$ and $C'(S)-x-y$ have each more than $(1/2)\binom{n-1}{2}$ edges, and thus intersect in a pair of their shadows.
	It follows that $F(G) \cup F(G')$ is \tightly connected, as desired.
\end{proof}

\section{A directed setup}\label{sec:directed-setup}

In this section, we formulate directed versions of our main results and then derive \cref{thm:framework-bandwidth} from them.
Since our guest structures are based on linear vertex orderings, it is natural to carry out the analysis itself in a directed setting.
We also replace the notion of $k$-graphs with $k$-bounded graphs, which are hypergraphs that allow edges of uniformity up (but not restricted) to $k$.
This is particularly useful to track the presence of specific tuples that witness connectivity in a robust way.
We note that the transition from the undirected to the directed setting, formalised in \cref{pro:framework-undirected-to-directed}, is not trivial and an important step towards the proof of \cref{thm:framework-bandwidth}.
Moreover, working with $k$-bounded directed graphs also turns out to be an appropriate interface for passing on the property of robust Hamiltonicity to other fields of study such as random graphs, which is discussed after \cref{thm:framework-connectedness-robust}.
The concepts we have seen so far generalise straightforwardly to this setting.
For the sake of completeness we spell out the details.

A \emph{$k$-bounded directed hypergraph} (or \emph{$[k]$-digraph} for short) $G$ consists of a set of vertices $V(G)$ and a set of edges $E(G) \subset \bigcup_{i \in [k]} V(G)^i$, where edges do not have repeated vertices.
We denote by $G^{(i)} \subset G$ the vertex-spanning subgraph that contains the edges of uniformity $i$.
A (directed) \emph{$k$-uniform path (cycle)} in $G$ comes with a linear (cyclical) ordering of its vertex set such that every subsequence of $k$ consecutive vertices forms a (directed) edge in $G$.
A \emph{homomorphism} from a $k$-digraph $C$ to $G$ is a function $\phi \colon V(C) \to V(G)$ that maps directed edges to directed edges.
We say that $W \subset G$ is a \emph{(closed) walk} if $W$ is the image of a homomorphism of a path (cycle) $P$.
The \emph{order} of $W$ is the order of $P$.

\subsection{Necessary conditions}\label{sec:directed-setupt-necessary-conditions}

In the following, we recover directed versions of the concepts introduced in \cref{sec:necessary-conditions}.
A $[k]$-digraph $G$ is \emph{\tightly connected} if every two edges in $G^{(k)}$ are on a common closed $k$-uniform \tight walk.
{A \emph{\tight component} is a maximal set of $k$-edges which form a \tightly connected subgraph.}

{A $[k]$-digraph is \emph{shift-closed} if for every $k$-edge $(x_1, \dotsc, x_k) \in G^{(k)}$, the cyclic shifts $(x_2, \dotsc ,x_{k}, x_1)$, $(x_3, \dotsc, x_{k}, x_1, x_2),$ and so on also belong to $G^{(k)}$.
Note that if a $[k]$-digraph is shift-closed then every $e \in G^{(k)}$ belongs to some \tight component, and for every $f \in G^{(k-1)}$, the $k$-edges which contain $f$ as a subsequence all belong to the same \tight component.}

{Given a shift-closed $[k]$-digraph} and an edge $e \in G^{(k-1)}$, we denote by $\tc(e)$ the \tight component comprising all $k$-edges which contain $e$.
The \emph{(directed) \tight adherence} $\adh(G) \subset G^{(k)}$ is obtained by taking the union of the \tight components $\tc(e)$ over all $e \in G^{(k-1)}$.

\begin{definition}[Connectivity]
	Let $\dcon$ be the set of {shift-closed} $[k]$-digraphs~$G$ with $k\geq2$ such that  $\adh(G)$ is a single vertex-spanning \tight component.
\end{definition}

A \emph{fractional matching} in a $[k]$-digraph $G$ is a function $\omega\colon E(G) \to [0,1]$ such that $\sum_{e\colon v \in e} \omega (e)\geq 1$ for every vertex $v \in V(G)$.
We say that $\omega$ is \emph{perfect} if $\sum_{e \in E(G)} \omega (e) = n/k$.

\begin{definition}[Space]
	Let $\dspa$ be the set of $[k]$-digraphs $G$ with $k\geq 2$ such that $\adh(G)$ has a perfect fractional matching.
\end{definition}

\begin{definition}[Aperiodicity]
	Let $\dape$ be the set of $[k]$-digraphs $G$ with $k\geq 2$ such that $G^{(k)}$ contains a closed walk $W$ of order coprime to $k$.
\end{definition}

\subsection{Robustness}
Next, we recover the concepts introduced in \cref{sec:robustness}.
It should be emphasised that the definition of the `property graph' for directed graphs is indeed a graph, not a directed graph.

\begin{definition}[Property graph -- directed]\label{def:property-graph-digraph}
	For a $[k]$-digraph $G$ and a family of $[k]$-digraphs $\P$, the \emph{property graph}, denoted by $\PG{G}{\P}{s}$, is the $s$-graph on vertex set $V(G)$ with an edge $S \subset V(G)$ whenever the induced subgraph $G[S]$ \emph{satisfies}~$\P$, that is $G[S] \in \P$.
\end{definition}

The following definition of robustness corresponds to directed hypergraphs.

\begin{definition}[Robustness, directed]\label{def:robustenss-detailed}
	For a family of $[k]$-digraphs $\P$, a $[k]$-digraph $G$ \emph{$(\delta,r,s)$-robustly satisfies} $\P$ if the minimum $r$-degree of the property $s$-graph $\PG{G}{  \P   }{s}$ is at least $\delta  \tbinom{n-r}{s-r}$.
\end{definition}

For convenience, we abbreviate $(1-1/s^2,r,s)$-robustness to \emph{$(r,s)$-robustness}.
Moreover, \emph{$s$-robustness} is short for $(r,s)$-robustness with $r=2k$ as in the context of \cref{def:robustenss-detailed}.

\subsection{Sufficient conditions}

Now we are ready to state the directed versions of our main results.
For a $k$-graph $G$, we denote by $\ori{C}(G)$ the $k$-digraph on $V(G)$ obtained by adding all possible orientations of every edge of $G$.
Recall that, for a $k$-graph $G$, we denote by $\partial G$ the $(k-1)$-sets of $V(G)$ that are contained in an edge of $G$.

The next result corresponds to a directed version of \cref{thm:framework-bandwidth}.
Its proof is given in \cref{sec:proof-main-result-bandwidth}.
\begin{theorem}[Directed bandwidth]\label{thm:framework-bandwidth-directed}
	For  all $k$, $t$ and $s$, there are $c$ and $n_0$ such that the following holds.
	Let $G$ be a $k$-graph on $n \geq n_0$ vertices such that the $t$-clique-graph $\hat K = K_t(G)$ admits a $[t]$-digraph $K \subset \ori{C}(\hat K) \cup \ori{C}(\partial \hat K)$ that  $s$-robustly satisfies $\dcon \cap \dspa \cap \dape$.
	Let $H$ be an $n$-vertex blow-up of the $(t-k+1)$st power of a $k$-uniform cycle with clusters of size at most~$(\log \log n)^c$.
	Then $H \subset G$.
\end{theorem}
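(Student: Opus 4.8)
The plan is to reduce the directed bandwidth statement to two independent tasks: (i) covering the vertex set of $G$ by a bounded family of complete blow-ups of reduced graphs that are interlocked cyclically, and (ii) allocating the guest blow-up $H$ into this cover. For step (i), I would invoke \cref{pro:blow-up-cover}, which takes the $s$-robust satisfaction of $\dcon \cap \dspa \cap \dape$ by the auxiliary $[t]$-digraph $K \subset \ori{C}(\hat K) \cup \ori{C}(\partial \hat K)$ and produces reduced digraphs $R_1, \dotsc, R_\ell$ together with vertex clusters $\cV_1, \dotsc, \cV_\ell$ whose union is $V(G)$, such that each $R_i(\cV_i)$ embeds (via cliques) into $G$ and consecutive covers overlap in a controlled way. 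The connectivity property $\dcon$ is what lets the $R_i$ be chosen so that the cover closes up into a cycle; the space property $\dspa$ guarantees that the clusters can be made to (nearly) partition $V(G)$ so nothing is wasted; and the aperiodicity property $\dape$ is what removes the divisibility obstruction to closing the cycle with clusters of the required residues.

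Next, given the blow-up $H$ of the $(t-k+1)$st power of a $k$-uniform cycle on $n$ vertices with clusters of size at most $(\log\log n)^c$, I would cut $H$ along the cyclic ordering into consecutive blow-ups of powers of paths $H_1, \dotsc, H_\ell$, choosing the cut points so that $|V(H_i)|$ matches $|\bigcup \cV_i|$ up to a negligible error, and so that the short `interface' segments of $H$ straddling consecutive cuts are small. This is where the poly-loglog bound on cluster sizes is used: it guarantees that each interface segment has few enough vertices that it can be absorbed into the overlap region of $R_i(\cV_i)$ and $R_{i+1}(\cV_{i+1})$ without difficulty. Each piece $H_i$ (respectively the cyclic interface) is then embedded into the complete blow-up $R_i(\cV_i)$ by appealing to \cref{pro:allocation-bandwidth-frontend} (and \cref{pro:allocation-Hamilton-cycle} for the global skeleton): these allocation results take a reduced digraph with the framework properties and a path-like blow-up of the appropriate order, and produce the desired embedding, with prescribed behaviour at the two ends so that the pieces can be concatenated. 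The consistency property \ref{itm:hf-intersecting}, packaged into the $\dcon$ condition via $\adh(G)$ being a single vertex-spanning tight component, is precisely what ensures the end-tuples handed off between $R_i(\cV_i)$ and $R_{i+1}(\cV_{i+1})$ lie in a common tight component, so that the allocations can be stitched together into an embedding of all of $H$.

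Finally I would assemble the pieces: the embeddings of $H_1, \dotsc, H_\ell$ into $R_1(\cV_1), \dotsc, R_\ell(\cV_\ell)$ together with the interface embeddings form an embedding of $H$ into $\bigcup_i R_i(\cV_i) \subseteq K_t(G)$, which — since a Hamilton cycle in $K_t(G)$ is exactly a $(t-k+1)$st power of a Hamilton cycle in $G$, and more generally a blow-up thereof pulls back — yields $H \subset G$. I expect the main obstacle to be step (ii), the synchronised allocation: one must simultaneously control the sizes of all the pieces $H_i$ so their total is exactly $n$, keep the end-tuples of consecutive allocations compatible (same tight component, correct orientation, disjoint from already-used vertices), and handle the cyclic closure, all while the clusters of $H$ are of nonconstant (though tiny) size. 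The orientation bookkeeping — that $K$ lives in $\ori{C}(\hat K) \cup \ori{C}(\partial\hat K)$ and the allocation must respect directed edges — adds a further layer that will need careful attention, but it is the kind of difficulty the directed formulation was set up to absorb.
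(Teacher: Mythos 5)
Your outline follows the paper's strategy closely (blow-up cover by \cref{pro:blow-up-cover}, cut $H$ into path pieces, allocate each piece via \cref{pro:allocation-bandwidth-frontend}, stitch at the interfaces), but there is a genuine gap: you cannot actually invoke \cref{pro:blow-up-cover} or \cref{pro:allocation-bandwidth-frontend} from the stated hypothesis without first \emph{boosting}. The theorem only assumes that $K$ is $s$-robustly $\dcon\cap\dspa\cap\dape$, i.e.\ $(1-1/s^2,\,2t,\,s)$-robust. But \cref{pro:blow-up-cover} requires the property $\P$ to be satisfied \emph{both} $(1,s_1)$-robustly \emph{and} $(2s_1-2,s_2)$-robustly (two different degree types $r$), and \cref{pro:allocation-bandwidth-frontend} requires the reduced digraph to lie in $\Del_{t+1}(\dcon\cap\dspa\cap\dape)$, not merely in $\dcon\cap\dspa\cap\dape$ --- the hardening against deleting up to $t+1$ vertices is what lets one remove the exceptional cluster and the handed-off end-tuples without destroying connectivity, space, or aperiodicity of the reduced graph. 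Supplying these two things is exactly the job of \cref{corollary:booster2}, applied twice with parameters $(r_2,s_2)=(1,s_1)$ and $(2s_1-2,s_2)$, and taking $\P=\Del_{t+1}(\dcon\cap\dspa\cap\dape)$ throughout the rest of the proof. Without this step the hypotheses of your two key propositions are not met. Relatedly, you need to choose $s_1-1$ and $s_2-1$ coprime to $t$, as demanded by \cref{pro:allocation-bandwidth-frontend}; this is a nontrivial choice baked into the constant hierarchy, not something automatic.

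A few smaller points. \cref{pro:allocation-Hamilton-cycle} is not actually used in this proof --- it belongs to the proof of \cref{thm:framework-connectedness-robust} --- so your remark about a ``global skeleton'' is a red herring; all allocation here goes through \cref{pro:allocation-bandwidth-frontend} alone. Your heuristic reading of the three framework properties is also somewhat misattributed: the cover closes up into a cycle simply because one is free to choose the shape $F$ in \cref{pro:blow-up-cover} to be any bounded-degree graph, so $\dcon$ plays no role there; its real function, together with $\dspa$ and $\dape$, is \emph{inside} each allocation (connectivity for tight walks, space for perfect fractional matchings to size the segments, aperiodicity to hit the right residue mod $k$). Finally, the consistency condition \ref{itm:hf-intersecting} is a property of the \emph{undirected} Hamilton framework and has already been consumed upstream, in \cref{pro:framework-undirected-to-directed}; the stitching of adjacent allocations in the directed proof works because the edge blow-ups $R^{e}(\cW^{e})$ of the shape-cycle share clusters with both endpoint blow-ups and one fixes common $t$-edges $e^{uv}, f^{uv}$ to prescribe the end-tuples of each path allocation, not by re-invoking \ref{itm:hf-intersecting}.
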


Next, we turn to a version of our main result that guarantees Hamilton connectedness.
So instead of finding a Hamilton cycle we aim to connect any two suitable vertex tuples with a Hamilton path.

\begin{restatable}[Hamilton connectedness]{definition}{defhamiltonconnectedness}\label{def:hamilton-connectedness}
	We denote by $\hamcon$ the set of $[k]$-digraphs $G$, for every $k\geq 2$, such that $G^{(k-1)}$ contains at least two vertex-disjoint edges and $G^{(k)}$ contains a \tight Hamilton $(e,f)$-path for all vertex-disjoint $e,f \in G^{(k-1)}$.
\end{restatable}

The following result guarantees robust Hamilton-connectedness under the same assumptions as in \cref{thm:framework-bandwidth} for $k=t$.
Its proof is given in \cref{sec:robust-ham-connect}.

\begin{restatable}[Hamilton connectedness]{theorem}{thmframeworkconnectednessrobust}\label{thm:framework-connectedness-robust}
	Let $1/k, 1/r \geq {1/s_1} \gg 1/s_2 \geq 1/s_3 \gg 1/n$.
	Let $\P$ be a family of $s_1$-vertex $k$-graphs that admits a Hamilton framework.
	Let $G$ be a $k$-graph on $n$ vertices that satisfies $s_1$-robustly $\P$.
	Then there is an $n$-vertex $[k]$-digraph $G' \subset \ori{C}(G) \cup \ori{C}(\partial G)$ that satisfies $(r,s)$-robustly $\hamcon$ for every $s_2 \leq s \leq s_3$.
\end{restatable}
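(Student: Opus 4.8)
The plan is to pass to the directed setting, establish a non-robust Hamilton-connectedness statement, and then \emph{robustify} it by sampling. For the transition I would apply the undirected-to-directed principle (\cref{pro:framework-undirected-to-directed}, proved in \cref{sec:boosting+orientations}) together with the booster lemma of \cref{sec:booster-lemma}. Starting from the hypothesis that $G$ $s_1$-robustly satisfies $\P$ for a family $\P$ admitting a Hamilton framework, this yields a shift-closed $[k]$-digraph $G' \subseteq \ori{C}(G) \cup \ori{C}(\partial G)$ on $V(G)$ that $(r,\sigma)$-robustly satisfies $\dcon \cap \dspa \cap \dape$ for \emph{every} scale $\sigma$ with $s_1 \leq \sigma \leq s_3$. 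Here the connectivity, space and aperiodicity conditions of \cref{def:hamilton-framework} provide the directed counterparts inside typical constant-order induced subgraphs, while the consistency condition is exactly what lets these local certificates be amalgamated into a single digraph whose robustness survives passing to induced subgraphs; the booster lemma then upgrades robustness at one scale to the whole range $[s_1,s_3]$.

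It remains to show that this $G'$ $(r,s)$-robustly satisfies $\hamcon$ for each $s$ with $s_2 \leq s \leq s_3$. Fix such an $s$ and an auxiliary scale $\sigma$ with $s_1 \leq \sigma \leq s_2$, so that $1/\sigma \gg 1/s$. Property graphs restrict well: if $G'[T] \in \dcon \cap \dspa \cap \dape$ then $(G'[T])[U] = G'[U] \in \dcon \cap \dspa \cap \dape$ for every $U \subseteq T$, so $\PG{G'[S]}{\dcon \cap \dspa \cap \dape}{\sigma}$ is the restriction of $\PG{G'}{\dcon \cap \dspa \cap \dape}{\sigma}$ to $S$. A concentration argument in the spirit of \cref{lem:inheritance-minimum-degree} then shows that for all but a negligible number of $s$-sets $S$, the induced digraph $G'[S]$ is \emph{itself} $(r,\sigma)$-robust with respect to $\dcon \cap \dspa \cap \dape$, with $\sigma \ll s = |S|$. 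The theorem follows once we prove that every such $G'[S]$ lies in $\hamcon$, since then $\PG{G'}{\hamcon}{s}$ has minimum $r$-degree at least $(1 - 1/s^2)\tbinom{n-r}{s-r}$.

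So the crux is the following core statement: \emph{if $H$ is an $m$-vertex $[k]$-digraph with $m$ large that $(r,\sigma)$-robustly satisfies $\dcon \cap \dspa \cap \dape$ for some $\sigma$ with $\sigma_0 \leq \sigma \leq \sqrt{m}$, then $H \in \hamcon$}; this is the Hamilton-path analogue of \cref{thm:framework-bandwidth-directed} in the case $k=t$ with clusters of size one. To prove it, note first that $H^{(k-1)}$ trivially contains two disjoint edges, so fix arbitrary disjoint $e,f \in H^{(k-1)}$; we must build a tight Hamilton $(e,f)$-path in $H^{(k)}$. Since $H$ is shift-closed and $\adh(H)$ is a single vertex-spanning tight component, $\tc(e) = \tc(f) = \adh(H)$, so both end-edges sit inside the relevant component. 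I would then deploy the blow-up-cover machinery of \cref{pro:blow-up-cover} in a \emph{path} flavour: after reserving a small vertex set for connector paths, cover the remaining vertices of $H$ by complete blow-ups $R_1(\cV_1),\dots,R_\ell(\cV_\ell)$ of reduced $[k]$-digraphs interlocked in a path-like (rather than cycle-like) fashion, with $R_1$ anchored at $e$ and $R_\ell$ anchored at $f$; robustness of $H$ guarantees that each $R_i$ again lies in $\dcon \cap \dspa \cap \dape$. Inside each $R_i(\cV_i)$ one allocates a spanning tight path with prescribed end-$(k-1)$-tuples --- the cluster-size-one case of the allocation results (\cref{pro:allocation-Hamilton-cycle,pro:allocation-bandwidth-frontend}) --- using connectivity to route, the perfect fractional matching from $\dspa$ to make it spanning, and the closed walk of order coprime to $k$ from $\dape$ to adjust lengths. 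Concatenating the $e$-connector, the $\ell$ path segments, and the $f$-connector produces the desired tight Hamilton $(e,f)$-path.

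The main obstacle is the core statement, and inside it the anchoring at \emph{arbitrary} disjoint end-edges. Unlike the closed-walk setting of \cref{thm:framework-bandwidth-directed}, one must simultaneously force the path to start at $e$, end at $f$, and contain exactly $m$ vertices; making this length-and-boundary bookkeeping close --- so that the concatenated path is exactly spanning with prescribed ends --- demands a careful interplay between the anchored blow-up cover at the two ends and the aperiodicity corrections inside the segments. Once this is arranged, the remaining steps essentially transcribe the cycle argument underlying \cref{thm:framework-bandwidth-directed}.
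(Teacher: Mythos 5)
Your overall architecture is close to the paper's: pass to a directed digraph via \cref{pro:framework-undirected-to-directed} and the booster, sample down to $s$-sets via an inheritance argument, and for each good $s$-set build a Hamilton $(e,f)$-path by covering with blow-ups of path shape and allocating with \cref{pro:allocation-Hamilton-cycle}. However, there is a genuine gap at exactly the point you flag as ``the main obstacle'': anchoring at \emph{arbitrary} disjoint end-edges. The core statement you propose --- that any $m$-vertex $[k]$-digraph which $(r,\sigma)$-robustly satisfies $\dcon\cap\dspa\cap\dape$ lies in $\hamcon$ --- is not true as stated, and cannot be true, because robustness of $\dcon\cap\dspa\cap\dape$ controls the $k$-edges but says nothing about whether a given $(k-1)$-edge of the digraph can actually be the endpoint of a spanning tight path. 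A $(k-1)$-edge could be included in $G'^{(k-1)}$ while extending to only a handful of $k$-edges, or while sitting in a locally sparse neighbourhood; $\hamcon$ requires a Hamilton path for \emph{every} pair of disjoint $(k-1)$-edges, so such an edge would kill $\hamcon$ without affecting robustness of the three base properties.

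The paper's resolution, which your proposal is missing, is to \emph{prune} $G'$ before claiming $\hamcon$: via \cref{lem:blow-up-support}, one identifies the $(k-1)$-edges $e\in G'^{(k-1)}$ that are supported by many $j_1$-balanced $\Del_k(\mathsf{Q})$-blow-ups containing $e$, and deletes the rest to obtain the output digraph $C$. It is $C$, not the unpruned $G'$, that satisfies $\hamcon$ robustly. Correspondingly, the property graph used to verify robustness is built from edges $S$ that are simultaneously \emph{tracking} (every surviving $(k-1)$-edge in $S$ extends to a small blow-up inside $S$) and \emph{bueno} (two disjoint $(k-1)$-edges exist in $S$); the tracking condition is exactly what supplies the anchor blow-ups $R^g(\cV^g)$, $R^f(\cV^f)$ at the two ends of the path-shaped cover, and \cref{lem:inheritance-minimum-degree2} is what propagates the tracking property into almost all $s$-sets. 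Without introducing this pruning and tracking machinery, the blow-up-cover step cannot be anchored at $e$ and $f$, and the bookkeeping you describe has nothing to close against.
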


The outcome of \cref{thm:framework-connectedness-robust} turns out to be a suitable interface for relating results on Hamilton cycles in a dense environment to the so-called random robust setting.
In particular, it follows from our work with Joos~\cite{JLS23} that all results from \cref{sec:background+applications} on Hamilton cycles and their powers are still valid in the random robust setting and moreover admit counting versions.
Moreover, as already mentioned after \cref{con:thresholds}, it follows that \cref{con:thresholds} implies a conjecture of Kelly, Müyesser and Pokrovskiy~\cite[Conjecture 8.1]{KMP23} on optimal thresholds for Hamiltonicity in the random robust setting.

\subsection{Deriving the main results}

To derive \cref{thm:framework-bandwidth} from \cref{thm:framework-bandwidth-directed}, we process the original conditions using the following result.

\begin{proposition}[Transition] \label{pro:framework-undirected-to-directed}
	Let $1/k,\, 1/s_1 \gg 1/s_2 \gg 1/n$.
	Let $\P$ be a family of $s_1$-graphs that admits a Hamilton framework.
	Let $G$ be a $k$-graph on $n$ vertices that $s_1$-robustly satisfies $\P$.
	Then there is an $n$-vertex $[k]$-digraph $G' \subset \ori{C}(G) \cup \ori{C}(\partial G)$ that $s_2$-robustly satisfies $\dcon \cap \dspa \cap \dape$.
\end{proposition}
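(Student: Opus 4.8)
The plan is to build $G'$ out of the Hamilton framework $F$ of $\P$, and then to convert the $s_1$-robustness of $G$ into $s_2$-robustness of $G'$ by a sampling argument in the spirit of \cref{lem:inheritance-minimum-degree}. Call an $s_1$-set $W\subseteq V(G)$ \emph{feasible} if $G[W]\in\P$. For a $k$-edge $e$ of $G$, put every orientation of $e$ into ${G'}^{(k)}$ whenever $e$ lies in $F(G[W])$ for at least a $\rho$-fraction of the feasible $W$ with $e\subseteq W$ (where $\rho$ is a small constant to be tuned); for a $(k-1)$-set $f$, put every orientation of $f$ into ${G'}^{(k-1)}$ whenever $f$ is contained in some selected $k$-edge. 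Then $G'\subseteq\ori{C}(G)\cup\ori{C}(\partial G)$, and since all orientations---in particular all cyclic shifts---of each selected edge are present, $G'$ is shift-closed, so that $\tc$ and $\adh$ make sense on its induced subgraphs.

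First I would carry out the boosting step. Fix a $2k$-set $R$ and sample a uniform $s_2$-set $S\supseteq R$. For each fixed $j$-set $R'\subseteq S$ with $j\le 2k$ and each fixed $k$-edge $e\subseteq S$, the relevant quantities---the number of feasible $s_1$-subsets of $S$ through $R'$, and the fraction of feasible $s_1$-subsets of $S$ through $e$ that witness $e$ in their framework image---are hypergeometric-type averages whose expectations are governed by the $s_1$-robustness of $G$ and by the global selection rule. Standard concentration and a union bound over the constantly many choices of $R'$ and $e$ then show that all but a $(1/s_2^2)$-fraction of these $S$ are \emph{good}, meaning that $\PG{G[S]}{\P}{s_1}$ has minimum $j$-degree at least $(1-1/s_1)\binom{|S|-j}{s_1-j}$ for all $j\le 2k$, and that for every $k$-edge $e\subseteq S$ the selection vote over feasible $s_1$-subsets of $S$ agrees with the global vote up to an additive $1/s_1$. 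By \cref{def:robustenss-detailed} it then suffices to prove $G'[S]\in\dcon\cap\dspa\cap\dape$ for every good $S$.

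Now fix a good $S$. For each feasible $W\subseteq S$ the selection vote inside $S$ still certifies a $(1-O(\rho))$-fraction of $F(G[W])$, so by \ref{itm:hf-connected} and \ref{itm:hf-odd} (and the stability of tight connectedness and of the aperiodicity witness under deleting a $\rho$-fraction of edges, patching across a few nearby feasible $W$ with \cref{obs:tight-connectivity} if needed) this certified part lies in a single tight component $\tau(W)$ of $G'[S]$ that still carries a closed tight walk of order $1\bmod k$. Since every vertex of $S$ lies in a selected $k$-edge (averaging over the many feasible sets containing it) and a walk of order $1\bmod k$ has order coprime to $k$, the union $\bigcup_W\tau(W)$ is vertex-spanning and $G'[S]\in\dape$. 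For $\dspa$, the high minimum degree of $\PG{G[S]}{\P}{s_1}$ lets one partition $S$ into feasible $s_1$-sets $W_1,\dots,W_m$ (after arranging $s_1\mid s_2$, or absorbing a bounded remainder locally); the union of the perfect fractional matchings of the $F(G[W_i])$ from \ref{itm:hf-matching}---whose edges have their $(k-1)$-shadows in $G'[S]^{(k-1)}$ and hence lie in $\adh(G'[S])$---is a perfect fractional matching covering all of $S$, giving $G'[S]\in\dspa$. For $\dcon$, if feasible $W,W'\subseteq S$ differ in a single vertex then $G[W]$ and $G[W']$ arise by deleting distinct vertices from $G[W\cup W']$, so \ref{itm:hf-intersecting} makes $F(G[W])\cup F(G[W'])$ tightly connected and forces $\tau(W)=\tau(W')$; and since $S$ is good the feasible $s_1$-subsets of $S$ span a connected subgraph of the Johnson graph on $\binom{S}{s_1}$, as any two are joined by single-vertex swaps that stay feasible (at each step only an $O(1/s_1)$-fraction of swaps can exit the feasible family). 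Hence all the $\tau(W)$ coincide, and since ${G'}^{(k-1)}$ consists of shadows of selected edges, $\adh(G'[S])$ is precisely this common vertex-spanning tight component, so $G'[S]\in\dcon$.

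The delicate point---and where the constant $\rho$ and the definition of a good set must be calibrated---is a tension inside the selection rule. It must be liberal enough that inside a typical $S$ the framework images $F(G[W])$ of the feasible $W\subseteq S$ survive essentially intact, for otherwise their connectivity, perfect fractional matchings and aperiodicity witnesses need not transfer to $G'[S]$; yet conservative enough that no $k$-edge selected on account of feasible sets reaching far outside $S$ persists inside $G'[S]$ as an extra tight component, which would wreck the single-component clause of $\dcon$. Making this work amounts to establishing an inheritance statement for the selection rule itself (not just for membership in $\P$), so that the global vote is reproduced faithfully within almost every $S$; granting that, it is precisely the consistency property \ref{itm:hf-intersecting} that then welds the surviving fragments of neighbouring framework images into a single tightly connected spanning object. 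By contrast, the other ingredients---the hypergeometric concentration, the perfect-matching bound for very dense $s_1$-graphs, and the Johnson-graph connectivity of the feasible family---are routine.
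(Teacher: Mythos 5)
Your proposal takes a genuinely different route from the paper, and the difference is exactly where the main difficulty of the proposition lives; unfortunately, the route has a gap that is not cosmetic.

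The paper's proof goes in two stages. First it builds an \emph{undirected} $[k]$-graph $C$ by taking $C^{(k)}=\bigcup_{T\in Q}F(G[T])$ (no threshold at the $k$-level!) and adding only the ``well-connected'' $(k-1)$-tuples, and shows that $C$ is $s'$-robustly in $\ucon\cap\uspa\cap\uape$. Then it applies a separate Orientation Lemma (\cref{lem:orientation}) to pass from the undirected to the directed properties. The Orientation Lemma is the nontrivial part: it keeps $D^{(k)}=\ori{C}(C^{(k)})$ but restricts $D^{(k-1)}$ to a carefully chosen subset $V(H_0)$ of orientations, found by building an auxiliary graph on $\ori{C}(C^{(k-1)})$, joining ``robustly consistent'' pairs (\cref{cla:robustly consistent}), and extracting a robustly-connected subgraph from a lower-regular pair. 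Your proposal collapses this into one step by declaring $G'^{(k-1)}$ to contain \emph{all} orientations of every shadow of a selected $k$-edge, and relies on shift-closedness to make $\adh$ well-defined.

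This is where it breaks. Including all orientations of the $(k-1)$-level is in general incompatible with $\dcon$. Concretely, if $K$ is a tight $k$-uniform cycle $v_1\dotsm v_n$, then $\ori{C}(K)$ has at least two tight components, distinguished by a parity invariant: writing each $k$-edge relative to the canonical ordering $(v_i,\dotsc,v_{i+k-1})$, forward steps in a tight walk and cyclic shifts both preserve the sign of the permutation, so ``even'' and ``odd'' orientations never mix. Consequently $\tc((v_i,v_{i+1},\dotsc))$ and $\tc((\dotsc,v_{i+1},v_i))$ live in distinct tight components, and $\adh$ picks up both once you include both orientations of that $(k-1)$-tuple, violating the single-component clause of $\dcon$. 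Since a Hamilton framework $F(G[W])$ is allowed to be exactly a tight cycle (it only needs \ref{itm:hf-connected}--\ref{itm:hf-intersecting}), the union of the $F(G[W])$ can inherit this parity obstruction; your construction therefore fails already at the level of choosing $G'$, before any robustness argument begins. Relatedly, your $\tau(W)$ is not well-defined as ``the'' tight component of $G'[S]$ containing the certified part of $F(G[W])$: with all orientations present, that set of edges spreads across several directed tight components.

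There is a second, independent issue which you yourself flag but do not resolve: the $\rho$-fraction threshold on $k$-edges. A Hamilton framework gives no lower bound on how often a given $k$-edge $e\in F(G[W])$ recurs in $F(G[W'])$ for other $W'\ni e$, so the ``vote'' of a typical edge of $F(G[W])$ may sit below any fixed $\rho>0$, and the ``stability of tight connectedness under deleting a $\rho$-fraction of edges'' you invoke is false in general (tight connectivity can hinge on a single bottleneck edge). The paper avoids this entirely by taking the unthresholded union at the $k$-level and reserving the threshold for the $(k-1)$-level (the ``well-connected'' sets), which is exactly what is needed to make the later robustness inheritance work. To repair your argument you would, at minimum, need (i) to replace the $\rho$-vote by the union rule at uniformity $k$, and (ii) to add the orientation-selection step — an analogue of the paper's $H_0$ — at uniformity $k-1$; at that point you would essentially be rederiving the paper's two-stage proof.
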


The proof of \Cref{pro:framework-undirected-to-directed} is given in \Cref{sec:framework-undirected-to-directed}.
Assuming this proposition, we can derive our main result.

\begin{proof}[Proof of \cref{thm:framework-bandwidth}]
	Given $k,t,s$, we obtain $s_2$ from \cref{pro:framework-undirected-to-directed} (with $s$ playing the rôle of $s_1$).
	We obtain $c$ from \cref{thm:framework-bandwidth-directed} (with $s_2,c$ playing the rôles of $s,c$).
	We then obtain $n_0$ from both of these two theorems and also sufficiently large with respect to $c$.

	Let $\P$ be a $k$-graph property that admits a Hamilton framework.
	Let $G$ be a $k$-graph on $n \geq n_0$ vertices such that $K=K_t(G)$ satisfies $s$-robustly $\P$.
	Let $H$ be an $n$-vertex blow-up of the $(t-k+1)$st power of a $k$-uniform cycle with clusters of size at most $(\log \log n)^{c}$.
	By \cref{pro:framework-undirected-to-directed}, there is an $n$-vertex $[t]$-digraph $K' \subset \ori{C}(K) \cup \ori{C}(\partial K)$ that satisfies $s_2$-robustly $\dcon \cap \dspa \cap \dape$.
	By \cref{thm:framework-bandwidth-directed}, we have $H \subset G$.
\end{proof}

\section{Main steps of the proofs}\label{sec:proof-main-result}

The proofs of \cref{thm:framework-connectedness-robust,thm:framework-bandwidth-directed} each come in two parts.
First we show that every $[k]$-digraph $G$ on $n$ vertices which $s$-robustly satisfies a property $\P$ can be covered with a collection of suitable blow-ups $R(\cV)$ whose clusters~$\cV$ are of size $\poly (\log \log n)$ except for one exceptional singleton cluster.
Moreover, the reduced $[k]$-digraphs $R$ satisfy $\P$ and are of order $O(1)$.
This framework is encapsulated in \rf{pro:blow-up-cover}.

Once such a cover is established, the remainder of the argument consists in spelling out the allocation of the desired structure to the blow-ups $R(\cV)$ for $\P=\Del(\dcon \cap \dspa \cap \dape)$, which can be done independently for each blow-up.
This is formalised in \rf{pro:allocation-Hamilton-path} and \rf{pro:allocation-bandwidth-frontend}.

We present results on covering and allocation in the following three subsections and then combine everything to obtain the proofs of \cref{thm:framework-connectedness-robust,thm:framework-bandwidth-directed}.

\subsection{Covering with blow-ups}\label{sec:blow-up-cover-statments}

Consider a \emph{set family} $\cV$, which is by convention of this paper a family of pairwise disjoint sets.
A set $X$ is \emph{$\cV$-partite} if it has at most one vertex in each part of $\cV$.
We say that $\cV$ is \emph{$m$-balanced} if $|V| = m$ for  every $V \in \cV$.
Similarly, $\cV$ is \emph{$(1\pm\eta)m$-balanced} if $(1-\eta)m \leq |V| \leq  (1 + \eta) m$ for  every $V \in \cV$.
Moreover, $\cV$ is \emph{quasi $(1\pm\eta)m$-balanced} if there is an \emph{exceptional} set $V^\ast \in \cV$ with $|V^\ast|=1$ and $\cV \sm \{V^\ast\}$ is $(1\pm \eta)m$-balanced.
We say that another set family $\cW$ \emph{hits} $\cV$ with subfamily $\cW' \subset \cW$ if each part of $\cV$ contains (as a subset) exactly one part of $\cW'$.
(In particular, the parts of $\cW \sm \cW'$ are disjoint from the parts of $\cV$.)

\begin{definition}[Cover]
	Let $F$ be a graph and $V$ be a set.
	We say that $\{\cV^x\}_{x \in V(F)}$ and $\{\cW^{e}\}_{e \in F}$ form an \emph{$(s_1,s_2)$-sized $(m_1,m_2,\eta)$-balanced cover} of $V$ with \emph{shape} $2$-graph $F$ if the following holds:
	\begin{enumerate}[(C1)]
		\item Each $\cV^x$ is a quasi $(1\pm \eta)m_1$-balanced set family of size $s_1$.
		\item Each $\cW^e$ is an $m_2$-balanced set family of size $s_2$. Moreover, the vertex sets $\bigcup \cW^e$ and $\bigcup \cW^f$ are pairwise disjoint for distinct $e,f \in F$.
		\item For each $x \in e$, the set family $\cW^{e}$ hits $\cV^{x} \sm \{V^\ast\}$ with subfamily $\cW^{e}_x \subset \cW^{e}$, where $V^\ast$ is the (exceptional) singleton cluster of $\cV^x$.
		\item The family $\bigcup_{x \in V(F)} \cV^x \cup \bigcup_{xy \in F} \cW^{xy} \sm (\cW^{xy}_x \cup \cW_y^{xy})$ partitions~$V$.
	\end{enumerate}
\end{definition}

Consider a $[k]$-digraph $R$, and let $\cV=\{V_x\}_{x \in V(R)}$ be a set family.
We write $R(\cV)$ for the \emph{blow-up} of $R$ by $\cV$, which is a $[k]$-digraph with vertex set $\bigcup \cV$ and whose edge set is the union of $V_{e(1)} \times \dots \times V_{e(j)}$ over all $j$-edges $e \in R$ with $j \in [k]$.
In this context, we refer to the sets of $\cV$ as \emph{clusters} and to $R$ as the \emph{reduced (di)graph}.
If $R$ is in a family of $[k]$-digraphs $\P$, we call $R(\cV)$ a \emph{$\P$-blow-up}.
We write $R(m)$ for a blow-up $R(\cV)$ whose underlying partition $\cV$ is $m$-balanced.
For a $[k]$-digraph $G$ with $\bigcup\cV \subset V(G)$, we denote by $G[\cV]$ the graph on vertex set $\bigcup \cV$ that contains all $\cV$-partite edges of $G$.

\begin{definition}[Blow-up cover]
	Let $G$ be a $[k]$-digraph.
	Let $\P$ be a family of $[k]$-digraphs.
	An $(s_1,s_2)$-sized $(m_1,m_2,\eta)$-balanced cover formed by $\{\cV^v\}_{v \in V(F)}$ and $\{\cW^{e}\}_{e \in F}$ of $V(G)$ with shape $F$ is called a \emph{$\P$-cover} of $G$ if the following holds:
	\begin{enumerate}[\upshape (B1)]
		\item For each $v \in V(F)$, there is an $s_1$-vertex $R^{v} \in \P$ with $R^{v}(\cV^{v}) = G[\cV^v]$.
		\item For every $e \in F$, there is an $s_2$-vertex $R^e \in \P$ with $R^e(\cW^e) = G[\cW^e]$.
	\end{enumerate}
\end{definition}


\definecolor{ftpink}{RGB}{235, 94, 141}
\definecolor{ftteal}{RGB}{0, 154, 168}
\definecolor{ftgreen}{RGB}{157, 191, 87}
\definecolor{ftblue}{RGB}{32, 143, 206}
\definecolor{ftdarkblue}{RGB}{15, 84, 153}
\definecolor{ftred}{RGB}{206, 49, 64}
\definecolor{ftdarkred}{RGB}{127, 6, 46}

\newcommand{\fc}[1]{}
\pgfdeclarelayer{background}
\pgfdeclarelayer{main}
\pgfdeclarelayer{foreground}
\pgfsetlayers{background,main,foreground}
\begin{figure}
	\centering
	\begin{tikzpicture}[scale=0.65, every node/.style={transform shape}]

		\tikzstyle{bigset} = [minimum size=60,circle,line width=0.5,draw opacity=1, fill=white]
		\tikzstyle{smallset} = [minimum size=16,circle,line width=0.5,draw opacity=1, fill=white]
		\tikzstyle{vertex} = [draw=black!60, fill=black=60!,circle,minimum size = 6,inner sep=0]

		\tikzstyle{edge} = [fill opacity=0.3]
		\tikzstyle{smalledge} = [line width=11, draw opacity=0.3]
		\tikzstyle{bigedge} = [line width=40, draw opacity=0.3]

		\coordinate (x1) at (-2.50,-3.50) {};
		\coordinate (x2) at (-0.00,-2.50) {};
		\coordinate (x3) at (0.50,-5.00) {};
		\coordinate (x4) at (-1.50,-6.00) {};
		\coordinate (x5) at (-2.00,-1.50) {};

		\node[bigset, draw=ftgreen, label={90: \fc{$x_1$}}] (X1) at (x1) {};
		\node[bigset, draw=ftgreen, label={90: \fc{$x_2$}}] (X2) at (x2) {};
		\node[bigset, draw=ftgreen, label={90: \fc{$x_3$}}] at (x3) {};
		\node[bigset, draw=ftgreen, label={90: \fc{$x_4$}}] at (x4) {};
		\node[vertex, ftgreen, draw=ftgreen, label={90: \fc{$x_5$}}] (X5) at (x5) {};

		\begin{pgfonlayer}{background}
			\draw[draw=ftgreen, bigedge] (x1) -- (x2);
			\draw[draw=ftgreen, bigedge] (x2) -- (x3);
			\draw[draw=ftgreen, bigedge] (x3) -- (x4);
			\draw[draw=ftgreen, bigedge] (x3) -- (x1);
			\fill[ftgreen, edge] (X5.west) to (X1.west) -- (X1.east) -- (X5.east) -- cycle;
			\fill[ftgreen, edge] (X5.south) to (X2.south) -- (X2.north) -- (X5.north) -- cycle;
		\end{pgfonlayer}

		\coordinate (y1) at (6.50,-3.00) {};
		\coordinate (y2) at (9.00,-4.00) {};
		\coordinate (y3) at (7.50,-6.00) {};
		\coordinate (y4) at (4.50,-5.00) {};
		\coordinate (y5) at (8.00,-2.00) {};

		\node[bigset, draw=ftgreen, label={110: \textcolor{ftgreen}{\huge $V_1$}}] (Y1) at (y1) {};
		\node[bigset, draw=ftgreen, label={85: \textcolor{ftgreen}{\huge $V_2$}}] (Y2) at (y2) {};
		\node[bigset, draw=ftgreen, label={320: \textcolor{ftgreen}{\huge $V_3$}}] at (y3) {};
		\node[bigset, draw=ftgreen, label={250: \textcolor{ftgreen}{\huge $V_4$}}] at (y4) {};
		\node[vertex, ftgreen, label={90: \textcolor{ftgreen}{\huge$V_5$}}] (Y5) at (y5) {};

		\begin{pgfonlayer}{background}
			\draw[draw=ftgreen, bigedge] (y1) -- (y2);
			\draw[draw=ftgreen, bigedge] (y2) -- (y3);
			\draw[draw=ftgreen, bigedge] (y3) -- (y4);
			\draw[draw=ftgreen, bigedge] (y4) -- (y1);
			\fill[ftgreen, edge] (Y5.north) to (Y1.north) -- (Y1.east) -- (Y5.south) -- cycle;
			\fill[ftgreen, edge] (Y5.east) to (Y2.east) -- (Y2.west) -- (Y5.west) -- cycle;
		\end{pgfonlayer}

		\coordinate (z1) at (14.00,-3.00) {};
		\coordinate (z2) at (13.50,-5.50) {};
		\coordinate (z3) at (16.00,-6.00) {};
		\coordinate (z4) at (16.25,-2.75) {};
		\coordinate (z5) at (15.00,-2.00) {};

		\node[bigset, draw=ftgreen, label={90: \fc{$z_1$}}] (Z1) at (z1) {};
		\node[bigset, draw=ftgreen, label={90: \fc{$z_2$}}] at (z2) {};
		\node[bigset, draw=ftgreen, label={90: \fc{$z_3$}}] at (z3) {};
		\node[bigset, draw=ftgreen, label={90: \fc{$z_4$}}] (Z4) at (z4) {};
		\node[vertex, ftgreen, draw=ftgreen, label={90: \fc{$z_5$}}] (Z5) at (z5) {};

		\begin{pgfonlayer}{background}
			\draw[draw=ftgreen, bigedge] (z1) -- (z2);
			\draw[draw=ftgreen, bigedge] (z2) -- (z4);
			\draw[draw=ftgreen, bigedge] (z3) -- (z4);
			\draw[draw=ftgreen, bigedge] (z3) -- (z1);
			\fill[ftgreen, edge] (Z5.north) to (Z1.north) -- (Z1.east) -- (Z5.south) -- cycle;
			\fill[ftgreen, edge] (Z5.north) to (Z4.north) -- (Z4.west) -- (Z5.south) -- cycle;
		\end{pgfonlayer}

		\node[label={180: \textcolor{ftgreen}{\huge{$R^u(\cV^{u})$}}}] (X) at (1.0,-8) {};
		\node[label={180: \textcolor{ftgreen}{\huge{$R^v(\cV^{v})$}}}] (Y) at (8.5,-8) {};
		\node[label={180: \textcolor{ftgreen}{\huge{$R^w(\cV^{w})$}}}] (Z) at (16.5,-8) {};

		\node[label={180: \textcolor{ftpink}{\Large{$R^{uv}(\cW^{uv})$}}}] (Z) at (4,-1) {};
		\node[label={180: \textcolor{ftteal}{\Large{$R^{vw}(\cW^{vw})$}}}] (Z) at (13,-1) {};

		\coordinate (w1) at (4.25,-4.75) {};
		\coordinate (w2) at (6.25,-2.75) {};
		\coordinate (w3) at (8.75,-3.75) {};
		\coordinate (w4) at (8.00,-6.50) {};

		\coordinate (w5) at (0.25,-2.25) {};
		\coordinate (w6) at (-2.25,-3.25) {};
		\coordinate (w7) at (0.25,-4.75) {};
		\coordinate (w8) at (-1.25,-6.25) {};

		\coordinate (w9) at (3.75,-1.75) {};
		\coordinate (w10) at (2.75,-3.75) {};
		\coordinate (w11) at (2.50,-5.25) {};
		\coordinate (w12) at (3.75,-2.75) {};
		\coordinate (w13) at (2.50,-2.50) {};

		\begin{pgfonlayer}{foreground}
			\node[smallset, draw=ftpink, label={[label distance=2mm] 270: \textcolor{ftpink}{\Large $W_4$}}] at (w1) {};
			\node[smallset, draw=ftpink, label={60: \textcolor{ftpink}{\Large$W_1$}}] at (w2) {};
			\node[smallset, draw=ftpink, label={[label distance=-3mm] 50: \textcolor{ftpink}{\Large$W_2$}}] at (w3) {};
			\node[smallset, draw=ftpink, label={180: \textcolor{ftpink}{\Large$W_3$}}] at (w4) {};
			\node[smallset, draw=ftpink, label={180: \fc{$W_1$}}] at (w5) {};
			\node[smallset, draw=ftpink, label={180: \fc{$W_2$}}] at (w6) {};
			\node[smallset, draw=ftpink, label={180: \fc{$W_3$}}] at (w7) {};
			\node[smallset, draw=ftpink, label={180: \fc{$W_4$}}] at (w8) {};
			\node[vertex, draw=ftpink, fill=ftpink, label={180: \fc{$W_9$}}] (A9) at (w9) {};
			\node[smallset, draw=ftpink, label={180: \fc{$W_{10}$}}] at (w10) {};
			\node[smallset, draw=ftpink, label={180: \fc{$W_{11}$}}] at (w11) {};
			\node[smallset, draw=ftpink, label={180: \fc{$W_{12}$}}] (A12) at (w12) {};
			\node[smallset, draw=ftpink, label={180: \fc{$W_{13}$}}] (A13) at (w13) {};
		\end{pgfonlayer}

		\begin{pgfonlayer}{main}
			\draw[draw=ftpink, smalledge] (w1) -- (w2);
			\draw[draw=ftpink, smalledge] (w3) -- (w2);
			\draw[draw=ftpink, smalledge] (w3) -- (w4);
			\draw[draw=ftpink, smalledge] (w1) -- (w4);
			\draw[draw=ftpink, smalledge] (w1) -- (w10);
			\draw[draw=ftpink, smalledge] (w1) -- (w11);
			\draw[draw=ftpink, smalledge] (w12) -- (w11);
			\draw[draw=ftpink, smalledge] (w12) -- (w2);
			\draw[draw=ftpink, smalledge] (w10) -- (w13);
			\draw[draw=ftpink, smalledge] (w5) -- (w13);
			\draw[draw=ftpink, smalledge] (w5) -- (w7);
			\draw[draw=ftpink, smalledge] (w7) -- (w11);
			\draw[draw=ftpink, smalledge] (w5) -- (w6);
			\draw[draw=ftpink, smalledge] (w8) -- (w7);
			\draw[draw=ftpink, smalledge] (w6) -- (w7);

			\fill[ftpink, edge] (A9.north) to (A13.north) -- (A13.east) -- (A9.south) -- cycle;
			\fill[ftpink, edge] (A9.west) to (A12.west) -- (A12.east) -- (A9.east) -- cycle;
		\end{pgfonlayer}

		\coordinate (b1) at (6.75,-3.25) {};
		\coordinate (b2) at (4.75,-5.25) {};
		\coordinate(b3) at (7.25,-5.50) {};
		\coordinate (b4) at (9.25,-4.25) {};
		\coordinate (b5) at (13.75,-3.25) {};
		\coordinate (b6) at (13.25,-5.25) {};
		\coordinate (b7) at (16.25,-2.75)  {};
		\coordinate (b8) at (15.75,-5.75)  {};
		\coordinate (b9) at (10.75,-2.75)  {};
		\coordinate (b10) at (12.25,-2.25)  {};
		\coordinate (b11) at (11.75,-3.75)  {};
		\coordinate (b12) at (10.75,-5.25)  {};
		\coordinate (b14) at (11.50,-4.50)   {};

		\begin{pgfonlayer}{foreground}
			\node[smallset, draw=ftteal, label={90: \fc{$b_1$}}] at (b1) {};
			\node[smallset, draw=ftteal, label={90: \fc{$b_2$}}] at (b2) {};
			\node[smallset, draw=ftteal, label={90: \fc{$b_3$}}] at (b3) {};
			\node[smallset, draw=ftteal, label={90: \fc{$b_4$}}] at (b4) {};
			\node[smallset, draw=ftteal, label={90: \fc{$b_5$}}] at (b5) {};
			\node[smallset, draw=ftteal, label={90: \fc{$b_6$}}] at (b6) {};
			\node[smallset, draw=ftteal, label={90: \fc{$b_7$}}] at (b7) {};
			\node[smallset, draw=ftteal, label={90: \fc{$b_8$}}] at (b8) {};
			\node[smallset, draw=ftteal, label={90: \fc{$b_9$}}] at (b9) {};
			\node[smallset, draw=ftteal, label={90: \fc{$b_{10}$}}] at (b10) {};
			\node[smallset, draw=ftteal, label={90: \fc{$b_{11}$}}] (B11) at (b11) {};
			\node[smallset, draw=ftteal, label={90: \fc{$b_{12}$}}] (B12) at (b12) {};
			\node[vertex, fill=ftteal, draw=ftteal, label={90: \fc{$b_{14}$}}] (B14) at (b14) {};
		\end{pgfonlayer}

		\begin{pgfonlayer}{main}
			\draw[draw=ftteal, smalledge] (b1) -- (b2);
			\draw[draw=ftteal, smalledge] (b4) -- (b1);
			\draw[draw=ftteal, smalledge] (b4) -- (b3);
			\draw[draw=ftteal, smalledge] (b2) -- (b3);

			\draw[draw=ftteal, smalledge] (b6) -- (b5);
			\draw[draw=ftteal, smalledge] (b6) -- (b7);
			\draw[draw=ftteal, smalledge] (b8) -- (b7);
			\draw[draw=ftteal, smalledge] (b8) -- (b5);

			\draw[draw=ftteal, smalledge] (b3) -- (b12);
			\draw[draw=ftteal, smalledge] (b4) -- (b9);
			\draw[draw=ftteal, smalledge] (b12) -- (b9);
			\draw[draw=ftteal, smalledge] (b12) -- (b6);
			\draw[draw=ftteal, smalledge] (b11) -- (b10);
			\draw[draw=ftteal, smalledge] (b9) -- (b11);
			\draw[draw=ftteal, smalledge] (b5) -- (b10);
			\draw[draw=ftteal, smalledge] (b11) -- (b6);

			\fill[ftteal, edge] (B14.north) to (B12.west) -- (B12.east) -- (B14.east) -- cycle;
			\fill[ftteal, edge] (B14.west) to (B11.west) -- (B11.east) -- (B14.east) -- cycle;
		\end{pgfonlayer}
	\end{tikzpicture}

	\caption{
		A blow-up cover whose shape is the path $uvw$.
		The blow-ups $R^u(\cV^u)$, $R^v(\cV^v)$  and $R^w(\cV^w)$ are coloured green.
		The blow-ups $R^{uv}(\cW^{uv})$ and $R^{vw}(\cW^{vw})$ are coloured red and blue, respectively.
		The filled nodes present the singleton clusters.
		The $2$-graph $R^v$ has vertices $1,2,3,4,5$ and edges $12,23,34,14,15,25$.
		The corresponding set family is $\cV^v=\{V_1,V_2,V_3,V_4,V_5\}$ with exceptional cluster $V_5$.
		The set family $\cV^{v} \sm \{V_5\}$ is hit by the family $\cW^{uv}_v = \{W_1,W_2,W_3,W_4\} \subset \cW^{uv}$.
	}
	\label{fig:blow-up-cover}
\end{figure}
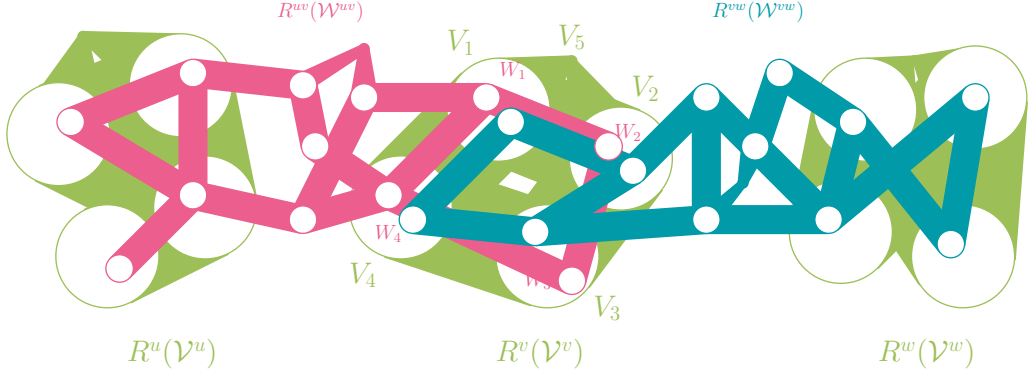


An illustration of a blow-up-cover is given in \cref{fig:blow-up-cover}.

{Our key technical result is that $\P$-covers exist in hypergraphs which satisfy $\P$ in a sufficiently robust way; and that we can even find such covers with clusters of size polynomial in $\log \log n$.}

\begin{proposition}[Blow-up cover]\label{pro:blow-up-cover}
	Let $1/k,\, 1/\Delta,\,  1/s_1 \gg 1/s_2 \gg c,\,\eta \gg 1/n$, $m_1= (\log n)^{c}$ and $m_2= (m_1)^{c}$.
	Let  $G$ be an $n$-vertex $[k]$-digraph.
	Let $\P$ be a family of $[k]$-digraphs.
	Suppose that $G$ satisfies $\P$ both $(1,s_1)$-robustly and {$(2s_1-2,s_2)$}-robustly.
	Then there exists $\ell \geq 1$ such that for every $\ell$-vertex graph $F$ with $\Delta(F)\leq \Delta$, it follows that $G$ has an $(s_1,s_2)$-sized $(m_1,m_2,\eta)$-balanced $\P$-cover of shape $F$.
\end{proposition}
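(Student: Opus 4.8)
The plan is to build the cover piece by piece, exploiting the two robustness hypotheses for complementary tasks: $(1,s_1)$-robustness supplies the reduced digraphs $R^v$ of the vertex-blow-ups and lets one anchor reduced structures at prescribed vertices, while $(2s_1-2,s_2)$-robustness is designed precisely to grow a connector $\cW^{xy}$ out of the $2(s_1-1)$ clusters it must share with $\cV^x$ and $\cV^y$. First I would record the combinatorial consequences of robustness: by averaging, $(1,s_1)$-robustness yields that all but a $1/s_1$-fraction of $s_1$-sets $S$ satisfy $G[S]\in\P$, and this is preserved (up to a $(1+o(1))$ factor in the exceptional fraction) under deleting any $o(n)$ vertices, while for \emph{every} $(2s_1-2)$-set $T$, all but a $1/s_2$-fraction of $s_2$-sets $S\supseteq T$ have $G[S]\in\P$. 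Next I would isolate the basic gadget: if $U$ has linear size and inherits the first property, then inside $U$ one can grow prescribed disjoint candidate clusters of size $\le\Delta m_2$ (possibly empty) into a homogeneous blow-up $R(\cV)$ with $R\in\P$, where the new clusters have size $m_1$ and one is a planned singleton, provided the candidates already realise the induced structure of $R$. This is an iterated Kővári--Sós--Turán cleaning: repeatedly pass each cluster to a sub-cluster on which \emph{all} $\cV$-partite tuples spanned by at most $k$ clusters realise the pattern of $R$ simultaneously; since only a $1/s_1^2$-fraction of configurations is ever bad and $m_1=(\log n)^c\ll\log|U|$, this terminates after polylog many rounds with clusters still of size $m_1$. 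The analogous statement with $s_2$ in place of $s_1$ produces connectors.

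With these gadgets I would assemble the cover in stages, setting $\ell=\lfloor n/((s_1-1)m_1)\rfloor$. \textbf{Stage 1 (connectors).} Process $E(F)$ in a breadth-first order. When $e=xy$ is processed, each of $x,y$ has either been ``initialised'' by an earlier incident edge, giving an interface digraph on $s_1-1$ vertices, or is initialised now; picking a transversal $T$ of the already-homogenised interface clusters at $x$ and at $y$ and applying $(2s_1-2,s_2)$-robustness yields $R^e\in\P$ on $s_2$ vertices inducing the two interfaces on the $\cW^e_x$- and $\cW^e_y$-coordinates, and the $s_2$-vertex gadget grows $R^e$ into $\cW^e$, with the anchored coordinates homogenised jointly with the previously built clusters at $x$ and at $y$. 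All connectors together use $O(\Delta\ell s_2 m_2)=o(n)$ vertices. \textbf{Stage 2 (vertex-blow-ups).} For each $v\in V(F)$ in turn, feed the gadget the $\deg_F(v)\le\Delta$ families of anchored connector clusters at $v$ (one per big cluster of $\cV^v$, with room since $\Delta m_2\ll m_1$) plus a fresh singleton $w^v$, and grow $\cV^v$ using not-yet-used vertices. \textbf{Stage 3 (exact partition).} Tune sizes within the $\pm\eta m_1$ slack and distribute the at most $n^{o(1)}$ vertices unused at the end of Stage 2 among the big clusters, so that $\bigcup_x\cV^x$ together with the non-anchored connector clusters partitions $V(G)$; checking (C1)--(C4) and (B1)--(B2) is then routine.

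Because Stage 2 eventually uses a $(1-o(1))$-fraction of $V(G)$ and robustness is a \emph{minimum}-degree condition, I would run Stage 2 as a randomised greedy and control it with a nibble-type concentration argument: processing the $\cV^v$ in uniformly random order, the union of used vertices behaves like a uniform random subset, so with high probability the unused part keeps inheriting the $(1,s_1)$-property for as long as it has $n^{1-o(1)}$ vertices, which is exactly when the gadget stops being applicable. This --- the martingale bookkeeping that a worst-case-fragile property survives an almost-spanning random greedy --- is one of the two main obstacles. The other is the joint homogenisation in Stages 1--2: a connector is pinned from both endpoints and several connectors share a host cluster, so a single cleaning schedule must reconcile all of these overlapping blow-up structures while shrinking clusters only from $m_1$ down to $m_2$, that is, within a polylog budget; dovetailing this schedule with the breadth-first order on $F$ and with the exact-partition count is where the bulk of \cref{sec:blow-up-covers} is spent.
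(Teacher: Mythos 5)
Your proposal reverses the order of the two stages relative to the paper, and that reversal creates a genuine gap in Stage~2. The paper first covers \emph{all} of $V(G)$ with vertex-blow-ups using a deterministic, regularity-free tiling argument (\cref{lem:simple-blow-up-cover}, built on \cref{lem:almost-perfect-regular-tuple-tiling}), and only then iteratively adds one connector at a time (\cref{lem:connecting-blow-ups}), carving the connector vertices out of the already-placed clusters and controlling the damage via a ``saturated cluster'' bookkeeping. You instead propose to place connectors first, and then cover the remaining vertices by a random-greedy/nibble process with a martingale concentration argument.

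The nibble step is where your plan does not go through. The hypothesis is a \emph{minimum-degree} condition on the property $s_1$-graph: every vertex lies in at most $(1/s_1^2)\binom{n-1}{s_1-1}$ ``bad'' $s_1$-sets. This survives deleting an $\eta n$-vertex set for $\eta\ll 1/s_1^2$, but it gives \emph{no} information once the undercovered set $U$ shrinks to sublinear size: the number of $s_1$-sets within $U$ containing a fixed $v$ is $\binom{|U|-1}{s_1-1}$, which is $o\bigl(\binom{n-1}{s_1-1}/s_1^2\bigr)$ already when $|U| = n/(\log n)^{\Omega(1)}$, so all $s_1$-sets in $U$ through $v$ could be bad. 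Your claim that ``the unused part keeps inheriting the $(1,s_1)$-property for as long as it has $n^{1-o(1)}$ vertices'' is therefore not a consequence of robustness, and the heuristic that ``the union of used vertices behaves like a uniform random subset'' does not rescue it: a greedy process conditions on $G$'s structure, so the residual set is not exchangeable with a uniform random set, and no obvious martingale bound controls where the bad $s_1$-sets end up. The paper sidesteps this by never letting the uncovered set drop below $\eta n$: \cref{lem:almost-perfect-regular-tuple-tiling} produces a tiling by lower-regular $s_1$-tuples covering all but $\eta n$ vertices deterministically (using an iterative density-increment argument rather than regularity), and the residual $\eta n$ vertices are then absorbed via two further rounds of rooted blow-ups of successively smaller cluster size ($m_2\ll m_1$ and $m_3 \ll m_2$), each surgically carved out of earlier clusters so the size bookkeeping closes. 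Until you replace the nibble step with something that keeps the undercovered set linear (or a different mechanism for the tail), the construction does not complete.
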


{Our $\P$-covers with shape $F$ will play a rôle similar to the partition and reduced graph produced by the Regularity Lemma. Regular partitions generally require that the (reciprocal of the) error term $1/\eps$ is much smaller than the number of clusters. On the other hand, our output ensures that the parameter $1/\eta$ (which measures the imbalancedness of the clusters) is much larger than $s_1, s_2$, which are the number of clusters in each of the blow-ups of the cover. This is meaningful, because in our embedding applications later we will treat each blow-up separately, and this order of the constants is crucial for our approach to work.}

\subsection{Boosting}\label{sec:booster-lemma}

It is convenient to work with properties that survive the deletion of a few vertices.
For a $[k]$-digraph family $\P$, denote by $\Del_q(\P)$ the set of digraphs $H \in \P$ such that $H-X \in \P$ for every set $X \subset V(H)$ of at most $q$ vertices.
As it turns out, one can harden the properties ${\dcon}$, ${\dspa}$ and ${\dape}$ of \cref{thm:framework-bandwidth-directed} against vertex deletion.

We denote the minimum vertex-degree threshold for perfect $k$-uniform matchings by $\delta_{1}^{(s)}(\mat)$.
More precisely, $\delta_{1}^{(s)}(\mat)$ is the infimum over all $\delta \in [0,1]$ such that for every $\eps > 0$, there is an $n_0$ such that all $s$-graphs on $n \geq n_0$ vertices, with $n$ divisible by $s$, and $\delta_1(G) \geq (\delta + \eps) \binom{n-1}{k-1}$, contain a perfect matching.

\begin{lemma}[Booster]\label{lem:booster}
	Let $1/k,\, 1/s_1,\, 1/r_2, \, \eps, \, 1/q \gg 1/s_2 \gg 1/n$.
	Set $r_1=2k-2$ and $\delta_1 = {\big(\delta_{1}^{(s_1)}(\mat) + \eps\big)}$ and $\delta_2 = 1-\exp(-\sqrt{s_2})$.
	Then every $[k]$-digraph on $n$ vertices that $(\delta_1,r_1,s_1)$-robustly satisfies $\dcon \cap \dspa \cap \dape$ also $(\delta_2,r_2,s_2)$-robustly satisfies $\Del_q(\dcon \cap \dspa \cap \dape)$.
\end{lemma}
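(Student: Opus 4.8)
The plan is to combine a probabilistic \emph{inheritance} step — which passes from scale $s_1$ to the larger scale $s_2$, simultaneously boosting the density towards $1$ and building in robustness against deleting up to $q$ vertices — with a deterministic \emph{local‑to‑global} step, showing that a $[k]$-digraph densely covered by induced $s_1$-vertex subgraphs lying in $\dcon\cap\dspa\cap\dape$ must itself lie in $\dcon\cap\dspa\cap\dape$. Write $\P^\ast=\dcon\cap\dspa\cap\dape$ and $P_1=\PG{G}{\P^\ast}{s_1}$. The first observation is that a routine double counting turns the hypothesis that $P_1$ has minimum $r_1$-degree at least $\delta_1\binom{n-r_1}{s_1-r_1}$ (recall $r_1=2k-2$) into the additional fact that $P_1$ has minimum $1$-degree at least $\delta_1\binom{n-1}{s_1-1}$; both bounds will be used — the high‑order one for connectivity, the $1$-degree one for space.

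For the inheritance step I would fix an $r_2$-set $R_0\subseteq V(G)$ and take $S$ a uniformly random $s_2$-subset of $V(G)$ containing $R_0$. Using the two minimum‑degree bounds on $P_1$ together with the identity $\binom{n-a}{b-a}\binom{n-b}{s_2-b}=\binom{n-a}{s_2-a}\binom{s_2-a}{b-a}$, the expected fraction of $T\in\binom{S}{s_1}$ with $G[T]\in\P^\ast$ through a fixed vertex of $S$, and likewise through a fixed $(2k-2)$-subset of $S$, is at least $\delta_1$. A concentration argument for uniformly random subsets — of the kind behind \cref{lem:inheritance-minimum-degree}, the relevant bounded differences being of order $\tfrac{s_1}{s_2}\binom{s_2}{s_1}$, so deviations beyond a constant fraction have probability $\exp(-\Omega(s_2))$ — then shows that with probability at least $1-\exp(-\sqrt{s_2})$ the set $S$ is \emph{good}, meaning every vertex of $S$ lies in at least $(\delta_1^{(s_1)}(\mat)+\eps/2)\binom{|S|-1}{s_1-1}$ such $T$, and every $(2k-2)$-subset of $S$ lies in $\Omega\big(\binom{|S|-r_1}{s_1-r_1}\big)$ such $T$. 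Since deleting $\leq q$ vertices from $S$ removes only a small ($o(1)$) fraction from each of these counts, a good $S$ automatically has: for \emph{every} $X\subseteq S$ with $|X|\leq q$, the set $S\setminus X$ satisfies the same two bounds with slightly worse constants.

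The local‑to‑global step is the heart of the matter. Fix a good $S$ and $X\subseteq S$ with $|X|\leq q$, and put $G_0=G[S\setminus X]$; the goal is $G_0\in\P^\ast$. Shift‑closedness of $G_0$ is inherited from that of $G$, which in turn holds because every $k$-edge extends to a $(2k-2)$-set contained in some good $s_1$-set $T$ with $G[T]$ shift‑closed. For $\dcon$ I would argue: (i) every vertex of $G_0$ lies in some $T\in\binom{S\setminus X}{s_1}$ with $G[T]\in\dcon$, whose vertex‑spanning adherence $\adh(G[T])$ is contained in $\adh(G_0)$, so $\adh(G_0)$ is vertex‑spanning; (ii) since $r_1=2k-2$, any two disjoint $(k-1)$-edges $e,f$ of $G_0$ form a $(2k-2)$-set contained in a common $T\in\binom{S\setminus X}{s_1}$ with $G[T]\in\dcon$, so $e$ and $f$ are tightly connected within $G_0[T]\subseteq G_0$; and (iii) bringing in one further $(k-1)$-edge disjoint from any two given ones stitches all the tight components $\tc(e)$ into one, so $\adh(G_0)$ is a single tight component. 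For $\dspa$ I would use the deletion‑robust vertex bound: the $s_1$-graph $M$ on $S\setminus X$ whose edges are those $T$ with $G[T]\in\P^\ast$ has minimum vertex degree above the threshold $\delta_1^{(s_1)}(\mat)$, hence admits a perfect fractional matching, i.e.\ a fractional tiling of $S\setminus X$ by such $T$'s; since the edges of each $\adh(G[T])$ lie in $\adh(G_0)$, weighting the perfect fractional matchings of the $\adh(G[T])$ by this tiling yields a perfect fractional matching of $\adh(G_0)$. For $\dape$, any single good $T$ supplies a closed walk of order coprime to $k$ inside $G[T]^{(k)}\subseteq G_0^{(k)}$. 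This establishes $G_0\in\P^\ast$ for all admissible $X$, i.e.\ $G[S]\in\Del_q(\P^\ast)$; summing over $S$ gives that $\PG{G}{\Del_q(\P^\ast)}{s_2}$ has minimum $r_2$-degree at least $(1-\exp(-\sqrt{s_2}))\binom{n-r_2}{s_2-r_2}$, which is the claim with $\delta_2=1-\exp(-\sqrt{s_2})$.

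The hard part will be the $\dcon$ bookkeeping: keeping the distinction between $(k-1)$-edges and $k$-edges straight in the definitions of $\adh$ and of tight components, and checking that tight walks witnessed inside the small induced pieces $G[T]$ persist in $G_0$ and assemble into a single tight component. The divisibility nuisance in $\dspa$ (sidestepped by working with fractional tilings rather than exact partitions into $s_1$-sets) and the precise constants in the concentration estimate should be routine.
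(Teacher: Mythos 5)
Your proposal follows the same two‑stage decomposition as the paper's proof: a probabilistic inheritance step that passes from scale $s_1$ to scale $s_2$ and builds in robustness against deleting up to $q$ vertices, followed by a deterministic local‑to‑global step checking $\dcon$ (via the pigeonhole fact that any two disjoint $(k-1)$-edges span $r_1=2k-2$ vertices and hence fit inside a common good $s_1$-set), $\dspa$ (by composing a perfect fractional matching of the intermediate $s_1$-graph with the local perfect fractional matchings), and $\dape$ (from a single good $T$). The only real difference is presentational: where the paper constructs the iterated property graph $Q = \PG{P}{\DegF{1}{\delta+\eps/2}}{s_2}$ and invokes \cref{lem:inheritance-minimum-degree} as a black box, you inline the underlying concentration argument for random $s_2$-subsets; you also spell out the stitching of non‑disjoint $(k-1)$-edges and the shift‑closedness inheritance, details the paper leaves implicit.
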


We prove \cref{lem:booster} in \cref{ssec:boosting}.
To place \cref{lem:booster} in the context of \cref{def:robustness,def:robustenss-detailed}, we recall the following fact due to Daykin and Häggkvist~\cite{DH81}, which follows from a simple double counting argument.

\begin{theorem}\label{fact:matchingthresholds}
	We have $\delta_{1}^{(s)}(\mat) \leq 1- 1/s$ for all $s \geq 2$.
\end{theorem}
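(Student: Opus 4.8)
The plan is to unpack the definition and reduce to a concrete statement: it suffices to show that for every $\eps>0$ there is an $n_0$ so that every $s$-graph $G$ on $n\ge n_0$ vertices with $s\mid n$ and $\delta_1(G)\ge (1-1/s+\eps)\binom{n-1}{s-1}$ has a perfect matching. Once this is established, $\delta_1^{(s)}(\mat)\le 1-1/s$ follows immediately from the definition of $\delta_1^{(s)}(\mat)$ as an infimum, and hence $\delta_1^{(s)}(\mat)\le 1-1/s < 1-1/s^2$, which is the form in which the bound is used alongside \cref{def:robustness,lem:booster}.

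To prove the matching statement I would run the classical maximum-matching argument of Daykin and Häggkvist~\cite{DH81}. Let $M$ be a maximum matching of $G$ and suppose for contradiction it is not perfect; since $M$ covers a multiple of $s$ vertices and $s\mid n$, the uncovered set $U=V(G)\setminus V(M)$ satisfies $|U|\ge s$. Fix $s$ distinct vertices $u_1,\dots,u_s\in U$ and write $W=V(M)$. Maximality of $M$ forbids every augmenting reconfiguration: there is no edge of $G$ inside $U$, and, more usefully, for every $f\in M$ and every $w\in f$, one cannot have both $(f\setminus\{w\})\cup\{u_i\}\in G$ for some $i$ and a second $G$-edge on $\{w\}$ together with the remaining free vertices, since replacing $f$ by those two vertex-disjoint new edges would enlarge $M$. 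These forbidden patterns, counted against the degrees of the $u_i$ (all of whose edges must meet $W$), should be shown to be too numerous to coexist with the hypothesis $\delta_1(G)\ge(1-1/s+\eps)\binom{n-1}{s-1}$: summing over the appropriate index set produces strictly more than $\binom{n-1}{s-1}$ excluded $(s-1)$-sets around some vertex, a contradiction. The final comparison is just arithmetic with binomial coefficients (of order $n^{s-1}$), using $n\gg s,1/\eps$.

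The step I expect to be the real obstacle is making the swap-and-count interface cleanly with a \emph{vertex}-degree (as opposed to codegree) hypothesis, and in particular handling the regime where $M$ is only slightly sub-perfect, so that $|U|$ is as small as $s$ and $W$ is almost all of $V(G)$. In that regime the crude estimate ``every edge through an uncovered vertex misses $U$'' is far too weak, and one genuinely needs the two-edge exchange above — trading a single matching edge for two new edges built from two of the uncovered vertices — together with a careful accounting of which $(s-1)$-sets this rules out. Once the exchange conditions are set up correctly the rest is routine, so the write-up should concentrate on pinning down exactly which configurations maximality excludes and verifying that their count exceeds $\binom{n-1}{s-1}$.
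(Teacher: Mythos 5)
The paper gives no proof of this statement; it simply cites Daykin and Häggkvist~\cite{DH81} with the remark that the bound follows from a simple double counting. Your first paragraph performs the correct reduction — unpacking the definition of $\delta_1^{(s)}(\mat)$ as an infimum, it suffices to show that for every $\eps>0$ and all large $n$ with $s\mid n$, every $s$-graph $G$ with $\delta_1(G)\ge(1-1/s+\eps)\binom{n-1}{s-1}$ has a perfect matching — and invoking Daykin–Häggkvist is exactly what the paper does. Up to this point you match the paper's intent precisely.

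The problem is your sketch of the Daykin–Häggkvist argument itself, and you have (correctly) flagged where it fails. The swap you describe — delete $f\in M$ and add $(f\setminus\{w\})\cup\{u_i\}$ together with an edge $\{w\}\cup T$ on free vertices — tells you that no such \emph{pair} of edges exists, and more generally that for each $f\in M$ the $G$-edges inside $U\cup f$ form an intersecting family with no edge entirely inside $U$. But the count you then need does not materialise. When $|U|=s$ (the critical regime you single out), the set $T$ is forced to be $U\setminus\{u_i\}$, so the $(s-1)$-set $f\setminus\{w\}$ gets ruled out as a neighbour of $u_i$ only on the condition that $\{w\}\cup(U\setminus\{u_i\})\in G$, and a vertex-degree hypothesis gives you no control whatsoever over how many $w\in V(M)$ satisfy this — it concerns a single fixed $(s-1)$-set, which is invisible to $\delta_1$. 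Even granting the most favourable outcome, these exchanges exclude only the $O(n)$ sets of the form $f\setminus\{w\}$ around each $u_i$, and the intersecting-family bound likewise controls only the $O(n)$ many $(s-1)$-sets confined to a single $U\cup f$. You would need to exclude $\Omega(n^{s-1})$ potential neighbours of some $u_i$ to contradict $\deg(u_i)\ge(1-1/s+\eps)\binom{n-1}{s-1}$, so the assertion ``summing over the appropriate index set produces strictly more than $\binom{n-1}{s-1}$ excluded $(s-1)$-sets'' is precisely the missing step, not routine arithmetic. The actual Daykin–Häggkvist argument requires a genuinely more careful counting than the plain maximality exchange, so as written the proposal does not contain a proof — only a correct reduction plus an honest acknowledgement of the open key step.
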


In light of this, the choice of parameters in \cref{def:robustness,def:robustenss-detailed} can be explained as follows.
We chose $\delta = 1-1/s^2$ to guarantee a (robust) perfect matching in the property graph.
Moreover, we chose $r=2k \geq 2k-2$ to ensure connectivity at the $k$-uniform level for which one needs to consider (and connect) two disjoint $(k-1)$-sets.
This is done by finding a common edge in the property graph $P$, which is possible if $P$ has positive minimum $(2k-2)$-degree (see the proof of \cref{lem:booster}).
Finally, we recall the monotone behaviour of minimum degrees, which allows us to transition between different degree types.
\begin{fact}\label{fact:monotone-degrees}
	For an $n$-vertex $k$-graph $G$ and $d \leq d'$, we have $\frac{\delta_d(G)}{\binom{n-d}{k-d}} \geq \frac{\delta_{d'}(G)}{\binom{n-d'}{k-d'}}.$
\end{fact}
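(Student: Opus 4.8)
The plan is to prove the inequality in the single step $d' = d+1$ and then iterate. Indeed, $\binom{n-d}{k-d} = \frac{n-d}{k-d}\binom{n-d-1}{k-d-1}$, so once we know $\delta_d(G) \geq \frac{n-d}{k-d}\,\delta_{d+1}(G)$ for every $d < k$, dividing through by $\binom{n-d}{k-d}$ gives $\delta_d(G)/\binom{n-d}{k-d} \geq \delta_{d+1}(G)/\binom{n-d-1}{k-d-1} = \delta_{d+1}(G)/\binom{n-(d+1)}{k-(d+1)}$, and chaining this bound from $d$ up to $d'-1$ yields the claim.

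For the one-step bound, I would fix an arbitrary $d$-set $S \subseteq V(G)$ and double count the pairs $(v,e)$ with $v \in V(G)\setminus S$, $e \in E(G)$ and $S \cup \{v\} \subseteq e$. Counting by $v$ first: for each of the $n-d$ choices of $v$, the $(d+1)$-set $S \cup \{v\}$ lies in at least $\delta_{d+1}(G)$ edges of $G$ by definition of the minimum $(d+1)$-degree, so there are at least $(n-d)\,\delta_{d+1}(G)$ such pairs. Counting by $e$ first: an edge $e$ with $S \subseteq e$ contributes exactly $|e \setminus S| = k-d$ pairs, while edges not containing $S$ contribute none, so the number of pairs equals $(k-d)\cdot|\{e \in E(G) : S \subseteq e\}|$. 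Comparing the two counts gives $|\{e \in E(G): S \subseteq e\}| \geq \frac{n-d}{k-d}\,\delta_{d+1}(G)$, and since $S$ was an arbitrary $d$-set, $\delta_d(G) \geq \frac{n-d}{k-d}\,\delta_{d+1}(G)$, as required.

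There is no genuine obstacle here: the statement is a routine double counting, and the only point needing (minimal) care is the binomial-coefficient bookkeeping in the iteration, which is immediate from $\binom{n-d}{k-d} = \frac{n-d}{k-d}\binom{n-d-1}{k-d-1}$. One could equivalently phrase the computation as averaging the degree of the $(d+1)$-sets $S\cup\{v\}$ over $v \notin S$ and noting that this average equals $\tfrac{k-d}{n-d}$ times the number of edges through $S$.
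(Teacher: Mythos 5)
Your proof is correct, and the paper in fact states this as a \emph{Fact} without supplying a proof, so there is no argument of the authors to compare against. The double-counting argument you give (fix a $d$-set $S$, count pairs $(v,e)$ with $v\notin S$ and $S\cup\{v\}\subseteq e$, obtain $(k-d)\deg_G(S)\geq (n-d)\delta_{d+1}(G)$, then normalise and iterate) is the standard one for this well-known monotonicity of relative minimum degrees, and your bookkeeping with $\binom{n-d}{k-d}=\frac{n-d}{k-d}\binom{n-d-1}{k-d-1}$ is exactly what is needed to chain the inequality.
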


{Using these observations, the following corollary of \Cref{lem:booster} is immediate from \cref{def:robustenss-detailed} (and the definitions afterwards).}

\begin{corollary}[Booster]\label{corollary:booster2}
	Let $1/k,\, 1/s_1,\, 1/q,\, 1/r_2 \gg 1/s_2 \gg 1/n$,
	and set $\delta = 1 - \exp(-\sqrt{s_2})$.
	Every $[k]$-digraph on $n$ vertices that $s_1$-robustly satisfies $\dcon \cap \dspa \cap \dape$ also $(\delta, r_2, s_2)$-robustly satisfies $\Del_q(\dcon \cap \dspa \cap \dape)$. \hfill \qedsymbol
\end{corollary}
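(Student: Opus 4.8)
The plan is to deduce the corollary directly from the Booster Lemma (\cref{lem:booster}); the entire content sits in that lemma, and what remains is to feed it a legitimate value of $\eps$ after relaxing the degree type of the hypothesis from $2k$ to $2k-2$.

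First I would unwind the conventions following \cref{def:robustenss-detailed}. The hypothesis that $G$ $s_1$-robustly satisfies $\dcon\cap\dspa\cap\dape$ means that the property $s_1$-graph $P=\PG{G}{\dcon\cap\dspa\cap\dape}{s_1}$ satisfies $\delta_{2k}(P)\geq(1-1/s_1^2)\binom{n-2k}{s_1-2k}$. Since $2k-2\leq 2k$, \cref{fact:monotone-degrees} applied to the $s_1$-uniform graph $P$ yields
\[
\frac{\delta_{2k-2}(P)}{\binom{n-(2k-2)}{s_1-(2k-2)}}\;\geq\;\frac{\delta_{2k}(P)}{\binom{n-2k}{s_1-2k}}\;\geq\;1-\frac{1}{s_1^2},
\]
so $G$ in fact $(1-1/s_1^2,\,2k-2,\,s_1)$-robustly satisfies $\dcon\cap\dspa\cap\dape$.

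Next I would pin down $\eps$. By \cref{fact:matchingthresholds}, $\delta_1^{(s_1)}(\mat)\leq 1-1/s_1$, so choosing $\eps:=1/(2s_1)$ gives $\delta_1^{(s_1)}(\mat)+\eps\leq 1-1/(2s_1)\leq 1-1/s_1^2$ (using $s_1\geq 2$). Thus the robustness established in the previous step implies, a fortiori, that $G$ is $(\delta_1,\,r_1,\,s_1)$-robust with $r_1=2k-2$ and $\delta_1=\delta_1^{(s_1)}(\mat)+\eps$, which is precisely the hypothesis of \cref{lem:booster}. Since $\eps$ was chosen as a function of $s_1$ alone, the corollary's hierarchy $1/k,\,1/s_1,\,1/q,\,1/r_2\gg 1/s_2\gg 1/n$ gives $\eps=1/(2s_1)\gg 1/s_2$, so the full hierarchy required by \cref{lem:booster} holds. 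Applying \cref{lem:booster} then gives that $G$ $(\delta_2,\,r_2,\,s_2)$-robustly satisfies $\Del_q(\dcon\cap\dspa\cap\dape)$ with $\delta_2=1-\exp(-\sqrt{s_2})=\delta$, as claimed.

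I do not expect a genuine obstacle here, since the proof amounts to bookkeeping around the hidden shorthands in the robustness definitions. The two points deserving a moment of care are that the monotonicity step goes in the permissible direction — lowering the degree type produces a \emph{weaker} robustness condition, so it legitimately follows from the stated ($2k$-degree) $s_1$-robustness — and that $\eps$ must be picked depending on $s_1$ only, so that the constant hierarchy of \cref{lem:booster} is respected.
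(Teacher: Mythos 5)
Your proof is correct and is exactly the derivation the paper intends (the paper marks the corollary as immediate and sketches this reasoning in the preceding paragraph): drop the degree type from $2k$ to $2k-2$ via \cref{fact:monotone-degrees}, bound $\delta_1^{(s_1)}(\mat)$ using \cref{fact:matchingthresholds}, choose $\eps$ depending only on $s_1$ so that the hierarchy of \cref{lem:booster} is respected, and apply that lemma. The two points you flag as needing care are exactly the right ones, and your bookkeeping (in particular the check that $1-1/(2s_1)\le 1-1/s_1^2$ for $s_1\ge 2$) is accurate.
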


\subsection{Allocation}

The next two results implement the allocation step in the proofs of \cref{thm:framework-connectedness-robust,thm:framework-bandwidth-directed}.
Note that in each case, we only allocate to a single blow-up.
We begin with the allocation of a Hamilton path.

\begin{proposition}[Hamilton path allocation]\label{pro:allocation-Hamilton-path}
	Let $1/k, 1/s \gg \eta, 1/m$.
	Let $R \in \Del_k (\dcon \cap \dspa \cap \dape)$ be an $s$-vertex $[k]$-digraph.
	Let $\cV=\{V_x\}_{x \in V(R)}$ be a quasi $(1 \pm \eta)m$-balanced partition with exceptional cluster $V^\ast$.
	Let $f_1,f_2 \in (R(\cV)-V^\ast)^{(k-1)}$ be vertex-disjoint.
	Then $R(\cV)$ has a \tight Hamilton $(f_1,f_2)$-path.
\end{proposition}

The following result allocates a blow-up of a path.

\begin{proposition}[Path blow-up allocation] \label{pro:allocation-bandwidth-frontend}
	Let $1/k,\, 1/t,\, 1/s \gg \eta \gg \rho \gg 1/m$, with $s-1$ coprime to $t$.
	Let~$R$ be an $s$-vertex $k$-graph with $t$-clique-graph $\hat K = K_t(R)$.
	Suppose that $K \subset \ori{C}(\hat K) \cup \ori{C}(\partial \hat K)$ is an $s$-vertex $[t]$-digraph that satisfies $\Del_{t+1}(\dcon \cap \dspa \cap \dape)$.
	Let $\cV=\{V_x\}_{x \in V(R)}$ be a quasi $(1\pm \eta)m$-balanced partition on $n$ vertices with exceptional cluster $V_{x^\ast}$.
	Let $H$ be an $n$-vertex blow-up of the $(t-k+1)$st power of a path with cluster sizes at most $\rho m$.
	Then the blow-up $R(\cV)$ contains $H$.

	Moreover, we can ensure the following.
	Suppose that $e, f$ are two ordered $t$-edges of $K^{(t)}-x^\ast$.
	Let $H$ start with clusters $E_1,\dots,E_t \subset V(H)$ and end with clusters $F_1,\dots, F_t \subset V(H)$.
	Then there is an embedding $\phi$ of $H$ into $R(\cV)$ so that $\phi(E_i) \subseteq V_{e(i)}$ and $\phi(F_i) \subseteq V_{f(i)}$ for each $i\in [t]$.
\end{proposition}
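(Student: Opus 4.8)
The plan is to reduce the problem of embedding a blow-up of a power of a path into the blow-up $R(\cV)$ to an embedding of a (non-blown-up) tight path into an auxiliary $[t]$-digraph, and then invoke \cref{pro:allocation-Hamilton-cycle}. First I would pass from the $k$-graph $R$ to its $t$-clique-graph $\hat K = K_t(R)$; recall that a $(t-k+1)$st power of a path in $R$ is exactly a $t$-uniform tight path in $\hat K$, and likewise a blow-up of the former corresponds to a blow-up of the latter. So, replacing $R$ by $\hat K$ and the guest $H$ by the corresponding $t$-uniform structure, it suffices to embed an $n$-vertex blow-up $H$ of a $t$-uniform tight path into $\hat K(\cV)$, with the prescribed behaviour on the first $t$ and last $t$ clusters. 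The hypothesis provides a $[t]$-digraph $K \subset \ori C(\hat K) \cup \ori C(\partial \hat K)$ satisfying $\Del_{t+1}(\dcon \cap \dspa \cap \dape)$; since $\ori C$ only adds orientations and we are free to orient the tight path, embedding into $K(\cV)$ implies embedding into $\hat K(\cV)$.

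Next I would set up the \emph{reservoir trick} to handle the fact that $H$ has clusters of size up to $\rho m$ rather than size $1$. Choose a small random subset $V'_x \subset V_x$ of size roughly $\sqrt{\rho}\, m$ in each cluster $x \neq x^\ast$ (keeping $V'_{x^\ast}=V_{x^\ast}$ a singleton), and set $\cV' = \{V'_x\}$. A standard concentration argument (as in the inheritance statements, \cref{lem:inheritance-minimum-degree}) shows $K[\cV']$ still lies in $\Del_{t+1}(\dcon \cap \dspa \cap \dape)$, because robustness is inherited by typical induced subgraphs and $|V'_x|$ is still polynomially large. The idea is then to embed the `skeleton' of $H$ — one representative vertex from each part of $H$, joined in a tight path that visits each cluster $V_x$ the correct number $(1\pm\eta)m - |V_x \cap (\text{used by blow-up parts})|$ of times, with ends dictated by $e$ and $f$ — by applying \cref{pro:allocation-Hamilton-cycle} to $K[\cV']$ after deleting the already-used vertices; here the $\Del_{t+1}$ slack absorbs the at most $t+1$ vertices that the prescribed end-$t$-tuples $e,f$ occupy outside $\cV'$. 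Actually, since \cref{pro:allocation-Hamilton-cycle} produces a tight \emph{Hamilton} $(f_1,f_2)$-path of the blow-up, and we want a non-spanning path followed by room for the remaining blow-up vertices, I would instead reserve a linear-sized portion of each cluster first, embed all the `bulk' blow-up vertices greedily into the reservoir (this is where the cluster sizes $\le \rho m$ and the quasi-balancedness matter: each part of $H$ maps into a single $V_x$, and we track how many go to each $x$), and only afterwards run \cref{pro:allocation-Hamilton-cycle} on what remains to stitch everything into a single spanning path with the right ends.

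The genuinely delicate step — and the main obstacle — is the \emph{balancing}: we must decide, for each part $P$ of $H$, which cluster $V_x$ receives it, so that (i) consecutive parts of $H$ go to clusters forming an edge of $K$, (ii) the total demand on each $V_x$ equals $|V_x|$ exactly, and (iii) the prescribed assignments at the two ends are respected. This is precisely a flow/fractional-matching problem on the reduced graph: the space property $\dspa$ (a perfect fractional matching on $\adh(K)$) together with $\dcon$ (tight-connectivity, so we can reroute between any two edges) and $\dape$ (a closed walk of order coprime to $t$, so we can adjust path lengths modulo $t$ to hit $n$ exactly and match the parity/divisibility constraints imposed by $s-1$ coprime to $t$) are exactly the ingredients needed to solve it — this mirrors the allocation arguments in \cite{Lan23,LS23}, which I would adapt. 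The error term $\eta$ gives the wiggle room to round the fractional solution to an integer one; $\rho \ll \eta$ ensures the blow-up vertices perturb the cluster demands by at most $\eta m$ per cluster, keeping us inside that wiggle room. Once the integral assignment is fixed, the actual vertex-by-vertex embedding of the bulk is a routine greedy argument using that each $K[\cV']$ neighbourhood is dense, and the final stitching is a direct application of \cref{pro:allocation-Hamilton-cycle}.
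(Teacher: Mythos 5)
There is a genuine gap. You correctly reduce to the $t$-uniform tight-path setting via $\hat K = K_t(R)$, and you correctly identify the crux as an integer allocation problem (deciding which cluster of $R$ receives each part of $H$, with exact cluster-demands), but the two mechanisms you propose for overcoming it would not work, and the machinery the paper actually deploys is different from what your sketch describes.

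First, the ``reservoir $+$ greedy bulk $+$ stitch with \cref{pro:allocation-Hamilton-cycle}'' plan does not produce a valid embedding of $H$. The output of \cref{pro:allocation-Hamilton-cycle} is a tight Hamilton $(f_1,f_2)$-path of a blow-up, i.e.~a spanning tight path in the host; it is not a tool for extending a partial embedding of the guest $H$. Once you have placed `bulk' vertices of $H$ into reservoirs, those vertices occupy specific positions inside the path structure of $H$, and a separately-produced Hamilton path of the residual host vertices has no reason to be compatible with them. There is no step in your plan that glues these pieces into a single homomorphism from $H$ into $R(\cV)$, and $H$ is a blow-up of a \emph{power} of a path, not a tight path, so the object produced by \cref{pro:allocation-Hamilton-cycle} has the wrong shape to begin with.

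Second, the balancing step is identified as a ``flow/fractional-matching problem'' whose solution is attributed, without detail, to $\dspa$, $\dcon$, $\dape$; but these do not by themselves yield an exact integer allocation, and this is precisely where the paper does real work. The paper's route is: (i) refine $\cV\setminus\{V_{x^\ast}\}$ into $s'=(s-1)p$ subclusters with $p$ chosen coprime to $t$, so that $\gcd(s',t)=1$; (ii) apply \cref{corollary:hamallocationforbandwidth} at the reduced-graph level to $K'(\cU)$ to get a spanning tight cycle $C$ on the $s'$ subclusters passing through prescribed edges $e',f',g'$; (iii) allocate the parts of $H$ along $C$ via the stationary random walk of \cref{pro:allocation-random}, giving an approximately balanced homomorphism with many buffer vertices; (iv) overwrite the first and last $\ell$ steps to land on $e'$ and $f'$; and (v) repair the residual $O(\eta n)$ imbalance by pushing buffer vertices one step along the ``shift-by-$t$'' digraph on $[s']$, which is a \emph{single directed cycle} exactly because $\gcd(s',t)=1$. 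The coprimality of $s-1$ and $t$ is thus not a parity convenience, as your sketch suggests, but the engine of the integer repair step (together with the refinement factor $p$ being chosen coprime to $t$). Without an explicit mechanism of this kind to turn the approximate allocation into an exact one, the argument does not close.
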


\Cref{pro:allocation-Hamilton-path,pro:allocation-bandwidth-frontend} are proven in \Cref{sec:allocation-cycle} and \Cref{sec:allocation-path-blow-up}, respectively.

\subsection{Putting everything together}\label{sec:proof-main-result-bandwidth}

In the following, we derive our bandwidth theorem.

\begin{proof}[Proof of \cref{thm:framework-bandwidth-directed}]
	Set $\Delta = 2$. Given $k$, $t$ and $s$, we introduce constants $s_1, s_2, c', c, \eta, \rho, n_0$ such that
	\begin{equation*}
		1/k,\, 1/t,\, 1/s,\, 1/\Delta \gg 1/s_1 \gg 1/s_2 \gg c' ,\,\eta \gg \rho \gg 1/n_0.
	\end{equation*}
	More precisely, we choose $s_1$ so that $s_1-1$ is coprime to $t$, and large enough for \cref{corollary:booster2} applied with $1,s,s_1,t+1$ playing the rôles of $r_2, s_1, s_2, q$, respectively.
	We then choose $s_2$ so that $s_2 - 1$ is coprime to $t$ and large enough for \cref{corollary:booster2} applied with $2s_1-2,s,s_2,t+1$ playing the rôles of $r_2,s_1,s_2,q$, respectively.
	We also take $s_2$ to be large enough for \cref{pro:blow-up-cover} applied with $\Delta=2$.
	Next, fix $c'$ playing the rôle of $c$ in \cref{pro:blow-up-cover}, and let $c = (c')^3$.
	Fix $\eta, \rho$ in accordance with \cref{pro:blow-up-cover} and \cref{pro:allocation-bandwidth-frontend}, where in the latter $s_1$ and $s_2$ are playing the rôle of $s$.
	Then fix $\rho$ according to \cref{pro:allocation-bandwidth-frontend}.
	Finally, take $n_0$ to be large enough so that it can play the rôle of $n$ in all previous statements.
	Moreover, we also ensure that $(\log n_0)^{c^2}$ is so large that it can play the rôle of $m$ in \cref{pro:allocation-bandwidth-frontend} and large enough so that
	$(\log \log n)^c \leq \rho (\log ((\log n)^{c'}))^{c'}$ holds.

	Let $n \geq n_0$ be arbitrary, and let $m_1 = (\log n)^{c'}$ and $m_2 = (\log m_1)^{c'}$.
	Let $G$ be a $k$-graph on $n \geq n_0$ vertices with $t$-clique-digraph $K' = K_t(G)$.
	Suppose that $K'$ admits a $[t]$-digraph $K \subset \ori{C}(K') \cup \ori{C}(\partial K')$ that  $s$-robustly satisfies $\dcon \cap \dspa \cap \dape$.
	Let $H$ be an $n$-vertex blow-up of the $(t-k+1)$st power of a $k$-uniform \tight cycle with cluster sizes at most $m_3 (\log \log n)^c \leq \rho m_2$.
	Our goal is to show that $G$ contains $H$.

	We abbreviate $\P = \Del_{t+1}(\dcon \cap \dspa \cap \dape)$.
	Note that $K$ satisfies both $(1,s_1)$-robustly and $(2s_1-2,s_2)$-robustly $\P$ by two applications of \cref{corollary:booster2}.
	Hence, \cref{pro:blow-up-cover} implies that there exists $\ell \geq 1$ so that~$K$ has an $(s_1,s_2)$-sized $(m_1,m_2,\eta)$-balanced $\P$-cover formed by $\{\cV^v\}_{v \in V(F)}$ and $\{\cW^{e}\}_{e \in F}$ whose shape~$F$ is an $\ell$-vertex cycle.

	We denote the corresponding reduced $[t]$-digraphs of $\P$ by $\hat K^v$ and $\hat K^e$ for $v \in V(F)$ and $e \in F$.
	Note that for every $v \in V(F)$, there is a $k$-graph $R^v$ with $t$-clique-graph  $\hat K^v = K_t(R^v)$ such that $  K^v \subset \ori{C}(\hat K^v) \cup \ori{C}(\partial \hat K^v)$.
	Moreover, the same holds for $e \in F$ with $k$-graphs $R^e$ and $t$-clique-graph $\hat K^e$.
	We identify the vertices of the cycle $F$ with $V(F) = \{1,\dots,\ell\}$ following the natural cyclic ordering.

	Now we prepare the vertices of the guest graph $H$ for the allocation inside the blow-up cover.
	Let $\{U_1, \dotsc, U_L\}$ be a partition of $V(H)$ corresponding to the clusters of the $(t-k+1)$st power of a $k$-uniform cycle, according to the natural cyclic ordering.
	We will consider vertex-disjoint $(t-k+1)$st power of $k$-uniform \tight paths $ H^v , H^{e} \subset H$ for each $v \in V(F)$ and edge $e \in F$ (so $V(H)$ is partitioned by the vertex sets of the $k$-graphs $H^v$ and $H^{e}$.)
	We choose those subgraphs
	such that for every $v \in V(F)$ and $e \in F$, we have
	\begin{equation*}
		\text{$v(H^v) = \left|\bigcup \cV^v\right|-2s_1 m_2$ and $v(H^{e}) = \left|\bigcup \cV^e\right| = s_2 m_2$.}
	\end{equation*}
	We say that a pair $(x, y)$ is \emph{consecutive} if $x \in V(F)$ and $y = x(x+1) \in E(F)$; or if $y \in V(F)$ and $x = (y-1)y \in E(F)$.
	We also ensure that for each $x \in V(F) \cup E(F)$, there exists a (cyclic) interval $I_x \subseteq \{1, \dotsc, L\}$ such that
	\begin{enumerate}[\upshape{(\roman*)}]
		\item $V(H^x) \subseteq \bigcup_{i \in I_x} U_i$, and
		\item for each consecutive pair $(x,y)$, the last $t$ elements of the interval $I_x$ coincide with the first $t$ elements of the interval $I_{y}$.
	\end{enumerate}
	We can find the desired $H^v, H^e$ by a greedy argument, identifying one subgraph at a time following the cyclic ordering of $F$.

	Our goal is to embed each blow-up $H^v$ and $H^{e}$ into the corresponding blow-up $R^v(\cV^v)$ and $R^e(\cW^{e})$, respectively.
	This gives an embedding of $H$ into $G$, provided that beginning and terminal clusters are placed correctly.

	We begin by embedding the blow-ups $H^e$.
	For each $u \in V(F)$ and $v = u+1$ (index computations modulo $\ell$), let us fix $t$-edges $e^{uv} \in \hat K^{uv} \cap \hat K^u$ and $f^{uv} \in \hat K^{uv} \cap \hat K^v$ that avoid the exceptional clusters.
	Suppose the blow-up $H^{uv}$ starts with clusters $E_1^{uv},\dots,E_t^{uv}$ and ends with clusters $F_1^{uv},\dots, F_t^{uv}$.
	Write $\cW^{uv} = \{W_{x}^{uv}\}_{x \in V(R^{uv})}$.
	By our choice of $c$ and $n_0$, we have that each cluster of $H^e$ has size at most $(m_3 \leq \rho m_2$.
	It follows by \cref{pro:allocation-bandwidth-frontend} that there is an embedding $\phi^{uv}\colon H^{uv} \rightarrow R^{uv}(\cW^{uv})$ so that $\phi(E_i^{uv}) \subseteq V_{e^{uv}(i)}$ and $\phi(F_i^{uv}) \subseteq V_{f^{uv}(i)}$ for each $i\in [t]$.

	We embed the blow-ups $H^v$ in the same way.
	Given $v \in V(F)$, let $u=v-1$ and $w=v+1$.
	Write $\cV^v=\{V^v_x\}_{x \in V(R^v)}$.
	Let $H^{v}$ start with clusters $F_1^{v},\dots,F_t^{v}$ and end with clusters $E_1^{v},\dots, E_t^{v}$.
	By our choice of $c$ and $n_0$, we have that each cluster of $H^e$ has size at most $m_3 \leq \rho m_1$.
	It follows by \cref{pro:allocation-bandwidth-frontend} that there is an embedding $\phi^v \colon H^v \rightarrow R^{v}(\cV^{v})$ so that $\phi(F_i^{v}) \subseteq V^v_{f^{uv}(i)}$ and $\phi(E_i^{v}) \subseteq V^v_{e^{vw}(i)}$ for each $i\in [t]$.

	In combination, this gives the desired embedding of $H$ into $G$.
\end{proof}

\section{Blow-up covers}\label{sec:blow-up-covers}

In this section, we give a proof of \rf{pro:blow-up-cover} that comes in two parts.
We first cover all vertices with quasi balanced blow-ups.
Then we add the connections.

\begin{lemma}[Simple blow-up cover]\label{lem:simple-blow-up-cover}
	Let $1/s \gg c,\eta \gg 1/n$ and $m = (\log n)^c$.
	Let  $G$ be an $n$-vertex $[k]$-digraph.
	Let $\P$ be a family of $[k]$-digraphs.
	Suppose that $G$ satisfies $(1,s)$-robustly $\P$.
	Then the vertices of $G$ are covered by vertex-disjoint quasi $(1\pm\eta)m$-balanced blow-ups whose reduced graphs satisfy $\P$ and have order $s$ each.
\end{lemma}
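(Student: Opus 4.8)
The plan is to peel off quasi $(1\pm\eta)m$-balanced blow-ups one at a time, so the main content is to locate a single one. By hypothesis the property graph $\PG{G}{\P}{s}$ has minimum $1$-degree at least $(1-1/s^2)\binom{n-1}{s-1}$, hence all but at most a $1/s^2$-fraction of the $s$-subsets $S\subseteq V(G)$ are \emph{good}, meaning $G[S]\in\P$. As there are only boundedly many isomorphism types of $s$-vertex $[k]$-digraphs, pigeonhole yields a fixed $R^{\ast}\in\P$ that is induced by at least $\eps_0\binom{n}{s}$ good sets for some $\eps_0=\eps_0(k,s)>0$. The goal is to upgrade one such copy to a \emph{homogeneous} one, i.e.\ pairwise disjoint clusters $V_1,\dots,V_s$ with $G[V_1\cup\dots\cup V_s]=R^{\ast}(\{V_i\})$.

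To do this I would fix, via a bounded list of averaging arguments, a good $s$-set $S^{\ast}=\{x_1,\dots,x_s\}$ of type $R^{\ast}$ all of whose relevant low-order `types' are popular, and start from the candidate pools $U_i=\{\,v : v\text{ agrees with }x_i\text{ on every edge-pattern through slot }i\,\}$, which are of linear size by popularity of the first-order types. Then refine pool by pool and pattern by pattern: for an ordered pattern $(i_1,\dots,i_j)$ with $2\le j\le k$, the indicator recording which $\{U_{i_1},\dots,U_{i_j}\}$-partite $j$-tuples span an edge of $G$ is a colouring with boundedly many colours, so a (hypergraph) K\H{o}v\'ari--S\'os--Tur\'an / Zarankiewicz estimate extracts sub-pools of size $\Omega\big((\log n)^{1/(k-1)}\big)$ on which this colouring is constant and agrees with $S^{\ast}$. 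After all patterns are processed, $G[\bigcup_i V_i]=R^{\ast}(\{V_i\})$ exactly; since $k\le s$ and $1/s\gg c$, each cluster has size at least $m=(\log n)^c$. Trimming $s-1$ of the clusters to sizes in $[(1-\eta)m,(1+\eta)m]$ and replacing the last cluster by a single one of its vertices (returning the discarded vertices) produces one quasi $(1\pm\eta)m$-balanced blow-up with reduced digraph $R^{\ast}\in\P$.

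Iterating, one deletes each blow-up as it is found and restarts on the remaining set $V'$; deleting $V\setminus V'$ only replaces the robustness constant $1-1/s^2$ by roughly $1-\tfrac{1}{s^2}(n/|V'|)^{s-1}$, so the extraction survives while $|V'|$ stays a large enough fraction of $n$ (and $\log|V'|\ge(1-o(1))\log n$, keeping clusters of size $m$ available). \emph{The main obstacle is that this degradation is fatal once a constant fraction of $V(G)$ has been removed}: naive greedy removal covers all but a constant fraction, whereas the lemma asks for a perfect cover. I would resolve this more globally — for instance by partitioning $V(G)$ at the outset into blocks of sub-polynomial size (a random such partition preserves, up to negligible error, the robustness fraction on each block and on every union of blocks), extracting blockwise while reserving a small part of each block, and concluding with a redistribution step that uses the $\pm\eta m$ slack in cluster sizes and the free choice of the singleton vertex to fold the remaining uncovered vertices into already-built blow-ups after a type-compatibility check. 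Arranging the construction so that this last absorption cannot fail — so that every leftover vertex is compatible with a cluster of some existing blow-up — is the crux; the divisibility bookkeeping making the blow-up sizes sum to exactly $n$ is then routine, given the singletons and the slack~$\eta$.
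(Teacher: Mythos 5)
Your extraction of a single blow-up is fine in spirit (it is a Zarankiewicz/Erd\H{o}s-type argument of the kind the paper also uses, via \cref{thm:erd64}), and you are right that naive greedy peeling stalls once a constant fraction of $V(G)$ is gone. The problem is that your proposed fix for this — random blocks plus a final ``absorption'' step that folds each leftover vertex into a cluster of an already-built blow-up after a ``type-compatibility check'' — is not just unfinished, it is aimed at the wrong mechanism. Adding a vertex $v$ to an existing cluster $V_i$ of a blow-up $R(\cV)$ preserves the blow-up structure only if $v$ has exactly the same edge pattern into every other $V_j$ as the rest of $V_i$; there is no reason a generic leftover vertex admits any such slot, and the $\pm\eta m$ slack in cluster sizes does nothing to create one. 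You flag this yourself as ``the crux,'' which in effect means the proof is missing its key step.

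The paper resolves exactly this issue by a multi-scale construction rather than an absorption-into-existing-clusters step. First, \cref{lem:almost-blow-up-cover} (proved via a weak-regularity-free tiling argument, \cref{lem:almost-perfect-regular-tuple-tiling}) covers all but $\eta' n$ vertices with blow-ups at scale $m_1$. The leftovers $W$ are then handled in two further waves at strictly smaller scales $m_2 \ll m_1$ and $m_3 \ll m_2$, each time using \cref{lem:rooted-blow-ups}: a leftover vertex is not slotted into an old cluster, but instead becomes the rooted/singleton part of a \emph{fresh} blow-up whose remaining $s-1$ clusters live at the smaller scale. Because robustness guarantees every vertex lies in many good $s$-sets, a fresh rooted blow-up through a given vertex always exists, so this absorption cannot fail — which is precisely the guarantee your plan lacks. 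Finally, the scale separation $m_3 \ll m_2 \ll m_1$ lets the new, smaller blow-ups nibble only an $O(\eta)$-fraction of each older cluster, after which everything is repartitioned into quasi $(1\pm\eta)m_3$-balanced families (here the singleton-cluster freedom is used for divisibility, not for absorption). If you want to salvage your outline, replace the ``fold leftover vertices into existing clusters'' idea with ``root a fresh blow-up at each leftover vertex at a smaller scale''; that is the missing idea.
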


A different way to put the outcome of \cref{lem:simple-blow-up-cover} is to say that $G$ contains an $(s_1,\cdot)$-sized $(m_1,\cdot,\eta)$-balanced $\P$-cover of empty shape.
The next result allows us to realise non-trivial shapes by adding a single edge to the current shape.

\begin{lemma}[Connecting blow-ups]\label{lem:connecting-blow-ups}
	Let $\eps,  1/s_1 \gg 1/s_2 \gg c \gg 1/n$.
	Suppose $m_1 \leq n/s_2$ and $m_2= (\log m_1)^c$ are positive integers.
	Let  $G$ be an $n$-vertex $[k]$-digraph.
	Let $\P$ be a family of $[k]$-digraphs.
	Suppose that $G$ satisfies  $(\eps,s_1,s_2)$-robustly $\P$.
	Let $\cV$ be an $m_1$-balanced set family in $V(G)$ with $s_1$ clusters.
	Then $G$ contains an $s_2$-sized quasi $m_2$-balanced $\P$-blow-up $T(\cW)$ such that $\cW$ hits $\cV$, and such that the exceptional vertex of~$\cW$ is not contained in $\cV$.
\end{lemma}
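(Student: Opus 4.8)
The plan is to build the connector $T(\cW)$ in two stages: first locate a single small \emph{template} copy of some reduced digraph $T\in\P$ that is positioned correctly relative to $\cV$ — exactly one vertex inside each cluster $V_i$, and the remaining $s_2-s_1$ vertices outside $\bigcup\cV$ — and then inflate this template into clusters of size $m_2$ by an iterated K\H{o}v\'ari--S\'os--Tur\'an/Erd\H{o}s-box argument. Throughout we use that $|\bigcup\cV|=s_1m_1$ is negligible next to $n$; formally $m_1\le n/s_2$ is all that is invoked (it is exactly what makes each $|V_i|=m_1$ no larger than the pieces we carve out of $V(G)\setminus\bigcup\cV$), but the substance is the case $m_1=o(n)$, which is what arises when this lemma is fed the clusters produced by \cref{lem:simple-blow-up-cover}.

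\emph{Step 1 (pinning down the reduced digraph).} Let $P=\PG{G}{\P}{s_2}$, so that $\delta_{s_1}(P)\ge\eps\binom{n-s_1}{s_2-s_1}$ by hypothesis. For every $\cV$-partite $s_1$-set $S$ meeting each $V_i$ exactly once, all but an $o(1)$-fraction of the $\ge\eps\binom{n-s_1}{s_2-s_1}$ edges of $P$ through $S$ have their $s_2-s_1$ new vertices outside $\bigcup\cV$, and each such edge $X$ satisfies $G[X]\in\P$. Recording, for each such $X$, the labelled isomorphism type of $G[X]$ under a map sending the $i$-th vertex of $S$ to an abstract vertex $i$, and pigeonholing over the $O(1)$ labelled $[k]$-digraphs on $[s_2]$ and then over the $m_1^{s_1}$ choices of $S$, we obtain one $[k]$-digraph $T$ on $[s_2]$ — necessarily lying in $\P$ — together with, after fixing a suitable (random) partition $A_{s_1+1},\dots,A_{s_2}$ of $V(G)\setminus\bigcup\cV$ into $s_2-s_1$ nearly-equal parts, a set $R\subseteq V_1\times\dots\times V_{s_1}\times A_{s_1+1}\times\dots\times A_{s_2}$ consisting of all tuples $(w_1,\dots,w_{s_2})$ for which $w_i\mapsto i$ is an isomorphism from $G[\{w_1,\dots,w_{s_2}\}]$ to $T$, with $|R|\ge\delta'\,m_1^{s_1}|A_{s_1+1}|\cdots|A_{s_2}|$ for some $\delta'=\delta'(\eps,s_1,s_2)>0$.

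\emph{Step 2 (inflating the template).} The set $R$ is a dense $s_2$-partite $s_2$-uniform hypergraph, so by the standard iterated K\H{o}v\'ari--S\'os--Tur\'an/Erd\H{o}s-box argument — remove one part at a time, using convexity of $\binom{\,\cdot\,}{m_2}$, which replaces $1/\delta'$ by $(1/\delta')^{m_2}$ at each of the $s_2-1$ steps — it contains a complete $s_2$-partite sub-hypergraph all of whose parts have size $\lfloor\beta(\log N)^{1/(s_2-1)}\rfloor$, where $N=\min_i|A_i|=m_1$ (the outside parts being far larger, by $m_1\le n/s_2$) and $\beta>0$ depends only on $\delta'$ and $s_2$. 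Since $1/s_2\gg c$, we have $(\log m_1)^c\le\beta(\log m_1)^{1/(s_2-1)}$ for $m_1$ large, so we obtain pairwise-disjoint clusters $W_i\subseteq V_i$ $(i\le s_1)$ and $W_i\subseteq A_i$ $(i>s_1)$, each of size $m_2=(\log m_1)^c$, with $W_1\times\dots\times W_{s_2}\subseteq R$. By the definition of $R$ this forces $G[\cW]=T(\cW)$ with $T\in\P$, where $\cW=\{W_1,\dots,W_{s_2}\}$. Finally, shrinking one of the outside clusters, say $W_{s_2}$, to a single vertex makes $\cW$ quasi $m_2$-balanced; then $\{W_1,\dots,W_{s_1}\}$ witnesses that $\cW$ hits $\cV$, and the exceptional vertex lies outside $\bigcup\cV$, as required.

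\emph{Main obstacle.} The heart of the argument is Step 2: one must blow up all of the (boundedly many) edges of $T$ simultaneously, \emph{and} keep the $s_1$ distinguished clusters inside the prescribed $V_i$, and it is precisely the dependence of the achievable cluster size on the uniformity $s_2$ of the property graph — of order $(\log m_1)^{1/(s_2-1)}$ — that dictates the relation $1/s_2\gg c$ in the hierarchy (and hence why the final theorem only reaches clusters of size $\poly(\log\log n)$). A secondary point of care is in Step 1: arranging that the template really can be placed with one vertex per cluster of $\cV$ and all others outside $\bigcup\cV$ uses that $\bigcup\cV$ occupies only a negligible portion of $V(G)$, and this step needs a slightly more careful count than the blunt bound $m_1\le n/s_2$ suggests. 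The remaining bookkeeping — the random choice of the partition $A_{s_1+1},\dots,A_{s_2}$, pairwise disjointness, and the trimming to a singleton — is routine.
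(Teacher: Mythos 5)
Your proposal is correct and follows essentially the same strategy as the paper's proof: pass to the property $s_2$-graph, restrict to a random partite sub-hypergraph, pigeonhole over the $O(1)$ labelled isomorphism types to obtain a dense monochromatic piece, and then extract an $m_2$-balanced complete $s_2$-partite sub-hypergraph via Erdős's theorem (which is exactly your iterated Kővári--Sós--Turán/Erdős-box step), finally trimming one non-$\cV$ cluster to a singleton. The only departures are cosmetic: the paper samples $s_2-s_1$ random \emph{disjoint $m_1$-sets} in $V(G)\setminus\bigcup\cV$ (so that all parts have equal size $m_1$ and \cref{thm:erd64} applies verbatim) and pigeonholes over types \emph{after} fixing the random parts, whereas you partition $V(G)\setminus\bigcup\cV$ into near-equal pieces and pigeonhole first; your ``all but an $o(1)$-fraction'' remark in Step 1 is loose at the boundary $m_1 \approx n/s_2$, but this is an imprecision shared with the paper and is irrelevant in the applications where $m_1$ is polylogarithmic.
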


Given this, we can already derive the main result of this section.

\begin{proof}[Proof of \rf{pro:blow-up-cover}]
	We apply \cref{lem:simple-blow-up-cover} to cover the vertices of $G$ by $\ell$ vertex-disjoint quasi $(1\pm\eta/2)m_1$-balanced blow-ups whose reduced graphs satisfy $\P$ and have order $s_1$ each.
	In other words, we have an $(s_1,\cdot)$-sized $(m_1,\cdot,\eta/2)$-balanced $\P$-cover of $G$ whose shape is the empty graph on $\ell$ vertices.
	We turn this shape into an arbitrary graph of maximum degree $\Delta$ by adding one edge after another.

	We describe the process of adding one edge to the shape of the current $\P$-cover of $G$.
	Let $\mathcal{V}_{\textrm{init}}$ be the collection of non-exceptional clusters in the $\P$-cover we have found initially; and let $V_{\textrm{exc}}$ be the set of vertices of the exceptional clusters.
	Consider two of the blow-ups, say $R_1(\cV_1)$ and $R_2(\cV_2)$, where an edge of $F$ needs to be added.
	Let us define a set of vertices $V_{\textrm{avoid}}$ which we want to avoid as follows.
	Let $V_{\textrm{used}}$ be the set of vertices already used in previous blow-ups $R^e(\mathcal{W}^e)$ corresponding to some other edges $e \in F$.
	Let $\mathcal{V}_{\textrm{sat}} \subseteq \mathcal{V}_{\textrm{init}} \setminus (\mathcal{V}_1 \cup \mathcal{V}_2)$ be the set of clusters which intersect $V_{\textrm{used}}$ in at least $\eta m_1 / 4$ vertices.
	Let $V_{\textrm{sat}} = \bigcup \mathcal{V}_{\textrm{sat}}$ be the vertices contained in them.
	Let $V_{\textrm{avoid}} = V_{\textrm{used}} \cup V_{\textrm{sat}} \cup V_{\textrm{exc}}$.
	We assume for now (and justify later) that $|V_{\textrm{avoid}}| \leq \eta n$.
	Because of this, $G-V_{\textrm{avoid}}$ still satisfies $(\delta,2s_1-2,s_2)$-robustly $\P$ with $\delta = 1-1/s_2-\eta$
	(as specified in \cref{def:robustenss-detailed}).
	By \cref{lem:connecting-blow-ups}, $G-V_{\textrm{avoid}}$ contains an $s_2$-sized quasi $m_2$-balanced $\P$-blow-up $T(\cW)$ such that $\cW$ hits both $\cV_1$ and $\cV_2$, and the exceptional cluster of $\cW$ is not in $\cV_1 \cup \cV_2$.
	This finishes the process of adding one edge.

	Let us now justify that $|V_{\textrm{avoid}}| \leq \eta n$.
	Since each cluster in $\mathcal{V}_{\textrm{init}}$ has at least $(1 - \eta)m_1$ vertices, and each blow-up has $s_1 - 1$ non-exceptional clusters, we get that $\ell \leq n/((s_1-1)(1 - \eta) m_1) \leq 2 n / (s_1 m_1)$.
	Since each blow-up has one exceptional cluster, we also have that $|V_{\textrm{exc}}| \leq \ell \leq 2 n / (s_1 m_1) \leq \eta n / 3$.
	Since $\Delta(F) \leq \Delta$ we have  $|E(F)| \leq \ell \Delta / 2 \leq \Delta n / (s_1 m_1)$.
	Since each added blow-up uses $(s_2-1)m_2+1 \leq s_2 m_2$ vertices, it follows that $|V_{\textrm{used}}| \leq s_2 m_2 |E(F)| \leq \Delta s_2 m_2 n / (s_1 m_1) \leq \eta n / 3$, where the last step uses $\Delta = o(m_1/m_2)$.
	Note that~$\mathcal{V}_{\textrm{sat}}$ can have at most $|V_{\textrm{used}}| / (\eta m_1 / 4)$ clusters, and since each cluster in $\mathcal{V}_{\textrm{init}}$ has at most $(1 + \eta)m_1$ vertices, we obtain that $|V_{\textrm{sat}}| \leq (1 + \eta)m_1 |V_{\textrm{used}}| / (\eta m_1 / 4) \leq 4 (1 + \eta) |V_{\textrm{used}}| \eta^{-1} \leq \eta n / 3$, again from $\Delta = o(m_1/m_2)$.
	Thus, we have $|V_{\textrm{avoid}}| \leq \eta n$ as desired.

	The above procedure allows us to stepwise add edges to $F$, ensuring that the blow-ups of the edges are pairwise disjoint.
	Let $U^\ast$ be the final set after incorporating every edge of $F$.
	Note that, for any cluster $X \in \mathcal{V}_{\textrm{init}}$, we have $|U^\ast \cap X| \leq \eta m_1 / 2$ at the end of the construction.
	Indeed, suppose $X$ belongs to a blow-up $R_1(\cV_1)$.
	After adding an edge between the clusters $R_a(\cV_a)$ and $R_b(\cV_b)$ distinct from $R_1(\cV_1)$; if $X$ is not saturated then at most $\eta m_1 / 4 + s_2 m_2 \leq \eta m_1 / 3$ vertices will be in $U \cap X$, after becoming saturated we do not add more vertices to it.
	For the edges which do contain $R_1(\cV_1)$ we use at most $\Delta m_2$ vertices in $X$, but this is at most $\eta m_1 / 6$ (if $\Delta = o(m_1/m_2)$).
	Thus indeed the total number of vertices used in $X$ is at most $\eta m_1/2$.

	By removing from each original cluster the vertices from $U^\ast$, we transform the initial $(s_1, s_2)$-sized $(m_1,m_2,\eta/2)$-balanced $\P$-cover (with empty shape) into a $(s_1, s_2)$-sized $(m_1,m_2,2\eta)$-balanced $\P$-cover with shape $F$.
\end{proof}

It remains to show \cref{lem:simple-blow-up-cover,lem:connecting-blow-ups}, which is done in the next two sections.
We shall also prove the following related result, which tracks edges that robustly extend to many small blow-ups and finds its application in the proof of \cref{thm:framework-connectedness-robust}.

\begin{lemma}\label{lem:blow-up-support}
	Let $1/k,\,\eps,\, 1/s,\,1/m \gg \beta \gg 1/n$ and $1 \leq d \leq k$.
	Let~$G$ be an $n$-vertex $[k]$-digraph.
	Let~$\P$ be a family of $s$-vertex $[k]$-digraphs with $R^{(d)}$ non-empty for each $R \in \P$.
	Suppose that $\PG{G}{  \P   }{s}$ has at least $\eps n^s$ edges.
	Then there are at least $\beta n^{d}$ edges $e \in G^{(d)}$ such that there are $\beta n^{ms-d}$ many $m$-balanced $\P$-blow-ups in~$G$ that contain $e$ as an edge.
\end{lemma}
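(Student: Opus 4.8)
The plan is to deduce the statement from supersaturation for the Erd\H{o}s box theorem, applied not to $\PG{G}{\P}{s}$ directly but to an auxiliary labelled hypergraph, after first passing to a single isomorphism type of member of $\P$.

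\textbf{Reduction to one labelled type.} Write $P = \PG{G}{\P}{s}$. Since there are only finitely many---say $p = p(k,s)$---isomorphism types of $s$-vertex $[k]$-digraphs, and every edge $S$ of $P$ has $G[S] \in \P$, there is by averaging a single $R \in \P$ with $G[S] \cong R$ for at least $\eps n^{s}/p$ edges $S$ of $P$. Fix such an $R$, with $V(R) = [s]$, and fix one $d$-edge $e_0 = (l_1, \dots, l_d) \in R^{(d)}$, which exists by hypothesis. Let $\Psi$ be the set of injective maps $\phi \colon [s] \to V(G)$ that are isomorphisms from $R$ onto the induced subgraph $G[\phi([s])]$; each of the $\ge \eps n^{s}/p$ edges $S$ above contributes at least one such $\phi$ (with $\phi([s]) = S$), so $|\Psi| \ge \eps n^{s}/p$. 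Encode $\Psi$ as an $s$-partite $s$-uniform hypergraph $\cH$ whose parts $X_1, \dots, X_s$ are $s$ disjoint copies of $V(G)$, with one edge $\{(1,\phi(1)), \dots, (s,\phi(s))\}$ for each $\phi \in \Psi$; then $\cH$ has parts of size $n$ and at least $\eps n^{s}/p$ edges.

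\textbf{Box theorem and blow-ups.} By the Erd\H{o}s box theorem \cite{Erd64} one has $\mathrm{ex}(N; K^{(s)}_{m,\dots,m}) = o(N^{s})$, so a standard supersaturation argument (restrict to uniformly random $M$-subsets of each part $X_i$ for a suitable constant $M = M(\eps,k,s,m)$ and locate a box inside) yields a constant $\eps_2 = \eps_2(\eps, k, s, m) > 0$ such that $\cH$ contains at least $\eps_2 n^{sm}$ \emph{boxes}, i.e.\ tuples $(B_1, \dots, B_s)$ with $B_i$ an $m$-subset of $X_i$ and $B_1 \times \dots \times B_s \subseteq E(\cH)$. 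The point of the labelled encoding is that a box immediately gives a blow-up of $R$ with consistent labelling: viewing each $B_i$ as an $m$-subset of $V(G)$, injectivity of the maps in $\Psi$ forces $B_1, \dots, B_s$ to be pairwise disjoint as vertex sets of $G$, and for every transversal $(v_1, \dots, v_s) \in B_1 \times \dots \times B_s$ the map $i \mapsto v_i$ lies in $\Psi$, hence is an isomorphism $R \to G[\{v_1, \dots, v_s\}]$. Consequently, for every $j$-edge $(i_1, \dots, i_j)$ of $R$ and every transversal the tuple $(v_{i_1}, \dots, v_{i_j})$ lies in $G^{(j)}$; letting the transversal vary over $B_1 \times \dots \times B_s$ gives $B_{i_1} \times \dots \times B_{i_j} \subseteq G^{(j)}$, which is exactly $R(\cV) \subseteq G$ for $\cV = \{B_1, \dots, B_s\}$. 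Thus each box yields an $m$-balanced $\P$-blow-up in $G$, distinct boxes yield distinct such blow-ups, and we obtain a family $\cB_0$ with $|\cB_0| \ge \eps_2 n^{sm}$.

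\textbf{Counting.} Each blow-up $R(\cV) \in \cB_0$ contains, through the fixed edge $e_0$, the $m^{d}$ pairwise distinct $d$-edges in $B_{l_1} \times \dots \times B_{l_d}$, all of which are edges of $G$. Double counting the pairs consisting of a blow-up in $\cB_0$ and a $d$-edge of $G$ it contains gives $\sum_{e \in G^{(d)}} |\{B \in \cB_0 : e \in E(B)\}| \ge \eps_2 m^{d} n^{sm}$. On the other hand, any fixed $d$-edge $e$ lies in at most $C_1 n^{sm-d}$ many $m$-balanced $\P$-blow-ups in $G$, for a constant $C_1 = C_1(k,s,m)$ (the $d$ vertices of $e$ fix their positions inside the clusters up to a bounded choice, leaving $sm-d$ vertices to be placed), and there are at most $n^{d}$ choices of $e$. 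Hence if fewer than $\beta n^{d}$ edges $e$ satisfied $|\{B \in \cB_0 : e \in E(B)\}| \ge \beta n^{sm-d}$, the left-hand sum would be at most $\beta n^{d} \cdot C_1 n^{sm-d} + n^{d} \cdot \beta n^{sm-d} = (C_1+1)\beta n^{sm}$, which is less than $\eps_2 m^{d} n^{sm}$ once $\beta$ is small compared with $\eps_2 m^{d}/(C_1+1)$, as guaranteed by $1/k, \eps, 1/s, 1/m \gg \beta$; a contradiction. So at least $\beta n^{d}$ edges $e \in G^{(d)}$ lie in at least $\beta n^{sm-d} = \beta n^{ms-d}$ blow-ups of $\cB_0$, hence in at least that many $m$-balanced $\P$-blow-ups in $G$, as required.

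\textbf{Main obstacle.} The only genuine subtlety is the consistency of the labelling: since $\P$ is merely a family of digraphs with no monotonicity assumed, knowing that $G[S] \in \P$ for every transversal $S$ of a box does not by itself produce a blow-up of a \emph{single} graph. This is exactly why one first pigeonholes onto a fixed isomorphism type $R$ and then runs the box theorem on the labelled hypergraph $\cH$ rather than on $P$; once a box in $\cH$ is found, the blow-up $R(\cV) \subseteq G$ comes for free. The remaining ingredients---supersaturation for the box theorem and the final averaging---are routine.
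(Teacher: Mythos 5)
Your argument is correct and matches the paper's approach: pigeonhole onto a single labelled isomorphism type $R \in \P$, pass to an $s$-partite setting, apply the supersaturated form of Erd\H{o}s' box theorem to produce $\Omega(n^{sm})$ many $m$-balanced blow-ups of $R$, and double count via the $m^{d}$ copies of a fixed $d$-edge of $R^{(d)}$ in each. The only cosmetic difference is that you encode the labelling via an auxiliary $s$-partite hypergraph on $s$ disjoint copies of $V(G)$ (deriving cluster disjointness from the injectivity of the maps), whereas the paper instead restricts to the partite edges of the property graph under a random $s$-partition of $V(G)$.
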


\subsection{Connecting blow-ups}

At the heart of the proof of \cref{pro:blow-up-cover} sits an old insight of Erdős~\cite[Theorem 2]{Erd64}, which ensures that dense uniform hypergraphs contain complete blow-ups of polylogarithmic size.

\begin{theorem}\label{thm:erd64}
	For all $s\geq 2$  and $\eps > 0$, there are $c,n_0>0$ such that every $s$-partite $s$-graph $P$ with $n \geq n_0$  vertices in each part and at least $\eps n^s$  edges contains an $m$-blow-up of an edge with $m = c\cdot (\log n)^{1/(s-1)}$.
\end{theorem}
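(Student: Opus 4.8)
The plan is to prove \cref{thm:erd64} by induction on the uniformity $s$, following the classical argument of Erdős~\cite{Erd64}. It is convenient to strengthen the statement so that it survives the recursion, allowing the edge density to decay slowly with $n$: I would prove that for each $s \geq 2$ there are $A_s>0$ and $n_s$ such that every $s$-partite $s$-graph $P$ with parts of size $n \geq n_s$ and at least $\delta n^s$ edges, where $n^{-1/2} \leq \delta \leq 1$, contains an $m$-blow-up of an edge (a complete $s$-partite $s$-graph with all parts of size $m$) for $m = \lfloor A_s ( \log n / \log(2/\delta) )^{1/(s-1)}\rfloor$. Specialising to a constant density $\delta = \eps$ then yields \cref{thm:erd64} with $c = c(s,\eps)$.

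For the base case $s = 2$, let $P$ be bipartite with parts $X$, $Y$ of size $n$ and at least $\delta n^2$ edges. I would count the pairs $(y, S)$ with $y \in Y$, $S \in \binom{X}{m}$ and $S \subseteq N(y)$: by convexity of $d \mapsto \binom{d}{m}$ this count is at least $n\binom{\delta n}{m}$, whereas if $P$ contained no $K_{m,m}$ every $m$-subset of $X$ would have at most $m-1$ common neighbours in $Y$, so the count would be less than $m\binom{n}{m}$. Combining the two bounds and using $\binom{\delta n}{m}/\binom{n}{m} \geq (\delta/2)^m$ (valid when $m \leq \delta n/2$) forces $m > \log n/(2\log(2/\delta))$, contradicting the choice of $m$ (with $A_2 = 1/2$); the side conditions $m \leq \delta n/2$ and $m \leq \sqrt n$ hold for $n$ large because $\delta \geq n^{-1/2}$. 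Hence the desired $K_{m,m}$ exists.

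For the inductive step $s \geq 3$, assuming the statement for $s-1$, I would proceed via common links. Given $P$ with parts $V_1, \dots, V_s$, assign to each transversal $\mathbf{u}$ of $V_1, \dots, V_{s-1}$ the number $d(\mathbf{u})$ of vertices $v \in V_s$ completing $\mathbf{u}$ to an edge; the average of $d(\mathbf{u})$ over all $n^{s-1}$ transversals is at least $\delta n$. Fixing a parameter $t$ and counting the pairs $(T, \mathbf{u})$ with $T \in \binom{V_s}{t}$ and $T$ completing $\mathbf{u}$, convexity again gives a lower bound of $n^{s-1}\binom{\delta n}{t}$, so some fixed $t$-set $T \subseteq V_s$ is completed by at least $\eps' n^{s-1}$ transversals, where $\eps' = (\delta/2)^t$. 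Applying the induction hypothesis to the $(s-1)$-graph $L_T$ formed by these transversals produces an $m'$-blow-up of an edge inside $L_T$ with $m' = \lfloor A_{s-1}(\log n/\log(2/\eps'))^{1/(s-2)}\rfloor$, and adding back $T$ yields a complete $s$-partite $s$-graph in $P$ with part sizes $m', \dots, m', t$. Since $\log(2/\eps') \leq 2t\log(2/\delta)$, we have $m' \geq \tfrac12 A_{s-1}(\log n/(2t\log(2/\delta)))^{1/(s-2)}$ for $n$ large; choosing $t$ so that this lower bound equals $t$ gives $t^{s-1}$ proportional to $\log n/\log(2/\delta)$, so $t$ (and hence $m := \min\{m', t\}$) is of the claimed order $(\log n/\log(2/\delta))^{1/(s-1)}$ after folding the implicit constant into $A_s$. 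One checks that $t \leq \delta n/2$ holds since $t$ is polylogarithmic, and that $\eps' \geq n^{-1/2}$ since $\log(1/\eps') = O((\log n)^{1/(s-1)}) = o(\log n)$, so the induction hypothesis legitimately applies to $L_T$ and this lower bound on the density is preserved all the way down the recursion.

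The step needing real care is producing a \emph{balanced} blow-up. The common-link count by itself only delivers an unbalanced complete $s$-partite $s$-graph with part sizes $m', \dots, m', t$, where $t$ is a free parameter; to balance it one must let $t$ grow with $n$ in step with the part size $m'$ returned by the inductive call. Making this match work requires tracking how the density degrades from $\delta$ to $(\delta/2)^t$ through the recursion and checking that the relation ``$t^{s-1}$ proportional to $\log n/\log(2/\delta)$'' reproduces exactly the exponent $1/(s-1)$ — this bookkeeping, rather than any single inequality, is the main obstacle.
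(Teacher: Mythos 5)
The paper does not give its own proof of this statement; it is cited directly from Erd\H{o}s \cite{Erd64} (Theorem 2). Your argument faithfully reconstructs Erd\H{o}s's original proof: a convexity double count locating a dense common link, followed by induction on the uniformity, with the statement strengthened to carry a slowly decaying density parameter so the recursion closes. Your worry about the bookkeeping is the right one and your resolution is sound: starting from constant density $\eps$, after $k$ recursive steps one has $\log(2/\delta_k) = O\bigl((\log n)^{k/(s-1)}\bigr)$ (up to constant factors involving $\log(2/\eps)$), which stays $o(\log n)$ for all $k\le s-2$, so the invariant $\delta_k\ge n^{-1/2}$ holds throughout and the balancing relation $t^{s-1}\asymp \log n/\log(2/\delta)$ delivers the exponent $1/(s-1)$ at every level.
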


\begin{proof}[Proof of \cref{lem:connecting-blow-ups}]
	Let $P = \PG{G}{  \P   }{s_2}$ be the property $s_2$-graph with minimum $s_1$-degree at least $\eps \tbinom{n-s_1}{s_2-s_1}$.
	(This crude bound suffices for our purposes.)
	We begin by switching to the partite setting.
	Select a random family $\cV'$ of $s_2-s_1$ pairwise disjoint $m_1$-sets in $V(G) \sm \bigcup \cV$.
	Let $Q \subset P$ be the subgraph obtained by keeping only $(\cV \cup\cV')$-partite edges.
	A standard probabilistic argument guarantees that with positive probability $\deg_{Q}(X) \geq (\eps/2) m_1^{s_2-s_1}$ for every $\cV$-partite $s_1$-set~$X \subset V(Q)$.
	In particular, $Q$ has at least $(\eps/2) m_1^{s_2}$ edges.
	Fix~$Q \subset P$ with this property.

	Note that every edge $Y \in Q$ corresponds to a labelled copy of a $[k]$-digraph $R \in \P$ on vertex set $[{s_2}]$.
	There are at most $2^{k{s_2}^k}$ many forms that $R$ can take.
	Hence we can find a specific $R \in \P$ and a subgraph $Q' \subset Q$, such that $Q'$ contains $2^{-k{s_2}^k}\eps m_1^{s_2}$ edges $Y$ that correspond (uniformly) to $R$.
	We apply \cref{thm:erd64} to $Q'$ to find an $s_2$-sized $m_2$-balanced blow-up of an edge in $Q$.
	Note that this already gives an $m_2$-balanced $\P$-blow-up whose cluster family hits $\cV$.
	To finish, we need to guarantee that this blow-up has a singleton cluster outside of $\cV$.
	We simply select a cluster of the blow-up which does not intersect $\cV$ and remove all but one vertex.
\end{proof}

The proof of \cref{lem:blow-up-support} follows along the same lines.
We use the following result, which follows from \cref{thm:erd64} and the supersaturation phenomenon~\cite[Lemma 2.1]{Kee11a}.

\begin{theorem}\label{thm:erd64-supersaturated}
	For all $s\geq 2$, $m \geq 1$ and $\eps > 0$, there are $\beta ,n_0>0$ such that every $s$-partite $s$-graph $P$ with $n \geq n_0$  vertices in each part and at least $\eps n^s$ edges contains at least $\beta n^{sm}$ $m$-blow-ups of an edge.
\end{theorem}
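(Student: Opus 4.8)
The plan is the classical supersaturation bootstrap: \cref{thm:erd64} already guarantees a \emph{single} $m$-blow-up inside any sufficiently dense piece of $P$ of bounded size, and averaging over all such pieces upgrades this to a count of order $n^{ms}$.

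First I would fix a window size $q$. Apply \cref{thm:erd64} with $\eps/2$ in place of $\eps$ to obtain constants $c_0$ and $q_0$, and then pick an integer $q \ge q_0$ large enough that $c_0(\log q)^{1/(s-1)} \ge m$. Consequently, every $s$-partite $s$-graph with exactly $q$ vertices in each part and at least $(\eps/2)q^s$ edges contains an $m$-blow-up of an edge, since it contains a blow-up of size $\lfloor c_0(\log q)^{1/(s-1)} \rfloor \ge m$.

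Next I would average over windows. Call an ordered tuple $(W_1,\dots,W_s)$, with $W_i$ a $q$-subset of the $i$-th part of $P$, \emph{good} if $P[W_1\cup\dots\cup W_s]$ has at least $(\eps/2)q^s$ edges. Choosing the $W_i$ independently and uniformly at random, a fixed edge of $P$ survives with probability $(q/n)^s$, so the number $X$ of surviving edges satisfies $\mathbb{E}[X] = e(P)(q/n)^s \ge \eps q^s$; since $X \le q^s$ always, the bound $\eps q^s \le \mathbb{E}[X] \le (\eps/2)q^s + q^s \Pr[X \ge (\eps/2)q^s]$ yields $\Pr[X \ge (\eps/2)q^s] \ge \eps/2$. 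Hence at least an $\eps/2$ fraction of the $\binom{n}{q}^s$ windows are good. Finally I would double count: every good window contains at least one $m$-blow-up of an edge, and each $m$-blow-up (a choice of an $m$-subset from every part so that all transversals are edges) lies in exactly $\binom{n-m}{q-m}^s$ windows, so the number of distinct $m$-blow-ups in $P$ is at least
\[
\frac{(\eps/2)\binom{n}{q}^s}{\binom{n-m}{q-m}^s}
= \frac{\eps}{2}\left(\frac{n(n-1)\cdots(n-m+1)}{q(q-1)\cdots(q-m+1)}\right)^{s}
\ge \frac{\eps}{2}\left(\frac{n}{2q}\right)^{ms}
\ge \beta n^{ms}
\]
with $\beta = \eps/(2(2q)^{ms})$, valid for all $n \ge n_0$ once $n_0$ is large in terms of $s,m,\eps$. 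For \cref{lem:blow-up-support} one runs the identical scheme on a random $s$-partition of the property graph $\PG{G}{\P}{s}$, restricts to the $\Omega(1)$-fraction of blow-ups of edges realising one fixed labelled form $R\in\P$ (there are only boundedly many forms), and observes that a blow-up of $R$ contains $\Theta(m^{d})$ copies of any given $d$-edge of $R$; averaging then produces a $d$-edge $e \in G^{(d)}$ lying in $\ge \beta n^{ms-d}$ such blow-ups.

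I do not anticipate a real obstacle here: the only delicate points are that the reverse Markov step relies on the crude bound $X\le q^s$ (rather than on any structure of $P$), and that for \cref{lem:blow-up-support} one must keep track of the bounded number of labelled forms and of the $\Theta(m^{d})$ internal freedom, which is exactly what accounts for the drop in exponent from $ms$ to $ms-d$.
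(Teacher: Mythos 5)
Your proof is correct, and it is exactly the supersaturation argument that the paper invokes implicitly by citing Keevash's survey (Lemma~2.1 there) together with \cref{thm:erd64}; the paper does not spell out the details. Your windowing step, the reverse-Markov estimate giving an $\eps/2$ fraction of good windows, and the double-count dividing by $\binom{n-m}{q-m}^s$ are all standard and check out.
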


\begin{proof}[Proof of \cref{lem:blow-up-support}]
	Introduce $\beta'$ with $1/k,\,\eps,\, 1/s,\,1/m \gg \beta' \gg \beta$.
	Without loss of generality, we may assume that $G$ has $sn$ vertices.
	To begin, we select a random $n$-balanced partition $\cV$ of $V(G)$ with $s$ parts.
	Let $Q \subset P$ be the subgraph obtained by keeping only $\cV$-partite edges.
		{A standard probabilistic argument guarantees that $Q$ has at least $(\eps/2) n^s$ edges.}

	Note that every edge $Y \in Q$ corresponds to a labelled copy of a $[k]$-digraph $R \in \P$ on vertex set $[s]$.
	There are at most $2^{k{s}^k}$ choices for $R$.
	Hence we can find a specific $R \in \P$ and a subgraph $Q' \subset Q$, such that $Q'$ contains $2^{-k{s}^k}(\eps/2) n^{s}$ edges $Y$ that correspond (uniformly) to $R$.
	We apply \cref{thm:erd64-supersaturated} to $Q'$ to find $\beta' n^{ms}$ many $m$-balanced blow-ups of an edge.

	Note that each of these blow-ups corresponds to an $m$-balanced $\P$-blow-up.
	Moreover, each blow-up contains at least $m^d$ edges $e \in G^{(d)}$, since $R^{(d)}$ is non-empty for each $R \in \P$.
	We may therefore conclude with a basic averaging argument.
\end{proof}

We also record (without proof) the following consequence of \cref{thm:erd64}, that can be shown analogously.

\begin{lemma}[Rooted blow-ups]\label{lem:rooted-blow-ups}
	Let $1/k,\,  1/s \gg c \gg n$, $1 \leq m_1 \leq (\eps/2)n$ and $m_2 = \lfloor (\log m_1)^c \rfloor$.
	Let $G$ be an $n$-vertex $[k]$-digraph.
	Let $\P$ be a family of $[k]$-digraphs.
	Suppose that $G$ satisfies $(1,s)$-robustly $\P$.
	Let $\cV$ be an $m_1$-balanced set family in $V(G)$ with a single cluster.
	Then $G$ contains an $m_2$-balanced $\P$-blow-up $T(\cW)$ such that $\cW$ hits~$\cV$.
	\qed
\end{lemma}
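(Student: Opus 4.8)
The plan is to follow the proof of \cref{lem:connecting-blow-ups}, which is in fact simpler here because we are not required to manufacture an exceptional singleton cluster. Write $\cV = \{V_1\}$ with $|V_1| = m_1$, and let $P = \PG{G}{\P}{s}$; the hypothesis that $G$ satisfies $\P$ $(1,s)$-robustly gives $\delta_1(P) \ge (1 - 1/s^2)\binom{n-1}{s-1}$, so in particular $P$ has density at least $1 - 1/s^2$. First I would pass to the partite setting: choose pairwise disjoint $m_1$-sets $V_2, \dots, V_s \subseteq V(G) \setminus V_1$ uniformly at random (possible since $m_1 \le (\eps/2)n$), and let $Q \subseteq P$ be the $s$-partite $s$-graph with parts $V_1, \dots, V_s$ formed by the $(V_1, \dots, V_s)$-partite edges of $P$; note every edge of $Q$ is a transversal. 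The same routine first-moment/concentration argument as in \cref{lem:connecting-blow-ups} shows that with positive probability $\deg_Q(v) \ge \tfrac12 m_1^{s-1}$ for every $v \in V_1$; fixing such $V_2, \dots, V_s$ we get $e(Q) \ge \tfrac12 m_1^s$.

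Next, each edge $Y \in Q$ encodes a labelled copy on $[s]$ of some member of $\P$, and there are at most $2^{ks^k}$ labelled $[k]$-digraphs on $s$ vertices, so by pigeonhole there is a single $R \in \P$ and a subgraph $Q' \subseteq Q$ with at least $\eps' m_1^s$ edges, $\eps' = 2^{-ks^k-1}$, all corresponding uniformly to $R$. I would then apply \cref{thm:erd64} to the $s$-partite $s$-graph $Q'$ (density parameter $\eps'$): this produces constants $c', n_0'$ depending only on $k$ and $s$ such that, for $m_1 \ge n_0'$, $Q'$ contains a complete $m_2'$-balanced blow-up of an edge with $m_2' = c'(\log m_1)^{1/(s-1)}$. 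Since $1/s \gg c$ forces $c < 1/(s-1)$, we have $m_2 = \lfloor (\log m_1)^c \rfloor \le m_2'$ provided $m_1$ is large; discarding surplus vertices, we obtain sets $W_1 \subseteq V_1$ and $W_i \subseteq V_i$ ($2 \le i \le s$) of size exactly $m_2$ such that every transversal of $\cW = \{W_1, \dots, W_s\}$ is an edge of $Q'$. Put $T = R$.

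It remains to verify the conclusion. Since each transversal of $\cW$ lies in $Q'$ and hence carries the same labelled copy of $R$, for any $j$-tuple of distinct indices $(i_1, \dots, i_j)$ and any $w_i \in W_i$ the tuple $(w_{i_1}, \dots, w_{i_j})$ is an edge of $G$ if and only if $(i_1, \dots, i_j)$ is an edge of $R$; hence the $\cW$-partite edges of $G$ inside $\bigcup \cW$ are exactly those of $R(\cW)$, so $G[\cW] = R(\cW)$ and $T(\cW)$ is a genuine $m_2$-balanced $\P$-blow-up. Moreover $W_1 \subseteq V_1$ while $W_2, \dots, W_s$ avoid $V_1$, so $\cW$ hits $\cV$ with subfamily $\{W_1\}$, as required. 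I expect the one genuinely delicate point to be precisely this last verification — that the complete blow-up of an edge of $Q'$ is a $\P$-blow-up on the nose rather than merely a subgraph of one, which is exactly why we first restrict to a single isomorphism type $R$ — together with pinning down the routine probabilistic sampling; the degenerate regime of small $m_1$ poses no real difficulty and can be folded into the choice of constants.
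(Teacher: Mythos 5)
Your proof is correct and takes exactly the approach the paper has in mind: it mirrors the proof of \cref{lem:connecting-blow-ups} (randomly pass to a $\cV$-rooted $s$-partite subgraph of the property graph, pigeonhole down to a single isomorphism type $R\in\P$, apply \cref{thm:erd64}, then observe that the complete blow-up of an edge of $Q'$ gives $G[\cW]=R(\cW)$ on the nose), which is precisely what the paper intends when it records the lemma ``without proof\ldots\ shown analogously.'' The one place to be a touch more careful is the range $m_1 < n_0(\thm*{thm:erd64})$, where \cref{thm:erd64} does not apply directly; but there $\lfloor(\log m_1)^c\rfloor\le 1$ since $c\ll 1/s$, so a single $\cV$-partite edge of $P$ (which exists by $\delta_1(P)>0$) already furnishes the required blow-up, and this is exactly the folding-into-constants you gesture at.
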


\subsection{Simple blow-up covers}

The proof of \cref{lem:simple-blow-up-cover} is based on the following result, which has been adapted from earlier work~\cite[\S 4.2]{Lan23}.

\begin{lemma}[Almost blow-up cover]\label{lem:almost-blow-up-cover}
	Let $1/k,\,  1/s, \, \eta\gg c \gg 1/n$ and $1 \leq m \leq (\log n)^c$.
	Let  $G$ be an $n$-vertex $[k]$-digraph.
	Let $\P$ be a family of $[k]$-digraphs.
	Suppose that $G$ satisfies  $(1,s)$-robustly $\P$.
	Then all but at most~$\eta n$ vertices of $G$ may be covered with quasi $m$-balanced $\P$-blow-ups, each of order $s$.
\end{lemma}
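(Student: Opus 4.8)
The plan is to iterate the extraction of a single quasi $m$-balanced $\P$-blow-up as long as enough vertices remain uncovered, and to control the accumulated "waste". First I would set up an auxiliary constant $m_1$ with $1/s \gg c \gg 1/m_1 \gg 1/n$ and note that, since $G$ is $(1,s)$-robustly satisfying $\P$, the property $s$-graph $P = \PG{G}{\P}{s}$ has minimum $1$-degree at least $(1-1/s^2)\binom{n-1}{s-1}$; in particular $P$ is very dense. The key point is that this density (or even a much weaker density bound, like $\eps n^s$ edges) is robust under deleting a small number of vertices: if $W \subset V(G)$ with $|W| \leq \eta n$, then $P - W = \PG{G-W}{\P}{s}$ still has minimum $1$-degree at least $(1-1/s^2 - \eta s)\binom{n-|W|-1}{s-1}$, hence still at least $\eps' (n-|W|)^s$ edges. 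So throughout the iteration the leftover graph continues to satisfy the hypotheses needed to pull out another blow-up.

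The main step is: given any induced subgraph $G' = G - W$ on $n' \geq \eta n/2$ vertices with $G'$ satisfying $(1,s)$-robustly (or just "$\P$-dense enough"), extract one quasi $m$-balanced $\P$-blow-up of order $s$ using few vertices. This is exactly the partite-reduction-plus-Erdős argument already used in the proof of \cref{lem:connecting-blow-ups}: pick $s$ disjoint $m_1$-sets at random inside $V(G')$, restrict $P'$ to the partite edges they span to get a partite $s$-graph with $\geq (\eps'/2) m_1^s$ edges, pigeonhole on the $\leq 2^{ks^k}$ possible forms of the reduced digraph $R \in \P$ to fix one $R$, apply \cref{thm:erd64} to obtain an $m_2$-balanced blow-up of an edge with $m_2 = c'(\log m_1)^{1/(s-1)}$, and finally trim one cluster down to a singleton to make it quasi-balanced. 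Choosing the hierarchy so that $m \leq m_2 = c'(\log m_1)^{1/(s-1)}$ (which is possible since $m \leq (\log n)^c$ and $m_1$ is polynomial in $n$), this produces a quasi $m$-balanced $\P$-blow-up $R(\cW)$ of order $s$ on $(s-1)m + 1 \leq sm$ vertices.

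Now I would run the greedy loop: maintain a set $W$ of already-covered vertices, initially empty; while $|W| \leq (1-\eta)n$, apply the extraction step to $G - W$ (whose order is $\geq \eta n \geq \eta n/2$, so the hypotheses hold), obtain a new blow-up on at most $sm$ vertices disjoint from $W$, and add its vertices to $W$. Each iteration adds at most $sm = O((\log n)^{?})$ vertices and there are at most $n$ iterations, so the process terminates; when it halts we have $|W| > (1-\eta)n$, i.e. at most $\eta n$ vertices are uncovered, and all of $W$ is covered by vertex-disjoint quasi $m$-balanced $\P$-blow-ups of order $s$, as required. The one thing to double-check throughout — and the step I expect to be the only real (if minor) obstacle — is bookkeeping the robustness constant: I must verify that deleting up to $n$ vertices still leaves the minimum-degree (or edge-count) bound of $P-W$ strong enough to invoke \cref{thm:erd64}, which forces the hierarchy $\eta \ll 1/s$ and the use of a fixed slack (e.g. replacing $1-1/s^2$ by $\eps' = 1/2$ as the working density threshold, plenty for Erdős' theorem). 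Everything else is routine probabilistic deletion and pigeonholing exactly as in \cref{lem:connecting-blow-ups}.
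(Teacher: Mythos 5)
Your greedy loop has a genuine gap: the density robustness you claim only covers deletion sets $W$ with $|W| \leq \eta n$, while the loop is run until $|W|$ reaches $(1-\eta)n$, i.e.\ until only $n' = \eta n$ vertices remain. At that point the natural bound
$\deg_{P-W}(v) \geq \binom{n'-1}{s-1} - (1/s^2)\binom{n-1}{s-1}$
becomes vacuous, since $\binom{n-1}{s-1}/\binom{n'-1}{s-1} \approx (n/n')^{s-1}$ exceeds $s^2$ once $n'/n \leq s^{-2/(s-1)}$, which is much larger than $\eta$ in the hierarchy $1/s \gg \eta$. Equivalently, the number of $s$-sets contained in the residual $\eta n$ vertices is about $\eta^s\binom{n}{s}$, while the hypothesis permits up to $(1/s^2)\binom{n}{s}$ non-edges of $P$; since $\eta \ll 1/s$ gives $\eta^s \ll 1/s^2$, the entire residual could consist of non-edges, making $P-W$ empty and the extraction step inapplicable. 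Nothing in your argument rules out the greedy process stalling long before $(1-\eta)n$ vertices are covered, so ``the hypotheses hold'' in your loop is unjustified, and the working density threshold $\eps'=1/2$ is not preserved. (A secondary issue: your $m_1$ is introduced in a hierarchy as a constant, but later treated as polynomial in $n$; it must in fact grow with $n$ roughly like $\exp(m^{s-1})$ to make the $\poly(\log m_1)$ output of \cref{thm:erd64} at least $m$.)

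This is precisely why the proof first passes through the lower-regular tuple tiling of \cref{lem:almost-perfect-regular-tuple-tiling}: all but $\eta n/2$ vertices are partitioned into $(\eps,\eta)$-lower-regular $s$-tuples of linear size, and inside each such tuple lower-regularity \emph{by definition} guarantees that after deleting vertices down to an $\eps$-fraction of each part the remaining partite $s$-graph still has density at least $\eta-\eps$. That local, restriction-stable density guarantee — not a bare minimum-degree bound relative to the whole vertex set — is what keeps \cref{thm:erd64} applicable at every step of the iteration. So the regularisation step is not an optimisation; it is the structural input that makes the greedy extraction terminate correctly.
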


We defer the details of the proof of \cref{lem:almost-blow-up-cover} to \cref{sec:almost-blow-up-cover}.

\begin{proof}[Proof of \rf{lem:simple-blow-up-cover}]
	Introduce $c_1,c_2,c_3$ with $\eps,  1/s \gg c_1,\eta \gg c_2 \gg c_3 \gg 1/n$ as well as $\eta'$ with $\eta \gg \eta' \gg 1/n$.
	Then set $m_i = (\log n)^{c_i}$ for $i \in [3]$.
	By \cref{lem:almost-blow-up-cover}, all but at most $\eta' n$ vertices of $G$ may be covered with pairwise vertex-disjoint $m_1$-balanced $\P$-blow-ups $R_1(\cV_1),\dots,R_{p}(\cV_p) \subset G$, where each~$R_i$ has order $s$.
	The uncovered vertices, denoted by $W$, are added onto blow-ups in two steps.

	We first cover most vertices of $W$ as follows.
	While $|W| \geq \eta'n/m_1$, iteratively apply \cref{lem:rooted-blow-ups} to find vertex-disjoint blow-ups $T^1(\cW_1),\dots,T^{p}(\cW_q)$ such that each $\cW_i$ is an $m_2$-balanced set family that hits $\{W\}$.
	Each blow-up covers $(s-1)m_2$ vertices of $G-W$ and $m_2$ vertices of $W$.
	So the process requires at most $(s-1)|W|/m_2 \leq n/m_2$ vertices of $G-W$, which means that we can greedily choose these blow-ups pairwise vertex-disjoint.
	Moreover, since $(s-1)m_2 \leq \eta' m_1$, we can also ensure that no cluster of $\cV_1 \cup \dots \cup \cV_p$ is intersected in more than $(\eta/8)m_1$ vertices.
	Denote the remaining uncovered vertices by $U \subset W$.

	To finish the covering, we apply \cref{lem:rooted-blow-ups} to add each $u \in U$ onto a quasi $m_3$-balanced $\P$-blow-up $S^u(\cU_u)$, where $S^u$ has order $s$ and $\{u\} \in \cU_u$.
	(The requirements of the lemma are met by replacing $u$ with $\eps n $ copies of itself, which are dropped afterwards.)
	Note that this blow-up covers $(s-1)m_3$ vertices of $G-U$ and $1$ vertex of $U$.
	Since $(s-1)m_3|U| \leq \eta' n$, we can select the blow-ups $S^u(\cU_u)$ ($u \in W$) to be pairwise disjoint.
	Moreover, since $(s-1)m_3 \leq \eta' m_2$, we may ensure that any cluster of $\cV_1,\dots,\cV_p,\cW_1 \cup \dots \cup \cW_q$ is intersected in at most $(\eta/8)m_2$ vertices.

	We delete the vertices of each $\cW_i$ from each $\cV_j$, keeping the names for convenience.
	Afterwards, we delete the vertices from $\cU_\ell$ from each of $\cW_i$ and $\cV_j$, keeping again the names.
	Two issues remain.
	Firstly, the clusters of the families $\cW_i$ and $\cV_i$ are still much larger than the clusters of $\cU_i$.
	Secondly,
	we need to obtain quasi balanced blow-ups, for which we need to create exceptional vertices in those blow-ups.
	We address both of those issues as follows.
	First, we split each $\cW_i$ and each $\cV_j$ into $(1\pm \eta/2)((s-1)m_3)$-balanced families.
	(This can be done greedily.)
	Finally, we partition each of these $(1\pm \eta/2)((s-1)m_3)$-balanced families into $s$ quasi $(1\pm \eta)m_3$-balanced families.
	To do this, assuming the family is $X_1, \dotsc, X_s$, we can partition each $X_i$ into $s$ sets, one of size $1$, and the other ones of size as equal as possible, that is, of size $(1 \pm \eta)m_3$.
	Then we form $s$ new families by including exactly one singleton in each, and $s-1$ many $(1 \pm \eta)m_3$-sized sets.
	Hence every family is now quasi $(1\pm \eta)m_3$-balanced, and we can finish with $c=c_3$.
\end{proof}

\subsection{Covering most vertices with blow-ups}\label{sec:almost-blow-up-cover}

Here we show \cref{lem:almost-perfect-regular-tuple-tiling}.
We require the following notion of (weak) quasirandomness.
For  an $s$-graph~$P$, the \emph{density} of a tuple $(V_1,\dots,V_k)$ of pairwise disjoint vertex sets is
\begin{equation*}
	d_P(V_1,\dots,V_k) = \frac{e_P(V_1,\dots,V_k)}{|V_1|\dots|V_k|}\,.
\end{equation*}
We say that $(V_1,\dots,V_k)$ is \emph{$(\eps,d)$-lower-regular} if for $d=d_P(V_1,\dots,V_k)$ and all choices of $X_1\subset V_1,$ $\dots,$ $X_k \subset V_k$ satisfying $|X_1| \geq \eps |V_1|,\,\dots,\,|X_k| \geq \eps |V_k|$, we have
$ d_P(X_1,\dots,X_k)  \geq d- \eps$.
Denote by $\cQ(s,m,\eps,d)$ the set of $m'$-balanced $(\eps,d')$-lower-regular $s$-tuples with $d'\geq d$ and $m' \geq m$.

Our next lemma shows that one can partition most of the vertices of a hypergraph with large enough minimum degree into balanced lower-regular tuples, where we allow distinct tuples to have distinct part sizes.
This can easily be derived from the Weak Hypergraph Regularity Lemma (see, e.g.~\cite[\S 2]{KNR+10}), which, however, leads to tower-type dependencies on the constant hierarchy.
To avoid this, we use an alternative approach, which comes with milder constant dependencies and an overall shorter proof.

For an $s$-graph $P$ and a family $\cF$ of $s$-graphs, an \emph{$\cF$-tiling} is a set of pairwise vertex-disjoint $k$-graphs $F_1,\dots,F_\ell \subset H$ with $F_1,\dots,F_\ell \in \cF$.

\begin{lemma}[Almost perfect tiling]\label{lem:almost-perfect-regular-tuple-tiling}
	Let $1/s, \eta \gg d,\eps \gg \alpha \gg 1/n$ and $m=\alpha n$.
	Then every $n$-vertex $s$-graph $P$ with $\delta_1(P) \geq \left(1-1/s^2 \right) \binom{n-1}{s-1}$ contains a $\cQ(s,m,\eps,d)$-tiling that covers all but $\eta n$ vertices.
\end{lemma}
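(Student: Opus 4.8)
The plan is to pass to a bounded‑size \emph{reduced $s$-graph} and find a near‑perfect matching there, the point being that the hypothesis is a \emph{minimum}‑degree condition rather than an average one.

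First I would fix an auxiliary parameter $\eps^*$ with $1/s\gg\eta,\eps\gg\eps^*\gg\alpha$ and apply a weak‑regularity partition to $P$: write $V(P)=V_0\cup V_1\cup\dots\cup V_t$ with $|V_0|\le\eps^* n$, with $V_1,\dots,V_t$ equitable; here $t$ is bounded in terms of $\eps^*$ alone, hence $t\le 1/(2\alpha)$ by the choice of $\alpha$, so each $|V_i|\ge(1-\eps^*)n/t\ge m$. One also arranges that all but an $\eps^*$-fraction of the $s$-tuples of classes are $(\eps^*)$-lower‑regular. As the excerpt remarks, invoking the Weak Hypergraph Regularity Lemma here costs a tower‑type $t$; the paper instead produces such a partition by a direct sampling argument with much milder bounds, and this is the one genuinely technical ingredient.

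Next, define the reduced $s$-graph $\mathcal R$ on $\{1,\dots,t\}$ with $\{i_1,\dots,i_s\}\in\mathcal R$ iff $(V_{i_1},\dots,V_{i_s})$ is $(\eps^*)$-lower‑regular and of $P$-density at least $1/2$. The crucial observation is that $\delta_1(P)\ge(1-1/s^2)\binom{n-1}{s-1}$ transfers to $\mathcal R$ \emph{class by class}: fixing a class $z$ and $v\in V_z$, at most $\tfrac1{s^2}\binom{n-1}{s-1}$ of the $(s-1)$-subsets through $v$ are $P$-non‑edges; summing this over $v\in V_z$ and over the $(s-1)$-tuples of the remaining classes shows that at most $(\tfrac2{s^2}+o(1))\binom{t-1}{s-1}$ of the $s$-tuples through $z$ have density below $1/2$. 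Deleting the at most $\eta t/4$ classes lying in more than $\sqrt{\eps^*}\,\binom{t-1}{s-1}$ non‑regular $s$-tuples (there are few, since there are $\le\eps^* t^s$ non‑regular tuples in all), the induced reduced graph $\mathcal R'$ on $t'$ vertices has minimum degree at least $(1-\tfrac2{s^2}-\sqrt{\eps^*}-o(1))\binom{t'-1}{s-1}$. For $s\ge3$ this exceeds $(1-\tfrac1s+\tfrac{s-2}{2s^2})\binom{t'-1}{s-1}$, so the bound $\delta_{1}^{(s)}(\mat)\le 1-1/s$ of \cref{fact:matchingthresholds} gives a perfect matching in $\mathcal R'$ (or a near‑perfect one after discarding $<s$ classes to fix divisibility); for $s=2$ the minimum degree is $(\tfrac12-o(1))t'$, which again yields a matching missing only $o(t')$ vertices.

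Finally, for each edge $\{i_1,\dots,i_s\}$ of this matching I would truncate $(V_{i_1},\dots,V_{i_s})$ to a perfectly balanced $\ell$-tuple with $\ell\ge m$ by deleting at most one vertex per part. Lower‑regularity and density $\ge 1/2$ are inherited under this truncation and, since $\eps^*\le\eps$ and $1/2\ge2\eta$, imply $(\eps,2\eta)$-lower‑regularity; hence each truncated tuple lies in $\cQ(s,m,\eps,2\eta)$, and distinct ones are vertex‑disjoint. The uncovered vertices are exactly those of $V_0$, of the discarded classes, and of the $<s$ classes dropped for divisibility, totalling at most $\eps^* n+\eta n/4+o(n)\le\eta n$, which gives the desired tiling. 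The main obstacle is the first step — the mild weak‑regularity partition; everything after it is the short double‑count above, the quoted matching threshold, and the routine hereditarity of lower‑regularity under truncation.
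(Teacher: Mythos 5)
Your proof is correct, and you have already put your finger on the one real divergence from the paper: you invoke the Weak Hypergraph Regularity Lemma to obtain a regular partition in one shot, whereas the paper deliberately avoids it. Let me sharpen the comparison, since your characterisation of what the paper does instead is not quite right.

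The paper does not replace the Weak Regularity Lemma by a ``direct sampling argument.'' Its proof of \cref{lem:almost-perfect-regular-tuple-tiling} is an \emph{iterative bootstrap}: starting from the empty tiling, it repeatedly applies \cref{lem:larger-matching}, which on each pass (i)~chops all of $V(P)$---both the parts of the current tiling and the leftover vertices---into equal-sized chunks, (ii)~builds a reduced $s$-graph on these chunks, transfers the minimum $1$-degree exactly as you do, and finds a perfect matching via $\delta_1^{(s)}(\mat)\le 1-1/s$, and (iii)~inside each matching edge pulls out lower-regular tuples using only the single-tuple extractor \cref{lem:density-regular-tuple} (no global regular partition), plus a recycling step to retain most of the previous tiling. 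Each pass covers an extra $\Omega(\mu^2)n$ vertices and shrinks the part size by a factor of roughly $\exp(-\eps^{-2s})$, so after $O(1/\mu^2)$ passes the constant dependence of $\alpha$ on $(\eta,\eps,s)$ is only a bounded tower of exponentials, not the Wowzer-type tower that a weak regularity partition forces. So the two arguments share the same engine---reduced graph, degree transfer, matching, local regular-tuple finding---and the real structural difference is ``one-shot weak regularity'' versus ``incremental refinement with only a local regularity tool.''

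What your approach buys is brevity and a cleaner conceptual picture, at the price of tower-type dependence of $\alpha$ (and hence, propagating upward, of $1/n_0$) on the earlier constants. Since $\alpha$ sits at the bottom of the hierarchy $1/s\gg\eta,\eps\gg\alpha\gg1/n$, the lemma as stated is still true under your proof; you just get a worse value of $n_0$. The paper cares about this (see the ``Deregularisation'' remark in \cref{sec:conclusion}, where it is stressed that \cref{pro:blow-up-cover} works for $n$ only a constant-height tower), so your route would weaken the quantitative claims downstream even though the qualitative statements survive. Two minor points to tighten if you were to write this out in full: you should state explicitly that the weak-regularity partition can be taken to refine an initial equipartition into at least $s$ classes (so the matching theorem applies and divisibility can be fixed by dropping fewer than $s$ classes), and you should verify that truncating each matched $s$-tuple to the common minimum part size preserves lower-regularity with a slightly worse parameter---which holds because each part loses at most one vertex, so a subset of size $\ge\eps\ell$ of the truncated part still has size $\ge\eps^*|V_i|$ in the original.
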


Given this, we can easily derive the main result of this section.

\begin{proof}[Proof of \rf{lem:almost-blow-up-cover}]
	Introduce $d,\eps,\alpha$ with $1/s,\, \eta \gg d ,\,\eps \gg \alpha \gg c$ where $\eps = d/2$.
	By \cref{lem:almost-perfect-regular-tuple-tiling} applied with $\eta/2$ playing the rôle of $\eta$, the $s$-graph $P$ contains a $\cQ(s,\alpha n,\eps, d)$-tiling that covers all but $\eta n / 2$ vertices.
	Consider one of the $(\eps,d)$-lower-regular $s$-tuples $\cV=\{V_1,\dots,V_s\}$, which is $m'$-balanced for some $m' \geq \alpha n$.
	Note that each $\cV$-partite edge of~$P$ corresponds to some labelled $s$-vertex $[k]$-digraph in $\P$.
	So there is an $s$-vertex $[k]$-digraph $R_1 \in \P$, which appears at least $(d -\eps)2^{-ks^k} (m')^s$ times.
	By \cref{thm:erd64}, we may find a subgraph $R_1(m) \subset G$ with $\cV$-partite edges.
	We repeat this procedure to obtain pairwise vertex-disjoint blow-ups $R_1(m),\dots,R_{\ell}(m)$ until all but $\eps m'$ vertices are covered in each cluster of $\cV$.
	{This is possible, because the tuple $(V_1,\dots,V_s)$ is $(\eps, d)$-lower-regular, so the set of remaining vertices forms a dense partite $s$-graph which allows us to apply the previous argument again.
	After iterating this over all regular tuples, we have thus covered with blow-ups all but $\eps n \leq \eta n/2$ of those vertices that were covered by the tuples.
	Together with the at most $\eta n / 2$ vertices not covered by the tuples, this gives the desired result.}
\end{proof}

It remains to show \cref{lem:almost-perfect-regular-tuple-tiling}.
We shall use the following quasirandom analogue of \cref{thm:erd64}, which states that every (reasonably) large dense hypergraph admits a regular tuple of linear order.

\begin{lemma}[Regular tuple]\label{lem:density-regular-tuple}
	Let $1/s, \, d,\, \eps \gg 1/m$ and $\gamma = \exp(-\eps^{-2s})$.
	Let $P$ be an $m$-balanced $s$-partite $s$-graph with $e(P) \geq d m^s$.
	Then $P$ contains an $(\eps,d)$-lower-regular $m_1$-balanced $s$-tuple, where $\gamma m \leq m_1$.
\end{lemma}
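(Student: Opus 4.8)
The plan is an iterative refinement: starting from a balanced sub-tuple of density at least $d$, repeatedly pass to a slightly smaller balanced sub-tuple of strictly larger density whenever lower-regularity fails. Since densities lie in $[0,1]$ and each step gains a fixed amount, the process terminates at a lower-regular tuple, and the only quantitative point is to check that the part-sizes have not shrunk below $\alpha\eta m$.

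First I would record the elementary averaging identity that drives everything: if $Q$ is $s$-partite with parts $A_1,\dots,A_s$ and each $B_i\subseteq A_i$ is chosen uniformly at random of a prescribed size, then $\Exp\bigl[d_Q(B_1,\dots,B_s)\bigr]=d_Q(A_1,\dots,A_s)$, because every edge survives with the same probability $\prod_i |B_i|/|A_i|$. Applying this to $P$ with $|B_i|=\lfloor\eta m\rfloor$ yields a balanced sub-tuple $T_0=(V_1,\dots,V_s)$ of density at least $d$; this is the start of the iteration. (Throughout, $m$ is taken large enough that all the part-sizes below are $\gg 1$.)

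For the iteration step, suppose the current balanced tuple $T=(V_1,\dots,V_s)$ has part-size $\ell\ge\alpha\eta m$, density $\rho\ge d$, and fails to be lower-regular, witnessed by sets $Y_i\subseteq V_i$ with $|Y_i|\ge\eps\ell$ and $d_P(Y_1,\dots,Y_s)<\rho-\eps$. Using the averaging identity I would first shrink each $Y_i$ to size exactly $m':=\lceil\eps\ell\rceil$ while keeping the density below $\rho-\eps$. Writing $V_i=Y_i\cup Z_i$ with $Z_i=V_i\setminus Y_i$ and partitioning the edges of the $s$-partite graph on $V_1,\dots,V_s$ according to the $2^s$ corners $W\in\prod_i\{Y_i,Z_i\}$, the density is the convex combination $\rho=\sum_W\lambda_W\,d_P(W)$ with $\lambda_W=\prod_i|W_i|/\ell$; the corner $(Y_1,\dots,Y_s)$ has weight $(m'/\ell)^s\ge\eps^s$ and density below $\rho-\eps$, so averaging over the remaining corners produces some corner $(W_1,\dots,W_s)$ with $d_P(W_1,\dots,W_s)\ge\rho+\eps^{s+1}$. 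Each $|W_i|$ is either $m'$ or $\ell-m'$, hence at least $m'\ge\eps\ell$; a final random trimming (again via the averaging identity) converts this corner into a balanced tuple $T'$ of part-size $\mu\ge\eps\ell$ and density at least $\rho+\eps^{s+1}$, which becomes the next tuple.

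Finally I would collect the bounds. Each non-terminating step raises the density by at least $\eps^{s+1}$, so there are at most $\eps^{-(s+1)}+1$ of them, and the process halts at a balanced tuple that \emph{is} $(\eps,d)$-lower-regular: its density is still at least $d$, and every sub-tuple with parts of relative size at least $\eps$ has density at least that density minus $\eps$, hence at least $d-\eps$. Along the way the part-size is multiplied by at least $\eps$ per step, so the final part-size $m_1$ obeys $m_1\ge\eps^{\,\eps^{-(s+1)}+1}\lfloor\eta m\rfloor\ge\alpha\eta m$ for $\alpha=\exp(-\eps^{-2s})$, since $\bigl(\eps^{-(s+1)}+1\bigr)\log(1/\eps)\le\eps^{-2s}$ for small $\eps$ with room to spare; and trivially $m_1\le\eta m$. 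I expect the only real friction to be the bookkeeping that keeps every tuple exactly balanced — which is precisely what the averaging identity and the random trimmings handle — together with this last geometric-shrinkage calculation; the rest is routine.
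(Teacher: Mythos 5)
Your proof is correct and follows the paper's iterative density-increment strategy exactly: starting from a balanced $\eta m$-sized sub-tuple of density at least $d$, whenever lower-regularity fails you pass to a denser balanced sub-tuple whose part-size has shrunk by a factor $\Omega(\eps)$ while the density has risen by $\Omega(\eps^{s+1})$, and bounding the number of iterations by $\eps^{-(s+1)}$ gives the claimed $m_1 \geq \exp(-\eps^{-2s})\eta m$. The only cosmetic difference is in how the denser sub-tuple is extracted from a failing witness: you split each part into two pieces $Y_i,Z_i$, average over the $2^s$ corners, and then rebalance by a random trimming, whereas the paper partitions each part into $\eps^{-1}$ equal-sized pieces (one of which is $Y_i$) and selects one piece per side, so the resulting tuple is already balanced and no trimming is needed.
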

\begin{proof}
	Without loss of generality, we can assume that $m' =\eps m$ is an integer.
	(Otherwise, decrease~$\eps$ accordingly.)
	By assumption, the vertex set of $P$ consists of an $m$-balanced tuple $(V_1,\dots,V_s)$ of density $d_P(V_1,\dots,V_s)\geq d$.

	Suppose that $(V_1,\dots,V_s)$ is not $(\eps,d)$-lower-regular.
	Then there exists $X_1\subset V_1,$ $\dots,$ $X_s \subset V_s$ with $|X_1|,\dots,|X_s| \geq m'$ and $d_P(X_1,\dots,X_s) < d - \eps$.
	By averaging, we can find $Y_1\subset X_1,$ $\dots,$ $Y_s \subset X_s$ with $|Y_1|\,,\dots,\,|Y_s| = m'$ and $d_P(Y_1,\dots,Y_s)  < d - \eps$.
	Partition each $V_i \setminus Y_i$ into $m'$-sized sets; by adding $Y_i$ we obtain a partition $\mathcal{P}_i$ of $V_i$ into exactly $m/m' = \eps^{-1}$ many sets.
	Writing $t = \eps^{-1}$, there are $t^s$ many choices for a tuple $(Z_1, \dotsc, Z_s)$ of sets where $Z_i \in \mathcal{P}_i$.
	By averaging, we can find a tuple $(Z_1,\dots,Z_s)$ of parts such that $d_P(Z_1,\dots,Z_s) \geq d + \gamma$ for $\gamma = (\eps/2) (t)^{-s} = (\eps/2) \eps^{s}$.
	If $(Z_1,\dots,Z_s)$ is $(\eps,d)$-lower-regular, we are done.
	Otherwise, we iterate this process within $P[Z_1,\dots,Z_s]$.
	In each step, the density increases by $\gamma$ and the part size decreases by a factor of at most $\eps/2$.
	The process stops after at most $1/\gamma$ steps with part sizes of at least $(\eps/2)^{1/\gamma} m \geq \exp(-\eps^{-2s}) m$.
\end{proof}

We then derive \cref{lem:almost-perfect-regular-tuple-tiling} by iteratively applying the following result.
{We apply the lemma with {$\mu = 16 \eta$}, so that $1 - 1/s^2 \geq \th_{1}^{}(\mat_s)+ \mu$.}
In each step, we turn a $\cQ$-tiling into another $\cQ$-tiling with smaller parts such that additional $(\nu\mu/16)n = \eta^2n$ vertices are covered.
So we arrive at \cref{lem:almost-perfect-regular-tuple-tiling} after at most $\eta^{-2}$ steps.

\begin{lemma}[Regular tuple tiling increment]\label{lem:larger-matching}
	Let $ 1/s,\, \mu,\, \eta, \, d,  \,\eps \gg \alpha \gg 1/n$ with $d \leq \mu/16$ and $\eps \leq d/2$.
	Set $\gamma = \exp(-\eps^{-2s})$ and $m=\alpha n$.
	Let $\cQ_1 = \cQ(s,m,\eps,{d})$ and $\cQ_2 = \cQ(s, m',\eps',d')$,
	where $m' = (\gamma \eta \mu^2/128)m$,
	$\eps'= 2\eps$,
	 and $d' = d/2$.
	Let $P$ be an $s$-graph on $n$ vertices with $\delta_1(P) \geq \left(\th_{1}^{}(\mat_s)+ \mu \right) \binom{n-1}{s-1}$.
	Suppose that $P$ contains a $\cQ_1$-tiling $Q_1$ on $\lambda n$ vertices with $0 \leq \lambda \leq 1-\eta$.
	Then $P$ contains a $\cQ_2$-tiling on at least $(\lambda + \mu \eta /16) n$ vertices.
\end{lemma}

\COMMENT{
	Before we come to the proof of \cref{lem:larger-matching}, let us derive \cref{lem:almost-perfect-regular-tuple-tiling}.
	\begin{proof}[Proof of \cref{lem:almost-perfect-regular-tuple-tiling}]
		Set $\mu = 16 \eta$, and note that $1 - 1/s^2 \geq \th_{1}^{}(\mat_s)+ \mu$.
		Set $t = (\mu \eta/16)^{-1} = \eta^{-2}$.
		For $0 \leq i \leq t$, we set $d_i = (\mu/16)/2^i$, $\eps_i = d_i/2$, $\gamma_i = \exp(-\eps_i^{-2s})$, $\alpha_0 = 1$, $\alpha_i = \gamma_i \eta \mu^2/128$, $m_i = \alpha_i n$, $\lambda_0 = 0$ and $\lambda_i = \lambda_{i-1} + \eta^2$.
		We begin with $\cQ_1 = \cQ(s,\alpha_0 n,\eps_0,d_0)$ and an empty $\cQ_1$-tiling $Q_1$.
		We then apply \cref{lem:larger-matching} iteratively until all but $\eta n$ vertices are covered.
		So at the beginning of step $i$, we have a $\cQ_1$-tiling $Q_1$ on $\lambda_i n$ vertices in $P$, where $\cQ_{1} = \cQ(s,m_i,\eps_i,d_i)$.
		And at the end of step $i$, we have a $\cQ_{2,i}$-tiling on at least $(\lambda_i + \mu \eta /16) n = (\lambda_i + \eta^2)n = \lambda_{i+1} n$ vertices, where $\cQ_{2,i} = \cQ(s, m_{i+1},\eps_{i+1},d_{i+1})$.
		Clearly, this process stops after at most $t$ steps.
		So we can finish with a {$\cQ(s,\alpha_t n,\eps_t,d_t)$-tiling} that covers all but $\eta n$ vertices.
	\end{proof}
}

Let us briefly explain how we intend to prove \cref{lem:larger-matching}.
Our plan is to extract additional regular tuples from the $(1-\lambda)n \geq \eta n$ vertices not covered by $Q_1$.
We shall do this by using the assumption that the minimum degree is $\mu \binom{n-1}{s-1}$ above the threshold for a perfect matching.
To track these gains, we define $\nu = \mu/16$ and $\beta = \nu \eta /2$.
In a first step, we find a tiling $\cQ_{\text{fresh}}$ that covers $4\beta \eta n$ additional vertices outside of $Q_1$ and some vertices inside of $Q_1$.
In a second step, we `recycle' the remainder of $Q_1$ using another tiling $\cQ_{\text{rec}}$ such that $\cQ_{\text{fresh}} \cup \cQ_{\text{rec}}$ together cover all but $2\beta \eta n$ vertices in $Q_1$.
We note that the leftover comes from the fact that removing $\cQ_{\text{fresh}}$ leaves parts of $Q_1$ a bit unbalanced.
In any case, the relative difference $2\beta n = \mu \eta /16$ presents the share of additional covered vertices at the end of the argument.

Recall that the input part sizes are $m$ and $m'$.
Since the parts of $Q_1$ may be of different size, it will be convenient to use an intermediate part size.
We therefore define $m_1 = m$, $m_2 =  (\mu/8) m_1$ and $m_3 = m' = \gamma 2\beta  m_2$.
Our approach is then to refine the parts of $Q_1$, which have size at least $m_1$, into parts of uniform size $m_2$.
We remark that $m_2$ is chosen small enough to take advantage of the excess minimum degree $\mu \binom{n-1}{s-1}$.
Moreover, $m_3$ is chosen small enough to keep the number of lost vertices due to imbalancedness (discussed above) under $2\beta n$.
Now come the details.

\begin{proof}[Proof of \cref{lem:larger-matching}]
	We set $\nu = \mu/16$ and $\beta = \nu \eta /2$.
	Moreover, let $m_1 = m$, $m_2 =  (\mu/8) m_1$ and $m_3 = m' = \gamma 2\beta  m_2$.
	We begin by defining a family $\cU$ of disjoint sets in $V(P)$, as follows.
	For each $s$-tuple in $Q_1$ with parts $\{V_1, \dotsc, V_s\}$, pick a maximal number of vertex-disjoint sets of size $m_2$ inside each $V_i$, and add all of them to $\cU$.
	It follows that the sets of $\cU$ cover all but $m_2 = (\mu/8)m_1 \leq (\mu/8) |V_i|$ vertices in each $V_i$.
	Next, we extend $\cU$ by subdividing the vertices outside of $Q_1$ into a maximal number of vertex-disjoint $m_2$-sets such that $r = |\cU|$ is divisible by $s$.
	This implies that outside of $V(Q_1)$, the family $\cU$ covers all but at most $sm_2$ vertices.
	We remark that $1/s,\mu$ are much larger than $1/r$ by the choice of $\alpha$.
	Now, let $R$ be an $s$-graph with vertex set~$\cU$ and an edge $X$ if $P$ contains at least $4\nu m_2^{s}$ $X$-partite edges.
	{So $R$ plays the rôle of a reduced graph in the context of a Regularity Lemma.}
	It is therefore not surprising that $R$ inherits the minimum degree condition of $P$.
	Indeed, simple counting shows that $\delta_1(R) \geq \left( \th_{1}^{}(\mat_s)+\mu/4 \right) \binom{r-1}{s-1}$.\COMMENT{
	Fix a vertex $x$ in $R$.
	There are at most $r^{s-2} m_2^{s} \leq (\mu/4) m_2^{s} \binom{r-1}{s-1}$ edges of $G$, which have one vertex in the part of $x$ and more than two vertices in some part.
	(For the inequality, we used that $1/s,\mu$ are much larger than $1/r$.)
	Moreover, there are at most $(\mu/4) m_2^{s} \binom{r-1}{s-1}$ edges of $G$, which have one vertex in the part of $x$ and one vertex that is not covered by $\cU$.
	Put differently, there are $m_2^s (1-1/s+\mu/2) \binom{r-1}{s-1}$ edges in $G$, which are $\cU$-partite and have a vertex in $x$.
	Every edge of $R$ incident to $x$ can at host at most $m_2^{s}$ of these edges.
	Every non-edge of $R$ incident to $x$ can host at most $4\nu m_2^s$ of these edges.
	Thus $m_2^{s} \deg(x) + 4\nu m_2^s \binom{r-1}{s-1} \geq m_2^s (1-1/s+\mu/2) \binom{r-1}{s-1}$.
	Since $4\nu = \mu/4$, we can solve for $\deg(x)$ to obtain the desired bound.
	}
	Since $r$ is divisible by $s$, there is a perfect matching $\cM$ in~$R$.

	{We will construct a $\cQ_2$-tiling by finding tuples inside each edge $X \in \mathcal{M}$, as follows.
	Each edge $X$ of $\cM$ corresponds to an $s$-partite family $\{U_1, \dotsc, U_s\}$.
	We construct a $\cQ_2$-tiling in $X$ greedily, by finding one tuple after another.
	Note that as long as the number of used vertices in each part $U_i$ is at most $3\nu m_2$, there remain at least $\nu m_2^s \geq d m_2^s$ edges among the leftover vertices.
	(Here we used that $d \leq \mu/16 = \nu$.)
	Within the unused vertices in $X$, select subsets $U'_i \subseteq U_i$, for all $1 \leq i \leq s$, of size $\beta m_2$ each such that $d_P(U'_1, \dotsc, U'_s) \geq d$.
	(This can be done using an averaging argument).
	We apply \cref{lem:density-regular-tuple} with $\beta m_2$ playing the rôle of $m$, to obtain an $(\eps, d)$-lower-regular $\tilde m$-balanced $s$-tuple, where $m_3 =  2 \gamma \beta  m_2 \leq \tilde m \leq \beta m_2$.
	Note that this lower-regular $s$-tuple belongs to $\cQ_2$.
	We iterate this, to greedily find a $\cQ_2$-tiling on at least $3\nu s m_2$ vertices, and at most $3\nu s m_2 + \beta m_2$ vertices, in the $X$-partite subgraph of $P$ hosted by $\bigcup X$.}
	
	Denote the union of these `fresh' $\cQ_2$-tilings, over all $X \in \mathcal{M}$, by $\cQ_{\text{fresh}}$.
	Note that $\cQ_{\text{fresh}}$ covers at least $3\nu m_2$ and at most $(3\nu + \beta)m_2$ vertices in each part of $\cU$.
	Since there are at least $(1 - \lambda){(n-sm_2)}/m_2$ parts in $\mathcal{U}$ outside $V(Q_1)$, it follows that $\cQ_{\text{fresh}}$ covers at least ${3\nu (1 - \lambda)n - s} \geq 2\nu \eta n = 4\beta n$ vertices outside $V(Q_1)$.
	
	Next, we define a $\cQ_2$-tiling $\cQ_{\text{rec}} \subset P$ in $V(Q_1) \sm V(\cQ_{\text{fresh}})$ to `recycle' what is left of $Q_1$.
	Let $\{V_1, \dotsc, V_s\}$ be the parts of an ${\tilde m}$-balanced $(\eps,d)$-lower-regular $s$-tuple in $Q_1$.
	Recall that, $\cQ_{\text{fresh}}$ covers between $3\nu m_2$ and $(3\nu + \beta)m_2$ vertices in each part of $\cU$.
	This implies in particular that the difference of leftover vertices between any two clusters of $V_1,\dots,V_s$ is at most $\beta {\tilde m}$.
	We may therefore pick subsets $U_1 \subset V_1, \dotsc, U_s \subset V_s$ of size $m^\ast = (1 - 3\nu - \beta){\tilde m}$ among the uncovered vertices.
	Note that $(U_1,\dots,U_s)$ is still $(2\eps,d')$-lower-regular for $d' \geq d-\eps \geq d/2$ and thus in $\cQ_2$.
	Indeed, for all choices of $X_1\subset U_1,$ $\dots,$ $X_s \subset U_s$ of size at least $2\eps m^\ast \geq \tilde m$ each, we have
	$ d_P(X_1,\dots,X_s)  \geq d - \eps$ by $(\eps,d)$-regularity of $(V_1,\dots,V_s)$.
	Let $\cQ_{\text{rec}} \subset P$ be the $\cQ_2$-tiling obtained this way.
	It follows that $\cQ_{\text{rec}} \cup \cQ_{\text{fresh}}$ covers all but $\beta v(Q_1) = \beta \lambda n$ vertices of $V(Q_1)$.
	It follows that $\cQ_{\text{fresh}} \cup \cQ_{\text{rec}}$ covers at least $(1 - \beta)\lambda n + 4 \beta n \geq (\lambda + 4\beta  - 2\beta )n = (\lambda + \nu \eta)n$ vertices.
\end{proof}

\section{Robust Hamilton connectedness}\label{sec:robust-ham-connect}

In this section, we prove \cref{thm:framework-connectedness-robust}.
We require the following variation of \cref{lem:inheritance-minimum-degree} (Inheritance Lemma), whose proof follows from analogous probabilistic arguments (see \cref{sec:inheritance} for more details).

\begin{lemma}[Inheritance Lemma II]\label{lem:inheritance-minimum-degree2}
	For $1/k,\,1/r,\,\eps \gg 1/s \gg 1/n$ and $\delta \geq 0$.
	Let $G$ be an $n$-vertex $k$-graph, and let $D$ be a subset of $d$-sets in $V(G)$.
	Suppose that for each $e \in D$ there exists $G_e \subseteq G$ such that $\deg_{G_e}(e) \geq (\delta + \eps) \tbinom{n-d}{k-d}$.
	Let $P$ be the $s$-graph consisting of all $s$-sets $S$ such that, for all $e \in D$ contained in~$S$, $\deg_{G_e[S]}(e) \geq (\delta + \eps/2) \tbinom{s-d}{s-d}$.
	Then $\delta_{r}(P) \geq  (1-e^{-\sqrt{s}}    )  \tbinom{n-r}{s-r}$.\qed
\end{lemma}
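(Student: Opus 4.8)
The plan is to prove \cref{lem:inheritance-minimum-degree2} by the same first-moment-and-concentration scheme that underlies \cref{lem:inheritance-minimum-degree}, now run one $d$-set at a time. Fix an arbitrary $r$-set $R \subseteq V(G)$; it suffices to show that all but at most an $e^{-\sqrt{s}}$-fraction of the $s$-sets $S$ with $R \subseteq S$ lie in $P$. So let $S$ be a uniformly random such $s$-set. For a $d$-set $e \in D$, call $S$ \emph{$e$-bad} if $e \subseteq S$ but $\deg_{G_e[S]}(e) < (\delta + \eps/2)\tbinom{s-d}{k-d}$; then $S \notin P$ precisely when $S$ is $e$-bad for some $e \in D$. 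The argument splits into bounding $\Prob[S\text{ is }e\text{-bad}]$ for fixed $e$, and a union bound over $e \in D$.

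For the first part I would condition on $e \cup R \subseteq S$ and put $r' = |R \setminus e| \leq r$ and $W = V(G)\setminus(R\cup e)$. Then $S\setminus e = (R\setminus e)\cup T$ with $T$ a uniformly random $(s-d-r')$-subset of $W$, and $\deg_{G_e[S]}(e)$ equals the number of edges of the link $(k-d)$-graph $L_{G_e}(e)$ inside $(R\setminus e)\cup T$, hence is at least the number inside $T$. Since $\deg_{G_e}(e)\geq(\delta+\eps)\tbinom{n-d}{k-d}$, at most $r'\tbinom{n}{k-d-1}=O(r/n)\tbinom{n-d}{k-d}$ link edges meet $R\setminus e$, so $L_{G_e}(e)[W]$ has at least $(\delta+\eps-O(r/n))\tbinom{|W|}{k-d}$ edges, and each fixed $(k-d)$-set lies in $T$ with probability $\tbinom{s-d-r'}{k-d}/\tbinom{|W|}{k-d}$; multiplying out and using $r\ll s\ll n$ yields
\[
\Exp\bigl[\deg_{G_e[S]}(e)\mid e\cup R\subseteq S\bigr]\;\geq\;(\delta+\tfrac{3\eps}{4})\tbinom{s-d}{k-d},
\]
the losses being of order $(rk/s+1/n)\tbinom{s-d}{k-d}$, negligible against $\eps\,\tbinom{s-d}{k-d}$ by the hierarchy. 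For concentration, expose the vertices of $T$ one by one and consider the Doob martingale of $\deg_{G_e[S]}(e)$: re-sampling one exposed vertex changes the value by at most $\tbinom{s-d-1}{k-d-1}$ (the number of edges of $L_{G_e}(e)[S]$ through a fixed vertex), so this martingale has bounded differences $\tbinom{s-d-1}{k-d-1}$ over at most $s-d$ steps. By the Azuma--Hoeffding inequality, using $\tbinom{s-d}{k-d}=\tfrac{s-d}{k-d}\tbinom{s-d-1}{k-d-1}$,
\[
\Prob\bigl[\deg_{G_e[S]}(e)<(\delta+\tfrac{\eps}{2})\tbinom{s-d}{k-d}\mid e\cup R\subseteq S\bigr]\;\leq\;2\exp\!\bigl(-\tfrac{\eps^{2}(s-d)}{32(k-d)^{2}}\bigr)\;\leq\;e^{-2\sqrt{s}}
\]
for $s$ large in terms of $k$ and $\eps$.

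For the union bound, since $D$ consists of $d$-sets and $d\leq k$,
\[
\Prob[S\notin P]\;\leq\;\sum_{e\in D}\Prob[S\text{ is }e\text{-bad}]\;\leq\;e^{-2\sqrt{s}}\sum_{|e|=d}\Prob[e\subseteq S]\;=\;e^{-2\sqrt{s}}\tbinom{s}{d}\;\leq\;e^{-\sqrt{s}},
\]
using $\tbinom{s}{d}\leq s^{k}\leq e^{\sqrt{s}}$ for $s$ large. Thus at most $e^{-\sqrt{s}}\tbinom{n-r}{s-r}$ of the $s$-sets containing $R$ avoid $P$, and as $R$ was arbitrary we get $\delta_{r}(P)\geq(1-e^{-\sqrt{s}})\tbinom{n-r}{s-r}$, which is the claim. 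I do not anticipate a real obstacle; the one point that needs a little care is the bounded-differences estimate for sampling \emph{without} replacement, which the vertex-exposure martingale above handles directly (alternatively, couple to sampling with replacement, which is valid since $s=o(n)$).
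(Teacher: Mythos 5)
Your proof is correct and follows essentially the same route as the paper: fix an $r$-set $R$, draw a uniformly random $s$-superset $S$, control each $d$-set $e\in D$ via an Azuma--Hoeffding concentration bound for sampling without replacement, and union-bound over the (at most $\binom{s}{d}$) relevant $d$-sets inside $S$. The paper proves the analogous \cref{lem:inheritance-degree} with precisely this scheme (using \cref{lem:concentration} in place of the vertex-exposure martingale, which is the same estimate) and states that a straightforward modification yields \cref{lem:inheritance-minimum-degree2}; your argument is exactly that modification, carried out in full.
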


For a family $\cR$ of hypergraphs, an \emph{$\cR$-blow-up} is a blow-up whose reduced graph is in $\cR$.

\begin{proof}[Proof of \cref{thm:framework-connectedness-robust}]
	Given $k,{r}\leq s_1$, introduce constants $p_1,p_2,q_1,q_2,j_2,j_1$ with
	\[ \frac{1}{k},\frac{1}{r}, \frac{1}{s_1} \gg \frac{1}{p_1} \gg \frac{1}{p_2} \gg \frac{1}{q_1} \gg \frac{1}{q_2} \gg \frac{1}{j_2} \gg \frac{1}{j_1} \gg \frac{1}{s_2} \geq \frac{1}{s_3} \gg \frac{1}{n}. \]
	For brevity, let $\mathsf{Q} = \dcon \cap \dspa \cap \dape$.
	Since $\P$ is a family of $s_1$-vertex $k$-graphs which admits a Hamilton framework and $G$ satisfies $\P$ $s_1$-robustly,
	we can apply \cref{pro:framework-undirected-to-directed} to find  an $n$-vertex $[k]$-digraph $G' \subset \ori{C}(G) \cup \ori{C}(\partial G)$ that $p_1$-robustly satisfies $\mathsf{Q}$.
	By \cref{corollary:booster2}, $G'$ satisfies $(1 - \exp(-\sqrt{p_2}), {r}, p_2)$-robustly $\Del_k(\mathsf{Q})$.

	We introduce $\beta$ such that $1/k, 1/j_1, 1/p_2 \gg \beta \gg 1/s_2$.
	We apply \cref{lem:blow-up-support} (with $\Del_k(\mathsf{Q})$ playing the role of $\P$) to reveal that there are at least $\beta n^{k-1}$ edges $e \in G'^{(k-1)}$ such that there are $\beta n^{j_1p_2-(k-1)}$ many $j_1$-balanced $\Del_k(\mathsf{Q})$-blow-ups in $G'$ that contain $e$ as an edge.
	Let $C$ be obtained from $G'$ by deleting all $(k-1)$-edges that do not have this property.
	Fix $s_2 \leq {s} \leq s_3$.
	We claim that $C$ satisfies  ${s}$-robustly $\hamcon$.

	Let $P = \PG{G'}{\mathsf{Q}}{p_2}$.
	Set $Q = \PG{P}{\DegF{{r}}{1-1/p^2_2}}{{s}}$.
	In other words, $Q$ is the ${s}$-graph with vertex set $V(G)$ and an ${s}$-edge $S$ whenever the induced $p_2$-graph $P[S]$ has minimum ${r}$-degree at least $\left(1 - 1/p^2_2\right) \binom{{s}-{r}}{p_2-{r}}$.
	Since $G'$ satisfies $\Del_k(\mathsf{Q})$ $(1 - \exp(-\sqrt{p_2}), {r}, p_2)$-robustly, we have in particular that $\delta_{{r}}(P) \geq (1 - \exp(-\sqrt{p_2})) \tbinom{n - {r}}{p_2 - {r}}$.
	Hence, by \cref{lem:inheritance-minimum-degree}, it follows that $\delta_{{r}}(Q) \geq (1-1/(2{s}^2)) \tbinom{n-{r}}{{s}-{r}}$.

	Given an ${s}$-set $S \subset V(Q)$ and $e \in C^{k-1}$, we say that $S$ is \emph{$e$-tracking} if there is a $j_1$-balanced $\Del_k(\mathsf{Q})$-blow-up in $C[S] \cup \{e\}$ that contains $e$ as an edge.
	We say that $S$ is \emph{tracking} if it is $e$-tracking for all $e \in C^{k-1}[S]$.
	Finally, we call an ${s}$-set $S \subset Q$ \emph{bueno} if $C^{k-1}[S]$ has at least two disjoint edges.
	Denote by $Q' \subset Q$ the $n$-vertex subgraph of tracking and bueno edges.

	\begin{claim}
		We have $\delta_{{r}}(Q') \geq  (1-1/{s}^2)   \tbinom{n-{r}}{{s}-{r}}$.
	\end{claim}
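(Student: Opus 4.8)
The plan is to fix an arbitrary $r$-set $R$ and bound the number of $s$-sets $S\supseteq R$ that lie in $Q$ but fail to be in $Q'$, i.e.\ that are in $Q$ but not tracking or not bueno. Since we have already shown $\delta_r(Q)\ge(1-1/(2s^2))\binom{n-r}{s-r}$, it suffices to prove that at most $\frac{1}{16s^2}\binom{n-r}{s-r}$ of the $s$-sets containing $R$ are not tracking, and that at most $\frac{1}{16s^2}\binom{n-r}{s-r}$ of them are not bueno; summing the three error terms then yields $\delta_r(Q')\ge(1-1/s^2)\binom{n-r}{s-r}$. Throughout I will work with a uniformly random $s$-set $S\supseteq R$; as $r$ is tiny compared with $s$, conditioning on $R\subseteq S$ only perturbs lower-order terms in every estimate below.

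I record the two density facts driving the argument. First, by construction $C^{(k-1)}$ is exactly the set of $(k-1)$-edges of $G'$ with the blow-up-support property supplied by \cref{lem:blow-up-support}, so $|C^{(k-1)}|\ge\beta n^{k-1}$, and for every $e\in C^{(k-1)}$ the family $\mathcal L_e$ of $(j_1p_2-(k-1))$-tuples that together with the vertices of $e$ span a $j_1$-balanced $\Del_k(\mathsf{Q})$-blow-up satisfies $|\mathcal L_e|\ge\beta n^{\,j_1p_2-(k-1)}$; here one also checks, using that $k-1\le r$ together with \cref{fact:monotone-degrees} and a further application of \cref{lem:blow-up-support} at the $(k-1)$-level, that such blow-ups can be taken inside $C$ rather than merely inside $G'$. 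Second, both of the bounds we need follow from these facts together with a single concentration principle: if $\mathcal F$ is a family of $w$-tuples from $[n]$ of density at least $\beta$, then with probability $1-\exp(-\Omega_{\beta,w}(s))$ a random $s$-set contains at least $\tfrac{\beta}{2}\binom{s}{w}$ members of $\mathcal F$. This is proved by the Doob martingale that reveals the $s$ vertices of $S$ one at a time — each revealed vertex changes the conditional expectation of the count by at most $O(s^{w-1})$ — followed by Azuma--Hoeffding over $s$ steps; since $s$ is enormous relative to $\log n$, $1/\beta$ and $p_2$, the resulting exponential tail is far below $1/s^2$.

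For the \emph{bueno} estimate I apply this with $w=k-1$ to $Z=e(C^{(k-1)}[S])$, whose mean is $\Omega_{\beta,k}(s^{k-1})$, obtaining $\Pr[Z\le(k-1)s^{k-2}]\le\exp(-\Omega(s))\le\frac{1}{16s^2}$. As a $(k-1)$-graph with matching number at most $1$ has at most $(k-1)\binom{s}{k-2}\le(k-1)s^{k-2}$ edges, $Z>(k-1)s^{k-2}$ forces two disjoint edges in $C^{(k-1)}[S]$, i.e.\ $S$ is bueno; hence at most $\frac{1}{16s^2}\binom{n-r}{s-r}$ of the $s$-sets $S\supseteq R$ are not bueno. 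For the \emph{tracking} estimate, fix $e\in C^{(k-1)}$ with $e\subseteq S$; then $S$ is $e$-tracking precisely when $S\setminus e$ contains a member of $\mathcal L_e$, so applying the concentration principle with $w=j_1p_2-(k-1)$ to $\mathcal L_e$ gives $\Pr[S\text{ not }e\text{-tracking}\mid e\subseteq S]\le\exp(-\Omega(s))$ uniformly in $e$. A union bound over $e\in C^{(k-1)}$, together with $\sum_{e\in C^{(k-1)}}\Pr[e\subseteq S]=\mathbb E[e(C^{(k-1)}[S])]\le s^{k-1}$, then yields $\Pr[S\text{ not tracking}]\le s^{k-1}\exp(-\Omega(s))\le\frac{1}{16s^2}$, so at most $\frac{1}{16s^2}\binom{n-r}{s-r}$ of the $s$-sets containing $R$ are not tracking. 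Combining the two error bounds with the bound for $Q$ proves the claim.

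The only genuinely delicate step is the second density fact — establishing $|\mathcal L_e|\ge\beta n^{\,j_1p_2-(k-1)}$ for blow-ups lying inside $C$, not merely inside $G'$ — since this is the sole point where the passage from $G'$ to $C$ must be controlled; the rest is a routine union bound plus a McDiarmid-type concentration inequality whose exponential decay comfortably dominates the polynomial loss $1/s^2$ because $s$ was chosen huge relative to $\log n$ and the remaining parameters.
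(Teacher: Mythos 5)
Your proof follows essentially the same strategy as the paper: bound, for a fixed $r$-set, the sets that are not in $Q$, those that are not bueno, and those that are not tracking, then sum the three error terms. For the bueno count the paper invokes \cref{lem:inheritance-minimum-degree} (with $r=d=\delta=0$) to show that most $S$ have $C[S]^{(k-1)}$ dense enough to exceed the maximum size of an intersecting $(k-1)$-graph; you re-derive the same fact directly from a Doob martingale and Azuma--Hoeffding, which is exactly what underlies the inheritance lemma. For the tracking count the paper applies \cref{lem:inheritance-minimum-degree2} to the auxiliary $j_1p_2$-uniform hypergraph of blow-ups; your union bound over $e\in C^{(k-1)}$, estimated via $\sum_e \Pr[e\subseteq S]=\Exp[e(C^{(k-1)}[S])]\le s^{k-1}$, is precisely the mechanism inside that lemma's proof. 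So the two arguments are the same modulo whether the concentration machinery is invoked as a black box.

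Two small remarks. First, the phrase ``since $s$ is enormous relative to $\log n$'' is inaccurate and unnecessary: in the constant hierarchy of \cref{thm:framework-connectedness-robust}, $s$ is fixed while $n\to\infty$, so $s\ll\log n$; what the argument actually needs (and has) is that $\exp(-\Omega(s))\cdot s^{k-1}\le 1/(16s^2)$, which only requires $s$ large relative to $k$, $1/\beta$ and $j_1p_2$, not relative to $\log n$. The union bound never ranges over $\Omega(n)$ events thanks to the expectation trick, so no $\log n$ factor ever appears. Second, you flag as the ``genuinely delicate step'' the need to upgrade the blow-ups of $\mathcal{L}_e$ from living in $G'$ to living in $C$, and propose an extra application of \cref{lem:blow-up-support} at the $(k-1)$-level for this. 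This is a fair observation about a subtlety the paper glosses over: the paper's Definition of $e$-tracking asks for a blow-up inside $C[S]\cup\{e\}$, while the auxiliary hypergraph $H$ in the paper's proof records blow-ups in $G'$. In the paper that tension is immaterial because the downstream use of tracking only needs the blow-up's cluster family (fed into \cref{lem:connecting-blow-ups}) and its $k$-edges, and $C$ and $G'$ coincide on $k$-edges by construction; but your reading of the definition as stated is correct and your proposed patch is a legitimate (if stronger than needed) way to close that cosmetic gap. Neither remark affects the validity of your argument.
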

	\begin{proofclaim}
		Fix a set of ${r}$ vertices $D$.
		Since $\beta \gg 1/{s}$, by \cref{lem:inheritance-minimum-degree} (applied with $r=0$, $d=0$ and $\delta=0$) we deduce that there are at most $1/(4s^2)\binom{n - {r}}{{s} - {r}}$ sets $S$ containing $D$ for which the induced $[k]$-digraph $C[S]$ has fewer than $(\beta/2) {s}^{k-1}$ $(k-1)$-edges.
		In the opposite case, $C[S]$ has at least $(\beta/2) {s}^{k-1} > \binom{{s}-1}{k-2}$ edges, and hence has at least two disjoint edges.
		In other words, the ${s}$-graph of bueno edges has minimum ${r}$-degree at least $(1 - 1/(4s^2))\binom{n - {r}}{{s} - {r}}$.

		Now we wish to apply \cref{lem:inheritance-minimum-degree2} in an auxiliary hypergraph.
		Let $H$ be the $j_1 p_2$-uniform hypergraph with an edge for every $j_1$-balanced $\Del_k(\mathsf{Q})$-blow-up with $p_2$-uniform edges in $G'$.
		By construction, each $e \in D$ belongs to at least $\beta n^{j_1p_2-(k-1)} \geq 2 \beta \binom{n - (k-1)}{j_1 p_2 - (k-1)}$ edges of $H$.
		We apply \cref{lem:inheritance-minimum-degree2} in $H$, with $j_1p_2$, ${s}$, $\beta$, $C$ playing the rôles of $k$, $s$, $\varepsilon$, $D$.
		We obtain that the ${s}$-uniform graph of tracking edges has minimum ${r}$-degree at least $(1 - 1/(4 s^2)) \binom{n - {r}}{{s} - {r}}$.

		Together with the above and the fact that $\delta_{{r}}(Q) \geq (1 - 1/(2s^2))\binom{n - {r}}{{s} - 2k}$, this confirms the claim.
	\end{proofclaim}

	Consider an edge $S \in Q'$, and let $D=C[S]$.
	Let $g,f \in D^{(k-1)}$ be disjoint, which exist since $S$ is bueno.
	To finish, we show that $D$ contains a Hamilton $(g,f)$-path.

	Since $Q' \subseteq Q$, from the definition of $Q$ we get that $\mathsf{Q}$ is $p_2$-robustly satisfied by~$D$.
	Using $1/k, 1/p_2 \gg 1/q_1 \gg 1/q_2 \gg 1/{s}$, we can apply \cref{lem:booster} twice and get that $D$ satisfies $\Del_{k}(\mathsf{Q})$ both $(1,q_1)$-robustly and $(2q_1,q_2)$-robustly.
	Introduce new constants $c, \eta$ such that $1/q_2 \gg c, \eta \gg 1/j_2$ hold, and let $m_1 = (\log {s})^{c}$ and $m_2 = (\log m_1)^{c}$.
	By \cref{pro:blow-up-cover} with ${s}$ playing the rôle of $n$, $D$ has a $(q_1,q_2)$-sized $(m_1,m_2,\eta)$-balanced $\Del_{k}(\mathsf{Q})$-cover whose shape $F$ is a path.

	By definition of $Q'$, we have that $S$ is $g$-tracking.
	Hence, there is a $p_2$-sized $j_1$-balanced $\P$-blow-up $R^g(\cV^g)$ that contains $g$ as an edge.
	Let $x$ be the first vertex of $F$ with corresponding blow-ups $R^{x}(\cV^{x})$.
	Select a set of size $j_1$ inside each set of the family $\cV^{x}$; let $\cV^{x}_2$ be those sets.
	This can be done keeping the sets disjoint from the vertices of $R^g(\cV^g)$.
	In this way, we obtain a $j_1$-balanced $q_1$-sized blow-up $R^{x}(\cV_2^{x})$, so that
	$\cV_2^{x} \cup \cV^g$ is a $j_1$-balanced family with $p_2 + q_1 \leq 2q_1$ clusters.
	We apply \cref{lem:connecting-blow-ups} with $\cV_2^{x} \cup \cV^g$ in place of $\cV$.
	This gives a $j_2$-balanced $\P$-blow-up $R^{gx}(\cW^{gx})$ such that $\cW^{gx}$ hits $\cV^g$ with $\cW^{gx}_{g} \subset \cW^{gx}$ and $\cV^{x}$ with $\cW^{gx}_x \subset \cW^{gx}$.
	We repeat the same process with $f$ to obtain $\P$-blow-ups $R^f(\cV^f)$ and  $R^{fy}(\cW^{fy})$, where $y$ is the last vertex of the path $F$.

	Note that these four blow-ups might intersect each other (quite a lot) outside of the dedicated hitting areas.
	To fix this, we may select subfamilies that are $j_1'$-balanced and $j_2'$-balanced, respectively, and vertex-disjoint (except for the hitting areas), keeping the names for convenience.
	This can be done with  $j_1' = j_1/3$ and $j_2' = j_2/3$ using a random partitioning argument.
	We delete the vertices of these additional four blow-ups from the cover  $(\{\cV^v\}_{v \in V(F)},  \{\cW^{e}\}_{e \in F}))$, again keeping the names for convenience.
	Since the additional blow-ups have only $2q_2(j_1'+j_2')$ vertices, $(\{\cV^v\}_{v \in V(F)},  \{\cW^{e}\}_{e \in F}))$ is still $(m_1,m_2,2\eta)$-balanced.
	Let $F'$ be the path obtained from $F$ by extending its ends with two vertices $g$ and $f$.

	To finish, we identify the vertices of $F$ with $1,\dots,\ell$ following the order of the path.
	For each $i \in V(F)$, we select disjoint edges $e_i,f_i \in R^{i}(\cV^{i})$ with $e_1 = f$ and $e_\ell = g$.
	We then apply \cref{pro:allocation-Hamilton-path} to find a \tight Hamilton $(f_i,e_{j})$-path $P^{ij}$ in $R^{ij}(\cW^{ij})$ for each $i \in [\ell]$ and $j=i+1$.
	Note that the families $\cV^{i}$ are still $(1\pm 2\eta)m_1$-balanced (resp. $(1\pm 2\eta)j_1'$-balanced) after deleting the vertices of these paths.
	We may therefore finish by applying \cref{pro:allocation-Hamilton-path} to find a \tight Hamilton $(e_i,f_{i})$-path $P^{i}$ in $R^{i}(\cW^{i})$ for each $i \in [\ell]$ with index computations modulo $\ell$.
\end{proof}

\section{Hamilton cycle allocation}\label{sec:allocation-cycle}

In this section, we allocate Hamilton cycles to suitable blow-ups.
Every \tight Hamilton cycle contains an almost perfect matching as a subgraph (or perfect if its order is divisible by $k$).
We thus begin our allocation with an analogous result for perfect matchings.
Afterwards, we embed the full cycle, proving \cref{pro:allocation-Hamilton-path}.

\subsection{Perfect matchings}

To allocate a perfect matching into a suitable blow-up, we require a few additional concepts introduced by Keevash and Mycroft~\cite{KM15}.
Our exposition follows a setup from the context of perfect tilings~\cite{Lan23}.
For a $k$-digraph $G$ and an edge $e \in G$, we denote by $\vn_e \in \NATS^{V(G)}$ the \emph{indicator vector}, which takes value $1$ at index $v$ if $v \in e$.\COMMENT{When writing $\INTS^{V(G)}$, we implicitly fix an ordering of $V(G)$. But this does not matter for the arguments.}
We set $\vn_v = \vn_e$ for $e=(v)$.
The \emph{lattice} of $G$ is the additive subgroup $\cL(G) \subset \INTS^{V(G)}$ generated by the vectors $\vn_e$ with $e \in G$.
We say that $\cL(G)$ is \emph{complete} if it contains all $\vecb b \in \INTS^{V(G)}$ for which $\sum_{v \in V(G)} \vecb b(v)$ is divisible by $k$.
Denote by $\udiv$ the set of uniform digraphs with complete lattice.
It is convenient to characterise lattice completeness as follows.
A \emph{transferral} in a lattice $\cL(G)$ is a vector of the form $\vn_v - \vn_u$ with $u,v \in V(G)$.

\begin{observation}\label{cor:lattice}
	The lattice of a  $k$-digraph is complete if and only if its lattice contains all transferrals.
\end{observation}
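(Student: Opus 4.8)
The plan is to prove the two implications separately; the forward direction is immediate and the reverse one needs only a short manipulation of integer vectors. For ``$\Rightarrow$'': if $\cL(G)$ is complete, then every transferral $\vn_v-\vn_u$ has coordinate sum $1-1=0$, which is divisible by $k$, so it lies in $\cL(G)$.

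For ``$\Leftarrow$'', suppose $\cL(G)$ contains every transferral; I may assume $G$ has an edge (the edgeless case being trivial). First I would show $k\vn_w \in \cL(G)$ for every vertex $w$. Fix an edge $e_0 \in G$ with vertex set $\{v_1,\dots,v_k\}$, so that $\vn_{e_0} = \vn_{v_1}+\dots+\vn_{v_k} \in \cL(G)$; adding to it the transferrals $\vn_w-\vn_{v_1},\dots,\vn_w-\vn_{v_k}$ one at a time (each in $\cL(G)$ by hypothesis) turns $\vn_{e_0}$ into $k\vn_w$. Here it matters that an edge has $k$ \emph{distinct} vertices, so the $\vn_{v_i}$ can be cancelled successively.

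Finally I would check that the transferrals together with the vectors $k\vn_w$ already generate the complete lattice. Given $\vecb b \in \INTS^{V(G)}$ with $\sum_{v \in V(G)}\vecb b(v) = km$ for some $m\in\INTS$, fix any $w_0 \in V(G)$; then
\[
\vecb b - \Big(\sum_{v \in V(G)}\vecb b(v)\Big)\vn_{w_0} \;=\; \sum_{v\in V(G)} \vecb b(v)\,(\vn_v-\vn_{w_0})
\]
is an integer combination of transferrals and hence lies in $\cL(G)$, while $\big(\sum_{v \in V(G)}\vecb b(v)\big)\vn_{w_0} = m\,(k\vn_{w_0}) \in \cL(G)$ by the previous step. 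Adding the two gives $\vecb b \in \cL(G)$, so $\cL(G)$ is complete. I do not expect a genuine obstacle here: the argument is entirely routine, the only mild subtleties being the ``peeling'' step just mentioned and the need to set aside the edgeless case.
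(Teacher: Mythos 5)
Your argument is correct and is essentially the same as the paper's: both fix an edge $e$ and a reference vertex $w$, use transferrals to convert $\vn_e$ into $k\vn_w$, and then redistribute via further transferrals to obtain any $\vecb b$ with coordinate sum divisible by $k$. The only difference is cosmetic: you isolate the intermediate step ``$k\vn_w\in\cL(G)$'' as a lemma, whereas the paper packs the whole computation into a single displayed identity.
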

\begin{proof}
	Let $G$ be a $k$-digraph with $V=V(G)$.
	Clearly, if the lattice $\cL(G)$ is complete, then it contains all transferrals.
	For the other direction, consider a vector $\vecb b \in \INTS^{V}$ for which $\sum_{v \in V} \vecb b(v)$ is divisible by $k$.
	Fix a vertex $w \in V$ and an edge $e$ of $G$.
	(Such an edge exists since $G$ has a complete lattice.)
	Consider $\mathbf {b} \in \INTS^{V}$ with $\sum_{v \in V} \vecb b(v) = qk$.
	Informally, we assemble $\vecb b$ from transferrals as follows.
	We begin with $q$ copies of $e$ and transfer their weight to $w$.
	Then, we redistribute the accumulated weight of $w$ according to the demands of $\vecb b$.
	Formally,
	\begin{equation*}
		q  \vn_e +   q \sum_{v\in e}  (\vn_{w} - \vn_{v}) + \sum_{v \in V} \mathbf {b}(v) (\vn_v - \vn_{w}) = \mathbf {b}\,.
	\end{equation*}
	Since $\cL(H)$ contains all transferrals, this implies $\mathbf {b} \in \cL(H)$, and hence $\cL(G)$ is complete.
\end{proof}

Recall that in a \tightly connected $k$-digraph every two edges are on a common closed \tight walk.
The following result allows us to find a walk of controlled length that visits all edges.
The proof is a straightforward, if somewhat tedious application of the pigeonhole principle.

\begin{lemma}[{\cite[Proposition 7.3]{LS23}}]\label{lem:tight-walk}
	Let $G$ be a \tightly connected $[k]$-digraph on $s$ vertices and $1 \leq q \leq k$.
	Then $G$ contains a \tight closed walk of order at most $k^{2s}$ that visits every edge.
	Moreover, if $R \in \ape$, then there exists such a walk of order $q \bmod k$.
\end{lemma}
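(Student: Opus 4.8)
The plan is to first produce \emph{some} closed tight walk through every edge of $G^{(k)}$, then prune it by pigeonhole, and finally---for the refinement---adjust its length modulo $k$. Throughout, the \emph{windows} of a closed tight walk $(v_1,\dots,v_\ell)$ are the $\ell$ blocks of $k$ cyclically consecutive vertices, each of which is an edge of $G^{(k)}$.

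If $G^{(k)}=\es$ there is nothing to prove, and since a single edge lies on no closed tight walk, tight connectivity forces $|E(G^{(k)})|\ge 2$. For any two edges tight connectivity supplies a common closed tight walk; given closed tight walks $W_1$ through $\{e_1,e_2\}$ and $W_2$ through $\{e_2,e_3\}$, one may \emph{splice} $W_2$ into $W_1$ at a window equal to $e_2$ (traverse $W_2$ once, starting and ending at its $e_2$-window, then resume $W_1$); the two seam windows both equal $e_2$, so the result is a valid closed tight walk through $e_1,e_2,e_3$. Iterating over all edges yields a closed tight walk visiting every edge, and among such \emph{comprehensive} walks I would fix one of minimum order, $W=(v_1,\dots,v_L)$. (Equivalently: one seeks a short closed spanning walk in the digraph on $E(G^{(k)})$ recording which edge may follow which, which is strongly connected by tight connectivity.)

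To bound $L$, the key point is that what a tight walk does $k$ steps ahead is constrained only by its last $k-1$ vertices, and there are at most $s^{k-1}$ ordered $(k-1)$-tuples of distinct vertices. Pick for each $e\in G^{(k)}$ a position $p_e$ whose window is $e$; the $N:=|E(G^{(k)})|$ positions $p_e$, listed cyclically as $q_1,\dots,q_N$, split $W$ into $N$ arcs. Were some arc from $q_j$ to $q_{j+1}$ longer than $s^{k-1}+k$, then among its positions that are at least $k-1$ steps after $q_j$ (more than $s^{k-1}$ of them) two, say $a<b$, would carry the same $(k-1)$-tuple $(v_{a-k+2},\dots,v_a)=(v_{b-k+2},\dots,v_b)$; deleting $v_{a+1},\dots,v_b$ would then give a strictly shorter closed tight walk---each window straddling the new junction coincides with one already present in $W$, precisely because those $(k-1)$-tuples agree---which still visits every edge, since the $(k-1)$-step buffer keeps every $p_e$ out of the deleted block, contradicting minimality. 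Hence $L\le N(s^{k-1}+k)\le s^{k}(s^{k-1}+k)$, which a routine computation bounds by $k^{2s}$ (tightest when $s=k$, where $N$ and $s^{k-1}$ are in fact only $k!$). This gives the first assertion. For the refinement, assume $G\in\dape$ and fix $1\le q\le k$; let $W^\ast=(w_1,\dots,w_m)$ be a closed tight walk with $\gcd(m,k)=1$ of minimum order. The same excision, applied to two positions of $W^\ast$ that agree modulo $k$ and carry the same $(k-1)$-tuple---so it shortens $W^\ast$ by a multiple of $k$, keeping the order coprime to $k$, while taking the two positions close together keeps the order above $k$---yields $m\le ks^{k-1}+k$. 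Now $W$ visits every edge, so it passes through some window $\hat e$ of $W^\ast$; splicing $j$ copies of the $W^\ast$-loop into $W$ at an occurrence of $\hat e$ produces a comprehensive closed tight walk of order $L+jm$ for every $j\ge 0$. Since $m$ is invertible modulo $k$, the residues of $L,L+m,\dots,L+(k-1)m$ run through all of $\mathbb{Z}/k\mathbb{Z}$, so some $j\le k-1$ gives $L+jm\equiv q\pmod{k}$, and $L+jm\le L+(k-1)m\le k^{2s}$ by another routine estimate.

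The hard---rather, tedious---part will be making the excision airtight: verifying that deleting the block $v_{a+1},\dots,v_b$ always leaves a genuine closed tight walk (each of the $k-1$ windows across the junction equals a pre-existing window, thanks to $(v_{a-k+2},\dots,v_a)=(v_{b-k+2},\dots,v_b)$), that it still covers every edge (the purpose of requiring the cut points at least $k-1$ steps past $q_j$), and---in the $W^\ast$ case---that it preserves $\gcd(\cdot,k)=1$ and does not drop the order below $k$. The remaining loose ends are the two numerical inequalities $L\le k^{2s}$ and $L+(k-1)m\le k^{2s}$, whose only mildly delicate instances are $s$ close to $k$; one also notes separately that for $s=k=2$ no $[k]$-digraph on $s$ vertices is aperiodic, so the refinement is vacuous there.
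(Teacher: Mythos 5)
Your proof is correct, and it is essentially the argument the paper alludes to: the lemma is cited from [LS23, Proposition~7.3] and not reproved here, with the remark that it is ``a straightforward if somewhat tedious application of the pigeonhole principle.'' Your splice-then-prune strategy, with pigeonhole on the ordered $(k-1)$-suffixes of the walk, is exactly that application. A few small points worth flagging, none of which is a gap. (i) You should use the falling factorial $(s)_{k-1}$ rather than $s^{k-1}$ as the count of available suffixes (this only helps). (ii) Your buffer condition $a,b\geq q_j+k-1$ correctly keeps the distinguished window at $q_j$ entirely inside the retained segment, and the identification of straddling windows with the original windows at positions $b-j$ is the right verification that the excision returns a genuine tight walk. (iii) In the refinement step the pigeonhole must be applied to positions within a window of length $k\cdot(s)_{k-1}+1$ (as you do, ``taking the two positions close together'') both to preserve coprimality and to keep the order above $k$; and the $s=k=2$ exception you single out is indeed the only degenerate case, since there the class $\dape$ is empty. (iv) The final numerics $L\le N((s)_{k-1}+k)$ and $L+(k-1)m\le k^{2s}$ are tightest at $s=k$, where both sides are of order $(k!)^2$ versus $k^{2k}$, and they do hold for all $k\geq 2$; you were right that this is the delicate regime.
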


The next result connects the assumptions of \cref{pro:allocation-Hamilton-path} to lattice completeness.

\begin{observation}\label{obs:con-cap-implies-div}
	Every $[k]$-digraph in $\dcon \cap \dape$ has complete lattice.
\end{observation}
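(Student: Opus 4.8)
The statement to prove is \cref{obs:con-cap-implies-div}: every $[k]$-digraph $G \in \dcon \cap \dape$ has complete lattice. By \cref{cor:lattice}, it suffices to show that $\cL(G)$ contains every transferral $\vn_v - \vn_u$ for $u, v \in V(G)$. The plan is to exploit the two hypotheses in tandem: connectivity ($\dcon$) gives that $\adh(G)$ is a single vertex-spanning tight component, so any two vertices are joined by a tight walk in $G^{(k)}$; and aperiodicity ($\dape$) gives a closed tight walk whose order is coprime to $k$, which we use to ``adjust'' lengths modulo $k$.

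\textbf{Key steps.} First I would observe that consecutive edges along a tight walk differ by a transferral: if $(x_1,\dots,x_k)$ and $(x_2,\dots,x_{k+1})$ are consecutive $k$-edges of a tight walk, then $\vn_{(x_2,\dots,x_{k+1})} - \vn_{(x_1,\dots,x_k)} = \vn_{x_{k+1}} - \vn_{x_1} \in \cL(G)$. Hence, following a tight walk from an edge containing $u$ to an edge containing $v$, and telescoping these single-step differences, one sees that $\cL(G)$ contains $\vn_v - \vn_u$ \emph{provided} the walk has the right length: more precisely, a closed tight walk $W$ of order $m$ through an edge yields, by telescoping all the way around, the relation $0 \in \cL(G)$ trivially, but an \emph{open} tight walk from $u$ to $v$ of order $m$ contributes a vector congruent to $\vn_v - \vn_u$ only up to a ``wrap-around'' correction term that vanishes exactly when $k \mid (m - \text{something})$; the clean way to package this is to note that any closed tight walk through two edges $e \ni u$ and $e' \ni v$ together with the shift-closedness of $G$ yields that $\tc(e) = \tc(e')$, and then to extract the transferral by comparing two edges that share $k-1$ vertices. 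Second, since $\adh(G)$ is vertex-spanning and a single tight component, for any $u,v$ there is a closed tight walk in $G^{(k)}$ visiting an edge containing $u$ and an edge containing $v$; reading off the indicator vectors along this walk and telescoping shows $q(\vn_v - \vn_u) \in \cL(G)$ for some $q \equiv r \pmod k$ depending on the structure. Third, I would use the aperiodic closed walk $W_0$ of order coprime to $k$ from $\dape$: the vector obtained by summing $\vn_e$ over the edges of $W_0$ (with multiplicity) equals $\ell \cdot \vn_w$-type combinations whose total weight is $|W_0| \not\equiv 0$, and combined with Bézout this lets us cancel the unwanted coefficient $q$ and isolate the bare transferral $\vn_v - \vn_u$.

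\textbf{Cleaner route.} Actually the slickest approach, which I would ultimately adopt, is: pick any edge $e_0 \in G^{(k)}$. By shift-closedness (part of $\dcon$), all cyclic shifts of $e_0$ lie in $G^{(k)}$, and summing the indicator vectors of the $k$ shifts gives $k$ copies of $\vn_{e_0}$ as a sum of edge-vectors — not directly useful, but it shows $\cL(G) \ni \vn_{e_0}$ with all its coordinates. Then: tight-connectivity of $\adh(G)$ means for every vertex $v$ there is a tight walk from $e_0$ to an edge $e_v \ni v$; telescoping along it shows $\vn_{e_v} - \vn_{e_0} \in \cL(G)$, so in particular $\cL(G)$ contains $\vn_x - \vn_y$ whenever $x, y$ lie in a common edge or in consecutive edges of a tight walk — and by transitivity (walks concatenate) $\cL(G)$ contains $\vn_x - \vn_y$ for \emph{all} $x, y$ \emph{as long as} we never need to ``close up'' a walk with a length not divisible by $k$. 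Finally, the single closed walk of order coprime to $k$ provided by $\dape$ is precisely what removes the last residual obstruction: its existence forces $\cL(G)$ to contain a vector of total weight coprime to $k$ times $k$, i.e.\ the ``diagonal'' direction, which together with all transferrals generates the full index-$k$ sublattice. So $\cL(G)$ contains all transferrals, and by \cref{cor:lattice} it is complete.

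\textbf{Main obstacle.} The delicate point is the bookkeeping in the telescoping argument: a tight walk of the ``wrong'' length $\bmod\,k$ produces $\vn_v - \vn_u$ plus a nonzero multiple of some reference vector, and one must argue this correction is killable. This is exactly where $\dape$ (rather than mere connectivity) is indispensable — without an aperiodic closed walk, $\cL(G)$ could be a proper index-$k'$ sublattice for some $k' \mid k$, $k' > 1$, as in the bipartite-type obstructions. I expect the bulk of the (short) proof to be making this cancellation precise, most likely by invoking \cref{lem:tight-walk} to get a closed tight walk of order $q \bmod k$ for the desired residue $q$, then telescoping that walk to obtain the transferral exactly.
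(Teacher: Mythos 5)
Your proposal lands on the same two ingredients as the paper — the reduction to transferrals via \cref{cor:lattice}, and \cref{lem:tight-walk} (using both $\dcon$ and $\dape$) to produce a tight walk from $v$ to $u$ of order $\equiv 1\bmod k$ — and your ``Main obstacle'' paragraph names exactly the final move the paper makes. However, the middle of the proposal is muddled and contains a false claim, so the proof as written does not go through.

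The false claim: in the ``cleaner route'' you assert that ``$\cL(G)$ contains $\vn_x-\vn_y$ whenever $x,y$ lie in a common edge.'' This is not true in general — a $3$-digraph consisting of a single edge $(a,b,c)$ has $\cL(G)=\langle\vn_a+\vn_b+\vn_c\rangle$, which contains no transferral. A single step of a tight walk $v_1,\dots,v_{k+1}$ gives you exactly one transferral, namely $\vn_{v_1}-\vn_{v_{k+1}}$; within-edge differences are not free. The missing computation: summing \emph{all} consecutive-edge differences along a walk $v_1,\dots,v_m$ telescopes to $\vn_{e_1}-\vn_{e_{\mathrm{last}}}=\sum_{j\le k}\vn_{v_j}-\sum_{j>m-k}\vn_{v_j}$, a $2k$-term combination that is not a transferral; this is the imprecise ``wrap-around correction'' you allude to but never resolve. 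What actually works is to sum only differences spaced $k$ apart: writing the walk as $v_1,\dots,v_{k\ell+1}$ with $v_1=v$ and $v_{k\ell+1}=u$ (the order condition from \cref{lem:tight-walk} with $q=1$, relying on $\dape$, ensures $u$ sits at a position $\equiv 1\bmod k$), and setting $e_i=(v_{ik+1},\dots,v_{ik+k})$, $f_i=(v_{ik+2},\dots,v_{ik+k+1})$, one has $\vn_{e_i}-\vn_{f_i}=\vn_{v_{ik+1}}-\vn_{v_{(i+1)k+1}}$, and summing over $0\le i<\ell$ telescopes cleanly to $\vn_v-\vn_u$. Until that identity is written down, the proposal is an outline, not a proof.
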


\begin{proof}
	Let $G$ be a $[k]$-digraph that satisfies $\dcon \cap \dape$.
	Let $u,v \in V(G)$ be arbitrary: by \cref{cor:lattice}, it suffices to show that the lattice $\cL(G)$ contains the {transferral} $\vn_v - \vn_u$.
	By \cref{lem:tight-walk}, every two edges of $G$ are on a common closed \tight walk of order $1\bmod k$.
	Moreover, every vertex is on an edge.
	By the above, there is a \tight walk beginning with $v$ and ending with $u$ of order coprime to $k$.
	Let $v_1,\dots,v_{k\ell+1}$ be the sequence of vertices of the walk, with $v = v_1$ and $u = v_{k\ell + 1}$.
	It follows that $G$ contains the edges $e_i = v_{ik+1},\dots,v_{ik+k}$ and $f_i =v_{ik+2},\dots,v_{ik+k+1}$ for $0 \leq i < \ell$.
	Hence $\vn_v - \vn_u = \sum_{0 \leq i < \ell} (e_i - f_i) \in \cL(G)$, as desired.
\end{proof}

Finally, we state a simple auxiliary result about perfect matchings.

\begin{observation}\label{lem:balancing-matching}
	For $1/k \gg \eta, 1/m$, let  $G$ be a $(1\pm\eta)m$-balanced blow-up of a complete $k$-graph of order $k+1$.
	Then $G$ has a perfect fractional matching.\qed
\end{observation}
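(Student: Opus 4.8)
The plan is to write down an explicit perfect fractional matching that is uniform on each partite class of edges and to verify the two defining constraints by a one-line computation; the balancedness hypothesis will be used only to keep the weights nonnegative.

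Write $v_0,\dots,v_k$ for the vertices of $K_{k+1}^{(k)}$, let $V_0,\dots,V_k$ be the corresponding clusters of the blow-up $G$, and put $m_i=|V_i|$ and $n=\sum_{i=0}^k m_i$. The edges of $K_{k+1}^{(k)}$ are the sets $e_j=\{v_0,\dots,v_k\}\sm\{v_j\}$ for $j\in\{0,\dots,k\}$, so $E(G)$ is the disjoint union of the $k+1$ classes $E_j$, where $E_j$ consists of all $k$-element transversals of $\{V_i:i\neq j\}$ and $\lvert E_j\rvert=\prod_{i\neq j}m_i$. I would then set $\omega(e)=\omega_j$ for every $e\in E_j$, where
\[
  \omega_j \;=\; \frac{n/k-m_j}{\prod_{i\neq j}m_i}\,.
\]
Three things need checking. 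First, $0\le\omega_j\le 1$: nonnegativity amounts to $m_j\le n/k$, which holds because $n/k\ge\tfrac{k+1}{k}(1-\eta)m$ while $m_j\le(1+\eta)m$ and $(1+\eta)<(1+1/k)(1-\eta)$ since $\eta\ll 1/k$, and the upper bound is immediate as the denominator dominates the (linear in $m$) numerator once $m$ is large. Second, for a vertex $u\in V_i$, exactly $\prod_{i'\notin\{i,j\}}m_{i'}$ edges of $E_j$ pass through $u$ for each $j\neq i$, whence
\[
  \sum_{e\ni u}\omega(e)=\sum_{j\neq i}\frac{\prod_{i'\notin\{i,j\}}m_{i'}}{\prod_{i'\neq j}m_{i'}}\bigl(n/k-m_j\bigr)=\frac1{m_i}\sum_{j\neq i}\bigl(n/k-m_j\bigr)=\frac{n-(n-m_i)}{m_i}=1,
\]
so $\omega$ is a fractional matching. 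Third, its size is $\sum_{j=0}^k\omega_j\lvert E_j\rvert=\sum_{j=0}^k(n/k-m_j)=(k+1)\tfrac nk-n=\tfrac nk$, so $\omega$ is perfect.

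I do not expect a genuine obstacle here: the constraint system has a forced solution once one insists on equality at every vertex, and the only inequality to watch, namely $m_j<n/k$, is precisely what the hypothesis $\eta\ll 1/k$ provides (and it genuinely fails if one cluster is allowed to exceed the average by a constant factor). As an alternative one could instead invoke \cref{lem:space}, after observing that each singleton link of $G$ is itself a $(1\pm\eta)m$-balanced blow-up of a complete $(k-1)$-graph on $k$ vertices and so (by the same computation, or by induction on $k$) carries a fractional matching of size at least $n/k$; but the direct construction above is shorter and self-contained.
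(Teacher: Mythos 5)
Your construction is correct, and your verification is complete: the weight $\omega_j=\bigl(n/k-m_j\bigr)/\prod_{i\neq j}m_i$ on each transversal class gives equality $\sum_{e\ni u}\omega(e)=1$ at every vertex, the size telescopes to $n/k$, and nonnegativity is exactly the inequality $m_j\le n/k$, which the hierarchy $\eta\ll 1/k$ guarantees (explicitly, $\eta\le 1/(2k+1)$ suffices). The paper itself prints no proof for this observation; the argument it carries in a suppressed source comment proceeds differently, by first greedily selecting an integral matching that equalises the cluster sizes (removing one vertex from the largest cluster per round, at a cost of $k-1$ vertices from each of the others) and then taking a uniform fractional matching on the residual balanced blow-up. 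Your route is the more streamlined of the two: it identifies the (forced) solution of the linear system in one shot rather than correcting imbalances step by step, and it makes the threshold $m_j\le n/k$ visible as the precise reason the balancedness hypothesis is needed. The alternative you sketch via \cref{lem:space} and induction on $k$ also goes through, since each singleton link is a $(1\pm\eta)m$-balanced blow-up of $K_k^{(k-1)}$ and the induced perfect fractional matching of the link has size $(n-m_i)/(k-1)\ge n/k$ whenever $m_i\le n/k$; but as you say the direct construction is shorter.
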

\COMMENT{\begin{proof}
		Let $\cV$ be the partition of $V(G)$.
		Note that we can easily find a perfect fractional matching if all clusters have the same size.
		Simply give each edge the same weight.
		Otherwise, we proceed as follows.
		Denote by $d$ the largest differences in size of two clusters.
		So $d \leq 2\eta n$ by assumption.
		To balance the clusters of $\cV$, we construct a matching $M$ as follows.
		We begin with $M = \es$.
		While there is a cluster $V \in \cV$ that is larger than all others in $G - V(M)$, select edges $e_1,\dots,e_{k}$ such that each of these edges covers a vertex of $V$ and misses a unique cluster of the others.
		So after adding these edges, the largest difference between the size of two clusters must have gone down by $1$.
		This process stops after $d$ steps, since it never uses more than $(k-1)d$ vertices of each cluster.
		We can then finish by covering $G-V(M)$ with a perfect fractional matching as described before.
	\end{proof}}

Given these preparations, we are ready to embed perfect matchings into blow-ups following a strategy introduced in the setting of perfect tilings~\cite[Lemma 4.11]{Lan23}.

\begin{proposition}[Perfect matching allocation]\label{lem:allocation-perfect-matching}
	Let $1/k,\, 1/s \gg \eta, 1/m$.
	Let $R \in \Del_1 (\dspa \cap \udiv)$ be an $s$-vertex $[k]$-digraph.
	Let $\cV=\{V_x\}_{x \in V(R)}$ be a $(1 \pm \eta)m$-balanced partition with $|\bigcup \cV|$ divisible by $k$.
	Then the blow-up $R(\cV)$ has a perfect matching.
\end{proposition}

\begin{proof}
	Introduce $q$ with {$1/k ,\, 1/s \gg 1/q \gg \eta, 1/m$}.
	In the following, an edge $f \in R(\cV)$ is called \emph{$e$-partite} for an edge $e \in R$, if $f$ has one vertex in each of the clusters corresponding to $e$.

	We first pick a matching $\cM_{\res} \subset R(\cV)$ that acts as a `reservoir' by selecting $q$ disjoint $e$-{partite} edges for every $e \in R$.
	Let $\cV'$ be obtained from $\cV$ by deleting the vertices of $\cM_{\res}$.
	Note that $\cV'$ is still $(1\pm 2\eta)m$-balanced.

	Next we cover most of the vertices of $R(\cV')$ with a matching $\cM_{\cover}$, which is constructed as follows.
	We begin by finding a perfect fractional matching in $R(\cV')$.
	To do so, apply \cref{lem:balancing-matching} to identify a family $S_1,\dots,S_\ell$ of pairwise vertex-disjoint $\cV'$-partite $(s-1)$-sets,
	such that $\bigcup \cV' = S_1\cup \dots \cup S_\ell$.
	Since  $R \in \Del_1 (\dspa)$, there is a perfect fractional matching $\omega_i$ of $(R(\cV'))[S_i]$ for each $1\leq i \leq \ell$.
	It follows that $\omega = \omega_1 + \dots + \omega_\ell$ is a perfect fractional matching of $R(\cV')$.
	Let $w_e$ be the sum of $\omega(f)$ over all $e$-partite edges $f \in R(\cV')$.
	We obtain the matching $\cM_{\cover} \subset R(\cV')$ by selecting $\lfloor w_e   \rfloor$ edges  for each $e \in R$.
	Let $\cV''$ be obtained from $\cV'$ by deleting the vertices of $\cM_{\cover}$.
	Note that $\cV''$ contains at most $e(R) \leq s^k$ vertices, where the error arises from rounding $\lfloor w_e   \rfloor$ at each edge $e \in R$.

	To finish, we construct a third matching $\cM_{\ominus} \subset R(\cV)$ that contains the remaining vertices.
	This is done by picking a matching $\cM_{\oplus} \subset \cM_{\res}$ from the reservoir such that the vertices $V(\cM_{\oplus}) \cup \bigcup \cV''$ span a perfect matching $\cM_{\ominus}$ in $R(\cV)$.
	Formally, let $\mathbf {b} \in \INTS^{V(R)}$ count the size of each part of $\cV''$, that is $\mathbf {b} (x) = |V_x''|$ for every $x \in V(R)$.
	Note that the lattice $\cL(R)$ is complete since $R \in {\udiv}$.
	Moreover, $\sum_{x \in  V(R)} \mathbf {b}(x)$ is divisible by~$k$ by assumption.
	It follows that $\mathbf {b}\in \cL(R)$.
	So there are integers $c_e$, one for every $e \in R$, such that $\mathbf {b} = \sum_{e \in R} c_e \vn_e$.
	By the choice of $q$, we can ensure $-q \leq c_e \leq q$ holds for each $e \in R$.
	Let $E_{\oplus}$ and $E_{\ominus}$ denote the sets of edges $e \in R$ for which~$c_e$ is positive and negative, respectively.
	Let $\cM_{\ominus} \subset \cM_{\res}$ be a submatching obtained by adding $|c_e|$ many $e$-partite edges from $\cM_{\res}$, for each $e \in E_{\ominus}$.
	Since
	\begin{equation*}
			\sum_{e \in E_{\oplus}} c_e \vn_e  = \mathbf {b} - \sum_{e \in E_{\ominus}} c_e \vn_e = \mathbf {b} + \sum_{e \in E_{\ominus}} |c_e|  \vn_e,
	\end{equation*}
	we may finish by selecting a matching $\cM_{\oplus} \subset R(\cV)$ on the vertices of $V(\cM_{\ominus}) \cup \bigcup \cV''$ by adding $c_e$ many $e$-partite edges for every $e \in E_{\oplus}$.
\end{proof}

We remark that the relation $1/k ,\, 1/s \gg 1/q$ in the previous proof is needed in relation with the following problem: given $\mathbf {b}\in \cL(R)$, what is the best bound $q$, in terms of $s = |V(G)|$ and $r = \Vert \mathbf{b} \Vert_\infty \geq k$, so that there exist $c_e$, one for each $e \in R$, such that $|c_e| \leq q$ and $\mathbf{b} = \sum_{e \in R} c_e \vn_e$?
A bound of the form $q \leq 2r \cdot (2s)^{s}$ can be proven using Steinitz' lemma (see \cite{barany2008power}).

\COMMENT{
	\begin{lemma}\label{lem:lattice-girth}
		Let $R$ be an $k$-graph on $s$ vertices with complete lattice.
		Then every $\vecb b \in \cL(R)$ can be written as $\vecb b = \sum_{e \in R} c_e \vn_e$ with $|c_e| \leq 2 ||\vecb b||_1(2s)^{s}$.
	\end{lemma}
	For the proof of \cref{lem:lattice-girth}, we appeal to an old result of Steinitz~\cite{steinitz1913bedingt}, see also the expository paper of Bárány~\cite{barany2008power}.
	\begin{theorem}[Steinitz' lemma]\label{thm:steinitz}
		Given a finite multiset  $W \subset B$ with $\sum_{w \in W} = 0$, where $B$ is the unit-ball of a norm in $\REALS^d$, there is an ordering of $\vecb w_1,\dots, \vecb w_n$ of the elements of $W$ such that  $\sum_{j=1}^i \vecb w_j \in dB$ for every $1\leq i \leq n$.
	\end{theorem}
	\begin{proof}[Proof of \cref{lem:lattice-girth}]
	By \cref{obs:lattice-completeness}, $\cL = \cL(R)$ contains all transferrals.
	Consider a transferrals $\vecb x \in \cL$, and note that $\vecb x$ is in the unit ball $B \subset \REALS^s$ of the infinity norm $||\cdot||_\infty$.
	We first show that one can write $\vecb x = \sum_{e \in R} c_e \vn_e$ with $|c_e| \leq (2s)^s$.
	To see this, let $\vecb v_1,\dots, \vecb v_r$ be a basis of~$\cL$ with $r \leq s$, where each $\vecb v_i$ is an indicator vector on an edge of $R$ and thus in $B$.
	Let $A$ be the matrix with row vectors $\vecb v_1,\dots, \vecb v_r$.
	Consider $\vecb c = (c_1,\dots,c_r)^\intercal$ with $A \vecb c=\vecb x$ such that $||\vecb c||_\infty$ is minimal.
	Let $W$ be the multiset obtained by adding $c_i$ copies of each $\vecb v_i$ and in addition $- \vecb x$.
	We apply \cref{thm:steinitz} to obtain an ordering of $\vecb w_1,\dots, \vecb w_n$ of the elements of $W$ such that  $\sum_{i=1}^i \vecb w_i \subset sB$ for every $1\leq i \leq p$.
	Since the elements of $\cL$ are integer-valued, the blown-up unit-ball $sB$ contains at most $(2s)^s$ elements of $\cL$.
	On the other hand, the $n$ partial sums are pairwise distinct by minimality of $\vecb c$.
	It follows that $|W| = n \leq (2s)^s$.
	This shows the above claim.
	Now consider $\vecb b \in \cL(R)$.
	As seen in the proof of \cref{obs:lattice-completeness}, we can write $\vecb b$ as the sum of $||\vecb b||_1$ transferrals and $\ell  \vn_e$, where $e \in H$ and $\ell = \sum_{v \in V(R)} \vecb b(v)$.
	Since $\ell \leq |\vecb b||_1$, the lemma follows from the above claim.
	\end{proof}}

\subsection{Hamilton paths}

Now we are ready to embed Hamilton paths into suitable blow-ups.

\begin{lemma}\label{lem:linked-edges}
	Suppose that $G$ is a $k$-digraph and $v \in V(G)$ such that $G, G-v \in \dcon$.
	Then there are edges~$e,f$ that intersect in $k-1$ vertices such that $v \in f$ and $v \notin e$.
\end{lemma}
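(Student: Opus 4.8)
The plan is to find, inside the unique tight component forming $\adh(G)$, two $k$-edges $e$ and $f$ that share exactly $k-1$ vertices and satisfy $v\in f$ and $v\notin e$. Write $\cC=\adh(G)$. Since $G\in\dcon$, the subgraph $\cC$ is a single vertex-spanning tight component of $G$, so in particular $v\in V(G)=V(\cC)$ lies in some $k$-edge $f\in\cC$. The real work is to produce a companion edge $e\in\cC$ avoiding $v$, and then to interpolate between $e$ and $f$ along a closed tight walk.

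First I would use the second hypothesis to produce $e$. Since $G-v\in\dcon$, the set $\adh(G-v)$ is non-empty, so there is a $(k-1)$-edge $h$ of $G-v$ and a $k$-edge $e\in(G-v)^{(k)}$ that contains $h$ as a subsequence; in particular $v\notin e$. As $h$ is also a $(k-1)$-edge of $G$ and $e\in G^{(k)}$ contains $h$ as a subsequence, we get $e\in\tc_G(h)\subseteq\adh(G)=\cC$. Hence $e$ and $f$ both lie in the tight component $\cC$. I expect this to be the crux of the argument: the point of the hypothesis $G-v\in\dcon$ is exactly that it supplies a $v$-avoiding $k$-edge which is still certified to live in $\adh(G)$, and not merely in $G$ — note that $G\in\dcon$ alone does not prevent every edge of $\adh(G)$ from passing through $v$.

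It then remains to interpolate, which is routine. Because $e,f$ lie in the same tight component, they lie on a common closed $k$-uniform tight walk $W\subseteq\cC$; writing its cyclic vertex sequence as $w_0,\dots,w_{\ell-1}$ with edges $W_i=(w_i,\dots,w_{i+k-1})$ (indices modulo $\ell$), I may assume after relabelling that $e=W_0$ and $f=W_b$ for some $1\le b<\ell$. Taking $j\in\{1,\dots,b\}$ minimal with $v\in W_j$ yields $v\notin W_{j-1}$ and $v\in W_j$. Both $W_{j-1}$ and $W_j$ are edges of $G$; since edges have no repeated vertices, the $k-1$ vertices $w_j,\dots,w_{j+k-2}$ are distinct and lie in $W_{j-1}\cap W_j$, so $|W_{j-1}\cap W_j|\ge k-1$, with equality because $W_{j-1}\ne W_j$ (as $v$ separates the two windows). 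Thus $e=W_{j-1}$ and $f=W_j$ are the desired pair. The only subtlety here is the degenerate case $W_{j-1}=W_j$ (which would force $w_{j-1}=w_{j+k-1}$), and this is excluded precisely because $v$ belongs to one window but not the other.
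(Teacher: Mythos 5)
Your proof is correct and follows essentially the same route as the paper's: produce a $v$-containing edge and a $v$-avoiding edge inside the single tight component $\adh(G)$, link them by a tight walk, and extract the transition window where $v$ first enters. The paper's version is terser and simply asserts the existence of suitable $e',f'$ before invoking connectivity, whereas you explicitly trace the $v$-avoiding edge through $\adh(G-v)$ to certify that it really lies in $\adh(G)$ (the genuine reason the hypothesis $G-v\in\dcon$ is needed) -- a detail the published proof elides.
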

\begin{proof}
	By assumption there are distinct edges $e',f' \in G$ such that $v \in f'$ and $v \notin e'$.
	Moreover, there is a \tight walk $W$ starting with $f'$ and ending with $e'$.
	We can then take $f$ to be the last edge of $W$ that contains $v$, and $e$ to be the edge that follows after $f$.
\end{proof}

\begin{proof}[Proof of \rf{pro:allocation-Hamilton-path}]
	For $1/k, 1/s \gg \eta, 1/m$, let $R \in \Del_k (\dcon \cap \dspa \cap \dape)$ be an $s$-vertex $[k]$-digraph.
	Let $\cV=\{V_x\}_{x \in V(R)}$ be a quasi $(1 \pm \eta)m$-balanced partition with exceptional cluster $V^\ast$.
	Let $f_1,f_2 \in (R(\cV)-V^\ast)^{(k-1)}$ be vertex-disjoint.
	Our goal is to find a \tight Hamilton $(f_1,f_2)$-path in $R(\cV)$.

	Let us write $V^\ast = V_{{x^\ast}}$ with $x^\ast \in V(R)$.
	For every $e \in (R-x^\ast)^{(k)}$, fix a \tight path $P_e$ in $R(\cV)$ of order $v(P_C) = 3k$ whose vertices follow the parts of $\cV$ according to the ordering of $e$.
	Since $m$ is much larger than $k$ and $s$, we can assume that these paths are pairwise disjoint, and also disjoint from $f_1$ and $f_2$.

	We handle the exceptional vertex $v^\ast$ as follows.
	By \cref{lem:linked-edges}, there are edges $e, f \in R^{(k)}$ that share $k-1$ vertices such that ${x^\ast} \in f$, ${x^\ast} \notin e$.
	We use this configuration to obtain a \tight path of order at most $6k$ that contains the vertex of $v^\ast$ and begins and ends in the clusters of $f$ (in the ordering of~$f$).
	We also ensure that~$P^\ast$ is disjoint from the paths of type $P_e$ specified above as well as disjoint from $f_1$ and $f_2$.

	Since $R-{x^\ast} \in \dape$, there is an odd closed walk $W$ of order coprime to $k$ in $R-{x^\ast}$.
	We can assume that $W$ has $w \leq 4^{2^k}$ vertices.
	We select a path $P_{\dape}$ that contains as a subpath a segment of order $(k-1)w$ that follows the clusters of $W$, meaning that $P_{\dape}$ `winds $(k-1)$-times around' $W$.
	We ensure that $P_{\dape}$ is disjoint from all paths selected so far including $f_1$ and $f_2$.
	Importantly, for any $1\leq i \leq k-1$, we may select an alternative path $P_{\dape}'$ with $V(P_{\dape}') \subset V(P_{\dape})$ and the same ends as $P_{\dape}$  such that $v(P_{\dape}) - v(P') \equiv i \bmod k$.
	Indeed, such a $P_{\dape}'$ would simply `wind around' $W$ fewer times, and then connect to the rest of $P_{\dape}$.

	Since $R \in \Del_k(\con)$, the $k$-digraphs $\adh(R)$ and $\adh(R-{x^\ast})$ are \tightly connected.
	By \cref{lem:tight-walk}, any two disjoint elements of $R-{x^\ast}$ are connected by a \tight walk~$W$ of order at most $4^{s^k}$.
	This allows us to connect up all the paths of type $P_e$, $P_{x^\ast}$ and $P_{\dape}$ to one `skeleton' $(f_1,f_2)$-path $P_{\text{skel}}$ of order at most $8^{s^k}$.
	We then ensure that $n-v(P_{\text{skel}}) \equiv 0 \bmod k$ by replacing $P_{\dape}$ with a suitable alternative path $P_{\dape}'$ as described above.

	Let $\cV'$ be obtained from $\cV$ by deleting the vertices of $P_{\text{skel}}$ (including the now empty exceptional cluster).
	Note that $\cV'$ is still $(1\pm 2\eta)m$-balanced.
	By \cref{lem:allocation-perfect-matching}, there is a perfect matching $\cM$ of $(R-x^\ast)(\cV')$.
	We then define a Hamilton $(f_1,f_2)$-path $P$ by adding $\ell k$ vertices to the `middle' of every path $P_e$, where $\ell$ counts the number of edges in $\cM$ that are in the respective parts of $\cV$ corresponding to the edge $e$.
	Note that this is possible because we chose $P_e$ of order $v(P_e)=3k$.
\end{proof}

A similar proof gives the following corollary.

\begin{corollary} \label{corollary:hamallocationforbandwidth}
	{Let $1/k, 1/s \gg \eta, 1/m$.
	Let $R \in \Del_k (\dcon \cap \dspa \cap \dape)$ be an $s$-vertex $[k]$-digraph.
	Let $\cV=\{V_x\}_{x \in V(R)}$ be a $(1 \pm \eta)m$-balanced partition.
	Moreover, let $e,f,g \in R(\cV)^{(k)}$ each containing an element of $R(\cV)^{(k-1)}$.
	Then $R(\cV)$ has a \tight Hamilton cycle that contains each of $e,f,g$ as a subpath.}
\end{corollary}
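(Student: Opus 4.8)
The statement of \Cref{corollary:hamallocationforbandwidth} is a direct strengthening of \Cref{pro:allocation-Hamilton-cycle}: instead of only prescribing a pair of disjoint $(k-1)$-edges at which a Hamilton path starts and ends, we now wish to prescribe three full $k$-edges $e,f,g$ (each extending an element of $R(\cV)^{(k-1)}$, so each lies in the `adherence' part of $R(\cV)$), and demand that the resulting \tight Hamilton cycle traverses each of them as a subpath. The plan is to re-run the argument in the proof of \rf{pro:allocation-Hamilton-cycle} essentially verbatim, making only the modifications needed to thread $e,f,g$ into the skeleton cycle; there is no exceptional singleton cluster to handle here (all clusters are $(1\pm\eta)m$-balanced), which in fact simplifies matters.

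\textbf{Key steps.} First, for every edge $e' \in R(\cV)^{(k)}$ (using that $\adh(R)$ is a single vertex-spanning \tight component, which holds since $R \in \Del_k(\dcon)$), I would fix a short \tight path $P_{e'}$ of order $3k$ following the clusters of $e'$ in the ordering of $e'$, with all these paths pairwise disjoint. Among these, single out $P_e$, $P_f$, $P_g$ corresponding to the three prescribed edges — crucially, I can choose $P_e$ so that $e$ itself is the central $k$-edge of $P_e$ (and likewise for $f,g$), which is possible because $e$ extends a $(k-1)$-edge of $R(\cV)$, so its cluster sequence corresponds to a walk of length $k$ that can be extended on both sides inside the \tight component; here we use that the clusters have size $\gg k,s$. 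Second, since $R \in \dape$, pick an odd closed walk $W$ of order coprime to $k$ with $w \leq 4^{2^k}$ vertices, and a path $P_{\dape}$ containing a segment of order $(k-1)w$ following $W$, disjoint from everything chosen so far, with the same flexibility as before: subpaths $P' \subset P_{\dape}$ with the same endpoints realising any residue $v(P_{\dape}) - v(P') \equiv i \bmod k$. Third, use \Cref{lem:tight-walk} inside the \tightly connected $\adh(R)$ to connect all the $P_{e'}$ (including $P_e, P_f, P_g$) and $P_{\dape}$ into a single skeleton cycle $C_{\textrm{skel}}$ of order at most $8^{s^k}$, ensuring by the choice of subpath of $P_{\dape}$ that $n - v(C_{\textrm{skel}}) \equiv 0 \bmod k$. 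Fourth, delete $V(C_{\textrm{skel}})$ from $\cV$ to obtain a still-$(1\pm 2\eta)m$-balanced partition $\cV'$, apply \Cref{lem:perfect-matching-allocation} (using $R \in \Del_k(\dspa \cap \udiv)$, where completeness of the lattice follows from \Cref{obs:con-cap-implies-div} since $R \in \dcon \cap \dape$) to get a perfect matching $\cM$ of $R(\cV')$, and finally absorb $\cM$ into the cycle by inserting, into the `middle' of each $P_{e'}$, a number of vertices equal to $k$ times the count of edges of $\cM$ lying in the clusters of $e'$ — possible precisely because each $P_{e'}$ was taken of order $3k$, leaving room to grow. The resulting Hamilton cycle contains $P_e, P_f, P_g$ as subpaths, hence $e, f, g$ as subpaths.

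\textbf{Main obstacle.} The only genuinely new point compared to \rf{pro:allocation-Hamilton-cycle} is arranging that the prescribed $e,f,g$ appear \emph{as subpaths} of the final cycle, rather than just starting/ending there: this requires (i) choosing the short paths $P_e, P_f, P_g$ so that $e,f,g$ sit in their interiors with at least one extra cluster-step on each side, and (ii) making sure the later vertex-insertion step used to absorb $\cM$ inserts vertices into portions of each $P_{e'}$ that do \emph{not} break the edges $e,f,g$. Both are handled by the same mechanism as in the original proof — take $P_e$ of order $3k$ and always insert the absorbing vertices strictly between the relevant cluster-blocks, away from the $k$ consecutive clusters realising $e$ — so this is a routine bookkeeping adaptation rather than a new idea. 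A minor point to check is that $e, f, g$ (and their extending $(k-1)$-edges) all lie in $\adh(R(\cV))$, so that the connection step via \Cref{lem:tight-walk} applies to them; this is immediate from the hypothesis that each contains an element of $R(\cV)^{(k-1)}$ together with shift-closedness.
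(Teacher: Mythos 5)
Your proposal is correct and takes the same route as the paper, which itself gives only a one-line remark after \cref{pro:allocation-Hamilton-cycle}: rerun that proof, modifying only the construction of $C_{\textrm{skel}}$ so that it winds around $e,f,g$ (possible because each of $e,f,g$ contains a $(k-1)$-edge of $R(\cV)$ and hence lies in a cluster pattern belonging to $\adh(R)$). Two small points: the absorbing paths should be indexed by $k$-edges of the reduced graph $R^{(k)}$ rather than by $R(\cV)^{(k)}$ (so there are only $O(s^k)$ of them and disjointness is available); and rather than identifying $P_e,P_f,P_g$ with absorbing paths — which tacitly assumes $e,f,g$ have distinct cluster patterns in $R$ and then forces the bookkeeping in your obstacle (ii) — it is cleaner, and closer to the paper's wording, to leave the absorbing paths alone and splice three extra short subpaths threading $e,f,g$ into $C_{\textrm{skel}}$, so that no absorption ever occurs on them.
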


To prove the corollary, we follow the proof as above, only modifying the construction of $P_{\text{skel}}$ such that it becomes a cycle that contains $e,f,g$ as subpaths.
(This is possible by `winding around' the corresponding $k$-edges of $R(\cV')^{(k)}$).

\section{Path blow-up allocation}
\label{sec:allocation-path-blow-up}

In this section, we prove \rf{pro:allocation-bandwidth-frontend}.
The strategy follows the ideas developed for a similar result in the graph setting~\cite{LS23}.
Let us outline the argument in the case when $k=t$.
Our goal is to embed the blow-up of a path $H$ into a blow-up $R(\cV)$.
To begin, we identify a suitable blow-up $C(\cW)$ of a cycle $C$ in $R(\cV)$ such that $\cW$ refines~$\cV$.
This is possible since $R$ robustly satisfies $\con$, $\spa$ and $\ape$.
We then embed $H$ into $C(\cW)$ as follows.
To begin, we map $H$ onto $C(\cW)$ following a random walk on $C$.
	{It turns out that (using that $s-1$ is coprime to $t$) we can control the order of $C$, taking it coprime to $t$.
		We can use this to get a} homomorphism $\phi$ from $H$ to $C(\cW)$ that is nearly injective.
We then manually adjust the (images) of the end tuples of $H$ as required in the statement.
To finish the allocation, we modify $\phi$ by swapping certain vertices along the ordering of $C$.
Using again that $C$ has a controlled order, this results in the desired embedding.

The rest of this section is organised as follows.
In the next section, we prove \cref{pro:allocation-bandwidth-frontend} subject to a result on random embeddings (\cref{pro:allocation-random}).
In \cref{sec:markov}, we introduce some probabilistic terminology and results.
Finally, we prove \cref{pro:allocation-random}  in \cref{sec:allocation-random}.

\subsection{Embedding blow-ups of paths}\label{sec:allocation-blow-up-path-final}

In the following, we derive the main result of this section.
To formalise the above discussion, we introduce the concept of buffer vertices.

\begin{definition}[Buffer]
	Let $H$ be a $k$-graph, let $C$ be the $(t-k+1)$st power of a cycle $C$ on the vertices $\{c_1, \dotsc, c_s\}$, and let $\phi\colon V(H) \rightarrow V(C)$ be a homomorphism.
	We say $\phi$ has an \emph{$(\alpha, s)$-buffer} $I \subseteq V(H)$ if
	\begin{enumerate}[(B1)]
		\item no two vertices in $I$ are contained in a common edge,
		\item for every $c_i \in V(C)$ there are at least $\alpha n / s$ vertices of $I$ mapped to $c_i$, and
		\item for each vertex $v \in I$ with $\phi(v) = c_i$ and each $e \in E(H)$ which contains $v$, we have $\phi(e) \subseteq \{ c_{i+1}, \dotsc, c_{i+t-1} \}$.
	\end{enumerate}
\end{definition}

Buffer vertices will be used in the last step of the proof, where we need to manually adjust a given embedding.
The next result implements the random allocation of the second step, and gives an embedding which is almost injective, with many buffer vertices.

\begin{proposition}[Randomised allocation] \label{pro:allocation-random}
	Let $1/t \gg \alpha$ and $ 1/s,\, 1/t,\, \xi \gg \rho \gg 1/n$.
	Suppose $s, t$ are coprime.
	Let $H$ be a blow-up of the $(t-k+1)$st power of a \tight path on $n$ vertices with clusters of size at most $\rho n$.
	Let $C$ be the $(t-k+1)$st power of a cycle $C$ on $s$ vertices.
	Then there exists a homomorphism $\phi$ from $H$ to $C$, where each vertex of $C$ is the image of $(1 \pm \xi) n/s$ vertices.
	Moreover, $\phi$ has an $(\alpha, s)$-buffer.
\end{proposition}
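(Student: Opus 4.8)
The plan is to build the homomorphism $\phi$ in three stages: a random walk skeleton on $C$, a correction to make cluster images almost perfectly balanced, and a final reservation of buffer vertices. Throughout, think of $H$ as a concatenation of its clusters $U_1, \dots, U_L$ along the path ordering, where each edge of the $(t-k+1)$st power of the path lives inside $t$ consecutive clusters and $|U_j| \le \rho n$. The idea is to process the clusters $U_1, U_2, \dots$ in order and assign to each cluster $U_j$ a single vertex $c_{w(j)} \in V(C)$, thereby defining $\phi$ on all of $U_j$ at once: since the guest graph restricted to $t$ consecutive clusters is a complete $t$-partite $t$-graph blown up, the only requirement for $\phi$ to be a homomorphism is that $(c_{w(j)}, c_{w(j+1)}, \dots, c_{w(j+t-1)})$ forms an edge of $C$ for every $j$, i.e.\ $w$ itself is a walk in the $(t-k+1)$st power of the cycle on $s$ vertices — and since $C$ is the full $(t-1)$st power, this just means $w(j+1) \in \{w(j)+1, w(j)-? \}$; more precisely it suffices to take $w$ to be a walk that at each step either stays, advances by one, or the segment of length $t$ remains within a window of size $t$. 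The cleanest choice: let $w$ advance by $+1$ most of the time but occasionally repeat a value, so that the window condition (B3) becomes automatic for a generous buffer set.

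First I would set up the random walk: let $w$ be a lazy walk on $\mathbb{Z}/s\mathbb{Z}$ which at each cluster step independently either advances ($w(j+1) = w(j)+1$) with some probability $p$ bounded away from $0$ and $1$, or stays ($w(j+1) = w(j)$) otherwise, subject to never staying more than once in a row (to keep windows of size $t$ legal, using $t \ge k \ge 2$ and in fact $t$ large). Because $s$ and $t$ are coprime — this is exactly where that hypothesis enters — the walk wraps around $C$ in a way that is not trapped on a sublattice, so the number of clusters assigned to each fixed vertex $c_i$ of $C$, summed with the cluster weights $|U_j|$, concentrates around $n/s$. Quantitatively, after $L \ge n/(\rho n)^{-1}$-ish steps the expected number of guest vertices landing on $c_i$ is $n/s$, and a Hoeffding/Azuma bound for the walk (the increments are bounded by $\rho n$ in the weighted count, $L$ of them, so the deviation is $O(\sqrt{L} \rho n) = O(\sqrt{\rho}\, n)$, which is $\le \xi n / s$ once $\rho \ll \xi, 1/s$) gives the $(1\pm\xi)n/s$ balance with high probability. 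This is the main quantitative step, but it is a routine martingale concentration argument once the walk is set up correctly; the genuine subtlety is choosing the walk so that the per-step increments in the count of each $c_i$ are actually bounded and roughly mean-zero after recentering — this needs the coprimality to rule out periodic behaviour that would bias certain vertices.

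Next, to produce the buffer: call a cluster $U_j$ a \emph{buffer cluster} if the walk stayed put at both the step into and the step out of an appropriate neighbourhood of $j$, i.e.\ $w$ is constant on a block of $t+1$ consecutive clusters containing $U_j$ with $U_j$ in its interior; then any vertex $v \in U_j$ has all its edges contained in clusters mapped into a window $\{c_{w(j)}, \dots, c_{w(j)+t-1}\}$ of the right form, verifying (B3) after a harmless relabelling. Picking \emph{one} vertex from each of a suitably spread-out collection of buffer clusters (one per cluster, no two in a common edge, which holds because buffer clusters can be taken pairwise non-adjacent in the path ordering) gives set $I$ satisfying (B1). For (B2) — at least $\alpha n/s$ buffer vertices on every $c_i$ — observe that the fraction of clusters that are buffer clusters is a constant depending only on $p$ and $t$ (the probability of a length-$(t+1)$ stay-block), so a second concentration argument, conditioned on the walk visiting $c_i$ the expected number of times, shows each $c_i$ receives $\Omega(n/s)$ buffer vertices; choosing $\alpha$ small relative to $1/t$ (as permitted by $1/t \gg \alpha$) absorbs all constants. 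Finally I would take a union bound over the $O(1)$-many concentration events (balance for each of the $s$ vertices, buffer count for each of the $s$ vertices) — all hold with probability $1 - o(1)$ — to conclude that a valid $\phi$ with an $(\alpha,s)$-buffer exists.

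The step I expect to be the main obstacle is arranging the lazy random walk so that \emph{simultaneously} (i) every vertex $c_i \in V(C)$ is hit with the correct weighted frequency up to $\xi$, and (ii) enough of the visits to each $c_i$ come from genuine buffer clusters, all while the laziness pattern never violates the window constraint (B3) — i.e.\ reconciling the combinatorial rigidity of "walk must be a legal walk in $(t-1)$st power of $C_s$" with the probabilistic freedom needed for concentration. I would handle this by making the walk structurally periodic-in-expectation (advance exactly once every few clusters on average, inserting extra stays at random positions) so that its long-run behaviour is transparent and the coprimality of $s,t$ guarantees equidistribution; the martingale differences then have deterministic bounds of size $O(\rho n)$ and the rest is bookkeeping. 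The coprimality hypothesis and the smallness hierarchy $1/s, 1/t, \xi \gg \rho \gg 1/n$ are precisely calibrated to make this work.
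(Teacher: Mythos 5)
The central device in your construction does not produce a homomorphism. You take a lazy walk on $\mathbb{Z}/s\mathbb{Z}$ in which the ``stay'' move sets $w(j+1) = w(j)$, so two consecutive clusters $U_j, U_{j+1}$ of $H$ are mapped to the \emph{same} vertex $c_{w(j)}$ of $C$. But any two clusters within $t$ of each other meet a common edge of $H$ (a $k$-set with one vertex in each, drawn from $t$ consecutive clusters), and a map that identifies two vertices of an edge sends it to a $(< k)$-set, which is not an edge of $C$. So even a single stay is already illegal; the restriction ``never stay twice in a row'' does not help. This is exactly the combinatorial rigidity you flagged as ``the main obstacle'' (reconciling the legal-walk constraint with probabilistic freedom), but your construction does not actually resolve it. The consequences propagate: your buffer clusters are, by definition, stretches where the walk stayed, so the buffer mechanism also collapses once the stay move is removed; and the role of $\gcd(s,t)=1$ becomes unclear in a walk that only ever advances by $+1$, because such a walk equidistributes regardless of coprimality.

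The paper sidesteps this with a different step rule: at each step, with probability $1/2$ the walk advances by one, and with probability $1/2$ the next $t-1$ clusters are sent to the cyclic window \emph{one step back}, i.e.\ to $c_{j-t}, c_{j-t+1}, \dots, c_{j-2}$ in order. This permutation preserves the invariant that every $t$ consecutive clusters of $H$ are mapped bijectively onto $t$ consecutive vertices of $C$, so injectivity on edges is never violated, and the ``tails'' move produces a net cyclic shift by $-(t-1)$, which together with the $+1$ move explains exactly why $\gcd(s,t)=1$ is needed for ergodicity. The whole construction is then a finite Markov chain on $[s] \times [t-1]$ whose stationary distribution is computed in closed form, with concentration supplied by a Markov-chain Hoeffding bound, and buffer vertices identified as occurrences of a specific $(2t-1)$-block pattern (one tails move sandwiched by heads) rather than as stays. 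If you want to salvage your write-up, replace the stay move with this windowed back-step and re-derive the stationary distribution and buffer pattern accordingly; the Azuma-type concentration you sketched can then be replaced by the Markov-chain concentration inequality, and the rest of your balancing argument goes through.
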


We require a simple fact about cycles, whose proof we omit.

\begin{lemma} \label{lemma:fixedlengthwalkincycle}
	For all coprime integers $s,t \geq 2$, there is $\ell = \ell(s,t) \geq 1$ with the following property.
	Let $C$ be a $t$-uniform {oriented cycle} of order $s$.
	Then for each pair of oriented edges $e, f \in C$, there is an $(e,f)$-walk in $C$ of order exactly $\ell$. \hfill $\qed$
\end{lemma}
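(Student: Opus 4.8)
The statement to prove is \cref{lemma:fixedlengthwalkincycle}: for coprime $s,t\geq 2$ there is $\ell=\ell(s,t)$ so that any two oriented edges of a $t$-uniform oriented cycle $C$ of order $s$ are joined by an $(e,f)$-walk of order exactly $\ell$. My plan is to reduce everything to a counting/number-theoretic statement about walk lengths in the cyclic structure underlying $C$ and then invoke the coprimality hypothesis.

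First I would set up coordinates. Identify $V(C)$ with $\mathbb{Z}/s\mathbb{Z}$ so that the (oriented) edges of the $t$-uniform cycle are exactly the tuples $(i, i+1, \dots, i+t-1)$ for $i \in \mathbb{Z}/s\mathbb{Z}$; here I am using that a $t$-uniform oriented cycle of order $s$ is, up to relabelling, unique (this is the implicit content of the lemma — the statement does not depend on $C$ but only on $s$ and $t$). A \tight walk of order $r$ in $C$ is a sequence of $r$ vertices, consecutive $t$-windows of which are edges; under the identification this is exactly a walk $v_1,\dots,v_r$ in $\mathbb{Z}/s\mathbb{Z}$ with $v_{j+1}=v_j+1$ for all $j$, i.e. the walk is forced to be $v, v+1, \dots, v+r-1$ once the start $v$ and the length $r$ are fixed. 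So a walk of order $r$ starting at edge $e=(a,a+1,\dots,a+t-1)$ is determined by $r\geq t$ and ends with the window $(a+r-t, \dots, a+r-1)$; it ends at the oriented edge $f=(b,b+1,\dots,b+t-1)$ precisely when $a+r-t \equiv b \pmod s$, that is $r \equiv b-a+t \pmod s$.

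Next, the key step: I want a single $\ell$ that works for \emph{all} pairs $(e,f)$ simultaneously. Given the displacement $\delta=b-a \in \mathbb{Z}/s\mathbb{Z}$, the admissible orders are $\{\,r \geq t : r \equiv \delta + t \pmod s\,\}$. As $(e,f)$ ranges over all pairs, $\delta$ ranges over all of $\mathbb{Z}/s\mathbb{Z}$, so the admissible residues cover every residue class mod $s$. A naive choice of $\ell$ depending on $\delta$ is easy, but I need one $\ell$. This is where coprimality of $s$ and $t$ enters: I would instead use walks that are allowed to ``wind around'' — but wait, in the $t$-uniform tight cycle as set up above the walk length $r$ already determines the endpoint, so there is no freedom from winding in a single pass. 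The resolution is to observe that concatenating a closed \tight walk (a walk returning to its start edge) lets us add any multiple of the \emph{cycle length of closed walks}. A closed \tight walk from $e$ back to $e$ has order $r$ with $r \equiv 0 \pmod s$ (by the endpoint formula with $\delta=0$), and the shortest one has order $s$ (it is just once around). Hence from any walk of order $r_0$ connecting $e$ to $f$ we obtain walks of every order $r_0 + ms$ for $m\geq 0$. So to get a uniform $\ell$ I choose $\ell$ to be any common value, e.g. take for each residue class $\delta$ the smallest admissible $r_\delta \in [t, t+s)$, let $N=\max_\delta r_\delta < t+s$, and then set $\ell = N + s\cdot s!$ or more simply $\ell$ to be any integer $\geq t+s$ congruent simultaneously to all required values — but the required values are all residues mod $s$, so no single $\ell$ is congruent to all of them unless $s=1$! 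So this line of argument shows that, for the $t$-uniform tight cycle as literally defined, \emph{no} such $\ell$ exists; therefore the intended reading of ``$t$-uniform oriented cycle of order $s$'' here must be different, and coprimality of $s,t$ must be doing real work.

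\textbf{Reconsidering — where coprimality bites, and the real obstacle.} I now think the correct model is: $C$ is the $(t-k+1)$st power object being referenced, i.e. in \cref{sec:allocation-path-blow-up} $C$ denotes the $(t-k+1)$st power of a cyclic ordering, so its \emph{edges} (as a $t$-digraph in the sense used throughout) are all $t$-tuples of vertices lying within a window of $\leq t$ consecutive positions on a cycle of length $s$, with all orientations allowed via $\ori{C}(\,\cdot\,)$. Then a \tight walk can step forward by $+1$ \emph{or} can jump within a window, so walk-lengths connecting a fixed $e$ to a fixed $f$ form a much richer set: from $e$ one can reach $f$ by going ``the long way'' and the per-step progress is flexible, so the set of achievable orders, modulo $s$, is eventually all of $\mathbb{Z}_{\geq L}$ for some $L$, \emph{provided} $\gcd(s,t)=1$ — the coprimality guarantees the available step sizes generate $\mathbb{Z}/s\mathbb{Z}$ additively, which is exactly the numerical-semigroup / Chicken-McNugget condition needed for all large orders to be attainable. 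Concretely I would: (1) fix the model precisely and reduce ``order-$r$ $(e,f)$-walk'' to solvability of a linear congruence system whose step multiset has gcd controlled by $\gcd(s,t)$; (2) show that when $\gcd(s,t)=1$ there exist closed walks of two coprime orders $p,q$ (e.g. $s$ and something of size $\Theta(t)$), hence by the Chicken-McNugget/Frobenius bound every order $\geq (p-1)(q-1)$ is realizable as a closed walk, and then (3) take $\ell = \ell(s,t) := $ (max over the finitely many $(e,f)$ of a shortest connecting order) $+ (p-1)(q-1) + pq$, so that for every pair we can pad a shortest $(e,f)$-walk up to exactly $\ell$ by inserting a closed sub-walk of the appropriate order. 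The main obstacle is step (1)–(2): pinning down the exact step-move set of \tight walks in the power-of-a-cycle digraph and verifying that coprimality of $s$ and $t$ is precisely what makes those moves generate $\mathbb{Z}/s\mathbb{Z}$, so that ``all sufficiently large orders occur for every pair'' and a uniform $\ell$ can be extracted; once that numerical fact is in hand, the rest is bookkeeping (finitely many pairs, take a max, add a Frobenius-type slack).
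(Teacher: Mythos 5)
Your proposal does not prove the lemma — you say so yourself — and the gap you flag is genuine. Your first-half observation is a correct and useful sanity check: if $C$ had only the $s$ forward-oriented tuples $(c_i,\dots,c_{i+t-1})$ as edges, a tight walk would be forced to march forward, its order would determine its end-edge modulo $s$, and no single $\ell$ could serve all pairs. You correctly conclude the move set must be richer, but you leave open both \emph{what} it is and \emph{why} coprimality then produces a uniform $\ell$ — and these two steps are the entire content of the lemma, not a bookkeeping afterthought.

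Here is how to close them. The oriented edges of $C$ are \emph{all cyclic rotations} of the $s$ consecutive windows $(c_i,\dots,c_{i+t-1})$; you can read this off the walk that the randomised allocation of \cref{pro:allocation-random} produces, where a tails step rotates the current window in place rather than advancing it. With this move set the walk transition digraph on the $s(t-1)$ states of \cref{proposition:markovstationary} is strongly connected, and it contains closed walks of two coprime lengths: $s$ (a full forward loop) and $t$ (one forward step followed by one full backward rotation). Hence it is aperiodic, the orders of closed $(e,e)$-walks contain the numerical semigroup $\langle s,t\rangle$, and Chicken McNugget supplies a uniform $\ell$ exactly as in your step (3) — take $\ell$ larger than the diameter of the transition digraph plus the Frobenius bound. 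A more direct packaging of the very same coprimality fact, giving an explicit constant $\ell=(t+1)st$, is the one the paper intends (its proof is omitted with a $\qed$): by a Chinese Remainder computation, produce an $(e,f)$-walk of order $w\equiv 0\pmod t$ and $w\le (t+1)s$ by appending $p<t$ extra full forward loops of length $s$ each, with $p$ chosen so that $ps$ corrects $w$ modulo $t$; then pad to order exactly $\ell$ by prepending $(\ell-w)/t$ winds around $e$, each adding exactly $t$. Your Frobenius route and this CRT route spend coprimality in the same place; yours is less explicit about the constant, the latter is a one-line solve. Either way, the model identification and the ergodicity-from-coprimality verification that you left outstanding are the substance of the proof and must be written out.
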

\COMMENT{\begin{proof}
		We show that $\ell = (t+1)st$ works.
		Consider ordered edges $e,f$
		Since $s, t \geq 2$ are coprime, there is an $(e,f)$-walk in $C$ of order $w \leq (t+1)s$ with $w \equiv 0 \bmod t$.
		We then increase the order of this walk to exactly $(t+1)st$ by winding around $e$.
	\end{proof}}

\begin{proof}[Proof of \cref{pro:allocation-bandwidth-frontend}]
	We begin by choosing constants $\xi, \alpha, p$ such that
	\[   1/t \gg \alpha \gg 1/p \gg \eta \gg \xi \gg \rho \gg 1/m, \]
	and also $1/s \gg 1/p$.
	Moreover, we also choose $p$ to be coprime to $t$.

	\medskip \noindent \emph{Step 1: Finding a Hamilton cycle.}
	Recall that $V_{x^\ast}$ is the exceptional cluster of $\cV$, and denote $\cV' = \cV \sm \{V_{x^\ast}\}$.
	Let $n$ be the order of $R(\cV)$.
	So $\mathcal{V}'$ is $(1\pm \eta)m$-balanced, and by assumption we have
	\begin{equation}
		n-1 = \sum_{x \in V(R-x^\ast)} |V_x| = (1\pm \eta) (s-1)m.
		\label{equation:sizesV}
	\end{equation}
	Since $\eta$ is sufficiently small, we deduce that $m = (1\pm 2 \eta)n/(s-1)$.
	Hence, $\mathcal{V}'$ is $(1\pm 4\eta)n/(s-1)$-balanced.

	Let $\cW$ be a partition which refines $\cV'$ by (greedily) partitioning each cluster of $\cV'$ into $p$ subclusters of approximately the same size.
	Set $s' = (s-1)p$.
	Using that $\eta, 1/p \gg 1/m$, we see that $\cW$ is $(1\pm 5\eta)n/s'$-balanced.
	For each $x \in V(R-x^\ast) $, denote by $U_x$ the set of clusters of $\cW$ that are contained in $V_x$, and set $\cU = \{U_x\}_{x \in V(R-x^\ast)}$.
	Note that $\cU$ is a $p$-balanced partition of the set $\mathcal{W}$.
	Moreover, $s'$ is the number of clusters in $\cW$.
	Both $s-1$ and $p$ are coprime to $t$, so $s'$ is also coprime to $t$.

	Now recall that $K \subset \ori{C}(\hat K) \cup \ori{C}(\partial \hat K)$ is an $s$-vertex $[t]$-digraph that satisfies $\Del_{t+1}(\dcon \cap \dspa \cap \dape)$.
	It follows by \cref{lem:linked-edges} that there are edges $g^\ast, g \in K^{(t)}$ that intersect in $t-1$ vertices and such that $x^\ast \in g^\ast$ and $x^\ast \notin g$.
	Let $y \in V(K)$ be the unique vertex in $g \setminus g^\ast$.
	Without loss of generality (by considering a cyclic shift, if necessary) we can assume the tuple $g$ begins with $y$.
	Set $K' = K-x^\ast$.
	We modify $\mathcal{W}$ by choosing any cluster in $U_y$ and adding a new vertex to it; we update $\mathcal{U}$ accordingly.
	Note that, after this final modification,
	\begin{enumerate}[  label=({A}{{\arabic*}})]
		\item \label{item:thirdallocation-Wbalanced} $\mathcal{W}$ is a $(1 \pm 6\eta)n/s'$-balanced partition,
		\item \label{item:thirdallocation-sizesnoty} for each $x \in V(R-x^\ast-y)$, we have $\sum_{W \in U_x} |W| = |V_x|$, and
		\item \label{item:thirdallocation-sizesy} $\sum_{W \in U_y} |W| = |V_y|+1$.
	\end{enumerate}

	Suppose that $e, f$ are two ordered $t$-edges of $K^{(t)}-x^\ast$ as in the statement.
	Since $e = (e_1, \dotsc, e_t) \in K'^{(t)}$,  we can select $e' = (e'_1, \dotsc, e'_t) \in K'(\mathcal{U})^{(t)}$ where $e'_i \in U_{e_i}$ is chosen arbitrarily for each $1 \leq i \leq t$.
	We also define $f', g' \in K'(\mathcal{U})^{(t)}$ analogously, starting from $f$ and $g$.

	Observe that $K' \in \Del_t(\dcon \cap \dspa \cap \dape)$.
	Thus, we may apply \cref{corollary:hamallocationforbandwidth} with $s-1, p, K', \cU, e', f', g'$ playing the rôles of $s, m, R, \cV, e, f, g$.
	We obtain a Hamilton cycle $C_K$ of $K'(\cU)$, which contains $e',f',g'$ as subpaths.
	For convenience, we write $V(C_K) = \{c_1, \dotsc, c_{s'}\}$, so that $c_1, \dotsc, c_{s'}$ respects the cyclic order of~$C_K$.
	Note that each $c_i$ corresponds to a cluster in $\cW$, which we will denote by $W_i$.
	Together with \eqref{equation:sizesV}, \ref{item:thirdallocation-sizesnoty}, \ref{item:thirdallocation-sizesy}, this gives
	\begin{enumerate}[label=({A}{{\arabic*}}), resume]
		\item \label{item:thirdallocation-sizesW} $n = \sum_{j=1}^{s'} |W_{j}|$.
	\end{enumerate}
	Set $R' = R - x^\ast$.
	Since $K' \subseteq K^{(t)} \subseteq \ori{C}(K_t(R))$ and $V(K') = V(R-x^\ast)$, the cyclic sequence $(c_1, \dotsc, c_{s'})$ of $C_K$ is also the cyclic sequence of the $(t-k+1)$st power of a Hamilton cycle in $C \subset R'[\mathcal{U}]$.

	\medskip \noindent \emph{Step 2: Approximate allocation.}
	By assumption, there is a homomorphism $\phi_1$ from $H$ to the $(t-k+1)$st power of a \tight path $P$ such that $\phi_1$ takes each cluster of $H$ to the corresponding vertex $p_1,p_2,\dots $ of $P$.
	We apply \cref{pro:allocation-random} with $2 \alpha, s'$ playing the rôles of $\alpha, s$.
	This gives a homomorphism $\phi_2\colon V(P) \rightarrow V(C)$ such that $\phi = \phi_2 \circ \phi_1$ is a homomorphism from $H$ to $C$, and
	\begin{enumerate}[label=({A}{{\arabic*}}), resume]
		\item \label{item:thirdallocation-preimages} each $c_i$ is the image of $(1 \pm \xi)n/s'$ vertices, and
		\item \label{item:thirdallocation-buffer}
		      there exists a $(2\alpha, s')$-buffer $I \subseteq V(H)$.
	\end{enumerate}

	\medskip \noindent \emph{Step 3: Adjusting the endpoints.}
	Recall that $H$ starts with clusters $E_1,\dots,E_t \subset V(H)$ and ends with clusters $F_1,\dots, F_t \subset V(H)$.
	Let $\ell \geq 1$ be the value given by \cref{lemma:fixedlengthwalkincycle} applied to $C$.
	Let $i'\in C$ be the image of the $(\ell-t+1)$st edge of $P$, formally $ i' = (i'_1, \dotsc, i'_t) = \phi_2( p_{\ell-t+1}, \dotsc, p_{\ell})$.
	By \cref{lemma:fixedlengthwalkincycle}, it follows that $C$ admits an oriented walk $w_1 \dotsb w_{\ell}$ from $e' = (e'_1, \dotsc, e'_t)$ to $i'$.
	Modify $\phi_2$ on the vertices $p_1, \dotsc, p_{\ell}$ by setting $\phi_2(p_j) = e'_j$ for each $1 \leq i \leq t$, and $\phi_2(p_j) = w_{j-t}$ for each $1 \leq j \leq \ell$.
	We perform a similar alteration on $\phi_2$ on the last $\ell$ vertices of $V(P)$, to map the last $t$ vertices of $V(P)$ to the ordered tuple $f'$.
	For convenience, we keep the names $\phi_2$ and $\phi = \phi_2 \circ \phi_1$ for the updated homomorphism after this alteration.
	By definition of~$e'$ and $f'$, we have
	\begin{enumerate}[label=({A}{{\arabic*}}), resume]
		\item \label{item:thirdallocation-correctendploints} $\phi(E_i) \subseteq V_{e(i)}$ and $\phi(F_i) \subseteq V_{f(i)}$ for each $i\in [t]$.
	\end{enumerate}

	Since the alterations only modified the images of at most $2\ell$ vertices of $P$ and each cluster of $H$ has size at most $\rho m \leq (1+\eta)n/s'$, the previous properties of $\phi$ are maintained up to a small change in the constants.
	More precisely, the number of vertices of $H$ mapped to any given vertex $c_j \in V(C)$ were altered by at most $2\ell \rho (1+\eta)n/s'\leq \xi n /s'$, where we appealed to the hierarchy of constants in the last inequality.
	Let $Y = E_1 \cup \dotsb \cup E_t \cup F_1 \cup \dotsb \cup F_t$.
	We deduce from \ref{item:thirdallocation-preimages} and \ref{item:thirdallocation-buffer} that
	\begin{enumerate}[label=({A}{{\arabic*}}), resume]
		\item \label{item:thirdallocation-preimages2} each $c_i$ is the image of $(1 \pm 2\xi)n/s'$ vertices, and
		\item \label{item:thirdallocation-buffer2}
		      there exists an $(\alpha, s')$-buffer $I' \subseteq V(H) \setminus Y$.
	\end{enumerate}

	\medskip \noindent \emph{Step 4: Achieving perfection.}
	For each $1 \leq j \leq s'$, let $z_j = |W_{j}| - |\phi^{-1}(c_j)|$, where as we recall $W_j \in \cW$ is the cluster corresponding to the vertex $c_j \in V(C)$.
	Hence, $z_j$ is the difference between our target number of vertices of $H$ allocated to $c_j$ and the current number of vertices allocated via $\phi$.
	From \ref{item:thirdallocation-sizesW} we deduce that $\sum_{j=1}^{s'} z_j = 0$.
	Moreover,  $-7 \eta n/s' \leq z_j \leq 7 \eta n/s'$ holds for each $1 \leq j \leq s$ by \ref{item:thirdallocation-Wbalanced} and \ref{item:thirdallocation-preimages2}.
	In particular, we have $\sum_{j=1}^{s'} |z_j| \leq 7 \eta n$.
	Let $D$ be the directed graph on vertex set $\{1, \dotsc, s'\}$ where the edge $(i, i+t)$ is added for every $1 \leq i \leq s'$, with indices modulo~$s'$.
	Since $t$ and $s'$ are coprime, $D$ is in fact a directed cycle of length~$s'$.

	We will define, iteratively, sets $B_j \subseteq \phi^{-1}(c_j) \cap I'$ as follows.
	The construction will be done in steps $T \geq 0$.
	In each step $T$, we shall guarantee that $|B_j| \leq T$ for all $j$.
	Initially, in step $T = 0$, all the sets are empty.
	Define, for each $1 \leq j \leq s'$,
	\[ \delta(j) = |B_{j-t}| - |B_{j}| - z_j. \]
	If $\delta(j) = 0$ for all $1 \leq j \leq s'$; or $T \geq 7 \eta n$ steps have been executed, we stop the construction.
	Otherwise, suppose that $T < 7 \eta n$ steps have been executed, and there are $j, j'$ such that $\delta(j) > 0$ and $\delta(j') < 0$.
	Since $D$ is a cycle, there is a directed $(j, j')$-path $v_1 \dotsb v_r$ in $D$.
	For each $1 \leq i < r$, add some new unused vertex from $\phi^{-1}(c_{v_i}) \cap I'$ to $B_{v_i}$.
	Indeed this is possible, since by \ref{item:thirdallocation-buffer2} each set $\phi^{-1}(c_{v_i}) \cap I'$ contains at least $\alpha n / s' > 7 \eta n > T \geq |B_i|$ vertices (in the first inequality we used crucially that $s' = (s-1)p$ and $\eta \ll 1/p, 1/s, \alpha$).
	Note that this change decreases $\delta(j)$ by one, increases $\delta(j')$ by one, and keeps every other value intact: this implies that $\sum_{j=1}^{s'} |\delta(j)|$ decreases by two.

	We claim that this construction ends with $\delta(j) = 0$ for each $j \in [s']$.
	Note that, initially, $\sum_{j=1}^{s'} |\delta(j)| = \sum_{j=1}^{s'} |z_j| \leq 7 \eta n$.
	Also, as explained, each movement decreases $\sum_{j=1}^{s'} |\delta(j)|$ by two.
	Hence, after doing at most $7\eta n/ 2$ movements, we arrive at a family of sets $B_1, \dotsc, B_{s'}$ such that $\delta(j) = 0$ for each $j \in [s']$, as required.

	We define $\phi'\colon V(H) \rightarrow V(C)$ by defining $\phi'(B_j) = c_{j+t}$ for each $1 \leq j \leq s$; and $\phi'(v) = \phi(v)$ for each other vertex.
	Note first that $B_j \subseteq I'$ for each $j \in [s']$.
	Moreover, $I' \cap Y = \emptyset$ by \ref{item:thirdallocation-buffer2}.
	So \ref{item:thirdallocation-correctendploints} implies that
	\begin{enumerate}[label=({A}{{\arabic*}}), resume]
		\item \label{item:thirdallocation-correctendplointsfinal} $\phi'(E_i) \subseteq V_{e(i)}$ and $\phi'(F_i) \subseteq V_{f(i)}$ for each $i\in [t]$.
	\end{enumerate}

	We remark that $\phi'$ is a homomorphism from $H$ to $C$.
	To see this, let $e \in E(H)$ be arbitrary.
	If $\phi(e) = \phi'(e)$, this is immediate, so assume otherwise.
	Each edge $e \in H$ contains at most one vertex from $I$, by the definition of buffer.
	Thus, if $e \in H$ is such that $\phi(e) \neq \phi'(e)$, there must exist a unique $v \in B_j \cap e$ for some $1 \leq j \leq s'$.
	Again by the definition of the buffer, it follows that $\phi(e) \subseteq \{ c_j, c_{j+1}, \dotsc, c_{j+t-1} \}$.
	Since $\phi'$ and $\phi$, restricted to $e$, only differ in the image of $v$, we have $\phi'(e) \subseteq (\phi(e) \setminus \{ \phi(v) \}) \cup \{ \phi'(v) \} = \{ c_{j+1}, \dotsc, c_{j+t} \}$.
	As $C$ is the $(t-k+1)$st power of a Hamilton cycle, this ensures that $\phi'(e)$ is an edge in $C$, as required.

	Next, note that for each $1 \leq j \leq s'$, the number of vertices of $H$ allocated to $c_j$ via $\phi'$ is
	\begin{equation}
		|(\phi')^{-1}(c_j)| = |\phi^{-1}(c_j)| - |B_j| + |B_{j-t}| = |\phi^{-1}(c_j)| + z_j = |W_{j}|.
		\label{equation:exactWj}
	\end{equation}

	\medskip \noindent \emph{Step 5: Incorporating the exceptional vertex.}
	To finish, we obtain from $\phi'$ a homomorphism $\phi''$ from $H$ into~$R$, which essentially consists in taking care of the exceptional cluster $x^\ast \in V(R)$.
	Recall that in Step 1, we selected an ordered tuple $g = (g_1, \dotsc, g_t)$ with $g_1 = y$, such that the power of a cycle $C$ contains the ordered tuple $g' = (g'_1, \dotsc, g'_{t})$, where $g'_i \in U_{g_i}$ for all $i \in [t]$.

	Select any $v^\ast \in V(H) \cap (I' \setminus Y)$ such that $\phi(v^\ast) = \phi'(v^\ast) = g'_1$.
	(Such a vertex must exist: the argument is the same we used to ensure the existence of vertices in the buffer before using \ref{item:thirdallocation-buffer2}.)
	Define $\phi''(v^\ast) = x^\ast$.
	For every $v \neq v^\ast$, we have $\phi(v) = c_j$, and there exists $x \in V(R-x^\ast)$ such that $c_j \in U_x$; we set $\phi''(v) = x$.

	To verify that $\phi''$ is a homomorphism it is enough to check the image of edges $e \ni v^\ast$.
	Since $v^\ast \in I' \setminus Y$ and $I'$ is a buffer for $\phi$, we have $\phi(v^\ast) = g'_1$ and $\phi(e \setminus \{v^\ast\}) \subseteq \{ g'_{i+1}, \dotsc, g'_{i+t-1} \}$.
	Hence, $$\phi''(e) \subseteq \{x^\ast\} \cup \{ g_{i+1}, \dotsc, g_{i+t-1} \} = g^\ast \in K^{(t)} \subseteq K_t(R),$$ so indeed $\phi''(e) \in E(R).$
	From \ref{item:thirdallocation-sizesnoty}--\ref{item:thirdallocation-sizesy} and \eqref{equation:exactWj} we get that each $x \in V(R)$ is the image of exactly~$|V_x|$ vertices from $H$.
	Hence $R[\cV]$ contains a copy of $H$, as desired.
	Lastly, we have $\phi''(E_i) \subseteq V_{e(i)}$ and $\phi''(F_i) \subseteq V_{f(i)}$ for each $i\in [t]$ by \ref{item:thirdallocation-correctendplointsfinal}.
\end{proof}

It remains to show \cref{pro:allocation-random}.
In the next section, we introduce a few probabilistic instruments for this purpose.
The proof of \cref{pro:allocation-random} follows in \cref{sec:allocation-random}.

\subsection{Markov chains}\label{sec:markov}
For the random allocation, we need some probabilistic tools.
We work with finite Markov chains.
The following exposition covers the tools and concepts needed for the proof, see also \cite[Section 10.3]{LS23} for more details.

\subsubsection{Basics}

Markov chains are random processes $\{X_i\}_{i \geq 1}$ taking values in a finite \emph{state space} $\mathcal{X}$.
The trajectories of the random process are determined by a transition matrix $(P_{ij})_{i, j \in \mathcal{X}}$, where the value $P_{ij}$ represents the probability of evolving from $i$ to $j$ in a single step.
This implies that all entries in $P$ are in $[0,1]$ and the entries of each row sum up to $1$.
The associated digraph (with loops and antiparallel edges allowed) of the chain $D = D(P, \mathcal{X})$ has vertex set $\mathcal{X}$ and an arc $(i, j)$ if $P_{ij} > 0$.
A finite Markov chain is \emph{ergodic} if $D$ is strongly connected and contains two closed walks whose lengths are coprime.
In the language of Markov chains, this is equivalent to being \emph{irreducible} and \emph{aperiodic}.
The characterization in terms of the digraph properties applies to finite Markov chains.
A probability distribution $\pi$ on $\mathcal{X}$ (understood as a row vector) is said to be \emph{stationary} for the Markov chain if $\pi = \pi P$.
A basic fact from Markov chain theory states that ergodic Markov chains admit a unique stationary distribution.

\subsubsection{Concentration}

We need the following concentration inequality for Markov-dependent random variables~\cite{FJS21} (cf. \cite[Theorem 10.2]{LS23}).
Given a function $f\colon \mathcal{X} \rightarrow \REALS$ and a probability distribution $\pi$ over $\mathcal{X}$, we write $\Exp_\pi[f] = \sum_{x \in \mathcal{X}} \pi(x)f(x)$ for the expected value.

\begin{theorem} \label{theorem:markovconcentration}
	Let $\{X_i\}_{i \geq 1}$ be a finite ergodic Markov chain with state space $\mathcal{X}$ and transition matrix $P$.
	There exists $C > 0$, depending on $P$ only, such that the following holds.
	Suppose $X_1$ is distributed according to the stationary distribution $\pi$ of the chain.
	Let $\{f_i\}_{i \geq 1}$ be functions such that $f_i\colon \mathcal{X} \rightarrow [x_i, y_i]$ for $x_i < y_i$.
	Then, for every $\eps > 0$ and $s \in \NATS$,
	\[ \Pr\left[ \left| \sum_{i=1}^s (f_i(X_i) - \Exp_\pi[f_i]) \right| > \eps  \right] \leq 2 \exp \left( - \frac{2 C \eps^2}{\sum_{i=1}^s (y_i - x_i)^2} \right). \]
\end{theorem}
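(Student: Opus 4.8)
The plan is to run a Doob martingale along the chain and apply the Azuma--Hoeffding inequality, with the martingale increments controlled by a quantitative form of ergodicity of $P$. Write $S = \sum_{i=1}^s f_i(X_i)$, let $\mathcal{F}_k = \sigma(X_1,\dots,X_k)$, and set $M_k = \Exp[S \mid \mathcal{F}_k]$ for $0 \le k \le s$, so that $M_s = S$. Since $X_1 \sim \pi$ and $\pi$ is stationary, the whole chain is stationary, hence $X_i \sim \pi$ and $\Exp[f_i(X_i)] = \Exp_\pi[f_i]$ for every $i$; therefore $M_0 = \sum_i \Exp_\pi[f_i]$ and $M_s - M_0 = \sum_{i=1}^s\big(f_i(X_i) - \Exp_\pi[f_i]\big)$ is exactly the quantity to be bounded. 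Letting $P$ act on functions by $(Pg)(x) = \sum_y P_{xy}g(y)$, the Markov property gives $M_k = \sum_{i<k} f_i(X_i) + h_k(X_k)$, where $h_k = \sum_{i=k}^{s} P^{\,i-k} f_i$, and a short computation yields $M_k - M_{k-1} = h_k(X_k) - (Ph_k)(X_{k-1})$. Both terms lie in $[\min h_k, \max h_k]$, so the increment is bounded deterministically by $c_k := \max h_k - \min h_k \le \sum_{i=k}^{s}\big(\max P^{\,i-k}f_i - \min P^{\,i-k}f_i\big)$.

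Next I would quantify the ergodicity through the Dobrushin contraction coefficient $\delta(Q) = \tfrac12 \max_{x,y}\sum_z|Q_{xz}-Q_{yz}|$. Because the chain is ergodic there is an $m = m(P)$ with $P^m$ entrywise positive, hence $\rho := \delta(P^m) < 1$; submultiplicativity $\delta(Q_1Q_2) \le \delta(Q_1)\delta(Q_2)$ then gives $\delta(P^n) \le \rho^{\lfloor n/m\rfloor}$ for all $n \ge 1$, and for any function $g$ one has $\max P^n g - \min P^n g \le \delta(P^n)\,(\max g - \min g)$. Plugging this into the bound for $c_k$ and using $\max f_i - \min f_i \le y_i - x_i$ gives $c_k \le \sum_{i=k}^{s} \rho^{\lfloor (i-k)/m\rfloor}(y_i - x_i)$, i.e. a one-sided convolution of the sequence $(y_i-x_i)_i$ with $(\rho^{\lfloor j/m\rfloor})_{j\ge0}$. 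Young's inequality for sequences then yields $\sum_{k=1}^{s} c_k^2 \le \big(\sum_{j\ge0}\rho^{\lfloor j/m\rfloor}\big)^2 \sum_{i=1}^{s}(y_i-x_i)^2 = \big(\tfrac{m}{1-\rho}\big)^2 \sum_{i=1}^{s}(y_i-x_i)^2$. Finally, Azuma--Hoeffding gives $\Pr[|M_s - M_0| > \eps] \le 2\exp\big(-\eps^2/(2\sum_k c_k^2)\big) \le 2\exp\big(-\tfrac{(1-\rho)^2\eps^2}{2m^2\sum_i(y_i-x_i)^2}\big)$, which is the asserted bound with $C = (1-\rho)^2/(4m^2)$, a constant depending on $P$ alone.

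The martingale set-up and the Azuma step are routine; the one point that needs care is making the decay of $\max P^n f - \min P^n f$ simultaneously geometric in $n$ and \emph{uniform} over $n$, over $s$, and over the functions $f_i$ — this is precisely what the Dobrushin coefficient together with its submultiplicativity delivers, and it is what forces $C$ to depend on $P$ only. (Alternatively, the statement may be quoted verbatim from \cite{FJS21}.)
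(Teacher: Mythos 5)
The paper does not prove this statement: it is quoted directly from \cite{FJS21} (with a pointer to the equivalent \cite[Theorem~10.2]{LS23}), so there is no in-paper argument to compare against. Your blind proof is correct and self-contained, and it follows the standard Doob-martingale route rather than citing. The decomposition $M_k = \sum_{i<k} f_i(X_i) + h_k(X_k)$ with $h_k = \sum_{i\geq k} P^{\,i-k} f_i$ is exactly right, the identity $h_{k-1} = f_{k-1} + P h_k$ gives the clean increment $M_k - M_{k-1} = h_k(X_k) - (Ph_k)(X_{k-1})$ with range $c_k = \max h_k - \min h_k$, and the Dobrushin coefficient (with submultiplicativity and the uniform positivity of some $P^m$ under ergodicity) supplies the geometric damping needed to sum $c_k^2$. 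The Cauchy--Schwarz/Young step $\sum_k c_k^2 \leq \big(\tfrac{m}{1-\rho}\big)^2 \sum_i (y_i - x_i)^2$ is correct, and Azuma with $|M_k - M_{k-1}| \le c_k$ yields the claimed tail with $C = (1-\rho)^2/(4m^2)$, which depends on $P$ alone as required. One small point worth making explicit in a full write-up: the increments lie in $[-c_k, c_k]$ because each of $h_k(X_k)$ and $(Ph_k)(X_{k-1})$ lies in $[\min h_k, \max h_k]$, so the version of Azuma you want is the symmetric-range one with factor $1/(2\sum c_k^2)$ in the exponent, which is what you use. In short: where the paper black-boxes a reference, you have reconstructed a proof along the same general lines (Doob martingale plus a contraction estimate), and it is sound.
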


\subsubsection{Multiblock chains}

Let $\{X_i\}_{i \geq 1}$ be a Markov chain with state space $\mathcal{X}$ and transition matrix $P$.
Given a positive integer $m \geq 1$, let $\mathcal{X}^m_{+} \subseteq \mathcal{X}^m$ be the $m$-tuples $(x_1, \dotsc, x_m)$ such that $P_{x_i x_{i+1}} > 0$ for each $1 \leq i < m$.
Define the random variables $\{ \smash{X^{(m)}_i} \}_{i \geq 1}$, where $\smash{X^{(m)}_i} = (X_i, X_{i+1}, \dotsc, X_{i+m-1})$ consists of $m$ consecutive states of the Markov chain $\{X_i\}_{i \geq 1}$.
Then $\{ \smash{X^{(m)}_i} \}_{i \geq 1}$ is in fact a Markov chain with state space $\mathcal{X}^m_{+}$, where the probability of transition between two `shifted' tuples $(x_1, x_2, \dotsc, x_m)$ and $(x_2, x_3, \dotsc, x_{m+1})$ is equal to $P_{x_m x_{m+1}}$, and zero otherwise.
We say that $\{ \smash{X^{(m)}_i} \}_{i \geq 1}$ is the \emph{$m$-block chain of $\{X_i\}_{i \geq 1}$}.
The following proposition is straightforward to prove.

\begin{lemma} \label{proposition:multiblockmarkov}
	Let $X = \{X_i\}_{i \geq 1}$ be a Markov chain with state space $\mathcal{X}$ and transition matrix $P$.
	Let $m \geq 1$, and let $X^{(m)} = \{ \smash{X^{(m)}_i} \}_{i \geq 1}$ be the $m$-block chain of $X$.
	If $X$ is ergodic, then $X^{(m)}$ is ergodic as well;
	and if the stationary distribution of $X$ is $\pi$, then $X^{(m)}$ has stationary distribution $\pi^{(m)}$ on $\mathcal{X}^m_{+}$ given by \[ \pi^{(m)}(x_1, x_2, \dotsc, x_m) = \pi(x_1) P_{x_1 x_2} \dotsb P_{x_{m-1} x_m} \]
	for each $(x_1, x_2, \dotsc, x_m) \in \mathcal{X}^m_{+}$.
\end{lemma}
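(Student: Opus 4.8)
The statement to prove is \cref{proposition:multiblockmarkov}: the $m$-block chain of an ergodic Markov chain is ergodic, and its stationary distribution is the product formula $\pi^{(m)}(x_1,\dots,x_m) = \pi(x_1)P_{x_1x_2}\cdots P_{x_{m-1}x_m}$.

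\medskip

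The plan is to verify the three assertions in turn: that $\{X^{(m)}_i\}$ is a Markov chain, that it is ergodic, and that $\pi^{(m)}$ is its (necessarily unique) stationary distribution. The Markov property of the block chain is essentially immediate from the Markov property of $X$: conditioned on $X^{(m)}_i = (x_1,\dots,x_m)$, the next state $X^{(m)}_{i+1} = (X_{i+1},\dots,X_{i+m})$ is determined by $(x_2,\dots,x_m)$ together with $X_{i+m}$, and $X_{i+m}$ depends on the past only through $X_{i+m-1} = x_m$; so the transition probability from $(x_1,\dots,x_m)$ to $(x_2,\dots,x_m,x_{m+1})$ is $P_{x_m x_{m+1}}$ and the transition probability to any non-shifted tuple is $0$. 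This also identifies the associated digraph $D(P^{(m)},\mathcal{X}^m_+)$: there is an arc from $(x_1,\dots,x_m)$ to $(y_1,\dots,y_m)$ iff $y_i = x_{i+1}$ for $i<m$ and $P_{x_m y_m} > 0$.

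\medskip

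For ergodicity I would argue at the level of the digraph $D^{(m)} = D(P^{(m)},\mathcal{X}^m_+)$. First, strong connectedness: given two states $\bar x = (x_1,\dots,x_m)$ and $\bar y = (y_1,\dots,y_m)$ in $\mathcal{X}^m_+$, note that each is (by membership in $\mathcal{X}^m_+$) realised by a walk of length $m-1$ in $D = D(P,\mathcal{X})$, and since $D$ is strongly connected there is a walk in $D$ from $x_m$ to $y_1$; concatenating $x_1\cdots x_m$, this connecting walk, and $y_1\cdots y_m$ yields a walk in $D$ whose consecutive length-$m$ windows trace a walk in $D^{(m)}$ from $\bar x$ to $\bar y$. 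Hence $D^{(m)}$ is strongly connected. Second, aperiodicity: the gcd of cycle lengths in $D^{(m)}$ equals the gcd of closed-walk lengths through a fixed vertex, and a closed walk of length $\ell$ in $D^{(m)}$ based at $\bar x$ corresponds to a closed walk of length $\ell$ in $D$ based at $x_1$ (reading off the first coordinates); conversely any closed walk in $D$ based at $x_1$ lifts, after one lap of the $m$-window, to a closed walk of the same length in $D^{(m)}$. So the two gcd's coincide, and since $X$ is ergodic this gcd is $1$. Therefore $X^{(m)}$ is ergodic, and in particular has a unique stationary distribution.

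\medskip

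Finally I would check that $\pi^{(m)}$ as defined is stationary, which by uniqueness finishes the proof. It is a probability distribution: $\sum_{(x_1,\dots,x_m)\in\mathcal{X}^m_+} \pi(x_1)P_{x_1x_2}\cdots P_{x_{m-1}x_m} = \sum_{x_1}\pi(x_1)\sum_{x_2}P_{x_1x_2}\cdots\sum_{x_m}P_{x_{m-1}x_m} = \sum_{x_1}\pi(x_1) = 1$, using that each row of $P$ sums to $1$ (and the summand vanishes off $\mathcal{X}^m_+$ anyway). For stationarity, fix $(x_2,\dots,x_{m+1})\in\mathcal{X}^m_+$ and compute $\sum_{x_1}\pi^{(m)}(x_1,x_2,\dots,x_m)\,P_{x_mx_{m+1}} = \big(\sum_{x_1}\pi(x_1)P_{x_1x_2}\big)P_{x_2x_3}\cdots P_{x_{m-1}x_m}P_{x_mx_{m+1}} = \pi(x_2)P_{x_2x_3}\cdots P_{x_mx_{m+1}} = \pi^{(m)}(x_2,\dots,x_{m+1})$, where the middle equality is exactly the stationarity $\pi = \pi P$ of the original chain (note the only incoming transitions to $(x_2,\dots,x_{m+1})$ come from tuples of the form $(x_1,x_2,\dots,x_m)$, so this sum is the full $(\pi^{(m)}P^{(m)})$-coordinate). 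There is no serious obstacle here: the whole statement is a routine unwinding of definitions, and the only mild subtlety is bookkeeping the restriction to $\mathcal{X}^m_+$ and making sure the digraph-level argument for aperiodicity correctly matches closed-walk lengths between $D$ and $D^{(m)}$.
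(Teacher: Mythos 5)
The paper offers no proof of this lemma (it is described as ``straightforward to prove''), so there is no argument to compare against; your proof is correct and is the natural unwinding of the definitions. One small imprecision worth tightening: in the aperiodicity step, the ``conversely'' lift of a closed walk $w_0 \dotsb w_\ell$ at $x_1$ in $D$ (extend by $w_1 \dotsb w_{m-1}$ and read off $m$-windows) gives a closed walk of length $\ell$ in $D^{(m)}$ based at $(w_0,\dotsc,w_{m-1})$, which is generally \emph{not} $\bar x$; so the two gcds ``at $\bar x$'' and ``at $x_1$'' are not directly equated by this map. This is harmless because $D^{(m)}$ has already been shown strongly connected, and for a strongly connected digraph the period equals the gcd of all closed-walk lengths over the whole digraph (not just at one vertex), so producing a closed walk of length $\ell$ somewhere in $D^{(m)}$ for each $\ell$ in the $D$-length set is enough. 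The stationarity computation is clean and complete, including the summability check and the observation that the only predecessors of $(x_2,\dotsc,x_{m+1})$ under $P^{(m)}$ are the tuples $(x_1,x_2,\dotsc,x_m)$.
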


\subsection{Random allocation}\label{sec:allocation-random}

To prove \cref{pro:allocation-random}, we consider (a slight variation of) the following randomised procedure, which defines a random homomorphism from the $(t-k+1)$st power of a path $P$ to the $(t-k+1)$st power of a cycle $C$.
Suppose the vertices of $P$ are $\{p_1, p_2, \dotsc\}$ and the vertices of $C$ are $\{c_1, \dotsc, c_s\}$, in cyclic order.
Initially, map $p_i$ to $c_i$ for all $1 \leq i < t$.
In an iteration of the procedure, suppose that the next vertex of $P$ to be embedded is $p_i$, and the $t-1$ previous vertices $p_{i-t+1}, \dotsc, p_{i-1}$ were mapped to $c_{j-t+1}, \dotsc, c_{j-1}$, respectively.
(Here and in the following, index computations are taken modulo $s$ for the vertices of $C$.)
Flip a fair coin.
\begin{enumerate}
	\item If the result is heads, then map $p_i$ to $c_j$,
	\item If the result is tails, then map the $t-1$ vertices $p_i, p_{i+1}, \dotsc, p_{i+t-2}$ to $c_{j-t}, c_{j-t+1}, \dotsc, c_{j-2}$ respectively.
\end{enumerate}
This procedure ensures that each $t$ consecutive vertices in $P$ are embedded in $t$ consecutive vertices in $C$, which in turn ensures that it defines a valid homomorphism from $P$ to $C$.

To analyse this random algorithm, we use a Markov chain $X = \{(a_i, b_i)\}_{i \geq 0}$, defined as follows.
The state space is $\mathcal{X}_{s, t} = [s] \times [t-1]$.
The transition probabilities $P_{(a,b)(a',b')}$ are given (for each $a \in [s]$) by
$P_{(a,1), (a+1,1)} = P_{(a,1), (a,2)} = 1/2$,
$P_{(a,b)(a,b+1)} = 1$ for $b \in \{2, \dotsc, t-2\}$,
$P_{(a,t-1), (a-1, 1)} = 1$, and $0$ in any other case.
The way these transitions capture the above algorithm is that the allocation uses the vertex $c_j$ of the cycle in the $j$th step if $(a_i, b_i) = (i,1)$ or if $(a_i, b_i) = (j+t+1-b, b)$ for some $1 < b < t$.
It is not hard to compute the stationary distribution of this Markov chain.

\begin{lemma}[{\cite[Lemma 10.4]{LS23}}] \label{proposition:markovstationary}
	If $s$ and $t$ are coprime, then $X$ is ergodic and the stationary distribution~$\pi$ of $X$ is given, for all $a \in [s]$, by $\pi_{(a,1)} = 2/(st)$ and $\pi_{(a,b)} = 1/(st)$ for $b \neq 1$.
\end{lemma}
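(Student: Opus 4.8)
\textbf{Proof proposal for \cref{proposition:markovstationary}.}

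The plan is to verify directly that the proposed vector $\pi$ is a stationary distribution of the chain $X$ (which, by uniqueness for ergodic chains, identifies it completely), and then to check ergodicity via the characterisation in terms of the associated digraph. First I would confirm that $\pi$ is a probability distribution: there are $s$ states of the form $(a,1)$, each carrying mass $2/(st)$, and $s(t-2)$ states of the form $(a,b)$ with $b \neq 1$, each carrying mass $1/(st)$, so the total mass is $2s/(st) + s(t-2)/(st) = (2 + t - 2)/t = 1$. Then I would check the balance equations $\pi = \pi P$ state by state. The only states receiving probability flow are: $(a,1)$, which is fed from $(a-1,1)$ with weight $1/2$ and from $(a+1,t-1)$ with weight $1$; $(a,2)$, which is fed only from $(a,1)$ with weight $1/2$; and $(a,b)$ for $3 \le b \le t-1$, which is fed only from $(a,b-1)$ with weight $1$. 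Writing these out: $(\pi P)_{(a,1)} = \tfrac12 \pi_{(a-1,1)} + \pi_{(a+1,t-1)} = \tfrac12 \cdot \tfrac{2}{st} + \tfrac{1}{st} = \tfrac{2}{st} = \pi_{(a,1)}$; $(\pi P)_{(a,2)} = \tfrac12 \pi_{(a,1)} = \tfrac12 \cdot \tfrac{2}{st} = \tfrac{1}{st} = \pi_{(a,2)}$; and for $3 \le b \le t-1$, $(\pi P)_{(a,b)} = \pi_{(a,b-1)} = \tfrac{1}{st} = \pi_{(a,b)}$. Hence $\pi P = \pi$.

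For ergodicity, I would argue that the associated digraph $D(P, \mathcal{X}_{s,t})$ is strongly connected and contains two closed walks of coprime lengths. Strong connectivity: from any state $(a,b)$ one can reach $(a,1)$ (follow the deterministic chain $(a,b) \to (a,b+1) \to \dotsb \to (a,t-1) \to (a-1,1)$ and then walk around using the heads-transitions $(a',1)\to(a'+1,1)$), and from $(a,1)$ one reaches every $(a',1)$ by repeatedly taking the arc $(\cdot,1)\to(\cdot+1,1)$, using that these arcs generate a single cycle on the first coordinate; from $(a',1)$ one then reaches $(a',b')$ for any $b'$ via the chain $(a',1)\to(a',2)\to\dotsb\to(a',b')$. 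For the coprime cycle lengths, observe that $(a,1)\to(a+1,1)\to(a+2,1)\to\dotsb\to(a+s-1,1)\to(a,1)$ is a closed walk of length $s$ (using the heads arcs all the way around). On the other hand, the walk $(a,1)\to(a,2)\to(a,3)\to\dotsb\to(a,t-1)\to(a-1,1)\to(a,1)$, which takes $t-2$ deterministic steps to go from $(a,1)$ to $(a-1,1)$, then one tails-type step $(a-1,t-1)\to(a-2,1)$... I should instead pick the clean closed walk of length $t$: from $(a,1)$, flip tails to go to $(a,2)$, proceed deterministically $(a,2)\to\dotsb\to(a,t-1)$, then $(a,t-1)\to(a-1,1)$; this returns the first coordinate to $a-1$ after $t-1$ steps, so it is not yet closed. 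Combining it with one heads step $(a-1,1)\to(a,1)$ gives a closed walk of length $t$ based at $(a,1)$. Since $s$ and $t$ are coprime, these two closed walks have coprime lengths, so $D$ is aperiodic; together with strong connectivity this gives ergodicity, and hence $\pi$ is the unique stationary distribution.

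The main obstacle I anticipate is bookkeeping the transition structure correctly — in particular making sure that the arc $(a,t-1)\to(a-1,1)$ is placed consistently with the description of the algorithm (the tails branch moves the first coordinate \emph{backwards}), and that the closed walk of length $t$ I exhibit really does close up, which forces a careful count of how each transition changes the first coordinate: heads-arcs increase it by $1$, the deterministic arcs within a fixed $a$-block leave it unchanged, and the wrap arc $(a,t-1)\to(a-1,1)$ decreases it by $1$. Getting these signs and the cycle lengths exactly right is the only delicate point; the balance-equation computation and the probability-normalisation check are routine. (Since this lemma is quoted verbatim from \cite[Lemma 10.4]{LS23}, one may alternatively simply cite that reference, but the direct verification above is short enough to include.)
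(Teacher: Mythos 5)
The paper does not prove this lemma; it simply cites \cite[Lemma~10.4]{LS23}. Your direct verification is correct: the normalisation check ($2s/(st) + s(t-2)/(st) = 1$), the balance equations for each of the three cases $b'=1$, $b'=2$, $3\le b'\le t-1$, the strong-connectivity argument (reach $(\,\cdot\,,1)$, traverse the $s$-cycle on the first coordinate, then climb the deterministic chain in the second coordinate), and the two closed walks of lengths $s$ (all heads) and $t$ (one tails flip, $t-3$ deterministic steps, the wrap arc, one heads step) are all accurate, and coprimality of $s$ and $t$ then gives aperiodicity. This is the standard way to prove such a statement, and the only care required is exactly the bookkeeping you flag at the end: the wrap arc $(a,t-1)\to(a-1,1)$ decreases the first coordinate, so the length-$t$ walk must include one compensating heads step to close up — which you handle correctly.
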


Armed with these facts, we can prove \Cref{pro:allocation-random}.

\begin{proof}[Proof of \cref{pro:allocation-random}]
	Introduce $C$ with $1/t,1/s \gg C \gg \rho$ such that \cref{theorem:markovconcentration} applies to chains over $[s] \times [t-1]$ and $(2t-1)$-block Markov chains over $[s] \times [t-1]$.
	Let us write $H = P(\cU)$.
	So $P$ is the $(t-k+1)$st power of a \tight path on $q$ vertices $p_1, \dotsc, p_q$, and $\cU = \{U_i\}_{p_i \in V(P)}$ is a set family with clusters of size at most $h \leq \rho n$.

	Now we consider a random embedding $\phi_2 \colon V(P) \rightarrow V(C)$ defined as follows.
	Let $X = \{(a_i, b_i)\}_{i \geq 1}$ be the Markov chain over $[s] \times [t-1]$ defined above, where $(a_1, b_1)$ is drawn according to the stationary distribution~$\pi$ given by \cref{proposition:markovstationary}.
	Then, for each $i \geq 1$, we define
	\[\phi(v_i) = \begin{cases}
			c_a         & \text{if } (a_i, b_i) = (a, 1)                        \\
			c_{a-t+b-1} & \text{if } (a_i, b_i) = (a, b) \text{ and } b \geq 2,
		\end{cases}\]
	where indices of $c_j$ are understood modulo $s$ as before.
	Then $\phi_2$ is a homomorphism from $P$ to $C$.
	Let $\phi = \phi_2 \circ \phi_1$ be the homomorphism from $H$ to $C$, where $\phi_1$ maps the vertices $U_i$ to $p_i$.
	To finish, we verify that $\phi$ satisfies the required properties with positive probability.
	The analysis is separated into two steps concerning the allocation to the clusters and the allocation of the buffer, respectively.

	\medskip
	\noindent \emph{Step 1: Estimating allocation to clusters.}
	For each $1 \leq j \leq s$, let
	\[ S_j = \{ (j, 1) \} \cup \{ (j + t + 1 - b, b) \colon 2 \leq b < t \} \subseteq \mathcal{X}_{s,t}, \]
	and note that $\phi(v_i) = c_j$ if and only if $(a_i, b_i) \in S_j$, by definition.
	Now, fix $1 \leq j \leq s$, and define functions $f^j_1, \dotsc, f^j_q\colon \mathcal{X}_{s,t} \rightarrow [0, h]$ such that, for all $1 \leq i \leq q$,
	\[f^j_i((a,b)) = \begin{cases}
			|U_i| & \text{if } (a, b) \in S_j \\
			0     & \text{otherwise.}
		\end{cases}\]
	Since $U_i$ will be embedded via $\phi = \phi_2 \circ \phi_1$ in $c_j$ if and only if $(a_i, b_i) \in S_j$, we have
	\[ | \phi^{-1}(c_j)| = \sum_{i=1}^q f^j_i((a_i, b_i)). \]
	As the Markov chain was started from its stationary distribution, we have $\Pr[(a_i, b_i) = (a,b)] = \pi_{(a,b)}$ for each $1 \leq i \leq q$ and $(a,b) \in \cX_{s,t}$.
	Then, using the values of $\pi$ given by \cref{proposition:markovstationary}, we get
	\[ \Pr[ \phi_2(v_i) = c_j ] = \Pr[ (a_i, b_i) \in S_j ] = \frac{2}{st} + (t-2) \frac{1}{st} = \frac{1}{s}, \]
	which implies that $\Exp_\pi[ f^j_i ] = \frac{|U_i|}{s}$ and therefore that
	\[ \sum_{i=1}^q \Exp_\pi[ f^j_i ] = \frac{n}{s}. \]
	We have that $\sum_{i=1}^q |U_i| = n$ and $|U_i| \leq h$ for each $1 \leq i \leq q$.
	A convexity argument reveals that the sum $\sum_{i=1}^q |U_i|^2$ is maximised when as many entries $|U_i|$ as possible take the value $h$, at most one takes a value between $1$ and $h$, and the remaining values are zero.
	In this situation, at most $(n-1)/h + 1 \leq 2n/h$ entries take value $h$.
	Therefore, we have
	\[ \sum_{i=1}^q |U_i|^2 \leq (2n/h)h^2 = 2nh. \]

	The chain $X$ is finite and ergodic by \cref{proposition:markovstationary}.
	We apply \cref{theorem:markovconcentration} with $f^j_i, \xi n / s$ and $[0,|U_i|]$ playing the rôles of $f_i$, $\eps$ and $[x_i,y_i]$ respectively, to get
	\begin{align*}
		\Pr\left[ \left| |\phi^{-1}(c_j)| - \frac{n}{s} \right| > \frac{\xi n}{s} \right]
		 &  =
		\Pr\left[ \left| \sum_{i=1}^q f^j_i((a_i, b_i)) - \Exp_\pi[f^j_i] \right| > \eps \frac{n}{s} \right] \\
		  & \leq 2 \exp \left( - \frac{2 C \xi^2 n^2}{s^2 \sum_{i=1}^q |U_i|^2} \right)                       
		  \leq 2 \exp \left( - \frac{C \xi^2 n}{s^2 h} \right) < \frac{1}{3s},
	\end{align*}
	where in the last step we used $h \leq \rho n < C \xi^2 n / (s^2   \ln(8s))$.
	This allows us to use a union bound over the $s$ choices of $j$.
	So with probability at least $2/3$, we have $|\phi^{-1}(c_j)| =  (1 \pm \xi){n}/{s}$ for each $1 \leq j \leq s$.

	\medskip
	\noindent \emph{Step 2: Estimating buffer allocation.}
	For each $1 \leq c \leq t$, let $H_c \subseteq V(H)$ be the vertices contained in a cluster~$U_i$ with $i \equiv c \bmod t$.
	Thus $n = \sum_{c=1}^t |H_c|$, and therefore by the pigeonhole principle there exists $c$ such that
	\begin{equation}
		|H_c| \geq n/t. \label{equation:hc}
	\end{equation}
	Fix such a value of $c$ from now on.
	Note that there is no pair of vertices from $H_c$ which belong to a common edge of~$H$.
	Our goal is to find an $(\alpha, s)$-buffer $I$ for $\phi$ contained in $H_c$.

	Given $1 \leq j \leq s$, we say that $v \in V(H)$ is a \emph{buffer vertex for $c_j$} if there exists $1 \leq i \leq q$ with $i \equiv c \bmod t$ such that $\phi_1(v) = p_i$ and $\phi_2(p_{i} + r ) = c_{j+|r|}$ holds for each $-t < r < t$.
	The reason for this definition is that for each buffer vertex $v$ for $c_j$, it follows that each $v \in H_c$, $\phi(v_i) \in c_j$, and for each edge $e \in H$ which contains~$v$, we have $\phi(e) \subseteq \{ c_{j+1}, \dotsc, c_{j+t-1} \}$.
	Thus, we need to show that for each $1 \leq j \leq s$, there are at least $\alpha n / s$ buffer vertices for $c_j$.

	Consider the $(2t-1)$-block chain $X^{(2t-1)} = \{ \smash{X^{(2t-1)}_i} \}_{i \geq 1}$ of the chain $\{X_i\}_{i \geq 1}$ over the state space $\mathcal{X}^{2t-1}_+$.
	By \cref{proposition:multiblockmarkov}, $X^{(2t-1)}$ is ergodic and has stationary distribution $\pi^{(2t-1)}$ on $\mathcal{X}^{2t-1}_{+}$ given by \[ \pi^{(2t-1)}(x_1, x_2, \dotsc, x_{2t-1}) = \pi(x_1) P_{x_1 x_2} \dotsb P_{x_{2t-2} x_{2t-1}} \]
	for each $(x_1, x_2, \dotsc, x_{2t-1}) \in \mathcal{X}^{2t-1}_{+}$.
	Since the chain $X$ is started from its stationary distribution, we have
	\begin{align*}
		\Pr[(X_1, \dotsc, X_{2t-1}) = (x_1, \dotsc, x_{2t-1})]
		 & = \pi(x_1) P_{x_1 x_2} \dotsb P_{x_{2t-2} x_{2t-1}}
		= \pi^{(2t-1)}(x_1, \dotsc, x_{2t-1}),
	\end{align*}
	so $X^{(2t-1)}$ also starts from its stationary distribution.

	Next, given $1 \leq j \leq s$, define $\mathbf{x}_j \in \mathcal{X}^{2t-1}_{+}$ as
	\begin{align*}
		\mathbf{x}_j & = ( (j+1, 1), (j+2,1), \dotsc, (j+t-1, 1),            \\
		             & \qquad  (j+t-1, 2), (j+t-1, 3), \dotsc, (j+t-1, t-1), \\
		             & \qquad  (j+t-2, 1), (j+t-1, 1) ).
	\end{align*}
	Note that if $X^{(2t-1)}_{i-t+1} = \mathbf{x}_j$, then $\phi_2(p_i) = c_j$, and also the $t-1$ vertices before $p_i$, and also the $t-1$ vertices after $p_i$ were embedded in $\{c_{j+1}, \dotsc, c_{j + t-1} \}$.
	Recall that $\phi = \phi_2 \circ \phi_1$ is the homomorphism from $H$ to $C$, where $\phi_1$ maps the vertices $U_i$ to $p_i$.
	The above discussion implies that every vertex in $U_i \cap H_c$ is a buffer vertex for $c_j$.
	Shifting the indices, it follows that if $X^{(2t-1)}_i = \mathbf{x}_j$, then the vertices in $U_{i+t-1} \cap H_c$ are buffer vertices for $c_j$.

	With this in mind, define, for each $1 \leq j \leq s$, a sequence of functions $g^j_1, \dotsc, g^j_{q-2t+2}\colon \mathcal{X}^{(2t-1)}_{+} \rightarrow [0, h]$ such that, for all $1 \leq i \leq q$,
	\[g^j_i(\mathbf{y}) = \begin{cases}
			|U_{i+t-1} \cap H_c| & \text{if } \mathbf{y} = \mathbf{x}_j \\
			0                    & \text{otherwise.}
		\end{cases}\]
	By the above discussion, $c_j$ has at least
	$  \sum_{i=1}^{q-2t+2} g^j_i(X^{(2t-1)}_i) $
	many buffer vertices.

	Recall \cref{proposition:markovstationary}.
	Since the Markov chain was started from its stationary distribution, it follows that
	\begin{align*}
		\Pr[X^{(2t-1)}_i = \mathbf{x}_j]
		 & = \pi^{(2t-1)}_{\mathbf{x}_j}
		= \pi_{(j+1,1)} P_{(j+1,1)(j+2,2)} \dotsb P_{(j+t-2,1),(j+t-1,1)} = \frac{2}{st} \left( \frac{1}{2} \right)^{t}
	\end{align*}
	for each $1 \leq i \leq q-2t+2$.
	This implies $\Exp_\pi[ g^j_i ] = \frac{|U_{i+t-1} \cap H_c|}{2^{t-1}st}$.
	We therefore have (with explanations to follow)
	\begin{align*}
		\sum_{i=1}^{q-2t+2} \Exp_\pi[ g^j_i ]
		 & = \frac{1}{2^{t-1}st} \left| H_c \setminus \{ U_1 \cup \dotsb \cup U_{t-2} \cup U_{q-t+1} \cup \dotsc \cup U_{q} \} \right|            \\
		 & \geq \frac{1}{2^{t-1}st} \left( |H_c| - 2h \right) \geq \frac{1}{2^{t-1}st} \left( \frac{n}{t} - 2h \right) \geq \frac{2 \alpha n}{s}.
	\end{align*}
	In the first equality, we use that $|U_{i+t-1} \cap H_c|$ for $1 \leq i \leq q-2t+2$ ranges from every $U_i$ except the first $t-2$ and the last $t-2$ indices.
	The first inequality follows from the fact that $H_c$ (since it consists only of clusters $U_i$ which are spaced $t$ apart) contains at most one of the first and last $t-2$ many $U_i$'s; and each $U_i$ that is missed contains at most $h$ many vertices, by assumption.
	The second inequality follows from equation \eqref{equation:hc}, and the last inequality follows from the assumptions on $h$ and $\alpha$.
	Also, the same convexity argument as in the previous step shows that
	\[ \sum_{i=1}^{q-2t+2} |U_{i+t-1} \cap H_c|^2 \leq 2nh. \]

	We apply \cref{theorem:markovconcentration} to the chain $X^{(2t-1)}$ with $g^j_i, \alpha n / s$, $0$ and $|U_{i+t-1} \cap H_c|$ playing the rôles of $f_i$,~$\eps$,~$x_i$ and $y_i$ respectively, to get
	\begin{align*}
		\Pr\left[ \sum_{i=1}^{q-2t+2} g^j_i(X^{(2t-1)}_i) < \frac{\alpha n}{s} \right]
		 & \leq
		\Pr\left[ \left| \sum_{i=1}^{q-2t+2} g^j_i(X^{(2t-1)}_i) - \Exp_\pi[ g^j_i] \right| > \alpha \frac{n}{s} \right] \\
		 & \leq 2 \exp \left( - \frac{2 C \alpha^2 n^2}{s^2 \sum_{i=1}^s |U_{i+t-1} \cap H_c|^2} \right)                 
		  \leq 2 \exp \left( - \frac{C \alpha^2 n}{2s^2h} \right) < \frac{1}{3s},
	\end{align*}
	where in the last step we used $h \leq \rho n < C \alpha^2 n / (s^2  \ln(8s))$.
	Thus we can take a  union bound over all the $s$ choices of $j$.
	So with probability at least $2/3$, it follows that $\sum_{i=1}^{q-2t+2} g^j_i(X^{(2t-1)}_i) \geq \alpha n /s$ for each $1 \leq j \leq s$;
	which implies that each $c_j$ has at least $\alpha n / s$ many buffer vertices, as required.

	Therefore, the calculations done in both steps show that there exists an embedding $\phi_2$ from $P$ to $C$ which satisfies all properties simultaneously.
\end{proof}

\section{Boosting and orientations}\label{sec:boosting+orientations}

Here we show \cref{pro:framework-undirected-to-directed,lem:booster}.
We begin with the proof of the latter lemma in \cref{ssec:boosting}.
For the proof of \cref{pro:framework-undirected-to-directed}, we need some preparations, which take place in the following two subsections.
\cref{pro:framework-undirected-to-directed} is then shown in \cref{sec:framework-undirected-to-directed}.

\subsection{Proof of the Booster Lemma} \label{ssec:boosting}
We require the following fact.

\begin{observation}\label{obs:fractional-threshold}
	Let $1/r,\,1/s,\,\eps \gg 1/n$.
	Let $P$ be an $n$-vertex $s$-graph with $\delta_r(P) \geq (\delta_{1}^{(s)} + \eps) \binom{n-r}{s-r}$.
	Then~$P$ has a perfect fractional matching.
\end{observation}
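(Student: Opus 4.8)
The statement to prove is \Cref{obs:fractional-threshold}: if $P$ is an $n$-vertex $s$-graph whose minimum $r$-degree exceeds $(\delta_1^{(s)}(\mat)+\eps)\binom{n-r}{s-r}$, then $P$ has a perfect fractional matching. The plan is to pass from the $r$-degree condition to a $1$-degree condition via the monotonicity of normalised minimum degrees (\Cref{fact:monotone-degrees}), and then to invoke the definition of $\delta_1^{(s)}(\mat)$ together with an integer-matching-to-fractional-matching argument on a slightly reduced vertex set.

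First I would apply \Cref{fact:monotone-degrees} with $d=1$ and $d'=r$ (taking the contrapositive direction: larger $d$ gives a smaller normalised degree, so a lower bound on the $r$-normalised degree does \emph{not} immediately give one on the $1$-normalised degree). Actually the inequality goes the wrong way for a naive application, so instead I would argue directly: a minimum $r$-degree of at least $(\delta_1^{(s)}(\mat)+\eps)\binom{n-r}{s-r}$ means every $r$-set lies in many edges; averaging over the $r$-sets containing a fixed vertex $v$, one gets that $v$ lies in at least $(\delta_1^{(s)}(\mat)+\eps)\binom{n-1}{s-1}$ edges, i.e. $\delta_1(P)\geq(\delta_1^{(s)}(\mat)+\eps)\binom{n-1}{s-1}$. (This is the standard double-counting that also underlies \Cref{fact:monotone-degrees}.) So the $r$-degree hypothesis is at least as strong as the $1$-degree hypothesis we actually need.

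Next, let $\eps'=\eps/2$ and let $n_0$ be the threshold from the definition of $\delta_1^{(s)}(\mat)$ for this $\eps'$. The only obstacle to applying that definition verbatim is the divisibility requirement $s \mid n$. To handle this, remove at most $s-1$ vertices from $P$ to obtain an induced subgraph $P'$ on $n'$ vertices with $s \mid n'$; since $n$ is large, $\delta_1(P') \geq (\delta_1^{(s)}(\mat)+\eps')\binom{n'-1}{s-1}$ still holds (deleting a bounded number of vertices changes each vertex's degree and the normalising binomial only by lower-order terms — this is the one routine estimate to spell out). By the definition of $\delta_1^{(s)}(\mat)$, $P'$ has a perfect matching $M$. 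Assigning weight $1$ to every edge of $M$ and weight $0$ elsewhere gives a fractional matching of $P$ of size $n'/s \geq (n-s+1)/s$, which is not quite perfect. To upgrade to a genuinely perfect fractional matching of size $n/s$, I would instead redistribute: give each edge of $M$ weight $1-\gamma$ for a tiny $\gamma$ and spread the remaining $\gamma n'/s$ units of weight, together with weight covering the $\leq s-1$ leftover vertices, across additional edges guaranteed by the large minimum degree (every set of $s-1$ vertices extends to an edge since $\delta_1(P)$ is close to its maximum, so in particular every leftover vertex lies in an edge and these edges can carry the small amount of fractional weight needed without violating the vertex constraints).

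The main obstacle is purely bookkeeping: handling the non-divisibility of $n$ and converting the resulting near-perfect object into an exactly perfect fractional matching while respecting the constraint $\sum_{e\ni v}\omega(e)\leq 1$ at every vertex. Since $1/s,\eps \gg 1/n$, there is ample slack (each vertex currently has weight exactly $1$ from $M$ or $0$, and the minimum degree is within $\eps$ of the maximum), so the redistribution is comfortable; I would phrase it as: take the leftover vertex set $X$ with $|X|\leq s-1$, greedily pick $|X|$ edges, one through each vertex of $X$, and for each such edge $e$ lower the weight on one edge of $M$ meeting $e$ and raise $e$'s weight, iterating to absorb all of $X$, which works because $|X|=O(1)$ is negligible compared to $n$. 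This completes the construction of a perfect fractional matching.
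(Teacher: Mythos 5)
Your first step is fine, but you have the direction of \Cref{fact:monotone-degrees} backwards: the fact states $\delta_d(G)/\binom{n-d}{k-d} \geq \delta_{d'}(G)/\binom{n-d'}{k-d'}$ for $d \leq d'$, so applying it with $d=1$ and $d'=r$ gives $\delta_1(P)/\binom{n-1}{s-1} \geq \delta_r(P)/\binom{n-r}{s-r} \geq \delta_1^{(s)}(\mat) + \eps$ directly. Your double-counting re-derivation is harmless, but you did not need to detour around the fact.

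The real problem is the final redistribution step, and I think it has a genuine gap. You delete $q \leq s-1$ vertices, find a perfect matching $M$ of the remaining $n-q$ vertices, and then try to patch in the leftover set $X$ by locally shifting weight between $M$-edges and new edges. This cannot work by the moves you describe. Recall that a perfect fractional matching of $P$ has total weight exactly $n/s$, and since the vertex sums are each at most $1$ and must sum to $n$, every vertex sum must in fact equal $1$. Starting from $M$ at weight $1$, the total is $(n-q)/s$, so you must raise the total by $q/s$ and bring every vertex of $X$ from weight $0$ up to weight $1$ without pushing any other vertex above $1$. But a matched vertex currently has no slack at all, so any new edge with positive weight that touches a matched vertex forces you to first lower some $M$-edge; the swap \textquotedblleft raise $\omega(e)$ by $\alpha$, lower one intersecting $f \in M$ by $\alpha$\textquotedblright\ is at best weight-neutral (and only avoids violating constraints when $e \setminus \{x\} \subseteq f$, which is not something you can assume), so iterating it never increases the total weight past $(n-q)/s$. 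Your \textquotedblleft ample slack because $\eps \gg 1/n$\textquotedblright\ intuition does not help here: the obstruction is exact conservation of weight under the local moves, not a small-error issue.

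The paper avoids this entirely by symmetrising: for each $q$-set $Q \subset V(P)$ one has a perfect matching $M_Q$ of $P - Q$, and averaging $\frac{1}{\binom{n}{q}}\sum_Q \mathbbm{1}_{M_Q}$ gives every vertex weight exactly $(n-q)/n$, after which a single global rescaling by $n/(n-q)$ produces a perfect fractional matching. This averaging-and-scaling is the missing idea; a single matching plus greedy repair does not reach total weight $n/s$.
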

\begin{proof}
	By monotonicity of the degree-types (\cref{fact:monotone-degrees}), we have $\delta_1(P) \geq (\delta_{1}^{(s)} + \eps) \binom{n-1}{s-1}$.
	Write $n = m k + q$ with $0 \leq q \leq k-1$.
	By the choice of $n$, we note that $P$ still has a perfect matching after deleting any set of $q$ vertices.
	We may thus obtain a perfect fractional matching of $P$ by averaging over all possible choices of $q$ vertices.
\end{proof}

\begin{proof}[Proof of \cref{lem:booster}]
	We abbreviate $\P= \dcon \cap \dspa \cap \dape$.
	Let $G$ be a $[k]$-digraph on $n$ vertices that $(\delta_1,r_1,s_1)$-robustly satisfies $\P$.
	Set $P = \PG{H}{\P}{s_1}$ and $\delta=\delta_{1}^{(s_1)}(\mat)$.
	Hence, $\delta_{r_1}(P) \geq (\delta+\eps) \binom{n-r_1}{s_1-r_1}$.
	Now set $Q = \PG{P}{\DegF{1}{\delta+\eps/2}}{s_2}$.
	In other words, $Q$ is the $s_2$-graph on $V(P)=V(G)$ with an $s_2$-edge $S$ whenever the $s_1$-graph $P[S]$ has minimum ${r_1}$-degree at least $\left(\delta+\eps/2\right) \binom{s_2-r_1}{{s_1-r_1}}$.
	By \cref{lem:inheritance-minimum-degree}, it follows that $\delta_{r_2}(Q) \geq  \delta_2 \tbinom{n-r_2}{s_2-r_2} \geq (1-1/s_2^2) \tbinom{n-r_2}{s_2-r_2}$.

	Fix an arbitrary $s_2$-edge $S' \in E(Q)$.
	It is enough to show that $G[S']$ satisfies $\Del_q(\P)$.
	To this end, let $D \subset S'$ be a set of at most $q$ vertices, and let $S = S' \sm D$.
	Our goal is now to show that $G[S]$ satisfies $\P = \dcon \cap \dspa \cap \dape$.

	For the space property, observe that $P[S]$ has a perfect fractional matching by \cref{obs:fractional-threshold}.
	Furthermore, note that $G[R]$ has a perfect fractional matching for every $s_1$-edge $R$ in $P[S]$ by definition of $P$.
	We may therefore linearly combine these matchings to a perfect fractional  matching of $G[S]$.

	For the connectivity property, consider disjoint $e,f \in G[S]^{(k-1)}$.
	We have to show that $e$ and $f$ are on a common closed \tight walk in $G[S]$.
	Note that $e$ and $f$ together span ${r_1}=2k-2$ vertices, and $\delta_{r_1}(P[S]) \geq (\delta+\eps) \binom{n-r_1}{s_1-r_1}$.
	Hence there is an $s_1$-edge $R \in P[S]$ that contains both $e$ and $f$.
	Moreover, since $G[R] \in \dcon$, it follows that $e$ and $f$ are on a common closed walk.

	Finally, the aperiodicity property follows simply because there is an edge in $R \in P[S]$.
	Since $G[R]$ satisfies $\dape$, it follows that $G[R]$ contains a closed \tight walk of order $1\bmod k$.
\end{proof}

\subsection{Orientations}
It is convenient to work with undirected versions of the properties $\dcon$, $\dspa$ and $\dape$ introduced in \cref{sec:directed-setupt-necessary-conditions}.
A \emph{$k$-bounded hypergraph} (or \emph{$[k]$-graph} for short) $G$ consists of a set of vertices $V(G)$ and a set of edges $E(G)$, where each edge $e \subset V(G)$ has size $1 \leq |e| \leq k$.
Denote by $G^{(i)}$ the $i$-graph consisting of the edges of uniformity $i$.
We say that $G$ is \emph{\tightly connected} if $G^{(k)}$ is \tightly connected (as defined in \cref{sec:necessary-conditions}).

Given this, the concepts introduced in \cref{sec:directed-setupt-necessary-conditions} can be immediately transferred to the undirected setting.
The \emph{(undirected) \tight adherence} $\adh(G) \subset G^{(k)}$ is obtained by taking the union of the \tight components $\tc(e)$ over all $e \in G^{(k-1)}$.

\begin{definition}[Connectivity]
	Let $\ucon$ be the set of $[k]$-graphs $G$ with $k\geq2$ such that $\adh(G)$ is a single vertex-spanning \tight component.
\end{definition}

\begin{definition}[Space]
	Let $\uspa$ be the set of $[k]$-graphs $G$ with $k\geq 2$ such that $\adh(G)$ has a perfect fractional matching.
\end{definition}

\begin{definition}[Aperiodicity]
	Let $\uape$ be the set of $[k]$-graphs $G$ with $k\geq 2$ such that $\adh(G)$ contains a closed walk $W$ of length $1 \bmod k$.
\end{definition}

The next lemma allows us to transition from the undirected to the directed setting.

\begin{lemma}[Orientation]\label{lem:orientation}
	For $1/k,\, 1/s_1 \gg 1/s_2 \gg 1/n$, let $G$ be a $[k]$-graph on $n \geq n_0$ vertices that $s_1$-robustly satisfies $\ucon \cap \uspa \cap \uape$.
	Then there exists an $n$-vertex $[k]$-digraph $D \subseteq \ori{C}(G) \cup \ori{C}(\partial G)$ which $s_2$-robustly satisfies $\dcon \cap \dspa \cap \dape$.
\end{lemma}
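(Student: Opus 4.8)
\textbf{Proof plan for \cref{lem:orientation} (Orientation Lemma).}

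The plan is to reduce to the (robust) local picture provided by the property graph and then orient edges locally in a consistent way, using that the three properties $\ucon$, $\uspa$, $\uape$ are all witnessed by bounded-size substructures (walks, fractional matchings, closed walks of controlled length). Concretely, first I would set $P = \PG{G}{\ucon \cap \uspa \cap \uape}{s_1}$, which by hypothesis has minimum $2k$-degree at least $(1-1/s_1^2)\binom{n-2k}{s_1-2k}$, and then pass to a finer property graph $Q = \PG{P}{\DegF{2k}{1-1/s_1^2-o(1)}}{s_2}$ exactly as in the proof of \cref{lem:booster}; by \cref{lem:inheritance-minimum-degree}, $Q$ has minimum $2k$-degree at least $(1-1/s_2^2)\binom{n-2k}{s_2-2k}$. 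The point of this double step is that every $s_2$-set $S'\in E(Q)$ induces an $s_1$-graph $P[S']$ of huge minimum degree, so within $S'$ any two disjoint $(k-1)$-sets lie in a common $s_1$-edge $R\in P[S']$, i.e.\ $G[R]\in\ucon\cap\uspa\cap\uape$. This is the same connectivity-at-the-$k$-level argument used in \cref{lem:booster}.

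Next, I would fix, for each $s_1$-edge $R\in P$, a concrete witness: a closed tight walk $W_R\subseteq \adh(G[R])$ of order at most $k^{2s_1}$ that visits every edge of $\adh(G[R])$ and has order $1\bmod k$ (this exists by \cref{lem:tight-walk} applied to the tightly connected $k$-graph $\adh(G[R])$, which also lies in $\ape$ since $G[R]\in\uape$), together with a perfect fractional matching of $\adh(G[R])$. A closed tight walk induces a coherent cyclic orientation of the $k$-edges it traverses: reading $W_R = v_0 v_1 \cdots v_{\ell}$ (indices mod $\ell$, $k\mid \ell$ — we may arrange divisibility by winding) as a sequence of directed $k$-edges $(v_i,\dots,v_{i+k-1})$, and likewise orient the relevant $(k-1)$-shadow edges as $(v_i,\dots,v_{i+k-2})$. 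I would define $D$ to be the $[k]$-digraph on $V(G)$ whose $k$-edges are all cyclic shifts of $(v_i,\dots,v_{i+k-1})$ over all $R$ and all $i$ (so $D^{(k)}$ is shift-closed by construction), together with the oriented $(k-1)$-edges $\ori{C}(\partial G^{(k)})$ and enough lower-uniformity oriented edges to witness the fractional matchings. Then $D\subseteq \ori{C}(G)\cup\ori{C}(\partial G)$ by design.

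Finally, I would verify $D$ $s_2$-robustly satisfies $\dcon\cap\dspa\cap\dape$ by checking, for each $S'\in E(Q)$, that $D[S']\in\dcon\cap\dspa\cap\dape$; this suffices since $\delta_{2k}(Q)$ is large. For $\dcon$: given disjoint $e,f\in D[S']^{(k-1)}$, take the $s_1$-edge $R\in P[S']$ containing their underlying $(k-1)$-sets; the walk $W_R$ visits the corresponding $k$-edges in the prescribed orientations, so $e$ and $f$ lie on a common closed directed tight walk — and $\adh(D[S'])$ is a single vertex-spanning tight component because $W_R$ spans $R$ and the $R$'s overlap. For $\dape$: the same $W_R$ has order $1\bmod k$, so is coprime to $k$. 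For $\dspa$: linearly combine the perfect fractional matchings of the $\adh(G[R])$'s, $R\in P[S']$, exactly as in the space part of \cref{lem:booster}. The main obstacle I anticipate is orientation consistency: when two chosen witness walks $W_R$ and $W_{R'}$ share a $k$-edge, they might orient it incompatibly, which would break shift-closedness or make $\adh(D)$ disconnected. The fix is to not insist on a single orientation per edge but rather to let $D^{(k)}$ collect \emph{all} cyclic shifts arising from any witness — then $D^{(k)}$ is automatically shift-closed, every $k$-edge lies in a tight component, and the walks $W_R$ still certify that everything is in one component; the price is only that $D$ is slightly larger, which is harmless since we only need $D\subseteq \ori{C}(G)\cup\ori{C}(\partial G)$. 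A secondary technical point is arranging $k\mid \ell$ and order $\equiv 1 \bmod k$ simultaneously for the closed walk; this is handled by the "moreover" clause of \cref{lem:tight-walk} together with winding around a single edge, as in \cref{obs:con-cap-implies-div}.
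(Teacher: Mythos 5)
Your proposal correctly identifies the skeleton: pass to the property graph $P$, work inside a finer property graph $Q$ as in \cref{lem:booster}, use closed tight walks of length $1\bmod k$ as witnesses, and combine fractional matchings for $\dspa$. You also correctly sense that orientation consistency between witness walks $W_R$ and $W_{R'}$ is the central difficulty. However, the fix you propose does not address the actual source of that difficulty, which lives at uniformity $k-1$, not $k$.

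Setting $D^{(k)} = \ori{C}(G^{(k)})$ is indeed what the paper does, and it does make $D^{(k)}$ shift-closed for free. But the property $\dcon$ is defined through the directed tight adherence $\adh(D) = \bigcup_{e \in D^{(k-1)}} \tc(e)$, and your choice $D^{(k-1)} = \ori{C}(\partial G^{(k)})$ throws in \emph{both} orientations of every shadow $(k-1)$-edge. These two orientations generally land in \emph{different} tight components of $\ori{C}(G^{(k)})$ — for a concrete illustration, take $G^{(3)}$ with edges $\{123, 234, 345\}$: the oriented $(k-1)$-edges $(1,2)$ and $(2,1)$ have $\tc((1,2))$ containing $(1,2,3),(2,3,4),\dots$ (the ``forward'' component) and $\tc((2,1))$ containing $(3,2,1),(2,1,3),\dots$ (the ``backward'' component), and no directed closed tight walk links them. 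So $\adh(D)$ would be a union of at least two tight components and $D\notin\dcon$, already before robustness enters. Your verification step for $\dcon$ also tacitly assumes that the walk $W_R$ visits the $k$-edges around $e$ and $f$ in the orientations $e$ and $f$ actually carry; but $W_R$ fixes one orientation per edge it traverses, and $e,f$ may well be the opposite ones. Adding all cyclic shifts of $k$-tuples does not repair this, since cyclic shifts preserve the tight component; only reversing the reading direction can cross between the components, and that is exactly what $D^{(k-1)}$ must \emph{not} licence.

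The missing idea is therefore a global \emph{selection} of one coherent orientation per $(k-1)$-edge, rather than taking all of them. This is what the paper's auxiliary graph $H$ on the vertex set $\ori{C}(G^{(k-1)})$ achieves: two oriented $(k-1)$-tuples are joined in $H$ when they are ``robustly consistent'' (appear with those orientations on a common closed walk of length $1\bmod k$ in $G[S]$ for positively many $s_1$-edges $S\in P$). One then extracts, via \cref{lem:density-regular-tuple} and \cref{lemma:lowerregular-robustlyconnected}, a large robustly-connected subgraph $H_0\subseteq H$ and sets $D^{(k-1)} = V(H_0)$. It is precisely the connectivity and robustness of $H_0$ that guarantee, inside a typical $s_2$-set $S'$, that any two $(k-1)$-edges of $D[S']$ are linked by a short path of consistent pairs, each step of which is witnessed by some $R\in P[S']$, so that a maximal closed walk in $D[S']$ picks them all up and $\adh(D[S'])$ is a single (vertex-spanning) tight component. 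Without some analogue of this selection step, the argument for $\dcon$ does not close.
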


The rest of this subsection is dedicated to the proof of \cref{lem:orientation}.
Let us briefly outline the argument.
Given a $k$-graph $G$ as in the statement, it is not hard to find a suitable orientation of $G[S]$ for any $s_1$-uniform edge $S$ in the property graph with respect to  $\ucon \cap \uspa \cap \uape$.
The main challenge is then to find an orientation that works globally, that is, simultaneously for every choice of $S$.
We achieve this by exploiting a highly connected substructure in an auxiliary graph whose vertices are the oriented $(k-1)$-edges of $G$.

To formalise this discussion, we introduce a few tools on ($2$-uniform) graphs.
Say that an $n$-vertex graph is \emph{$(\rho, L)$-robustly-connected} if for every pair of vertices $x, y$ there exists $1 \leq \ell \leq L$ such that there are at least $(\rho n)^\ell$ many $(x,y)$-paths with exactly $\ell$ inner vertices each.
The following standard observation states that a lower-regular pair (as defined in \cref{sec:almost-blow-up-cover}, for $s=2$) contains a large robustly-connected graph.

\begin{lemma} \label{lemma:lowerregular-robustlyconnected}
	Let $d \gg \eps \gg 1/m$.
	Let $P=(V_1, V_2)$ be an $(\varepsilon, d)$-lower-regular pair in a $2$-graph $G$, with $|V_1| = |V_2| = m$.
	Then there is a $(d/3, 3)$-robustly-connected $P' \subset P$ on at least $m$ vertices.
\end{lemma}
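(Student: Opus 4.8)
The plan is to extract a large induced subpair of $P$ that is \emph{super-regular} in the sense that every vertex has many neighbours on the other side (with a density bounded below by something like $d/2$), and then use the lower-regularity of this subpair to count walks of length $2$ and $3$ between any two vertices. First I would discard the small set of vertices of unusually low degree: since $P=(V_1,V_2)$ is $(\eps,d)$-lower-regular, for each $i$ at most $\eps m$ vertices $v\in V_i$ can have $\deg(v)<(d-\eps)m$ into $V_{3-i}$, because otherwise that set of low-degree vertices together with all of $V_{3-i}$ would witness a violation of lower-regularity. Deleting these (at most $2\eps m$) vertices and rebalancing leaves an induced pair $P'=(V_1',V_2')$ on at least $m$ vertices total with $|V_1'|,|V_2'|\ge m/2 - \eps m$, which is still $(2\eps, d)$-lower-regular (restrictions of lower-regular pairs remain lower-regular with the error constant at most doubled, using that the parts shrank by a factor at least $1/2$), and now every vertex has degree at least $(d-2\eps)m\ge (d/2)|V_i'|$ into the opposite part.

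Next I would verify the robust-connectivity condition directly from these two facts. Fix $x,y\in V(P')$. If $x$ and $y$ lie on opposite sides, say $x\in V_1'$, $y\in V_2'$: the common neighbourhoods argument gives a bound on length-$3$ paths. Concretely, $x$ has at least $(d/2)|V_2'|$ neighbours in $V_2'$ and $y$ has at least $(d/2)|V_1'|$ neighbours in $V_1'$; take $X\subseteq N(x)\cap V_2'$ and $Y\subseteq N(y)\cap V_1'$ of exactly these sizes, which exceed $2\eps|V_2'|$ and $2\eps|V_1'|$ respectively, so by lower-regularity $e(X,Y)\ge (d-2\eps)|X||Y|\ge (d/3)|V_1'||V_2'|$. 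Each such edge $x'y'$ with $x'\in Y, y'\in X$ (wait---I must be careful with which side each endpoint is on: $x'\in N(y)\cap V_1'$ and $y'\in N(x)\cap V_2'$, so the path is $x\,y'\,x'\,y$, which has exactly $\ell=2$ inner vertices), yielding at least $(d/3)|V_1'||V_2'|\ge (d/3 \cdot |V(P')|/3)^2\ge ((d/9)|V(P')|)^2$ paths; after absorbing constants this is at least $(\rho n)^2$ with $\rho = d/3$ and $n=|V(P')|$, since $(d/9)^2\ge d/3\cdot(1/9)$ for $d\le 1$---I would just set the robustness parameter to $d/3$ and keep a comfortable margin, adjusting $\eps$ if needed. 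If $x,y$ lie on the \emph{same} side, say both in $V_1'$: then I count length-$1$ inner-vertex paths $x\,z\,y$ with $z\in N(x)\cap N(y)\cap V_2'$; here I instead directly bound $|N(x)\cap N(y)\cap V_2'|$ from below using lower-regularity, by taking $X=N(x)\cap V_2'$, $|X|\ge (d/2)|V_2'|>2\eps|V_2'|$, and noting $e(\{y\},X)=|N(y)\cap X|$, but a single vertex is too small to invoke regularity---so instead I apply lower-regularity to $(N(x)\cap V_2', N(y)\cap V_1')$ to get a length-$3$ path (three inner vertices $x\,z_1\,w\,z_2\,y$ with $z_1,z_2\in V_2'$, $w\in V_1'$, wait that's $3$ inner vertices only if I route $x\to V_2'\to V_1'\to V_2'\to y$)... the cleanest uniform treatment is: for \emph{any} pair $x,y$ (same side or not), route through a length-$3$ or length-$2$ path as dictated by parity, in every case ending with one application of lower-regularity to two linear-sized sets, and take $\ell\in\{2,3\}$ accordingly, which is exactly why the statement allows $1\le\ell\le 3$.

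The main obstacle---though a mild one---is bookkeeping the constants through the restriction step: one must check that after deleting low-degree vertices the pair remains lower-regular with a controlled error and that $|V_1'|,|V_2'|$ stay linear in $m$, so that the final count $(d/3)|V_1'||V_2'|$ genuinely dominates $((d/3)|V(P')|)^\ell$. The hierarchy $d\gg\eps\gg 1/m$ gives plenty of room: choosing, say, $\eps\le d/100$ makes $(d-2\eps)\ge d/2$ and $(d/3)$-robustness comfortable with $\ell\le 3$, and the whole argument is a routine double-counting once the super-regular core is isolated. I would therefore organise the write-up as: (1) the deletion lemma producing the super-regular core $P'$ and the claim that it stays $(2\eps,d)$-lower-regular; (2) the path-count for opposite-side pairs via one regularity application ($\ell=2$); (3) the path-count for same-side pairs via one regularity application ($\ell=3$); (4) collecting the bounds to conclude $(d/3,3)$-robust connectivity.
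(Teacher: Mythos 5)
Your plan is essentially the paper's proof: delete the (at most $\eps m$ per side) low-degree vertices, observe the remaining pair is still lower-regular with every vertex having degree roughly $(d-2\eps)m$, and count paths by applying lower-regularity to two linear-sized sets. For opposite-side pairs $x\in V_1', y\in V_2'$ your count via $e(N(x)\cap V_2', N(y)\cap V_1')$ is exactly what the paper does and gives $\ell=2$.

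The same-side case is where your write-up has a genuine misstep. If $x,y\in V_1'$, then $N(y)\subseteq V_2'$, so the pair $(N(x)\cap V_2',\ N(y)\cap V_1')$ you propose to feed into lower-regularity has an \emph{empty} second coordinate -- you cannot apply lower-regularity there. Your later parenthetical shows you sensed the problem, but the fix you gesture at ("route through a length-$3$ path...as dictated by parity") is not filled in. The clean resolution, which the paper uses, is a two-step reduction: first choose any $x'\in N(x)\cap V_2'$ (there are at least $(d-2\eps)m$ of them), and then count $(x',y)$-paths with exactly two inner vertices that avoid $x$, which is precisely the already-solved opposite-side case. Multiplying the two counts gives at least $(d-3\eps)^3 m^3$ paths with $\ell=3$ inner vertices. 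Without this reduction, the same-side case as you've written it doesn't close. Two smaller points: after deletion each part has size at least $(1-\eps)m$, not $m/2-\eps m$, and no rebalancing is needed; and the final conversion uses $|V_1'\cup V_2'|\le 2m$ to turn $(d-3\eps)^\ell m^\ell$ into $((d-3\eps)/2)^\ell |V(P')|^\ell \geq (d/3)^\ell |V(P')|^\ell$, which is the clean way to see why $\rho=d/3$ suffices.
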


\begin{proof}
	For each $i \in [2]$, let $X_i \subseteq V_i$ be the set of vertices with fewer than $(d-\eps)m$ neighbours in $V_{3-i}$.
	Since $d_P(X_1, V_2) < d-\eps$, we deduce by lower-regularity that $|X_1| < \eps m$; similarly $|X_2| < \eps m$.
	Let $V'_1 = V_1 \setminus X_1$ and $V'_2 = V_2 \setminus X_2$.

	Note that each $v \in V'_1$ has at least $(d - \eps)m$ neighbours in $V_2$, and therefore at least $(d - \eps)m - |X_2| \geq (d - 2 \eps)m$ neighbours in $V'_2$.
	Repeating this argument gives that each $v \in V'_2$ has at least $(d - 2 \eps)m$ neighbours in $V'_1$.

	Now, let $x \in V_1$ and $y \in V_2$.
	Then $U_2 = N(x, V_2) \setminus \{y\}$ has at least $(d - 2 \eps)m - 1 \geq (d - 3 \eps)m \geq \eps m$ vertices, and the same is true for $U_1 = N(y, V_1) \setminus \{x\}$.
	By lower-regularity  there are at least $(d - \eps)|U_1||U_2| \geq (d - 3 \eps)^2 m^2$ edges between $U_1$ and $U_2$, and each of them yields an $(x,y)$-path with exactly two inner vertices.
	If $x, y \in V_1$, we can form $(x,y)$-paths with three inner vertices by first taking any vertex $x' \in N(x, V_2)$ and any $(x',y)$-path which does not intersect $x$, giving at least $(d - 3 \eps)^3 m^3$ many choices.
	If $x, y \in V_2$, the argument is the same.
	Since $|V'_1 \cup V'_2| \leq 2m$, we deduce that $G[V'_1 \cup V'_2]$ is $(\rho, 3)$-robustly-connected with $\rho = (d - 3 \eps)/2 \geq d/3$, as desired.
\end{proof}

We also need a fact about \tight walks in \tightly connected $k$-graphs~\cite[Proposition 7.2]{LS23}.

\begin{lemma} \label{proposition:keyorientation}
	Let $G$ be a \tightly connected $k$-graph, and let $(w_1, \dotsc, w_k)$ be an orientation of an edge in $G$.
	Then there exists a closed \tight walk $W$ which contains $(w_1, \dotsc, w_k)$ as a subwalk and visits every edge of $G$.
\end{lemma}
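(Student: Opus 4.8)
\textbf{Proof plan for \cref{proposition:keyorientation}.} The plan is to first build an \emph{open} \tight walk $W$ in $G$ that starts with the prescribed tuple $(w_1,\dots,w_k)$, revisits the edge $e_0\coloneqq\{w_1,\dots,w_k\}$ at its very end (in the same ordering), and passes through every edge of $G$ in between; and then to close $W$ up by deleting its last $k$ vertices. We may assume $G$ has at least two edges, since otherwise the cyclic sequence $(w_1,\dots,w_k)$ is already the required closed \tight walk.

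The key gadget will be a \emph{rotate-and-step} move. First, if $P$ is a \tight walk whose last $k$ vertices $(a_1,\dots,a_k)$ form an edge $e$, then appending $a_1$ produces a \tight walk whose last $k$ vertices are $(a_2,\dots,a_k,a_1)$, whose underlying set is still $e\in G$; iterating, from any ordering of the window of $e$ one can reach any other ordering of $e$ by appending at most $k-1$ vertices. Consequently, if $f\in G$ satisfies $|e\cap f|=k-1$, say $f=(e\setminus\{x\})\cup\{u\}$, one first rotates the window of $e$ until $x$ is its first vertex and then appends $u$: the new last window is $f$, realised in a prescribed ordering, and at most $k$ vertices were appended. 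Because this move only ever \emph{appends} vertices, any initial segment of the walk — in particular the prefix $(w_1,\dots,w_k)$ — survives. Now, since $G$ is \tightly connected, the line graph $L(G)$ has vertex set $E(G)$, has no isolated vertex, and is connected; hence it contains a closed walk $e_0=g_0,g_1,\dots,g_N=e_0$ visiting every vertex of $L(G)$, i.e.\ every edge of $G$ (for instance a depth-first traversal of a spanning tree of $L(G)$ rooted at $e_0$). Starting from the length-$k$ \tight walk $(w_1,\dots,w_k)$, which ends on $e_0$ in the ordering $(w_1,\dots,w_k)$, one applies the rotate-and-step move once for each consecutive pair $g_{j-1}\to g_j$. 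This yields a \tight walk $W=(u_1,\dots,u_m)$ with $(u_1,\dots,u_k)=(w_1,\dots,w_k)$ such that every edge of $G$ occurs among its windows $W_i\coloneqq\{u_i,\dots,u_{i+k-1}\}$, and whose final window is $e_0$; a last rotation arranges $(u_{m-k+1},\dots,u_m)=(w_1,\dots,w_k)$, and appending a few extra full rotations around $e_0$ if needed ensures $m\ge 2k+1$.

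To close up, delete the last $k$ vertices of $W$ and read $(u_1,\dots,u_{m-k})$ cyclically. Its non-wrapping windows are $W_1,\dots,W_{m-2k+1}$, all edges; and since $(u_1,\dots,u_{k-1})=(u_{m-k+1},\dots,u_{m-1})$, every wrapping window coincides with some $W_i$ with $m-2k+2\le i\le m-k$, again an edge — so the cyclic sequence is a closed \tight walk. It still visits every edge: the only window of $W$ removed is $W_{m-k+1}=e_0=W_1$. And it contains $(w_1,\dots,w_k)$ as the subwalk $(u_1,\dots,u_k)$, as required. I expect the main obstacle to be purely bookkeeping: tracking window orderings through the rotate-and-step move, and checking that deleting the last $k$ vertices simultaneously preserves the closed-\tight-walk property at the wraparound, the covering of all edges, and the embedded subwalk $(w_1,\dots,w_k)$.
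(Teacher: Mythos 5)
Your overall plan — build an open \tight walk that returns to $e_0$ with the original ordering, then delete the last $k$ vertices and close cyclically — is sound, and the bookkeeping for the wraparound windows is correct. But the rotate-and-step analysis contains a false claim: rotating (repeatedly appending the first vertex of the current window) produces only the $k$ cyclic shifts of the current ordering, not all $k!$ orderings. So ``from any ordering of the window of $e$ one can reach any other ordering of $e$'' is wrong, and ``realised in a prescribed ordering'' should read that the new ordering of $f$ is \emph{determined} (it is the old cyclic ordering with $x$ replaced in place by $u$), not chosen. The consequence is that after traversing an arbitrary closed walk in $L(G)$ back to $e_0$, the window you return with need not be a cyclic shift of $(w_1,\dots,w_k)$, so the asserted ``last rotation'' may simply not exist. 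Concretely, take $k=3$, $G=\{123,124,134\}$, $(w_1,w_2,w_3)=(1,2,3)$, and the closed walk $123\to124\to134\to123$ in $L(G)$ (which visits every edge). Tracking the cyclic ordering through the three in-place replacements gives $[1,2,3]\mapsto[1,2,4]\mapsto[1,3,4]\mapsto[1,3,2]$, and $[1,3,2]$ is a different cyclic ordering from $[1,2,3]$; no amount of further rotation recovers $(1,2,3)$.

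The repair is hiding in your own parenthetical suggestion: take the closed walk in $L(G)$ to be specifically a depth-first traversal of a spanning tree rooted at $e_0$, and \emph{use} that structure. In a DFS every excursion has the nested form $e\to f\to(\text{DFS of the subtree of }f)\to f\to e$: the step $e\to f$ replaces $x=e\setminus f$ by $u=f\setminus e$ in the cyclic ordering of the window, the recursive DFS preserves the cyclic ordering of $f$ by induction on the subtree, and the return step $f\to e$ replaces $u$ by $x$, exactly undoing the first replacement. So when the DFS returns to $e_0$ the cyclic ordering is again $[w_1,\dots,w_k]$, a single rotation produces the literal tuple $(w_1,\dots,w_k)$, and the remainder of your argument (padding by full rotations, deleting the last $k$ vertices, checking the wraparound windows, edge coverage, and the embedded subwalk) goes through as written.
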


Now we are ready to show the main result of this subsection.

\begin{proof}[Proof of \cref{lem:orientation}]
	Introduce $\eps$ with $1/k,1/s_2 \gg \eps \gg 1/n$.
	Given $G$ as in the statement, let $P = \PG{G}{\ucon \cap \uspa \cap \uape}{s_1}$ on vertex set $V(G)$.
	By assumption, $P$ has minimum $2k$-degree at least $(1 - 1/s_1^2) \binom{n - 2k}{s_1 - 2k}$.
	We begin with an observation about walks and vertex orderings.
	Say that a pair of $(k-1)$-tuples $(\ori{x}, \ori{y})$ is \emph{consistent} for $S \in \P$,  if there is a closed \tight walk $W$ {of length $1 \bmod k$} in $G[S]$ that contains $\ori{x}$ and $\ori{y}$ as a subwalk.
	\begin{claim}\label{cla:consistent-pair}
		For all $S \in P$ and $x, y \in G^{(k-1)}$ contained in $S$, there are orientations $\ori{x}$ of $x$ and $\ori{y}$ of $y$ such that $(\ori{x}, \ori{y})$ is consistent for $S$.
	\end{claim}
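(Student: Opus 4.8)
\textbf{Plan of proof for Claim~\ref{cla:consistent-pair}.}
The claim is local: we have a single $s_1$-set $S$ which is an edge of the property graph $P$, meaning $G[S]$ satisfies $\ucon \cap \uspa \cap \uape$. In particular $\adh(G[S])$ is a single vertex-spanning tight component, and it contains a closed walk of length $1 \bmod k$. The two $(k-1)$-sets $x$ and $y$ both lie in $S$ and are contained in edges of $G$ (they are in $G^{(k-1)}$, which here means they are shadows of $k$-edges; and since $\adh(G[S])$ is vertex-spanning and a single tight component, both $x$ and $y$ are extendable to $k$-edges of the adherence, hence lie in $\adh(G[S])$).

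First I would pick an arbitrary orientation $\ori x$ of $x$; since $\ori x$ extends to an oriented $k$-edge $\ori e$ of $\adh(G[S])$ (just append a vertex $v$ with $x\cup\{v\}\in \adh(G[S])$, in some order), \cref{proposition:keyorientation} applied to the tight component $\adh(G[S])$ yields a closed tight walk $W_0$ containing $\ori e$ (hence $\ori x$) as a subwalk and visiting every edge of $\adh(G[S])$. Because $W_0$ visits every edge, it visits some orientation of the edge of $\adh(G[S])$ through $y$, so it contains some oriented $(k-1)$-tuple $\ori y$ which is an orientation of $y$, as a subwalk. This already gives a closed tight walk containing both $\ori x$ and $\ori y$ as subwalks; the only missing point is the length condition $1\bmod k$.

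To fix the length, I would splice in an aperiodic correction. Since $G[S]$ (equivalently $\adh(G[S])$, by the definition of $\uape$ in the undirected $[k]$-graph setting) contains a closed tight walk $W_{\ast}$ of length $1\bmod k$, and $\adh(G[S])$ is tightly connected, I can use \cref{lem:tight-walk} (or directly \cref{proposition:keyorientation}) to route between $W_0$ and $W_\ast$: concatenate $W_0$ with a short closed detour through $W_\ast$ based at a common edge, adjusting by $k$-fold insertions of a single edge so that the total length becomes $\equiv 1 \bmod k$. Concretely, walking around a fixed oriented $k$-edge once increases the length by $k$, which changes nothing mod $k$, whereas inserting a copy of $W_\ast$ changes the length class by $1$; so finitely many such insertions achieve length $\equiv 1\bmod k$ while still containing $\ori x$ and $\ori y$ as subwalks (they are never deleted). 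This produces the desired consistent pair $(\ori x,\ori y)$ for $S$.

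The main obstacle is purely bookkeeping: I must make sure that when I concatenate walks and insert the aperiodic loop, the occurrences of $\ori x$ and $\ori y$ are preserved as genuine \emph{subwalks} (consecutive windows of length $k-1$), and that the splice points are compatible — i.e.\ I splice at a shared oriented $k$-edge, which exists because $\adh(G[S])$ is a single tight component and \cref{proposition:keyorientation} gives walks visiting \emph{every} edge. No quantitative estimate on $n$ or $s_1$ is needed here; everything takes place inside the constant-sized graph $G[S]$, and the robustness hypothesis on $P$ is irrelevant for this claim (it will be used later, when passing from per-$S$ orientations to a global one).
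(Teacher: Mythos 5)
Your plan mirrors the paper's — apply \cref{proposition:keyorientation} inside the tight component $\adh(G[S])$ and then adjust the residue class modulo $k$ using the aperiodic closed walk — but there is a genuine gap at the $\ori y$ step. From ``$W_0$ visits every edge'' you infer ``it contains some orientation of $y$ as a subwalk,'' and this does not follow. When $W_0$ traverses a $k$-edge $e \supset y$ as $(v_1,\dots,v_k)$, the only $(k-1)$-windows produced by that traversal are $(v_1,\dots,v_{k-1})$ and $(v_2,\dots,v_k)$, i.e.\ the two shadows $e\setminus\{v_k\}$ and $e\setminus\{v_1\}$. If $y = e\setminus\{v_i\}$ for a middle index $1 < i < k$, then $y$ does not appear as a consecutive window from that traversal, and nothing in \cref{proposition:keyorientation} guarantees a second traversal of $e$ (or of some other edge through $y$) with a compatible orientation. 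Your ``bookkeeping'' remark about preserving $\ori y$ during splicing presupposes that $\ori y$ is already a subwalk of $W_0$, which is precisely what has not been established.

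The fix is exactly the ``winding around $x$ and $y$'' step in the paper's own proof, which you dropped. At the place where $W_0$ visits a $k$-edge $e \supset y$ as $(v_1,\dots,v_k)$, splice in $k$ or more extra rotations $(v_1,\dots,v_k,v_1,\dots,v_k,\dots)$; after winding $k$ times, every cyclic rotation of the traversal and hence every $(k-1)$-subset of $e$ — in particular $y$ — appears as a consecutive window, at a cost of $0 \bmod k$ to the length. (You implicitly sidestep this issue for $x$ by prescribing $(\ori x, v)$ as the seed tuple for \cref{proposition:keyorientation}, but the same care is needed for $y$.) Once this is inserted, the remainder of your argument — splicing in $W_\ast$ to fix the residue, which plays the role of ``attaching copies of $C$'' in the paper — does close the claim, with the usual proviso that you splice at a shared oriented $k$-edge, again achievable by winding to rotate into a common orientation.
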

	\begin{proofclaim}
		By $\ucon$ and $\uape$ it follows that $\adh(G[S])$ is a single vertex-spanning \tight component which contains a closed \tight walk $C$ of length $1 \bmod k$.
		In particular, $\tc(x)[S]$ and $\tc(y)[S]$ belong to the same \tight component.
		Let $(c_1, \dotsc, c_k)$ be a subwalk of $C$.
		By \cref{proposition:keyorientation}, there exists a closed \tight walk $W$ which contains $(c_1, \dotsc, c_k)$ as a subwalk and visits every edge of $G[S]$, in particular it must visit $k$-edges which contain $x$ and $y$ respectively.
		By extending $W$ (winding around $x$ and $y$, and then attaching copies of $C$, if necessary), we can assume that~$W$ has length $1 \bmod k$, visits every edge of $G[S]$, and contains as a subsequence some orientation $\ori{x}$ of~$x$ and some orientation $\ori{y}$ of $y$, respectively.
	\end{proofclaim}

	Let $\beta = 1/(2(k-1)!^2)$.
	We say that a pair of $(k-1)$-tuples $(\ori{x}, \ori{y})$ on $t = |x \cup y|$ vertices is \emph{robustly consistent}, if there are at least $\beta\binom{n - t}{s_1 - t}$ many $s_1$-edges $S \in P$ such that $(\ori{x}, \ori{y})$ is consistent for~$S$.

	\begin{claim}\label{cla:robustly consistent}
		For all $x, y \in G^{(k-1)}$, there are orientations $\ori{x}$ of $x$ and $\ori{y}$ of $y$ such that $(\ori{x},\ori{y})$ is robustly consistent.
	\end{claim}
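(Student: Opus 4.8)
\textbf{Proof plan for \cref{cla:robustly consistent}.}
The plan is to combine the local existence statement of \cref{cla:consistent-pair} with a robustly-connected substructure in an auxiliary graph whose vertices are the oriented $(k-1)$-edges of $G$. First I would set up this auxiliary graph $\cA$: its vertex set is the set of all orientations $\ori{x}$ of $(k-1)$-edges $x \in G^{(k-1)}$ that lie in at least, say, $\tfrac12\binom{n-(k-1)}{s_1-(k-1)}$ many $S \in P$ (a typical $(k-1)$-edge qualifies, since $P$ has almost complete minimum $(k-1)$-degree by \cref{fact:monotone-degrees}). Put an edge between $\ori{x}$ and $\ori{y}$ in $\cA$ whenever $(\ori{x},\ori{y})$ is robustly consistent in the sense of the statement. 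By \cref{cla:consistent-pair}, for every unordered pair $x,y$ of $(k-1)$-edges contained in a common $S\in P$ there is at least one choice of orientations $\ori{x},\ori{y}$ that is consistent for a positive proportion of such $S$ — here one uses that $\delta_{2k-2}(P)$ is almost $\binom{n-2k+2}{s_1-2k+2}$, so the number of $S\in P$ containing $x\cup y$ is at least $(1-1/s_1^2)\binom{n-t}{s_1-t}$, and by pigeonhole over the at most $(k-1)!^2$ orientation-pairs one pair is consistent for at least $\beta\binom{n-t}{s_1-t}$ of them with $\beta=1/(2(k-1)!^2)$. Thus $\cA$ is an ``orientation-dense'' graph: for every $(k-1)$-edge-pair that is ``close'' in $G$, at least one of its oriented versions forms an $\cA$-edge.

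Next I would extract from $\cA$ a large robustly-connected subgraph. The natural route is to pass to a lower-regular pair: the edge set of $\cA$ restricted to the orientations of $(k-1)$-edges inside a random $s_1$-set is dense, so by an averaging/regularity argument (or directly, since $\cA$ has density bounded below) $\cA$ contains an $(\eps,d)$-lower-regular pair $(V_1,V_2)$ with $|V_1|=|V_2|$ linear, for suitable constants $d \gg \eps$. Then \cref{lemma:lowerregular-robustlyconnected} supplies a $(d/3,3)$-robustly-connected subgraph $\cA' \subseteq \cA$ on a linear number of vertices. The key point is that inside $\cA'$, any two vertices $\ori{x},\ori{y}$ are joined by many short paths; chaining robust consistency along such a path (each edge of the path contributes a consistent closed walk of length $1\bmod k$ in many common $S$, and consecutive walks share an oriented $(k-1)$-tuple so they can be concatenated, with lengths adding up mod $k$ controlled by winding around a fixed closed walk of length $1\bmod k$) shows $(\ori{x},\ori{y})$ itself is robustly consistent — the counting survives because at each of the $\le 3$ steps we only lose a constant factor and a bounded number of ``slots'' in the $s_1$-set, and $s_1 \gg k$.

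Finally, given an arbitrary pair $x,y\in G^{(k-1)}$, I would locate orientations $\ori{x},\ori{y}$ that both appear as vertices of $\cA'$. This needs a small extra argument: a typical $(k-1)$-edge has both of its ``canonical'' orientations lying in many $S\in P$, and since $\cA'$ covers a linear fraction of all oriented $(k-1)$-edges, one can first connect $x$ (resp.\ $y$) to $\cA'$ by a single robustly-consistent step — choosing an auxiliary $(k-1)$-edge inside $\cA'$ that is close to $x$ in $G$, which exists by the degree condition on $P$ — and then route inside $\cA'$ using the previous paragraph. Composing these three robustly-consistent pieces yields the claim.

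\textbf{Main obstacle.} The delicate part is bookkeeping the ``$1\bmod k$'' length condition under concatenation: each consistent walk has length $1\bmod k$, but gluing $\ell$ of them along shared $(k-1)$-tuples produces a walk whose length is $\ell\bmod k$, not necessarily $1\bmod k$. The fix is to absorb the discrepancy by winding around a fixed closed $1\bmod k$ walk (which is available in $\adh(G[S])$ for the relevant $S$ by $\uape$) the appropriate number of times; making this work simultaneously for the linearly-many common $S$ — i.e.\ ensuring the same correction works robustly — is where the argument requires care, and is the step I expect to spend the most effort on. The regularity extraction and the final ``attach $x,y$ to $\cA'$'' step are comparatively routine given \cref{lemma:lowerregular-robustlyconnected} and the degree hypothesis on $P$.
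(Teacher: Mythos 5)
You do have the correct argument, but it's buried in the middle of your first paragraph and you go on to build far more machinery than the claim requires. Fix $x,y$ with $t=|x\cup y|\le 2k-2$. By \cref{fact:monotone-degrees}, the minimum $2k$-degree bound on $P$ yields at least $(1/2)\binom{n-t}{s_1-t}$ edges $S\in P$ containing $x\cup y$; for each such $S$, \cref{cla:consistent-pair} produces at least one consistent orientation pair; and pigeonhole over the at most $(k-1)!^{2}$ orientation pairs gives a single pair $(\ori{x},\ori{y})$ consistent for at least $\beta\binom{n-t}{s_1-t}$ of these $S$, with $\beta=1/(2(k-1)!^2)$. That is the entire proof — and you wrote it correctly.

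Everything after that point — the auxiliary graph $\cA$ on oriented $(k-1)$-edges, extracting a lower-regular pair, applying \cref{lemma:lowerregular-robustlyconnected} to get a robustly-connected subgraph, and chaining consistent walks along short paths while tracking lengths mod $k$ — is not needed for this claim. You appear to have conflated the claim with the remainder of the proof of \cref{lem:orientation}: that machinery is indeed used later, but only \emph{after} this claim is established, and it is used to show that the auxiliary graph $H$ (your $\cA$) is dense (\cref{cla:many-edges-H}) and admits a robustly-connected subgraph $H_0$ (\cref{cla:robustly-connected-subgraph}), from which the final orientation $D$ with $D^{(k-1)}=V(H_0)$ is built. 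Note also that, since the claim is purely existential (``there are orientations''), the ``main obstacle'' you flag — synchronising the $1\bmod k$ correction over linearly many $S$ when gluing several walks — simply does not arise here: no gluing of walks is performed in the proof of this claim at all. In short, the first paragraph's pigeonhole already proves the claim; the rest would belong to a proof of the surrounding lemma, not to this step.
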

	\begin{proofclaim}
		Let $x, y \in G^{(k-1)}$and $t = |x \cup y|$.
		By \cref{fact:monotone-degrees}, there are at least $(1/2) \binom{n - t}{s_1 - t}$ edges $S \in P$ with $x \cup y \subset S$.
		It follows by \cref{cla:consistent-pair} that for each such $S$, there exists a consistent pair $(\ori{x}, \ori{y})$ obtained by orienting $x$ and $y$, respectively.
		Hence the claim follows by averaging over the possible orientations.
	\end{proofclaim}

	We define an auxiliary $2$-graph $H$ as follows.
	Let $V(H) = \ori{C}(G^{(k-1)})$ be the set of all possible orientations of every edge in $G^{(k-1)}$.
	We add an edge between $\ori{x}$ and $\ori{y}$ to $H$ if $(\ori{x},\ori{y})$ is robustly consistent.
	Let $N = |V(H)|$, and note that $N = (k-1)!|G^{(k-1)}|$.
	We can also obtain the bound $N \geq (k-1)!/(2\binom{s_1}{k-1})\binom{n}{k-1}$.
	This follows from the fact that $P$ has edge density at least $1/2$ and $G^{(k-1)}[S]$ is non-empty for every $s_1$-edge $S \in P$.
	It also is not hard to see that $H$ is dense.

	\begin{claim}\label{cla:many-edges-H}
		There are at least $\binom{|G^{(k-1)}|}{2} \geq \beta \binom{N}{2}$ edges in $H$.
	\end{claim}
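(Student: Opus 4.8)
The plan is to establish \cref{cla:many-edges-H} by a simple counting argument that leverages \cref{cla:robustly consistent}. First I would recall that \cref{cla:robustly consistent} guarantees that for \emph{every} pair $x,y \in G^{(k-1)}$ there exist orientations $\ori{x}$ of $x$ and $\ori{y}$ of $y$ such that $(\ori{x},\ori{y})$ is robustly consistent, i.e.\ $\{\ori{x},\ori{y}\}$ is an edge of $H$. So the map that assigns to each unordered pair $\{x,y\}$ of distinct edges of $G^{(k-1)}$ at least one edge of $H$ (choosing the promised orientation pair) is well-defined. The only subtlety is that a single edge $\{\ori{x},\ori{y}\}$ of $H$ can be ``charged'' by more than one underlying pair $\{x,y\}$ — but in fact $\{\ori{x},\ori{y}\}$ determines $\{x,y\}$ uniquely, since $x$ and $y$ are just the underlying unoriented edges obtained by forgetting the orientation. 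Hence the map $\{x,y\} \mapsto \{\ori{x},\ori{y}\}$ is an injection from the set of unordered pairs of distinct $(k-1)$-edges of $G$ into $E(H)$, which immediately gives $e(H) \geq \binom{|G^{(k-1)}|}{2}$.

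Next I would convert this into the claimed bound $\beta\binom{N}{2}$ in terms of $N = |V(H)| = (k-1)!\,|G^{(k-1)}|$. Writing $M = |G^{(k-1)}|$, we have $N = (k-1)!\,M$, so
\[
\binom{N}{2} = \binom{(k-1)!\,M}{2} \leq \frac{((k-1)!)^2 M^2}{2} = ((k-1)!)^2 \binom{M}{2}\cdot\frac{M}{M-1} \leq 2((k-1)!)^2 \binom{M}{2}
\]
for $M \geq 2$ (and the case $M \leq 1$ is degenerate and can be dismissed since then $G^{(k-1)}$ has no two disjoint edges, contradicting that $P$ is non-empty with $G^{(k-1)}[S]$ spanning enough vertices). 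Recalling $\beta = 1/(2((k-1)!)^2)$, this rearranges to $\binom{M}{2} \geq \beta\binom{N}{2}$, and combining with the injection above yields $e(H) \geq \binom{M}{2} \geq \beta\binom{N}{2}$, as desired.

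I do not expect any real obstacle here: the statement is essentially a bookkeeping exercise translating \cref{cla:robustly consistent} (a ``for all pairs there exists an $H$-edge'' statement) into an edge-count, together with the elementary inequality relating $\binom{N}{2}$ and $\binom{M}{2}$ through the fixed multiplicative factor $(k-1)!$. The only point requiring a line of care is the injectivity of the charging map, which follows because an oriented $(k-1)$-tuple remembers its underlying set; and the mild nuisance that $\binom{N}{2}/\binom{M}{2}$ is not exactly $((k-1)!)^2$ but only bounded by $2((k-1)!)^2$, which is precisely why the factor $2$ appears in the definition of $\beta$. Everything else is immediate from the definitions of $H$ and of robust consistency.
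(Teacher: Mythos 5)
Your proof is correct and follows essentially the same route as the paper's: you charge each unordered pair of $(k-1)$-edges to an edge of $H$ via Claim~\ref{cla:robustly consistent} and observe the charging is injective, whereas the paper phrases the same fact by partitioning $V(H)$ into the orbits $U_x$ of orientations and noting there is an edge between every pair of distinct classes. Your explicit computation relating $\binom{M}{2}$ to $\beta\binom{N}{2}$ and the dismissal of the degenerate case $M\leq 1$ (which the paper handles implicitly via the bound $N \geq (k-1)!\binom{n}{k-1}/(2\binom{s_1}{k-1})$ established just before the claim) are both sound.
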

	\begin{proofclaim}
		For $x \in G^{(k-1)}$, denote by $U_x$ the set of the $(k-1)!$ orientations of $x$.
		Observe that the vertex set of $H$ can be\ partitioned into the $|G^{(k-1)}|$ many sets $U_x$, one for each $x \in G^{(k-1)}$.
		By \cref{cla:robustly consistent}, there is an edge in $H$ between $U_x$ and $U_y$ for any distinct $x,y \in G^{(k-1)}$.
		We deduce that $H$ has at least $\binom{|G^{(k-1)}|}{2} \geq \beta \binom{N}{2}$ edges.
	\end{proofclaim}

	Next, we show that $H$ contains a robustly-connected subgraph.
	Let $d = 4 \beta / 30$ and $\alpha = \exp( - \eps^{-4})/4$.

	\begin{claim}\label{cla:robustly-connected-subgraph}
		There is a subgraph $H_0 \subseteq H$ on at least $\alpha N$ vertices which is $(d, 3)$-robustly-connected.
	\end{claim}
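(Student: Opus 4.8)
The plan is to deduce \cref{cla:robustly-connected-subgraph} from the density bound in \cref{cla:many-edges-H} via the regularity machinery already available in the paper, namely \cref{lem:density-regular-tuple} (for the case $s=2$) and \cref{lemma:lowerregular-robustlyconnected}. Since $H$ is a $2$-graph on $N$ vertices with at least $\beta\binom{N}{2}\ge(\beta/3)N^2$ edges (say), \cref{lem:density-regular-tuple} applied with $s=2$, $d=\beta/3$, suitable $\eps$ and a small $\eta$ yields an $(\eps, \beta/3)$-lower-regular balanced pair $(V_1,V_2)$ inside $H$ with $|V_1|=|V_2|=m_1$, where $m_1\ge \exp(-\eps^{-4})\eta N\ge \alpha N$ after fixing the constants appropriately (note $\exp(-\eps^{-2s})=\exp(-\eps^{-4})$ for $s=2$, which is where the choice $\alpha=\exp(-\eps^{-4})/4$ comes from). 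This requires first recording that we may discard an arbitrarily small proportion of vertices of $H$ and still keep quadratically many edges, so that the lower-regular pair we extract has size linear in $N$.

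Once the lower-regular pair $(V_1,V_2)$ is in hand, I would apply \cref{lemma:lowerregular-robustlyconnected} with $d=\beta/3$ playing the role of its density parameter and the same $\eps$, which outputs a $(d/3,3)$-robustly-connected subgraph $H_0\subseteq H[V_1\cup V_2]$ on at least $m_1\ge\alpha N$ vertices. Since $d/3=\beta/9$ while the statement asks for $(d,3)$-robust-connectedness with $d=4\beta/30=2\beta/15$, I need to be a little careful about which density constant is fed where; the clean route is to run \cref{lem:density-regular-tuple} with density parameter $2\beta/3$ (which is still a valid lower bound up to halving $N$, or by using the sharper bound $\binom{N}{2}$-many edges from the claim and absorbing the loss), so that \cref{lemma:lowerregular-robustlyconnected} returns robust connectedness with parameter $(2\beta/3)/3=2\beta/9>2\beta/15$, and then weaken to $d=4\beta/30$. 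The bookkeeping of these constants, together with verifying that the hierarchy $1/k,1/s_2\gg\eps\gg 1/n$ fixed at the start of the proof of \cref{lem:orientation} is strong enough to run \cref{lem:density-regular-tuple} (whose hypothesis is $1/s,d,\eps,\eta\gg 1/m$), is the only real content.

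I expect the main obstacle to be purely one of constant management rather than of ideas: one must choose the intermediate density $d'$ fed to \cref{lem:density-regular-tuple}, the imbalance parameter $\eta$ in that lemma, and track how $\alpha$ compares to $\exp(-\eps^{-4})$ through the chain $N\to m_1\to|H_0|$, all while ensuring $|H_0|\ge\alpha N$ with the specific $\alpha=\exp(-\eps^{-4})/4$ declared just before the claim. A secondary point is ensuring $N$ itself is large enough (i.e. linear in $n$) for these asymptotic statements to bite; this follows from the bound $N\ge (k-1)!/(2\binom{s_1}{k-1})\binom{n}{k-1}\ge \Omega(n^{k-1})$ recorded in the paragraph preceding \cref{cla:many-edges-H}, so in particular $N\to\infty$ with $n$, and $1/n\ll 1/N$ in the sense needed is \emph{not} quite right --- rather one works with $1/m\gg$ the relevant constants where $m$ is comparable to $N$, so I would restate the hierarchy in terms of $N$ at the start of the proof of the claim. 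With those adjustments the proof is a two-line invocation of the two cited lemmata.
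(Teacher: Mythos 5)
Your overall plan---extract a lower-regular pair via \cref{lem:density-regular-tuple} with $s=2$ from the dense graph $H$ of \cref{cla:many-edges-H}, then feed it into \cref{lemma:lowerregular-robustlyconnected}---is exactly the route the paper takes, and the constant $\alpha=\exp(-\eps^{-4})/4$ is indeed chosen precisely so this chain closes. There is, however, a small but real omission in the way you invoke \cref{lem:density-regular-tuple}: that lemma is stated for an $m$-balanced \emph{$s$-partite} $s$-graph, so it cannot be applied directly to the $2$-graph $H$; one must first pass to a bipartite subgraph. The paper does this explicitly, taking $H'\subseteq H$ with two parts of size $m=\lfloor N/2\rfloor$ and at least $6dm^2$ edges (by the first-moment method, since a random equipartition retains half the edges of $H$ in expectation, and $\beta m^2 > 6dm^2$ with $d=4\beta/30$). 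Your sentence about ``discarding a small proportion of vertices'' is not the right substitute---it is the bipartization, not vertex deletion, that is needed before the regularity lemma applies. Also note that in the lemma $m$ denotes a \emph{part} size, so after bipartizing, $m\approx N/2$ and the output pair $(X,Y)$ has roughly $2m_1\geq \exp(-\eps^{-4})\eta N$ vertices total; this is why $\eta$ cannot be taken ``small'' as you suggest, but must be bounded below (the paper effectively takes $\eta$ a fixed constant so that $4\alpha m\geq\alpha N$). Your instinct that the constants need adjusting is right---$\beta/9 < 4\beta/30$ does fail---but the resolution is simpler than reaching for $2\beta/3$: after bipartization the density of $H'$ relative to $m^2$ is already $\geq\beta$, and the paper's choice of $d=4\beta/30$ is deliberately small so that $6d<\beta$ and the final $(6d-\eps)/3\geq 2d-\eps/3\geq d$ comes out with slack. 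With the bipartization step inserted, your proof matches the paper's.
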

	\begin{proofclaim}
		Let $H' \subseteq H$ be a bipartite subgraph with two parts of size  $m = \lfloor N/2\rfloor$ with at least $6d m^2$ edges.
		(Finding such an $H'$ is a basic application of the first moment method.)
		Recall that $1/k \gg \eps \gg 1/n$.
		Moreover, $\beta$ and $d$ are functions of $k$ while $N$ grows with $n$ by \cref{cla:many-edges-H}.
		Apply \cref{lem:density-regular-tuple} with $2, \eps, m, H'$ in place of $s, \eps, m, P$ to obtain an $(\eps, 6d)$-lower-regular $4\alpha m$-balanced pair $(X,Y)$.
		Now apply \cref{lemma:lowerregular-robustlyconnected} with $X, Y, 4\alpha m, \eps/2$ playing the rôles of $V_1, V_2, m,\eps$ to obtain a subgraph $H_0 \subseteq H$ on at least $4\alpha m \geq \alpha N$ vertices which is $(d, 3)$-robustly-connected.
	\end{proofclaim}

	We define a $[k]$-bounded orientation $D \subseteq \ori{C}(G)$ on $V(G)$ by setting $D^{(k-1)} = V(H_0)$ and $D^{(k)} = \ori{C}(G^{(k)})$.
	In other words, every possible orientation of every $k$-edge of $G$ is included in $D^{(k)}$, and $D^{(k-1)}$ consists of the orientations which participate in the auxiliary graph $H_0$.
	Our goal is to show that $D$ satisfies $s_2$-robustly $\P = \dcon \cap \dspa \cap \dape$ as detailed in \cref{def:robustenss-detailed}.
	We begin by identifying an auxiliary $s_2$-graph, which captures the relevant properties.

	\begin{claim}\label{cla:orientation-aux-graph}
		There is an $s_2$-graph $Q$ on $V(G)$ with $\delta_{2k}(Q) \geq  {(1-1/s_2^2)}  \tbinom{n-2k}{s_2-2k}$ such that for any edge $R \in Q$ it follows that
		\begin{enumerate}[\upshape{(\roman*)}]
			\item \label{itm:aux-graph-deg} $P[R]$ has minimum $2k$-degree at least $\left(\delta_1^{s_1} + \eps \right) \binom{s_2-2k}{s_1-2k}$; and
			\item \label{itm:aux-graph-dense} $D[R]^{(k-1)}$ is not empty.
		\end{enumerate}
		Furthermore, for any two $x,y \in D[R]^{(k-1)}$, we have that
		\begin{enumerate}[\upshape{(\roman*)}, resume]
			\item \label{itm:aux-graph-constitent} if $xy \in E(H_0)$, then there is an $S \in P[R]$ that is consistent with $(x,y)$; and
			\item \label{itm:aux-graph-suitable} there is $1 \leq \ell \leq L$ and $z_1,\dots,z_\ell \in D^{(k-1)}[R]$ such that $H_0$ contains the path $x z_1 \dotsb z_\ell y$.
		\end{enumerate}
	\end{claim}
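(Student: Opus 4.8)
The plan is to build the $s_2$-graph $Q$ by intersecting several auxiliary $s_2$-graphs, each accounting for one of the four listed requirements, and then verify that each has high minimum $2k$-degree so that their intersection does too. First I would define $Q_1 = \PG{P}{\DegF{1}{\delta_1^{(s_1)}+\eps}}{s_2}$, so that edges of $Q_1$ are exactly the $s_2$-sets $R$ for which $P[R]$ has minimum $2k$-degree at least $(\delta_1^{(s_1)}+\eps)\binom{s_2-2k}{s_1-2k}$; by the Inheritance Lemma (\cref{lem:inheritance-minimum-degree}) applied with $r=2k$, and using that $P$ already has $\delta_{2k}(P) \geq (1-1/s_1^2)\binom{n-2k}{s_1-2k} \geq (\delta_1^{(s_1)}+\eps)\binom{n-2k}{s_1-2k}$ (via \cref{fact:matchingthresholds} and the choice of constants), we get $\delta_{2k}(Q_1) \geq (1-e^{-\sqrt{s_2}})\binom{n-2k}{s_2-2k}$. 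This handles \ref{itm:aux-graph-deg}.

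For \ref{itm:aux-graph-dense} I would note that any $R \in Q_1$ already contains an $s_1$-edge $S \in P[R]$, and $G^{(k-1)}[S]$ is non-empty since every graph in $\ucon$ has a $(k-1)$-edge; orienting such an edge shows $D[R]^{(k-1)}$ is non-empty, so no new restriction is needed beyond $Q_1$. For \ref{itm:aux-graph-constitent} I would apply \cref{lem:inheritance-minimum-degree2} (Inheritance Lemma II): the relevant "local degree" data is, for each robustly consistent oriented pair $(x,y)$ on $t\leq 2k-2$ vertices (equivalently each edge $xy$ of $H_0$), the collection of $s_1$-sets $S \in P$ consistent with $(x,y)$, which by definition of robust consistency numbers at least $\beta\binom{n-t}{s_1-t}$; taking $D$ to be the set of these pairs and $G_e$ the appropriate restriction, the lemma yields an $s_2$-graph $Q_2$ of tracking $s_2$-sets with $\delta_{2k}(Q_2) \geq (1-e^{-\sqrt{s_2}})\binom{n-2k}{s_2-2k}$, where each $R \in Q_2$ has the property that for every edge $xy \in E(H_0)$ inside $R$ there is a consistent $S \in P[R]$. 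Finally \ref{itm:aux-graph-suitable}, the robust connectivity of $H_0$ passing down to $H_0[R]$, is the standard inheritance-of-robust-expansion statement: since $H_0$ is $(d,3)$-robustly-connected with $|V(H_0)| \geq \alpha N = \Omega(n^{k-1})$, for a random $s_2$-set $R$ the induced subgraph $H_0[\ori C(G^{(k-1)})[R]]$ is whp still robustly-connected (there remain $\Omega((d\cdot |V(H_0)\cap R|)^\ell)$ many $\ell$-vertex $(x,y)$-paths by a concentration argument), giving an $s_2$-graph $Q_3$ with $\delta_{2k}(Q_3) \geq (1-e^{-\sqrt{s_2}})\binom{n-2k}{s_2-2k}$.

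Setting $Q = Q_1 \cap Q_2 \cap Q_3$ and using $1/k,\eps \gg 1/s_2$ so that $3e^{-\sqrt{s_2}} \leq 1/s_2^2$, the union bound over the three defect sets at any fixed $2k$-set gives $\delta_{2k}(Q) \geq (1-1/s_2^2)\binom{n-2k}{s_2-2k}$, and by construction every $R \in Q$ satisfies \ref{itm:aux-graph-deg}--\ref{itm:aux-graph-suitable} simultaneously.

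The main obstacle I anticipate is getting \ref{itm:aux-graph-suitable} cleanly: robust connectivity is not a monotone "minimum degree of a property graph" condition in the form directly handled by \cref{lem:inheritance-minimum-degree}, so one has to argue separately that it is inherited by a typical induced subset. This needs a careful count — for each pair $\ori x, \ori y$ of oriented $(k-1)$-edges inside $R$, one must show that a positive fraction of the $(d|V(H_0)|)^\ell$ paths of \cref{cla:robustly-connected-subgraph} survive the restriction, which is a hypergeometric-type concentration estimate over the randomness of $R$, combined with a union bound over the $O(n^{2(k-1)})$ pairs; the parameter hierarchy $d,\alpha,\beta$ depending only on $k$ against $s_2 \to \infty$ is exactly what makes this work, but the bookkeeping (in particular keeping the surviving density at least $d$ rather than merely positive, which may force relabelling $d$ to a slightly smaller constant such as $d/2$) is where the care is required.
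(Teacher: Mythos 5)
The main issue is your treatment of \ref{itm:aux-graph-dense}. You claim it is ``free'' once $R \in Q_1$: since $R$ contains some $S \in P[R]$, and $G[S] \in \ucon$ has a $(k-1)$-edge, you say some orientation of it lands in $D[R]^{(k-1)}$. This does not follow. Recall that $D^{(k-1)} = V(H_0)$ is \emph{not} the set of all orientations of $G^{(k-1)}$; it is the vertex set of the $(d,3)$-robustly-connected subgraph $H_0$ extracted in \cref{cla:robustly-connected-subgraph}, and $|V(H_0)| \geq \alpha N$ where $\alpha = \exp(-\eps^{-4})/4$ is tiny. So the overwhelming majority of orientations of $(k-1)$-edges of $G$ are \emph{not} in $D^{(k-1)}$, and there is no reason that the $(k-1)$-edges appearing inside a given $S \in P[R]$ contribute any vertex of $H_0$ at all. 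The paper handles this by a separate auxiliary graph $Q_2$: one uses $|D^{(k-1)}| \geq \alpha N = \Omega(n^{k-1})$ and \cref{lem:inheritance-minimum-degree} (with $r = 0$, $d = 0$, $\delta = 0$) to show that almost every $s_2$-set $R$ meets $D^{(k-1)}$. Your proposal omits this graph entirely, so $Q = Q_1 \cap Q_2 \cap Q_3$ in your notation is missing a factor.

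Your other three ingredients are fine, though \ref{itm:aux-graph-suitable} is more laboured than it needs to be. You frame it as an inheritance-of-robust-expansion statement, anticipating a hypergeometric concentration argument and a possible weakening of the density parameter $d$. The paper never needs $H_0[R]$ to remain robustly connected — only that for each pair $(x,y)$ inside $R$ there exists \emph{some} $H_0$-path of length at most $3$ with interior in $R$. Since $H_0$ is $(d,3)$-robustly-connected on $\Omega(n^{k-1})$ vertices, each pair $(x,y)$ admits $\Omega(n^{\ell(k-1)})$ candidate path interiors for some $1 \leq \ell \leq 3$, which after discarding $O(n^{\ell(k-1)-1})$ degenerate ones still form a positive fraction of all $\ell(k-1)$-sets; one then applies \cref{lem:inheritance-minimum-degree2} directly, separately for each $(t,\ell)$, and intersects. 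This sidesteps the density bookkeeping you were worried about. Finally, a small point on \ref{itm:aux-graph-constitent}: the pairs $x,y$ may have $|x \cup y| = t$ for any $k-1 \leq t \leq 2k-2$, so as in the paper one should apply \cref{lem:inheritance-minimum-degree2} once for each $t$ and intersect the results, adjusting the degree lower bound to $1 - k e^{-\sqrt{s_2}} \geq 1 - 1/(4s_2^2)$ accordingly.
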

	\begin{proofclaim}
		We define four types of auxiliary hypergraphs that track the relevant objects.
		To begin, recall that $P = \PG{G}{\ucon \cap \uspa \cap \uape}{s_1}$ has minimum $2k$-degree at least $(1 - 1/s_1^2) \tbinom{n - 2k}{s_1 - 2k}$.
		Moreover, $1 - 1/s_1^2 \geq \th_1^{(s_1)} +\eps$ by \cref{fact:matchingthresholds}.
		Set $Q_1 = \PG{P}{\DegF{2k}{\th_1^{(s_1)}+\eps}}{s_2}$.
		In other words, $Q_1$ is the $s_2$-graph with vertex set $V(G)$ and an $s_2$-edge $R$ whenever the induced $s_1$-graph $P[R]$ has minimum vertex-degree at least $\left(\th_1^{(s_1)}+\eps\right) \tbinom{s_2-1}{s_1-1}$.
		By \cref{lem:inheritance-minimum-degree}, it follows that $\delta_{2k}(Q_1) \geq (1-e^{-\sqrt{s_2}})  \tbinom{n-2k}{s_2-2k} \geq (1 - 1/(4s^2_2)) \tbinom{n-2k}{s_2-2k}$.

		Next, define $Q_2$ as the $s_2$-graph on vertex set $V(G)$ with an edge $R$ whenever $D^{(k-1)}[R]$ is non-empty.
		Since \cref{cla:robustly-connected-subgraph} guarantees that $|D^{(k-1)}| \geq \alpha N = \Omega(n^{k-1})$, it follows by \cref{lem:inheritance-minimum-degree} that $\delta_{2k}(Q_2) \geq (1-e^{-\sqrt{s_2}/2}) \tbinom{n-2k}{s_2-2k} \geq (1 - 1/(4s^2_2)) \tbinom{n-2k}{s_2-2k}$ edges.

		To define the graph $Q_3$ which tracks \ref{itm:aux-graph-constitent}, we proceed as follows.
		First, for each integer $t$ between $k-1$ and $2k-2$, we let $\mathcal{D}_t$ be the set of pairs of tuples $(x,y) \in V(H_0)^2$ such that $|x \cup y| = t$.
		We also let $\mathcal{D}'_{t} \subseteq \mathcal{D}_t$ be the set of pairs which form an edge in $E(H_0)$.
		By definition, we know that for each pair $(x,y) \in \mathcal{D}'_{t}$ there is a set $P_{x,y} \subseteq P$ consisting of at least $\beta \tbinom{n-t}{s_1 - t}$ many $s_1$-edges $S \in P$ such that $(x,y)$ is consistent for $S$.
		Let $Q_3^t$ be the $s_2$-graph consisting of the $s_2$-sets $S$ such that for all $(x,y) \in \mathcal{D}'_t$ with $x \cup y \subseteq S$, there is at least one $s_1$-set $S' \in P_{x,y}$ such that $S' \subseteq S$.
		Then \cref{lem:inheritance-minimum-degree2}
		implies that $Q_3^t$ satisfies $\delta_{2k}(Q^t_3) \geq (1 - e^{\sqrt{s_2}}) \tbinom{n-2k}{s_2 - 2k}$.
		We then let $Q_3$ be the intersection of the $s_2$-graphs $Q_3^t$, for each $k-1 \leq t \leq 2k-2$.
		We have $\delta_{2k}(Q_3) \geq (1 - ke^{\sqrt{s_2}}) \tbinom{n-2k}{s_2 - 2k} \geq (1 - 1/(4s^2_2)) \tbinom{n-2k}{s_2-2k}$.

		To define the $s_2$-graph $Q_4$ which tracks \ref{itm:aux-graph-suitable} we need a bit of preparation.
		Let $(x,y) \in \mathcal{D}_t$.
		Since $x,y \in V(H_0)$, and since $H_0$ is $(d,3)$-robustly connected, there exist $1 \leq \ell \leq 3$ such that there are at least $(\alpha |V(H_0)|)^\ell = \Omega(n^{\ell (k-1)})$ many paths $x z_1 \dotsb z_\ell y$ in $H_0$.
		For each $1 \leq \ell \leq 3$, let $\mathcal{D}^\ell_t \subseteq \mathcal{D}_t$ be the set of pairs where the paths described above have $\ell$ inner vertices.
		For a given $(x,y) \in \mathcal{D}^\ell_t$, we can discount the at most $O(n^{\ell(k-1)-1})$ paths $x z_1 \dotsb z_\ell y$ paths where $x \cup y, z_1,\dots,z_\ell$ are not pairwise disjoint.
		By doing so we get a family of $\ell(k-1)$-sets $P'_{x,y}$, consisting of $\Omega(n^{\ell (k-1)})$ many sets $z_1 \cup \dotsb \cup z_\ell$, with the property that $x z_1 \dotsb z_\ell y$ is a path in $H_0$.
		For each $k-1 \leq t \leq 2k-2$ and $1 \leq \ell \leq 3$, we let $Q_4^{t, \ell}$ be the $s_2$-graph consisting of the $s_2$-sets $S$ such that for all $(x,y) \in \mathcal{D}^\ell_t$, there is at least one $\ell(k-1)$-set in $P'_{x,y}$ which is contained in $S$.
		Then \cref{lem:inheritance-minimum-degree2}
		implies that $Q_4^{t, \ell}$ satisfies $\delta_{2k}(Q^{t, \ell}_4) \geq (1 - e^{\sqrt{s_2}}) \tbinom{n-2k}{s_2 - 2k}$.
		We then let $Q_4$ be the intersection of the $s_2$-graphs $Q_4^{t, \ell}$, for each $k-1 \leq t \leq 2k-2$ and $1 \leq \ell \leq 3$.
		We have $\delta_{2k}(Q_4) \geq (1 - 3ke^{\sqrt{s_2}}) \tbinom{n-2k}{s_2 - 2k} \geq (1 - 1/(4s^2_2)) \tbinom{n-2k}{s_2-2k}$.

		Now let $Q$ be the intersection of the $s_2$-graphs $Q_1$, $Q_2$, $Q_3$ and $Q_4$.
		We have that $\delta_{2k}(Q) \geq (1 - 1/s_2^2) \tbinom{n-2k}{s_2 - 2k}$,
		and by construction, it follows that $Q$ satisfies the claimed properties.
	\end{proofclaim}

	Let $Q$ be as in \cref{cla:orientation-aux-graph}, and fix an edge $R \in Q$.
	To finish, we have to show that $D[R]$ satisfies $\P = \dcon \cap \dspa \cap \dape$.
	Let us begin with the connectivity property.

	\begin{claim}
		$D[R]$ satisfies $\dcon$.
		Moreover, $\adh(D[R])$ contains at least one orientation of every edge of~$G[R]^{(k)}$.
	\end{claim}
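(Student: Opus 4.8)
The plan is to deduce the connectivity of $D[R]$ from the robustly-connected auxiliary graph $H_0$, passing through the intermediate properties encoded in the edge $R \in Q$. Recall $D^{(k-1)}[R] = V(H_0) \cap \ori{C}(G^{(k-1)}[R])$ is nonempty by \cref{cla:orientation-aux-graph}\ref{itm:aux-graph-dense}, and that $D^{(k)}[R] = \ori{C}(G^{(k)}[R])$ contains every orientation of every $k$-edge. So I first want to show that any two oriented $(k-1)$-edges $x, y \in D^{(k-1)}[R]$ lie on a common closed \tight walk in $D[R]$ of length $1 \bmod k$; in particular $\tc(x)$ and $\tc(y)$ (in $D[R]$) coincide, so $\adh(D[R])$ is a single \tight component. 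Since $D^{(k-1)}[R]$ is nonempty, aperiodicity and the spanning condition will come along the way as well.

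The key step is a \emph{path-in-$H_0$-to-walk-in-$D[R]$} argument. Given $x, y \in D^{(k-1)}[R] = V(H_0) \cap (\text{orientations on } R)$, property \ref{itm:aux-graph-suitable} gives $1 \le \ell \le 3$ and $z_1, \dots, z_\ell \in D^{(k-1)}[R]$ with $x z_1 \dotsb z_\ell y$ a path in $H_0$. Each consecutive pair along this path is an edge of $H_0$, so by \ref{itm:aux-graph-constitent} there is, for each such pair $(u,v)$, an $s_1$-set $S \in P[R]$ consistent with $(u,v)$ --- meaning by definition that $G[S]$ contains a closed \tight walk of length $1 \bmod k$ containing both $u$ and $v$ as subwalks (with these specific orientations). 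Since all $k$-edges of $G[S]$ are present in $D[R]$ with every orientation, this walk lifts to a closed \tight walk in $D[R]$ realising the given orientations $u$ and $v$. Concatenating these $\ell+1$ closed \tight walks (which share the vertices $x, z_1, \dots, z_\ell, y$ appropriately, using that consecutive walks both pass through the common $z_i$ with matching orientation) produces a single closed \tight walk in $D[R]$ containing both $x$ and $y$. Each piece has length $1 \bmod k$ and there are $\ell+1 \le 4$ of them, but we may also wind around any one piece to adjust the total length; since each piece individually can be taken of length $1 \bmod k$ and we can freely add multiples of $k$, the concatenation can be arranged to have length $1 \bmod k$ too. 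This shows $\adh(D[R])$ is a single \tight component; it is vertex-spanning because every vertex of $R$ appears in some $(k-1)$-edge of $G[R]$ hence in some orientation in $D^{(k-1)}[R]$ (using \ref{itm:aux-graph-dense} together with the fact that $P[R]$ is dense so $G[R]^{(k-1)}$ covers $R$), and every such vertex lies on an edge visited by our walks. The "moreover" clause follows since for an arbitrary $e \in G[R]^{(k)}$, any orientation $\ori{e} \in D^{(k)}[R]$ contains some oriented $(k-1)$-subtuple $x$, and by \cref{proposition:keyorientation} applied inside a consistent $G[S]$ one can route a closed \tight walk through $\ori{e}$; this walk lies in $\adh(D[R])$.

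I expect the main obstacle to be the bookkeeping in the concatenation: one must be careful that the closed \tight walks furnished by consecutive edges of the $H_0$-path can actually be glued at the shared $(k-1)$-tuples $z_i$ \emph{with consistent orientations}, which is exactly why the definition of "consistent for $S$" records the oriented subwalks rather than merely the unoriented edges, and why $H_0 \subseteq V(H)$ was built from oriented $(k-1)$-edges in the first place. A clean way to handle this is to observe that a closed \tight walk in $D[R]$ containing an oriented $(k-1)$-tuple $u$ as a subwalk can be cut open at $u$ into a \tight $u$-$u$ walk, and two such (for $u$ and then from $u$ onward) splice into one longer closed \tight walk; iterating along $x, z_1, \dots, z_\ell, y$ gives the result, and a final winding step fixes the residue mod $k$. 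Everything else --- the space property (perfect fractional matching of $\adh(D[R])$, obtained by orienting a perfect fractional matching guaranteed via \ref{itm:aux-graph-deg} and \cref{obs:fractional-threshold} together with $\uspa$ of the witnessing $G[S]$'s) and the aperiodicity property (immediate from the length-$1 \bmod k$ walk just constructed) --- will be short remarks after the connectivity claim is in hand.
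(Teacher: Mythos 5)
Your core connectivity argument --- translating an $H_0$-path from $x$ to $y$ into a chain of closed \tight walks in the various $G[S]$ with $S \in P[R]$, lifting them to $D[R]$ (which is legitimate since $D^{(k)} = \ori{C}(G^{(k)})$ contains every orientation of every $k$-edge), and splicing at the shared oriented $z_i$'s --- is sound and is essentially the paper's mechanism, which packages it via a closed \tight walk of maximal $k$-edge count containing a fixed $x \in D^{(k-1)}[R]$ that must then absorb all of $D^{(k-1)}[R]$ by the same splicing. Your worry about the total length modulo $k$ is beside the point: this claim only concerns $\dcon$, and aperiodicity is verified separately afterwards.

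The gap is in vertex-spanning and the ``moreover'' clause. You argue vertex-spanning by asserting that every vertex of $R$ lies in some orientation in $D^{(k-1)}[R]$; but $D^{(k-1)}[R] = V(H_0) \cap \ori{C}(G^{(k-1)}[R])$ is typically a small fraction of the oriented $(k-1)$-edges on $R$, since $V(H_0)$ was obtained by restricting to a lower-regular and then robustly-connected \emph{subgraph}, discarding most orientations. Similarly, for the ``moreover'' clause you route a closed walk through $\ori{e}$ inside some $G[S]$ and assert that it lies in $\adh(D[R])$; but this requires the walk to contain some tuple of $D^{(k-1)}[R]$ as a subtuple, and the $(k-1)$-subtuple of $\ori{e}$ you invoke is just an arbitrary orientation of a $(k-1)$-subset of $e$ --- it need not lie in $V(H_0)$. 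What is missing is an $S \in P[R]$ containing the vertices of \emph{both} $e$ and a fixed $x \in D^{(k-1)}[R]$; property~\ref{itm:aux-graph-deg} together with \cref{fact:monotone-degrees} supplies such an $S$ because $|e \cup x| \le 2k$, and a closed \tight walk in $G[S]$ through both $e$ and the orientation $x$ (via \cref{proposition:keyorientation}) then lifts into $\tc(x) = \adh(D[R])$. Vertex-spanning is then a consequence of the ``moreover'' clause, since every vertex of $R$ lies in some $k$-edge of $G[R]$.
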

	\begin{proofclaim}
		Let $W$ be a closed \tight walk in $D[R]$ which contains some $x \in D^{(k-1)}$ as guaranteed by property~\ref{itm:aux-graph-dense}.
		Suppose $W$ visits a maximum number of $k$-edges in $D[R]$.
		It follows that $W$ contains all edges of $D^{(k-1)}$.
		Indeed, for any $y \in D^{(k-1)}$, it follows by property~\ref{itm:aux-graph-suitable} that there are $z_1, \dotsc,z_\ell \in D^{(k-1)}[R]$ with $1 \leq \ell \leq 3$ such that $H_0$ contains a path $x z_1 \dotsb z_\ell y$.
		So by property~\ref{itm:aux-graph-constitent} and by definition of consistency, we can find a \tight walk from $x$ to $y$ in $D[R]$.
		Hence $W$ must also contain $y$ by \cref{proposition:keyorientation}.
		This shows that $\adh(D[R])$ is \tightly connected, and hence $D[R] \in \con$.

		To finish, we claim that $\adh(D[R])$ contains at least one orientation of every $k$-edge in $G[R]$.
		To this end, consider an edge $e \in G[R]^{(k)}$.
		By property~\ref{itm:aux-graph-deg} and \cref{fact:monotone-degrees}, there is an edge $S\in P[R]$ that contains the vertices of both $e$ and $x$.
		Since $G[R] \in \uape$, and by \cref{proposition:keyorientation}, it follows that $W$ contains an orientation of $e$ as a subwalk.
	\end{proofclaim}

	Next, we check the aperiodicity property.
	Take any $x, y \in D^{(k-1)}$ contained in $R$.
	By definition of $D$, there exists $S \in P$ which is consistent with $(x,y)$.
	Hence $G[S]$ contains a closed walk of length $1 \bmod k$ from $x$ to~$y$.
	Since such a walk belongs to $\adh(D[R])$, it follows that $D[R]$ satisfies $\ape$.

	Finally, the space property $\spa$.
		{By property~\ref{itm:aux-graph-deg} and \cref{obs:fractional-threshold}, $P[R]$ contains a perfect fractional matching.}
	Moreover, each $G^{(k)}[S]$ with $S \in P[R]$ contains a perfect fractional matching.
	We may therefore linearly combine these matchings to a perfect fractional matching in $G^{(k)}[R]$.
	Since $\adh(D[R])$ contains every $k$-edge in $G^{(k)}[R]$, we are done.
\end{proof}

\subsection{Dense \tight components}

For the proof of \cref{pro:framework-undirected-to-directed}, we require two simple lemmata about dense \tight components.

\begin{lemma}\label{lem:dense-component}
	Let $1/k,\, \eps \gg 1/n$ and $\delta \in [0,1]$.
	Then every $k$-graph  $G$ with $e(G)\geq (\delta +\eps) \binom{n}{k}$ contains a \tight component with at least $\delta^k\binom{n}{k}$ edges.
\end{lemma}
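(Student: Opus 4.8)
The plan is to bound the number of edges of each tight component by the size of its $(k-1)$-shadow, to exploit that these shadows are pairwise disjoint, and then to conclude by a convexity/normalisation argument that the largest component must already carry a $\delta^k$-proportion of all $\binom nk$ possible edges. The slack $\eps$ will only be needed at the very end to absorb lower-order terms.

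First I would record the structural observation that drives everything. For a tight component $C$ of $G$ (with $k\ge 2$; the case $k=1$ is trivial since then $G$ has a single tight component), let $\partial C$ denote the family of $(k-1)$-sets contained in some edge of $C$. If $e\neq f$ are edges of $G$ sharing a $(k-1)$-set, then $|e\cap f|=k-1$, so $ef\in E(L(G))$ and $e,f$ lie in the same tight component. Hence every $(k-1)$-set of $\partial G$ lies in $\partial C$ for exactly one tight component $C$, and therefore
\[
\sum_{C} |\partial C| \;\le\; \binom{n}{k-1},
\]
where the sum is over the tight components of $G$ with at least one edge.

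Next I would bound $|\partial C|$ below in terms of $e(C)$ via the Kruskal--Katona theorem in Lovász's form: writing $e(C)=\binom{x}{k}$ for the unique real $x\ge k$, we have $|\partial C|\ge \binom{x}{k-1}=\frac{k}{x-k+1}\,e(C)$, and since $e(C)=\binom{x}{k}\ge \frac{(x-k+1)^k}{k!}$ gives $x-k+1\le (k!\,e(C))^{1/k}$, this yields $|\partial C|\ge \frac{k}{(k!)^{1/k}}\,e(C)^{(k-1)/k}$. Combining with the displayed inequality, and writing $f=\max_C e(C)$, the bound $e(C)^{(k-1)/k}=e(C)/e(C)^{1/k}\ge e(C)/f^{1/k}$ together with $\sum_C e(C)=e(G)\ge(\delta+\eps)\binom nk$ gives
\[
\frac{k}{(k!)^{1/k}}\,f^{-1/k}\,(\delta+\eps)\binom nk \;\le\; \binom{n}{k-1},
\qquad\text{hence}\qquad
f \;\ge\; \Bigl(\tfrac{k(\delta+\eps)}{(k!)^{1/k}}\cdot\tfrac{\binom nk}{\binom{n}{k-1}}\Bigr)^{k}
= \frac{(\delta+\eps)^k(n-k+1)^k}{k!},
\]
using $\binom nk/\binom{n}{k-1}=(n-k+1)/k$. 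It then remains to verify $(\delta+\eps)^k(n-k+1)^k\ge \delta^k\,n(n-1)\cdots(n-k+1)=\delta^k\,k!\binom nk$; after dividing, this follows from $(1+\eps/\delta)^k\ge\bigl(\tfrac{n}{n-k+1}\bigr)^{k-1}$ (the case $\delta=0$ is trivial, and $\delta+\eps>1$ makes the statement vacuous, so $\delta\le1$), whose left side is at least $1+\eps$ and whose right side is at most $\exp\!\big((k-1)^2/(n-k+1)\big)\le 1+\eps$ once $n\gg k^2/\eps$, which holds by the hierarchy.

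The step I expect to be the crux is obtaining the shadow bound with the correct constant: a softer estimate such as $|\partial C|\ge e(C)^{(k-1)/k}$ loses a factor of order $e^k$ and fails to reach $\delta^k\binom nk$, so one genuinely needs the sharp Kruskal--Katona constant $k(k!)^{-1/k}$, which cancels exactly against the combinatorial identity $\binom nk/\binom{n}{k-1}=(n-k+1)/k$ and leaves only the tiny gap between $(n-k+1)^k$ and $n(n-1)\cdots(n-k+1)$ for $\eps$ to cover. The remaining manipulations are routine.
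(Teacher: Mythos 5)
Your proof is correct and rests on the same ingredients as the paper's: disjointness of the $(k-1)$-shadows of distinct tight components, the Lov\'asz form of Kruskal--Katona, and an extremal/convexity step, so I would classify it as essentially the same approach. The only difference is cosmetic: you select the component maximising $e(C)$ and use $e(C)^{(k-1)/k}\ge e(C)/f^{1/k}$, whereas the paper selects the component maximising the ratio $e(C)/e(\partial C)$ and applies the mediant inequality $\sum_i a_i/\sum_i b_i \le \max_i (a_i/b_i)$, which is marginally shorter and sidesteps the explicit $(k!)^{1/k}$ bookkeeping that you then need to cancel against $\binom nk/\binom{n}{k-1}$.
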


\begin{proof}
	Let $C_1,\dots,C_\ell$ be the \tight components of $G$.
	Suppose that the component $C$ maximises $e(C_i) / e(\partial C_i)$ over all $1 \leq i \leq \ell$.
	By \cref{obs:tight-connectivity}, we have $\sum_{i=1}^\ell e(C_i) = e(G)$ and $\sum_{i=1}^\ell e(\partial C_i) = e(\partial G)$.
	It follows that
	\begin{equation*}
		e(G) = \sum_{i=1}^\ell e(C_i) \leq \frac{e(C)}{e(\partial C)} \sum_{i=1}^\ell e(\partial C_i) = \frac{e(C)}{e(\partial C)}  e(\partial G).
	\end{equation*}
	Let us write $e(C) = \nu \binom{n}{k}$ and $e(\partial C) = \nu' \binom{n}{k-1}$.
	By Lovász's formulation of the Kruskal--Katona theorem, we have  $\nu' \geq \nu^{(k-1)/k} - \eps^2$.
	Together with the above, this gives $\nu \geq \nu' (\delta + \eps) \geq \nu^{(k-1)/k} \delta$ and hence $\nu \geq \delta^k$.
\end{proof}

\begin{lemma}\label{lem:minimum-degree-component}
	Let $1/k,\, \eps \gg 1/n$  and $\delta \geq 2^{-1/(k-d)}$ for $1\leq d < k$.
	Let $G$ be a $k$-graph with $\delta_d(G)\geq (\delta +\eps) \binom{n-d}{k-d}$.
	Then $G$ contains a \tight component $C \subset G$ with $\delta_d(C)\geq \delta^{k-d} \binom{n-d}{k-d}$.
\end{lemma}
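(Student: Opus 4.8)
\textbf{Proof plan for \cref{lem:minimum-degree-component}.}
The plan is to mimic the argument of \cref{lem:dense-component}, but replacing global edge counts by link graphs, so that minimum $d$-degrees are preserved. First I would fix an arbitrary $d$-set $S \subset V(G)$ and look at the link $(k-d)$-graph $L_G(S)$, which by assumption has at least $(\delta + \eps)\binom{n-d}{k-d}$ edges. The key observation is that a \tight component $C \subset G$ restricts, for each $d$-set $S \subset V(C)$, to a union of \tight components of $L_G(S)$: indeed, two $(k-d)$-edges $X, X' \in L_G(S)$ lying in a common closed \tight walk of $L_G(S)$ give rise, after adding $S$, to two $k$-edges of $G$ on a common closed \tight walk, hence in the same \tight component of $G$. (One should check that the walk in $L_G(S)$ lifts to a genuine \tight walk in $G$; this is routine since consecutive $(k-d)$-edges of the walk differ in one vertex, so the corresponding $k$-edges differ in one vertex too, and the cyclic length is the same up to a harmless adjustment — the details of this lift are the one place to be careful, but no real obstacle.)

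Given this, I would run the same extremal argument as in \cref{lem:dense-component}, \emph{componentwise}. Let $C_1, \dots, C_\ell$ be the \tight components of $G$. For a \emph{fixed} $d$-set $S$, each $L_{C_i}(S)$ is a union of \tight components of $L_G(S)$, and these are pairwise edge-disjoint and partition $E(L_G(S))$. Choose the index $q = q(S)$ maximising $e(L_{C_q}(S))/e(\partial L_{C_q}(S))$ — but to get a \emph{single} component that works for all $S$ simultaneously, I would instead choose $C$ to be the \tight component maximising $e(C)/e(\partial C)$ over all components of $G$ (exactly as in \cref{lem:dense-component}), and then argue that this same $C$ has large $d$-degree at \emph{every} $S \in V(C)$. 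For this, note that by the choice of $C$ and the summation identities $\sum_i e(C_i) = e(G)$, $\sum_i e(\partial C_i) = e(\partial G)$, we already get $e(C) = \nu\binom{n}{k}$, $e(\partial C) = \nu'\binom{n}{k-1}$ with $\nu/\nu' \geq \delta + \eps$ (as in \cref{lem:dense-component}, using Kruskal--Katona $\nu' \ge \nu^{(k-1)/k} - \eps^2$, so $\nu \ge \delta^k$). This controls the global density of $C$, but the claim is about every link. So I would run the density argument \emph{inside each link}: for $S \in V(C)$, among the \tight components of $L_G(S)$ that make up $L_{C}(S)$, apply \cref{prop:large-component} to $L_G(S)$ to find the \tight component of $L_G(S)$ with the best edge-to-shadow ratio; this component lies inside $L_{C'}(S)$ for the $G$-component $C'$ containing it, and one shows $C' = C$ by a consistency/maximality bookkeeping. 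Then $e(L_C(S)) / e(\partial L_C(S)) \geq e(L_G(S))/e(\partial L_G(S)) \ge \delta + \eps$, and Kruskal--Katona applied in the $(k-d)$-uniform link (with $\partial$-density $\le 1$) gives $e(L_C(S)) \ge (\delta + \eps)^{k-d}\binom{n-d}{k-d} \ge \delta^{k-d}\binom{n-d}{k-d}$ — i.e.\ $\delta_d(C) \ge \delta^{k-d}\binom{n-d}{k-d}$, as required. The hypothesis $\delta \geq 2^{-1/(k-d)}$, equivalently $\delta^{k-d} \geq 1/2$, is what guarantees the selected link-component is \emph{unique} (two components each with $> \tfrac12 \binom{n-d}{k-d}$ shadow-edges would have to share a shadow pair, contradiction), pinning down which $G$-component it belongs to and making the "$C' = C$" step go through cleanly.

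The main obstacle I anticipate is precisely this last synchronisation: making sure that the \tight component $C$ chosen once and for all (maximising the global ratio) is the \emph{same} component whose link at each $S$ is dense. The clean way around it is to not insist on the global-ratio $C$ at all, but to argue directly: let $C$ be \emph{any} \tight component of $G$ such that for every $d$-set $S \subset V(C)$ the link $L_C(S)$ contains the densest \tight component of $L_G(S)$; existence of such a $C$ follows because, for each $S$, the densest \tight component of $L_G(S)$ has edge-density $> \nu'(\delta+\eps) \ge \delta^{k-d}/\ldots$ hence shadow-density $> 1/2$ by Kruskal--Katona and the bound $\delta^{k-d} \ge 1/2$, so these densest link-components at different $S$ all pairwise intersect (sharing a shadow element), forcing them into a common $G$-tight-component by \cref{obs:tight-connectivity}. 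Once $C$ is identified this way, the per-link density bound is immediate from \cref{prop:large-component}. I would also double-check the edge cases $d = k-1$ (links are $1$-graphs, everything trivial) and the rounding in Kruskal--Katona for the $\eps$-slack, but these are routine and I would dispatch them in a line each.
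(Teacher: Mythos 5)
Your overall strategy matches the paper's: apply \cref{lem:dense-component} (equivalently \cref{prop:large-component}) in every link $L_G(S)$ to extract a dense \tight component $C_S$, and use the numerical hypothesis $\delta^{k-d} \geq 1/2$ to glue these into a single \tight component of $G$. However, the gluing step in your ``clean way'' has a genuine error. You claim the dense link-components at different $S$ ``all pairwise intersect (sharing a shadow element), forcing them into a common $G$-tight-component by \cref{obs:tight-connectivity}.'' Sharing a \emph{shadow} element $T$ (a $(k-d-1)$-set) does not produce tight-adjacent $k$-edges of $G$: from $T \cup \{v\} \in C_S$ and $T \cup \{v'\} \in C_{S'}$ you only get $k$-edges $S \cup T \cup \{v\}$ and $S' \cup T \cup \{v'\}$, which share at most $(d-1)+(k-d-1)=k-2$ vertices even when $|S \cap S'|=d-1$. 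So \cref{obs:tight-connectivity} does not apply. The Kruskal--Katona detour to shadow density is a red herring here.

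The paper instead works at the level of $(k-d)$-\emph{edges} and only for \emph{adjacent} $d$-sets: for $|e \cap e'| = d-1$, the edge density $\geq \delta^{k-d} \geq 1/2$ of $C_e$ and $C_{e'}$ (on almost identical ground sets, with the $\eps$-slack absorbing the boundary effect) forces a common $(k-d)$-edge $f \in C_e \cap C_{e'}$; then $e \cup f$ and $e' \cup f$ are genuine $k$-edges sharing $k-1$ vertices, hence tight-adjacent. Connectivity of the Johnson graph on $d$-sets chains these relations up to conclude that all the lifted components coincide. Your ``all pairwise'' version cannot work directly, since for $|S \cap S'| < d-1$ even a common $(k-d)$-edge $f$ yields $k$-edges $S \cup f$, $S' \cup f$ sharing fewer than $k-1$ vertices. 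You should restrict to adjacent $d$-sets and chain. One further structural difference: the paper does not search for an \emph{existing} \tight component of $G$ that dominates every link (your ``let $C$ be any tight component such that \dots existence follows because \dots''); it instead builds a spanning subgraph $C$ directly as the union of $e \cup f$ over $f \in C_e$ and shows this $C$ is itself tightly connected — the degree bound is then immediate by construction, avoiding any ``consistency/maximality bookkeeping.''
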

\begin{proof}
	For each $d$-set $e \subset V(G)$, use \cref{lem:dense-component} to select a \tight component $C_e \subset L_{G}(e)$ of density at least~$\delta^{k-d}$.
	Let $C \subset G$ be the vertex-spanning subgraph with an edge $e \cup f$ for every $d$-set $e$ and $f \in C_e$.
	Note that for every $d$-set $e$ the edges $e \cup f$ with $f \in C_e$ are in a common \tight component of $C$, which we denote by $F_e$.
	Moreover, for $d$-sets $e,e'$ with $|e \cap e'| = d-1$, there is a common $f \in E(C_e) \cap E(C_{e'})$ since $\delta^{k-d} \geq 1/2$.
	Thus $F_{e} = F_{e'}$, and by extension $C$ is \tightly connected.
\end{proof}

\subsection{From undirected to directed frameworks}\label{sec:framework-undirected-to-directed}

Now we are ready to show the main result of this section.

\begin{proof}[Proof of \cref{pro:framework-undirected-to-directed}]
	Introduce $\eps,s_3$ with  $1/k,\,1/s_1 \gg \eps \gg s_2 \gg s_3 \gg 1/n$.
	Let $Q = \PG{G}{\P}{s_1}$, and note that by assumption $\delta_{2k} (Q) \geq (1-1/s_1^2) \binom{n-2k}{s_1-2k}$.
	Denote by $F$ the Hamilton framework of $\P$ that we are guaranteed by the assumption.
	For each $T \in Q$, let $C_T$ be the $[k]$-graph with $C_T^{(k)} = F(G[T])$ and no further edges (for the moment).
	Let $C$ be the union of the $[k]$-graphs $C_T$ over all edges $T \in Q$.
	Note that for each $e \in \partial(C^{(k-1)})$ there is at least one edge $T \in Q$ such that $e \in \partial(C_T)$.
	For our purposes, we require a bit more.
	To this end, let us say that $e \in \partial(C^{(k-1)})$ is \emph{well-connected} if there are at least $\eps \binom{n-(k-1)}{s_1-(k-1)}$ edges $T \in Q$ such that $e \in \partial(C_T)$.
	A crude double-counting argument reveals that there are at least $\eps \binom{n}{k-1}$ well-connected sets in $\partial(C)$.
	Now we define a $k$-bounded hypergraph (which we still call $C$) by adding to $C$ all well-connected $(k-1)$-tuples.
	We claim that $C$ satisfies $s_2$-robustly $\ucon \cap \uspa \cap \uape$.

	To see this, define an $s_2$-graph $P$ on $V(G)$ by adding an edge $S$ whenever
	\begin{enumerate}[label = \rmlabel]
		\item \label{itm:Q[S]-large-degree} $Q[S]$ has minimum $(k-1)$-degree at least $(1-e^{-\sqrt{s_2}})\binom{n-(k-1)}{s_1-(k-1)}$,
		\item \label{itm:Q[S]-well-connected-existence} $S$ contains a well-connected $(k-1)$-set and
		\item \label{itm:Q[S]-well-connected-degree} every well-connected $(k-1)$-set $e \subset S$ is contained in $\partial(C_T)$ for at least $(\eps/2) \binom{s_2-(k-1)}{s_1-(k-1)}$ edges $T \in Q[S]$.
	\end{enumerate}
	As in the proof of \cref{cla:orientation-aux-graph}, it follows by \cref{lem:inheritance-minimum-degree,lem:inheritance-minimum-degree2} that $P$ has minimum $2k$-degree at least $(1-1/s_2^2) \binom{n-2k}{s_2-2k}$.
	(We omit repeating the details.)
	Consider an edge $S \in P$.
	In the following, we show that~$G[S]$ satisfies $\ucon \cap \uspa \cap \uape$.
	This is enough, because we may then conclude the proof by applying \cref{lem:orientation} with $s_2,s_3$ playing the rôle of $s_1,s_2$ to obtain an $n$-vertex $[k]$-digraph $G' \subset \ori{C}(G) \cup \ori{C}(\partial G)$ which $s_3$-robustly satisfies $\dcon \cap \dspa \cap \dape$.

	We begin with connectivity.
	Recall that the {\tight adherence} $\adh(C[S]) \subset (C[S])^{(k)}$ is obtained by taking the union of the \tight components $\tc(e)$ over all $e \in (C[S])^{(k-1)}$.
	We have to show that $\adh(C[S])$ is a single vertex-spanning \tight component.
	To this end, we first identify a \tight component $J \subset C[S]$ and then show that $J$ contains the components $\tc(e)$.
	By choice, each $C_T$ is itself \tightly connected.
	{Moreover, for all $T,T' \in Q$ with $|T \cap T'| = s_1-1$, the edges of $C_{T}^{(k)} = F(G[T])$ and $C_{T'}^{(k)} = F(G[T'])$ are in the same \tight component of $G[S]$ thanks to the consistency condition of \cref{def:hamilton-framework}.}
	{This motivates us to select an $s_2$-vertex \tight component $Q'_S \subset Q[S]$ of minimum $(k-1)$-degree at least $$(1-e^{-\sqrt{s_2}})^k \binom{s_2-(k-1)}{s_1-(k-1)} \geq (1-e^{-\sqrt{s_2}/2}) \binom{s_2-(k-1)}{s_1-(k-1)},$$ which is possible by property~\ref{itm:Q[S]-large-degree} and \cref{lem:minimum-degree-component}.}
	Let $J \subset C[S]$ be formed by the union of the $k$-graphs $C_{T}$ with $T \in Q'_S$.
	As observed above, $J$ is \tightly connected.

	Now consider $e \in (C[S])^{(k-1)}$, which exists by property~\ref{itm:Q[S]-well-connected-existence}.
	To prove that $C$ satisfies $\ucon$, we show that $\tc_{C[S]}(e) = J$.
	By the above it suffices to show that $e$ is contained in an edge $T \in Q'_S$.
	Recall that by definition $e$ is well-connected in $G$, and hence $e$ is contained in $\partial(C_T)$ for at least $(\eps/2) \binom{s_2-(k-1)}{s_1-(k-1)}$ edges $T \in Q[S]$ by property~\ref{itm:Q[S]-well-connected-degree}.
	Since $\eps/2 > e^{-\sqrt{s_2}/2}$, this means that one of these edges is in $Q'_S$, and we are done.
	It follows that $C$ satisfies $\ucon$.

	For the space property, observe that $Q'_S$ has a perfect fractional matching.
	Indeed, by \cref{fact:matchingthresholds,obs:fractional-threshold}, the $s_2$-vertex $s_1$-graph $Q'_S$ still has a perfect fractional matching.
	Furthermore, note that $C[T]$ has a perfect fractional matching for every $s_1$-edge $T$ in $Q'_S$.
	We may therefore linearly combine these matchings to a perfect fractional matching of $C[S]$.

	Finally, the aperiodicity property follows simply because there is an edge in $T \in Q'_S$.
	Since $C_T$ satisfies~$\uape$, it follows that $C_T$ contains an odd walk of order $1 \bmod k$.
\end{proof}

\section{Conclusion}\label{sec:conclusion}

In this paper, we introduced a new approach to construct spanning structures in hypergraphs, which was used to prove a hypergraph bandwidth theorem.
We conclude with a series of remarks and related open questions.

\subsection{Comparison to past work}

In earlier work~\cite[Theorem 3.5]{LS23}, we proved a version of \cref{thm:framework-bandwidth} in the graph setting, where the quantification of bandwidth and maximum degrees coincides with~\rf{thm:bandwidth-graphs-linear}.
The main conceptual difference between the two results is the notion of robustness, which in the earlier instance is defined via edge and vertex deletions.
The results are equivalent, in the sense that one can derive one from the other with different constants using the Regularity Lemma.

\subsection{Quo vadis hypergraph Hamiltonicity?}

\cref{thm:framework-bandwidth} allows us to decompose the problem of finding a \tight Hamilton cycle in a dense host graph into three distinct subproblems, corresponding to connectivity, space and aperiodicity.
Each of these problems invokes new questions, which merit study on their own.

\subsubsection*{Connectivity}

Given a $k$-graph $G$ of edge density $d$, what is the largest number of edges a \tight component is guaranteed to have?
For $k=2$, this is easy to solve.
However, already for $k=3$ not much is known beyond the cases $d=o(1)$~\cite{ABCM17} and $d=5/8$~\cite{LSV24}.
In the latter, one can find a component which has density $1/2$ relative to the order of $G$.
We conjecture that this extends to all odd uniformities.
\begin{conjecture}
	For $k = 2j+1 \geq 3$, let $G$ be an $n$-vertex graph of density $1 - \frac{1}{2^k} \frac{k!}{(j+1)!j!}$.
	Then $G$ contains a \tight component $C$ with $e(G) \geq (1/2-o(1)) \binom{n}{k}$.
\end{conjecture}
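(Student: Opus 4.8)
The plan is to reduce the conjectured statement to the case $d = k-1$ (codegrees) via the link-graph trick that pervades this paper, and then to solve the resulting $(k-d)$-uniform problem by an explicit random or algebraic construction matching the extremal example in \cref{const:tight-general}. More precisely, for a general $k$-graph $G$ of $d$-degree slightly above the claimed threshold, one selects in each link $(k-d)$-graph $L(S)$ a tight component of maximum density; by \cref{lem:dense-component} (applied inside $L(S)$) these components are dense, and if $k-d$ is odd and the density is right, one wants a single such component to span a fixed proportion of $\binom{n-d}{k-d}$. The subgraph of $G$ generated by these choices is then tightly connected exactly as in \cref{lem:minimum-degree-component}, and its relative size is governed by the worst link. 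So the heart of the matter is the codegree statement: \emph{for $\ell = 2j+1$ odd and an $\ell$-graph $L$ of density $1 - \tfrac{1}{2^\ell}\tbinom{\ell}{j}$, some tight component of $L$ has at least $(1/2 - o(1))\binom{n}{\ell}$ edges.}

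For the codegree case I would first establish the \emph{upper bound} (showing the density threshold cannot be lowered) by analysing \cref{const:tight-general}: with $\ell = k - d$ odd, $\lceil \ell/2\rceil = j+1$, and the forbidden intersection size $\,j\,$ (since $(j-1)/k < \tfrac{j+1}{2j+2} < (j+1)/k$ is what pins down the relevant $j$), the hypergraph on parts $X$ of size $(j+1)n/(2j+2) = n/2$ and $V\setminus X$ of size $n/2$ whose edges are the $\ell$-sets with $|S \cap X| \ne j$ splits into two tight components — one for sets meeting $X$ in at most $j-1$ vertices, one for sets meeting it in at least $j+1$ vertices — each of relative size exactly $\tfrac12(1 + o(1))\binom{n}{\ell}$ by the symmetry $|S\cap X| \leftrightarrow |S \setminus X|$ when $|X| = |V\setminus X|$ and $\ell$ is odd. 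This identifies $1 - \tbinom{\ell}{j}/2^\ell$ as the density of the construction and simultaneously shows a largest component of relative size $1/2$ is best possible there. The claimed inequality $e(C) \ge (1/2 - o(1))\binom{n}{k}$ (with the typo $k$ for $\ell$) should be read as: the worst case is precisely this balanced bipartite-type configuration.

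For the \emph{lower bound} — that every $\ell$-graph above the threshold contains a component of relative size $\ge 1/2 - o(1)$ — I would run the weighted component argument of \cref{prop:large-component,obs:density}. Pick the tight component $C$ maximising $e_\ell(C)/e_{\ell-1}(C)$; by \cref{prop:large-component} its ratio $\nu/\nu'$ is at least the global density $\delta = 1 - \tbinom{\ell}{j}/2^\ell$. If $C$ is not large, one bounds $\nu$ from above using that the complementary components carry the remaining edge mass, and then the Kruskal--Katona theorem gives $\nu' \ge \nu^{(\ell-1)/\ell} - o(1)$, so $\nu \ge \delta^{\ell}$; for $\ell$ odd this forces $\nu \ge 1/2 - o(1)$ provided the arithmetic $\delta^\ell \ge 1/2$ holds, which is exactly the numerological content of the choice of threshold (for $\ell = 3$, $\delta = 5/8$ and $(5/8)^3 > 1/5$ is not enough by itself, so in fact one needs the sharper stability/convexity input of \cite[Lemma 6.2]{RRR19} used in \cref{prop:switcher}, not just Kruskal--Katona). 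I expect \textbf{this last numerical gap to be the main obstacle}: naive Kruskal--Katona is too lossy, and closing it will require either the convexity argument of Reiher--Rödl--Ruciński--Schacht--Szemerédi adapted to $\ell$-uniformity, or a genuinely new stability analysis showing that a hypergraph of density $1 - \tbinom{\ell}{j}/2^\ell$ whose largest tight component has relative size $< 1/2$ must essentially look like \cref{const:tight-general} — and hence cannot exceed the threshold density. Making that stability statement quantitative and uniform in $\ell$ is the crux; everything else is the routine link-graph bootstrap already carried out for $\ell \le 3$ in \cref{lem:natural-framework-threshold-1,lem:natural-framework-threshold-2,lem:natural-framework-threshold-3}.
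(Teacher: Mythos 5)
The statement you are proving is a \emph{conjecture}; the paper gives no proof of it. The only thing the paper offers is the extremal construction (an equipartition $V(G)=X\cup\bar X$ keeping every $k$-set $S$ except those with $|S\cap X|=j+1$) together with the remark that the case $k=3$ is settled by \cite{LSV24}. There is accordingly no proof of the paper's to compare against, and your proposal --- quite rightly --- does not claim one. You do identify the central obstacle correctly: the maximum-ratio argument of \cref{prop:large-component} combined with naive Kruskal--Katona yields only $\nu \ge \delta^k$ for the relative density $\nu$ of the chosen tight component; with $\delta = 1 - \tbinom{k}{j+1}/2^k$ this is already about $0.244$ at $k=3$ and tends to $0$ as $k\to\infty$ (since $\delta = 1 - \Theta(1/\sqrt k)$), so Kruskal--Katona on its own is hopelessly lossy, and even the known case $k=3$ in \cite{LSV24} requires the sharper convexity input of \cite{RRR19}.

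Two corrections to the details. Your analysis of the extremal construction has the component densities wrong: the two tight components are \emph{not} each of relative size $1/2$. For $k=2j+1$, the construction splits into $C_1 = \{S : |S\cap X|\le j\}$ of relative density $\sum_{i=0}^{j}\tbinom{k}{i}/2^k = 1/2$ (using that $k$ is odd and the bipartition is balanced), and $C_2 = \{S : |S\cap X|\ge j+2\}$ of relative density $1/2 - \tbinom{k}{j+1}/2^k < 1/2$. Only one component attains $1/2$; the symmetry $|S\cap X|\leftrightarrow|S\setminus X|$ you invoke is exactly what the asymmetric choice of forbidden intersection size destroys. Separately, the opening link-graph reduction is superfluous: the conjecture is already a direct statement about $k$-graphs and their tight components, so what you call `the codegree statement' is the conjecture itself re-notated, not a base case to be reached by a reduction. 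Your bottom-line diagnosis --- that a stability argument of Reiher--R\"odl--Ruci\'nski--Schacht--Szemer\'edi type, made uniform in $k$, is what the problem genuinely needs, and that the rest is routine bootstrapping --- is a fair assessment of why this conjecture remains open.
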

Note that this is best possible, which can be seen by taking a partition of $V(G)$ into two parts of equal size and adding all edges, except those which have $j+1$ vertices in the first part.

Other questions in this direction arise from the context of spanning spheres, where we are interested in vertex-spanning \tight components under minimum degree conditions~\cite{GHM+21,ILM+24}.

\subsubsection*{Space}

Let us call a matching in a $k$-graph $G$ \emph{connected} if it is contained in a \tight component.
If $G$ has density $d$, what is the largest connected matching we are guaranteed to find?
Without the connectedness, this is known as Erdős' Matching Conjecture, which predicts two extremal configurations.
However, in the connected setting the problem becomes more delicate and already for $k=3$ more than two extremal configurations emerge.
For additional context, we refer the reader to the work of Lang, Schacht and Volec~\cite{LSV24}.

\subsubsection*{Aperiodicity}

The existence of cycles of specific length in dense hypergraphs has been studied in the Turán-setting.
In the known cases we know the threshold for the existence of sufficiently long cycles;
this was achieved for $k=3$ by
Kam{\v{c}}ev, Letzter and Pokrovskiy~\cite{KLP24}, and for $k=4$ by Sankar~\cite{San24}.
We conjecture the following for larger uniformities.

\begin{conjecture}\label{con:odd-cycles}
	For $k \geq 2$, $\eps >0$ and $n$ large, let $G$ be an $n$-vertex graph of density $1/2+\eps$.
	Then $G$ contains a cycle of length coprime to $k$.
\end{conjecture}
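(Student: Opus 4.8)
\emph{Proof proposal.} The goal is to produce a tight cycle ($k$-uniform cycle) in $G$ whose order is coprime to $k$. For $k=2$ this is exactly Mantel's theorem: a graph on $n$ vertices with at least $(1/2+\eps)\binom n2>\lfloor n^2/4\rfloor$ edges contains a triangle, a cycle of odd length. So assume $k\ge 3$. It is natural to aim at a single prescribed target order $\ell$ with $\gcd(\ell,k)=1$, allowed to be large. Note that by concatenating a closed tight walk with itself, a closed tight walk of order coprime to $k$ already yields one of order $\equiv 1\bmod k$, so membership in $\dape$ is the real content; the additional step of upgrading a closed tight walk to an honest tight cycle of order coprime to $k$ is best absorbed into a constructive argument rather than obtained by naive short-cutting, which can fail (splitting a closed tight walk of order $5$ in a $6$-graph into sub-walks of orders $2$ and $3$ gives no coprime piece).

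The plan for $k\ge 3$ is a link reduction followed by an assembly step. Averaging over all $(k-2)$-subsets $S\subset V(G)$, the mean density of the $2$-graph $L(S)$ equals the density of $G$; this uses the identity $\binom{n}{k-2}\binom{n-k+2}{2}=\binom k2\binom nk$. Hence a positive proportion of the sets $S$ satisfy $e(L(S))\ge(1/2+\eps/2)\binom{n-k+2}{2}$, and for $n$ large this exceeds $m^2/4$ with $m=n-k+2$, and so exceeds the edge count of $K_{m/2,m/2}$; by Bondy's pancyclicity theorem each such $L(S)$ is pancyclic on its non-isolated vertices, containing cycles of every length from $3$ up to a linear bound. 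One then wants to lift this abundance into a single tight $k$-cycle of controlled order: heuristically, a tight $k$-walk that alternates between travelling along a long cycle inside some $L(S)$ and inserting short connector blocks built from the vertices of $S$ and of neighbouring $(k-2)$-sets has order equal to the number of link-vertices used plus a tunable correction term, and since the links are pancyclic there should be enough slack to force the total order into any residue modulo $k$, in particular one coprime to $k$.

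The hard part will be the assembly: turning many local dense pieces (the pancyclic links together with the connecting edges between them) into one \emph{global} tight cycle of the \emph{exact} prescribed residue modulo $k$, with cycle segments, connector blocks and the closing-up all mutually compatible and vertex-disjoint. This is precisely the technology behind the cited results --- Kam\v{c}ev, Letzter and Pokrovskiy \cite{KLP24} for $k=3$ and Sankar \cite{San24} for $k=4$ --- and no method uniform in $k$ is currently known; already controlling tight-cycle lengths modulo $k$ in hypergraphs of density as low as $1/2$ is delicate. A further obstacle is that one must first secure a tight component carrying a positive proportion of the edges of $G$, a question governed by the (open) Connectivity problem of this section: \cref{lem:dense-component} only guarantees a tight component of density at least $2^{-k}$, which is far too sparse to feed into the scheme above, so progress on the two conjectures is likely intertwined. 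Finally, $1/2$ is the exact threshold only for $k=2$ (for $k=3$ it is $2\sqrt3-3<1/2$ by \cite{KLP24}), so the content of \cref{con:odd-cycles} is a clean bound uniform in $k$ rather than a sharp one.
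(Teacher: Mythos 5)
This statement is an open conjecture; the paper offers no proof of it, only a discussion of the known cases ($k\le 4$, via the cited works of Kam\v{c}ev--Letzter--Pokrovskiy and Sankar), the sharpness constructions for even $k$, and the remark that for odd $k$ the recursive construction gives a lower optimal density. You correctly recognise that no proof is available and that your sketch is a plan rather than an argument, and your commentary aligns well with the paper's framing: the $k=2$ base case via Mantel is right, the identification of connectivity (finding a dense tight component) and assembly (gluing local pancyclic link pieces into one tight cycle of controlled residue) as the core obstacles matches the paper's own discussion in this section, and the caveat that $1/2$ is sharp only for even $k$ is exactly the paper's remark.

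Two small quibbles. Your illustrative example of why na\"{\i}ve short-cutting fails --- splitting a closed tight walk of order $5$ in a $6$-graph --- is ill-posed, since a closed tight walk in a $k$-graph has order at least $k$; a well-posed version would be, say, a closed tight walk of order $13$ in a $6$-graph whose natural break-points give sub-walks of orders $6,6,1$ or $8,5$, none of which is a tight cycle of coprime length, but the conceptual point (you cannot simply excise a sub-walk and keep a coprime length) stands. Also, while your averaging identity for link densities and the appeal to pancyclicity in $L(S)$ are both correct and would be a sensible first move, they only give local structure; the paper's own emphasis (and your closing paragraph) correctly places the genuine difficulty in controlling a single global tight cycle's length modulo $k$, which is why this remains open for $k\ge 5$.
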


The conjecture holds for all $2 \leq k \leq 4$, by the aforementioned results.
We remark that the conjecture, if true, is best possible when $k$ is even.
To see this consider a partition of $G$ into two parts $X$ and $Y$ of the same size and add all edges that intersect in $X$ with an odd number of vertices.
(The density is easily checked by calculating the codegrees.)
For odd $k$, one can take $X$ and $Y$ of distinct sizes and recurse the construction inside $Y$.
When $k=3$, this is known to lead to the optimal density~\cite{KLP24}.
It is plausible that the same recursive construction is optimal for all odd $k \geq 5$.

\subsubsection*{Natural frameworks}

As discussed in \cref{sec:natural-frameworks}, we believe that Hamiltonicity in hypergraphs is best understood by focusing on the densest components of the link graphs.
We therefore reiterate our main conjecture on the matter:

{\connaturalframework*}

\subsection{Pósa's theorem for triple systems}

An old conjecture of Pósa states that every graph with relative minimum degree at least $2/3$ contains a square of a Hamilton cycle.
This was confirmed for large graphs by Komlós, Sárközy and Szemerédi~\cite{KSS98}.
For $3$-graphs, Bedenknecht and Reiher showed that a relative minimum codegree of slightly above $4/5$ gives a square of a \tight Hamilton cycle, while a lower bound of at least $3/4$ is necessary.
We believe that the latter is the correct threshold.

\begin{conjecture}
	For every $\eps > 0$, there is $n_0$ such that every $3$-graph $G$ of order $n\geq n_0$ with $\delta_2(G) \geq (3/4 + \eps)n$ contains a square of a \tight Hamilton cycle.
\end{conjecture}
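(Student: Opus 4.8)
The plan is to deduce the conjecture from \cref{thm:framework-bandwidth-simple} by establishing the threshold bound $\delta_2^{\hf}(3,4) \leq 3/4$; this suffices because a square of a \tight Hamilton cycle in a $3$-graph is precisely the $(t-k+1)$st power of a $k$-uniform Hamilton cycle with $k=3$ and $t=4$, and taking all clusters of size one in \cref{thm:framework-bandwidth-simple} recovers the plain power. So, following the template of \cref{sec:applications-proofs}, the task reduces to showing that for every $\eps>0$ and $n$ large, the family of $t$-graphs $K_4(G)$ with $G$ an $n$-vertex $3$-graph satisfying $\delta_2(G) \geq (3/4+\eps)\binom{n-2}{1}$ admits a Hamilton framework. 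Since minimum codegree conditions are hereditary, the robustness in the hypothesis of \cref{thm:framework-bandwidth} comes for free via \cref{lem:inheritance-minimum-degree}, and the remaining work is entirely structural: verifying conditions \ref{itm:hf-connected}--\ref{itm:hf-intersecting} of \cref{def:hamilton-framework} for $K_4(G)$.

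First I would set $F(K_4(G)) = K_4(G)$ itself and check the four properties directly, in the spirit of the proof of \cref{thm:power-hamilton-map-codegree}. For \ref{itm:hf-connected} (connectivity) one needs that $K_4(G)$ is \tightly connected; for \ref{itm:hf-odd} (aperiodicity) one needs a closed \tight walk of order $\equiv 1 \bmod 4$; and for \ref{itm:hf-intersecting} (consistency) one needs that $F(G)-x-y$ and $F(G')-x-y$ share an edge whenever $G,G'$ arise by deleting distinct vertices from a common $(n+1)$-vertex $3$-graph — which, as in the earlier applications, follows once one shows that after deleting two vertices the relevant clique-graph is still nonempty and, in fact, \tightly connected on a large robust portion. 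The main new ingredient will be a codegree analogue of \cref{lemma:crucialbound}: given $D_1,D_2 \in K_3(G)$ (triangles in the link-sense) with $|D_1 \cap D_2| = 2$, one wants a vertex $v$ with $D_1 \cup \{v\}, D_2 \cup \{v\} \in K_4(G)$; a counting argument analogous to the proof of \cref{lemma:crucialbound} shows this needs $\delta_2(G) > (1 - 1/c)n$ for the appropriate constant $c$ counting the $2$-sets to be covered, and one should check that the constant works out to give threshold exactly $3/4$ (this is where the precise value $3/4$ enters, matching the lower bound construction). Given such a lemma, connectivity and the odd-walk property would follow by the same inductive/concatenation arguments as in \cref{lem:powers-connectivity} and \cref{lem:powers-odd-cycle}.

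For \ref{itm:hf-matching} (space) I would need that $K_4(G)$ has a perfect fractional matching, i.e.\ a fractional $K_4^{(3)}$-tiling saturating all vertices. Here \cref{thm:lo-markstrom} is not directly applicable since it is phrased for codegrees of the host $k$-graph with a different constant; instead I expect to invoke a space-type statement tailored to $\delta_2(G) \geq (3/4+\eps)n$, either by a direct fractional-matching argument in the clique-graph or by adapting \cref{lem:space} together with an estimate on the link structure. The hard part will be precisely this space condition: unlike connectivity and aperiodicity, which are governed by the pairwise-intersection lemma above, the fractional matching requires controlling the \emph{global} distribution of $K_4$'s, and it is conceivable that $3/4$ alone does not suffice for a perfect fractional matching — in which case one would either need $\eps$-room (which is available) combined with a supersaturation/regularity-free averaging argument, or one would need to restrict $F(G)$ to a proper \tight subcomponent of $K_4(G)$ of controlled density à la \cref{sec:natural-frameworks} and verify space there. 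I would first attempt the direct route (keeping $F(G)=K_4(G)$), and only if the fractional matching fails at density $3/4$ fall back to the densest-component approach; either way the consistency condition \ref{itm:hf-intersecting} is then the routine deletion-of-two-vertices check as in the proofs of \cref{lem:natural-framework-threshold-1,lem:natural-framework-threshold-2,lem:natural-framework-threshold-3}, using that the codegree drops only by $O(1)$ after removing two vertices.
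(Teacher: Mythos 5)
This statement is a \emph{conjecture} in the paper, not a theorem: the paper offers no proof of it, and indeed explicitly frames both the connectivity statement and the full Hamiltonicity claim at density $3/4$ as open problems (see the surrounding discussion in \cref{sec:conclusion}, where it is noted that the best known upper bound, due to Bedenknecht and Reiher and recorded in \cref{thm:power-hamilton-cycle-codegree}, is $f_3(4) = 4/5$, while $3/4$ is only a lower bound). So there is no "paper proof" to compare against, and any proposal must be judged on whether it genuinely closes the gap between $4/5$ and $3/4$.

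Your proposal does not close that gap, and the place where it fails is precisely the step you flag as "the main new ingredient." You hope that a codegree analogue of \cref{lemma:crucialbound} — given $D_1, D_2 \in K_3(G)$ with $|D_1 \cap D_2| = 2$, find $v$ with $D_1 \cup \{v\}, D_2 \cup \{v\} \in K_4(G)$ — will "work out to threshold exactly $3/4$." It does not. The inclusion–exclusion count of $2$-sets inside $D_1 \cup D_2$ gives $2\binom{3}{2} - \binom{2}{2} = \binom{3}{2} + \binom{2}{1} = 5$, so the argument requires $\delta_2(G) > (1 - 1/5)n = 4n/5$, and $4/5$ is exactly $f_3(4)$: this is \emph{why} the known bound is $4/5$ and not $3/4$. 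Pushing the threshold down to $3/4$ requires replacing this greedy one-vertex-extension step by a genuinely different structural argument — some way of exhibiting a large \tightly connected piece of $K_4(G)$ (with a fractional matching and aperiodicity) that does not rely on every pair of overlapping triangles having a common clique extension. The paper identifies even the connectivity piece of this as a separate open conjecture, so the framework reduction (\cref{thm:framework-bandwidth-simple} plus \cref{lem:inheritance-minimum-degree}) that you correctly describe is the easy part; the hard part is verifying \ref{itm:hf-connected}–\ref{itm:hf-intersecting} for $K_4(G)$ at density $3/4$, and your sketch does not supply the idea needed to do so. Your fallback — restricting $F(G)$ to a dense \tight subcomponent in the style of \cref{sec:natural-frameworks} — is a more plausible route, but it is likewise only gestured at: no candidate component is identified, and the fractional matching, aperiodicity and consistency conditions would all have to be re-derived for it, which is exactly the content of the open problem.
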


A first step towards this conjecture would be to prove that everything is connected.

\begin{conjecture}
	For every $\eps > 0$, there is $n_0$ such that for every $3$-graph $G$ of order $n\geq n_0$ with $\delta_2(G) \geq (3/4 + \eps)n$, the $4$-clique graph $K_4(G)$ is \tightly connected.
\end{conjecture}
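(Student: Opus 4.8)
\medskip

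\noindent\textbf{A proof proposal.}
The plan is to reduce the statement, via \cref{obs:tight-connectivity}, to showing that any two $4$-cliques of $G$ lie on a common closed tight walk in $K_4(G)$. First I would record the easy structural facts. Since $\delta_2(G)\geq(3/4+\eps)n>n/2$, the base case of the argument in \cref{lem:powers-connectivity} shows that $G$ itself is tightly connected. Moreover every triangle $xyz\in G$ extends to a $4$-clique, because the number of vertices $v$ with $xyv,xzv,yzv\in G$ is at least $n-3(1/4-\eps)n=(1/4+3\eps)n>0$. Hence $\partial K_4(G)=K_3(G)=G$ as $3$-graphs, and in particular $K_4(G)$ has no isolated vertices.

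Second, I would set up a transfer along tight walks of $G$. For a $3$-edge $T=xyz\in G$, any two $4$-cliques of $G$ containing $T$ share the three vertices $x,y,z$, hence are tight-adjacent in $K_4(G)$; so all $4$-cliques containing $T$ lie in a single tight component $\cC(T)$ of $K_4(G)$, which is well defined by the previous paragraph. Given $4$-cliques $A,B$, pick triangles $T_A\subseteq A$ and $T_B\subseteq B$. As $G$ is tightly connected, \cref{obs:tight-connectivity} yields a closed tight walk $D_0D_1\cdots D_\ell$ in $G$ with $D_0=D_\ell=T_A$ that visits $T_B$ and has $|D_i\cap D_{i+1}|=2$ for every $i$. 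It then suffices to prove $\cC(D_i)=\cC(D_{i+1})$ for all $i$, since chaining these equalities along the walk gives $A\in\cC(T_A)=\cC(T_B)\ni B$.

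Third comes the key local claim. Write $D_i=xyz$, $D_{i+1}=yzw$ with common pair $P=\{y,z\}$, and let $H_P$ be the graph on $N_G(P):=\{u:yzu\in G\}$ with $uv\in E(H_P)$ iff $yuv\in G$ and $zuv\in G$. Then the $4$-cliques of $G$ through $P$ are exactly the sets $\{y,z\}\cup\{u,v\}$ with $uv\in E(H_P)$; two of them are tight-adjacent in $K_4(G)$ iff the corresponding $H_P$-edges meet; and a path in $H_P$ lifts to a tight walk in $K_4(G)$ through $4$-cliques containing $P$. Since $xyz,yzw\in G$ we have $x,w\in V(H_P)$, so if $x$ and $w$ lie in the same component of $H_P$ then $\cC(xyz)=\cC(yzw)$. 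Now $|V(H_P)|=\deg_G(P)\geq(3/4+\eps)n$, and for $u\in V(H_P)$ the number of non-neighbours of $u$ inside $V(H_P)$ is at most $(n-\deg_G(\{y,u\}))+(n-\deg_G(\{z,u\}))\leq(1/2-2\eps)n$, so $\delta(H_P)\geq(1/4+3\eps)n-1$. This rules out $H_P$ having three or more components: a vertex $u$ in one of them would have at least $2\big((1/4+3\eps)n-1\big)>(1/2-2\eps)n$ non-neighbours in $V(H_P)$, a contradiction.

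Fourth, and this is where I expect the real difficulty to lie, one must handle the remaining case where $H_P$ has exactly two components $C_1,C_2$ (each of size between $(1/4+3\eps)n$ and $(1/2-2\eps)n$) with $x\in C_1$ and $w\in C_2$, so that the lift above fails. Two routes seem plausible. First, a stability argument: show that a pair $P$ with $H_P$ disconnected forces $G$ to be close to a specific extremal-type configuration on $V$, and then exploit the $3\eps n$ slack in the degree hypothesis, together with a cleanup of the few exceptional vertices, to derive a contradiction — this mirrors the way the tight thresholds $5/9$ and $5/8$ are treated in \cref{thm:hamilton-cycle-k-d-small}. Second, a rerouting argument: show that even when $H_P$ disconnects one can still link $\cC(xyz)$ and $\cC(yzw)$ through $4$-cliques that use different pairs (for instance passing to pairs $\{x,v\}$ with $v\in C_1$, whose auxiliary graphs one analyses in the same way), or else that the closed tight walk $W$ in $G$ may be chosen so that no consecutive intersection $D_i\cap D_{i+1}$ is ``bad'', which should be feasible once one shows bad pairs are structurally constrained and hence rare. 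Either way a genuinely new idea beyond the $f_3(4)=4/5$ argument underlying \cref{lemma:crucialbound} and \cref{lem:powers-connectivity} is required, and the two-component analysis is the step I would expect to absorb most of the work.
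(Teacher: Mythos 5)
This is stated in the paper as a \emph{conjecture}, not a theorem; the paper gives no proof and explicitly frames it as an open problem (``a first step towards'' Pósa's conjecture for triple systems). So there is no paper argument against which to compare, and any ``proof'' you give must be scrutinised as a new result.

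Your proposal is sensible reconnaissance but is not a proof, and you say so yourself. The reduction to chaining tight components $\cC(D_i)$ along a tight walk in $G$ is natural, the auxiliary graph $H_P$ on the common pair $P=\{y,z\}$ is the right object, and your computations of $\delta(H_P)\ge(1/4+3\eps)n-1$ and the exclusion of three or more components are correct. But the entire content of the conjecture is concentrated in exactly the case you defer: $H_P$ has two components, each of size in $\big[(1/4+3\eps)n,\ (1/2-2\eps)n\big]$. Nothing in your argument constrains this case for small $\eps$ (for $\eps>1/20$ the size bounds already contradict $|V(H_P)|\ge(3/4+\eps)n$, but that is far from what is asked). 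Neither of the two escape routes you sketch --- a stability analysis of ``bad'' pairs, or rerouting the tight walk in $G$ to avoid them --- is carried out, and each would itself require a structural theorem of comparable difficulty to the conjecture. In particular, showing that bad pairs are rare, or that the closed tight walk of \cref{obs:tight-connectivity} can be steered around all of them, is not a routine deduction from $\delta_2(G)\ge(3/4+\eps)n$ alone; an argument along the $f_3(4)=4/5$ line of \cref{lemma:crucialbound} gives a disjoint-covering count of $n-5(n-\delta_2(G))>0$, which needs $\delta_2>4n/5$, so the gap between $3/4$ and $4/5$ is precisely where new ideas must enter. In short: the set-up and the easy cases are right, and you have correctly located the open difficulty, but there is a genuine gap at the two-component case and the proposal does not close it.
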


\subsection{Directed hypergraphs}

Dirac-type results have also been studied in directed hypergraphs.
\rf{pro:blow-up-cover} also applies to this setting.
However, it is not clear whether one can extend \cref{thm:framework-bandwidth} to a directed version.
One particular problem is that in the proof of \cref{pro:allocation-bandwidth-frontend} one can no longer modify the allocation by flipping vertices between clusters (since their edges would then point in the wrong direction).
On the other hand, if we restrict the output of \cref{thm:framework-bandwidth} to a (directed) Hamilton cycle, the proof should in principle go through as before.
But then again, a direct adaptation of our arguments only applies to shift-closed directed graphs (by the setup explained in \cref{sec:directed-setupt-necessary-conditions}).
This certainly limits the applications, as it does not apply, for instance, to oriented graphs.
It would therefore be appropriate to also generalise \cref{def:hamilton-framework} accordingly, for instance by replacing the perfect matching in the space condition with a perfect tiling into cycles.

\COMMENT{
	It is not hard to see that \cref{thm:framework-bandwidth,thm:framework-connectedness-robust} can also be proved in terms of $k$-uniform directed hypergraphs.
	\begin{definition}[Directed Hamilton framework]\label{def:hamilton-framework-directed}
		A family $\P$ of $s$-vertex $k$-digraphs has a \emph{Hamilton framework} $F$ if for every $G \in \P$ there is a subgraph $F(G) \subset G$ such that
		\begin{enumerate}[(F1)]
			\item \label{itm:hf-connected-directed} $F(G)$ is a \tight component, \hfill(connectivity)
			\item \label{itm:hf-matching-directed} $F(G)$ has a perfect fractional matching, \hfill(space)
			\item \label{itm:hf-odd-directed} $F(G)$ contains a closed walk of order $1 \bmod k$, and \hfill(aperiodicity)
			\item \label{itm:hf-intersecting-directed} $F(G) \cup F(G')$ is \tightly connected whenever $G,G' \in \P$ differ in a single vertex. \hfill(consistency)
		\end{enumerate}
	\end{definition}
	\begin{theorem}\label{thm:framework-cycles-directed}
		For  all $k$ and $s$, there is $n_0$ such that the following holds.
		Let $\P$ be a family of $s$-vertex $t$-graphs that admits a Hamilton framework.
		Let $G$ be a shift-closed $k$-digraph on $n \geq n_0$ vertices that $s$-robustly satisfies $\P$.
		Then $G$ contains a Hamilton cycle.
	\end{theorem}
	\begin{theorem} \label{thm:framework-connectedness-robust-digraph-version}
		Let $1/k,\, 1/s \gg 1/s_2 \gg 1/n$.
		Let $\P$ be a family of $s_1$-vertex $k$-digraphs that admits a Hamilton framework.
		Let $G$ be a shift-closed $k$-digraph on $n$ vertices that satisfies $s_1$-robustly $\P$.
		Then is an $n$-vertex $[k]$-digraph $G' \subset G \cup \partial G$ that satisfies $s_2$-robustly $\hamcon$.
	\end{theorem}
	The only noteworthy difference between the proofs of \cref{thm:framework-bandwidth,thm:framework-connectedness-robust} and \cref{thm:framework-bandwidth-directed,thm:framework-connectedness-robust-digraph-version} is that instead of \cref{pro:framework-undirected-to-directed}, we need the following directed version.
	\begin{proposition} \label{pro:framework-undirected-to-directed-digraph-version}
		Let $1/k,\, 1/s_1 \gg \eps \gg 1/s_2 \gg 1/n$.
		Let $\P$ be a family of $s_1$-digraphs that admits a Hamilton framework.
		Let $G$ be a shift-closed $k$-digraph on $n$ vertices that satisfies $s_1$-robustly $\P$.
		Then there is an $n$-vertex $[k]$-digraph $G' \subset \ori{C}(G) \cup \ori{C}(\partial G)$ that satisfies $s_2$-robustly $\dcon \cap \dspa \cap \dape$.
	\end{proposition}
	The proof of \cref{pro:framework-undirected-to-directed-digraph-version} follows along the lines of the undirected version, with the main difference that we do not need to apply \cref{lem:orientation}.
}

\subsection{Other spanning structures}

Tight Hamilton cycles are not the only spanning structures that have been studied in hypergraphs, and our techniques can be used or extended to be applicable in other situations.

\subsubsection*{Loose cycles}

Dirac-type problems have also been studied for $\ell$-cycles in which consecutive edges intersect in $1 \leq \ell \leq k-1$ vertices.
The minimum codegree conditions for loose cycles have been determined by Kühn, Mycroft and Osthus~\cite{KMO10}.
Using the main structural result of this paper (\cref{pro:blow-up-cover}), we reconfigured our framework to address $\ell$-Hamiltonicity in a separate publication~\cite{LS25}.

\subsubsection*{Trees}

A classic result of Komlós, Sárközy and Szemerédi~\cite{KSS95, KSS01b} determines asymptotically tight minimum degree conditions which ensure the existence of spanning trees of logarithmic maximum degree.
Similar results have been obtained in the hypergraph setting by Pavez-Signé, Sanhueza-Matamala and Stein~\cite{PSS23}.
We wonder whether an approach using blow-up covers can also be used to recover and extend these results?

\subsubsection*{Spheres}

Another geometrical interpretation of cycles was studied by Georgakopoulos,  Haslegrave,  Montgomery and Narayanan~\cite{GHM+21}, who determined the minimum codegree threshold for $3$-uniform spheres.
Recently, this was generalised for `supported codegree' to $k$-uniform spheres by Illingworth, Lang, Müyesser, Parczyk and Sgueglia~\cite{ILM+24} using a basic version of \rf{pro:blow-up-cover}.

\subsubsection*{Zero-freeness}

Böttcher, Schacht and Taraz~\cite{BST09} also proved a more general form of the Bandwidth theorem, which allows us to embed graphs that admit so-called zero-free colourings.
Our framework can be adapted to reproduce these results for hypergraphs by adding to \cref{def:hamilton-framework} the condition of $F(G)$ containing a clique of size $k+1$.
Moreover, this condition is easily seen to be necessary, since graphs with zero-free colourings can contain $(k+1)$-cliques and there exist $K_{k+1}$-free Hamilton frameworks.

\subsubsection*{Randomness}

Our main result on Hamilton connectedness (\cref{def:hamilton-connectedness}), which we restate here, can be combined with the work of Joos, Lang and Sanhueza-Matamala~\cite{JLS23} to give extensions to the random robust setting.

\thmframeworkconnectednessrobust*

A more challenging problem concerns the (random) resilience of Hamilton cycles~\cite{APP21,AKL+23}.
One obstacle in this line of research has thus far been the lack of a `robust' form of Hamiltonicity, which can be used as a black box.
We believe that the notion of Hamilton frameworks could be a suitable starting point for further research in this area.

\subsubsection*{Counting}

Combining \cref{thm:framework-connectedness-robust} with our former work~\cite{JLS23} also gives (somewhat) crude estimates on counting (powers of) Hamilton cycles in hypergraphs.
Similar bounds have been obtained by Montgomery and Pavez-Signé~\cite{MPS24} by exploiting the inheritance of minimum degree conditions.
It would be very interesting to understand whether this can be further sharpened as in the work of Cuckler and Kahn~\cite{cuckler2009hamiltonian}.

\subsection{Blow-up covers}

Our approach rests on the idea of covering the vertices of a hypergraph with blow-ups of suitable reduced graphs.
This framework shares a common interface with the setup derived from a combination of the Regularity Lemma and the Blow-up Lemma.
This allows us to tackle embedding problems relying on past insights, while avoiding many of the technical details.
We believe that there are plenty of opportunities to test and extend these techniques.

\subsubsection*{Deregularisation}
When using the Regularity Lemma, the order of the host graphs is a tower function of height growing with the input parameters.
There have been efforts to reduce these requirements by avoiding the Regularity Lemma, see for instance the survey of Simonovits and Szemerédi~\cite{SS19}.
Our work contributes to this line of research, since \rf{pro:blow-up-cover} applies to graphs whose order is a tower function of constant height.
Admittedly, this is still too large to be of practical relevance, but it is somewhat smaller nonetheless.

\subsubsection*{Cluster sizes}

The hypergraph extension of the Erdős--Stone theorem (\cref{thm:erd64}) allows us to find blow-ups of constant size hypergraphs with cluster sizes $\poly (\log n)$.
Random constructions show that this is essentially best possible.
On the other hand, the cluster sizes in \cref{thm:bandwidth-graphs-linear} cannot be larger than $\poly (\log \log n)$.
This bound stems from a somewhat crude application of \cref{thm:erd64} in the proof of \cref{lem:connecting-blow-ups}.
We believe that it can be improved to the bound suggested by the Erdős--Stone theorem.

\begin{conjecture}
	\cref{thm:framework-bandwidth} remains valid for $H$ with clusters of size at most $\poly (\log n)$.
\end{conjecture}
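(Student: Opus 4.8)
The final statement in the excerpt is the conjecture that \cref{thm:framework-bandwidth} remains valid when the guest blow-up $H$ has clusters of size at most $\poly(\log n)$ rather than $\poly(\log\log n)$. Since this is stated as an open conjecture at the very end of the paper, rather than a theorem to be proved, I will outline the approach one would take to attack it, identify which steps in the existing machinery already give this bound, and pinpoint the one place where the current argument is lossy.

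First I would trace where the double-logarithmic bound enters. Inspecting the proof of \cref{thm:framework-bandwidth-directed}, the cluster sizes available for the allocation are $m_1 = (\log n)^{c'}$ at the `vertex' blow-ups $R^v(\cV^v)$ and $m_2 = (\log m_1)^{c'}$ at the `edge' blow-ups $R^{uv}(\cW^{uv})$. The guest path-pieces $H^e$ must be embedded into the edge blow-ups by \cref{pro:allocation-bandwidth-frontend}, which requires cluster sizes at most $\rho m_2 = \rho(\log\log n)^{\Theta(c')}$; this is exactly where the $\poly(\log\log n)$ ceiling on $H$ comes from. The iterated logarithm appears because \cref{lem:connecting-blow-ups} (and ultimately \cref{thm:erd64}) is applied \emph{twice}: once to produce the $m_1$-sized vertex blow-ups from $G$ (on $n$ vertices), and a second time to produce the $m_2$-sized connector blow-ups \emph{inside} an already-shrunk $m_1$-vertex environment. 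So the plan is to eliminate the second nesting, so that connectors also have polylogarithmic-in-$n$ size.

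The key step would be to redesign the blow-up cover so that the connecting blow-ups are built directly from the large ambient graph $G$ rather than from within the vertex blow-ups. Concretely, in \cref{lem:connecting-blow-ups} one would want the connector $T(\cW)$ — which must hit a prescribed $m_1$-balanced family $\cV$ with $s_1$ clusters — to have clusters of size $m_2 = (\log n)^{c}$ rather than $(\log m_1)^c$. The obstruction is that \cref{thm:erd64} is applied to the \emph{partite} property graph obtained after fixing $\cV$, whose parts have size only $m_1$; Erdős' theorem then yields a blow-up of size $(\log m_1)^{1/(s-1)}$. To break this, one would instead look for connectors in the full property $s_2$-graph $P = \PG{G}{\P}{s_2}$ on $n$ vertices, apply \cref{thm:erd64} there to get an $m_2 = \poly(\log n)$ blow-up $T(\cW)$ with $\cW$ an arbitrary $m_2$-balanced family, and only \emph{afterwards} arrange the hitting condition — i.e.\ carve the intersections $\cW\cap\cV^x$ out of the already-placed $m_1$-clusters. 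For this to work, the $m_1$-clusters must themselves be chosen large enough (say $m_1 = (\log n)^{c_1}$ with $c_1$ large) and `generic' enough that a $\poly(\log n)$-sized slice of a $\poly(\log n)$-sized blow-up can be made to land inside them; this is a matching/transversal problem between two families of polylogarithmic sets in an $n$-vertex ground set, which should be solvable by a random-greedy argument since $\poly(\log n)\cdot\poly(\log n) \ll n$. The accounting in the proof of \cref{pro:blow-up-cover} — bounding $|V_{\textrm{avoid}}|$, controlling how many vertices each original cluster loses — goes through verbatim with $m_2$ now polylogarithmic, since all the relevant inequalities only used $\Delta = o(m_1/m_2)$ and $m_2 \ll m_1$, which still hold.

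The main obstacle I expect is precisely this reconciliation of the two scales: the connector blow-up $T(\cW)$ produced from the full graph has clusters $W_i$ of size $\poly(\log n)$ that are spread over all of $V(G)$, whereas the hitting condition (C3) demands that one cluster of $T$ sits \emph{inside} each of the $s_1$ prescribed clusters $V_x$ of $\cV^x$, which are themselves only $\poly(\log n)$-sized and were fixed earlier. Guaranteeing simultaneously that (i) $T(\cW)$ is a $\P$-blow-up, (ii) $\cW$ refines the placement appropriately, and (iii) distinct connectors remain vertex-disjoint, now requires a more delicate iterative construction than the clean two-step argument currently in \cref{sec:blow-up-covers}. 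A plausible route is to first run \cref{lem:simple-blow-up-cover} with $m_1$ slightly larger than the target $m_2$ (but still polylogarithmic), then when adding an edge to the shape, apply \cref{lem:blow-up-support}-style supersaturation \emph{inside $G$} to find many $m_2$-balanced $\P$-blow-ups containing a suitable $(k-1)$-edge, and select one that happens to meet both target clusters $V_1, V_2$ in large pieces — feasible because each $V_i$ has $\poly(\log n)$ vertices and, by robustness, is incident to $\Omega(n^{s_2-1})$ property-graph edges, so a positive fraction of the supersaturating blow-ups will be well-distributed across it. Making this selection argument quantitatively tight, and checking that the resulting cover still balances after removing the overlap vertices, is where the real work lies; the downstream allocation results (\cref{pro:allocation-Hamilton-cycle,pro:allocation-bandwidth-frontend}) are insensitive to the cluster sizes and would apply unchanged, now permitting guest clusters of size up to $\rho\cdot\poly(\log n)$.
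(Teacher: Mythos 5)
The statement you are addressing is an open conjecture: the paper does not prove it and explicitly defers to an accompanying work~\cite{LS24b} for a weaker special case, so there is no ``paper proof'' to compare against. Your diagnosis of the bottleneck is accurate and matches the authors' own remark in \cref{sec:conclusion}: the $\poly(\log\log n)$ bound arises from the nested application of \cref{thm:erd64}, first to $G$ on $n$ vertices (producing $m_1 = (\log n)^{c'}$-balanced vertex blow-ups) and then \emph{inside} an $m_1$-sized partite environment in \cref{lem:connecting-blow-ups} (producing $m_2 = \poly(\log m_1) = \poly(\log\log n)$-balanced connectors).

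However, the specific remedy you propose has a quantitative gap that makes it unworkable as stated. You suggest applying supersaturation (\cref{thm:erd64-supersaturated} or \cref{lem:blow-up-support}) in $G$ to produce $\beta n^{s_2 m_2}$ many $m_2$-balanced $\P$-blow-ups, and then selecting one whose clusters land inside the prescribed parts $V_x \in \cV$. The problem is that each $V_x$ has only $m_1 = \poly(\log n)$ vertices out of $n$, and the number of $m_2$-balanced blow-ups that have even a single cluster entirely contained in a fixed $V_x$ is at most $\binom{m_1}{m_2}\, n^{(s_2-1)m_2}$; the ratio of this to $n^{s_2 m_2}$ is roughly $(m_1/n)^{m_2}$, which decays super-exponentially in $m_2$ since $m_1 \ll n$. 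You would need $m_1/m_2$ comparable to $n$ for a positive fraction of the supersaturating blow-ups to hit the target, and that is not the regime. Put differently: once the hitting condition~(C3) pins $s_1$ clusters of $\cW$ inside $\poly(\log n)$-sized sets, the effective ambient size is $\poly(\log n)$ and any Erdős-type argument will again give clusters of size $\poly(\log\log n)$. Increasing the exponent $c_1$ in $m_1 = (\log n)^{c_1}$ does not escape this either, since $\log m_1 = c_1 \log\log n$ stays iterated-logarithmic. Breaking past this seems to require a genuinely new idea (a stronger, perhaps degree-based, version of Erdős' theorem, or a different cover structure that avoids localizing to small parts), which is precisely why the conjecture is open; your proposal correctly identifies where the loss occurs but the ``select a well-distributed blow-up'' step does not close it.
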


{In an accompanying paper, we show a weaker version of this conjecture in the classic setting of Dirac's theorem~\cite{LS24b}.}

\subsubsection*{Inheritance}\label{sec:conclusion-inheritance}

Our framework can be applied to families that are approximately closed under taking small induced subgraphs.
This inheritance principle applies to minimum degree conditions as detailed in \cref{lem:inheritance-minimum-degree}.
There are many related conditions that have been studied in the context of Hamiltonicity, which exhibit inheritance.
Let us illustrate this with the example of degree sequences.

An old result of Pósa~\cite{Pos62} states that a graph on $n$ vertices contains a Hamilton cycle provided that its degree sequence $d_1\leq \dots \leq d_n$ satisfies $d_i > i $ for all $i < n/2$.
To generalise this to powers of cycles, we define the graph class $\DegSeq_{t,\eps}$ as all $n$-vertex graphs~$G$ with degree sequences $d_1 \leq \dots \leq d_n$ such that $d_i > (t-2)n/t + i + \eps n$ for every $i \leq n/t$.
In an extension of Pósa's theorem it was recently established that large enough graphs in $\DegSeq_{t,\eps}$ contain $(t-1)$st powers of Hamilton cycles~\cite{LS23,ST17}, which is sharp as constructions show.
Using standard probabilistic tools, it is not hard to see that $\DegSeq$ is approximately closed under taking small induced subgraphs.
(The details of the proof are found in \cref{sec:inheritance}.)

\begin{lemma}[Degree sequence inheritance] \label{lem:inheritance-degree-sequence}
	Let $1/t,\eps \gg 1/s \gg 1/n$.
	Suppose $G$ is an $n$-vertex graph in $\DegSeq_{t,\eps}$.
	Then $P=\PG{G}{\DegSeq_{t,\eps/2}}{s}$ satisfies $\delta_{2t}(P) \geq \left(1- e^{-\sqrt{s}} \right)  \tbinom{n-2t}{s-2t}$.
\end{lemma}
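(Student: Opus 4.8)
The plan is to mimic the structure of the proof of the Inheritance Lemma (\cref{lem:inheritance-minimum-degree}), replacing the single minimum-degree condition with the whole degree-sequence condition. First I would unwind the definition of $\DegSeq_{t,\eps}$: a graph $H$ on $N$ vertices lies in $\DegSeq_{t,\eps/2}$ if, writing $d_1 \leq \dots \leq d_N$ for its ordered degree sequence, we have $d_i > (t-2)N/t + i + (\eps/2)N$ for all $i \leq N/t$. The key reformulation is that this is equivalent to a ``for all thresholds'' statement: for every $a \leq 1/t$, the number of vertices of $H$ of degree at most $(t-2)N/t + aN + (\eps/2)N$ is strictly less than $aN$. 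So it suffices to control, for a random $s$-subset $S$, the count of low-degree vertices of $G[S]$ at each relevant threshold.

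The main step is a union bound over a suitable finite net of thresholds. Fix $a$ ranging over a grid of mesh $\eps/(100 t)$ inside $[0,1/t]$; there are only $O(1/\eps)$ such values. For a fixed $a$, set $\theta_a = (t-2)/t + a$ (a density threshold). In $G$ itself, the degree-sequence hypothesis (with parameter $\eps$) guarantees that fewer than $aN$ vertices have relative degree below $\theta_a + \eps - (\eps/2) = \theta_a + \eps/2$ — more precisely, for each $i \leq n/t$ the $i$-th smallest degree exceeds $((t-2)/t + \eps)n + i$, so the set $B_a = \{v : \deg_G(v) \leq (\theta_a + \eps/2)n\}$ has size less than $an$. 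Now for a uniformly random $s$-set $S$ containing a fixed set $D$ of $r=2t$ vertices, I would show that with probability $\geq 1 - e^{-\Omega(\sqrt s)}$ the following two events hold simultaneously: (i) $|S \cap B_a| < a|S| $ for every grid value $a$; and (ii) every vertex $v \in S \setminus B_a$ has $\deg_{G[S]}(v) \geq (\theta_a - \eps/4)|S|$. Event (i) follows from a hypergeometric concentration bound (e.g.\ the standard Chernoff-type estimate for sampling without replacement, as used in \cite[Lemma 4.9]{Lan23}), since the expected size of $S \cap B_a$ is about $a s$ and we have slack $\eps s /(100t)$ between consecutive grid points; a union bound over the $O(1/\eps)$ values of $a$ costs only a constant factor. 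Event (ii) is a second hypergeometric bound: for each fixed vertex $v$ with $\deg_G(v) \geq (\theta_a+\eps/2)n$, its degree into a random $(s-1)$-subset concentrates around $(\theta_a+\eps/2)s$, so with probability $1 - e^{-\Omega(\sqrt s)}$ it is at least $(\theta_a + \eps/4)s$; then a union bound over the $s$ vertices of $S$ and the $O(1/\eps)$ thresholds still leaves failure probability $e^{-\Omega(\sqrt s)}$.

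Combining (i) and (ii): on the good event, order the vertices of $S$ by their $G[S]$-degree as $d_1' \leq \dots \leq d_s'$. For any $i \leq s/t$, pick the grid value $a$ with $a s$ just above $i$ (so $i < as \leq i + \eps s/(50t)$). By (i), fewer than $as$ vertices of $S$ lie in $B_a$, hence the $i$-th smallest $G[S]$-degree vertex lies outside $B_a$ (as $i < as$), and by (ii) its degree is at least $(\theta_a - \eps/4)s \geq (t-2)s/t + as - \eps s/4 > (t-2)s/t + i + (\eps/2)s$, using $as > i$ and the slack; after adjusting constants this gives the $\DegSeq_{t,\eps/2}$ condition with room to spare, so $S \in P$. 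Therefore $\delta_{2t}(P) \geq (1 - e^{-\sqrt s})\binom{n-2t}{s-2t}$ once $s$ is large (the $2t$ forced vertices of $D$ are harmless since $2t = o(s/t)$ and we can simply exclude them from the ordered tail arguments, or absorb them into the error terms).

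The routine part is bookkeeping the constants in the two Chernoff/hypergeometric applications so that the failure probability genuinely comes out as $e^{-\sqrt s}$ rather than merely $e^{-\Omega(\sqrt s)}$; this is handled exactly as in \cite[Lemma 4.9]{Lan23} and the proof of \cref{lem:inheritance-minimum-degree} in \cref{sec:inheritance}, so I would simply cite that machinery. The only genuinely new ingredient — and the step I expect to require the most care — is the passage from a pointwise degree-sequence inequality to a family of ``number of low-degree vertices'' inequalities and back, and in particular checking that a \emph{finite} $\eps$-net of thresholds suffices (so that the union bound does not destroy the exponential saving). Once that reformulation is set up cleanly, the rest is a direct transcription of the minimum-degree argument.
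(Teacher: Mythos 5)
Your overall strategy — discretize the degree-sequence hypothesis into a finite family of threshold checks, apply hypergeometric concentration, and take a union bound over the grid — is essentially the paper's, which works with rank intervals $W_1,\dots,W_\ell$ rather than degree thresholds but encodes the same idea. However, your combination step contains a genuine logical gap. The deduction ``the $i$-th smallest $G[S]$-degree vertex lies outside $B_a$ (as $i < as$)'' does not follow from (i) and (ii): the vertex achieving $d'_i$ is identified by its $G[S]$-degree rank, whereas membership in $B_a$ is governed by $G$-degree, and knowing $|S \cap B_a| < as$ with $i < as$ does not expel any particular vertex from $B_a$. What (i) and (ii) jointly yield is the contrapositive: by (ii), every $v \in S \setminus B_a$ has $\deg_{G[S]}(v) \geq T_a$, so the set $\{v \in S : \deg_{G[S]}(v) < T_a\}$ is contained in $S \cap B_a$ and has fewer than $as$ elements; this lower-bounds $d'_{\lceil as\rceil}$. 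To say anything about $d'_i$ you then need $i \geq as$, the opposite of what you chose. Your final chain also fails numerically — $as - \eps s/4 > i + \eps s/2$ requires $as > i + 3\eps s/4$, incompatible with $as \leq i + \eps s/(50t)$ — and event (ii) is stated with bound $(\theta_a - \eps/4)s$ but later invoked as $(\theta_a + \eps/4)s$.

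The repair requires the slack you noted in passing but did not use: $|B_a| < (a - \eps/2)n$ rather than merely $< an$ (rewrite the threshold $(\theta_a + \eps/2)n$ as $(\theta_{a - \eps/2} + \eps)n$ and apply the $\DegSeq_{t,\eps}$-condition at rank $(a - \eps/2)n$). After Chernoff this gives $|S \cap B_a| < (a - \eps/4)s$, and with degree-concentration loss $\eps/8$ from \cref{lem:inheritance-degree} (applied with $d=1$) one may choose $a$ in the nonempty window $(i/s + \eps/8,\; i/s + \eps/4]$: the upper end forces $\lceil (a - \eps/4)s\rceil \leq i$, so that $d'_i \geq T_a$, and the lower end forces $T_a = (\theta_a + 3\eps/8)s$ above the $\DegSeq_{t,\eps/2}$ threshold at position $i$. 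This is precisely the arithmetic the paper packages into the rank-interval partition in \cref{sec:inheritance}, which also avoids a second concentration step by reusing \cref{lem:inheritance-degree} as a black box.
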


The following result was proved in our past work~\cite[Lemma 5.6]{LS23} (connectivity, aperiodicity) and by Treglown~\cite{Tre15} (space).

\begin{theorem}
	\label{thm:power-hamilt-connctedness-deg-seq}
	Given $t\geq 2$ and $\eps >0$, let $n$ be sufficiently large.
	Suppose $G$ is an $n$-vertex graph in $\DegSeq_{t,\eps}$.
	Then $K_t(G)$ is connected, has a perfect matching and contains a cycle of length $t+1$.
\end{theorem}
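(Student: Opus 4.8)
\textbf{Proof proposal for \cref{thm:power-hamilt-connctedness-deg-seq}.}
The plan is to verify the three asserted properties---connectivity of $K_t(G)$, existence of a perfect matching in $K_t(G)$, and existence of a $(t+1)$-cycle---one at a time, leveraging the degree sequence hypothesis $d_i > (t-2)n/t + i + \eps n$ for $i \leq n/t$. The key point to extract from this hypothesis is that all but at most $n/t$ vertices have degree at least $(t-1)n/t + \eps n$ (take $i$ slightly below $n/t$), so $G$ is ``almost'' a graph of minimum degree $(1-1/t+\eps)n$, while the remaining $n/t$ low-degree vertices still each have degree exceeding $(t-2)n/t + \eps n$ since the bound $d_i > (t-2)n/t + i + \eps n$ is vacuous only for $i=0$ but otherwise gives $d_i > (t-2)n/t + \eps n$ for all $i \geq 1$. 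These two facts will drive every step.

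For connectivity, I would argue that any $t$ vertices that are pairwise adjacent and all of ``high'' degree extend to a $K_{t+1}$, and moreover that the $t$-clique-graph restricted to high-degree vertices is tightly connected: given two $t$-cliques $X,Y$ on high-degree vertices, one can repeatedly swap a single vertex at a time, each swap being possible because $t-1$ fixed high-degree vertices have a common neighbourhood of size at least $t \cdot (\eps n) $ by inclusion--exclusion (each forbids at most $(1/t - \eps)n$ vertices), which is $\Omega(n)$, hence huge. This produces a tight walk between any two high-degree $t$-cliques. To incorporate the low-degree vertices, note each low-degree vertex $v$ still has $> (t-2)n/t + \eps n$ neighbours, so its neighbourhood together with $t-2$ suitably chosen high-degree common neighbours yields a $t$-clique containing $v$; since the common neighbourhood of $v$ and any single high-degree vertex already has size $\Omega(n)$, we can connect this clique to the high-degree part. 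Thus $K_t(G)$ is tightly connected, a fortiori connected.

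For the perfect matching in $K_t(G)$: this is precisely the content of Treglown's theorem~\cite{Tre15} on $K_t$-factors under degree sequence conditions, which is exactly the ``space'' half of the statement I am allowed to cite. For the $(t+1)$-cycle in $K_t(G)$ (equivalently a closed tight walk of order $t+1$ in the $t$-uniform sense, i.e.\ a $K_{t+1}$ with one extra structure, or concretely: $t+1$ cyclically arranged vertices every $t$ consecutive of which form a clique), it suffices to exhibit a $K_{t+2}$ in $G$: indeed $t+2$ mutually adjacent vertices give a cyclic arrangement of $t+1$ of them---wait, more carefully, a cycle of length $t+1$ in $K_t(G)$ means $t+1$ vertices $v_1,\dots,v_{t+1}$ (cyclically) such that every $t$ cyclically consecutive ones form an edge of $K_t(G)$, i.e.\ induce a $K_t$ in $G$; taking $v_1,\dots,v_{t+1}$ to induce a $K_{t+1}$ in $G$ suffices. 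Such a $K_{t+1}$ exists: pick any high-degree vertex $v_1$, then greedily extend, at step $j$ choosing $v_{j+1}$ in the common neighbourhood of $v_1,\dots,v_j$ which by the inclusion--exclusion bound has size at least $n - j(1/t-\eps)n \geq \eps n > 0$ as long as $j \leq t$, so after $t$ steps we obtain $K_{t+1}$.

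\textbf{Main obstacle.} The genuinely delicate point is the tight connectivity argument for $K_t(G)$ in the presence of the $n/t$ low-degree vertices, since their degree budget is only $(t-2)n/t + \eps n$, which is too small to guarantee that two \emph{disjoint} low-degree vertices lie in a common $t$-clique directly. The resolution is that we never need two low-degree vertices in one clique: connectivity of the clique-graph only requires that each low-degree vertex lies in \emph{some} $t$-clique (using $t-1$ high-degree partners, which is fine because $(t-2)n/t + \eps n$ still vastly exceeds the $t-2$ forbidden ``holes'' of size $(1/t-\eps)n$ that the already-chosen high-degree vertices impose---one checks $(t-2)n/t + \eps n > (t-2)(1/t - \eps)n$), and that this clique is tightly connected to the high-degree backbone, which follows from one swap. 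So the obstacle is really just careful bookkeeping of which inclusion--exclusion bound to invoke at each stage, and confirming that in every instance the surviving common neighbourhood is $\Omega(n)$ rather than merely nonempty, so that all greedy choices and swaps can be performed with room to spare and kept vertex-disjoint from previously used vertices where needed.
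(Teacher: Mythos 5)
The paper does not actually prove \cref{thm:power-hamilt-connctedness-deg-seq}: the sentence immediately preceding it records that connectivity and aperiodicity come from \cite[Lemma 5.6]{LS23} and the space property from Treglown~\cite{Tre15}, and the theorem is cited rather than argued. Your proposal is therefore a reconstruction from scratch, and two of the three pieces land cleanly: reducing the $(t+1)$-cycle to finding a $K_{t+1}$ (which a greedy common-neighbourhood argument among high-degree vertices does produce) is correct, and attributing the perfect matching (i.e.\ $K_t$-factor, hence the space condition) to Treglown is exactly the citation the paper uses.

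The tight-connectivity piece has genuine gaps. First, the ``swap one vertex at a time'' step between two high-degree $t$-cliques $X$ and $Y$ is unjustified as written: replacing $x_1$ by $y_1$ requires $y_1$ to lie in the common neighbourhood of $x_2,\dots,x_t$, which the degree bound alone does not give you. The usual fix is to route through intermediate vertices and argue about the tight component of a fixed $(t-1)$-clique, but this needs to be spelled out. Second, and more seriously, your coarse dichotomy into ``high degree'' ($\geq (1-1/t+\eps)n$) and ``low degree'' ($\geq (1-2/t+\eps)n$) discards the graded structure of $\DegSeq_{t,\eps}$, and the inclusion--exclusion budget then fails. Concretely, to place a low-degree vertex $v$ into a $t$-clique with $t-1$ pairwise-adjacent high-degree neighbours, the forbidden set is (non-neighbours of $v$) $\cup$ (non-neighbours of $w_1,\dots,w_{t-2}$) $\cup$ (low-degree vertices), of size roughly $\bigl(2/t + (t-2)/t + 1/t\bigr)n = (1+1/t)n > n$, so nothing is left; your displayed inequality $(t-2)n/t + \eps n > (t-2)(1/t-\eps)n$ accounts only for the middle term and omits both the constraint of being a neighbour of $v$ and the constraint of being high-degree. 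The correct argument must use that very few vertices sit near the bottom bound $(t-2)n/t$ --- i.e.\ that the permitted degree deficit scales with the index $i$ --- which is exactly what the cited lemma does, and which your high/low split cannot see.
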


Since $K_t(G)$ consists of a single component, it is also easy to see that the consistency property of \cref{def:hamilton-framework} holds, which leads to the following.

\begin{corollary}
	Given $t\geq 2$ and $\eps >0$, let $n$ be sufficiently large.
	Then the family of $t$-graphs $K=K_t(G)$ on $n \geq n_0$ vertices with $G \in \DegSeq_{t,\eps}$ admits a Hamilton framework $F$ with $F(K) = K$.
\end{corollary}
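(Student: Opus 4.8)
The corollary asserts that the family $\mathcal{K}$ of $t$-graphs $K = K_t(G)$ with $G \in \DegSeq_{t,\eps}$ admits a Hamilton framework $F$ with $F(K) = K$. By \cref{def:hamilton-framework}, I need to verify that for every $K \in \mathcal{K}$ the choice $F(K) = K$ satisfies the four conditions \ref{itm:hf-connected}--\ref{itm:hf-intersecting}. The plan is to read off \ref{itm:hf-connected}, \ref{itm:hf-matching}, and \ref{itm:hf-odd} directly from \cref{thm:power-hamilt-connctedness-deg-seq} (having fixed $n$ large enough for that theorem to apply to all members of $\mathcal{K}$), and then to check the consistency condition \ref{itm:hf-intersecting} using the fact that $K_t(G)$ is already a single tight component.

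\textbf{Steps.} First, given $t$ and $\eps$, I would take $n_0$ large enough that \cref{thm:power-hamilt-connctedness-deg-seq} holds for every $n \geq n_0$; let $K = K_t(G)$ with $G \in \DegSeq_{t,\eps}$ on $n \geq n_0$ vertices, and set $F(K) = K$. Condition \ref{itm:hf-connected} (connectivity): \cref{thm:power-hamilt-connctedness-deg-seq} states $K_t(G)$ is connected, i.e.\ it is a single tight component, and it has no isolated vertices since it has a perfect matching; so $F(K) = K$ is a tight component on all $n$ vertices. Condition \ref{itm:hf-matching} (space): the same theorem gives a perfect matching in $K_t(G)$, which in particular is a perfect fractional matching. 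Condition \ref{itm:hf-odd} (aperiodicity): the theorem produces a cycle of length $t+1$ in $K_t(G)$, and $t+1 \equiv 1 \bmod t$, so $K$ contains a closed walk of order $1 \bmod t$ (note the uniformity here is $t$, so `$1 \bmod k$' in \cref{def:hamilton-framework} reads `$1 \bmod t$').

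\textbf{The consistency condition.} The only step requiring a small argument is \ref{itm:hf-intersecting}. Suppose $K, K' \in \mathcal{K}$ arise from deleting two distinct vertices from a common $(s{+}1)$-vertex $t$-graph; equivalently (in the intended application via clique-graphs) $K = K_t(G)$ and $K' = K_t(G')$ with $G = J - x$ and $G' = J - y$ for some $(n{+}1)$-vertex graph $J$ and distinct $x,y$, where both $G, G' \in \DegSeq_{t,\eps}$. I need $F(K) \cup F(K') = K \cup K'$ to be tightly connected. Here I would use that $J - x - y$ still has a good degree sequence — one easily checks $J - x - y \in \DegSeq_{t,\eps/2}$, since deleting two vertices shifts the sequence by at most $2$ and changes each $d_i$ by at most $2$, absorbed into the $\eps n$ slack for $n$ large. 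By \cref{thm:power-hamilt-connctedness-deg-seq} applied to $J - x - y$, the clique-graph $K_t(J - x - y)$ is connected and nonempty, so it lies inside a single tight component of $K \cup K'$; since every edge of $K$ (resp.\ $K'$) shares a $(t{-}1)$-set with an edge of $K_t(J-x-y)$ (as $K$ has minimum degree large enough that deleting one more vertex keeps every $(t{-}1)$-set in an edge), the whole of $K \cup K'$ is tightly connected.

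\textbf{Main obstacle.} None of the steps is genuinely hard; the bookkeeping lies entirely in the degree-sequence arithmetic showing that deleting one or two vertices from a graph in $\DegSeq_{t,\eps}$ keeps it in $\DegSeq_{t,\eps/2}$, and in making sure the uniformity convention (working with $t$-graphs $K_t(G)$, so `$\bmod k$' becomes `$\bmod t$', and `$s$-vertex' in \cref{def:hamilton-framework} matches `$n$-vertex' here) is applied consistently. The mild subtlety is that \cref{def:hamilton-framework} phrases consistency in terms of two members of $\mathcal{K}$ obtained by deleting a distinct vertex from the same $(s{+}1)$-vertex $t$-graph, whereas the natural degree-sequence statement is about deleting vertices from the underlying graph $J$; bridging this requires observing that $K_t(J) - x = K_t(J-x)$ when $x$ is a vertex, which is immediate. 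Everything else is a direct citation of \cref{thm:power-hamilt-connctedness-deg-seq}.
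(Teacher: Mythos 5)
Your verification of \ref{itm:hf-connected}--\ref{itm:hf-odd} matches the paper: all three are direct consequences of \cref{thm:power-hamilt-connctedness-deg-seq}, with the cycle of length $t+1$ handling aperiodicity since $t+1 \equiv 1 \pmod t$. For the consistency condition \ref{itm:hf-intersecting}, however, you take a longer route than necessary, and your parenthetical step is under-justified. You apply \cref{thm:power-hamilt-connctedness-deg-seq} to $J-x-y$ to get that $K_t(J-x-y)$ is connected, and then try to conclude by arguing that every edge of $K$ (resp.\ $K'$) shares a $(t-1)$-set with an edge of $K_t(J-x-y)$ ``as $K$ has minimum degree large enough''; but that claim needs genuine work under a degree-sequence hypothesis, since $\DegSeq_{t,\eps}$ permits vertices of degree as low as roughly $(t-2)n/t$, and it is not immediate that every $(t-1)$-clique containing such a low-degree vertex and $y$ extends to a $t$-clique in $G - y$. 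Fortunately this step is redundant: you already observed (for \ref{itm:hf-connected}) that $K = K_t(G)$ and $K' = K_t(G')$ are each single vertex-spanning tight components, and you have $K_t(J-x-y) = F(K) - x - y = F(K') - x - y \subseteq K \cap K'$ nonempty (by your degree-sequence arithmetic for $J-x-y$, which is correct). Two tight components sharing an edge form a single tight component, so $K \cup K'$ is tightly connected without any further argument. This shorter argument is what the paper invokes when it remarks that consistency is ``easy to see'' once $K_t(G)$ is a single component, and it is the same pattern used in the paper's proofs of \cref{thm:power-hamilton-map-codegree} and \cref{lem:natural-framework-threshold-1}. So: your proposal is correct in outline, but the consistency step should be replaced by the direct shared-edge argument to avoid a gap.
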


We may thus apply \cref{thm:framework-bandwidth,thm:framework-connectedness-robust} to obtain new bandwidth and connectedness results.
The same arguments apply to many other host graph properties considered in the graph setting~\cite{LS23}.

Beyond degree conditions, inheritance is also known to hold for quasi-random and uniformly dense hypergraph families \cite[Lemma 9.1]{Lan23}.
So our work also applies to these situations.

\subsubsection*{Stability}

Our work can be combined with results from property testing to obtain stability results~\cite[Corollary 2.7]{Lan23}.
As an example in this direction, we highlight the recent work by Letzter and Ranganathan~\cite{LR25}.
They used the machinery of `blow-up covers', in combination with a stability analysis, to find exact `supported codegree' conditions ensuring tight Hamiltonicity in uniform hypergraphs.

\subsubsection*{Algorithmic considerations}

The decision problem for (tight) Hamiltonicity is computationally intractable.
So whenever we consider a sufficient condition for Hamilton cycles, it is natural to ask whether it can be verified efficiently.
Given a graph property with an intersecting Hamilton framework, it can be checked in $O(n^s)$ steps whether $s$-robustness holds.
In this sense, the conditions of \cref{thm:framework-bandwidth-directed} are easily checkable.

Beyond the decision problem, we believe that the proof also offers a scheme to efficiently find blow-ups of Hamilton cycles.
However, this is not trivial and requires additional details at multiple stages of the argument.
{Recently, Espuña~\cite{Espuna2026} proved an algorithmic version of Erd\H{o}s' theorem (\Cref{thm:erd64}), which ensures the existence of large blow-ups in dense hypergraphs.
This could be used in a first step towards a constructive version of our main result.}

\subsubsection*{Partite frameworks}

Our proof techniques can easily be adapted to obtain partite versions of \cref{pro:blow-up-cover}.
This could be particularly interesting in the setting of `transversal' substructures, where a similar approach has been used to find perfect tilings~\cite{Lan23}.

\section*{Acknowledgements}

We thank Amedeo Sgueglia for helpful comments on an earlier version of the draft.

Richard Lang was supported by the Marie Skłodowska-Curie Actions (101018431), the Ramón y Cajal programme (RYC2022-038372-I) and by grant PID2023-147202NB-I00 funded
by MICIU/AEI/10.13039/501100011033.
Nicolás Sanhueza-Matamala was supported by ANID-FONDECYT Iniciación Nº11220269 grant and by ANID-FONDECYT Regular Nº1251121 grant.

\bibliographystyle{amsplain}
\bibliography{../bibliography.bib}

\appendix

\section{Inheritance}\label{sec:inheritance}

For a $k$-graph $G$ on $n$ vertices, we denote the \emph{relative degree} of a $d$-set $D \subset V(G)$ by $\rdeg_G(S) = \deg_G(S) / \binom{n-d}{k-d}$.
The next lemma implies \cref{lem:inheritance-minimum-degree} directly, and a straightforward modification of the proof (which we omit) also yields \cref{lem:inheritance-minimum-degree2}.

\begin{lemma}[Degree inheritance] \label{lem:inheritance-degree}
	Let $\eps,1/r \gg 1/s \gg 1/n$.
	Suppose $G$ is an $n$-vertex graph.
	Let $P$ be the $s$-graph on $V(G)$ with an edge $S$ whenever every $d$-set $D \subset S$ satisfies $\rdeg_{G[S]}(D) \geq \rdeg_G(D) - \eps$.
	Then $\delta_{r}(P) \geq \left(1- \exp({-\Omega(s)}) \right)  \tbinom{n-r}{s-r}$.
\end{lemma}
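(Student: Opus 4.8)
The statement is a routine concentration argument, and the plan is to show that for a fixed $r$-set $R \subset V(G)$, all but an exponentially small fraction of $s$-sets $S \supseteq R$ have the property that every $d$-set $D \subseteq S$ satisfies $\rdeg_{G[S]}(D) \geq \rdeg_G(D) - \eps$. Summing the (tiny) failure probability over the at most $\binom{s}{d}$ choices of $D$ inside $S$ and applying a union bound then gives $\delta_r(P) \geq (1 - \exp(-\Omega(s))) \binom{n-r}{s-r}$, as claimed.

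\textbf{Key steps.} First I would fix an $r$-set $R$ and sample $S$ uniformly among $s$-sets containing $R$; equivalently, sample an $(s-r)$-subset $T$ of $V(G) \setminus R$ uniformly, so $S = R \cup T$. Second, fix a $d$-set $D$; I need to control $\deg_{G[S]}(D) = |\{ X \in \binom{S \setminus D}{k-d} : D \cup X \in E(G)\}|$ conditioned on $D \subseteq S$. Conditioning on $D \subseteq S$, the remaining $s - d$ vertices of $S$ (minus those of $R$) are still a near-uniform sample from $V(G) \setminus (R \cup D)$, and $\deg_{G[S]}(D)$ counts the number of neighbour-$(k-d)$-sets of $D$ landing inside this sample. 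Its expectation is $(1 \pm o(1)) \rdeg_G(D) \binom{s-d}{k-d}$ (up to a negligible correction from insisting $R \subseteq S$, which involves only $r = O(1)$ forced vertices and changes the count by an $O(1/s)$-relative amount since $r \ll s$). Third, the key tool: $\deg_{G[S]}(D)$ is a low-degree function of the random sample, and hypergeometric-type concentration — one can use a bounded-differences / Azuma argument or a direct Chernoff–Hoeffding bound for sampling without replacement, as e.g.\ in \cite[Lemma 4.9]{Lan23} — gives that the normalised degree $\rdeg_{G[S]}(D)$ deviates from $\rdeg_G(D)$ by more than $\eps$ with probability at most $\exp(-\Omega(\eps^2 s)) = \exp(-\Omega(s))$, using $\eps \gg 1/s$. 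Fourth, union bound over the at most $\binom{s}{d} = s^{O(1)}$ choices of $D \subseteq S$ (this polynomial factor is absorbed into $\exp(-\Omega(s))$), to conclude that $\Pr[S \notin E(P) \mid R \subseteq S] \leq \exp(-\Omega(s))$. Finally, since this holds for every $r$-set $R$, we get $\deg_P(R) \geq (1 - \exp(-\Omega(s))) \binom{n-r}{s-r}$, which is the desired bound on $\delta_r(P)$.

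\textbf{Main obstacle.} There is no genuine obstacle here — this is a standard ``induced subgraph inherits minimum degree'' lemma, and the paper explicitly points to \cite[Lemma 4.9]{Lan23} for the argument. The only point requiring mild care is bookkeeping the conditioning on $R \subseteq S$ when computing the conditional expectation of $\deg_{G[S]}(D)$: one must check that forcing the $O(1)$ vertices of $R$ into $S$ perturbs the expected normalised degree by only $o(\eps)$, which is immediate from $1/r \gg 1/s$ (indeed $r = O(1)$ while $s \to \infty$), and then verify that the concentration radius $\eps$ is large enough relative to the martingale's bounded differences — each of the $s - r$ sampled vertices changes $\deg_{G[S]}(D)$ by at most $\binom{s}{k-d-1} = s^{O(1)}$, so after normalising by $\binom{s-d}{k-d}$ the per-coordinate Lipschitz constant is $O(1/s)$, and Azuma gives deviation probability $\exp(-\Omega(\eps^2 s^2 / (s \cdot s^{-2} \cdot s^2))) $; being slightly more careful, the standard computation yields $\exp(-\Omega(\eps^2 s))$, comfortably beating the $s^{O(1)}$ union bound. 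The modification needed for \cref{lem:inheritance-minimum-degree2} is cosmetic: one simply restricts attention to the prescribed family $D$ of $d$-sets and works with the graphs $G_e$ in place of $G$, the same concentration bound applying verbatim to each $\deg_{G_e[S]}(e)$.
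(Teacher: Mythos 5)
Your proposal matches the paper's proof essentially line for line: fix an $r$-set $R$, sample $S$ uniformly among $s$-sets containing $R$, fix a $d$-set $D$, compute the expectation of $\deg_{G[S]}(D)$ (noting that forcing $R \subseteq S$ changes the expected relative degree by only $O(1/s) = o(\eps)$), apply an Azuma/bounded-differences bound with per-step Lipschitz constant of order $\tfrac{1}{s}\tbinom{s-d}{k-d}$, and then take a union bound over the $\tbinom{s}{d} = s^{O(1)}$ choices of $D$, which the resulting $\exp(-\Omega(\eps^2 s))$ comfortably absorbs. This is precisely the argument the paper gives via its \cref{lem:concentration} (a hypergeometric Azuma corollary). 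The only small wrinkle is the garbled inline Azuma exponent you wrote down, but you immediately correct it to $\exp(-\Omega(\eps^2 s))$, which is right, and the observation that the modification for \cref{lem:inheritance-minimum-degree2} is cosmetic also agrees with the paper's remark.
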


We show \cref{lem:inheritance-degree} using the following corollary of the Azuma--Hoeffding inequality (see Frieze and Pittel~\cite[Appendix B]{FP04}).
\COMMENT{Our proof follows the exposition of Frieze and Pittel~\cite[Appendix B]{FP04}.}

\begin{lemma}\label{lem:concentration}
	Let $V$ be an $n$-set with a function $h$ from the $s$-sets of $V$ to $\REALS$.
	Suppose that there exists $K \geq 0$ such that $|{h(S)-h(S')}| \le K$
	for any $s$-sets $S, S' \subset V$ with $|S \cap S'| = s-1$.
	Let $S \subset V$ be an $s$-set chosen uniformly at random.
	Then, for any $\ell >0$,
	\begin{equation*}
		{\rm Pr}( |h(S) - \Exp [h(S)]| \geq \ell) \leq 2 \exp\left( -\frac{ \ell^2}{2\min\{s,n-s\} K^2}\right).
	\end{equation*}
\end{lemma}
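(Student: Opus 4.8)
The plan is to prove \cref{lem:concentration} by the standard vertex-exposure martingale argument combined with the Azuma--Hoeffding inequality, and then to exploit the symmetry between a set and its complement in order to obtain the $\min\{s,n-s\}$ in the denominator rather than just $s$.

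First I would realise the uniformly random $s$-set $S$ as the unordered outcome of a sequence $v_1,\dots,v_s$ sampled uniformly from $V$ without replacement, so that $S=\{v_1,\dots,v_s\}$. Writing $\mathcal F_i=\sigma(v_1,\dots,v_i)$, consider the Doob martingale $Z_i=\Exp[h(S)\mid\mathcal F_i]$, so that $Z_0=\Exp[h(S)]$ and $Z_s=h(S)$. The core step is to show $|Z_i-Z_{i-1}|\le K$ for each $i$. Fixing $v_1,\dots,v_{i-1}$, it suffices to compare $\Exp[h(S)\mid v_1,\dots,v_{i-1},v_i=a]$ with $\Exp[h(S)\mid v_1,\dots,v_{i-1},v_i=b]$ for two admissible $a\neq b$, since $Z_{i-1}$ is the average of these values over $a$ while $Z_i$ equals one of them. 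To compare them I would set up an explicit bijection between completions $(v_{i+1},\dots,v_s)$ in the two conditionings: starting from a completion in the ``$v_i=a$'' world, if $b$ does not occur among $v_{i+1},\dots,v_s$ keep the completion unchanged (the two resulting $s$-sets then differ only in $a$ versus $b$, so the two $h$-values differ by at most $K$ by hypothesis); if instead $b=v_j$ for some $j>i$, replace that occurrence of $b$ by $a$ (the two resulting $s$-sets are then identical, so the $h$-values agree). Averaging over completions gives $|\Exp[h(S)\mid\dots,v_i=a]-\Exp[h(S)\mid\dots,v_i=b]|\le K$, hence $|Z_i-Z_{i-1}|\le K$. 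Applying Azuma--Hoeffding to $Z_0,\dots,Z_s$ with increments bounded by $K$ then yields
\[
  \Pr\bigl(|h(S)-\Exp[h(S)]|\ge\ell\bigr)\ \le\ 2\exp\!\left(-\frac{\ell^2}{2sK^2}\right).
\]

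To upgrade $s$ to $\min\{s,n-s\}$, note that $S\mapsto V\setminus S$ is a bijection between $s$-sets and $(n-s)$-sets, that $\tilde h(T):=h(V\setminus T)$ has the same bounded-difference constant $K$ (changing one element of the complement changes one element of $S$), and that $V\setminus S$ is a uniformly random $(n-s)$-set. Running the identical argument for $\tilde h$ gives the same tail bound with $n-s$ in place of $s$, and taking the better of the two bounds yields the stated inequality.

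The main obstacle is precisely the bounded-increment estimate $|Z_i-Z_{i-1}|\le K$: because sampling is without replacement, changing the $i$-th choice also alters the pool of remaining candidates, so the two conditional processes cannot be coupled by the identity. The ``repair'' bijection above is what resolves this, and it is the single place where the hypothesis that $h$ changes by at most $K$ between $s$-sets meeting in $s-1$ elements is used. (Alternatively one could quote McDiarmid's bounded-differences inequality for random permutations, applied to $(v_1,\dots,v_n)\mapsto h(\{v_1,\dots,v_s\})$, which bundles the same coupling into a black box; I would present the direct martingale version for transparency, matching the reference to Frieze and Pittel.)
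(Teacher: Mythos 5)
Your proposal is correct and follows essentially the same route as the paper's proof: both realise the random $s$-set via a vertex-exposure Doob martingale, establish the increment bound $|Z_i-Z_{i-1}|\le K$ by a swap-and-repair coupling between the two conditional worlds, apply Azuma--Hoeffding, and pass to the complement $V\setminus S$ to replace $s$ by $\min\{s,n-s\}$.
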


\COMMENT{
	\begin{proof}
		Consider a random permutation $\omega$ of $V$.
		Let $S \subset V$ consist of the first $s$ elements of $\omega$, and set $X(\omega) = h(S)$.
		We define the corresponding Doob martingale as follows.
		For a fixed permutation $(x_1,\dots,x_n)$ of $V$ and $0 \leq i \leq n$, let
		$$X_i(x_1,\dots,x_i) = \Exp(X \mid \omega_j = x_j,~ 1\leq j \leq i).$$
		We claim that
		\begin{equation}\label{itm:AH}
			|X_i(x_1,\dots,x_i) - X_i(x_1,\dots,x_{i-1},x_i')| \leq K
		\end{equation}
		for all $i$-tuples $(x_1,\dots,x_i)$ and $(x_1,\dots,x_{i-1},x_i')$ with (internally) distinct entries.
		Indeed, in this case we can conclude by the Azuma--Hoeffding inequality~\cite[Theorem 2.25]{JLR01}.
		Otherwise we repeat the above argument using the bijection between sets and their complements.
		It remains to show the above claim.
		Consider
		\begin{equation*}
			\Omega_1 = \{\omega \in \Omega \colon \omega_j = x_j,~1\leq j \leq i\}
		\end{equation*}
		and
		\begin{equation*}
			\Omega_1' = \{\omega \in \Omega \colon \omega_j = x_j,~1\leq j \leq i-1,~\omega_i = x_i'\}.
		\end{equation*}
		Define a map $f\colon \Omega_1 \to \Omega_1'$ as follows.
		For $\omega = x_1x_2\ldots x_{i-1}x_iy_{i+1}\ldots y_n$ and $y_j = x'_i$, set
		\[
			f(\omega) = x_1x_2\ldots x_{i-1}x'_iy_{i+1}\ldots y_{j-1}x_iy_{j+1}\ldots y_n.
		\]
		Since $f$ is a bijection, we have
		\[
			|Z_i(x_1, x_2, \ldots, x_i) - Z_i(x_1, x_2, \ldots, x'_i)| = \left|\frac{\sum_{y_{i+1},\ldots,y_n} (Z(\omega) - Z(f(\omega)))}{(N-i)!}\right| \leq K,
		\]
		as desired.
	\end{proof}
}

\begin{proof}[Proof of \cref{lem:inheritance-degree}]
	Set $V = V(G)$.
	Let $R \subset V$ be an $r$-set.
	Let $S \subset V$ be drawn uniformly among all $s$-sets that contain $R$.
	We say that a $d$-set {$D\subset V$ is \emph{bad for $S$}} if $\rdeg_{G[S \cup D]} (D) < \rdeg_G(D) - \eps$.
	Moreover, $S$ is \emph{good} if it contains no $d$-set which is bad for $S$.
	So $P$ is the $s$-graph of good edges.
	It suffices to show that, conditioning on $R \subset S$, a $d$-set $D$ is bad for $S$ with probability $ \exp(-\Omega(s))$.
	Indeed, in this case,
	\begin{align*}
		\Pr(\text{$S$ is good} \mid R \subset S) & \leq \sum_{D \in \binom{V}{d}} \Pr(\text{$D$ is bad for $S$} \mid D \cup R \subset S)              \\
		                                         & \leq  \sum_{D \in \binom{V}{d}} \Pr(\text{$D$ is bad for $S$} \mid R \subset S) \Pr( D \subset  S) \\
		                                         & = \Pr(\text{$D$ is bad for $S$} \mid R \subset S ) \binom{s}{d} = \exp({-\Omega(s)}).
	\end{align*}

	To obtain the above bound, consider a set $D \subset V(G)$ with $\delta = \rdeg_G(D) - \eps$.
	We can assume that $\delta \geq 0$ as there is nothing to show otherwise.
	Let $h(S) = \deg_{G[S \cup D]}(D)$.
	We first compute $\Exp [h(S)]$.
	Denote by $(s)_r = s(s-1)\dots (s-r+1)$ the \emph{falling factorial}.
	For a fixed edge $e \in G$ with $e \cap (R \cup D) = D$, the probability that $e \sm D \subset S$ is
	\begin{equation*}
		\quad\binom{n-|R \cup (e \sm D)|}{s-|R \cup  (e \sm D)|} \binom{n-|R|}{s-|R|}^{-1}
		=     \frac{(s)_{r}}{(n)_{r}}   \binom{n-(k-d)}{s-(k-d)} \left(\frac{(s)_{r}}{(n)_{r}}\binom{n}{s} \right)^{-1}
		= \binom{s}{k-d} \binom{n}{k-d}^{-1}.
	\end{equation*}
	There are at least $(\delta + \eps) \binom{n-d}{k-d} - \binom{r-d}{k-d} \geq (\delta+(7/8)\eps) \binom{n}{k-d}$ edges $e \in G$ with $e \cap (R \cup D) = D$.
	It follows that $\Exp [h(S)] \geq  (\delta + (3/4) \eps)  \binom{s-d}{k-d}$.
	\COMMENT{(Here we used the fact that $\binom{a}{b} \binom{a-b}{c} = \binom{a}{b+c} \binom{b+c}{b}$.)}

	Moreover, for any two $s$-sets $Q,Q' \subset V(G)$  with $|R\cap R'| =s-1$ that both contain $R \cup D$, we have $|h(Q) - h(Q')| \leq \binom{s-|R \cup D|-1}{k-|R \cup D|-1} \leq \frac{k}{s} \binom{s-d}{k-d} K$.
	So by \cref{lem:concentration} applied with $\ell = (\eps/4) \binom{s-d}{k-d}$ and $K = \frac{k}{s} \binom{s-d}{k-d}$, we have
	\begin{align*}
		{\rm Pr}\left (   h(S) <   \delta \binom{s-d}{k-d} \right) & \leq
		{\rm Pr}\left (   \Exp [h(S)] - h(S) \geq  (\eps/4) \binom{s-d}{k-d} \right)                                                            \\
		                                                           & \leq 2 \exp\left( -\frac{(\eps/4)^2}{s-|R \cup D|} 2\frac{s^2}{k^2}\right)
		=  \exp(-\Omega(s)) .\qedhere
	\end{align*}
\end{proof}

\begin{proof}[Proof of \cref{lem:inheritance-degree-sequence}]
	Let $D \subset V(G)$ be a set of $2t$ vertices.
	Let $S \subset V(G)$ be selected uniformly among all $s$-sets that contain $D$.
	By \cref{lem:inheritance-degree}, every vertex $v \in S$ satisfies $\rdeg_{G[S]}(v) \geq \rdeg_G(v) - \eps/8$ with probability at least $1- \exp({-\Omega(s)})$.

	Next, introduce $\eta$ with $\eps \gg \eta \gg 1/s$.
	We partition $V(G)$ into `intervals' $W_1,\dots,W_\ell$ of size $(1 \pm \eta) (\eps/8) n$ such that $\deg(u) \leq \deg(v)$ whenever $u \in W_i$ and $v \in W_j$ with $1 \leq i < j \leq t$.
	It follows by the Chernoff bound that $|S \cap W_i| =  (1 \pm 2\eta) (\eps/8)s$ for every $1 \leq i \leq \ell$ with probability at least $1- \exp(-\Omega(s))$.

	To finish, we show that $G[S] \in \DegSeq_{t,\eps/2}$ if $S$ satisfies the above two properties.
	Indeed, consider a vertex $v \in S \cap W_j$ for some $1 \leq j \leq \ell$.
	So the relative position of $v$ in the degree sequence of $G$ is at least $(1-\eta)(j-1)/\ell$.
	By assumption, we have $$\deg_G(v) > (t-2)n/t + ((1 - \eta) (j-1)/\ell)n + \eps n.$$
	Similarly, the relative position of $v$ in the degree sequence of $G[S]$ is at most
	$(\ell - (1-2\eta)(\ell-j/\ell))/\ell \leq j/\ell+2\eta.$
	We conclude that
	\begin{align*}
		\rdeg_{G[S]}(v) & \geq \rdeg_{G}(v)(n-1) - (7/8) \eps                       \\
		                & \geq (t-2)/t + ((1 - \eta) (j-1)/\ell) + (3/4)\eps        \\
		                & \geq (t-2)s/t + ((1 - \eta) (j/\ell+2\eta)s + (\eps/2) s.
	\end{align*}
	It follows that $G[S] \in  \DegSeq_{t,\eps/2}$.
\end{proof}

\section{Proof of the link graph lemma} \label{sec:cooley-mycroft-lemma}

In this section, we give a proof of Lemma~\ref{lem:cooley-mycroft} following the exposition of Cooley and Mycroft~\cite{CM17}.
We use the following theorem of Erdős and Gallai~\cite{EG}, which gives a sharp bound on the smallest possible size of a maximum matching in a graph of given order and size.

\begin{theorem}[Erdős and Gallai~\cite{EG}]\label{erdosgallai}
	Let $n,s \in \NATS$ with $s \leq n/2$.
	Suppose $G$ is a graph on $n$ vertices and
	\[e(G) > \max \left\{\binom{2s-1}{2}, \binom{n}{2} - \binom{n-s+1}{2} \right\}.\]
	Then $G$ admits a matching with at least $s$ edges.
\end{theorem}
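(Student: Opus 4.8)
The plan is to prove the contrapositive: if $G$ is an $n$-vertex graph with matching number $\nu(G)\le s-1$, then $e(G)\le\max\{\binom{2s-1}{2},\binom{n}{2}-\binom{n-s+1}{2}\}$. The two terms on the right are the edge counts of the extremal graphs $K_{2s-1}$ (together with $n-2s+1$ isolated vertices) and $K_{s-1}\vee\overline{K_{n-s+1}}$, and the argument will be set up so that these two configurations reappear automatically as the endpoints of a one-parameter optimisation.

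First I would invoke the Tutte--Berge formula, $\nu(G)=\tfrac12\bigl(n-\max_{S\subseteq V(G)}(\mathrm{odd}(G-S)-|S|)\bigr)$, where $\mathrm{odd}(H)$ denotes the number of components of $H$ of odd order. From $\nu(G)\le s-1$ and the hypothesis $s\le n/2$ it follows that there is a set $S$ with $|S|=t$ such that $\mathrm{odd}(G-S)\ge t+n-2s+2=:k$, and here $k\ge 2$ since $n\ge 2s$. A one-line counting observation (there cannot be more odd components of $G-S$ than vertices of $G-S$) forces $t\le s-1$, so $t$ ranges over $\{0,1,\dots,s-1\}$.

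The second step is to bound $e(G)$ crudely via the block structure induced by $S$: writing $C_1,\dots,C_q$ for the components of $G-S$, we have $e(G)\le\binom{t}{2}+t(n-t)+\sum_i\binom{|C_i|}{2}$, accounting for edges inside $S$, edges between $S$ and $G-S$, and edges inside the components. Since $x\mapsto\binom{x}{2}$ is convex, $\sum_i\binom{|C_i|}{2}$ is maximised, subject to $\sum_i|C_i|=n-t$ and at least $k$ of the components having odd order, by taking $k-1$ singleton components and one further component of order $(n-t)-(k-1)=2(s-t)-1$; this last number is odd, so the parity constraint is met with equality, and it is $\ge 1$ precisely because $t\le s-1$. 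Hence $e(G)\le f(t):=\binom{t}{2}+t(n-t)+\binom{2(s-t)-1}{2}$.

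Finally I would note that $f$ is a convex function of $t$: the coefficient of $t^2$ in $\binom{t}{2}+t(n-t)$ is $-\tfrac12$, while in $\binom{2(s-t)-1}{2}$ it is $2$, for a total of $\tfrac32>0$. Thus the maximum of $f$ over $0\le t\le s-1$ is attained at an endpoint, and a direct computation gives $f(0)=\binom{2s-1}{2}$ and $f(s-1)=\binom{s-1}{2}+(s-1)(n-s+1)=\binom{n}{2}-\binom{n-s+1}{2}$, which finishes the proof. The only genuinely delicate points are the component-size optimisation (the interplay of convexity with the parity constraint on the number of odd components) and the bookkeeping identifying the two endpoint values with the stated extremal quantities. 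If one instead wants a proof that does not quote Tutte--Berge, the main obstacle shifts to establishing the required structural statement directly, either through the Gallai--Edmonds decomposition or through an augmenting-path analysis of a maximum matching, after which the same convex optimisation completes the argument.
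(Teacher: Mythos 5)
The paper does not prove this theorem; it cites it as an external result of Erdős and Gallai \cite{EG} and treats it as a black box. So there is no in-paper proof to compare against.

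Your proof is correct. Each step checks out: Tutte--Berge together with $\nu(G)\le s-1$ yields a set $S$ of size $t$ with $\mathrm{odd}(G-S)\ge k:=t+n-2s+2$; the trivial bound $\mathrm{odd}(G-S)\le n-t$ forces $t\le s-1$; the block-structure bound $e(G)\le\binom{t}{2}+t(n-t)+\sum_i\binom{|C_i|}{2}$ is valid since there are no edges between components; the convexity/majorisation step correctly identifies the partition $(2(s-t)-1,1,\dotsc,1)$ with $k-1$ ones as extremal (the large part is odd and positive precisely because $t\le s-1$, so the parity constraint is tight and the $k$ required odd components exist); and the $t^2$-coefficient computation $-\tfrac12+2=\tfrac32>0$ makes $f$ strictly convex in $t$, so it suffices to evaluate the endpoints, which are $f(0)=\binom{2s-1}{2}$ and $f(s-1)=\binom{s-1}{2}+(s-1)(n-s+1)=\binom{n}{2}-\binom{n-s+1}{2}$. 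This is the standard modern route to the Erdős--Gallai matching bound, cleaner than the original 1959 argument (which predates the packaged Tutte--Berge formula and proceeds by a more hands-on induction and deletion scheme); the Tutte--Berge approach buys brevity and also makes the two extremal configurations emerge transparently as the endpoints of the convex optimisation, as you point out.

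One small presentational remark: when you write ``$k\ge2$ since $n\ge2s$'', this fact is not actually used anywhere downstream -- what matters is $k\ge1$ so that the odd-component count is meaningful, and $t\le s-1$ so that $2(s-t)-1\ge1$. You might simply drop that sentence. Also, strictly speaking one should say a word about why the maximiser $S$ in Tutte--Berge exists (finitely many subsets) and why concentrating mass in one part is optimal under the odd-count constraint; your parenthetical about convexity covers the latter but a one-line appeal to Karamata would make it airtight.
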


\begin{proof}[Proof of Lemma~\ref{lem:cooley-mycroft}]
	During the proof, we make repeated use of the fact that for $0 < x < 1$ we have $\binom{xt}{2} < x^2\binom{t}{2}$.
	Choose $\gamma$ such that $\varepsilon \gg \gamma \gg 1/n$.
	We will show the following series of claims about $C$ and $C'$, which in particular easily imply all of the results stated in the lemma:
	\begin{enumerate}[(a)]
		\item \label{item:cooleymycroft-a} $C$ spans at least $(2/3 + \sqrt{\varepsilon/2})t$ vertices,
		\item \label{item:cooleymycroft-b} $C$ has at least $(4/9 + \varepsilon) \binom{t}{2}$ edges,
		\item \label{item:cooleymycroft-c} $C$ has a matching of size at least $(2/3 + 2\gamma)t$,
		\item \label{item:cooleymycroft-d} $C$ and $C'$ have an edge in common and
		\item \label{item:cooleymycroft-e} $C$ has a triangle.
	\end{enumerate}

	Let us prove part~\ref{item:cooleymycroft-a}.
	Suppose for a contradiction that every connected component in $L$
	has at most $(2/3+\sqrt{\varepsilon/2})t$ vertices.
	Then we may form disjoint sets $A$ and $B$ such
	that $V(L) = A \cup B$, such that $A$ and~$B$ can each be written as a union of
	connected components of $L$, and such that $|A|, |B| \leq (2/3+\sqrt{\varepsilon/2})t$. We then have
	\[ e(L) \leq \binom{|A|}{2} + \binom{|B|}{2} \leq \binom{\left(\frac{2}{3} + \sqrt{\frac{\varepsilon}{2}} \right)t}{2} + \binom{\left(\frac{1}{3} - \sqrt{\frac{\varepsilon}{2}} \right)t}{2} < \left(\frac{5}{9} + \varepsilon \right) \binom{t}{2},\]
	giving a contradiction.

	Now we prove part~\ref{item:cooleymycroft-b}.
	Indeed, part~\ref{item:cooleymycroft-a} implies that the number of edges of $L$ which are not in $C$ is at most
	\[\binom{t-v(C)}{2} < \binom{\left(\frac{1}{3}-\sqrt{\frac{\varepsilon}{2}}\right)t}{2} < \left(\frac{1}{9}+{\frac{\varepsilon}{2}}\right)\binom{t}{2}.\]

	Now we check that part~\ref{item:cooleymycroft-c} holds.
	Let $x$ be such that $v(C) = (1-x)t$, so $x$ is the proportion of vertices of~$V$
	which are not in $C$. In particular $0 \leq x < 1/3-\sqrt{\varepsilon/2}$ by part~\ref{item:cooleymycroft-a}.
	Observe that at most $\binom{xt}{2} \leq x^2\binom{t}{2}$ edges of $L$ are not in $C$, so $C$ has more than $\left(5/9 - x^2 \right) \binom{t}{2}$ edges.
	It is easily checked that the inequality
	\[\left(\frac{5}{9} - x^2 \right) \binom{t}{2} > \binom{2s -1}{2} = \max \left\{\binom{2 s-1}{2},  \binom{t'}{2} - \binom{t'-s+1}{2} \right\}\]
	holds for $t' =\lceil(1-x)t\rceil$, $s = \lceil(1/3+\gamma)t\rceil$ and any $0 \leq x < 1/3-\sqrt{\varepsilon/2}$, as $1/t \ll \gamma \ll \varepsilon$.
	So by Theorem~\ref{erdosgallai} the component $C$ admits a matching $M$ of size $(2/3+2\gamma)t$ as claimed.

	To see part~\ref{item:cooleymycroft-d}, note that parts~\ref{item:cooleymycroft-a}--\ref{item:cooleymycroft-c} apply also to $C_2$.
	Let us write $C_1 = C$ and $C_2 = C'$.
	Fix $\alpha$ and $\beta$ so that $|V(C_1)| = (1-\alpha)t$ and
	$|V(C_2)| = (1 - \beta) t$. By part~\ref{item:cooleymycroft-a}, $0 \leq \alpha, \beta < 1/3$ and
	$|V(C_1) \cap V(C_2)| \geq (1 - \alpha - \beta) t$. Similarly to before, at most
	$\binom{\alpha t}{2} \leq \alpha^2\binom{t}{2}$ edges of $L_1$ are not in $C_1$, and at most
	$\binom{\beta t}{2} \leq \beta^2\binom{t}{2}$ edges of $L_2$ are not in $C_2$. Now suppose for a
	contradiction that $C_1$ and $C_2$ have no edges in common. Then we have
	\begin{align*}
		\left(\frac{5}{9}-\alpha^2\right)\binom{t}{2} & + \left(\frac{5}{9} - \beta^2\right) \binom{t}{2}
		< e(C_1 \cup C_2)                                                                                                                                        \\
		                                              & \leq \binom{v(C_1)}{2} + \binom{v(C_2)}{2} - \binom{|V(C_1) \cap V(C_2)|}{2}                             \\
		                                              & \leq \binom{(1-\alpha) t}{2} + \binom{(1-\beta) t}{2} - \binom{(1- \alpha - \beta)t}{2}                  \\
		                                              & = \left((1-\alpha)^2 + (1-\beta)^2 - (1-\alpha-\beta)^2\right)\binom{t}{2}                               \\
		                                              & \hspace{1cm} - \tfrac{t}{2}\left(\alpha(1-\alpha)+\beta(1-\beta)-(\alpha+\beta)(1-\alpha - \beta)\right) \\
		                                              & \leq (1-2\alpha\beta) \binom{t}{2}.
	\end{align*}
	So $1/9 < \alpha^2 + \beta^2 - 2 \alpha \beta = (\alpha - \beta)^2$, which implies that $|\alpha - \beta| > 1/3$, contradicting the fact that $0 \leq \alpha, \beta < 1/3$. We deduce that $C_1$ and $C_2$ must have an edge in common, as required.

	We conclude the proof by showing part~\ref{item:cooleymycroft-e}.
	If $e(C) \geq 1/2 \binom{v(C)}{2}$, we are done by Mantel's theorem.
	So assume otherwise.
	But then $$\left(\frac{5}{9} +\eps \right) \binom{n}{2} \leq e(G) \leq e(C) + e(G - V(C)) \leq \frac{1}{2} \binom{v(C)}{2} + \binom{n-v(C)}{2}.$$
	Writing $v(C) = \alpha n$, this results in $5/9 \leq \alpha^2/2 + (1-\alpha)^2$.
	Solving for $\alpha$ gives $\alpha = 2/9 (3 - \sqrt{3}) \approx 0.28$, which contradicts part \ref{item:cooleymycroft-a}.
\end{proof}

\end{document}